\DeclareRobustCommand{\SkipTocEntry}[5]{}
\newtheoremstyle{dotless}{}{}{\itshape}{}{\bfseries}{}{}{}
\theoremstyle{dotless}
\theoremstyle{plain}
\numberwithin{section}{chapter}
\numberwithin{subsection}{section}
\newtheorem{thm}{Theorem}
\numberwithin{thm}{section}
\newtheorem{lem}[thm]{Lemma}
\newtheorem{prop}[thm]{Proposition}
\newtheorem{cor}[thm]{Corollary}
\theoremstyle{definition}
\newtheorem{defn}[thm]{Definition}
\newtheorem{rem}[thm]{Remark}
\newtheorem{exa}[thm]{Example}
\newtheorem{cond}[thm]{Condition}
\newtheorem{que}[thm]{Question}
\newtheorem*{que*}{Question}
\newcommand{\N} {\mathbb{N}}
\newcommand{\Z} {\mathbb{Z}}
\newcommand{\Q} {\mathbb{Q}}
\newcommand{\R} {\mathbb{R}}
\newcommand{\C} {\mathbb{C}}
\newcommand{\K} {\mathbb{K}}
\newcommand{\D} {\mathbb{D}}
\newcommand{\FV} {\mathcal{FV}(\Omega)}
\newcommand{\FVE} {\mathcal{FV}(\Omega,E)}
\newcommand{\Fv} {\mathcal{F}\nu(\Omega)}
\newcommand{\FvE} {\mathcal{F}\nu(\Omega,E)}
\newcommand{\acx} {\operatorname{acx}}
\newcommand{\oacx} {\overline{\operatorname{acx}}}
\newcommand{\F} {\mathcal{F}(\Omega)}
\newcommand{\FE} {\mathcal{F}(\Omega,E)}
\newcommand{\f} {F(\Omega)}
\newcommand{\fe} {F(\Omega,E)}
\newcommand{\Feps} {\mathcal{F}_{\varepsilon}\nu(\Omega,E)}
\DeclareMathOperator{\id}{id}
\DeclareMathOperator{\re}{Re}
\DeclareMathOperator{\dom}{dom}
\DeclareMathOperator{\Span}{span}
\providecommand{\differential}{\mathrm{d}}
\renewcommand{\d}{\differential}
\newcommand{\e}{\mathrm{e}}
\newcommand{\iu}{\mathrm{i}}
\newcommand\llim{
\mathchoice{\vcenter{\hbox{${\scriptstyle{-}}$}}}
{\vcenter{\hbox{$\scriptstyle{-}$}}}
{\vcenter{\hbox{$\scriptscriptstyle{-}$}}}
{\vcenter{\hbox{$\scriptscriptstyle{-}$}}}}
\newcommand\rlim{
\mathchoice{\vcenter{\hbox{${\scriptstyle{+}}$}}}
{\vcenter{\hbox{$\scriptstyle{+}$}}}
{\vcenter{\hbox{$\scriptscriptstyle{+}$}}}
{\vcenter{\hbox{$\scriptscriptstyle{+}$}}}}
\newcommand{\vertiii}[1]{{\left\vert\kern-0.25ex\left\vert\kern-0.25ex\left\vert #1 
    \right\vert\kern-0.25ex\right\vert\kern-0.25ex\right\vert}}  
\newcommand{\fakephantomsection}{%
  \Hy@GlobalStepCount\Hy@linkcounter%
  \Hy@MakeCurrentHref{\@currenvir.\the\Hy@linkcounter}
  \Hy@raisedlink{\hyper@anchorstart{\@currentHref}\hyper@anchorend}%
}
\begin{document}
\begin{titlepage}
\begin{center}
\includegraphics[scale=0.25]{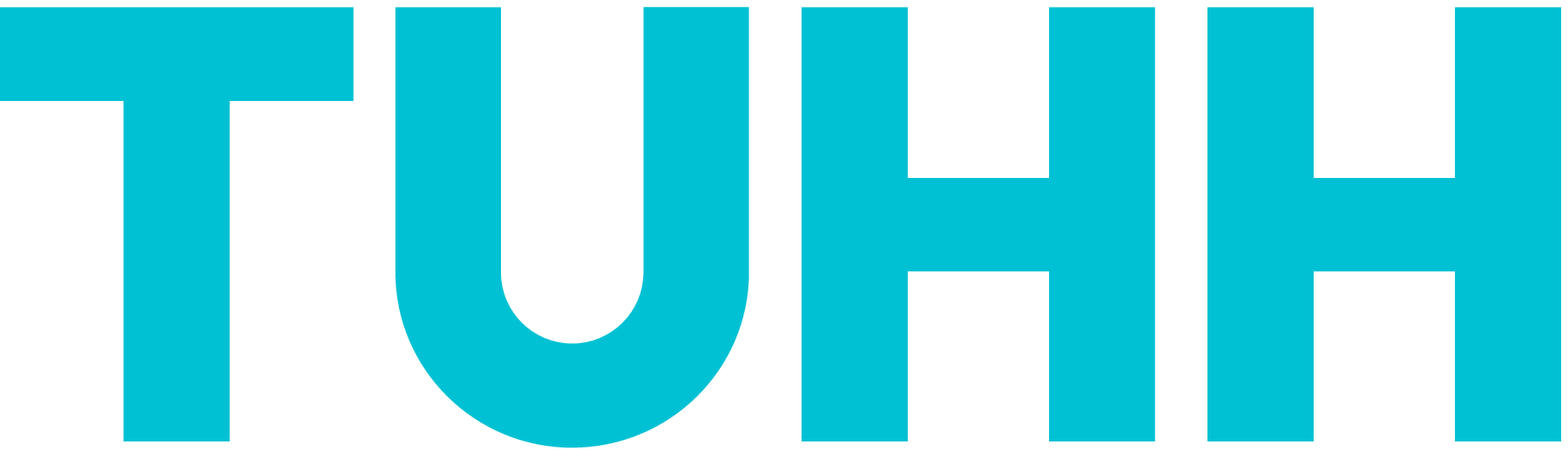}
\end{center}
\vspace{2cm}
\begin{center}
\huge{{\textbf{On vector-valued functions and the $\varepsilon$-product}}}
\end{center}
\vspace{4cm}
\begin{center}
\large{{\textbf{Habilitationsschrift}}}
\end{center}
\vspace{4cm}
\normalsize
\begin{center}
vorgelegt am 31.01.2022\\
der Technischen Universit\"at Hamburg\\
von\\
Dr.~rer.~nat.~Karsten Kruse,\\
geboren am 19.11.1984 in Papenburg.
\end{center}
\vspace{2cm}
\begin{center}
Die Habilitationsschrift wurde in der Zeit von Juli 2020 bis Januar 2022 im Institut f\"ur
Mathematik der Technischen Universit\"at Hamburg angefertigt.
\end{center}
\end{titlepage}
\thispagestyle{empty}
\noindent Gutachter: Prof.~Jos\'e Bonet\,\orcidlink{0000-0002-9096-6380}\\ 
\phantom{Gutachter: }Prof.~Dr.~Leonhard Frerick \\
\phantom{Gutachter: }Prof.~Dr.~Thomas Kalmes\,\orcidlink{0000-0001-7542-1334}\\
\phantom{Gutachter: }PD Dr.~Christian Seifert\,\orcidlink{0000-0001-9182-8687} 
\vskip 5mm
\noindent eingereicht: 31.~Januar 2022; \"uberarbeitet: 30.~Januar 2023
\vskip 5mm
\noindent Tag des Habilitationskolloquiums: 01.~Juli 2022

\vfill
\noindent DOI: \href{https://doi.org/10.15480/882.4898}{10.15480/882.4898}\\
ORCID: \orcidlink{0000-0003-1864-4915} 0000-0003-1864-4915
\vskip 5mm
\noindent Creative Commons Lizenz:\\
Diese Arbeit steht unter der Creative Commons Lizenz Namensnennung 4.0 (CC BY 4.0). 
Das bedeutet, dass sie vervielf\"altigt, verbreitet und \"offentlich zug\"anglich gemacht 
werden darf, auch kommerziell, sofern dabei stets der Urheber, die Quelle des Textes
und o.~g.~Lizenz genannt werden. Die genaue Formulierung der Lizenz kann unter 
\url{https://creativecommons.org/licenses/by/4.0/legalcode.de} auf\-gerufen werden.

\setcounter{page}{0}
\setcounter{page}{2}

\chapter*{Acknowledgement}
It is quite hard to express how grateful I am to the people who helped, in one way or the 
other, to finish this thesis which spans a part of my work between 2016 and 2022. 
But I will give it a try. 

First of all, I am deeply indebted to Marko Lindner and Christian Seifert 
who always supported and encouraged me since I joined the TUHH in 2014 and gave me a home 
so that I could work on the kind of mathematics I love. 
I know how lucky I was to meet you both. 
Second, I would like to thank the two people who taught me probably the most I know about
complex analysis and functional analysis, namely, Andreas Defant and Michael Langenbruch
(Oldenburg). Third, I am utterly grateful to Jos\'e Bonet and Enrique Jord\'a 
(Valencia) for many helpful suggestions and comments, improving some of the papers 
this thesis is based on, as well as enduring the quite abstract setting. 

Let us come to the honorable mentions, $\ldots$ just kidding. 
In 2015 I was lucky again because Jan Meichsner joined the TUHH as a PhD student. 
Despite him being a physicist and a dispraiser of green cabbage, it was a real pleasure 
to share an office, do mathematics or just spend time with him.  
Further, let me thank Dennis Gallaun with whom I spent a lot of effort and gaffer tape 
setting up the mobile e-assessment center at the TUHH between 2018 and 2020. 

Apart from the people mentioned above I would like to thank my other mathe\-matical co-authors 
Hans Daduna, Ruslan Krenzler, Felix Schwenninger and Lin Xie whose work is not physically present 
in this thesis but whose mathematical influence or spirit probably is. 
Furthermore, I am thankful to the whole Institute of Mathematics of the TUHH, in particular, 
the group of Applied Analysis for their support. Moreover, I am grateful to the reviewers of this thesis 
Leonhard Frerick (Trier) and Thomas Kalmes (Chemnitz) besides Jos\'e Bonet and Christian Seifert, 
and the anonymous reviewers of the papers it is based on for their work, helpful comments and corrections.   

Finally, I would like to thank my family for their continuous support and my love Sonja for sharing my 
mathematical interests, bearing my kind of humour and so much more which I cannot put into words.

These words only roughly express my gratitude and I hope that the minimum that remains after 
reading the acknowledgement is the thought `At least, he gave it a try.' and a smile. 
\chapter*{Abstract}
This habilitation thesis centres on linearisation of vector-valued functions which means that vector-valued
functions are represented by continuous linear operators. The first question we face is which vector-valued 
functions may be represented by continuous linear operators. 
We study this problem in the framework of $\varepsilon$-products and give sufficient conditions 
in \prettyref{chap:linearisation} and \ref{chap:consistency} when a space $\FE$ of vector-valued functions 
on a set $\Omega$ coincides (up to an isomorphism) with the $\varepsilon$-product $\F\varepsilon E$ 
of a corresponding space of scalar-valued functions $\F$ and the codomain $E$ 
which is usually an infinite-dimensional locally convex Hausdorff space. 
The $\varepsilon$-product $\F\varepsilon E$ is a space of continuous linear operators from the dual space $\F'$ 
to $E$. 

Once we have a representation of a space $\FE$ of vector-valued functions by an 
$\varepsilon$-product $\F\varepsilon E$, we have access to the rich theory of continuous linear operators 
which allows us to lift results that are known for the scalar-valued case to the vector-valued case. 
The whole \prettyref{chap:applications}, which spans more than half of this thesis, is dedicated to this 
lifting mechanism. But we should point out that this is not only about transferring results from the scalar-valued 
to the vector-valued case. The results in the vector-valued case encode additional information for the 
scalar-valued case as well, e.g.\ we may deduce from the solvability of a linear partial differential equation 
in the vector-valued case affirmative answers on the parameter dependence of solutions in the scalar-valued case 
(see \prettyref{sect:lifting}). 

In \prettyref{sect:extension} we give a unified approach to handle the problem of extending  
functions with values in $E$, which have weak extensions in $\F$, 
to functions in the vector-valued counterpart $\FE$ of $\F$. We present different extension theorems depending 
on the topological properties of the spaces $\F$ and $E$. These theorems also cover weak-strong principles. 
In particular, we study weak-strong principles for continuously partially differentiable functions of finite order 
in \prettyref{sect:weak_strong_finite_order} and improve the well-known weak-strong principles of Grothendieck 
and Schwartz.   
We use our results on the extension of vector-valued functions to derive Blaschke's convergence theorem for 
several spaces of vector-valued functions and Wolff's theorem for the description of dual spaces of several 
function spaces $\F$ in \prettyref{sect:blaschke} and \ref{sect:wolff_type}. 
Starting from the observation that every scalar-valued holomorphic function has a local power series expansion 
and that this is still true for holomorphic functions with values in $E$ if $E$ is locally complete, we 
develop a machinery which is based on linearisation and Schauder decomposition to transfer known series 
expansions from scalar-valued to vector-valued functions in \prettyref{sect:schauder}. 
Especially, we apply this machinery to derive Fourier expansions for $E$-valued Schwartz functions 
and $\mathcal{C}^{\infty}$-smooth functions on $\R^{d}$ that 
are $2\pi$-periodic in each variable. The last section of \prettyref{chap:applications} is devoted 
to the representation of spaces $\FE$ of vector-valued functions by sequence spaces, 
which can be used to identify the coefficient spaces of the series expansions from the preceding section, if 
one knows the coefficient space in the scalar-valued case. Furthermore, we give several new conditions on the 
Pettis-integrability of vector-valued functions in \prettyref{app:pettis}, which are, for instance, 
needed for the Fourier expansions in \prettyref{sect:schauder}.
\chapter*{Kurzfassung}
Im Mittelpunkt dieser Habilitationsschrift steht die Linearisierung vektorwertiger Funktionen, d.~h.\ 
vektorwertige Funktionen sollen durch stetige lineare Opera\-toren dargestellt werden. 
Die erste Frage, der man sich stellen muss, ist, welche vektorwertigen 
Funktionen durch stetige lineare Operatoren dargestellt werden k\"onnen. 
Wir untersuchen dieses Problem im Rahmen von $\varepsilon$-Produkten und geben hinreichende Bedingungen 
in Kapitel \ref{chap:linearisation} und \ref{chap:consistency} an, wann ein Raum $\FE$ 
von vektorwertigen Funktionen auf einer Menge $\Omega$ mit dem $\varepsilon$-Produkt $\F\varepsilon E$ 
eines entsprechenden Raums skalarwertiger Funktionen $\F$ und 
des Wertebereichs $E$ (bis auf Isomorphie) \"ubereinstimmt. Hierbei ist $E$ 
\"ublicherweise ein unendlich-dimensionaler lokalkonvexer Hausdorff Raum. 
Das $\varepsilon$-Produkt $\F\varepsilon E$ ist ein Raum stetiger linearer Operatoren, die vom Dualraum $\F'$ 
nach $E$ abbilden. 

Sobald wir eine Darstellung eines Raums $\FE$ von vektorwertigen Funktionen durch ein 
$\varepsilon$-Produkt $\F\varepsilon E$ gewonnen haben, ist es uns m\"oglich die reich\-haltige Theorie 
der stetigen linearen Operatoren zu nutzen, die es uns erlaubt, Ergebnisse, die f\"ur den skalarwertigen Fall 
bekannt sind, auf den vektorwertigen Fall zu \"ubertragen. 
Das gesamte Kapitel \ref{chap:applications}, das mehr als die H\"alfte dieser Arbeit einnimmt, widmet sich diesem 
\"Ubertragungsmechanismus. Es sei jedoch darauf hingewiesen, 
dass es hier nicht nur um die \"Ubertragung von Ergebnissen aus dem skalarwertigen 
auf den vektorwertigen Fall geht. Die Ergebnisse im vektorwertigen Fall beinhalten auch zus\"atzliche Informationen 
f\"ur den skalarwertigen Fall, z.~B.\ k\"onnen wir aus der L\"osbarkeit einer linearen partiellen Differentialgleichung
im vektorwertigen Fall Antworten auf die Frage nach der Parameterabh\"angigkeit der L\"osungen im skalarwertigen Fall 
ableiten (siehe Abschnitt \ref{sect:lifting}). 

In Abschnitt \ref{sect:extension} stellen wir einen einheitlichen Ansatz zur L\"osung des
Fortsetzungsproblems von Funktionen mit Werten in $E$, die schwache Fortsetzungen in $\F$ haben, 
zu Funktionen im vektorwertigen Gegenst\"uck $\FE$ von $\F$ vor. 
Wir pr\"asentieren verschiedene Fortsetzungss\"atze in Abh\"angigkeit von den topologi\-schen Eigenschaften der 
R\"aume $\F$ und $E$. Diese S\"atze decken auch schwach-stark Prinzipien ab. 
Insbesondere untersuchen wir schwach-stark Prinzipien f\"ur endlich oft stetig partiell differenzierbare Funktionen 
in Abschnitt \ref{sect:weak_strong_finite_order} und verbes\-sern die bekannten schwach-starken Prinzipien 
von Grothendieck und Schwartz.   
Zudem leiten wir von unseren Ergebnissen zur Fortsetzung vektorwertiger Funktionen den Konvergenzsatz von Blaschke 
f\"ur diverse R\"aume vektorwertiger Funktionen ab und \"ubertragen den Satz von Wolff auf Dualr\"aume mehrerer 
Funktionenr\"aume $\F$ in den Abschnitten \ref{sect:blaschke} und \ref{sect:wolff_type}. 
Ausgehend von der Beobachtung, dass jede skalar\-wer\-tige holomorphe Funktion eine lokale Potenzreihenentwicklung hat 
und dass dies auch f\"ur holomorphe Funktionen mit Werten in $E$ gilt, wenn $E$ lokal vollst\"andig ist,  
entwickeln wir einen Mechanismus, der auf Linearisierung und Schauder-Zerlegung basiert, 
um in Abschnitt \ref{sect:schauder} bekannte Reihenentwicklungen von skalarwertigen auf vektorwertige Funktionen 
zu erweitern. 
Insbesondere wenden wir diesen Mechanismus an, um Fourier-Entwicklungen f\"ur $E$-wertige 
Schwartz-Funktionen und $\mathcal{C}^{\infty}$-glatte Funktionen auf $\R^{d}$, die 
$2\pi$-periodisch in jeder Variablen sind, zu erhalten. 
Der letzte Abschnitt von Kapitel \ref{chap:applications} ist der Darstellung von R\"aumen $\FE$ 
vektorwertiger Funktionen durch Folgenr\"aume gewidmet, 
was man dazu nutzen kann, die Koeffizientenr\"aume der Reihenentwicklungen aus dem vorangegangenen Abschnitt 
zu bestimmen, sofern man den Koeffizientenraum im skalarwertigen Fall kennt. 
Au{\ss}erdem legen wir mehrere neue Bedingungen f\"ur die 
Pettis-Integrierbarkeit von vektorwertigen Funktionen in Anhang \ref{app:pettis} dar, 
die z.~B.\ f\"ur die Fourier-Entwicklungen in Abschnitt \ref{sect:schauder} ben\"otigt werden.
\setcounter{tocdepth}{2}
\tableofcontents
\chapter{Introduction}
\label{chap:intro}
This work is dedicated to a classical topic, namely, the linearisation of weighted spaces of vector-valued functions. 
The setting we are interested in is the following. Let $\F$ be a locally convex Hausdorff space of 
functions from a non-empty set $\Omega$ to a field $\K$ and $E$ be a locally convex Hausdorff space over $\K$. 
The $\varepsilon$-product of $\F$ and $E$ is defined as the space of linear continuous operators
\[
\F\varepsilon E:=L_{e}(\F_{\kappa}',E)
\]
equipped with the topology of uniform convergence on equicontinuous subsets of the dual $\F'$ which 
itself is equipped with the topology of uniform convergence on absolutely convex compact subsets of $\F$.
Suppose that the point-evaluation functionals $\delta_{x}$, $x\in\Omega$, belong to $\F'$ and that 
there is a locally convex Hausdorff space $\FE$ of $E$-valued functions 
on $\Omega$ such that the map 
\begin{equation}\label{eq:intro_0}
S\colon \F\varepsilon E \to \FE,\; u\longmapsto [x\mapsto u(\delta_{x})],
\end{equation}
is well-defined. 
The main question we want to answer reads as follows. When is $\F\varepsilon E$ a 
linearisation of $\FE$, i.e.\ when is $S$ an isomorphism?

In \cite{B3,B1,B2} Bierstedt treats the space $\mathcal{CV}(\Omega,E)$ 
of continuous functions on a completely regular Hausdorff space $\Omega$ weighted with 
a Nachbin-family $\mathcal{V}$ and its topological subspace $\mathcal{CV}_{0}(\Omega,E)$ of functions 
that vanish at infinity in the weighted topology. He derives sufficient conditions on $\Omega$, $\mathcal{V}$ and 
$E$ such that the answer to our question is affirmative, i.e.\ $S$ is an isomorphism. 
Schwartz answers this question for several weighted spaces of $k$-times continuously partially differentiable 
functions on $\Omega=\R^{d}$ like the Schwartz space in \cite{Schwartz1955,Sch1} for quasi-complete $E$
with regard to vector-valued distributions. 
Grothendieck treats the question in \cite{Gro}, mainly for nuclear $\F$ and complete $E$. 
In \cite{Kom7,Kom8,Kom9} Komatsu gives a positive answer for ultradifferentiable 
functions of Beurling or Roumieu type and sequentially complete $E$ with regard to vector-valued ultradistributions.
For the space of $k$-times continuously partially differentiable functions on open subsets $\Omega$ of 
infinite dimensional spaces equipped with the topology of uniform convergence of all partial derivatives 
up to order $k$ on compact subsets of $\Omega$ sufficient conditions for an affirmative answer are deduced 
by Meise in \cite{meise1977}. For holomorphic functions on open subsets of infinite dimensional spaces 
a positive answer is given in \cite{dineen1981} by Dineen. 
Bonet, Frerick and Jord{\'a} show in \cite{B/F/J} that $S$ is an isomorphism for 
certain closed subsheaves of the sheaf $\mathcal{C}^{\infty}(\Omega,E)$ of smooth functions on an open subset 
$\Omega\subset\R^{d}$ with the topology of uniform convergence of all partial derivatives on compact subsets 
of $\Omega$ and locally complete $E$ which, in particular, covers the spaces of harmonic and holomorphic functions. 

An important application of linearisation is within the field of partial differential equations. 
Let $E$ be a linear space of functions on a set $U$ and 
$P(\partial)\colon\mathcal{C}^{\infty}(\Omega)\to\mathcal{C}^{\infty}(\Omega)$ a 
linear partial differential operator with $\mathcal{C}^{\infty}$-smooth coefficients 
where $\mathcal{C}^{\infty}(\Omega):=\mathcal{C}^{\infty}(\Omega,\K)$. 
We call the elements of $U$ parameters and say that a family 
$(f_{\lambda})_{\lambda\in U}$ in $\mathcal{C}^{\infty}(\Omega)$ depends on a parameter
w.r.t.\ $E$ if the map $\lambda\mapsto f_{\lambda}(x)$ is an element of $E$ for every $x\in\Omega$. 
The question of parameter dependence is whether for every family 
$(f_{\lambda})_{\lambda\in U}$ in $\mathcal{C}^{\infty}(\Omega)$ depending on a parameter w.r.t.\ $E$ 
there is a family $(u_{\lambda})_{\lambda\in U}$ in $\mathcal{C}^{\infty}(\Omega)$ 
with the same kind of parameter dependence 
which solves the partial differential equation
\[
P(\partial)u_{\lambda}=f_{\lambda},\quad \lambda\in U.
\]
In particular, it is the question of $\mathcal{C}^{k}$-smooth (holomorphic, distributional, etc.) 
parameter dependence if $E$ is the space $\mathcal{C}^{k}(U)$ of $k$-times 
continuously partially differentiable functions 
on an open set $U\subset\R^{d}$ (the space $\mathcal{O}(U)$ of holomorphic functions on an open set $U\subset\C$, 
the space of distributions $\mathcal{D}(V)'$ on an open set $V\subset\R^{d}$ where $U=\mathcal{D}(V)$, etc.). 
The question of parameter dependence w.r.t.\ $E$ has an affirmative answer 
for several locally convex Hausdorff spaces $E$ due to 
tensor product techniques and splitting theory. Indeed, the answer is affirmative if
the topology of $E$ is stronger than the topology of pointwise convergence on $U$ and 
\[
P(\partial)^{E}\colon\mathcal{C}^{\infty}(\Omega,E)\to\mathcal{C}^{\infty}(\Omega,E)
\] 
is surjective where $P(\partial)^{E}$ is the version of $P(\partial)$ for $E$-valued functions. 
The operator $P(\partial)^{E}$ is surjective if its version $P(\partial)$ for scalar-valued functions 
is surjective, for instance, if $P(\partial)$ is elliptic, and $E$ is a Fr\'echet space. 
This is a consequence of Grothendieck's theory of tensor products \cite{Gro}, 
the nuclearity of $\mathcal{C}^{\infty}(\Omega)$ and the 
isomorphism $\mathcal{C}^{\infty}(\Omega,E)\cong\mathcal{C}^{\infty}(\Omega)\varepsilon E$ 
for locally complete $E$. 
Thanks to the splitting theory of Vogt for Fr\'{e}chet spaces \cite{vogt1983} and of 
Bonet and Doma\'nski for PLS-spaces \cite{D/L} we even have in case of an elliptic $P(\partial)$ 
that $P(\partial)^{E}$ for $d>1$ is surjective if $E:=F_{b}'$ where $F$ is a Fr\'{e}chet space satisfying 
the condition $(DN)$ or if $E$ is an ultrabornological PLS-space having the property $(PA)$ 
since $\operatorname{ker}P(\partial)$ has the property $(\Omega)$. 
In particular, these three results cover the cases that $E =\mathcal{C}^{k}(U)$, $\mathcal{O}(U)$ 
or $\mathcal{D}(V)'$. Of course, this technique to answer the question of parameter dependence 
is not restricted to linear partial differential operators or the space $\mathcal{C}^{\infty}(\Omega)$.

Another application of linearisation lies in the problem of extending a vector-valued 
function $f\colon \Lambda\to E$ from a subset $\Lambda\subset\Omega$ to a locally convex Hausdorff space $E$ 
if the scalar-valued functions $e'\circ f$ are extendable for each continuous linear functional 
$e'$ from certain linear subspaces $G$ of $E'$ under 
the constraint of preserving the properties, like holomorphy, of the scalar-valued extensions. 
This problem was considered, among others, by Grothendieck \cite{Grothendieck1953,Gro}, Bierstedt \cite{B2}, 
Gramsch \cite{Gramsch1977}, Grosse-Erdmann \cite{grosse-erdmann1992,grosse-erdmann2004}, 
Arendt and Nikolski \cite{Arendt2016,Arendt2000,Arendt2006}, 
Bonet, Frerick, Jord\'a and Wengenroth \cite{B/F/J,F/J,F/J/W,jorda2005,jorda2013}.
Even the simple case $\Lambda=\Omega$ and $G=E'$ is interesting and an affirmative answer is called a 
weak-strong principle. 

Our goal is to give a unified and flexible approach to linearisation 
which is able to handle new examples and covers the already known examples. 

\addtocontents{toc}{\SkipTocEntry}
\section*{Organisation of this thesis}

After fixing some notions and preliminaries on locally convex Hausdorff spaces, 
continuous linear operators and continuously partially differentiable functions 
in \textbf{\prettyref{chap:notation}}, we study the problem of linearisation in 
\textbf{\prettyref{chap:linearisation}}. In \prettyref{sect:eps-prod_into} we introduce 
our standard example of spaces $\FE$ that we consider. Namely, 
spaces of functions $\FVE$ from $\Omega$ to $E$ which are subspaces of sections of domains of linear operators 
$T^{E}$ on $E^{\Omega}$, and whose topology is generated by a family of weight functions $\mathcal{V}$.
These spaces cover many examples of classical spaces of functions appearing in analysis like the mentioned ones 
and an example of the operators $T^{E}$ are the partial derivative operators. Then we exploit the structure of 
our spaces to describe a sufficient condition, which we call consistency, 
on the interplay of the pairs of operators $(T^{E},T^{\K})$ and the map $S$ 
such that $S$ becomes an isomorphism into, i.e.\ an isomorphism 
to its range (see \prettyref{thm:linearisation}). 

In \prettyref{sect:eps-prod} 
we tackle the problem of surjectivity of $S$. In our main \prettyref{thm:full_linearisation} 
and its \prettyref{cor:full_linearisation} we give several sufficient conditions on the pairs of operators 
$(T^{E},T^{\K})$ and the spaces involved such that $S\colon\FV\varepsilon E \to \FVE$ is an isomorphism. 
Looking at the pair of partial differential operators $(P(\partial)^{E},P(\partial))$ considered above, 
these conditions allow us to express $P(\partial)^{E}$ as 
$P(\partial)^{E}=S\circ (P(\partial)\varepsilon\id_{E})\circ S^{-1}$ where 
$P(\partial)\varepsilon\id_{E}$ is the $\varepsilon$-product of $P(\partial)$ and the identity $\id_{E}$ on $E$. 
Hence it becomes obvious that the surjectivity of $P(\partial)^{E}$ is equivalent to the surjectivity of 
$P(\partial)\varepsilon\id_{E}$. This is used in \cite{ich,kruse2019_5,kruse2019_1,kruse2018_5,kruse2019_2}
in the case of the Cauchy--Riemann operator $P(\partial)=\overline{\partial}$ on spaces of smooth functions 
with exponential growth. 

In \textbf{\prettyref{chap:consistency}} we take a closer look at the notion of consistency 
of $(T^{E},T^{\K})$. In \prettyref{sect:consistency} we characterise several properties of the
functions $S(u)$ for $u\in\FV\varepsilon E$ that are inherited from the elements 
of $\FV$. 

\prettyref{sect:examples} is devoted to several concrete examples of 
spaces of vector-valued functions that may be linearised by $S$ and which we use for our 
applications in the forthcoming sections and chapters. 

In \prettyref{sect:riesz_markov_kakutani} we answer in several cases the question 
whether given a continuous linear functional $T^{\K}$ on $\F$ there is always 
a continuous linear operator $T^{E}$ on $\FE$ such that $(T^{E},T^{\K}$) is consistent. 
This is closely related to Riesz--Markov--Kakutani theorems for $T^{\K}$, 
which we transfer to the vector-valued case.  
 
\textbf{\prettyref{chap:applications}} is dedicated to applications of linearisation. 
In \prettyref{sect:lifting} we come back to our problem of parameter dependence. We show in our main 
\prettyref{thm:eps_prod_surj_inj} of this section how to use 
linearisations to transfer properties like injectivity, surjectivity or bijectivity from a map 
$T^{\K}\colon \mathcal{F}_{1}(\Omega_{1})\to \mathcal{F}_{2}(\Omega_{2})$ to the corresponding map 
$T^{E}\colon \mathcal{F}_{2}(\Omega_{1},E)\to \mathcal{F}_{2}(\Omega_{2},E)$ if the pair $(T^{E},T^{\K})$ is 
consistent under suitable assumptions on the spaces involved. Besides the problem of parameter dependence 
for (hypo)elliptic linear partial differential operators (see \prettyref{cor:surjectivity_hypo_elliptic}), we deduce 
a vector-valued version of the Borel--Ritt theorem (see \prettyref{thm:Borel_Ritt}) from this main theorem 
and give sufficient conditions under which the Fourier transformation 
$\mathfrak{F}^{\C}\colon \mathcal{S}_{\mu}(\R^{d})\to\mathcal{S}_{\mu}(\R^{d})$ on the Beurling--Bj\"orck space 
is still an isomorphism in the vector-valued case and may be decomposed as 
$\mathfrak{F}^{E}=S\circ (\mathfrak{F}^{\C}\varepsilon\id_{E})\circ S^{-1}$ (see \prettyref{thm:Bjoerck_Fourier}).

In \prettyref{sect:extension} we present a general approach to the extension problem considered above 
for a large class of function spaces $\FE$ if the map $S$ is an isomorphism into. 
The spaces we treat are of the kind 
that $\F$ belongs to the class of semi-Montel, Fr\'echet--Schwartz or Banach spaces, 
or that $E$ is a semi-Montel space. Apart from linearisation and consistency, the main ingredient of this approach 
is to view the set $\Lambda\subset\Omega$ from which we want to extend our functions as a set of functionals 
$\{\delta_{x}\;|\;x\in\Lambda\}$. This view allows us to generalise the extension problem 
in \prettyref{que:weak_strong} by swapping this set of functionals by other functionals, 
which opens up new possibilities in applications that we explore in \prettyref{sect:weak_strong_finite_order},
\prettyref{sect:blaschke}, \prettyref{sect:wolff_type} and \prettyref{sect:sequence_space}.
In the extension problem we always have to balance the sets $\Lambda$ 
from which we extend our functions and the subspaces $G\subset E'$ 
with which we test. The case of `thin' sets $\Lambda$ and `thick' subspaces $G$ 
is handled 
in \prettyref{sub:thin_0} with main theorems \prettyref{thm:ext_F_semi_M}, \prettyref{thm:ext_FS_set_uni} and 
\prettyref{thm:ext_B_unique}, the converse case of `thick' sets $\Lambda$ and `thin' subspaces $G$ 
is handled in \prettyref{sub:thick} with main theorems \prettyref{thm:fix_topo_E_semi_M},
\prettyref{thm:ext_FS_fix_top} and \prettyref{thm:ext_B_fix_top}.

In \prettyref{sect:weak_strong_finite_order} we consider weak-strong principles for continuously 
partially differentiable functions of finite order. For locally complete $E$ it is well-known that 
a function $f$ belongs to $\mathcal{C}^{\infty}(\Omega,E)$ if and only if $e'\circ f\in\mathcal{C}^{\infty}(\Omega)$ 
for all $e'\in E'$ (see e.g.\ \cite[Theorem 9, p.\ 232]{B/F/J}). 
If $k\in\N_{0}$, then it is still true that $f\in\mathcal{C}^{k}(\Omega,E)$ implies 
$e'\circ f\in\mathcal{C}^{k}(\Omega)$ for all $e'\in E'$. 
But the converse is not true anymore. Only a weaker version of this weak-strong principle 
holds which is due to Grothendieck \cite{Grothendieck1953} and Schwartz \cite{Schwartz1955} 
(see \prettyref{thm:schwartz_weak_strong}). 
Namely, if $k\in\N_{0}$, $E$ is sequentially complete and $f\colon\Omega\to E$ is such that 
$e'\circ f\in\mathcal{C}^{k+1}(\Omega)$ for all $e'\in E'$, then $f\in\mathcal{C}^{k}(\Omega,E)$. 
Using the results from \prettyref{sect:extension}, we improve this weaker version of the weak-strong principle 
by allowing $E$ to be locally complete, only testing with less functionals from certain linear subspaces 
$G\subset E'$ and getting that $f$ does not only belong to $\mathcal{C}^{k}(\Omega,E)$ 
but that all partial derivatives of order $k$ are actually locally Lipschitz continuous 
(see \prettyref{cor:ext_B_unique_loc_Hoelder}). If we restrict to semi-Montel spaces $E$, 
then even a `full' weak-strong principle \prettyref{thm:weak_strong_finite_order} 
holds as in the $\mathcal{C}^{\infty}$-case.

In \prettyref{sect:blaschke} we derive vector-valued Blaschke theorems like \prettyref{cor:Blaschke_vector_valued} 
for several function spaces. This generalises 
results of Arendt and Nikolski \cite{Arendt2000} for bounded holomorphic functions 
and Frerick, Jord\'a and Wengenroth \cite{F/J/W} for bounded functions in the kernel of a 
hypoelliptic linear partial differential operator. These are results of the form: 
given a bounded net $(f_{\iota})_{\iota\in I}$ in some space 
$\mathcal{F}_{1}(\Omega,E)$ of Banach-valued functions which converges pointwise on a certain subset of 
$\Omega$ there is a limit $f\in\mathcal{F}_{1}(\Omega,E)$ of this net w.r.t.\ a weaker topology 
of a linear superspace $\mathcal{F}_{2}(\Omega,E)$ of $\mathcal{F}_{1}(\Omega,E)$. 
In Blaschke's classical convergence theorem \cite[Theorem 7.4, p.\ 219]{burckel1979} 
we have $E=\C$, $\mathcal{F}_{1}(\Omega,E)$ is the space of bounded holomorphic functions on the open 
unit disc $\D\subset\C$, $\mathcal{F}_{2}(\Omega,E)$ is the space of holomorphic functions on 
$\D$ and the weaker topology is the topology of compact convergence. 

In \prettyref{sect:wolff_type} we present Wolff type descriptions of the dual space of several function spaces 
$\F$ using linearisation (see \prettyref{thm:wolff}). 
Wolff's theorem \cite[p.\ 1327]{wolff1921} (cf.\ \cite[Theorem (Wolff), p.\ 402]{grosse-erdmann2004}) 
phrased in a functional analytic way (see \cite[p.\ 240]{F/J/W}) says: if $\Omega\subset\C$
is a domain, then for each $\mu\in\mathcal{O}(\Omega)'$ there are a sequence 
$(z_{n})_{n\in\N}$ which is relatively compact in $\Omega$ and a sequence
$(a_{n})_{n\in\N}$ in the space $\ell^{1}$ of absolutely summable sequences 
such that $\mu=\sum_{n=1}^{\infty}a_{n}\delta_{z_{n}}$.

In \prettyref{sect:schauder} we derive a general result for Schauder decompositions of the $\varepsilon$-product 
$F\varepsilon E$ for locally convex Hausdorff spaces $F$ and $E$ if $F$ has an equicontinuous Schauder basis 
(see \prettyref{thm:schauder_decomp}). In combination with linearisation and consistency this can be used 
for $F=\F$ to lift series representations like the power series expansion of holomorphic functions 
from scalar-valued functions to vector-valued functions (see \prettyref{cor:schauder_decomp}). 
We present several examples in \prettyref{sub:Schauder_examples}, for instance, 
Fourier expansions in the Schwartz space $\mathcal{S}(\R^{d},E)$ and 
in the space $\mathcal{C}^{\infty}_{2\pi}(\R^{d},E)$ of functions in 
$\mathcal{C}^{\infty}(\R^{d},E)$ 
that are $2\pi$-periodic in each variable.
In particular, we combine these expansions for locally complete $E$ with the results from \prettyref{sect:lifting} 
to identify the coefficient spaces 
of the Fourier expansions in $\mathcal{S}(\R^{d},E)$ and 
$\mathcal{C}^{\infty}_{2\pi}(\R^{d},E)$ (see \prettyref{thm:fourier.rap.dec} and 
\prettyref{thm:fourier_periodic}). 

In \prettyref{sect:sequence_space} an application of our extension results from \prettyref{sect:extension} 
is given to represent function spaces $\FE$ by sequence spaces if one knows such a representation for $\F$ 
(see  \prettyref{thm:Schauder_coeff_space}).
As examples we treat the space $\mathcal{O}(\D_{R}(0),E)$ of $E$-valued holomorphic functions 
on the disc $\D_{R}(0)\subset\C$ around $0$ with radius $0<R\leq\infty$ and the multiplier space 
$\mathcal{O}_{M}(\R,E)$ of the Schwartz space for locally complete $E$ 
(see \prettyref{cor:Schauder_coeff_space_holom}, \prettyref{cor:Schauder_coeff_space_multiplier} 
and \prettyref{rem:Schauder_coeff_space_multiplier}).

The first section \prettyref{app:clos_abs_conv_compact} of the \textbf{\prettyref{app:appendix}} 
is devoted to the question when the closure of an absolutely convex hull of a set is compact in 
a locally convex Hausdorff space $E$ and \prettyref{app:pettis} to the related question of Pettis-integrability 
of an $E$-valued function. 

\newpage
\addtocontents{toc}{\SkipTocEntry}
\section*{Concerning originality}

We note that some parts of chapters or sections are based on our papers and preprints. 

\begin{itemize}
\item \prettyref{chap:linearisation}, \prettyref{sect:consistency} and \prettyref{sect:examples} are based on 
our paper \emph{Weighted spaces of vector-valued functions and the $\varepsilon$-product} \cite{kruse2017} and 
its extended preprint \cite{kruse2017a}. Furthermore, \prettyref{sect:examples} contains results from 
Sections 3 and 6 of our accepted preprint \emph{Extension of weighted vector-valued functions and 
sequence space representation} \cite{kruse2018_3} and our paper
\emph{Extension of weighted vector-valued functions and weak--strong principles for differentiable functions 
of finite order} \cite{kruse2019_3} and its extended preprint \cite{kruse2019_3a}.
\item \prettyref{sect:lifting} generalises some results of our papers 
\emph{Surjectivity of the $\overline{\partial}$-operator between weighted spaces of smooth vector-valued functions}
\cite{kruse2018_5} and \emph{Parameter dependence of solutions of the Cauchy--Riemann equation on weighted spaces of smooth functions} \cite{kruse2019_1} and its extended preprint \cite{kruse2019_1a}.
\item \prettyref{sect:extension}, \prettyref{sect:weak_strong_finite_order}, \prettyref{sect:blaschke},
\prettyref{sect:wolff_type} and \prettyref{sect:sequence_space} are based on our accepted preprint \cite{kruse2018_3} 
and our paper \cite{kruse2019_3} (and its extended preprint \cite{kruse2019_3a}).
\item \prettyref{sect:schauder} is based on our paper \emph{Series representations in spaces of vector-valued
functions via Schauder decompositions} \cite{kruse2018_1}.
\end{itemize}

Moreover, the introduction \prettyref{chap:intro} and \prettyref{chap:notation} on notation and preliminaries 
are based on the corresponding sections in our papers and preprints 
\cite{kruse2017a,kruse2017,kruse2019_1,kruse2018_1,kruse2018_3,kruse2018_5,kruse2019_3,kruse2019_3a}. 
However, not all of the results given in this thesis are already contained in our preprints or papers. 

In \prettyref{chap:linearisation} the new, i.e.\ not contained in our preprints or papers, 
results are \prettyref{cor:full_linearisation} (ii), \prettyref{ex:weighted_diff} e)+f),
\prettyref{ex:weighted_C_1_diff} and \prettyref{cor:Schwartz}. 

In \prettyref{sect:examples} the new examples and results are \prettyref{ex:sequence_vanish_infty},
\prettyref{cor:sequence_vanish_infty}, \prettyref{ex:cont_loc_comp}, \prettyref{ex:disc_algebra}, 
\prettyref{prop:frechet_bierstedt}, \prettyref{ex:subspace_bierstedt}, \prettyref{ex:diff_vanish_at_infinity} 
which extends \cite[Proposition 3.17 a), p.\ 244]{kruse2018_2} of our paper
\emph{The approximation property for weighted spaces of differentiable function} \cite{kruse2018_2}, 
\prettyref{prop:Fourier-trafo_Bjoerck} which extends \cite[Proposition 4.8, p.\ 370]{kruse2018_1} 
from sequentially complete $E$ to locally complete $E$, \prettyref{ex:Bjoerck} 
and \prettyref{ex:diff_ext_boundary} (ii).
All the results of \prettyref{sect:riesz_markov_kakutani} are new except for \prettyref{def:cons_strong} which is 
\cite[2.2 Definition, p.\ 4]{kruse2018_3} (and also not a result). 

The main theorem of \prettyref{sect:lifting}, \prettyref{thm:eps_prod_surj_inj}, 
is new even though special cases appeared in \cite{kruse2019_1,kruse2018_5}. 
\prettyref{thm:Borel_Ritt} and \prettyref{thm:Bjoerck_Fourier} are new as well. 
\prettyref{cor:Hoelder_vanish_Blaschke} extends \cite[7.3 Corollary, p.\ 22]{kruse2019_3a} 
from metric spaces with finite diameter to arbitrary metric spaces. 
\prettyref{thm:fourier.rap.dec} and \prettyref{thm:fourier_periodic} b) extend 
\cite[Theorem 4.9, p.\ 371--372]{kruse2018_1} and \cite[Theorem 4.11, p.\ 375]{kruse2018_1}
from sequentially complete $E$ to locally complete $E$. \prettyref{cor:Schauder_coeff_space_holom} is new 
in the sense that there is only a sketch how to prove it in \cite[p.\ 31]{kruse2018_3}.

The results of \prettyref{app:appendix} are also new except for \prettyref{prop:cadlag_precomp}, 
\prettyref{prop:vanish_at_infinity_precomp}, which are contained in 
\cite[5.2 Proposition, p.\ 24]{kruse2017a} and \cite[3.13 Lemma d), p.\ 10]{kruse2017a}, 
and \prettyref{lem:pettis.loc.complete} which is \cite[Lemma 4.7, p.\ 369]{kruse2018_1}.
\chapter{Notation and preliminaries}
\label{chap:notation}
\addtocontents{toc}{\SkipTocEntry}
\section*{Basics of topology}

We equip the spaces $\R^{d}$, $d\in\N$, and $\C$ with the usual Euclidean norm $|\cdot|$, denote by 
$\gls{br}:=\{w\in\R^{d}\;|\;|w-x|<r\}$ the ball around $x\in\R^{d}$ and by 
$\gls{dr}:=\{w\in\C\;|\;|w-z|<r\}$ the disc around $z\in\C$
with radius $r>0$. 
Furthermore, for a subset $M$ of a topological space $(X,t)$ we denote the closure of $M$ by $\gls{cl}$ 
and the boundary of $M$ by $\gls{bo}$. If we want to emphasize that we take the closure in $X$ resp.\ w.r.t.\ the 
topology $t$, then we write $\gls{clX}$ resp.\ $\gls{clt}$.
For a subset $M$ of a vector space $X$ we denote by $\gls{cih}$ the circled hull, 
by $\gls{coh}$ the convex hull and by $\gls{abcx}$ the 
absolutely convex hull of $M$. If $X$ is a topological vector space, we write $\gls{cabcx}$ 
for the closure of $\acx(M)$ in $X$.

\addtocontents{toc}{\SkipTocEntry}
\section*{Locally convex Hausdorff spaces and continuous linear operators}

By $\gls{E}$ we always denote a non-trivial, i.e.\ $E\neq\{0\}$, locally convex Hausdorff space over the field 
$\K=\R$ or $\C$ equipped with a directed fundamental system of 
seminorms $\gls{pa}$ and, in short, we write that $E$ is an \gls{lchs}. 
If $E=\K$, then we set $(p_{\alpha})_{\alpha\in \mathfrak{A}}:=\{|\cdot|\}.$ 

By $\gls{XhochO}$ we denote the set of maps from a non-empty set $\Omega$ to a non-empty set $X$, 
by $\gls{chiK}$ we mean the characteristic function of $K\subset\Omega$,  
by $\gls{c}$ the space of continuous functions from a topological space $\Omega$ 
to a topological space $X$, and by $\gls{c0}$ its subspace of 
continuous functions that vanish at infinity if $X$ is a locally convex Hausdorff space.

We denote by $\gls{LFE}$ the space of continuous linear operators from $F$ to $E$ 
where $F$ and $E$ are locally convex Hausdorff spaces. 
If $E=\K$, we just write $\gls{F'}:=L(F,\K)$ for the dual space and 
$\gls{Gcirc}$ for the \emph{polar set} of $G\subset F$. 
If $F$ and $E$ are linearly topologically isomorphic, we just write that $F$ and $E$ are isomorphic, 
in symbols $\gls{FcE}$.
We denote by $\gls{LtFE}$ the space $L(F,E)$ equipped with the locally convex topology $t$ of uniform convergence 
on the finite subsets of $F$ if $t=\gls{LsFE}$, on the absolutely convex, compact subsets of $F$ if $t=\gls{LkFE}$,
on the absolutely convex, $\sigma(F,F')$-compact subsets of $F$ if $t=\gls{LtauFE}$, 
on the precompact (totally bounded) subsets of $F$ 
if $t=\gls{LgFE}$ and on the bounded subsets of $F$ if $t=\gls{LbFE}$. 
We use the symbols $\gls{tF'F}$ for the corresponding topology on $F'$ 
and $\gls{tF}$ for the corresponding bornology on $F$. 
We say that a subspace $G\subset F'$ is \emph{\gls{separating}} (the points of $F$) if for every $x\in F$ 
it follows from $y(x)=0$ for all $y\in G$ that $x=0$. Clearly, this is equivalent to $G$ being $\sigma(F',F)$-dense in $F'$. 
For details and notions on the theory of locally convex spaces not explained 
in this thesis see \cite{F/W/Buch,Jarchow,meisevogt1997,Bonet}.

\addtocontents{toc}{\SkipTocEntry}
\section*{\texorpdfstring{$\varepsilon$}{epsilon}-products and tensor products}

The so-called \emph{\gls{eps_product}} of Schwartz is defined by 
\begin{equation}\label{notation0}
\gls{FepsE}:=L_{e}(F_{\kappa}',E)
\end{equation}
where $L(F_{\kappa}',E)$ is equipped with the topology of uniform convergence on equicontinuous subsets of $F'$. 
This definition of the $\varepsilon$-product coincides with the original 
one by Schwartz \cite[Chap.\ I, \S1, D\'{e}finition, p.\ 18]{Sch1}. 
It is symmetric which means that $F\varepsilon E\cong E\varepsilon F$. In the literature the definition of the 
$\varepsilon$-product is sometimes done the other way around, 
i.e.\ $E\varepsilon F$ is defined by the right-hand side 
of \eqref{notation0} but due to the symmetry these definitions are equivalent and for our purpose 
the given definition is more suitable. 
If we replace $F_{\kappa}'$ by $F_{\gamma}'$, we obtain Grothendieck's definition of the 
$\varepsilon$-product and  we remark that the two $\varepsilon$-products coincide 
if $F$ is quasi-complete because then $F_{\gamma}'=F_{\kappa}'$ holds. However, we stick to Schwartz' definition. 

For locally convex Hausdorff spaces $F_{i}$, $E_{i}$ and $T_{i}\in L(F_{i},E_{i})$, $i=1,2$, 
we define the $\varepsilon$-product 
$\gls{T1epsT2}\in L(F_{1}\varepsilon F_{2},E_{1}\varepsilon E_{2})$ of the operators $T_{1}$ and $T_{2}$ by 
\[
 (T_{1}\varepsilon T_{2})(u):=T_{2}\circ u\circ T_{1}^{t},\quad u\in F_{1}\varepsilon F_{2},
\]
where $T_{1}^{t}\colon E_{1}'\to F_{1}'$, $e'\mapsto e'\circ T_{1}$, is the \emph{dual map} of $T_{1}$. 
If $T_{1}$ is an isomorphism and $F_{2}=E_{2}$, then $T_{1}\varepsilon \id_{E_{2}}$ 
is also an isomorphism with inverse $T_{1}^{-1}\varepsilon\id_{E_{2}}$ by 
\cite[Chap.\ I, \S1, Proposition 1, p.\ 20]{Sch1} 
(or \cite[16.2.1 Proposition, p.\ 347]{Jarchow} if the $F_{i}$ are complete).

As usual we consider the tensor product $F\otimes E$ as a linear subspace of $F\varepsilon E$ for 
two locally convex Hausdorff spaces $F$ and $E$ by means of the linear injection 
\begin{equation}\label{eq:tensor_into_eps_product}
\gls{Theta}\colon F\otimes E\to F\varepsilon E,\; 
\sum^{k}_{n=1}{f_{n}\otimes e_{n}}\longmapsto\bigl[y\mapsto \sum^{k}_{n=1}{y(f_{n}) e_{n}}\bigr].
\end{equation}
Via $\Theta$ the space $\gls{FotimesE}$ is identified with the space of operators with finite rank in $F\varepsilon E$ 
and a locally convex topology is induced on $F\otimes E$. 
We write $\gls{FotimesepsE}$ for $F\otimes E$ equipped 
with this topology and $\gls{FotimesepsEcompl}$ for the completion 
of the \emph{\gls{injtenprod}} $F\otimes_{\varepsilon}E$.
For more information on the theory of $\varepsilon$-products 
and tensor products see \cite{Defant,Jarchow,Kaballo}. 

\addtocontents{toc}{\SkipTocEntry}
\section*{Several degrees of completeness}

The sufficient conditions for surjectivity of the map $S\colon \F\varepsilon E \to\FE$ from the introduction, 
which we derive in the forthcoming, depend on assumptions on different types of completeness of $E$. 
For this purpose we recapitulate some definitions which are connected to completeness. 
We start with local completeness. For a \emph{\gls{disk}} $D\subset E$, i.e.\ a bounded, absolutely convex set, 
the linear space $\gls{E_D}:=\bigcup_{n\in\N}nD$ becomes a normed space if it is equipped with the
gauge functional of $D$ as a norm (see \cite[p.\ 151]{Jarchow}). The space $E$ is called \emph{\gls{loccompl}} 
if $E_{D}$ is a Banach space for every closed disk $D\subset E$ (see \cite[10.2.1 Proposition, p.\ 197]{Jarchow}). 
We call a non-empty subset $A$ of an lcHs $E$ \emph{\gls{locclosed}} if every local limit point of $A$ 
belongs to $A$.
Here, a point $x\in E$ is called a \emph{\gls{loclimitp}} of $A$ if there is a sequence $(x_{n})_{n\in\N}$ in $A$ 
that converges locally to $x$ (see \cite[Definition 5.1.14, p.\ 154--155]{Bonet}), i.e.\ there is a disk $D\subset E$
such that $(x_{n})$ converges to $x$ in $E_{D}$ (see \cite[Definition 5.1.1, p.\ 151]{Bonet}). 
The \emph{\gls{locclosure}} of a subset $A$ of $E$ is defined as the smallest locally closed subset of $E$ 
which contains $A$ (see \cite[Definition 5.1.18, p.\ 155]{Bonet}).
Moreover, we note that every locally complete linear subspace of $E$ is locally closed and a locally closed linear
subspace of a locally complete space is locally complete by \cite[Proposition 5.1.20 (i), p.\ 155]{Bonet}. 

Moreover, a locally convex Hausdorff space is locally complete if and only 
if it is convenient by \cite[2.14 Theorem, p.\ 20]{kriegl}.
In particular, every complete locally convex Hausdorff space is quasi-complete, 
every quasi-complete space is sequentially complete and every 
sequentially complete space is locally complete and all these implications are strict. The first two by 
\cite[p.\ 58]{Jarchow} and the third by \cite[5.1.8 Corollary, p.\ 153]{Bonet} and 
\cite[5.1.12 Example, p.\ 154]{Bonet}. 

Now, let us recall the following definition from \cite[9-2-8 Definition, p.\ 134]{Wilansky} 
and \cite[p.\ 259]{J.Voigt}. A locally convex Hausdorff space is said 
to have the \emph{[\gls{metricconvcompactprop}] \gls{convcompactprop}} ([metric] ccp) if 
the closure of the absolutely convex hull of every [metrisable] compact set is compact. 
Sometimes this condition is phrased with the term convex hull instead of absolutely convex hull but these 
definitions coincide. Indeed, the first definition implies the second since every convex hull of a set 
$A\subset E$ is contained in its absolutely convex hull. 
On the other hand, we have $\acx(A)=\operatorname{cx}(\operatorname{ch}(A))$ by 
\cite[6.1.4 Proposition, p.\ 103]{Jarchow} and the circled hull $\operatorname{ch}(A)$ of a [metrisable] compact set
$A$ is compact by \cite[Chap.\ I, 5.2, p.\ 26]{schaefer} [and metrisable by 
\cite[Chap.\ IX, \S2.10, Proposition 17, p.\ 159]{bourbakiII} since $\D\times A$ is metrisable and 
$\operatorname{ch}(A)=M_{E}(\D\times A)$ where $M_{E}\colon\K\times E\to E$ is the continuous scalar 
multiplication and $\gls{unitdisc}:=\D_{1}(0)$ the open unit disc], which yields the other implication.

In particular, every locally convex Hausdorff space with ccp has obviously metric ccp, 
every quasi-complete locally convex Hausdorff space has ccp by \cite[9-2-10 Example, p.\ 134]{Wilansky}, 
every sequentially complete locally convex Hausdorff space 
has metric ccp by \cite[A.1.7 Proposition (ii), p.\ 364]{bogachev} and 
every locally convex Hausdorff space with metric ccp is locally complete by \cite[Remark 4.1, p.\ 267]{J.Voigt}. 
All these implications are strict. The second by \cite[9-2-10 Example, p.\ 134]{Wilansky} 
and the others by \cite[Remark 4.1, p.\ 267]{J.Voigt}. 
For more details on the [metric] convex compactness property and local completeness see \cite{Bonet2002,J.Voigt}.
In addition, we remark that every semi-Montel space is semi-reflexive by \cite[11.5.1 Proposition, p.\ 230]{Jarchow} 
and every semi-reflexive locally convex Hausdorff space is 
quasi-complete by \cite[Chap.\ IV, 5.5, Corollary 1, p.\ 144]{schaefer} and these implications are strict as well.
Summarizing, we have the following diagram of strict implications:
\begin{align*}
\text{semi-Montel}\;\Rightarrow\;&\phantom{q}\text{semi-reflexive}\\
&\phantom{semi}\Downarrow\\
\text{complete}\;\Rightarrow\;&\text{quasi-complete}\;\Rightarrow\;\text{sequentially complete}\;\Rightarrow\;\text{locally complete}\\
&\phantom{semi}\Downarrow\quad\phantom{complete sequentia}\Downarrow\phantom{complete}\; \rotatebox[origin=c]{29}{$\Longrightarrow$}\\
&\phantom{sem}\text{ccp}\phantom{omplete}\Rightarrow\phantom{seque}\text{metric ccp}
\end{align*}

\addtocontents{toc}{\SkipTocEntry}
\section*{Vector-valued continuously partially differentiable functions}

Since weighted spaces of continuously partially differentiable resp.\ holomorphic vector-valued functions will 
serve as our standard examples, we recall the definition of the spaces $\mathcal{C}^{k}(\Omega,E)$ resp.\ 
$\mathcal{O}(\Omega,E)$.
A function $f\colon\Omega\to E$ on an open set $\Omega\subset\R^{d}$ to an lcHs $E$ is called 
\emph{\gls{cpd}} ($f$ is $\mathcal{C}^{1}$) 
if for the $n$-th unit vector $e_{n}\in\R^{d}$ the limit
\[
(\partial^{e_{n}})^{E}f(x):=\lim_{\substack{h\to 0\\ h\in\R,h\neq 0}}\frac{f(x+he_{n})-f(x)}{h}
\]
exists in $E$ for every $x\in\Omega$ and $(\partial^{e_{n}})^{E}f$ 
is continuous on $\Omega$ ($(\partial^{e_{n}})^{E}f$ is $\mathcal{C}^{0}$) for every $1\leq n\leq d$.
For $k\in\N$ a function $f$ is said to be $k$-times continuously partially differentiable 
($f$ is $\mathcal{C}^{k}$) if $f$ is $\mathcal{C}^{1}$ and all its first partial derivatives are $\mathcal{C}^{k-1}$.
A function $f$ is called infinitely continuously partially differentiable ($f$ is $\mathcal{C}^{\infty}$) 
if $f$ is $\mathcal{C}^{k}$ for every $k\in\N$.
For $k\in\gls{N_infty}:=\N\cup\{\infty\}$ the functions $f\colon\Omega\to E$ which are $\mathcal{C}^{k}$ 
form a linear space which is denoted by $\gls{ck}$. 
For $\beta\in\N_{0}^{d}$ with $|\beta|:=\sum_{n=1}^{d}\beta_{n}\leq k$ and a function $f\colon\Omega\to E$ 
on an open set $\Omega\subset\R^{d}$ to an lcHs $E$ we set $(\partial^{\beta_{n}})^{E}f:=f$ if $\beta_{n}=0$, and
\[
(\partial^{\beta_{n}})^{E}f(x)
:=\underbrace{(\partial^{e_{n}})^{E}\cdots(\partial^{e_{n}})^{E}}_{\beta_{n}\text{-times}}f(x)
\]
if $\beta_{n}\neq 0$ and the right-hand side exists in $E$ for every $x\in\Omega$.
Further, we define 
\[
\gls{partial_b_f_E}(x)
:=\bigl((\partial^{\beta_{1}})^{E}\cdots(\partial^{\beta_{d}})^{E}\bigr)f(x)
\]
if the right-hand side exists in $E$ for every $x\in\Omega$. 
If $E=\K$, we often just write $\gls{partial_b_f}:=(\partial^{\beta})^{\K}f$ 
for $\beta\in\N_{0}^{d}$, $|\beta|\leq k$, and $f\in\mathcal{C}^{k}(\Omega)$.
Furthermore, we define the space of bounded continuously partially differentiable functions by 
\[
\gls{c1b}:=\{f\in\mathcal{C}^{1}(\Omega,E)\;|\;\forall\;\alpha\in\mathfrak{A}:\;
|f|_{\mathcal{C}^{1}_{b}(\Omega),\alpha}:=\sup_{\substack{x\in\Omega\\ \beta\in\N_{0}^{d},|\beta|\leq 1}}p_{\alpha}((\partial^{\beta})^{E}f(x))<\infty\}.
\]

\addtocontents{toc}{\SkipTocEntry}
\section*{Vector-valued holomorphic functions}

A function $f\colon\Omega\to E$ on an open set $\Omega\subset\C$ to an lcHs $E$ over $\C$ 
is called \emph{\gls{holomorphic}} if the limit
\[
 (\partial^{1}_{\C})^{E}f(z):=\lim_{\substack{h\to 0\\ h\in\C,h\neq 0}}\frac{f(z+h)-f(z)}{h},\quad z\in \Omega,
\]
exists in $E$. We denote by $\gls{OE}$ the linear space of holomorphic functions 
$f\colon\Omega\to E$. Defining the vector-valued version of the \emph{\gls{CRoperator}} by 
\[
\gls{dbar_f}:=\frac{1}{2}((\partial^{e_{1}})^{E}+\iu(\partial^{e_{2}})^{E})f
\]
for $f\in\mathcal{C}(\Omega,E)$ such that the partial derivatives $(\partial^{e_{n}})^{E}f$, $n=1,2$, 
exist in $E$, we remark that 
\begin{equation}\label{eq:holomorphic_coincide_0}
\mathcal{O}(\Omega,E)=\{f\in\mathcal{C}(\Omega,E)\;|\;f\in\ker\overline{\partial}^{E}\}
                     =\{f\in\mathcal{C}^{\infty}(\Omega,E)\;|\;f\in\ker\overline{\partial}^{E}\}
\end{equation}
by \cite[Theorem 6.1, p.\ 267]{kruse2019_4} if $E$ is locally complete. 
Further, we set $(\partial^{0}_{\C})^{E}f:=f$ and note that the $(n+1)$-th complex derivative
$(\partial^{n+1}_{\C})^{E}f:=(\partial^{1}_{\C})^{E}(\gls{partial_nC_f_E})$ exists 
for all $n\in\N_{0}$ and $f\in \mathcal{O}(\Omega,E)$ by 
\cite[2.1 Theorem and Definition, p.\ 17--18]{grosse-erdmann1992} and 
\cite[5.2 Theorem, p.\ 35]{grosse-erdmann1992} if $E$ is locally complete. If $E=\C$, we often just write 
$\gls{partial_nC_f}:=(\partial^{n}_{\C})^{\C}f$ for $n\in\N_{0}$ and 
$f\in\mathcal{O}(\Omega):=\mathcal{O}(\Omega,\C)$. 
We note that the real and complex derivatives are related by
\begin{equation}\label{eq:complex.real.deriv}
 (\partial^{\beta})^{E}f(z)=\iu^{\beta_{2}}(\partial^{|\beta|}_{\C})^{E}f(z),\quad z\in\Omega,
\end{equation}
for every $f\in\mathcal{O}(\Omega,E)$ and $\beta=(\beta_{1},\beta_{2})\in\N_{0}^{2}$ 
by \cite[Proposition 7.1, p.\ 270]{kruse2019_4} if $E$ is locally complete.  
\chapter{The \texorpdfstring{$\varepsilon$}{epsilon}-product for weighted function spaces}
\label{chap:linearisation}
\section{\texorpdfstring{$\varepsilon$}{epsilon}-into-compatibility}
\label{sect:eps-prod_into}
In the introduction we already mentioned that linearisations of spaces of vector-valued functions by means of 
$\varepsilon$-products are essential for our approach. Here, one of the important questions 
is which spaces of vector-valued functions can be represented by $\varepsilon$-products. 
Let $\Omega$ be a non-empty set and $E$ an lcHs. If $\F\subset\K^{\Omega}$
is an lcHs such that $\gls{delta_x}\in\F'$ for all $x\in\Omega$, then the map 
\[
S\colon \F\varepsilon E\to E^{\Omega},\;u\longmapsto [x\mapsto u(\delta_{x})],
\]
is well-defined and linear. This leads to the following definition.

\begin{defn}[{$\varepsilon$-into-compatible}]
Let $\Omega$ be a non-empty set and $E$ an lcHs. Let $\F\subset\K^{\Omega}$ and $\FE\subset E^{\Omega}$ 
be lcHs such that $\delta_{x}\in\F'$ for all $x\in\Omega$.
We call the spaces $\F$ and $\FE$ \emph{\gls{eps_into_comp}} if the map
\[
\gls{S}\colon \F\varepsilon E\to \FE,\;u\longmapsto [x\mapsto u(\delta_{x})],
\]
is a well-defined isomorphism into, i.e.\ to its range. 
We call $\F$ and $\FE$ \emph{\gls{eps_comp}} if $S$ is an isomorphism. 
We write $\gls{S_F}$ if we want to emphasise the dependency on $\F$.
\end{defn}

In this section we introduce the weighted space $\FVE$ of $E$-valued functions on $\Omega$ 
as a subspace of sections of domains in $E^{\Omega}$ of linear operators $T^{E}_{m}$ 
equipped with a generalised version of a weighted graph topology. 
This space is the role model for many function spaces and an example for these operators 
are the partial derivative operators.
Then we treat the question whether $\FVE$ and $\FV\varepsilon E$ are $\varepsilon$-into-compatible.
This is deeply connected with the interplay of the pair of operators $(T^{E}_{m},T^{\K}_{m})$ with the map $S$ 
(see \prettyref{def:consist_strong}). In our main theorem of this section we give sufficient conditions such that 
$S\colon\FV\varepsilon E\to\FVE$ is an isomorphism into (see \prettyref{thm:linearisation}). In the next section we provide conditions such that 
$S$ becomes surjective (see \prettyref{thm:full_linearisation}). 
We start with the well-known example $\mathcal{C}^{k}(\Omega,E)$ of $k$-times continuously 
partially differentiable $E$-valued functions to motivate our definition of $\FVE$.

\begin{exa}\label{ex:k_smooth_functions}
Let $k\in\N_{\infty}$ and $\Omega\subset\R^{d}$ be open. Consider the space $\mathcal{C}(\Omega,E)$ 
of continuous functions $f\colon\Omega\to E$ with the \emph{topology} $\gls{tau_c}$ \emph{of compact convergence}, 
i.e.\ the topology given by the seminorms 
\[
\|f\|_{K,\alpha}:=\sup_{x\in K}p_{\alpha}(f(x)),\quad f\in\mathcal{C}(\Omega,E),
\]
for compact $K\subset\Omega$ and $\alpha\in\mathfrak{A}$. 
The usual topology on the space $\mathcal{C}^{k}(\Omega,E)$ of $k$-times continuously partially differentiable 
functions is the graph topology generated by the partial derivative operators 
$
(\partial^{\beta})^{E}\colon \mathcal{C}^{k}(\Omega,E)\to \mathcal{C}(\Omega,E)
$
for $\beta\in\N_{0}^{d}$, $|\beta|\leq k$, i.e.\ the topology given by the seminorms 
\[
\|f\|_{K,\beta,\alpha}:=\max(\|f\|_{K,\alpha},\|(\partial^{\beta})^{E}f\|_{K,\alpha}),
\quad f\in\mathcal{C}^{k}(\Omega,E),
\]
for compact $K\subset\Omega$, $\beta\in\N_{0}^{d}$, $|\beta|\leq k$, and $\alpha\in\mathfrak{A}$. 
The same topology is induced by the directed system of seminorms given by 
\[
 |f|_{K,m,\alpha}:=\sup_{\beta\in\N_{0}^{d},|\beta|\leq m}\|f\|_{K,\beta,\alpha}
 =\sup_{\substack{x\in K\\ \beta\in\N_{0}^{d},|\beta|\leq m}}
 p_{\alpha}\bigl((\partial^{\beta})^{E}f(x)\bigr),\quad f\in\mathcal{C}^{k}(\Omega,E),
\]
for compact $K\subset\Omega$, $m\in\N_{0}$, $m\leq k$, and $\alpha\in\mathfrak{A}$ 
and may also be seen as a weighted topology induced by the family $(\chi_{K})$ of characteristic 
functions of the compact sets $K\subset\Omega$ by writing 
\[
 |f|_{K,m,\alpha}=\sup_{\substack{x\in \Omega\\ \beta\in\N_{0}^{d},|\beta|\leq m}}
 p_{\alpha}\bigl((\partial^{\beta})^{E}f(x)\bigr)\chi_{K}(x),\quad f\in\mathcal{C}^{k}(\Omega,E).
\]
This topology is inherited by linear subspaces of functions having additional properties like being holomorphic 
or harmonic. 
\end{exa}

We turn to the weight functions which we use to define a kind of weighted graph topology.

\begin{defn}[{weight function}]\label{def:weight} 
Let $J$ be a non-empty set and $(\omega_{m})_{m\in M}$ a family of non-empty sets. 
We call $\gls{V}:=(\nu_{j,m})_{j\in J,m\in M}$  
a family of \emph{\gls{weight}s} on $(\omega_{m})_{m\in M}$ if it fulfils 
$\nu_{j,m}\colon \omega_{m}\to [0,\infty)$ 
for all $j\in J$, $m\in M$ and 
\begin{equation}\label{loc3}
  \forall \; m\in M,\, x\in\omega_{m}\;\exists\;j\in J:\;0< \nu_{j,m}(x).
\end{equation}
\end{defn}

From the structure of \prettyref{ex:k_smooth_functions} we arrive at the following definition 
of the weighted spaces of vector-valued functions we want to consider. 

\begin{defn}\label{def:weighted_space}
Let $\Omega$ be a non-empty set, $\mathcal{V}:=(\nu_{j,m})_{j\in J,m\in M}$ 
a family of weight functions on $(\omega_{m})_{m\in M}$ and
$T^{E}_{m}\colon E^{\Omega}\supset\dom T^{E}_{m} \to E^{\omega_{m}}$ a linear map for every $m\in M$. 
Let $\gls{APE}$ be a linear subspace of $E^{\Omega}$ and define the space of intersections 
\[
\gls{FE}:=\operatorname{AP}(\Omega,E)\cap\bigl(\bigcap_{m\in M}\dom T^{E}_{m}\bigr)
\]
as well as
\[
\gls{FVE}:=\bigl\{f\in F(\Omega,E)\;|\; 
 \forall\;j\in J,\, m\in M,\,\alpha\in \mathfrak{A}:\; |f|_{j,m,\alpha}<\infty\bigr\}
\]
where
\[
\gls{f_jma}:=\sup_{x \in \omega_{m}}
p_{\alpha}\bigl(T^{E}_{m}(f)(x)\bigr)\nu_{j,m}(x)
=\sup_{e\in N_{j,m}(f)}p_{\alpha}(e)
\]
with
\[
\gls{Njmf}:=\{T^{E}_{m}(f)(x)\nu_{j,m}(x)\;|\;x\in\omega_{m}\}.
\]
Further, we write $\gls{F_Om}:=F(\Omega,\K)$ and $\gls{FV}:=\mathcal{FV}(\Omega,\K)$. 
If we want to emphasise dependencies, we write $M(E)$ instead of $M$, 
$\gls{APE_FV}$ instead of $\operatorname{AP}(\Omega,E)$
and $\gls{f_FVjma}$ instead of $|f|_{j,m,\alpha}$. 
If $J$, $M$ or $\mathfrak{A}$ are singletons, we omit the index $j$, $m$ resp.\ $\alpha$ in $|f|_{j,m,\alpha}$.
\end{defn}

Note that $\omega_{m}$ need not be a subset of $\Omega$. 
The space $\operatorname{AP}(\Omega,E)$ is a placeholder where we collect 
\emph{additional properties} ($\operatorname{AP}$) 
of our functions not being reflected by the operators $T^{E}_{m}$ which we integrated in the topology. 
However, these additional properties might come from being in the domain or kernel of additional operators, e.g.\  
harmonicity means being in the kernel of the Laplacian.
But often $\operatorname{AP}(\Omega,E)$ can be chosen as $E^{\Omega}$ 
or $\mathcal{C}(\Omega,E)$. 
The space $\FVE$ is locally convex but need not be Hausdorff. 
Since it is easier to work with Hausdorff spaces and a directed family 
of seminorms plus the point evaluation functionals $\delta_{x}\colon \FV\to \K$, 
$f\mapsto f(x)$, for $x\in \Omega$ and their continuity play a big role, we introduce the following definition. 

\begin{defn}[{$\dom$-space and $T^{E}_{m,x}$}]
We call $\FVE$ a \emph{\gls{domspace}} if it is a locally convex Hausdorff space, the system of 
seminorms $(|f|_{j,m,\alpha})_{j\in J, m\in M, \alpha\in\mathfrak{A}}$ 
is directed and, in addition, $\delta_{x}\in \FV'$ for every $x\in \Omega$ if $E=\K$. 
We define the point evaluation of $T^{E}_{m}$ by $\gls{TE_mx}\colon \dom T^{E}_{m} \to E$,
$T^{E}_{m,x}(f):=T^{E}_{m}(f)(x)$, for $m\in M$ and $x\in\omega_{m}$.
\end{defn}

\begin{rem}\fakephantomsection\label{rem:weights_Hausdorff_directed}
\begin{enumerate}
\item[a)] It is easy to see that $\FVE$ is Hausdorff if there is $m\in M$ such that $\omega_{m}=\Omega$ and 
$T^{E}_{m}=\operatorname{id}_{E^{\Omega}}$ since $E$ is Hausdorff. 
\item[b)] If $E=\K$, then $T^{\K}_{m,x}\in\FV'$ for every $m\in M$ and $x\in\omega_{m}$. 
Indeed, for $m\in M$ and $x\in\omega_{m}$ there exists $j\in J$ 
 such that $\nu_{j,m}(x)>0$ by \eqref{loc3},
 implying for every $f\in\FV$ that
 \[ 
 |T^{\K}_{m,x}(f)|=\frac{1}{\nu_{j,m}(x)}|T^{\K}_{m}(f)(x)|\nu_{j,m}(x)
 \leq \frac{1}{\nu_{j,m}(x)}|f|_{j,m}.
 \]
In particular, this implies $\delta_{x}\in \FV'$ for all $x\in \Omega$ if there is $m\in M$ 
such that $\omega_{m}=\Omega$ and $T^{\K}_{m}=\operatorname{id}_{\K^{\Omega}}$. 
\item[c)] Let the family of weight functions $\mathcal{V}$ be \emph{\gls{directed}}, i.e.\
\begin{flalign*}
  \forall \; j_{1},j_{2}\in J,\, &m_{1},m_{2}\in M\;\exists\;j_{3}\in J,\, m_{3}\in M,\,C>0\;
  \forall\;i\in\{1,2\}:\\ 
  &(\omega_{m_{1}}\cup \omega_{m_{2}})\subset \omega_{m_{3}}
   \quad\text{and}\quad 
   \nu_{j_{i},m_{i}}\leq C\nu_{j_{3},m_{3}}.
\end{flalign*} 
Then the system of seminorms $(|f|_{j,m,\alpha})_{j\in J, m\in M, \alpha\in\mathfrak{A}}$ is directed 
if $\mathcal{V}$ is directed and additionally it holds with $m_{i}$, $i\in\{1,2,3\}$, from above that
\[
\forall\;f\in\FVE,\,i\in\{1,2\},\,x\in\omega_{m_{i}}:\; T^{E}_{m_{i}}(f)(x)=T^{E}_{m_{3}}(f)(x),
\]
since the system $(p_{\alpha})_{\alpha\in \mathfrak{A}}$ of $E$ is already directed.
\end{enumerate}
\end{rem}

We point out that the additional condition in \prettyref{rem:weights_Hausdorff_directed} c) is missing in 
\cite[Remark 5 c), p.\ 1516]{kruse2017} (resp.\ \cite[3.5 Remark, p.\ 6]{kruse2017a}), which we correct here. 

For the lcHs $E$ over $\K$ we want to define a natural $E$-valued version of a $\dom$-space 
$\FV=\mathcal{FV}(\Omega,\K)$. The natural $E$-valued version of $\FV$ should be a $\dom$-space $\FVE$ 
such that there is a canonical relation between the families $(T^{\K}_{m})$ and $(T^{E}_{m})$. 
This canonical relation will be explained in terms of their interplay with the map 
\[
S\colon \FV\varepsilon E \to E^{\Omega},\; u\longmapsto [x\mapsto u(\delta_{x})].
\]
Further, the elements of our $E$-valued version $\FVE$ of $\FV$ should be compatible with a weak definition 
in the sense that $e'\circ f\in\FV$ should hold for every $e'\in E'$ and $f\in\FVE$.

\begin{defn}[{generator, consistent, strong}]\label{def:consist_strong} 
Let $\FV$ and $\FVE$ be $\dom$-spaces such that $M:=M(\K)=M(E)$.
\begin{enumerate}
 \item [a)] We call $(T^{E}_{m},T^{\K}_{m})_{m\in M}$ a \emph{\gls{generator}} for $(\FV,E)$, 
 in short, $(\mathcal{FV},E)$.
 \item [b)] We call $(T^{E}_{m},T^{\K}_{m})_{m\in M}$ \emph{\gls{consistent}} if we have
 for all $u\in\FV\varepsilon E$ that $S(u)\in F(\Omega,E)$ and 
 \[
 \forall\;m\in M,\,x\in\omega_{m}:\;\bigl(T^{E}_{m}S(u)\bigr)(x)=u(T^{\K}_{m,x}).
 \]
 \item[c)] We call $(T^{E}_{m},T^{\K}_{m})_{m\in M}$ \emph{\gls{strong}} if we have 
 for all $e'\in E'$, $f\in \FVE$ that $e'\circ f\in F(\Omega)$ and 
 \[
   \forall\; m\in M,\,x\in\omega_{m}:\;T^{\K}_{m}(e'\circ f)(x)=\bigl(e'\circ T^{E}_{m}(f)\bigr)(x). 
   \]
\end{enumerate}
\end{defn}

More precisely, $T^{\K}_{m,x}$ in b) means the restriction of $T^{\K}_{m,x}$ 
to $\FV$ and the term $u(T^{\K}_{m,x})$ is well-defined by \prettyref{rem:weights_Hausdorff_directed} b). 
Consistency will guarantee that the map $S\colon \FV\varepsilon E\to \FVE$ is a well-defined isomorphism into,
i.e.\ $\varepsilon$-into-compatibility, and strength will help us to prove its surjectivity 
under some additional assumptions on $\FV$ and $E$.
Let us come to a lemma which describes the topology of $\FV\varepsilon E$ in terms of the operators 
$T^{\K}_{m}$ with $m\in M$. 
It was the motivation for the definition of consistency and allows us to consider $\FV\varepsilon E$ as a 
topological subspace of $\FVE$ via $S$, assuming consistency. 

\begin{lem}\label{lem:topology_eps}
Let $\FV$ be a $\dom$-space. Then the topology of $\FV\varepsilon E$ is given by the system of seminorms 
defined by
	\[
	\|u\|_{j,m,\alpha}:=\sup_{x\in\omega_{m}}p_{\alpha}\bigl(u(T^{\K}_{m,x})\bigr)\nu_{j,m}(x),
	\quad u\in \FV\varepsilon E,
	\]
	for $j\in J$, $m\in M$ and $\alpha\in \mathfrak{A}$.
\end{lem}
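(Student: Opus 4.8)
The plan is to read off the standard seminorms that define the $\varepsilon$-product topology and identify them with the $\|\cdot\|_{j,m,\alpha}$. By definition $\FV\varepsilon E=L_{e}(\FV'_{\kappa},E)$ carries the topology of uniform convergence on the equicontinuous subsets of $\FV'$, so a fundamental system of seminorms is given by $u\mapsto\sup_{y\in H}p_{\alpha}(u(y))$, where $H$ runs through a fundamental system of equicontinuous subsets of $\FV'$ and $\alpha\in\mathfrak{A}$. Since $\FV$ is a $\dom$-space, the directed system $(|\cdot|_{j,m})_{j\in J,m\in M}$ is a fundamental system of continuous seminorms, whence the sets $U_{j,m}:=\{f\in\FV\;|\;|f|_{j,m}\leq 1\}$ form a base of $0$-neighbourhoods and their polars $U_{j,m}^{\circ}$, up to positive scalar multiples, a fundamental system of equicontinuous subsets of $\FV'$. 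Thus it suffices to prove, for fixed $j,m,\alpha$, that
\[
\sup_{y\in U_{j,m}^{\circ}}p_{\alpha}(u(y))=\|u\|_{j,m,\alpha},\quad u\in\FV\varepsilon E.
\]

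First I would describe $U_{j,m}^{\circ}$ explicitly. Put $D_{j,m}:=\{\nu_{j,m}(x)T^{\K}_{m,x}\;|\;x\in\omega_{m}\}\subset\FV'$, which is indeed a subset of $\FV'$ by \prettyref{rem:weights_Hausdorff_directed} b). Straight from the definition of the seminorm one has $|f|_{j,m}=\sup_{x\in\omega_{m}}|T^{\K}_{m}(f)(x)|\nu_{j,m}(x)=\sup_{y\in D_{j,m}}|y(f)|$ for all $f\in\FV$, so that $U_{j,m}=D_{j,m}^{\circ}$. The bipolar theorem then yields $U_{j,m}^{\circ}=(D_{j,m}^{\circ})^{\circ}=\oacx^{\sigma(\FV',\FV)}(D_{j,m})$, the $\sigma(\FV',\FV)$-closed absolutely convex hull of $D_{j,m}$, which is $\sigma(\FV',\FV)$-compact by the Alaoglu--Bourbaki theorem.

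The asserted equality now splits into two inequalities. Since $D_{j,m}\subset U_{j,m}^{\circ}$, the estimate $\|u\|_{j,m,\alpha}=\sup_{y\in D_{j,m}}p_{\alpha}(u(y))\leq\sup_{y\in U_{j,m}^{\circ}}p_{\alpha}(u(y))$ is immediate. For the converse I would consider $g:=p_{\alpha}\circ u$ on $\FV'$; as a composition of a seminorm with a linear map it is again a seminorm, so $\sup_{\acx(D_{j,m})}g=\sup_{D_{j,m}}g$. What remains is to pass to the $\sigma(\FV',\FV)$-closure without enlarging the supremum, i.e.\ to show that $g$ is $\sigma(\FV',\FV)$-continuous on $U_{j,m}^{\circ}$. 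This is the point where the main work lies: although $u$ is only assumed continuous from $\FV'_{\kappa}$ to $E$, on the equicontinuous set $U_{j,m}^{\circ}$ the topology $\kappa$ of uniform convergence on absolutely convex compact subsets of $\FV$ coincides with $\sigma(\FV',\FV)$ — the standard fact that all these polar topologies agree on equicontinuous sets. Hence $u|_{U_{j,m}^{\circ}}$, and therefore $g|_{U_{j,m}^{\circ}}$, is $\sigma(\FV',\FV)$-continuous. As $\acx(D_{j,m})$ is $\sigma(\FV',\FV)$-dense in $U_{j,m}^{\circ}=\oacx^{\sigma(\FV',\FV)}(D_{j,m})$, continuity gives $\sup_{U_{j,m}^{\circ}}g=\sup_{\acx(D_{j,m})}g=\sup_{D_{j,m}}g=\|u\|_{j,m,\alpha}$, completing the argument.

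I expect the sole genuine obstacle to be that last step, namely justifying that forming the $\sigma(\FV',\FV)$-closed absolutely convex hull does not increase $\sup p_{\alpha}(u(\cdot))$. The only reason this succeeds is the coincidence of the $\kappa$- and $\sigma(\FV',\FV)$-topologies on equicontinuous subsets of $\FV'$, which upgrades the mere $\kappa$-continuity of $u$ to $\sigma(\FV',\FV)$-continuity on $U_{j,m}^{\circ}$; everything else reduces to the bipolar theorem together with the elementary behaviour of seminorms under absolutely convex hulls.
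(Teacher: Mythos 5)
Your proof is correct and follows essentially the same route as the paper: identify $U_{j,m}^{\circ}$ as the bipolar of $D_{j,m}$, reduce the supremum to $\acx(D_{j,m})$ by continuity and density, and collapse it to $D_{j,m}$ using that $p_{\alpha}\circ u$ is a seminorm. The only cosmetic difference is that the paper forms the closed absolutely convex hull directly in $\kappa(\FV',\FV)$ and uses the $\kappa$-continuity of $u$ outright, whereas you detour through $\sigma(\FV',\FV)$ and the coincidence of the two topologies on equicontinuous sets; both are valid since $\kappa(\FV',\FV)$ is a topology of the dual pair.
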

\begin{proof}
 We define the sets $D_{j,m}:=\{T^{\K}_{m,x}(\cdot)\nu_{j,m}(x)\;|\;x\in\omega_{m}\}$ 
 and $B_{j,m}:=\{f\in\FV\;|\;|f|_{j,m}\leq 1\}$ for every $j\in J$ and $m\in M$.
 We claim that $\acx(D_{j,m})$ is dense in the polar $B^{\circ}_{j,m}$ 
 with respect to $\kappa(\FV',\FV)$.
 The observation 
 \begin{align*}
 D_{j,m}^{\circ}
 &=\{T^{\K}_{m,x}(\cdot)\nu_{j,m}(x)\;|\;x\in\omega_{m}\}^{\circ}\\
 &=\{f\in\FV\;|\;\forall x\in\omega_{m}: |T^{\K}_{m}(f)(x)|
 \nu_{j,m}(x)\leq 1\}\\
 &=\{f\in\FV\;|\; |f|_{j,m}\leq 1\}
 =B_{j,m}
 \end{align*}
 yields
\[
 \oacx(D_{j,m})^{\kappa(\FV',\FV)}
=(D_{j,m})^{\circ\circ}=B^{\circ}_{j,m}
\]
by the bipolar theorem. 
By \cite[8.4, p.\ 152, 8.5, p.\ 156--157]{Jarchow} the system of seminorms defined by 
\[
q_{j,m,\alpha}(u):=\sup_{y\in B_{j,m}^{\circ}}p_{\alpha}\bigl(u(y)\bigr),\quad u\in \FV\varepsilon E,
\]
for $j\in J$, $m\in M$ and $\alpha\in \mathfrak{A}$ gives the topology on $\FV\varepsilon E$ (here it is used 
that the system of seminorms $(|\cdot|_{j,m})$ of $\FV$ is directed). 
As every $u\in\FV\varepsilon E$ is continuous on $B_{j,m}^{\circ}$, we may replace $B_{j,m}^{\circ}$ 
by a $\kappa(\FV',\FV)$-dense subset. Therefore we obtain
\[
q_{j,m,\alpha}(u)=\sup\bigl\{p_{\alpha}\bigl(u(y)\bigr)\;|\;
y\in\acx(D_{j,m})\bigr\}.
\]
For $y\in\acx(D_{j,m})$ there are $n\in\N$, $\lambda_{k}\in\K$, $x_{k}\in\omega_{m}$, $1\leq k\leq n$, with
$\sum^{n}_{k=1}{|\lambda_{k}|}\leq 1$ such that 
$y=\sum^{n}_{k=1}\lambda_{k}T^{\K}_{m,x_{k}}(\cdot)\nu_{j,m}(x_{k})$. 
Then we have for every $u\in\FV\varepsilon E$
\[
     p_{\alpha}\bigl(u(y)\bigr)
\leq \sum^{n}_{k=1}|\lambda_{k}|p_{\alpha}\bigl(u(T^{\K}_{m,x_{k}})\bigr)\nu_{j,m}(x_{k})
\leq \|u\|_{j,m,\alpha},
\]
thus $q_{j,m,\alpha}(u)\leq \|u\|_{j,m,\alpha}$. On the other hand, we derive
\[
q_{j,m,\alpha}(u)\geq\sup_{y\in D_{j,m}}p_{\alpha}\bigl(u(y)\bigr)
=\sup_{x\in\omega_{m}}p_{\alpha}\bigl(u(T^{\K}_{m,x})\bigr)\nu_{j,m}(x)=\|u\|_{j,m,\alpha}.\qedhere
\]
\end{proof}

Let us turn to a more general version of \prettyref{ex:k_smooth_functions}, namely, 
to weighted spaces of $k$-times continuously partially differentiable functions and 
kernels of linear partial differential operators in these spaces.

\begin{exa}\label{ex:weighted_smooth_functions}
Let $k\in\N_{\infty}$ and $\Omega\subset\R^{d}$ be open. 
We consider the cases
\begin{enumerate}
\item [(i)] $\omega_{m}:=M_{m}\times\Omega$ with $\gls{M_m}:=\{\beta\in\N_{0}^{d}\;|\;|\beta|\leq \min(m,k)\}$ 
for all $m\in\N_{0}$, or
\item [(ii)] $\omega_{m}:=\N_{0}^{d}\times\Omega$ for all $m\in\N_{0}$ and $k=\infty$,
\end{enumerate}
and let $\mathcal{V}^{k}:=(\nu_{j,m})_{j\in J, m\in\N_{0}}$ be a directed family 
of weights on $(\omega_{m})_{m\in\N_{0}}$. 

a) We define the weighted space of $k$-times continuously partially differentiable functions with values 
in an lcHs $E$ as
\[
 \gls{CV_k_OE}:=\{f\in\mathcal{C}^{k}(\Omega,E)\;|\;\forall\;j\in J,\,m\in\N_{0},\,
 \alpha\in\mathfrak{A}:\;|f|_{j,m,\alpha}<\infty\} 
\]
where 
\[
 |f|_{j,m,\alpha}:=\sup_{(\beta,x)\in\omega_{m}}
 p_{\alpha}\bigl((\partial^{\beta})^{E}f(x)\bigr)\nu_{j,m}(\beta,x).
\]
Setting $\dom T^{E}_{m}:=\mathcal{C}^{k}(\Omega,E)$ and 
\[
 T^{E}_{m}\colon\mathcal{C}^{k}(\Omega,E)\to E^{\omega_{m}},\; 
 f\longmapsto [(\beta,x)\mapsto (\partial^{\beta})^{E}f(x)], 
\]
as well as $\operatorname{AP}(\Omega,E):=E^{\Omega}$, we observe that $\mathcal{CV}^{k}(\Omega,E)$ 
is a $\dom$-space by \prettyref{rem:weights_Hausdorff_directed} and 
\[
 |f|_{j,m,\alpha}=\sup_{x\in\omega_{m}}p_{\alpha}\bigl(T^{E}_{m}f(x)\bigr)\nu_{j,m}(x).
\]

b) The space $\mathcal{C}^{k}(\Omega,E)$ with its usual topology given in \prettyref{ex:k_smooth_functions} 
is a special case of a)(i) with $J:=\{K\subset\Omega\;|\;K\;\text{compact}\}$, 
$\nu_{K,m}(\beta,x):=\chi_{K}(x)$, $(\beta,x)\in\omega_{m}$, for all $m\in\N_{0}$ and $K\in J$ 
where $\chi_{K}$ is the characteristic function of $K$.  
In this case we write $\gls{W_k}:=\mathcal{V}^{k}$ for the family of weight functions.

c) The \emph{\gls{Schwartz_space}} is defined by
\[
\gls{SRE}
:=\{f\in \mathcal{C}^{\infty}(\R^{d},E)\;|\;
\forall\;m\in\N_{0},\,\alpha\in \mathfrak{A}:\;|f|_{m,\alpha}<\infty\}
\]
where 
\[
 |f|_{m,\alpha}:=\sup_{\substack{x\in\R^{d}\\ \beta\in\N_{0}^{d},|\beta|\leq m}}
 p_{\alpha}\bigl((\partial^{\beta})^{E}f(x)\bigr)(1+|x|^{2})^{m/2}.
\]
This is a special case of a)(i) with $k:=\infty$, $\Omega:=\R^{d}$, $J:=\{1\}$ and 
$\nu_{1,m}(\beta,x):=(1+|x|^{2})^{m/2}$, $(\beta,x)\in\omega_{m}$, for all $m\in\N_{0}$.

d) The \emph{\gls{multiplier_space}} for the Schwartz space is defined by 
\[
\gls{O_MRE}
:=\{f\in \mathcal{C}^{\infty}(\R^{d},E)\;|\;\forall\;g\in\mathcal{S}(\R^{d}),\,
m\in\N_{0},\,\alpha\in \mathfrak{A}:\;\|f\|_{g,m,\alpha}<\infty\}
\]
where
\[
 \|f\|_{g,m,\alpha}:=\sup_{\substack{x\in\mathbb{R}^{d}\\
  \beta\in\N_{0}^{d},|\beta|\leq m}}
 p_{\alpha}\bigl((\partial^{\beta})^{E}f(x)\bigr)|g(x)|
\]
(see \cite[$4^{0}$), p.\ 97]{Schwartz1955}).
This is a special case of a)(i) with $k:=\infty$, $\Omega:=\R^{d}$, 
$J:=\{j\subset\mathcal{S}(\R^{d})\;|\; j\;\text{finite}\}$ and 
$\nu_{j,1,m}(\beta,x):=\max_{g\in j}|g(x)|$, 
$(\beta,x)\in\omega_{m}$, for all $m\in\N_{0}$. This choice of $J$ guarantees 
that the family $\mathcal{V}^{\infty}$ is directed and does not change the topology.

e) Let $\mathfrak{K}:=\{K\subset\Omega\;|\;K\;\text{compact}\}$ 
and $(M_{p})_{p\in\N_{0}}$ be a sequence of positive real numbers. 
The space $\gls{EMpOE}$ of \emph{\gls{ultradifferentiable} functions of class 
$(M_{p})$ of Beurling-type} is defined as 
\[
\mathcal{E}^{(M_{p})}(\Omega,E)
:=\{f\in \mathcal{C}^{\infty}(\Omega,E)\;|\;\forall\;K\in\mathfrak{K},\,h>0,\,\alpha\in \mathfrak{A}:\;
|f|_{(K,h),\alpha}<\infty\}
\]
where 
\[
 |f|_{(K,h),\alpha}:=\sup_{\substack{x\in K \\ \beta\in\N_{0}^{d}}}
 p_{\alpha}\bigl((\partial^{\beta})^{E}f(x)\bigr)\frac{1}{h^{|\beta|}M_{|\beta|}}.
\]
This is a special case of a)(ii) with $J:=\mathfrak{K}\times\R_{>0}$ and 
$\nu_{(K,h),m}(\beta,x):=\chi_{K}(x)\frac{1}{h^{|\beta|}M_{|\beta|}}$, $(\beta,x)\in\omega_{m}$, 
for all $(K,h)\in J$ and $m\in\N_{0}$ where $\R_{>0}:=(0,\infty)$.

f) Let $\mathfrak{K}$ and $(M_{p})_{p\in\N_{0}}$ be as in e). 
The space $\gls{EM_pOE}$ of \emph{ultradifferentiable functions of class 
$\{M_{p}\}$ of Roumieu-type} is defined as
\[
\mathcal{E}^{\{M_{p}\}}(\Omega,E)
:=\{f\in \mathcal{C}^{\infty}(\Omega,E)\;|\;\forall\;(K,H)\in J,\,\alpha\in \mathfrak{A}:\;
|f|_{(K,H),\alpha}<\infty\}
\]
where 
\[
J:=\mathfrak{K}\times\{H=(H_{n})_{n\in\N}\;|\;\exists\;(h_{k})_{k\in\N},\,h_{k}>0,\,h_{k}\nearrow\infty
\;\forall\;n\in\N:\;
H_{n}=h_{1}\cdot\ldots\cdot h_{n}\}
\]
and
\[
 |f|_{(K,H),\alpha}:=\sup_{\substack{x\in K\\ \beta\in\N_{0}^{d}}}
 p_{\alpha}\bigl((\partial^{\beta})^{E}f(x)\bigr)\frac{1}{H_{|\beta|}M_{|\beta|}}
\]
(see \cite[Proposition 3.5, p.\ 675]{Kom9}). Again, this is a special case of a)(ii) with 
$\nu_{(K,H),m}(\beta,x):=\chi_{K}(x)\frac{1}{H_{|\beta|}M_{|\beta|}}$, $(\beta,x)\in\omega_{m}$, 
for all $(K,H)\in J$ and $m\in\N_{0}$.

g) Let $n\in\N$, $\beta_{i}\in\N_{0}^{d}$ with $|\beta_{i}|\leq k$ and 
$a_{i}\colon\Omega\to\K$ for $1\leq i\leq n$. We set 
\[
 P(\partial)^{E}\colon \mathcal{C}^{k}(\Omega,E)\to E^{\Omega},\;
 P(\partial)^{E}(f)(x):=\sum_{i=1}^{n}a_{i}(x)(\partial^{\beta_{i}})^{E}(f)(x)
\]
and obtain the (topological) subspace of $\mathcal{CV}^{k}(\Omega,E)$ given by 
\[
 \gls{CVP_k_OE}:=\{f\in\mathcal{CV}^{k}(\Omega,E)\;|\;f\in\ker P(\partial)^{E}\}.
\]
Choosing $\operatorname{AP}(\Omega,E):=\ker P(\partial)^{E}$, we see that this is also a $\dom$-space by a). 
If $P(\partial)^{E}$ is the Cauchy--Riemann operator (and $E$ locally complete) or the Laplacian, 
we obtain the weighted space of holomorphic resp.\ harmonic functions. 
\end{exa}

Let us show that the generators of these spaces are strong and consistent.
In order to obtain consistency for their generators 
we have to restrict to directed families of weights  
which are \emph{\gls{loc_b_away_0}} on $\Omega$, i.e.\ 
\[
 \forall\;K\subset\Omega\;\text{compact},\,m\in\N_{0}\;\exists\; 
 j\in J\;\forall\;\beta\in\N_{0}^{d},\,|\beta|\leq\min(m,k):\;\inf_{x\in K}\nu_{j,m}(\beta,x)>0.
\]
This condition on $\mathcal{V}^{k}$ guarantees that the map 
$I\colon\mathcal{CV}^{k}(\Omega)\to\mathcal{CW}^{k}(\Omega)$, 
$f\mapsto f$, is continuous which is needed for consistency. 

\begin{prop}\label{prop:weighted_diff_strong_cons}
Let $E$ be an lcHs, $k\in\N_{\infty}$, $\mathcal{V}^{k}$ be a directed family of weights 
which is locally bounded away from zero on an open set $\Omega\subset\R^{d}$. 
The generator of $(\mathcal{CV}^{k},E)$ resp.\ $(\mathcal{CV}^{k}_{P(\partial)},E)$ 
from \prettyref{ex:weighted_smooth_functions} 
is strong and consistent if $\mathcal{CV}^{k}(\Omega)$ resp.\ 
$\mathcal{CV}^{k}_{P(\partial)}(\Omega$) is barrelled.
\end{prop}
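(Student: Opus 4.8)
The plan is to verify the two requirements of \prettyref{def:consist_strong} separately, disposing of strength by an elementary argument and concentrating the work on consistency, where the barrelledness hypothesis enters. For strength, fix $e'\in E'$ and $f\in\mathcal{CV}^{k}(\Omega,E)$ (resp.\ $f\in\mathcal{CV}^{k}_{P(\partial)}(\Omega,E)$). Since $e'$ is linear and continuous it commutes with the difference quotients defining the partial derivatives, so an induction on $|\beta|\le k$ gives $e'\circ f\in\mathcal{C}^{k}(\Omega)$ together with $(\partial^{\beta})^{\K}(e'\circ f)(x)=e'\bigl((\partial^{\beta})^{E}f(x)\bigr)$ for all $x\in\Omega$; rewritten via $T^{E}_{m},T^{\K}_{m}$ from \prettyref{ex:weighted_smooth_functions} this is exactly the identity $T^{\K}_{m}(e'\circ f)=e'\circ T^{E}_{m}(f)$ on each $\omega_{m}$. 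In the kernel case one additionally notes $P(\partial)(e'\circ f)(x)=e'\bigl(P(\partial)^{E}f(x)\bigr)=0$, so $e'\circ f\in\ker P(\partial)$ and hence $e'\circ f\in F(\Omega)$. No completeness or barrelledness is needed here.

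For consistency the idea is to linearise the differentiation itself. I would first show that the map $\delta\colon\Omega\to\FV'$, $x\mapsto\delta_{x}$, is $k$-times continuously differentiable as a map into $\FV'_{\kappa}$, with partial derivatives $(\partial^{\beta})\delta_{x}=T^{\K}_{m,(\beta,x)}$, the functional $g\mapsto(\partial^{\beta})^{\K}g(x)$, which belongs to $\FV'$ by \prettyref{rem:weights_Hausdorff_directed} b). Once this is established, for $u\in\FV\varepsilon E=L_{e}(\FV'_{\kappa},E)$ the composition $S(u)=u\circ\delta$ is $\mathcal{C}^{k}$, being the composition of a $\mathcal{C}^{k}$-map into $\FV'_{\kappa}$ with the continuous linear operator $u$; this already gives $S(u)\in\mathcal{C}^{k}(\Omega,E)$ and, by the chain rule for continuous linear maps, $(\partial^{\beta})^{E}S(u)(x)=u\bigl(T^{\K}_{m,(\beta,x)}\bigr)$, which is precisely the consistency identity $\bigl(T^{E}_{m}S(u)\bigr)(\beta,x)=u(T^{\K}_{m,(\beta,x)})$. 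In the kernel case one moreover checks $S(u)\in\operatorname{AP}(\Omega,E)=\ker P(\partial)^{E}$: from the identity just proved, $P(\partial)^{E}S(u)(x)=\sum_{i=1}^{n}a_{i}(x)u(T^{\K}_{m,(\beta_{i},x)})=u\bigl(\sum_{i=1}^{n}a_{i}(x)T^{\K}_{m,(\beta_{i},x)}\bigr)$, and the functional in the argument sends $g\in\mathcal{CV}^{k}_{P(\partial)}(\Omega)$ to $P(\partial)g(x)=0$, hence vanishes on $\FV$, so $P(\partial)^{E}S(u)=0$ and $S(u)\in F(\Omega,E)$.

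The differentiability of $\delta$ is the technical heart, and this is where barrelledness is used. To produce $(\partial^{e_{n}})\delta_{x}$ I would study the difference quotients $\phi_{h}:=h^{-1}(\delta_{x+he_{n}}-\delta_{x})\in\FV'$. For each fixed $g\in\FV\subset\mathcal{C}^{k}(\Omega)$ we have $\phi_{h}(g)\to(\partial^{e_{n}})^{\K}g(x)$, so $\phi_{h}\to T^{\K}_{m,(e_{n},x)}$ in $\sigma(\FV',\FV)$, the limit lying in $\FV'$ by \prettyref{rem:weights_Hausdorff_directed} b). The family $\{\phi_{h}\}$ is pointwise bounded, so the barrelledness of $\FV$ and the Banach--Steinhaus theorem make it equicontinuous. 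On an equicontinuous subset of $\FV'$ the topologies $\sigma(\FV',\FV)$ and $\kappa(\FV',\FV)$ coincide, whence $\phi_{h}\to T^{\K}_{m,(e_{n},x)}$ in $\FV'_{\kappa}$. The same device shows $x\mapsto T^{\K}_{m,(\beta,x)}$ is continuous into $\FV'_{\kappa}$: for a compact neighbourhood $K'$ of $x$ the set $\{T^{\K}_{m,(\beta,y)}\;|\;y\in K'\}$ is pointwise bounded, hence equicontinuous, and $\sigma$-continuity of $y\mapsto T^{\K}_{m,(\beta,y)}$ (clear, since $y\mapsto(\partial^{\beta})^{\K}g(y)$ is continuous) upgrades to $\kappa$-continuity there. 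Iterating up to order $k$ yields $\delta\in\mathcal{C}^{k}(\Omega,\FV'_{\kappa})$; here the local boundedness away from zero of $\mathcal{V}^{k}$ enters through the continuity of $I\colon\mathcal{CV}^{k}(\Omega)\to\mathcal{CW}^{k}(\Omega)$ recorded before the proposition, which keeps the weighted topology finer than the $\mathcal{C}^{k}$-topology so that the pointwise limits are taken in the right space.

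I expect the main obstacle to be exactly this differentiation step, namely upgrading the merely pointwise convergence of the difference quotients $\phi_{h}$ to convergence in $\FV'_{\kappa}$. This is where barrelledness is indispensable, via Banach--Steinhaus turning pointwise boundedness into equicontinuity; the remaining care is to run the induction on $|\beta|\le k$ so that every partial derivative of $\delta$ is not only shown to exist but to be continuous as a map into $\FV'_{\kappa}$, since continuity of all derivatives is what promotes $S(u)$ from $k$-times differentiable to genuinely $\mathcal{C}^{k}$.
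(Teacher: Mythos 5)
Your proposal is correct and follows essentially the same route as the paper: strength is disposed of by composing with functionals, and consistency rests on the same key mechanism, namely using barrelledness and the Banach--Steinhaus theorem to upgrade the pointwise ($\sigma(\FV',\FV)$-) convergence of the difference quotients of $x\mapsto\delta_{x}\circ(\partial^{\widetilde{\beta}})^{\K}$ to convergence in $\FV_{\kappa}'$, with the local boundedness away from zero of $\mathcal{V}^{k}$ supplying the continuous inclusion into $\mathcal{CW}^{k}(\Omega)$ and the kernel case handled by the same computation $P(\partial)^{E}S(u)(x)=u(\delta_{x}\circ P(\partial)^{\K})=0$. The only (harmless) difference is organisational: you establish $\delta\in\mathcal{C}^{k}(\Omega,\FV_{\kappa}')$ first and apply the continuous linear map $u$ once at the end, whereas the paper's \prettyref{prop:diff_cons_barrelled} interleaves the application of $u$ with the induction on the order of differentiation of $S(u)$.
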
 
\begin{proof}
We recall the definitions from \prettyref{ex:weighted_smooth_functions}. 
We have $\omega_{m}:=M_{m}\times\Omega$ with $M_{m}:=\{\beta\in\N_{0}^{d}\;|\;|\beta|\leq\min(m,k)\}$ 
for all $m\in\N_{0}$ or $\omega_{m}:=\N_{0}^{d}\times\Omega$ for all $m\in\N_{0}$. 
Further, $\operatorname{AP}_{\mathcal{CV}^{k}}(\Omega,E)=E^{\Omega}$, 
$\operatorname{AP}_{\mathcal{CV}^{k}_{P(\partial)}}(\Omega,E)=\ker P(\partial)^{E}$, 
$\dom T^{E}_{m}:=\mathcal{C}^{k}(\Omega,E)$ and 
\[
 T^{E}_{m}\colon\mathcal{C}^{k}(\Omega,E)\to E^{\omega_{m}},\; 
 f\longmapsto [(\beta,x)\mapsto (\partial^{\beta})^{E}f(x)], 
\]
for all $m\in\N_{0}$ and the same with $\K$ instead of $E$. 
The family $(T^{E}_{m},T^{\K}_{m})_{m\in\N_{0}}$ is a strong generator for $(\mathcal{CV}^{k},E)$ because 
\[
 (\partial^{\beta})^{\K}(e'\circ f)(x)=e'\bigl((\partial^{\beta})^{E}f(x)\bigr),\quad (\beta,x)\in\omega_{m},
\]
for all $e'\in E'$, $f\in\mathcal{CV}^{k}(\Omega,E)$ and $m\in\N_{0}$ due to the linearity and 
continuity of $e'\in E'$. 
In addition, $e'\circ f\in\ker P(\partial)^{\K}$ for all $e'\in E'$ and 
$f\in\mathcal{CV}^{k}_{P(\partial)}(\Omega,E)$, 
which implies that $(T^{E}_{m},T^{\K}_{m})_{m\in\N_{0}}$ is also a strong generator 
for $(\mathcal{CV}^{k}_{P(\partial)},E)$.

For consistency we need to prove that 
\[
(\partial^{\beta})^{E}S(u)(x)=u(\delta_{x}\circ(\partial^{\beta})^{\K}), \quad (\beta,x)\in\omega_{m},
\]
for all $u\in\mathcal{CV}^{k}(\Omega)\varepsilon E$ resp.\ 
$u\in\mathcal{CV}^{k}_{P(\partial)}(\Omega)\varepsilon E$. 
This follows from the subsequent \prettyref{prop:diff_cons_barrelled} b) since 
$\FV=\mathcal{CV}^{k}(\Omega)$ resp.\ $\FV=\mathcal{CV}^{k}_{P(\partial)}(\Omega)$ is barrelled and 
$\mathcal{V}^{k}$ locally bounded away from zero on $\Omega$.
Thus $(T^{E}_{m},T^{\K}_{m})_{m\in\N_{0}}$ is a consistent generator for $(\mathcal{CV}^{k},E)$. In addition, 
we have with $P(\partial)^{E}$ from \prettyref{ex:weighted_smooth_functions} g) that
\begin{align}\label{eq:cons_partial_diff}
  P(\partial)^{E}(S(u))(x)
&=\sum_{i=1}^{n}a_{i}(x)(\partial^{\beta_{i}})^{E}(S(u))(x)
 =u\bigl(\sum_{i=1}^{n}a_{i}(x)(\delta_{x}\circ (\partial^{\beta_{i}})^{\K})\bigr)\nonumber\\
&=u(\delta_{x}\circ P(\partial)^{\K})=0,\quad x\in\Omega,
\end{align}
for every $u\in \mathcal{CV}^{k}_{P(\partial)}(\Omega)\varepsilon E$. This yields $S(u)\in\ker P(\partial)^{E}$ 
for all $u\in \mathcal{CV}^{k}_{P(\partial)}(\Omega)\varepsilon E$. 
Therefore $(T^{E}_{m},T^{\K}_{m})_{m\in\N_{0}}$ is a consistent generator for 
$(\mathcal{CV}^{k}_{P(\partial)},E)$ as well.
\end{proof}

Let us turn to the postponed part in the proof of consistency. 
We denote by $\mathcal{CW}(\Omega)$ the space of scalar-valued continuous functions on 
an open set $\Omega\subset\R^{d}$ with the topology of uniform convergence on compact subsets, 
i.e.\ the weighted topology given by the family of weights
$\mathcal{W}:=\mathcal{W}^{0}:=\{\chi_{K}\;|\;K\subset\Omega\;\text{compact}\}$, and we set 
$\delta(x):=\delta_{x}$ for $x\in\Omega$.

\begin{prop}\label{prop:diff_cons_barrelled}
Let $\Omega\subset\R^{d}$ be open, $k\in\N_{\infty}$ and $\FV$ a $\dom$-space. 
\begin{enumerate}
\item[a)] If $T\in L(\FV,\mathcal{CW}(\Omega))$, then $\delta\circ T\in\mathcal{C}(\Omega,\FV_{\gamma}')$. 
\item[b)] If $T\in L(\FV,\mathcal{CW}^{1}(\Omega))$ and $\FV$ is barrelled, then 
\begin{align*}
  (\partial^{e_{n}})^{\FV_{\kappa}'}(\delta\circ T)(x)
&=\lim_{h\to 0}\frac{\delta_{x+he_{n}}\circ T-\delta_{x}\circ T}{h}\\
& =\delta_{x}\circ (\partial^{e_{n}})^{\K}\circ T,\quad x\in\Omega,\;1\leq n\leq d,
\end{align*}
and $\delta\circ T\in\mathcal{C}^{1}(\Omega,\FV_{\kappa}')$.
\item[c)] If the inclusion $I\colon\FV\to\mathcal{CW}^{k}(\Omega)$, $f\mapsto f$, 
is continuous and $\FV$ barrelled, then $S(u)\in\mathcal{C}^{k}(\Omega,E)$ and 
\[
(\partial^{\beta})^{E}S(u)(x)=u(\delta_{x}\circ(\partial^{\beta})^{\K}),\quad 
\beta\in\N_{0}^{d},\;|\beta|\leq k,\; x\in\Omega,
\]
for all $u\in\FV\varepsilon E$. 
\end{enumerate}
\end{prop}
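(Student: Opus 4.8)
The plan is to establish the three parts in order, deriving (b) from (a) and (c) from (b); barrelledness will enter only through the Banach--Steinhaus theorem in (b) and (c). For part (a), fix $x_{0}\in\Omega$ and a precompact set $A\subset\FV$. Because $(\delta_{x}\circ T)(f)=T(f)(x)$, continuity of $\delta\circ T$ at $x_{0}$ into $\FV_{\gamma}'$ is the statement that $\sup_{f\in A}|T(f)(x)-T(f)(x_{0})|\to0$ as $x\to x_{0}$. Since $T\in L(\FV,\mathcal{CW}(\Omega))$, the image $T(A)$ is precompact, hence relatively compact, in the Fr\'echet space $\mathcal{CW}(\Omega)$, so by Arzel\`a--Ascoli it is equicontinuous; the required limit is precisely equicontinuity of $T(A)$ at $x_{0}$. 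As every absolutely convex compact set is precompact, the topology of $\FV_{\kappa}'$ is coarser than that of $\FV_{\gamma}'$, so $\delta\circ T$ is also continuous into $\FV_{\kappa}'$.

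For part (b), put $R:=(\partial^{e_{n}})^{\K}\circ T\in L(\FV,\mathcal{CW}(\Omega))$ and $L:=\delta_{x}\circ R\in\FV'$. Each $T(f)$ lies in $\mathcal{C}^{1}(\Omega)$, so the scalar difference quotient
\[
D_{h}(f):=\frac{(\delta_{x+he_{n}}\circ T)(f)-(\delta_{x}\circ T)(f)}{h}=\frac{T(f)(x+he_{n})-T(f)(x)}{h}
\]
converges to $R(f)(x)=L(f)$ as $h\to0$, for every fixed $f\in\FV$. In particular the family $\{D_{h}\}$, for $0<|h|<h_{0}$ with $h_{0}$ small enough that $x+he_{n}\in\Omega$, is pointwise bounded on $\FV$; since $\FV$ is barrelled, Banach--Steinhaus renders it equicontinuous. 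On an equicontinuous subset of $\FV'$ the topology of pointwise convergence coincides with that of uniform convergence on precompact subsets of $\FV$, so the pointwise limit is in fact a limit in $\FV_{\gamma}'$, hence in $\FV_{\kappa}'$; this gives the asserted derivative formula. Applying part (a) to $R$ shows that $x\mapsto\delta_{x}\circ R$ is continuous into $\FV_{\kappa}'$, and together with the existence of all first partial derivatives this yields $\delta\circ T\in\mathcal{C}^{1}(\Omega,\FV_{\kappa}')$.

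For part (c), set $T_{\beta}:=(\partial^{\beta})^{\K}\circ I\in L(\FV,\mathcal{CW}^{k-|\beta|}(\Omega))$ for $\beta\in\N_{0}^{d}$, $|\beta|\le k$, so that $\delta\circ T_{\beta}$ is the map $x\mapsto\delta_{x}\circ(\partial^{\beta})^{\K}$. For $|\beta|<k$ we have $T_{\beta}\in L(\FV,\mathcal{CW}^{1}(\Omega))$, whence part (b) gives $(\partial^{e_{n}})^{\FV_{\kappa}'}(\delta\circ T_{\beta})=\delta\circ T_{\beta+e_{n}}$ and $\delta\circ T_{\beta}\in\mathcal{C}^{1}(\Omega,\FV_{\kappa}')$, while for $|\beta|=k$ part (a) gives continuity of $\delta\circ T_{\beta}$ into $\FV_{\kappa}'$. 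Iterating over $|\beta|$ up to $k$ therefore shows $\delta\in\mathcal{C}^{k}(\Omega,\FV_{\kappa}')$ with $(\partial^{\beta})^{\FV_{\kappa}'}\delta(x)=\delta_{x}\circ(\partial^{\beta})^{\K}$. Finally $S(u)=u\circ\delta$ with $u\in L(\FV_{\kappa}',E)=\FV\varepsilon E$ linear and continuous; pushing the difference quotients of $\delta$ through $u$ and using continuity of $u$ shows $S(u)\in\mathcal{C}^{k}(\Omega,E)$ and
\[
(\partial^{\beta})^{E}S(u)(x)=u\bigl((\partial^{\beta})^{\FV_{\kappa}'}\delta(x)\bigr)=u(\delta_{x}\circ(\partial^{\beta})^{\K}),\quad|\beta|\le k,\ x\in\Omega.
\]

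The main obstacle is the upgrade in part (b) from pointwise convergence of the difference quotients to convergence in $\FV_{\kappa}'$. This is exactly where barrelledness is indispensable: Banach--Steinhaus turns pointwise boundedness into equicontinuity, and only on equicontinuous sets may one trade pointwise convergence for uniform convergence on precompact (hence absolutely convex compact) subsets. Parts (a) and (c) are then essentially bookkeeping around this step.
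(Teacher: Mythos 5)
Your proposal is correct and follows essentially the same route as the paper: barrelledness enters via Banach--Steinhaus to turn the pointwise-bounded family of difference quotients into an equicontinuous one, pointwise convergence is then upgraded to convergence in $\FV_{\kappa}'$, and part c) is the same induction on $|\beta|$ (with Schwarz' theorem implicitly needed to identify $(\partial^{e_{n}})^{\K}\circ(\partial^{\beta})^{\K}$ with $(\partial^{\beta+e_{n}})^{\K}$). The only cosmetic difference is in a), where you apply Arzel\`a--Ascoli to $T(A)$ for precompact $A\subset\FV$, whereas the paper shows $\{\delta_{x}\circ T\;|\;x\in K\}$ is equicontinuous in $\FV'$ and uses that $\sigma(\FV',\FV)$ and $\gamma(\FV',\FV)$ coincide on equicontinuous sets; both arguments extract the same local uniformity from the continuity of $T$ into $\mathcal{CW}(\Omega)$.
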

\begin{proof}
a) First, if $x\in\Omega$ and $(x_{\tau})_{\tau\in\mathcal{T}}$ is a net in $\Omega$ converging to $x$, then 
we observe that 
\[
(\delta_{x_{\tau}}\circ T)(f)=T(f)(x_{\tau})\to T(f)(x)=(\delta_{x}\circ T)(f)
\]
for every $f\in\FV$ as $T(f)$ is continuous on $\Omega$. 
Second, let $K\subset\Omega$ be compact. Then there are $j\in J$, $m\in M$ and $C>0$ such that
   \[
     \sup_{x\in K}|(\delta_{x}\circ T)(f)|=\sup_{x\in K}|T(f)(x)|
     \leq C|f|_{j,m}
   \]
for every $f\in\FV$. This means that $\{\delta_{x}\circ T\;|\;x\in K\}$ is equicontinuous in $\FV'$. 
The topologies $\sigma(\FV',\FV)$ and $\gamma(\FV',\FV)$ coincide on equicontinuous subsets of $\FV'$, 
implying that the restriction $(\delta\circ T)_{\mid_{K}}\colon K\to \FV_{\gamma}'$ 
is continuous by our first observation. As $\delta\circ T$ is continuous on every compact subset of 
the open set $\Omega\subset\R^{d}$, it follows that 
$\delta\circ T\colon \Omega\to \FV_{\gamma}'$ is well-defined and continuous.

b) Let $x\in\Omega$ and $1\leq n\leq d$. Then there is $\varepsilon>0$ such that $x+he_{n}\in\Omega$ 
for all $h\in\R$ with $0<|h|<\varepsilon$. We note that $\delta\circ T\in\mathcal{C}(\Omega,\FV_{\kappa}')$ 
by part a), which implies $\frac{\delta_{x+he_{n}}\circ T-\delta_{x}\circ T}{h}\in\FV'$. 
For every $f\in\FV$ we have 
\[
\lim_{h\to 0}\frac{\delta_{x+he_{n}}\circ T-\delta_{x}\circ T}{h}(f)
=\lim_{h\to 0}\frac{T(f)(x+he_{n})-T(f)(x)}{h}=(\partial^{e_{n}})^{\K}T(f)(x)
\]
in $\K$ as $T(f)\in\mathcal{C}^{1}(\Omega)$. Therefore $\tfrac{1}{h}(\delta_{x+he_{n}}\circ T-\delta_{x}\circ T)$ 
converges to $\delta_{x}\circ(\partial^{e_{n}})^{\K}\circ T$ in $\FV_{\sigma}'$ and thus in 
$\FV_{\kappa}'$ by the Banach--Steinhaus theorem as well. In particular, we obtain 
\[
\delta_{x}\circ (\partial^{e_{n}})^{\K}\circ T
=\lim_{h\to 0}\frac{\delta_{x+he_{n}}\circ T-\delta_{x}\circ T}{h}
=(\partial^{e_{n}})^{\FV_{\kappa}'}(\delta\circ T)(x)
\]
in $\FV_{\kappa}'$. Moreover, $\delta\circ(\partial^{e_{n}})^{\K}\circ T\in\mathcal{C}(\Omega,\FV_{\kappa}')$ 
by part a) as $(\partial^{e_{n}})^{\K}\circ T\in L(\FV,\mathcal{CW}(\Omega))$. 
Hence we deduce that $\delta\circ T\in \mathcal{C}^{1}(\Omega,\FV_{\kappa}')$.

c) We prove our claim by induction on the order of differentiation. 
Let $u\in\FV\varepsilon E$. For $\beta\in\N_{0}^{d}$ 
with $|\beta|=0$ we get $S(u)=u\circ\delta\in\mathcal{C}(\Omega,E)$ 
from part a) with $T=I$. Further, 
\[
(\partial^{\beta})^{E}S(u)(x)=S(u)(x)=u(\delta_{x})=u(\delta_{x}\circ(\partial^{\beta})^{\K}),\quad x\in\Omega.
\]
Let $m\in\N_{0}$, $m< k$, such that $S(u)\in\mathcal{C}^{m}(\Omega,E)$ and 
\begin{equation}\label{eq:diff_induction}
(\partial^{\beta})^{E}S(u)(x)=u(\delta_{x}\circ(\partial^{\beta})^{\K}),\quad x\in\Omega,
\end{equation}
for all $\beta\in\N_{0}^{d}$ with $|\beta|\leq m$. 
Let $\beta\in\N_{0}^{d}$ with $|\beta|=m+1\leq k$. Then there is $1\leq n\leq d$ and 
$\widetilde{\beta}\in\N_{0}^{d}$ with $|\widetilde{\beta}|=m$ such 
that $\beta=e_{n}+\widetilde{\beta}$.
The barrelledness of $\FV$ yields that 
$\tfrac{1}{h}(\delta_{x+he_{n}}\circ(\partial^{\widetilde{\beta}})^{\K}
-\delta_{x}\circ(\partial^{\widetilde{\beta}})^{\K})$ converges to 
$\delta_{x}\circ (\partial^{e_{n}})^{\K}\circ(\partial^{\widetilde{\beta}})^{\K}$ 
in $\FV_{\kappa}'$ for every $x\in\Omega$ by part b)
with $T:=(\partial^{\widetilde{\beta}})^{\K}$. 
Therefore we derive from $\delta_{x}\circ(\partial^{e_{n}})^{\K}
\circ(\partial^{\widetilde{\beta}})^{\K}=\delta_{x}\circ(\partial^{\beta})^{\K}$ by Schwarz' theorem that
\begin{align*}
u(\delta_{x}\circ(\partial^{\beta})^{\K})
&=\lim_{h\to 0}\frac{1}{h}\bigl(u(\delta_{x+he_{n}}\circ(\partial^{\widetilde{\beta}})^{\K})
 -u(\delta_{x}\circ(\partial^{\widetilde{\beta}})^{\K})\bigr)\\
&\underset{\mathclap{\eqref{eq:diff_induction}}}{=}\;
 \lim_{h\to 0}\frac{1}{h}\bigl((\partial^{\widetilde{\beta}})^{E}S(u)(x+he_{n})
 -(\partial^{\widetilde{\beta}})^{E}S(u)(x)\bigr)\\
&=(\partial^{e_{n}})^{E}(\partial^{\widetilde{\beta}})^{E}S(u)(x)
\end{align*}
for every $x\in\Omega$. 
Moreover, 
$\delta\circ(\partial^{\beta})^{\K}
=(\partial^{e_{n}})^{\FV_{\kappa}'}(\delta\circ T)\in\mathcal{C}(\Omega,\FV_{\kappa}')$ 
for $T=(\partial^{\widetilde{\beta}})^{\K}$ by part b). 
Hence we have $S(u)\in\mathcal{C}^{m+1}(\Omega,E)$ and it follows from Schwarz' theorem again that 
\[
u(\delta_{x}\circ(\partial^{\beta})^{\K})
=(\partial^{e_{n}})^{E}(\partial^{\widetilde{\beta}})^{E}S(u)(x)
=(\partial^{\beta})^{E}S(u)(x),\quad x\in\Omega.\qedhere
\]
\end{proof}

Part a) of the preceding proposition is just a modification of \cite[4.1 Lemma, p.\ 198]{B1},
where $\FV=\mathcal{CV}(\Omega)$ is the Nachbin-weighted space of continuous functions and 
$T=\id$, and holds more general for $k_{\R}$-spaces $\Omega$ (see \prettyref{lem:bier}).

\begin{thm}\label{thm:linearisation}
Let $(T^{E}_{m},T^{\K}_{m})_{m\in M}$ be a consistent generator for $(\mathcal{FV},E)$. 
Then the map $S\colon \FV\varepsilon E\to\FVE$ is an isomorphism into, i.e.\ 
the spaces $\FV$ and $\FVE$ are $\varepsilon$-into-compatible.
\end{thm}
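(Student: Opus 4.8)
The plan is to collapse the entire statement into a single identity matching the defining seminorms of the two spaces. By \prettyref{lem:topology_eps} the topology of $\FV\varepsilon E$ is generated by the seminorms $\|u\|_{j,m,\alpha}=\sup_{x\in\omega_{m}}p_{\alpha}(u(T^{\K}_{m,x}))\nu_{j,m}(x)$, whereas by definition the topology of $\FVE$ is generated by $|f|_{j,m,\alpha}=\sup_{x\in\omega_{m}}p_{\alpha}(T^{E}_{m}(f)(x))\nu_{j,m}(x)$. My first step is to invoke consistency of the generator, i.e.\ \prettyref{def:consist_strong} b): for every $u\in\FV\varepsilon E$ it gives both $S(u)\in F(\Omega,E)$ and $(T^{E}_{m}S(u))(x)=u(T^{\K}_{m,x})$ for all $m\in M$ and $x\in\omega_{m}$, where $u(T^{\K}_{m,x})$ is meaningful because $T^{\K}_{m,x}\in\FV'$ by \prettyref{rem:weights_Hausdorff_directed} b). Substituting this relation into the definition of $|\cdot|_{j,m,\alpha}$ yields the key identity $|S(u)|_{j,m,\alpha}=\|u\|_{j,m,\alpha}$ for all indices $j,m,\alpha$.

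Everything else then falls out of this identity. Well-definedness is immediate: since $u\in\FV\varepsilon E$, each $\|u\|_{j,m,\alpha}$ is finite, hence so is each $|S(u)|_{j,m,\alpha}$, and together with $S(u)\in F(\Omega,E)$ this places $S(u)$ in $\FVE$, so $S$ is a well-defined linear map into $\FVE$. For the topological embedding, note that the range $S(\FV\varepsilon E)$ carries the restrictions of the seminorms $|\cdot|_{j,m,\alpha}$, and pulling these back along $S$ returns, by the identity, exactly the seminorms $\|\cdot\|_{j,m,\alpha}$ generating the topology of $\FV\varepsilon E$. A linear map whose generating seminorm systems agree index by index is continuous with continuous inverse onto its range, so $S$ is an isomorphism into. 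Finally, injectivity follows since $S(u)=0$ forces all $|S(u)|_{j,m,\alpha}=0$, hence all $\|u\|_{j,m,\alpha}=0$; as these seminorms generate the Hausdorff topology of $\FV\varepsilon E$ they separate points, so $u=0$.

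I do not expect a genuine obstacle here, because the analytic content has already been discharged in \prettyref{lem:topology_eps} via the bipolar theorem; the theorem is essentially a bookkeeping consequence of that lemma together with consistency. The only point requiring care is to apply consistency correctly so that both sides of the identity are legitimate objects, in particular that $T^{\K}_{m,x}$ is understood as its restriction to $\FV$, lying in $\FV'$, so that the pairing $u(T^{\K}_{m,x})$ makes sense and equals the $\varepsilon$-product seminorm.
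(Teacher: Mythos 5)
Your proposal is correct and follows essentially the same route as the paper: both reduce the theorem to the identity $|S(u)|_{j,m,\alpha}=\|u\|_{j,m,\alpha}$ obtained by combining consistency with \prettyref{lem:topology_eps}, and then read off well-definedness, continuity, injectivity and continuity of the inverse on the range.
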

\begin{proof}
First, we show that $S(\FV\varepsilon E)\subset \FVE$. Let $u\in\FV\varepsilon E$. Due to the consistency of 
$(T^{E}_{m},T^{\K}_{m})_{m\in M}$ we have $S(u)\in\operatorname{AP}(\Omega,E)\cap\dom T^{E}_{m}$ and
\[
 \bigl(T^{E}_{m}S(u)\bigr)(x)=u(T^{\K}_{m,x}),\quad m\in M\; x\in \omega_{m}.
\]
Furthermore, we get by \prettyref{lem:topology_eps} for every $j\in J$, $m\in M$ and $\alpha\in\mathfrak{A}$
\begin{equation}\label{thm15.1}
|S(u)|_{j,m,\alpha}=\sup_{x\in\omega_{m}}p_{\alpha}\bigl(T^{E}_{m}(S(u))(x)\bigr)\nu_{j,m}(x)
=\|u\|_{j,m,\alpha}<\infty,
\end{equation}
implying $S(u)\in \FVE$ and the continuity of $S$.
Moreover, we deduce from \eqref{thm15.1} that $S$ is injective and that the inverse of $S$ on the range of 
$S$ is also continuous.
\end{proof}

\begin{rem}\label{rem:isometry}
If $J$, $M$ and $\mathfrak{A}$ are countable, then $S$ is an isometry with respect to the induced metrics on 
$\FVE$ and $\FV\varepsilon E$ by \eqref{thm15.1}. 
\end{rem}

The basic idea for \prettyref{thm:linearisation} was derived from analysing the proof of an analogous statement 
for Bierstedt's weighted spaces $\mathcal{CV}(\Omega,E)$ and $\mathcal{CV}_{0}(\Omega,E)$ 
of continuous functions already mentioned in the introduction 
(see \cite[4.2 Lemma, 4.3 Folgerung, p.\ 199--200]{B1} and \cite[2.1 Satz, p.\ 137]{B2}). 
\section{\texorpdfstring{$\varepsilon$}{epsilon}-compatibility}
\label{sect:eps-prod}
Now, we try to answer the natural question. When is $S$ surjective? 
The strength of a generator and a weaker concept to define a natural 
$E$-valued version of $\FV$ come into play to answer the question on the surjectivity of our key map $S$.
Let $\FV$ be a $\dom$-space. 
We define the linear space of \emph{\gls{Evalued_weak_FV_function}} by
\[
\gls{FVE_sigma}:=\{f\colon \Omega\to E\;|\;\forall\;e'\in E':\;e'\circ f\in \FV\}.
\]
Moreover, for $f\in\FVE_{\sigma}$ we define the linear map
\[
\gls{R_f}\colon E'\to \FV,\; R_{f}(e'):=e'\circ f, 
\]
and the dual map 
\[
\gls{R_ft}\colon \FV'\to E'^{\star}, \; f'\longmapsto \bigl[ e'\mapsto f'\bigl(R_{f}(e')\bigr) \bigr],
\]
where $\gls{E'star}$ is the algebraic dual of $E'$. Furthermore, we set
\[
\gls{FVE_kappa}:=\{f\in \FVE_{\sigma}\;|\;\forall\; \alpha\in \mathfrak{A}:
\;R_{f}(B_{\alpha}^{\circ})\;\text{relatively compact in}\;\FV\}
\]
where $B_{\alpha}:=\{x\in E\;|\; p_{\alpha}(x)<1\}$ for $\alpha\in\mathfrak{A}$.
Next, we give a sufficient condition for the inclusion $\FVE\subset\FVE_{\sigma}$ 
by means of the family $(T^{E}_{m},T^{\K}_{m})_{m\in M}$. 

\begin{lem}\label{lem:strong_is_weak}
If $(T^{E}_{m},T^{\K}_{m})_{m\in M}$ is a strong generator for $(\mathcal{FV},E)$, 
then we have $\FVE\subset \FVE_{\sigma}$ and
\begin{equation}\label{T.3.1}
\sup_{e'\in B_{\alpha}^{\circ}}|R_{f}(e')|_{j,m}=|f|_{j,m,\alpha}
\end{equation}
for every $f\in\FVE$, $j\in J$, $m\in M$ and $\alpha\in\mathfrak{A}$.
\end{lem}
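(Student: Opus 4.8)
The plan is to read off both assertions directly from the defining property of a strong generator: for $f\in\FVE$ and $e'\in E'$ one has $e'\circ f\in F(\Omega)$ together with the commutation relation $T^{\K}_{m}(e'\circ f)(x)=\bigl(e'\circ T^{E}_{m}(f)\bigr)(x)$ for all $m\in M$, $x\in\omega_{m}$. First I would settle the inclusion $\FVE\subset\FVE_{\sigma}$. Fix $f\in\FVE$ and $e'\in E'$; strength already yields $e'\circ f\in F(\Omega)$, so only the finiteness of the seminorms $|e'\circ f|_{j,m}$ remains. Since $e'$ is continuous there are $\alpha\in\mathfrak{A}$ and $C>0$ with $|e'(y)|\leq Cp_{\alpha}(y)$ for all $y\in E$, and then, using the commutation relation followed by this estimate,
\[
|e'\circ f|_{j,m}=\sup_{x\in\omega_{m}}|e'(T^{E}_{m}(f)(x))|\,\nu_{j,m}(x)\leq C\sup_{x\in\omega_{m}}p_{\alpha}(T^{E}_{m}(f)(x))\,\nu_{j,m}(x)=C|f|_{j,m,\alpha}<\infty.
\]
Hence $e'\circ f\in\FV$, which is exactly $f\in\FVE_{\sigma}$.

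For the identity \eqref{T.3.1} I would again rewrite $|R_{f}(e')|_{j,m}=|e'\circ f|_{j,m}=\sup_{x\in\omega_{m}}|e'(T^{E}_{m}(f)(x))|\,\nu_{j,m}(x)$ by strength, take the supremum over $e'\in B_{\alpha}^{\circ}$, and interchange the two suprema, which is legitimate because all terms are nonnegative. This reduces the claim to the pointwise statement that, for the fixed vector $y:=T^{E}_{m}(f)(x)\in E$, one has $\sup_{e'\in B_{\alpha}^{\circ}}|e'(y)|=p_{\alpha}(y)$. The key observation is the description
\[
B_{\alpha}^{\circ}=\{e'\in E'\mid |e'(y)|\leq p_{\alpha}(y)\ \text{for all}\ y\in E\}:
\]
the inclusion ``$\supset$'' is clear, and ``$\subset$'' follows because a functional bounded by $1$ on the open ball $\{p_{\alpha}<1\}$ satisfies $|e'(y)|\leq p_{\alpha}(y)$ by homogeneity (for $p_{\alpha}(y)>0$ apply the bound to $y/t$ with $t\downarrow p_{\alpha}(y)$, and $p_{\alpha}(y)=0$ forces $e'(y)=0$). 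Granting this, ``$\leq p_{\alpha}(y)$'' is immediate, while the reverse inequality is exactly Hahn--Banach applied to the seminorm $p_{\alpha}$, producing $e'\in E'$ with $|e'(y)|=p_{\alpha}(y)$ and $|e'|\leq p_{\alpha}$ everywhere, i.e.\ $e'\in B_{\alpha}^{\circ}$. Substituting back gives
\[
\sup_{e'\in B_{\alpha}^{\circ}}|R_{f}(e')|_{j,m}=\sup_{x\in\omega_{m}}p_{\alpha}(T^{E}_{m}(f)(x))\,\nu_{j,m}(x)=|f|_{j,m,\alpha}.
\]

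The two applications of strength and the interchange of suprema are routine; the only genuine point, and the one I expect to be the main obstacle to state cleanly, is the normalising identity $\sup_{e'\in B_{\alpha}^{\circ}}|e'(y)|=p_{\alpha}(y)$. It rests on Hahn--Banach (equivalently the bipolar theorem for the single seminorm $p_{\alpha}$), and it requires the small but essential care of passing from the open ball appearing in the definition of $B_{\alpha}$ to the homogeneous estimate $|e'|\leq p_{\alpha}$.
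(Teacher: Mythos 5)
Your proof is correct and follows essentially the same route as the paper: both rewrite $|R_{f}(e')|_{j,m}$ via the strength relation and then conclude \eqref{T.3.1} from the duality identity $\sup_{e'\in B_{\alpha}^{\circ}}|e'(y)|=p_{\alpha}(y)$, which the paper simply cites (Meise--Vogt, Proposition 22.14) and you prove directly via Hahn--Banach. The only cosmetic difference is in the finiteness step, where the paper invokes weak boundedness of the bounded set $N_{j,m}(f)$ while you use the continuity estimate $|e'|\leq Cp_{\alpha}$; both are equally valid.
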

\begin{proof}
Let $f\in \FVE$. We have $e'\circ f\in\mathcal{F}(\Omega)$ for every $e'\in E'$ 
since $(T^{E}_{m},T^{\K}_{m})_{m\in M}$ is a strong generator. Moreover, we have
\begin{align}\label{T.3.2}
|R_{f}(e')|_{j,m}&=|e'\circ f|_{j,m}=\sup_{x\in\omega_{m}}
\bigl|T^{\K}_{m}(e'\circ f)(x)\bigr|\nu_{j,m}(x)\notag\\
& =\sup_{x\in\omega_{m}}\bigl|e'\bigl(T^{E}_{m}(f)(x)\bigr)\bigr|\nu_{j,m}(x)
=\sup_{x\in N_{j,m}(f)}|e'(x)|
\end{align}
for every $j\in J$ and $m\in M$ with the set $N_{j,m}(f)$ from \prettyref{def:weighted_space}. We note that $N_{j,m}(f)$ is bounded in $E$ by \prettyref{def:weighted_space} 
and thus weakly bounded, implying that the right-hand side 
of \eqref{T.3.2} is finite. Hence we conclude $f\in\FVE_{\sigma}$.
Further, we observe that
\[
\sup_{e'\in B_{\alpha}^{\circ}}|R_{f}(e')|_{j,m}=|f|_{j,m,\alpha}
\]
for every $j\in J$, $m\in M$ and $\alpha\in\mathfrak{A}$ due to \cite[Proposition 22.14, p.\ 256]{meisevogt1997}. 
\end{proof}

Now, we phrase some sufficient conditions for $\FVE\subset\FVE_{\kappa}$ 
to hold which is one of the key points regarding the surjectivity of $S$.

\begin{lem}\label{lem:FVE_rel_comp}
If $(T^{E}_{m},T^{\K}_{m})_{m\in M}$ is a strong generator for $(\mathcal{FV},E)$ 
and one of the following conditions is fulfilled, then $\FVE\subset\FVE_{\kappa}$.
\begin{enumerate}
\item [a)] $\FV$ is a semi-Montel space.
\item [b)] $E$ is a semi-Montel or Schwartz space.
\item [c)] $\forall\;f\in\FVE,\, j\in J,\,m\in M\;\exists\; K\in\gamma(E):\;N_{j,m}(f)\subset K$.
\end{enumerate}
\end{lem}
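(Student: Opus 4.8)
The plan is to fix $f\in\FVE$ and $\alpha\in\mathfrak{A}$ and to show that $R_{f}(B_{\alpha}^{\circ})$ is relatively compact in $\FV$; since $(T^{E}_{m},T^{\K}_{m})_{m\in M}$ is a strong generator, \prettyref{lem:strong_is_weak} already yields $\FVE\subset\FVE_{\sigma}$, so $R_{f}$ maps $E'$ into $\FV$ and the identities \eqref{T.3.1} and \eqref{T.3.2} are at my disposal. The decisive structural observation is \eqref{T.3.2}: for every $e'\in E'$ one has $|R_{f}(e')|_{j,m}=\sup_{x\in N_{j,m}(f)}|e'(x)|$, that is, the $(j,m)$-seminorm of $R_{f}(e')$ in $\FV$ is exactly the seminorm of uniform convergence of $e'$ on the set $N_{j,m}(f)\subset E$ from \prettyref{def:weighted_space}. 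Hence $R_{f}$ pulls each generating seminorm of $\FV$ back to a seminorm of uniform convergence on one of the bounded sets $N_{j,m}(f)$, and the whole problem is reduced to a statement about the polar $B_{\alpha}^{\circ}\subset E'$.

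For part a) I would argue directly, without precompactness: by \eqref{T.3.1} we have $\sup_{e'\in B_{\alpha}^{\circ}}|R_{f}(e')|_{j,m}=|f|_{j,m,\alpha}<\infty$ for all $j\in J$ and $m\in M$, so $R_{f}(B_{\alpha}^{\circ})$ is a bounded subset of $\FV$. As $\FV$ is a semi-Montel space, bounded sets are relatively compact, and this case is finished.

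For parts b) and c) the common engine is that each $N_{j,m}(f)$ is precompact in $E$. In case c) this is assumed outright (a subset of a set in $\gamma(E)$ is precompact); in case b) I would use that $N_{j,m}(f)$ is bounded in $E$, which is precisely $|f|_{j,m,\alpha}<\infty$ for all $\alpha$ via \prettyref{def:weighted_space}, together with the facts that in a semi-Montel space bounded sets are relatively compact and in a Schwartz space bounded sets are precompact; in either subcase $N_{j,m}(f)$ is precompact. Granting this, I would invoke the Alaoglu--Bourbaki theorem to see that the polar $B_{\alpha}^{\circ}$ of the zero-neighbourhood $B_{\alpha}$ is $\sigma(E',E)$-compact and equicontinuous, and then the classical fact that on an equicontinuous subset of $E'$ the topology $\sigma(E',E)$ coincides with the topology of uniform convergence on precompact subsets of $E$ (see, e.g., \cite{Jarchow}). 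Because every $N_{j,m}(f)$ is precompact, \eqref{T.3.2} shows that $R_{f}$ is continuous from $E'$, equipped with the topology of precompact convergence, into $\FV$; restricting this map to $B_{\alpha}^{\circ}$, on which precompact convergence agrees with $\sigma(E',E)$, exhibits $R_{f}(B_{\alpha}^{\circ})$ as the continuous image of a $\sigma(E',E)$-compact set, hence as a compact, in particular relatively compact, subset of $\FV$.

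The step I expect to require the most care is this last one. One must resist comparing the topology of uniform convergence on the family $\{N_{j,m}(f)\}$ with $\sigma(E',E)$ globally on $E'$, because the sets $N_{j,m}(f)$ need not cover $E$ and such a comparison then breaks down. The point is to localise to the single equicontinuous set $B_{\alpha}^{\circ}$, where the coincidence of $\sigma(E',E)$ with precompact convergence is genuinely available, and to feed precompactness of the $N_{j,m}(f)$ into the continuity of $R_{f}$ there. Equivalently, I could phrase the argument with nets: a $\sigma(E',E)$-convergent net in $B_{\alpha}^{\circ}$ is mapped by $R_{f}$ to an $\FV$-convergent net, using equicontinuity of $B_{\alpha}^{\circ}$ together with precompactness of each $N_{j,m}(f)$ to upgrade pointwise convergence to uniform convergence on $N_{j,m}(f)$, which is exactly what \eqref{T.3.2} needs.
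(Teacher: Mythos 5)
Your proposal is correct and follows essentially the same route as the paper: part a) via boundedness of $R_{f}(B_{\alpha}^{\circ})$ in the semi-Montel space $\FV$, part c) via the identity \eqref{T.3.2} giving $R_{f}\in L(E_{\gamma}',\FV)$ together with the Alao\u{g}lu--Bourbaki theorem, and part b) by reducing to c) through the precompactness of the bounded sets $N_{j,m}(f)$ in a semi-Montel or Schwartz space. Your extra remarks on the coincidence of $\sigma(E',E)$ and precompact convergence on the equicontinuous set $B_{\alpha}^{\circ}$ merely spell out why Alao\u{g}lu--Bourbaki yields relative compactness of $B_{\alpha}^{\circ}$ in $E_{\gamma}'$, which the paper uses implicitly.
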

\begin{proof}
Let $f\in \FVE$. By virtue of \prettyref{lem:strong_is_weak} we already have $f\in \FVE_{\sigma}$.

a) For every $j\in J$, $m\in M$ and $\alpha\in \mathfrak{A}$ 
we derive from 
\[
\sup_{e'\in B_{\alpha}^{\circ}}|R_{f}(e')|_{j,m}\underset{\eqref{T.3.1}}{=}|f|_{j,m,\alpha}<\infty
\]
that $R_{f}(B^{\circ}_{\alpha})$ is bounded and thus relatively compact in the semi-Montel space $\FV$.

c) It follows from \eqref{T.3.2} that $R_{f}\in L(E_{\gamma}',\FV)$. 
Further, the polar $B_{\alpha}^{\circ}$ is relatively compact in $E_{\gamma}'$ for every $\alpha\in\mathfrak{A}$ 
by the Alao\u{g}lu--Bourbaki theorem. 
The continuity of $R_{f}$ implies that $R_{f}(B_{\alpha}^{\circ})$ is relatively compact as well.

b) Let $j\in J$ and $m\in M$. The set $K:=N_{j,m}(f)$ is bounded in $E$ by \prettyref{def:weighted_space}. 
We deduce that $K$ is already precompact in $E$ by \cite[10.4.3 Corollary, p.\ 202]{Jarchow} 
if $E$ is a Schwartz space resp.\ since it is relatively compact if $E$ is a semi-Montel space. 
Hence the statement follows from c). 
\end{proof}

Let us turn to sufficient conditions for $\FVE\cong \FV\varepsilon E$. 
For the lcHs $E$ we denote by $\gls{J}\colon E\to E'^{\star}$, $x\longmapsto [e'\mapsto e'(x)]$, 
the canonical injection.

\begin{cond}\label{cond:surjectivity_linearisation}
Let $(T^{E}_{m},T^{\K}_{m})_{m\in M}$ be a strong generator for $(\mathcal{FV},E)$. Define the following conditions:
\begin{enumerate}
\item [a)] $E$ is complete.
\item [b)] $E$ is quasi-complete and for every $f\in\FVE$ and 
$f'\in\FV'$ there is a bounded net $(f'_{\tau})_{\tau\in\mathcal{T}}$ in $\FV'$ 
converging to $f'$ in $\FV_{\kappa}'$ such that $R_{f}^{t}(f'_{\tau})\in \mathcal{J}(E)$ 
for every $\tau\in\mathcal{T}$.
\item [c)] $E$ is sequentially complete and for every $f\in\FVE$ and 
$f'\in\FV'$ there is a sequence 
$(f'_{n})_{n\in\N}$ in $\FV'$ converging to $f'$ in $\FV_{\kappa}'$ such that 
$R_{f}^{t}(f'_{n})\in \mathcal{J}(E)$ for every $n\in\N$.
\item [d)] $E$ is locally complete and for every $f\in\FVE$ and $f'\in\FV'$ there is a sequence 
$(f'_{n})_{n\in\N}$ in $\FV'$ locally converging to $f'$ in $\FV_{\kappa}'$
such that $R_{f}^{t}(f'_{n})\in \mathcal{J}(E)$ for every $n\in\N$.
\item [e)] 
$
\forall\;f\in\FVE,\,j\in J,\,m\in M\;\exists\; K\in\tau(E):\;N_{j,m}(f)\subset K.
$
\end{enumerate}
\end{cond}

\begin{thm}\label{thm:full_linearisation}
Let $(T^{E}_{m},T^{\K}_{m})_{m\in M}$ be a consistent generator for $(\mathcal{FV},E)$ and let
$\FVE\subset\FVE_{\kappa}$. If one of the Conditions \ref{cond:surjectivity_linearisation} is fulfilled, 
then the map $S\colon\FV\varepsilon E\to \FVE$ is an isomorphism, 
i.e.\ $\FV$ and $\FVE$ are $\varepsilon$-compatible.
The inverse of $S$ is given by the map 
\[
R^{t}\colon \FVE \to \FV\varepsilon E,\;f\mapsto \mathcal{J}^{-1}\circ R_{f}^{t},
\]
where $\mathcal{J}\colon E\to E'^{\star}$ is the canonical injection and 
\[
R_{f}^{t}\colon \FV'\to E'^{\star}, \; f'\longmapsto \bigl[ e'\mapsto f'\bigl(R_{f}(e')\bigr) \bigr],
\]
with $R_{f}(e')=e'\circ f$.
\end{thm}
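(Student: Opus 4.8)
The plan is to reduce everything to surjectivity of $S$, since \prettyref{thm:linearisation} already shows that consistency makes $S\colon\FV\varepsilon E\to\FVE$ an isomorphism into with continuous inverse on its range. Thus it suffices, for each $f\in\FVE$, to produce $u\in\FV\varepsilon E=L(\FV_{\kappa}',E)$ with $S(u)=f$, and the candidate is dictated by the asserted inverse, namely $u:=\mathcal{J}^{-1}\circ R_{f}^{t}$. Concretely, $u$ is to be the unique operator with $e'(u(f'))=f'(e'\circ f)$ for all $e'\in E'$ and $f'\in\FV'$, so the whole proof amounts to checking (i) that $R_{f}^{t}(f')$ really lies in $\mathcal{J}(E)$ so that $u$ is well defined, (ii) that $u$ is continuous from $\FV_{\kappa}'$ into $E$, and (iii) that $S(u)=f$.

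The core difficulty is (i). The single structural input is that $f\in\FVE\subset\FVE_{\kappa}$, so $R_{f}(B_{\alpha}^{\circ})$ is relatively compact in $\FV$ for every $\alpha\in\mathfrak{A}$; being the linear image of the absolutely convex set $B_{\alpha}^{\circ}$ it is absolutely convex, hence its closure $K_{\alpha}:=\oacx(R_{f}(B_{\alpha}^{\circ}))$ is an absolutely convex \emph{compact} subset of $\FV$, and $q_{\alpha}(f'):=\sup_{g\in K_{\alpha}}|f'(g)|$ is one of the defining seminorms of $\FV_{\kappa}'$. I would treat the five conditions in two groups. For \prettyref{cond:surjectivity_linearisation}~a) and e) I argue directly. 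Under a), since the $\delta_{x}$ are separating, a weak*-convergent net $e'_{\tau}\to e'$ in $B_{\alpha}^{\circ}$ forces $R_{f}(e'_{\tau})\to R_{f}(e')$ in $\FV$ (the net lies in the compact set $K_{\alpha}$ and converges pointwise on $\Omega$ to $R_{f}(e')$, which pins down every cluster point), so $R_{f}^{t}(f')(e')=f'(R_{f}(e'))$ is $\sigma(E',E)$-continuous on each equicontinuous set; Grothendieck's completeness theorem then places $R_{f}^{t}(f')$ in the completion, which equals $E$ when $E$ is complete. Under e), the bound $|R_{f}(e')|_{j,m}\le\sup_{x\in K}|e'(x)|$ coming from \eqref{T.3.2} shows $R_{f}$ is continuous from the Mackey dual $E_{\tau}'$ into $\FV$, whence its transpose maps $\FV'$ into $(E_{\tau}')'=E$ by Mackey--Arens.

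For conditions b)--d) I would instead exploit the approximating net $(f'_{\tau})$ resp.\ sequence $(f'_{n})$ supplied by the condition, which by hypothesis satisfies $R_{f}^{t}(f'_{\tau})=\mathcal{J}(x_{\tau})$ with $x_{\tau}\in E$ and converges to $f'$ in $\FV_{\kappa}'$. The key estimate is
\[
p_{\alpha}(x_{\tau}-x_{\tau'})=\sup_{e'\in B_{\alpha}^{\circ}}|(f'_{\tau}-f'_{\tau'})(e'\circ f)|\le\sup_{g\in K_{\alpha}}|(f'_{\tau}-f'_{\tau'})(g)|=q_{\alpha}(f'_{\tau}-f'_{\tau'}),
\]
so that convergence of $(f'_{\tau})$ in $\FV_{\kappa}'$ transfers to Cauchyness of $(x_{\tau})$ in $E$. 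Matching the convergence mode to the completeness hypothesis then closes each case: under b) the net is moreover bounded (a bounded subset of $\FV_{\kappa}'$ is uniformly bounded for each $q_{\alpha}$, whence $\{x_{\tau}\}$ is bounded in $E$), so quasi-completeness yields a limit $x\in E$; under c) sequential completeness applies to the Cauchy sequence $(x_{n})$; under d) local convergence of $(f'_{n})$ produces a locally Cauchy, hence locally convergent, sequence $(x_{n})$ in the locally complete $E$. Since $R_{f}^{t}$ is continuous from $\FV_{\kappa}'$ into $(E'^{\star},\sigma(E'^{\star},E'))$ and $\mathcal{J}(x_{\tau})\to\mathcal{J}(x)$ there, the limit satisfies $R_{f}^{t}(f')=\mathcal{J}(x)$, giving (i). I expect this matching of the three convergence regimes to the three degrees of completeness to be the main obstacle; it is exactly what forces the differing phrasings of b)--d).

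It remains to finish (ii) and (iii). For continuity of $u=\mathcal{J}^{-1}\circ R_{f}^{t}$ the same estimate gives $p_{\alpha}(u(f'))=\sup_{e'\in B_{\alpha}^{\circ}}|f'(e'\circ f)|\le q_{\alpha}(f')$, so $u\in L(\FV_{\kappa}',E)=\FV\varepsilon E$. For $S(u)=f$ I compute, for every $x\in\Omega$ and $e'\in E'$, that $e'(u(\delta_{x}))=\delta_{x}(e'\circ f)=e'(f(x))$; since $E'$ separates the points of $E$ this gives $u(\delta_{x})=f(x)$, i.e.\ $S(u)=f$, so $S$ is onto. Finally $S\circ R^{t}=\id_{\FVE}$ by construction and $S$ is injective by \prettyref{thm:linearisation}, whence $R^{t}=S^{-1}$ and $S$ is the claimed isomorphism.
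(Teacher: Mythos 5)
Your proposal is correct and follows essentially the same route as the paper: reduce to surjectivity via \prettyref{thm:linearisation}, take $u:=\mathcal{J}^{-1}\circ R_{f}^{t}$ as the candidate preimage, use the compactness of $K_{\alpha}$ coming from $\FVE\subset\FVE_{\kappa}$ to get the estimate $p_{\alpha}(u(f'))\leq\sup_{g\in K_{\alpha}}|f'(g)|$, split into the five cases to show $R_{f}^{t}(f')\in\mathcal{J}(E)$, and close with $e'(u(\delta_{x}))=e'(f(x))$. The one genuinely different step is case a): the paper uses the $\kappa(\FV',\FV)$-density of $\operatorname{span}\{\delta_{x}\}$ (bipolar theorem) together with \eqref{wT.1.2a} to produce a Cauchy net in the complete space $\mathcal{J}(E)$, whereas you verify that $R_{f}^{t}(f')$ is $\sigma(E',E)$-continuous on each polar $B_{\alpha}^{\circ}$ (via the compact set $K_{\alpha}$ and the fact that the $\delta_{x}$ separate $\FV$) and invoke Grothendieck's completeness theorem; both are valid, and yours avoids the density argument entirely at the cost of a stronger abstract input. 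Two small points: your case d) asserts that a locally Cauchy sequence in a locally complete space converges locally, which is true but requires exhibiting a concrete closed Banach disk in $E$ carrying the Cauchy property — the paper does this by taking $B:=\overline{R_{f}^{t}(D)\cap\mathcal{J}(E)}^{\mathcal{J}(E)}$ and checking $q_{B}(R_{f}^{t}(f'_{n}-f'_{k}))\leq q_{D}(f'_{n}-f'_{k})$, and you should spell this out; and note that the strength of the generator, which you use implicitly through \eqref{T.3.2} and through $e'\circ f\in\FV$, is supplied by the preamble of \prettyref{cond:surjectivity_linearisation} rather than by the theorem's own hypotheses.
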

\begin{proof}
Due to \prettyref{thm:linearisation} we only have to show that $S$ is surjective. 
We equip $\mathcal{J}(E)$ with the system of seminorms given by 
\begin{equation}\label{wT.1.1}
p_{B^{\circ}_{\alpha}}(\mathcal{J}(x)):=\sup_{e'\in B^{\circ}_{\alpha}}|\mathcal{J}(x)(e')|
=p_{\alpha}(x),\quad x\in E,
\end{equation}
for every $\alpha\in \mathfrak{A}$.
Let $f\in \FVE$. We consider the dual map $R_{f}^{t}$ and claim that 
$R_{f}^{t}\in L(\FV_{\kappa}',\mathcal{J}(E))$. 
Indeed, we have
\begin{equation}\label{wT.1.2}
p_{B^{\circ}_{\alpha}}\bigl(R_{f}^{t}(y)\bigr)
=\sup_{e'\in B^{\circ}_{\alpha}}\bigl|y\bigl(R_{f}(e')\bigr)\bigr|
=\sup_{x\in R_{f}(B^{\circ}_{\alpha})}|y(x)|
\leq\sup_{x\in K_{\alpha}}|y(x)|
\end{equation}
for all $y\in\FV'$ where $K_{\alpha}:=\overline{R_{f}(B^{\circ}_{\alpha})}$. 
Since $\FVE\subset\FVE_{\kappa}$, the set $R_{f}(B^{\circ}_{\alpha})$ is absolutely convex 
and relatively compact, implying that $K_{\alpha}$ is absolutely convex and compact in 
$\FV$ by \cite[6.2.1 Proposition, p.\ 103]{Jarchow}. 
Further, we have for all $e'\in E'$ and $x\in\Omega$
\begin{equation}\label{wT.1.2a} 
R_{f}^{t}(\delta_{x})(e')=\delta_{x}(e'\circ f)=e'\bigl(f(x)\bigr)=\mathcal{J}\bigl(f(x)\bigr)(e')
\end{equation}
and thus $R_{f}^{t}(\delta_{x})\in\mathcal{J}(E)$.

a) Let $E$ be complete and $f'\in \FV'$. 
Since the span of $\{\delta_{x}\;|\; x\in \Omega\}$ is dense in
$\mathcal{F}(\Omega)_{\kappa}'$ by the bipolar theorem, there is a net $(f_{\tau}')$ 
converging to $f'$ in $\FV_{\kappa}'$ with $R_{f}^{t}(f_{\tau}')\in\mathcal{J}(E)$ by \eqref{wT.1.2a}. As 
\begin{equation}\label{wT.1.3} 
p_{B^{\circ}_{\alpha}}\bigl(R_{f}^{t}(f_{\tau}')-R_{f}^{t}(f')\bigr)
 \underset{\eqref{wT.1.2}}{\leq} \sup_{x\in K_{\alpha}}|(f_{\tau}'-f')(x)|\to 0,
\end{equation}
for all $\alpha\in \mathfrak{A}$, we gain that $(R_{f}^{t}(f_{\tau}'))$ is a Cauchy net 
in the complete space $\mathcal{J}(E)$.
Hence it has a limit $g\in\mathcal{J}(E)$ which coincides with $R_{f}^{t}(f')$ since
\begin{align*}
\qquad p_{B^{\circ}_{\alpha}}\bigl(g-R_{f}^{t}(f')\bigr)
&\leq p_{B^{\circ}_{\alpha}}\bigl(g-R_{f}^{t}(f_{\tau}')\bigr)
 +p_{B^{\circ}_{\alpha}}\bigl(R_{f}^{t}(f_{\tau}')-R_{f}^{t}(f')\bigr)\\
 &\;\;\mathclap{\underset{\eqref{wT.1.3}}{\leq}}\;\;\; p_{B^{\circ}_{\alpha}}\bigl(g-R_{f}^{t}(f_{\tau}')\bigr)
 + \sup_{x\in K_{\alpha}}\bigl|(f_{\tau}'-f')(x)\bigr|\to 0
\end{align*}
for all $\alpha\in \mathfrak{A}$. We conclude that $R_{f}^{t}(f')\in\mathcal{J}(E)$ for every $f'\in \FV'$. 

b) Let \prettyref{cond:surjectivity_linearisation} b) hold and $f'\in\FV'$. Then there is a bounded net 
$(f'_{\tau})_{\tau\in\mathcal{T}}$ in $\FV'$ converging to $f'$ in $\FV_{\kappa}'$ 
such that $R_{f}^{t}(f'_{\tau})\in \mathcal{J}(E)$ for every 
$\tau\in\mathcal{T}$. Due to \eqref{wT.1.2} we obtain that $(R_{f}^{t}(f_{\tau}'))$ 
is a bounded Cauchy net in the quasi-complete space $\mathcal{J}(E)$ 
converging to $R_{f}^{t}(f')\in\mathcal{J}(E)$.

c) Let \prettyref{cond:surjectivity_linearisation} c) hold and $f'\in\FV'$. Then there is a sequence 
$(f'_{n})_{n\in\N}$ in $\FV'$ converging to $f'$ in $\FV_{\kappa}'$ such that 
$R_{f}^{t}(f'_{n})\in \mathcal{J}(E)$ for every $n\in\N$. Again \eqref{wT.1.2} implies that 
$(R_{f}^{t}(f_{n}'))$ is a Cauchy sequence in the sequentially complete space $\mathcal{J}(E)$ 
which converges to $R_{f}^{t}(f')\in\mathcal{J}(E)$.

d) Let \prettyref{cond:surjectivity_linearisation} d) hold and $f'\in\FV'$. Then there is an absolutely convex, 
bounded subset $D\subset\FV_{\kappa}'$ and a sequence $(f'_{n})_{n\in\N}$ in $\FV'$ converging to $f'$ 
in $(\FV_{\kappa}')_{D}$ such that $R_{f}^{t}(f_{n})\in \mathcal{J}(E)$ for every $n\in\N$.
Let $r>0$ and $f'_{n}-f'_{k}\in rD$. Then $R_{f}^{t}(f'_{n}-f'_{k})\in r(R_{f}^{t}(D)\cap\mathcal{J}(E))$, implying
\[
\{r>0\;|\; f'_{n}-f'_{k}\in rD\}\subset 
\bigl\{r>0\;|\; R_{f}^{t}(f'_{n}-f'_{k})\in r \overline{(R_{f}^{t}(D)\cap\mathcal{J}(E))}^{\mathcal{J}(E)}\bigr\}.
\]
Setting $B:=\overline{R_{f}^{t}(D)\cap\mathcal{J}(E)}^{\mathcal{J}(E)}$, we derive
\[
 q_{B}(R_{f}^{t}(f'_{n}-f'_{k}))\leq q_{D}(f'_{n}-f'_{k})
\]
where $q_{B}$ and $q_{D}$ are the gauge functionals of $B$ resp.\ $D$. 
The set $R_{f}^{t}(D)\cap\mathcal{J}(E)$ is absolutely convex as the intersection of two absolutely convex sets 
and it is bounded by \eqref{wT.1.2} and the boundedness of $D$. 
So $B$, being the closure of a disk, is a disk as well. 
Since $(f'_{n})$ is a Cauchy sequence in $(\FV_{\kappa}')_{D}$, we conclude
that $(R_{f}'(f'_{n}))$ is a Cauchy sequence in $\mathcal{J}(E)_{B}$. 
The set $B$ is a closed disk in the locally complete space $\mathcal{J}(E)$ and 
hence a Banach disk by \cite[10.2.1 Proposition, p.\ 197]{Jarchow}. 
Thus $\mathcal{J}(E)_{B}$ is a Banach space and $(R_{f}^{t}(f'_{n}))$ has a limit 
$g\in\mathcal{J}(E)_{B}$. The continuity of the canonical injection 
$\mathcal{J}(E)_{B}\hookrightarrow\mathcal{J}(E)$ implies that $(R_{f}^{t}(f'_{n}))$ converges to 
$g$ in $\mathcal{J}(E)$ as well. As in a) we obtain that $R_{f}^{t}(f')=g\in\mathcal{J}(E)$.

e) Let \prettyref{cond:surjectivity_linearisation} e) be fulfilled. Let $f\in \FVE$ and $e'\in E'$. 
For every $f'\in \FV'$ there are $j\in J$, $m\in M$ and $C>0$ such that
\[
|R_{f}^{t}(f')(e')|\leq C |R_{f}(e')|_{j,m}
\underset{\eqref{T.3.2}}{=}C\sup_{x\in N_{j,m}(f)}|e'(x)|
\]
because $(T^{E}_{m},T^{\K}_{m})_{m\in M}$ is a strong generator.
Since there is $K\in\tau(E)$ such that $N_{j,m}(f)\subset K$, we have
\[
|R_{f}^{t}(f')(e')|\leq C \sup_{x\in K}|e'(x)|,
\]
implying $R_{f}^{t}(f')\in (E'_{\tau})'=\mathcal{J}(E)$ by the Mackey--Arens theorem.

Therefore we obtain that $R_{f}^{t}\in L(\FV_{\kappa}',\mathcal{J}(E))$.
So we get for all $\alpha\in \mathfrak{A}$ and $y\in \mathcal{F}(\Omega)'$ 
\[
p_{\alpha}\bigl((\mathcal{J}^{-1}\circ R_{f}^{t})(y)\bigr)
\underset{\eqref{wT.1.1}}{=}p_{B^{\circ}_{\alpha}}\bigl(\mathcal{J}((\mathcal{J}^{-1}\circ R_{f}^{t})(y))\bigr)
= p_{B^{\circ}_{\alpha}}\bigl(R_{f}^{t}(y)\bigr)
\underset{\eqref{wT.1.2}}{\leq}\sup_{x\in K_{\alpha}}|y(x)|.
\]
This implies $\mathcal{J}^{-1}\circ R_{f}^{t}\in L(\FV_{\kappa}', E)=\FV\varepsilon E$ (as linear spaces)
and we gain
\[
S(\mathcal{J}^{-1}\circ R_{f}^{t})(x)=\mathcal{J}^{-1}\bigl(R_{f}^{t}(\delta_{x})\bigr)
\underset{\eqref{wT.1.2a}}{=}\mathcal{J}^{-1}\bigl(\mathcal{J}(f(x))\bigr)=f(x)
\]
for every $x\in \Omega$. Thus $S(\mathcal{J}^{-1}\circ R_{f}^{t})=f$, proving the surjectivity of $S$.
\end{proof}

Further sufficient conditions for $S$ being a topological isomorphism can be found in 
\prettyref{prop:weak_strong_principle}, \prettyref{prop:reverse_Schauder} 
and \prettyref{thm:Schauder_coeff_space}. 
In particular, we get the following corollary as a special case of \prettyref{thm:full_linearisation}.

\begin{cor}\label{cor:full_linearisation}
Let $(T^{E}_{m},T^{\K}_{m})_{m\in M}$ be a strong, consistent generator for $(\mathcal{FV},E)$. If 
\begin{enumerate}
\item[(i)] $\FV$ is a semi-Montel space and $E$ complete, or
\item[(ii)] $\FV$ is a Fr\'echet--Schwartz space and $E$ locally complete, or
\item[(iii)] $E$ is a semi-Montel space, or
\item[(iv)] $\forall\;f\in\FVE,\,j\in J,\,m\in M\;\exists\; K\in\kappa(E):\;N_{j,m}(f)\subset K$, 
\end{enumerate}
then $\FV$ and $\FVE$ are $\varepsilon$-compatible, in particluar, $\FVE\cong\FV\varepsilon E$.
\end{cor}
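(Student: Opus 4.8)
The plan is to derive all four cases from our main \prettyref{thm:full_linearisation} by verifying, in each case, its two hypotheses: the inclusion $\FVE\subset\FVE_{\kappa}$ and one of the Conditions \ref{cond:surjectivity_linearisation}. Since a strong, consistent generator is assumed throughout, both \prettyref{lem:strong_is_weak} and \prettyref{lem:FVE_rel_comp} are at our disposal, and it suffices to match each hypothesis (i)--(iv) to the appropriate part of those results.

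Cases (i), (iii) and (iv) are the routine ones. For (i), the inclusion $\FVE\subset\FVE_{\kappa}$ is \prettyref{lem:FVE_rel_comp} a) (as $\FV$ is semi-Montel), and completeness of $E$ is exactly \prettyref{cond:surjectivity_linearisation} a). For (iii), the inclusion follows from \prettyref{lem:FVE_rel_comp} b), and I would check \prettyref{cond:surjectivity_linearisation} e): each $N_{j,m}(f)$ is bounded by \prettyref{def:weighted_space}, hence relatively compact in the semi-Montel space $E$; since a semi-Montel space is quasi-complete and thus has ccp, the set $K:=\oacx(N_{j,m}(f))$ is compact, therefore $\sigma(E,E')$-compact, so $K\in\tau(E)$ with $N_{j,m}(f)\subset K$. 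For (iv), every $K\in\kappa(E)$ is precompact and $\sigma(E,E')$-compact, so the hypothesis simultaneously feeds \prettyref{lem:FVE_rel_comp} c) (giving $\FVE\subset\FVE_{\kappa}$) and is \prettyref{cond:surjectivity_linearisation} e) verbatim.

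Case (ii) is where the real work lies. A Fr\'echet--Schwartz space is Montel, hence semi-Montel, so $\FVE\subset\FVE_{\kappa}$ again follows from \prettyref{lem:FVE_rel_comp} a). It remains to verify \prettyref{cond:surjectivity_linearisation} d) for locally complete $E$, i.e.\ to produce, for given $f\in\FVE$ and $f'\in\FV'$, a sequence $(f'_{n})$ in $\FV'$ that locally converges to $f'$ in $\FV_{\kappa}'$ with $R_{f}^{t}(f'_{n})\in\mathcal{J}(E)$. I would draw the sequence from $\Span\{\delta_{x}\mid x\in\Omega\}$, since $R_{f}^{t}(\delta_{x})=\mathcal{J}(f(x))\in\mathcal{J}(E)$ by \eqref{wT.1.2a} and $R_{f}^{t}$ is linear. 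The key structural input is that $\FV$ being Fr\'echet--Schwartz makes $\FV_{\kappa}'$ a (DFS)-space, which is a sequential space whose convergent sequences already converge within a single Banach step $(\FV_{\kappa}')_{D}$. As $\Span\{\delta_{x}\}$ is $\kappa(\FV',\FV)$-dense by the bipolar theorem, sequentiality yields a sequence $(f'_{n})$ in $\Span\{\delta_{x}\}$ with $f'_{n}\to f'$, and the regularity (bounded retractivity) of the (DFS)-space upgrades this to local convergence.

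The main obstacle is precisely this last step: passing from density of $\Span\{\delta_{x}\}$, which a priori supplies only a net, to a \emph{locally convergent sequence}. I expect to overcome it by exploiting the (DFS)-structure of $\FV_{\kappa}'$ — its sequential character together with the fact that a convergent sequence in a (DFS)-space lies in and converges within one Banach disk. Once \prettyref{cond:surjectivity_linearisation} d) is secured, \prettyref{thm:full_linearisation} applies and yields that $S$ is an isomorphism, i.e.\ $\FVE\cong\FV\varepsilon E$, in all four cases.
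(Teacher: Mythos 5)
Your proposal is correct and follows essentially the same route as the paper: every case is reduced to \prettyref{thm:full_linearisation} by combining \prettyref{lem:FVE_rel_comp} with \prettyref{cond:surjectivity_linearisation} a), d) resp.\ e), exactly as in the printed proof. The only cosmetic deviations are that in (ii) you re-derive from the DFS-structure of $\FV_{\kappa}'$ (sequentiality plus sequential retractivity) the fact that density of $\operatorname{span}\{\delta_{x}\;|\;x\in\Omega\}$ upgrades to locally convergent sequences --- the paper simply cites \cite[Lemma 6 (b), p.\ 231]{B/F/J} for the coincidence of closure and local closure --- and that you verify \prettyref{cond:surjectivity_linearisation} e) for (iii) directly via ccp of the quasi-complete semi-Montel space $E$ instead of reducing (iii) to (iv).
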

\begin{proof}
(i) Follows from \prettyref{lem:FVE_rel_comp} a) and \prettyref{thm:full_linearisation} 
with \prettyref{cond:surjectivity_linearisation} a). 

(ii) If $\FV$ is a Fr\'echet--Schwartz space, then we have 
\[
\overline{\operatorname{span}\{\delta_{x}\;|\;x\in\Omega\}}^{\operatorname{lc}}
=\overline{\operatorname{span}\{\delta_{x}\;|\;x\in\Omega\}}^{\FV_{b}'}
=\overline{\operatorname{span}\{\delta_{x}\;|\;x\in\Omega\}}^{\FV_{\kappa}'}
=\FV'
\]
by \cite[Lemma 6 (b), p.\ 231]{B/F/J} and the bipolar theorem 
where $\overline{\operatorname{span}\{\delta_{x}\;|\;x\in\Omega\}}^{\operatorname{lc}}$ is the local closure of 
$\operatorname{span}\{\delta_{x}\;|\;x\in\Omega\}$ in $\FV_{b}'$. Hence for every $f'\in\FV'$ there is a 
sequence $(f_{n}')$ in the span of $\{\delta_{x}\;|\;x\in\Omega\}$ which converges locally to $f'$ in 
$\FV_{\kappa}'$. 
Due to \eqref{wT.1.2a} we know that $R_{f}^{t}(f_{n}')\in\mathcal{J}(E)$ for every $f\in\FVE$ and $n\in\N$.
Since Fr\'echet--Schwartz spaces are also semi-Montel spaces, the statement follows from 
\prettyref{lem:FVE_rel_comp} a) and \prettyref{thm:full_linearisation} 
with \prettyref{cond:surjectivity_linearisation} d). 

(iv) Follows from \prettyref{lem:FVE_rel_comp} c) and \prettyref{thm:full_linearisation} 
with \prettyref{cond:surjectivity_linearisation} e). 

(iii) Is a special case of (iv) since the set $K:=\oacx(N_{j,m}(f))$ 
is absolutely convex and compact in the semi-Montel space $E$ by \cite[6.2.1 Proposition, p.\ 103]{Jarchow} 
and \cite[6.7.1 Proposition, p.\ 112]{Jarchow} for every $f\in\FVE$, $j\in J$ and $m\in M$. 
\end{proof}

\begin{rem}
Linearisations of spaces $\FVE_{\sigma}$ of weak $E$-valued functions, where $\FV$ need not be a $\dom$-space, are 
treated in \cite{kruse_2022}.
\end{rem}

Let us apply our preceding results to our weighted spaces of $k$-times continuously 
partially differentiable functions on an open set $\Omega\subset\R^{d}$ with $k\in\N_{\infty}$.

\begin{exa}\label{ex:weighted_diff}
Let $E$ be an lcHs, $k\in\N_{\infty}$, $\mathcal{V}^{k}$ be a directed family of weights 
which is locally bounded away from zero on an open set $\Omega\subset\R^{d}$.
\begin{enumerate}
\item [a)] $\mathcal{CV}^{k}(\Omega,E)\cong \mathcal{CV}^{k}(\Omega)\varepsilon E$ if $E$ is a semi-Montel space 
and $\mathcal{CV}^{k}(\Omega)$ barrelled.
\item [b)] $\mathcal{CV}^{k}_{P(\partial)}(\Omega,E)\cong \mathcal{CV}^{k}_{P(\partial)}(\Omega)\varepsilon E$ if 
$E$ is a semi-Montel space and $\mathcal{CV}^{k}_{P(\partial)}(\Omega)$ barrelled.
\item [c)] $\mathcal{CV}^{k}(\Omega,E)\cong \mathcal{CV}^{k}(\Omega)\varepsilon E$ if $E$ is complete and 
$\mathcal{CV}^{k}(\Omega)$ a Montel space.
\item [d)] $\mathcal{CV}^{k}_{P(\partial)}(\Omega,E)\cong \mathcal{CV}^{k}_{P(\partial)}(\Omega)\varepsilon E$ 
if $E$ is complete and $\mathcal{CV}^{k}_{P(\partial)}(\Omega)$ a Montel space.
\item [e)] $\mathcal{CV}^{k}(\Omega,E)\cong \mathcal{CV}^{k}(\Omega)\varepsilon E$ if $E$ is locally complete and 
$\mathcal{CV}^{k}(\Omega)$ a Fr\'echet--Schwartz space.
\item [f)] $\mathcal{CV}^{k}_{P(\partial)}(\Omega,E)\cong \mathcal{CV}^{k}_{P(\partial)}(\Omega)\varepsilon E$ 
if $E$ is locally complete and $\mathcal{CV}^{k}_{P(\partial)}(\Omega)$ a Fr\'echet--Schwartz space.
\end{enumerate} 
\end{exa} 
\begin{proof}
The generator of $(\mathcal{CV}^{k},E)$ and $(\mathcal{CV}^{k}_{P(\partial)},E)$ is strong and 
consistent by \prettyref{prop:weighted_diff_strong_cons}.
From \prettyref{cor:full_linearisation} (iii) we deduce part a) and b), from (i) part c) and d) 
and from (ii) part e) and f). 
\end{proof}

Closed subspaces of Fr\'echet--Schwartz spaces are also Fr\'echet--Schwartz spaces 
by \cite[Proposition 24.18, p.\ 284]{meisevogt1997}. 
The space $\mathcal{CV}^{\infty}_{P(\partial)}(\Omega)$ is closed in $\mathcal{CV}^{\infty}(\Omega)$ 
if there is an lcHs $Y$ such that 
$P(\partial)_{\mid\mathcal{CV}^{\infty}(\Omega)}\colon\mathcal{CV}^{\infty}(\Omega)\to Y$ is continuous. 
For example, this is fulfilled if the coefficients of $P(\partial)$ belong to $\mathcal{C}(\Omega)$, 
in particular, if $P(\partial):=\Delta$ or $\overline{\partial}$, with $Y:=(\mathcal{C}(\Omega),\tau_{c})$ 
due to $\mathcal{V}^{\infty}$ being locally bounded away from zero.
The spaces $\mathcal{CV}^{k}(\Omega)$ from \prettyref{ex:weighted_smooth_functions} a)(i) 
with $\omega_{m}:=M_{m}\times\Omega$ for all $m\in\N_{0}$, 
where $M_{m}:=\{\beta\in\N_{0}^{d}\;|\;|\beta|\leq \min(m,k)\}$, 
are Fr\'{e}chet spaces and thus barrelled if the $J$ in 
$\mathcal{V}^{k}:=(\nu_{j,m})_{j\in J, m\in\N_{0}}$ is countable by \cite[Proposition 3.7, p.\ 240]{kruse2018_2}. 
Sufficient conditions on the weights that guarantee that $\mathcal{CV}^{\infty}(\Omega)$ is a nuclear 
Fr\'echet space and hence a Schwartz space as well can be found in \cite[Theorem 3.1, p.\ 188]{kruse2018_4}.
For the case $\omega_{m}=\N_{0}^{d}\times\Omega$ see the references given in \cite[p.\ 1]{kruse2018_4}.

If $\mathcal{V}^{k}=\mathcal{W}^{k}$, i.e.\ $\mathcal{C}^{k}(\Omega,E)$ is equipped with its usual topology 
of uniform convergence of all partial derivatives up to order $k$ on compact subsets of $\Omega$, 
\prettyref{ex:weighted_diff} c)+d) can be improved to quasi-complete $E$. 
For $\Omega=\R^{d}$ this can be found in \cite[Proposition 9, p.\ 108, Th\'{e}or\`{e}me 1, p.\ 111]{Schwartz1955}
and for general open $\Omega\subset\R^{d}$ it is already mentioned in \cite[(9), p.\ 236]{Kaballo} 
(without a proof) that $\mathcal{CW}^{k}(\Omega,E)\cong \mathcal{CW}^{k}(\Omega)\varepsilon E$ 
for $k\in\N_{\infty}$ and quasi-complete $E$. 
For $k=\infty$ we even have $\mathcal{CW}^{\infty}(\Omega,E)\cong \mathcal{CW}^{\infty}(\Omega)\varepsilon E$ 
for locally complete $E$ by \cite[p.\ 228]{B/F/J}. Our technique allows us to generalise the first result 
and to get back the second result.

\begin{exa}\label{ex:diff_usual}
Let $E$ be an lcHs, $k\in\N_{\infty}$ and $\Omega\subset\R^{d}$ open. If $k<\infty$ and $E$ has metric ccp, 
or if $k=\infty$ and $E$ is locally complete, then
\begin{enumerate}
\item [a)] $\mathcal{CW}^{k}(\Omega,E)\cong \mathcal{CW}^{k}(\Omega)\varepsilon E$, and
\item [b)] $\mathcal{CW}^{k}_{P(\partial)}(\Omega,E)\cong \mathcal{CW}^{k}_{P(\partial)}(\Omega)\varepsilon E$ if 
$\mathcal{CW}^{k}_{P(\partial)}(\Omega)$ is closed in $\mathcal{CW}^{k}(\Omega)$.
\end{enumerate}
\end{exa}
\begin{proof}
We recall from \prettyref{ex:weighted_smooth_functions} b) that $\mathcal{W}^{k}$ is the family of weights given 
by $\nu_{K,m}(\beta,x):=\chi_{K}(x)$, $(\beta,x)\in M_{m}\times\Omega$, for all $m\in\N_{0}$ 
and compact $K\subset\Omega$ where $M_{m}:=\{\beta\in\N^{d}_{0}\;|\;|\beta|\leq\min(m,k)\}$ and 
$\chi_{K}$ is the characteristic function of $K$. 
We already know that the generator for $(\mathcal{CW}^{k},E)$ and $(\mathcal{CW}^{k}_{P(\partial)},E)$ 
is strong and consistent by \prettyref{prop:weighted_diff_strong_cons}
because $\mathcal{W}^{k}$ is locally bounded away from zero on $\Omega$, and $\mathcal{CW}^{k}(\Omega)$ 
and its closed subspace $\mathcal{CW}^{k}_{P(\partial)}(\Omega)$ are Fr\'echet spaces.  
Let $f\in\mathcal{CW}^{k}(\Omega,E)$, $K\subset\Omega$ be compact, $m\in\N_{0}$ and  
consider
\[
 N_{K,m}(f)=\{(\partial^{\beta})^{E}f(x)\nu_{K,m}(\beta,x)\;|\;x\in \Omega,\,\beta\in M_{m}\}
            =\{0\}\cup\bigcup_{\beta\in M_{m}}(\partial^{\beta})^{E}f(K). 
\]
$N_{K,m}(f)$ is compact since it is a finite union of compact sets. 
Furthermore, the compact sets $\{0\}$ and $(\partial^{\beta})^{E}f(K)$ are metrisable by 
\cite[Chap.\ IX, \S2.10, Proposition 17, p.\ 159]{bourbakiII} and thus their finite union $N_{K,m}(f)$ 
is metrisable as well by \cite[Theorem 1, p.\ 361]{stone} since the compact set $N_{K,m}(f)$ 
is collectionwise normal and locally countably compact by \cite[5.1.18 Theorem, p.\ 305]{engelking}.  
If $E$ has metric ccp, then the set $\oacx(N_{K,m}(f))$ is 
absolutely convex and compact. Thus \prettyref{cor:full_linearisation} (iv) 
settles the case for $k<\infty$. 
If $k=\infty$ and $E$ is locally complete, we observe that 
$K_{\beta}:=\oacx((\partial^{\beta})^{E}f(K))$ for 
$f\in\mathcal{CW}^{\infty}(\Omega,E)$ is absolutely convex and compact 
by \cite[Proposition 2, p.\ 354]{Bonet2002}. Then we have
\[
 N_{K,m}(f)\subset \acx\bigl(\bigcup_{\beta\in M_{m}}K_{\beta}\bigr)
\]
and the set on the right-hand side is absolutely convex and compact 
by \cite[6.7.3 Proposition, p.\ 113]{Jarchow}. 
Again, the statement follows from \prettyref{cor:full_linearisation} (iv). 
\end{proof}

The statement above for $k=\infty$ follows from \prettyref{ex:weighted_diff} e)+f) as well because 
$\mathcal{CW}^{\infty}(\Omega)$ and its closed subspaces are Fr\'echet--Schwartz spaces. 
In the context of differentiability on infinite dimensional spaces the preceding example a) 
remains true for an open subset $\Omega$ of a Fr\'{e}chet space or DFM-space and quasi-complete $E$ by 
\cite[3.2 Corollary, p.\ 286]{meise1977}. 
Like here this can be generalised to $E$ with [metric] ccp. 
A special case of example b) is already known to be a consequence of \cite[Theorem 9, p.\ 232]{B/F/J}, 
namely, if $k=\infty$ and $P(\partial)^{\K}$ is hypoelliptic with constant coefficients. 
In particular, this covers the space of holomorphic functions and the space of harmonic functions. 
Holomorphy on infinite dimensional spaces is treated in \cite[Corollary 6.35, p.\ 332--333]{dineen1981} 
where $\mathcal{V}=\mathcal{W}^{0}$, $\Omega$ is an open subset of a locally convex Hausdorff $k$-space and
$E$ a quasi-complete locally convex Hausdorff space, both over $\C$, which can be generalised to 
$E$ with [metric] ccp in a similar way.

For a second improvement of \prettyref{ex:weighted_diff} for $k=\infty$ to locally complete $E$ without 
the condition that $\mathcal{CV}^{\infty}(\Omega)$ resp.\ $\mathcal{CV}^{\infty}_{P(\partial)}(\Omega)$ 
is a Fr\'echet--Schwartz space we introduce the following conditions on the family 
$\mathcal{V }^{\infty}$ on $(M_{m}\times\Omega)_{m\in\N_{0}}$.
We say that a family $\mathcal{V}^{\infty}$ of weights on $(M_{m}\times\Omega)_{m\in\N_{0}}$ is 
\emph{\gls{C_1controlled}} if 
\begin{align*}
&\phantom{ii}(i)\;\forall\;j \in J,\, m\in\N_{0},\,\beta\in M_{m}:\;
 \nu_{j,m}(\beta,\cdot)\in\mathcal{C}^{1}(\Omega),\\
&\phantom{i}(ii)\;\forall\;j \in J,\, m\in\N_{0},\, \beta,\gamma\in M_{m},x\in\Omega:\;
\nu_{j,m}(\beta,x)=\nu_{j,m}(\gamma,x),\\
&(iii)\;\forall\;j \in J,\, m\in\N_{0}\;\exists\; i\in J,\, k\in\N_{0},\, k\geq m,\, C>0\;
        \forall\;\beta\in M_{m},\,x\in\Omega,\, 1\leq n\leq d:\\
&\phantom{(iii)\;} \bigl|\partial^{e_{n}}\nu_{j,m}(\beta,\cdot)\bigr|(x)
\leq C\nu_{i,k}(\beta,x).
\end{align*}
We say that family $\mathcal{V}^{k}$, $k\in\N_{\infty}$, fulfils condition $\gls{Vinfty}$ if
\begin{align*}
\forall\;m\in\N_{0},\,j\in J\;\exists\;n\in\N_{\geq m},\,i\in J\;&\forall\;\varepsilon >0 \;\exists\; 
K\subset\Omega\;\text{compact}\; \forall\;\beta\in M_{m},\,x\in\Omega\setminus K: \\
&\nu_{j,m}(\beta,x)\leq \varepsilon \nu_{i,n}(\beta,x)
\end{align*}
where $\N_{\geq m}:=\{n\in\N_{0}\;|\;n\geq m\}$.
Here $(V_{\infty})$ stands for \emph{vanishing at infinity} and the condition was introduced in 
\cite[Remark 3.4, p.\ 239]{kruse2018_2} and for $k=0$ in \cite[1.3 Bemerkung, p.\ 189]{B1}.

\begin{exa}\label{ex:weighted_C_1_diff}
Let $E$ be an lcHs and $\mathcal{V}^{\infty}$ a directed $\mathcal{C}^{1}$-controlled family of weights on 
an open convex set $\Omega\subset\R^{d}$ which fulfils $(V_{\infty})$.
If $E$ is locally complete, then
\begin{enumerate}
\item [a)] $\mathcal{CV}^{\infty}(\Omega,E)\cong\mathcal{CV}^{\infty}(\Omega)\varepsilon E$ if 
$\mathcal{CV}^{\infty}(\Omega)$ is barrelled, and
\item [b)] $\mathcal{CV}^{\infty}_{P(\partial)}(\Omega,E)\cong
\mathcal{CV}^{\infty}_{P(\partial)}(\Omega)\varepsilon E$ if $\mathcal{CV}^{\infty}_{P(\partial)}(\Omega)$ 
is barrelled. 
\end{enumerate} 
\end{exa} 
\begin{proof}
We already know that the generator for $(\mathcal{CV}^{\infty},E)$ and $(\mathcal{CV}^{\infty}_{P(\partial)},E)$ 
is strong and consistent by \prettyref{prop:weighted_diff_strong_cons}
because $\mathcal{V}^{\infty}$ is locally bounded away from zero on $\Omega$ as $\nu_{j,m}(\beta,\cdot)$ 
is continuous for all $j\in J$, $m\in\N_{0}$ and $\beta\in M_{m}$.

Let $f\in\mathcal{CV}^{\infty}(\Omega,E)$, $j\in J$, $m\in\N_{0}$ and
$\beta\in M_{m}$. 
We set $g\colon\Omega\to E$, $g(x):=(\partial^{\beta})^{E}f(x)\nu_{j,m}(\beta,x)$, and note that 
\[
(\partial^{e_{n}})^{E}g(x)=(\partial^{\beta+e_{n}})^{E}f(x)\nu_{j,m}(\beta,x)
+(\partial^{\beta})^{E}f(x)\bigl((\partial^{e_{n}})^{\R}\nu_{j,m}(\beta,\cdot)\bigr)(x),\quad x\in\Omega,
\]
for all $1\leq n\leq d$. Since $\mathcal{V}^{\infty}$ is directed and $\mathcal{C}^{1}$-controlled
there are $i_{1},i_{2}\in J$, $k_{1},k_{2}\in\N_{0}$, $k_{1}>m$, $k_{2}\geq m$, and $C_{1},C_{2}>0$ such that
\begin{flalign*}
&\hspace*{0.43cm} p_{\alpha}((\partial^{e_{n}})^{E}g(x))\\
&\leq p_{\alpha}((\partial^{\beta+e_{n}})^{E}f(x))\nu_{j,m}(\beta,x)
 +p_{\alpha}((\partial^{\beta})^{E}f(x))\bigl|(\partial^{e_{n}})^{\R}\nu_{j,m}(\beta,\cdot)\bigr|(x)\\
&\leq C_{1}p_{\alpha}((\partial^{\beta+e_{n}})^{E}f(x))\nu_{i_{1},k_{1}}(\beta,x)
 +C_{2}p_{\alpha}((\partial^{\beta})^{E}f(x))\nu_{i_{2},k_{2}}(\beta,x)\\
&=C_{1}p_{\alpha}((\partial^{\beta+e_{n}})^{E}f(x))\nu_{i_{1},k_{1}}(\beta+e_{n},x)
 +C_{2}p_{\alpha}((\partial^{\beta})^{E}f(x))\nu_{i_{2},k_{2}}(\beta,x)
\end{flalign*}
for all $1\leq n\leq d$ and $\alpha\in\mathfrak{A}$, which implies 
\[
\sup_{\substack{x\in\Omega\\ \gamma\in\N_{0}^{d},|\gamma|\leq 1}}p_{\alpha}((\partial^{\gamma})^{E}g(x))
\leq |f|_{j,m,\alpha}+C_{1}|f|_{i_{1},k_{1},\alpha}+C_{2}|f|_{i_{2},k_{2},\alpha}.
\]
Thus $g$ is (weakly) $\mathcal{C}^{1}_{b}$. 

Due to $(V_{\infty})$ there are $n\in\N_{\geq m}$ and $i\in J$ such that for all $\varepsilon >0$ there is 
a compact set $K\subset\Omega$ such that for all $\beta\in M_{m}$ and $x\in\Omega\setminus K$ we have 
\[
\nu_{j,m}(\beta,x)\leq \varepsilon \nu_{i,n}(\beta,x).
\]
Since $\mathcal{V}^{\infty}$ is directed, we may assume w.l.o.g.\ that 
$\nu_{j,m}(\beta,x)\leq \nu_{i,n}(\beta,x)$ for all $x\in\Omega$. 
This implies that the zeros of $\nu_{i,n}(\beta,\cdot)$ are zeros of $\nu_{j,m}(\beta,\cdot)$. 
We define $h\colon\Omega\to[0,\infty)$ by $h(x):=\nu_{i,n}(\beta,x)/\nu_{j,m}(\beta,x)$ for 
$x\in\Omega$ with $\nu_{j,m}(\beta,x)\neq 0$ and $h(x):=1$ if $\nu_{j,m}(\beta,x)=0$. 
We note that $h(x)>0$ for all $x\in\Omega$ as the zeros of $\nu_{i,n}(\beta,\cdot)$ are contained in the 
zeros of $\nu_{j,m}(\beta,\cdot)$.
It follows that
\[
(\partial^{\beta})^{E}f(x)\nu_{j,m}(\beta,x)h(x)=(\partial^{\beta})^{E}f(x)\nu_{i,n}(\beta,x)
\]
for $x\in\Omega$ with $\nu_{j,m}(\beta,x)\neq 0$ and 
$(\partial^{\beta})^{E}f(x)\nu_{j,m}(\beta,x)h(x)=0$ for $x\in\Omega$ with $\nu_{j,m}(\beta,x)=0$. 
Therefore $(\partial^{\beta})^{E}f\nu_{j,m}(\beta,\cdot)h$ is bounded on $\Omega$. Further, 
\[
\varepsilon h(x)=\varepsilon\nu_{i,n}(\beta,x)/\nu_{j,m}(\beta,x)\geq 1
\]
for $x\in\Omega\setminus K$ with $\nu_{j,m}(\beta,x)\neq 0$ because $(V_{\infty})$ is fulfilled. 
Further, the zeros of $\nu_{j,m}(\beta,\cdot)$ are contained in 
$N:=\{x\in\Omega\;|\;(\partial^{\beta})^{E}f(x)\nu_{j,m}(\beta,x)=0\}$.
This yields that $K_{\beta}:=\oacx((\partial^{\beta})^{E}f\nu_{j,m}(\beta,\cdot)(\Omega))$ 
is absolutely convex and compact 
by \prettyref{prop:abs_conv_comp_hoelder} and \ref{prop:abs_conv_comp_C_1_b}. Furthermore, 
\[
 N_{j,m}(f)=\{(\partial^{\beta})^{E}f(x)\nu_{j,m}(\beta,x)\;|\;x\in \Omega,\,\beta\in M_{m}\}
           \subset \acx\bigl(\bigcup_{\beta\in M_{m}}K_{\beta}\bigr)
\]
and the set on the right-hand side is absolutely convex and compact 
by \cite[6.7.3 Proposition, p.\ 113]{Jarchow}. 
Finally, our statement follows from \prettyref{cor:full_linearisation} (iv). 
\end{proof}

For the Schwartz space $\mathcal{S}(\R^{d},E)$ and the multiplier space $\mathcal{O}_{M}(\R^{d},E)$ 
from \prettyref{ex:weighted_smooth_functions} c) and d) 
an improvement of \prettyref{ex:weighted_diff} c) to quasi-complete $E$ is already known, 
see e.g.\ \cite[Proposition 9, p.\ 108, Th\'{e}or\`{e}me 1, p.\ 111]{Schwartz1955}. 
However, due to \prettyref{ex:weighted_C_1_diff} it is even allowed that $E$ is only locally complete.

\begin{cor}\label{cor:Schwartz}
If $E$ is a locally complete lcHs, then $\mathcal{S}(\R^{d},E)\cong\mathcal{S}(\R^{d})\varepsilon E$ and 
$\mathcal{O}_{M}(\R^{d},E)\cong \mathcal{O}_{M}(\R^{d})\varepsilon E$.
\end{cor}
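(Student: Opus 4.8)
The plan is to realise both $\mathcal{S}(\R^{d},E)$ and $\mathcal{O}_{M}(\R^{d},E)$ as weighted spaces $\mathcal{CV}^{\infty}(\R^{d},E)$ and to apply \prettyref{ex:weighted_C_1_diff} a). Since $\R^{d}$ is open and convex and $E$ is locally complete by hypothesis, in each case it only remains to check that the scalar space $\mathcal{CV}^{\infty}(\R^{d})$ is barrelled and that the defining family of weights --- or an equivalent family inducing the same space and topology --- is directed, $\mathcal{C}^{1}$-controlled and satisfies $(V_{\infty})$.

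For the Schwartz space I would take the family from \prettyref{ex:weighted_smooth_functions} c), namely $J:=\{1\}$ and $\nu_{1,m}(\beta,x):=(1+|x|^{2})^{m/2}$. It is directed because $1+|x|^{2}\geq 1$ gives $\nu_{1,m}\leq\nu_{1,m'}$ for $m\leq m'$, and condition (ii) of $\mathcal{C}^{1}$-controlledness is immediate as the weight does not depend on $\beta$. For (iii) one computes
\[
\bigl|\partial^{e_{n}}\nu_{1,m}(\beta,\cdot)\bigr|(x)=m\,|x_{n}|\,(1+|x|^{2})^{(m-2)/2}\leq m\,(1+|x|^{2})^{m/2}=m\,\nu_{1,m}(\beta,x),
\]
so the family is $\mathcal{C}^{1}$-controlled with $i=1$, $k=m$. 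Finally $(V_{\infty})$ holds with $n:=m+1$, since $\nu_{1,m}/\nu_{1,m+1}=(1+|x|^{2})^{-1/2}\to 0$ as $|x|\to\infty$, so for given $\varepsilon>0$ a closed ball $K:=\{x : |x|\leq R\}$ with $(1+R^{2})^{-1/2}<\varepsilon$ works. As $\mathcal{S}(\R^{d})$ is Fr\'echet, hence barrelled, \prettyref{ex:weighted_C_1_diff} a) yields $\mathcal{S}(\R^{d},E)\cong\mathcal{S}(\R^{d})\varepsilon E$.

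The multiplier space is the delicate case and will be the main obstacle, because the weights $\nu_{j,m}(\beta,x)=\max_{g\in j}|g(x)|$ of \prettyref{ex:weighted_smooth_functions} d) are in general not differentiable and hence not $\mathcal{C}^{1}$-controlled. I would replace this family by the family indexed by the strictly positive functions $\psi\in\mathcal{S}(\R^{d})$, with $\nu_{\psi,m}(\beta,x):=\psi(x)$ for all $m\in\N_{0}$ and all $\beta$. Both families induce the same underlying space and the same topology: each $\psi$ equals $|\psi|$, so every new seminorm is an old one, and conversely every $g\in\mathcal{S}(\R^{d})$ admits a strictly positive Schwartz majorant $\psi$ with $|g|\leq\psi$ (a standard fact; one may take $\psi:=(|g|^{2}+\e^{-|\cdot|^{2}})^{1/2}$), so every old seminorm is dominated by a new one. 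The new family is directed via sums, $\psi_{3}:=\psi_{1}+\psi_{2}$; it is $\mathcal{C}^{1}$-controlled, condition (iii) being satisfied by bounding $|\partial^{e_{n}}\psi|$ through the strictly positive Schwartz weight $\eta:=(\psi^{2}+\sum_{n=1}^{d}(\partial^{e_{n}}\psi)^{2})^{1/2}$; and it fulfils $(V_{\infty})$ with $n:=m$ and $\widetilde{\psi}:=\psi\,(1+|\cdot|^{2})^{1/2}\in\mathcal{S}(\R^{d})$, since $\psi/\widetilde{\psi}=(1+|\cdot|^{2})^{-1/2}\to 0$. As $\mathcal{O}_{M}(\R^{d})$ is barrelled (being reflexive), \prettyref{ex:weighted_C_1_diff} a) then gives $\mathcal{O}_{M}(\R^{d},E)\cong\mathcal{O}_{M}(\R^{d})\varepsilon E$.

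The hard part is entirely on the $\mathcal{O}_{M}$ side: one must verify rigorously that the constructed majorants and the weights $\eta$ and $\widetilde{\psi}$ are genuinely Schwartz functions (in particular that the square roots lie in $\mathcal{S}(\R^{d})$ with all derivatives rapidly decreasing), that the two weight families really generate the same locally convex topology, and that $\mathcal{O}_{M}(\R^{d})$ is barrelled. Once these points are secured, the Schwartz-space statement is routine and both isomorphisms follow at once from \prettyref{ex:weighted_C_1_diff} a).
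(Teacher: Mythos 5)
Your treatment of the Schwartz space is correct and is essentially the paper's own argument: same weight family, same verification of $\mathcal{C}^{1}$-controlledness and $(V_{\infty})$, same appeal to \prettyref{ex:weighted_C_1_diff} a) together with barrelledness of the Fr\'echet space $\mathcal{S}(\R^{d})$. (The paper uses $n:=2m$ in $(V_{\infty})$ where you use $m+1$; both work.)

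The $\mathcal{O}_{M}$ half is where your route diverges from the paper, and it has a genuine gap exactly at the point you flag as ``the hard part''. Your proposed majorant $\psi:=(|g|^{2}+\e^{-|\cdot|^{2}})^{1/2}$ is in general \emph{not} a Schwartz function: take $d=1$ and $g(x):=\sin(\pi x)\e^{-x^{2}/8}$. At the integers $n$ one has $g(n)=0$, $|g'(n)|=\pi\e^{-n^{2}/8}$, so with $u:=g^{2}+\e^{-x^{2}}$ the second derivative of $\sqrt{u}$ contains the term $u''/(2\sqrt{u})\geq (g'(n))^{2}\e^{n^{2}/2}=\pi^{2}\e^{n^{2}/4}\to\infty$; thus $\sqrt{u}$ is not even $\mathcal{C}^{2}_{b}$. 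The same square-root obstruction afflicts your $\eta:=(\psi^{2}+\sum_{n}(\partial^{e_{n}}\psi)^{2})^{1/2}$, so neither the claim that the two weight families generate the same topology nor the claim that the new family is $\mathcal{C}^{1}$-controlled is established. (A strictly positive smooth rapidly decreasing majorant with rapidly decreasing derivatives does exist, but it requires a genuinely different construction, not a pointwise square root.) The paper avoids all of this by not forcing $\mathcal{O}_{M}$ into the framework of \prettyref{ex:weighted_C_1_diff} at all: it verifies the hypothesis of \prettyref{cor:full_linearisation} (iv) directly. The key observation is that for $f\in\mathcal{O}_{M}(\R^{d},E)$ and $g\in\mathcal{S}(\R^{d})$ one has $((\partial^{\beta})^{E}f)g\in\mathcal{S}(\R^{d},E)$, hence $((\partial^{\beta})^{E}f)g\in\mathcal{C}^{1}_{b}(\R^{d},E)$, and then \prettyref{prop:abs_conv_comp_hoelder} and \prettyref{prop:abs_conv_comp_C_1_b} applied with $h(x)=1+|x|^{2}$ give compactness of $K_{\beta,g}:=\oacx(((\partial^{\beta})^{E}f)g(\R^{d}))$; the non-smooth weight $\max_{g\in j}|g(x)|$ is then handled by the pointwise identity $(\partial^{\beta})^{E}f(x)\max_{g\in j}|g(x)|=\e^{\iu\theta}(\partial^{\beta})^{E}f(x)\widetilde{g}(x)$, which places $N_{j,m}(f)$ inside $\acx\bigl(\bigcup_{\beta,g}K_{\beta,g}\bigr)$. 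In short: put the $\mathcal{C}^{1}_{b}$-regularity on the products $((\partial^{\beta})^{E}f)g$ rather than on the weights, and your difficulties disappear.
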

\begin{proof}
We start with the Schwartz space. 
Due to \prettyref{ex:weighted_C_1_diff} a) and the barrelledness of the Fr\'{e}chet space $\mathcal{S}(\R^{d})$ 
we only need to check that its
directed family $\mathcal{V}^{\infty}:=(\nu_{1,m})_{m\in\N_{0}}$ of weights given by 
$\nu_{1,m}(\beta,x):=(1+|x|^{2})^{m/2}$, $x\in\R^{d}$, for $m\in\N_{0}$ and $\beta\in M_{m}$ 
is $\mathcal{C}^{1}$-controlled and fulfils $(V_{\infty})$. 
Obviously, condition (i) and (ii) are fulfilled. 
Since 
\[
\bigl|\partial^{e_{n}}\nu_{1,m}(\beta,\cdot)\bigr|(x)
=(m/2)(1+|x|^{2})^{(m/2)-1}2|x_{n}|\leq m (1+|x|^{2})^{m/2}=m\nu_{1,m}(\beta,x)
\]
for all $x\in\R^{d}$ and $1\leq n\leq d$, condition (iii) is also fulfilled. 
Thus $\mathcal{V}^{\infty}$ is $\mathcal{C}^{1}$-controlled. 
Noting that for every $m\in\N$ and $\varepsilon>0$ there is $r>0$ such 
that
\[
\frac{(1+|x|^{2})^{m/2}}{(1+|x|^{2})^{m}}=(1+|x|^{2})^{-m/2}\leq\varepsilon
\]
for all $x\notin\overline{\mathbb{B}_{r}(0)}$, we obtain that
\[
\nu_{1,m}(\beta,x)\leq\varepsilon \nu_{1,2m}(\beta,x)
\] 
for all $x\notin\overline{\mathbb{B}_{r}(0)}$ and $\beta\in M_{m}$. 
Hence $\mathcal{V}^{\infty}$ fulfils condition $(V_{\infty})$.

Now, let us consider the multiplier space. We already know that the generator for $(\mathcal{O}_{M},E)$ 
is strong and consistent by \prettyref{prop:weighted_diff_strong_cons}
because $\mathcal{O}_{M}(\R)$ is a Montel space, thus barrelled, by 
\cite[Chap.\ II, \S4, n$^\circ$4, Th\'{e}or\`{e}me 16, p.\ 131]{Gro} 
and its family of weights is continuous on $\R^{d}$, thus locally bounded away from zero.

Let $f\in\mathcal{O}_{M}(\R ,E)$, $g\in\mathcal{S}(\R^{d})$, $m\in\N_{0}$ and
$\beta\in M_{m}$. Then $(\partial^{\beta})^{E}f\in\mathcal{O}_{M}(\R ,E)$ and hence 
$((\partial^{\beta})^{E}f)g\in\mathcal{S}(\R^{d},E)$, which implies that 
$((\partial^{\beta})^{E}f)g\in\mathcal{C}^{1}_{b}(\R^{d},E)$. 
Moreover, we choose $h\colon\R^{d}\to (0,\infty)$, $h(x):=1+|x|^2$.
Then $((\partial^{\beta})^{E}f)gh$ is bounded on $\R^{d}$ and for $\varepsilon>0$ there is $r>0$ 
such that $(1+|x|^2)^{-1}\leq \varepsilon$ for all $x\notin\overline{\mathbb{B}_{r}(0)}$,
yielding that $K_{\beta,g}:=\oacx(((\partial^{\beta})^{E}f)g(\R^{d}))$ 
is absolutely convex and compact 
by \prettyref{prop:abs_conv_comp_hoelder} and \ref{prop:abs_conv_comp_C_1_b}.
Let $j\subset\mathcal{S}(\R^{d})$ be finite. Since for each $x\in\R^{d}$ we have 
$(\partial^{\beta})^{E}f(x)\max_{g\in j}|g(x)|=\e^{\iu\theta}(\partial^{\beta})^{E}f(x)\widetilde{g}(x)$ 
for some $\widetilde{g}\in j$ and $\theta\in [0,2\pi)$, we get 
\[
 N_{j,m}(f)=\{(\partial^{\beta})^{E}f(x)\max_{g\in j}|g(x)|\;|\;x\in \R^{d},\,\beta\in M_{m}\}
           \subset \acx\bigl(\bigcup_{\beta\in M_{m},g\in j}K_{\beta,g}\bigr).
\]
The set on the right-hand side is absolutely convex and compact by \cite[6.7.3 Proposition, p.\ 113]{Jarchow}. 
Finally, our statement follows from \prettyref{cor:full_linearisation} (iv). 
\end{proof}

For an alternative proof in the case of the Schwartz space we may also use \prettyref{ex:weighted_diff} e) 
since $\mathcal{S}(\R^{d})$ is a Fr\'echet--Schwartz space. 
\prettyref{ex:weighted_C_1_diff} can also be used for an alternative proof of \prettyref{ex:diff_usual} 
if $k=\infty$ by observing that $\mathcal{CW}^{\infty}(\Omega,E)=\mathcal{CV}^{\infty}(\Omega,E)$ for 
any lcHs $E$ where $\mathcal{V}^{\infty}:=\{\nu\in\mathcal{C}^{\infty}_{c}(\Omega)\;|\;\nu\geq 0\}$ and $\mathcal{C}^{\infty}_{c}(\Omega)$ is the space of functions in 
$\mathcal{C}^{\infty}(\Omega)$ with compact support.

Now, we improve \prettyref{ex:weighted_diff} for the special case of spaces of ultradifferentiable functions 
$\mathcal{E}^{(M_{p})}(\Omega,E)$ and $\mathcal{E}^{\{M_{p}\}}(\Omega,E)$ 
from \prettyref{ex:weighted_smooth_functions} e) and f)
where $\omega_{m}:=\N_{0}^{d}\times\Omega$ for all $m\in\N_{0}$. 
For this purpose we recall the following conditions of Komatsu for the sequence $(M_{p})_{p\in\N_{0}}$ (see 
\cite[p.\ 26]{Kom7} and \cite[p.\ 653]{Kom9}):
\begin{enumerate}
 \item [(M.0)$\phantom{'}$] $M_{0}=M_{1}=1$,
 \item [(M.1)$\phantom{'}$] $\forall\; p\in\N:\; M_{p}^{2}\leq M_{p-1}M_{p+1}$,
 \item [(M.2)'] $\exists\; A,C>0\;\forall\;p\in\N_{0}:\;M_{p+1}\leq AC^{p+1}M_{p}$,
 \item [(M.3)'] $\sum_{p=1}^{\infty}\frac{M_{p-1}}{M_{p}}<\infty$.
\end{enumerate}

\begin{exa}\label{ex:ultradifferentiable}
Let $E$ be an lcHs, $\Omega\subset\R^{d}$ open
and $(M_{p})_{p\in\N_{0}}$ a sequence of positive real numbers.
\begin{enumerate}
 \item [a)] $\mathcal{E}^{(M_{p})}(\Omega,E)\cong\mathcal{E}^{(M_{p})}(\Omega)\varepsilon E$ 
 if $E$ is locally complete.
 \item [b)] $\mathcal{E}^{\{M_{p}\}}(\Omega,E)\cong\mathcal{E}^{\{M_{p}\}}(\Omega)\varepsilon E$ 
 if $E$ is complete or semi-Montel
 and in both cases $(M_{p})_{p\in\N_{0}}$ fulfils (M.1) and (M.3)'.
 \item [c)] $\mathcal{E}^{\{M_{p}\}}(\Omega,E)\cong\mathcal{E}^{\{M_{p}\}}(\Omega)\varepsilon E$ 
 if $E$ is sequentially complete and  $(M_{p})_{p\in\N_{0}}$ fulfils (M.0), (M.1), (M.2)' and (M.3)'.
\end{enumerate}
\end{exa}
\begin{proof}
The generator is strong and consistent by \prettyref{prop:weighted_diff_strong_cons}
since the family of weights given in \prettyref{ex:weighted_smooth_functions} e) resp.\ f) 
is locally bounded away from zero on $\Omega$ and 
$\mathcal{E}^{(M_{p})}(\Omega)$ is a Fr\'{e}chet--Schwartz space in a) by \cite[Theorem 2.6, p.\ 44]{Kom7} whereas 
$\mathcal{E}^{\{M_{p}\}}(\Omega)$ is a Montel space in b) and c) by \cite[Theorem 5.12, p.\ 65--66]{Kom7}. 
Hence the statements a) and b) follow from \prettyref{ex:weighted_diff}. 

Let us turn to c). We note that $\mathcal{E}^{\{M_{p}\}}(\Omega,E)\subset\mathcal{E}^{\{M_{p}\}}(\Omega,E)_{\kappa}$ 
by \prettyref{lem:FVE_rel_comp} a) for any lcHs $E$. 
Further, we claim that \prettyref{cond:surjectivity_linearisation} c) is fulfilled. 
Let $f'\in\mathcal{E}^{\{M_{p}\}}(\Omega)'$. 
Due to \cite[Proposition 3.7, p.\ 677]{Kom9} there is a sequence $(f_{n})_{n\in\N}$ 
in the space $\mathcal{D}^{\{M_{p}\}}(\Omega)$ of ultradifferentiable functions 
of class $\{M_{p}\}$ of Roumieu-type with compact support which converges to 
$f'$ in $\mathcal{E}^{\{M_{p}\}}(\Omega)_{b}'$. 
Let $f\in\mathcal{E}^{\{M_{p}\}}(\Omega,E)$. We observe that for every $e'\in E'$
\[
|R_{f}^{t}(f_{n})(e')|=\bigl|\int_{\Omega}f_{n}(x)e'(f(x))\d x\bigr|
\leq \lambda\bigl(\operatorname{supp}(f_{n})\bigr)\sup_{y\in K_{n}(f)}|e'(y)|
\]
where $\lambda$ is the Lebesgue measure, $\operatorname{supp}(f_{n})$ is the support of $f_{n}$ and 
$K_{n}(f):=\{f_{n}(x)f(x)\;|\;x\in\operatorname{supp}(f_{n})\}$. 
The set $K_{n}(f)$ is compact and metrisable by 
\cite[Chap.\ IX, \S2.10, Proposition 17, p.\ 159]{bourbakiII} 
and thus the closure of its absolutely convex hull is 
compact in $E$ as the sequentially complete space $E$ has metric ccp. 
We conclude that $R_{f}^{t}(f_{n})\in (E_{\kappa}')'=\mathcal{J}(E)$ for 
every $n\in\N$. Therefore \prettyref{cond:surjectivity_linearisation} c) is fulfilled, implying 
statement c) for sequentially complete $E$ by \prettyref{thm:full_linearisation}. 
\end{proof}

The results a) and b) in this example are new whereas c) is already proved in
\cite[Theorem 3.10, p.\ 678]{Kom9} in a different way. In particular, part a)
improves \cite[Theorem 3.10, p.\ 678]{Kom9} since Komatsu's conditions (M.0), (M.1), (M.2)' and (M.3)' 
are not needed and the condition that $E$ is sequentially complete is weakened to local completeness. 
We included c) to demonstrate an application of \prettyref{cond:surjectivity_linearisation} c). 
\chapter{Consistency}
\label{chap:consistency}
\section{The spaces \texorpdfstring{$\operatorname{AP}(\Omega,E)$}{AP(Omega,E)} and consistency}
\label{sect:consistency}
This section is dedicated to the properties of functions which are compatible with the $\varepsilon$-product 
in the sense that the space of functions having these properties can be chosen as the space 
$\operatorname{AP}(\Omega,E)$ or $\bigcap_{m\in M}\dom T^{E}_{m}$ in the \prettyref{def:consist_strong} b) 
of consistency. This is done in a quite general way so that we are not tied to certain spaces and have to 
redo our argumentation, for example, if we consider the same generator $(T^{E}_{m},T^{\K}_{m})_{m\in M}$ 
for two different spaces of functions. 

Due to the linearity and continuity of $u\in\FV\varepsilon E$ for a $\dom$-space $\FV$ and $S(u)=u\circ \delta$ 
with $\delta\colon\Omega\to\FV'$, $x\mapsto \delta_{x}$, these are properties which are
purely pointwise or given by pointwise approximation. 
Among such properties of functions are continuity by \prettyref{prop:stetig.cons}, 
Cauchy continuity by \prettyref{prop:c-stetig.cons}, uniform continuity by \prettyref{prop:u-stetig.cons}, 
continuous extendability by \prettyref{prop:cont_ext}, 
continuous differentiability by \prettyref{prop:weighted_diff_strong_cons}, 
vanishing at infinity by \prettyref{prop:van.at.inf0}
and purely pointwise properties of a function like vanishing on a set by \prettyref{prop:zeros}.

We collect these properties in propositions and in follow-up lemmas we handle properties which can be described 
by compositions of defining operators $T^{E}_{m_{1}}\circ T^{E}_{m_{2}}$ 
like continuous differentiability (of higher order) of Fourier transformations
(see \prettyref{ex:Bjoerck}). 
We fix the following notation for this section. For a $\dom$-space $\FV$ and linear 
$T\colon\FV\to\K^{\Omega}$ we set 
$(\delta\circ T)(x)(f):=(\delta_{x}\circ T)(f):=T(f)(x)$ for all $x\in\Omega$ and $f\in\FV$.

\begin{prop}[{continuity}]\label{prop:stetig.cons}
Let $\Omega$ be a topological Hausdorff space and $\FV$ a $\dom$-space 
such that $\FV\subset\mathcal{C}(\Omega)$ as a linear subspace. 
Then $S(u)\in\mathcal{C}(\Omega,E)$ for all $u\in\FV\varepsilon E$ if 
$\delta\in\mathcal{C}(\Omega,\FV_{\kappa}')$.  
\end{prop}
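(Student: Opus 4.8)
The plan is to recognise $S(u)$ as a composition of two continuous maps. By the definition of the $\varepsilon$-product, $\FV\varepsilon E=L_{e}(\FV_{\kappa}',E)$, so every $u\in\FV\varepsilon E$ is in particular a continuous linear operator $u\colon\FV_{\kappa}'\to E$. Writing $\delta\colon\Omega\to\FV_{\kappa}'$, $x\mapsto\delta_{x}$, for the evaluation map, the definition of $S$ gives $S(u)(x)=u(\delta_{x})=(u\circ\delta)(x)$ for every $x\in\Omega$; that is, $S(u)=u\circ\delta$.

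With this factorisation at hand, I would simply invoke the hypothesis $\delta\in\mathcal{C}(\Omega,\FV_{\kappa}')$: since $\delta$ is continuous and $u$ is continuous, their composition $S(u)=u\circ\delta\colon\Omega\to E$ is continuous, whence $S(u)\in\mathcal{C}(\Omega,E)$. One should note in passing that $S(u)$ is a well-defined $E$-valued function on $\Omega$ precisely because $\delta\in\mathcal{C}(\Omega,\FV_{\kappa}')$ forces $\delta_{x}\in\FV'$ for every $x\in\Omega$, so that $u(\delta_{x})\in E$ makes sense.

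I do not expect any real obstacle here: the whole content of the statement is the observation that $\delta\in\mathcal{C}(\Omega,\FV_{\kappa}')$ is exactly the hypothesis needed to factor $S(u)$ through the continuous operator $u$. The standing assumptions that $\Omega$ is Hausdorff and $\FV\subset\mathcal{C}(\Omega)$ play no role in this implication; they belong to the surrounding setup and are the natural conditions under which the continuity of $\delta$ itself is verified in concrete situations (compare the argument of \prettyref{prop:diff_cons_barrelled} a)).
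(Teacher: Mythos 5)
Your proof is correct and is essentially identical to the paper's own argument: both factor $S(u)=u\circ\delta$ and conclude continuity from the continuity of $u\colon\FV_{\kappa}'\to E$ and the hypothesis $\delta\in\mathcal{C}(\Omega,\FV_{\kappa}')$. Your remark that the Hausdorff and $\FV\subset\mathcal{C}(\Omega)$ assumptions are only part of the surrounding setup is also accurate.
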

\begin{proof}
Let $u\in\FV\varepsilon E$. Since $S(u)=u\circ\delta$ and $\delta\in\mathcal{C}(\Omega,\FV_{\kappa}')$, we obtain 
that $S(u)$ is in $\mathcal{C}(\Omega,E)$.
\end{proof}

Now, we tackle the problem of the continuity of $\delta\colon\Omega\to\FV_{\kappa}'$ 
in the proposition above and phrase our solution in a way such that it can be applied to show the 
continuity of the partial derivative $(\partial^{\beta})^{E}(S(u))$ as well 
(see \prettyref{prop:diff_cons_barrelled}). 
We recall that a topological space $\Omega$ is called \emph{\gls{completelyregular}} 
if for any non-empty closed subset $A\subset\Omega$ and 
$x\in\Omega\setminus A$ there is $f\in\mathcal{C}(\Omega,[0,1])$ 
such that $f(x)=0$ and $f(z)=1$ for all $z\in A$ (see \cite[Definition 11.1, p.\ 180]{james}). 
Examples of completely regular spaces are uniformisable, particularly metrisable, spaces 
by \cite[Proposition 11.5, p.\ 181]{james} and locally convex Hausdorff spaces by 
\cite[Proposition 3.27, p.\ 95]{fabian}.
A completely regular space $\Omega$ is a \emph{\gls{kRspace}} if for any 
completely regular space $Y$ and any map $f\colon \Omega \to Y$, 
whose restriction to each compact $K\subset\Omega$ is continuous, the map is already continuous on $\Omega$ 
(see \cite[(2.3.7) Proposition, p.\ 22]{buchwalter}). 
Examples of $k_{\R}$-spaces are completely regular $k$-spaces by \cite[3.3.21 Theorem, p.\ 152]{engelking}.
A topological space $\Omega$ is called \emph{\gls{kspace}} (compactly generated space) 
if it satisfies the following condition:
$A\subset \Omega$ is closed if and only if $A\cap K$ is closed in $K$ for every compact $K\subset\Omega$.  
Every locally compact Hausdorff space is a completely regular $k$-space. 
Further, every sequential Hausdorff space is a $k$-space by \cite[3.3.20 Theorem, p.\ 152]{engelking}, 
in particular, every first-countable Hausdorff space. 
Thus metrisable spaces are completely regular Hausdorff $k$-spaces. 
Moreover, the dual space $(X',\tau_{c})$ with the topology of compact convergence 
$\tau_{c}$ is an example of a completely regular Hausdorff $k$-space that 
is neither locally compact nor metrisable by \cite[p.\ 267]{warner1958}
if $X$ is an infinite-dimensional Fr\'{e}chet space. 

We denote by $\mathcal{CW}(\Omega)$ the space of scalar-valued continuous functions on a 
topological Hausdorff space $\Omega$ with the topology $\tau_{c}$ of compact convergence, i.e.\ the topology 
of uniform convergence on compact subsets, which itself is the weighted topology given by the family of weights
$\mathcal{W}:=\mathcal{W}^{0}:=\{\chi_{K}\;|\;K\subset\Omega\;\text{compact}\}$, 
and by $\mathcal{C}_{b}(\Omega)$ the space of scalar-valued bounded, continuous functions on $\Omega$ 
with the topology of uniform convergence on $\Omega$.

\begin{lem}\label{lem:bier}
 Let $\Omega$ be a topological Hausdorff space, $\FV$ a $\dom$-space and 
 $T\colon\FV\to\mathcal{C}(\Omega)$ linear.
 Then $\delta\circ T\in\mathcal{C}(\Omega,\FV_{\gamma}')$ in 
 each of the subsequent cases:
 \begin{enumerate}
 \item [(i)] $\Omega$ is a $k_{\R}$-space and 
 $T\colon\FV\to\mathcal{CW}(\Omega)$ is continuous.
 \item [(ii)] $T\colon\FV\to\mathcal{C}_{b}(\Omega)$ is continuous.
 \end{enumerate}
\end{lem}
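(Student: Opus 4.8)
The plan is to deduce continuity of $\delta\circ T\colon\Omega\to\FV_{\gamma}'$ from two ingredients: pointwise continuity (i.e.\ continuity into $\FV_{\sigma}'$) together with equicontinuity of the image, exploiting the standard fact that the topologies $\sigma(\FV',\FV)$ and $\gamma(\FV',\FV)$ coincide on equicontinuous subsets of $\FV'$ (the very fact already used in the proof of \prettyref{prop:diff_cons_barrelled} a)). First I would record the pointwise statement: if $(x_{\tau})_{\tau}$ is a net in $\Omega$ converging to $x$, then for every $f\in\FV$ the function $T(f)\in\mathcal{C}(\Omega)$ is continuous, so
\[
(\delta_{x_{\tau}}\circ T)(f)=T(f)(x_{\tau})\longrightarrow T(f)(x)=(\delta_{x}\circ T)(f).
\]
This says precisely that $\delta\circ T$ is continuous from $\Omega$ into $\FV_{\sigma}'$.

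Next I would establish the equicontinuity of the relevant family. In case (ii), continuity of $T\colon\FV\to\mathcal{C}_{b}(\Omega)$ yields $j\in J$, $m\in M$ and $C>0$ with $\sup_{x\in\Omega}|T(f)(x)|=\|T(f)\|_{\infty}\leq C|f|_{j,m}$ for all $f\in\FV$, so the whole family $H:=\{\delta_{x}\circ T\mid x\in\Omega\}$ is equicontinuous in $\FV'$. Since the image of $\delta\circ T$ lies in $H$, on which $\sigma$ and $\gamma$ agree, the pointwise continuity from the first step already yields continuity into $\FV_{\gamma}'$, settling (ii).

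In case (i) the bound is only local: continuity of $T\colon\FV\to\mathcal{CW}(\Omega)$ gives, for each compact $K\subset\Omega$, some $j,m,C$ with $\sup_{x\in K}|T(f)(x)|\leq C|f|_{j,m}$, so $H_{K}:=\{\delta_{x}\circ T\mid x\in K\}$ is equicontinuous. As before, $\sigma$ and $\gamma$ coincide on $H_{K}$, whence the restriction $(\delta\circ T)_{\mid_{K}}\colon K\to\FV_{\gamma}'$ is continuous for every compact $K$. To pass from continuity on compacta to global continuity I would invoke that $\Omega$ is a $k_{\R}$-space and that the codomain $\FV_{\gamma}'$, being a locally convex Hausdorff space (it is Hausdorff because $\gamma$ is finer than the Hausdorff topology $\sigma(\FV',\FV)$), is completely regular; by the defining property of a $k_{\R}$-space this forces $\delta\circ T\colon\Omega\to\FV_{\gamma}'$ to be continuous.

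The main obstacle is the last step of (i): the $k_{\R}$-space argument only applies because the target is completely regular, so I must be careful to confirm that $\FV_{\gamma}'$ is a genuine locally convex Hausdorff space before quoting it. The other delicate point is that the coincidence of $\sigma$ and $\gamma$ on equicontinuous sets is exactly what upgrades the elementary pointwise limit to convergence in the precompact-convergence topology; everything else is a routine transcription of the argument already carried out for $\Omega\subset\R^{d}$ in \prettyref{prop:diff_cons_barrelled} a).
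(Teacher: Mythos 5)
Your proposal is correct and follows essentially the same route as the paper: pointwise ($\sigma$-) continuity from continuity of $T(f)$, equicontinuity of $\{\delta_{x}\circ T\}$ (globally in case (ii), on compacta in case (i)) obtained from the continuity estimate for $T$, the coincidence of $\sigma(\FV',\FV)$ and $\gamma(\FV',\FV)$ on equicontinuous sets, and in case (i) the $k_{\R}$-property together with the complete regularity of the lcHs $\FV_{\gamma}'$ to pass from continuity on compacta to global continuity. The paper's proof of (i) is literally "verbatim as in Proposition~\ref{prop:diff_cons_barrelled}~a)", which is exactly the argument you transcribe.
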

\begin{proof}
First, if $x\in\Omega$ and $(x_{\tau})_{\tau\in\mathcal{T}}$ is a net in $\Omega$ converging to $x$, then 
we observe that 
\[
(\delta_{x_{\tau}}\circ T)(f)=T(f)(x_{\tau})\to T(f)(x)=(\delta_{x}\circ T)(f)
\]
for every $f\in\FV$ as $T(f)$ is continuous on $\Omega$. 

(i) Verbatim as in \prettyref{prop:diff_cons_barrelled} a).

(ii) There are $j\in J$, $m\in M$ and $C>0$ such that
   \[
     \sup_{x\in  \Omega}|(\delta_{x}\circ T)(f)|=\sup_{x\in \Omega}|T(f)(x)|
     \leq C|f|_{\FV,j,m}
   \]
for every $f\in\FV$. This means that $\{\delta_{x}\circ T\;|\;x\in \Omega\}$ is equicontinuous 
in $\FV'$, yielding the statement like before.
\end{proof}

The preceding lemma is just a modification of \cite[4.1 Lemma, p.\ 198]{B1} 
where $\FV=\mathcal{CV}(\Omega)$, the Nachbin-weighted space of continuous functions, and 
$T=\id$. 

Next, we turn to Cauchy continuity. A function $f\colon \Omega \to E$ 
from a metric space $\Omega$ to an lcHs $E$ is called \emph{\gls{Cauchycontinuous}} if it maps Cauchy sequences 
to Cauchy sequences. We write $\gls{CCE}$ for 
the space of Cauchy continuous functions from $\Omega$ to $E$ and 
set $\mathcal{CC}(\Omega):=\mathcal{CC}(\Omega,\K)$.
 
\begin{prop}[{Cauchy continuity}]\label{prop:c-stetig.cons}
Let $\Omega$ be a metric space and $\FV$ a $\dom$-space such that $\FV\subset\mathcal{CC}(\Omega)$ 
as a linear subspace. 
Then $S(u)\in\mathcal{CC}(\Omega,E)$ for all $u\in\FV\varepsilon E$ if 
$\delta\in\mathcal{CC}(\Omega,\FV_{\kappa}')$.  
\end{prop}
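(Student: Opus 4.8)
The plan is to mimic the proof of \prettyref{prop:stetig.cons}: since $S(u)=u\circ\delta$ factors the map $S(u)$ through $\FV_{\kappa}'$, and $\delta$ already sends Cauchy sequences of $\Omega$ to Cauchy sequences of $\FV_{\kappa}'$ by hypothesis, it suffices to show that the outer map $u$ preserves Cauchy sequences. Concretely, I would fix $u\in\FV\varepsilon E$ and let $(x_{n})_{n\in\N}$ be a Cauchy sequence in the metric space $\Omega$. Because $\delta\in\mathcal{CC}(\Omega,\FV_{\kappa}')$, the sequence $(\delta_{x_{n}})_{n\in\N}$ is Cauchy in $\FV_{\kappa}'$, where ``Cauchy'' is understood in the sense of the natural uniformity of the locally convex space $\FV_{\kappa}'$, i.e.\ for every $0$-neighbourhood $U$ there is $N$ with $\delta_{x_{n}}-\delta_{x_{k}}\in U$ for all $n,k\geq N$.

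The key step is then that $u\in L(\FV_{\kappa}',E)$ maps this Cauchy sequence to a Cauchy sequence in $E$. This is the standard fact that a continuous linear operator between locally convex spaces is uniformly continuous: given a $0$-neighbourhood $V$ in $E$, continuity and linearity of $u$ provide a $0$-neighbourhood $U$ in $\FV_{\kappa}'$ with $u(U)\subset V$; choosing $N$ as above for this $U$ yields $u(\delta_{x_{n}})-u(\delta_{x_{k}})=u(\delta_{x_{n}}-\delta_{x_{k}})\in u(U)\subset V$ for all $n,k\geq N$. Hence $(u(\delta_{x_{n}}))_{n\in\N}=(S(u)(x_{n}))_{n\in\N}$ is Cauchy in $E$.

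Since $(x_{n})$ was an arbitrary Cauchy sequence, this shows that $S(u)$ maps Cauchy sequences to Cauchy sequences, i.e.\ $S(u)\in\mathcal{CC}(\Omega,E)$, which is the claim. I expect no real obstacle here: the only point requiring any care is the implication ``$u$ continuous linear $\Rightarrow$ $u$ preserves Cauchy sequences'', and this is precisely the uniform-continuity argument just sketched. Thus the proof is essentially identical in structure to \prettyref{prop:stetig.cons}, with ``continuous'' replaced throughout by ``Cauchy continuous'' and the relevant convergence notions read off from the uniformities of $\FV_{\kappa}'$ and $E$.
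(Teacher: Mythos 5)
Your argument is correct and is essentially identical to the paper's proof: both reduce the claim to the hypothesis that $\delta$ is Cauchy continuous together with the uniform continuity of the continuous linear map $u\in L(\FV_{\kappa}',E)$, which sends the Cauchy sequence $(\delta_{x_{n}})$ to the Cauchy sequence $(S(u)(x_{n}))$ in $E$. The paper merely states the uniform-continuity step without spelling out the $U$--$V$ details you include.
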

\begin{proof}
Let $u\in\FV\varepsilon E$ and $(x_{n})$ a Cauchy sequence in $\Omega$. Then $(\delta_{x_{n}})$ is a Cauchy 
sequence in $\FV_{\kappa}'$ since $\delta\in\mathcal{CC}(\Omega,\FV_{\kappa}')$. 
It follows that $(S(u)(x_{n}))$ is a 
Cauchy sequence in $E$ because $u$ is uniformly continuous and $u(\delta_{x_{n}})=S(u)(x_{n})$. 
Hence we conclude that $S(u)\in\mathcal{CC}(\Omega,E)$. 
\end{proof}

For the next lemma we equip the space $\mathcal{CC}(\Omega)$ with the topology 
of uniform convergence on precompact subsets of $\Omega$.

\begin{lem}\label{lem:bier3}
 Let $\FV$ be a $\dom$-space and $T\in L(\FV,\mathcal{CC}(\Omega))$ for a metric space $\Omega$.
 Then $\delta\circ T\in\mathcal{CC}(\Omega,\FV_{\gamma}')$.
\end{lem}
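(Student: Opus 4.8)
The plan is to unwind the definition of $\mathcal{CC}(\Omega,\FV_{\gamma}')$: I must show that $\delta\circ T$ sends every Cauchy sequence $(x_{n})$ in the metric space $\Omega$ to a Cauchy sequence $(\delta_{x_{n}}\circ T)$ in $\FV_{\gamma}'$. First I would record that each $\delta_{x}\circ T$ actually lies in $\FV'$: the singleton $\{x\}$ is compact, hence precompact in $\Omega$, so the point evaluation $\delta_{x}$ is continuous on $\mathcal{CC}(\Omega)$ (which carries the topology of uniform convergence on precompact subsets), and $\delta_{x}\circ T$ is therefore a composition of continuous linear maps. Next I would note that, since $(x_{n})$ is Cauchy, the set $C:=\{x_{n}\;|\;n\in\N\}$ is totally bounded, i.e.\ precompact in $\Omega$.

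To verify Cauchyness in $\FV_{\gamma}'$ I fix a precompact set $P\subset\FV$ and $\varepsilon>0$, and must bound $\sup_{f\in P}|T(f)(x_{n})-T(f)(x_{m})|$. The load-bearing step is that $T$, being continuous and linear, maps $P$ to a precompact set $T(P)\subset\mathcal{CC}(\Omega)$. Since $C$ is precompact in $\Omega$, the map $g\mapsto\sup_{x\in C}|g(x)|$ is one of the seminorms generating the topology of $\mathcal{CC}(\Omega)$; total boundedness of $T(P)$ with respect to it yields finitely many $g_{1},\dots,g_{k}\in T(P)$ such that every $g\in T(P)$ satisfies $\sup_{x\in C}|g(x)-g_{i}(x)|<\varepsilon/3$ for some $i$.

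Then I would invoke the Cauchy continuity of the finitely many $g_{i}$: each $g_{i}\in\mathcal{CC}(\Omega)$ maps $(x_{n})$ to a Cauchy sequence in $\K$, so there is a single $N$ with $|g_{i}(x_{n})-g_{i}(x_{m})|<\varepsilon/3$ for all $n,m\geq N$ and all $i\in\{1,\dots,k\}$. For an arbitrary $f\in P$, writing $g:=T(f)$ and choosing $i$ with $\sup_{x\in C}|g(x)-g_{i}(x)|<\varepsilon/3$, a triangle inequality using $x_{n},x_{m}\in C$ gives $|g(x_{n})-g(x_{m})|<\varepsilon$ for $n,m\geq N$. Taking the supremum over $f\in P$ shows that $(\delta_{x_{n}}\circ T)$ is Cauchy in $\FV_{\gamma}'$, which is exactly what is required.

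The genuinely essential move is passing from precompactness of $P$ to precompactness of $T(P)$ and then exploiting total boundedness relative to the precompact set $C$ furnished by the Cauchy sequence; this is the step that couples the metric structure of $\Omega$ with the chosen topology on $\mathcal{CC}(\Omega)$. The remaining ingredients — continuity of $\delta_{x}$ on $\mathcal{CC}(\Omega)$ and total boundedness of a Cauchy sequence in a metric space — are routine.
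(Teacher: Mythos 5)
Your proof is correct, but it takes a genuinely different route from the one in the paper. You verify the Cauchy condition in $\FV_{\gamma}'$ directly by a $3\varepsilon$-argument: the continuous linear map $T$ carries the precompact set $P\subset\FV$ to a totally bounded subset of $\mathcal{CC}(\Omega)$, a finite net for the seminorm $g\mapsto\sup_{x\in C}|g(x)|$ (legitimate, since $C=\{x_{n}\}$ is precompact) reduces the estimate to finitely many Cauchy continuous functions $g_{1},\dots,g_{k}$, and their Cauchyness along $(x_{n})$ closes the triangle inequality. The paper instead argues pointwise: for each $f\in\FV$ the scalar sequence $(T(f)(x_{n}))$ is Cauchy, hence converges to a limit $T_{\infty}(f)$ in the complete field $\K$; the precompactness of $\{x_{n}\}$ together with the continuity of $T$ into $\mathcal{CC}(\Omega)$ gives the single bound $\sup_{n}|(\delta_{x_{n}}\circ T)(f)|\leq C|f|_{j,m}$, i.e.\ equicontinuity of $\{\delta_{x_{n}}\circ T\}$ in $\FV'$, and then the standard fact that $\sigma(\FV',\FV)$ and $\gamma(\FV',\FV)$ coincide on equicontinuous sets upgrades pointwise convergence to convergence in $\FV_{\gamma}'$, which in particular yields Cauchyness. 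Your version is more elementary and self-contained, relying only on the preservation of total boundedness under continuous linear maps; the paper's version is shorter once the equicontinuity lemma is available and delivers slightly more, namely an explicit limit functional $T_{\infty}\in\FV'$ and genuine convergence of $(\delta_{x_{n}}\circ T)$ rather than Cauchyness alone. Both treatments of well-definedness agree in substance: you obtain $\delta_{x}\circ T\in\FV'$ from the continuity of $\delta_{x}$ on $\mathcal{CC}(\Omega)$, the paper from the constant sequence $x_{n}=x$.
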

\begin{proof}
 Let $(x_{n})$ be a Cauchy sequence in $\Omega$.
 We have $(\delta_{x_{n}}\circ T)(f)=T(f)(x_{n})$
 for every $f\in\FV$, which implies that 
 $((\delta_{x_{n}}\circ T)(f))$ is a Cauchy sequence in $\K$ 
 because $T(f)\in\mathcal{CC}(\Omega)$ by assumption. 
 Since $\K$ is complete, it has a unique limit $T_{\infty}(f):=\lim_{n\to\infty}(\delta_{x_{n}}\circ T)(f)$ 
 defining a linear functional in $f$. 
 The set $N:=\{x_{n}\;|\;n\in\N\}$ is precompact in $\Omega$ since Cauchy sequences are precompact. 
 Hence there are $j\in J$, $m\in M$ and $C>0$ such that
   \[
     \sup_{n\in\N}|(\delta_{x_{n}}\circ T)(f)|
     =\sup_{x\in N}|T(f)(x)|
     \leq C|f|_{\FV,j,m}
   \]
 for every $f\in\FV$. Therefore the set $\{\delta_{x_{n}}\circ T\;|\;n\in\N\}$ is 
 equicontinuous in $\FV'$, which implies that $T_{\infty}\in\FV'$
 and the convergence of $(\delta_{x_{n}}\circ T)$ to 
 $T_{\infty}$ in $\FV_{\gamma}'$ 
 due to the observation in the beginning and the fact that $\gamma(\FV',\FV)$ 
 and $\sigma(\FV',\FV)$ coincide on equicontinuous sets. 
 In particular, $(\delta_{x_{n}}\circ T)$ is a Cauchy sequence in $\FV_{\gamma}'$. 
 Furthermore, for every $x\in\Omega$ we obtain from the choice $x_{n}=x$ for all $n\in\N$ that 
 $\delta_{x}\circ T\in\FV'$. Thus the map $\delta\circ T\colon\Omega\to\FV_{\gamma}'$ is well-defined and 
 Cauchy continuous.
\end{proof}

The subsequent proposition and lemma handle the analogous statements for uniform continuity. 
For a metric space $\Omega$ we denote by $\gls{Cu}$ the space of uniformly continuous 
functions from $\Omega$ to $E$ and set $\mathcal{C}_{u}(\Omega):=\mathcal{C}_{u}(\Omega,\K)$.

\begin{prop}[{uniform continuity}]\label{prop:u-stetig.cons}
Let $(\Omega,\d)$ be a metric space and $\FV$ a $\dom$-space such that 
$\FV\subset\mathcal{C}_{u}(\Omega)$ 
as a linear subspace. Then $S(\mathsf{u})\in\mathcal{C}_{u}(\Omega,E)$ for all $\mathsf{u}\in\FV\varepsilon E$ if 
$\delta\in\mathcal{C}_{u}(\Omega,\FV_{\kappa}')$.\footnote{Here, we use the symbol $\mathsf{u}$ for elements in 
$\FV\varepsilon E$ instead of the usual $u$ to avoid confusion with the index $u$ of $\mathcal{C}_{u}(\Omega)$ resp.\ 
$\mathcal{C}_{u}(\Omega,E)$.} 
\end{prop}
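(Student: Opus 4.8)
The plan is to proceed exactly as in \prettyref{prop:stetig.cons} and \prettyref{prop:c-stetig.cons}: I would write $S(\mathsf{u})=\mathsf{u}\circ\delta$ and deduce uniform continuity of this composition from that of its two factors. The only extra input beyond the hypothesis $\delta\in\mathcal{C}_{u}(\Omega,\FV_{\kappa}')$ is the standard fact that every $\mathsf{u}\in\FV\varepsilon E=L(\FV_{\kappa}',E)$, being a continuous \emph{linear} operator, is automatically uniformly continuous for the canonical uniformities. Hence $S(\mathsf{u})$ is a composition of two uniformly continuous maps and is therefore uniformly continuous.

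To carry this out concretely, I would fix $\mathsf{u}\in\FV\varepsilon E$ and recall $S(\mathsf{u})(x)=\mathsf{u}(\delta_{x})$ for $x\in\Omega$. To verify $S(\mathsf{u})\in\mathcal{C}_{u}(\Omega,E)$, fix $\alpha\in\mathfrak{A}$ and $\varepsilon>0$. Since $\mathsf{u}$ is continuous and linear, $q:=p_{\alpha}\circ\mathsf{u}$ is a continuous seminorm on $\FV_{\kappa}'$. The assumed uniform continuity of $\delta$ then yields some $\rho>0$, depending only on $q$ (hence on $\alpha$) and $\varepsilon$, such that $\d(x,y)<\rho$ implies $q(\delta_{x}-\delta_{y})<\varepsilon$. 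Using the linearity of $\mathsf{u}$ we obtain
\[
p_{\alpha}\bigl(S(\mathsf{u})(x)-S(\mathsf{u})(y)\bigr)
=p_{\alpha}\bigl(\mathsf{u}(\delta_{x}-\delta_{y})\bigr)
=q(\delta_{x}-\delta_{y})<\varepsilon
\]
whenever $\d(x,y)<\rho$. As $\rho$ is independent of the base points $x,y$, this is precisely the uniform continuity of $S(\mathsf{u})$.

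I do not anticipate a real obstacle: the whole argument rests on translating continuity of the linear map $\mathsf{u}$ into the single continuous seminorm $q=p_{\alpha}\circ\mathsf{u}$ on $\FV_{\kappa}'$. The only points needing mild care are reading the uniform continuity of the $\FV_{\kappa}'$-valued map $\delta$ through the continuous seminorms of the locally convex uniformity (rather than through a metric on $\FV_{\kappa}'$, which need not exist) and choosing the threshold $\rho$ uniformly in $x$ and $y$, which the estimate above does. As with the companion propositions, the resulting proof is short.
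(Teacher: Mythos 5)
Your proof is correct and follows essentially the same route as the paper: write $S(\mathsf{u})=\mathsf{u}\circ\delta$ and use that the continuous linear $\mathsf{u}$ is automatically uniformly continuous. The only cosmetic difference is that the paper verifies uniform continuity via the sequential test (pairs of sequences with $\d(z_{n},x_{n})\to 0$), whereas you work directly with the continuous seminorm $p_{\alpha}\circ\mathsf{u}$ on $\FV_{\kappa}'$; the two formulations are equivalent for maps from a metric space into a locally convex space.
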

\begin{proof}
Let $(z_{n})$, $(x_{n})$ be sequences in $\Omega$ with 
$\lim_{n\to\infty}\d(z_{n},x_{n})=0$ and $\mathsf{u}\in\FV\varepsilon E$. 
Then $(\delta_{z_{n}}-\delta_{x_{n}})$ converges to $0$ in $\FV_{\kappa}'$ 
because $\delta\in\mathcal{C}_{u}(\Omega,\FV_{\kappa}')$.
As a consequence $(S(\mathsf{u})(z_{n})-S(\mathsf{u})(x_{n}))$ converges to 
$0$ in $E$ since $\mathsf{u}$ is uniformly continuous and $\mathsf{u}(\delta_{z_{n}}-\delta_{x_{n}})=S(\mathsf{u})(z_{n})-S(\mathsf{u})(x_{n})$. 
Hence we conclude that $S(\mathsf{u})\in\mathcal{C}_{u}(\Omega,E)$.
\end{proof}

For the next lemma we mean by $\mathcal{C}_{bu}(\Omega)$ 
the space of scalar-valued bounded, uniformly continuous functions 
equipped with the topology of uniform convergence on a metric space $\Omega$.

\begin{lem}\label{lem:bier4}
 Let $\FV$ be a $\dom$-space
 and $T\in L(\FV,\mathcal{C}_{bu}(\Omega))$ for a metric space $(\Omega,\d)$.
 Then $\delta\circ T\in\mathcal{C}_{u}(\Omega,\FV_{\gamma}')$.
\end{lem}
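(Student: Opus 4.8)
The plan is to mimic the proof of \prettyref{lem:bier3}, replacing Cauchy sequences by pairs of sequences whose distance tends to zero, and exploiting that here $T$ maps into the sup-normed space $\mathcal{C}_{bu}(\Omega)$. First I would record that the continuity of $T\in L(\FV,\mathcal{C}_{bu}(\Omega))$ yields $j\in J$, $m\in M$ and $C>0$ with
\[
\sup_{x\in\Omega}|(\delta_{x}\circ T)(f)|=\sup_{x\in\Omega}|T(f)(x)|\leq C|f|_{\FV,j,m},\quad f\in\FV,
\]
so that $\{\delta_{x}\circ T\;|\;x\in\Omega\}$ is equicontinuous in $\FV'$. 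In particular $\delta_{x}\circ T\in\FV'$ for every $x\in\Omega$, whence $\delta\circ T\colon\Omega\to\FV_{\gamma}'$ is well-defined.

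Next I would verify uniform continuity via the sequential characterisation used in \prettyref{prop:u-stetig.cons}: let $(z_{n})$ and $(x_{n})$ be sequences in $\Omega$ with $\lim_{n\to\infty}\d(z_{n},x_{n})=0$, and set $g_{n}:=(\delta_{z_{n}}\circ T)-(\delta_{x_{n}}\circ T)$. Each $g_{n}$ lies in the equicontinuous set $\{(\delta_{z}\circ T)-(\delta_{x}\circ T)\;|\;z,x\in\Omega\}$, since the above bound gives $|g_{n}(f)|\leq 2C|f|_{\FV,j,m}$ for all $f\in\FV$ and all $n$. For fixed $f\in\FV$ the function $T(f)$ is uniformly continuous on $\Omega$, so
\[
g_{n}(f)=T(f)(z_{n})-T(f)(x_{n})\longrightarrow 0
\]
in $\K$ as $\d(z_{n},x_{n})\to 0$; that is, $(g_{n})$ converges to $0$ in $\FV_{\sigma}'$.

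Because $\sigma(\FV',\FV)$ and $\gamma(\FV',\FV)$ coincide on equicontinuous subsets of $\FV'$ (the same fact invoked in the proof of \prettyref{lem:bier3}), the pointwise convergence $g_{n}\to 0$ upgrades to convergence in $\FV_{\gamma}'$. Hence $\delta\circ T$ maps pairs of sequences at vanishing distance to sequences whose difference tends to $0$ in $\FV_{\gamma}'$, which is precisely the uniform continuity of $\delta\circ T\colon\Omega\to\FV_{\gamma}'$.

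I do not anticipate a genuine obstacle; the single point requiring care is that the decisive estimate $|g_{n}(f)|\leq 2C|f|_{\FV,j,m}$ is uniform in $n$. This is exactly what the sup-norm topology of $\mathcal{C}_{bu}(\Omega)$ delivers—in contrast to the precompact-convergence topology of $\mathcal{CC}(\Omega)$ in \prettyref{lem:bier3}, where uniformity had to be recovered from the precompactness of the sequence—and it is what makes the equicontinuity, and thereby the passage from $\sigma(\FV',\FV)$ to $\gamma(\FV',\FV)$, available.
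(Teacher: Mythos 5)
Your proof is correct and follows essentially the same route as the paper's: pointwise convergence of $g_{n}=\delta_{z_{n}}\circ T-\delta_{x_{n}}\circ T$ to $0$ from the uniform continuity of each $T(f)$, the uniform bound $|g_{n}(f)|\leq 2C|f|_{\FV,j,m}$ giving equicontinuity, and the coincidence of $\sigma(\FV',\FV)$ and $\gamma(\FV',\FV)$ on equicontinuous sets to upgrade to convergence in $\FV_{\gamma}'$. No gaps.
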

\begin{proof}
Let $(z_{n})$ and $(x_{n})$ be sequences in $\Omega$ such that $\lim_{n\to\infty}\d(z_{n},x_{n})=0$.
We have
  \[
  (\delta_{z_{n}}\circ T-\delta_{x_{n}}\circ T)(f)=T(f)(z_{n})-T(f)(x_{n})
  \]
for every $f\in\FV$, which implies that 
$(\delta_{z_{n}}\circ T-\delta_{x_{n}}\circ T)(f)$ converges 
to $0$ in $\K$ for every $f\in\FV$ because $T(f)\in\mathcal{C}_{u}(\Omega)$.
There exist $j\in J$, $m\in M$ and $C>0$ such that
 \[
   \sup_{n\in\N}|(\delta_{z_{n}}\circ T-\delta_{x_{n}}\circ T)(f)|
   \leq 2\sup_{x\in \Omega}|T(f)(x)|
   \leq 2C|f|_{\FV,j,m}
 \]
for every $f\in\FV$. Therefore the set 
$\{\delta_{z_{n}}\circ T-\delta_{x_{n}}\circ T\;|\;n\in\N\}$ is 
equicontinuous in $\FV'$ and
we conclude the statement like before.
\end{proof}

Let us turn to continuous extensions. Let $X$ be a metric space and $\Omega\subset X$. We write 
$\gls{CextE}$ for the space of functions $f\in\mathcal{C}(\Omega,E)$ 
which have a continuous extension to $\overline{\Omega}$ and set 
$\mathcal{C}^{ext}(\Omega):=\mathcal{C}^{ext}(\Omega,\K)$.

\begin{prop}[{continuous extendability}]\label{prop:cont_ext}
Let $X$ be a metric space, $\Omega\subset X$ and $\FV$ a $\dom$-space such that 
$\FV\subset\mathcal{C}^{ext}(\Omega)$ as a linear subspace. 
Then $S(u)\in\mathcal{C}^{ext}(\Omega,E)$ for all $u\in\FV\varepsilon E$ if 
$\delta\in\mathcal{C}^{ext}(\Omega,\FV_{\kappa}')$.
\end{prop}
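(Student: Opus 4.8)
The plan is to exploit, exactly as in the preceding propositions, the factorisation $S(u)=u\circ\delta$ together with the fact that every $u\in\FV\varepsilon E=L_{e}(\FV_{\kappa}',E)$ is a continuous linear operator $u\colon\FV_{\kappa}'\to E$. The only new ingredient compared with \prettyref{prop:stetig.cons}, \prettyref{prop:c-stetig.cons} and \prettyref{prop:u-stetig.cons} is that we now transport a \emph{continuous extension} of $\delta$ through $u$, rather than its continuity, Cauchy continuity or uniform continuity.

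First I would fix $u\in\FV\varepsilon E$ and recall that $u$ is continuous and linear on $\FV_{\kappa}'$, and that $\delta_{x}\in\FV'$ for every $x\in\Omega$ so that $S(u)=u\circ\delta$ is well-defined. By hypothesis $\delta\in\mathcal{C}^{ext}(\Omega,\FV_{\kappa}')$, so there is a continuous map $\widehat{\delta}\colon\overline{\Omega}\to\FV_{\kappa}'$ with $\widehat{\delta}_{\mid\Omega}=\delta$. Then I would form the composition
\[
u\circ\widehat{\delta}\colon\overline{\Omega}\to E,
\]
which is continuous as a composition of the continuous maps $\widehat{\delta}$ and $u$.

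It then remains to check that $u\circ\widehat{\delta}$ extends $S(u)$. For every $x\in\Omega$ we have
\[
(u\circ\widehat{\delta})(x)=u\bigl(\widehat{\delta}(x)\bigr)=u(\delta_{x})=S(u)(x),
\]
hence $(u\circ\widehat{\delta})_{\mid\Omega}=S(u)$. In particular $S(u)$ is continuous on $\Omega$ (this also follows from $\FV\subset\mathcal{C}^{ext}(\Omega)\subset\mathcal{C}(\Omega)$, from $\delta=\widehat{\delta}_{\mid\Omega}\in\mathcal{C}(\Omega,\FV_{\kappa}')$ and \prettyref{prop:stetig.cons}), and $u\circ\widehat{\delta}$ provides a continuous extension of $S(u)$ to $\overline{\Omega}$. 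Therefore $S(u)\in\mathcal{C}^{ext}(\Omega,E)$.

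I expect essentially no obstacle: the argument is a verbatim adaptation of the composition scheme of the earlier propositions, with the relevant continuity property of $\delta$ simply replaced by continuous extendability. The two mild points worth spelling out are that no \emph{uniqueness} of the extension is required—the existence of a single continuous extension suffices for membership in $\mathcal{C}^{ext}(\Omega,E)$—and that $u$ indeed acts continuously on $\FV_{\kappa}'$, which is nothing but the definition of $\FV\varepsilon E$.
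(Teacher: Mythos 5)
Your argument is correct and is essentially identical to the paper's proof: both take a continuous extension $\widehat{\delta}$ of $\delta$ to $\overline{\Omega}$, compose with the continuous linear operator $u$, and observe that $u\circ\widehat{\delta}$ restricts to $S(u)=u\circ\delta$ on $\Omega$. No gaps.
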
 
\begin{proof}
Let $u\in\FV\varepsilon E$. There is $\delta^{ext}\in\mathcal{C}(\overline{\Omega},\FV_{\kappa}')$ such that 
$\delta^{ext}=\delta$ on $\Omega$ since $\delta\in\mathcal{C}^{ext}(\Omega,\FV_{\kappa}')$. 
Moreover, $u\circ\delta^{ext}\in\mathcal{C}(\overline{\Omega},E)$ and equal to $S(u)=u\circ\delta$ on $\Omega$,
yielding $S(u)\in\mathcal{C}^{ext}(\Omega,E)$.
\end{proof}

For the next lemma we equip $\mathcal{C}^{ext}(\Omega)$ with the topology of uniform convergence on 
compact subsets of $\Omega$.

\begin{lem}\label{lem:cont_ext}
Let $X$ be a metric space, $\Omega\subset X$, $\FV$ a $\dom$-space and
$T\in L(\FV,\mathcal{C}^{ext}(\Omega))$. 
Then $\delta\circ T\in\mathcal{C}^{ext}(\Omega,\FV_{\gamma}')$ if 
$\FV$ is barrelled.
\end{lem}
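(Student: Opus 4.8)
The plan is to build the continuous extension of $\delta\circ T$ to $\overline{\Omega}$ pointwise, using the (unique) continuous extensions of the scalar functions $T(f)$, and to extract its continuity from the barrelledness of $\FV$ via the Banach--Steinhaus theorem. Concretely, for fixed $f\in\FV$ the function $T(f)\in\mathcal{C}^{ext}(\Omega)$ admits a continuous extension $T(f)^{ext}\colon\overline{\Omega}\to\K$, which is unique since $\Omega$ is dense in its closure $\overline{\Omega}\subset X$. For $y\in\overline{\Omega}$ I would then set $T_{y}(f):=T(f)^{ext}(y)$; uniqueness of the extension makes $f\mapsto T_{y}(f)$ linear, and for $y\in\Omega$ one has $T_{y}=\delta_{y}\circ T\in\FV'$ by continuity of $T$.

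First I would show $T_{y}\in\FV'$ for \emph{every} $y\in\overline{\Omega}$. Pick $x_{n}\in\Omega$ with $x_{n}\to y$. Then $(\delta_{x_{n}}\circ T)(f)=T(f)(x_{n})\to T(f)^{ext}(y)=T_{y}(f)$ for all $f$, so $(\delta_{x_{n}}\circ T)$ converges to $T_{y}$ in $\FV_{\sigma}'$. Being pointwise convergent it is $\sigma(\FV',\FV)$-bounded, hence equicontinuous in $\FV'$ by barrelledness and the Banach--Steinhaus theorem. The pointwise limit of an equicontinuous family is continuous, so $T_{y}\in\FV'$; moreover $\sigma(\FV',\FV)$ and $\gamma(\FV',\FV)$ coincide on this equicontinuous set, whence $\delta_{x_{n}}\circ T\to T_{y}$ in $\FV_{\gamma}'$ as well. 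This yields a well-defined map $(\delta\circ T)^{ext}\colon\overline{\Omega}\to\FV_{\gamma}'$, $y\mapsto T_{y}$, that restricts to $\delta\circ T$ on $\Omega$.

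The remaining and main point is the continuity of $(\delta\circ T)^{ext}$. Since $\overline{\Omega}$, as a subspace of the metric space $X$, is metrisable and hence a $k_{\R}$-space, it suffices to verify continuity on each compact $L\subset\overline{\Omega}$. Here lies the obstacle: the topology of $\mathcal{C}^{ext}(\Omega)$ controls $T(f)$ uniformly only on compact subsets of $\Omega$, not of $\overline{\Omega}$, so the equicontinuity argument of \prettyref{lem:bier} and \prettyref{lem:bier3} does not transfer directly to boundary points. I would circumvent this again by barrelledness: for fixed $f$ the function $y\mapsto T_{y}(f)=T(f)^{ext}(y)$ is continuous and hence bounded on the compact set $L$, so $\{T_{y}\mid y\in L\}\subset\FV'$ is pointwise bounded and therefore equicontinuous. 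On this equicontinuous set $\sigma(\FV',\FV)$ and $\gamma(\FV',\FV)$ coincide, while $y\mapsto T_{y}$ is continuous into $\FV_{\sigma}'$ by the pointwise continuity of $y\mapsto T(f)^{ext}(y)$; consequently it is continuous into $\FV_{\gamma}'$ on $L$.

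Combining these, $(\delta\circ T)^{ext}\in\mathcal{C}(\overline{\Omega},\FV_{\gamma}')$, so its restriction $\delta\circ T$ lies in $\mathcal{C}(\Omega,\FV_{\gamma}')$ and possesses a continuous extension to $\overline{\Omega}$, i.e.\ $\delta\circ T\in\mathcal{C}^{ext}(\Omega,\FV_{\gamma}')$. I expect the delicate step to be the equicontinuity of $\{T_{y}\mid y\in L\}$ at genuine boundary points $y\in\overline{\Omega}\setminus\Omega$, which is precisely where the hypothesis that $\FV$ be barrelled is indispensable.
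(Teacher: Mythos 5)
Your proof is correct and follows essentially the same route as the paper's: continuity on $\Omega$ itself via the equicontinuity-on-compacts argument, and the Banach--Steinhaus theorem (i.e.\ barrelledness of $\FV$) to obtain the boundary values $\delta^{ext}_{x}\circ T\in\FV'$ together with convergence in $\FV_{\gamma}'$. You are in fact slightly more careful than the paper in that you explicitly verify the continuity of the extension on compact subsets of $\overline{\Omega}$ (where, as you correctly note, barrelledness rather than the continuity of $T$ must supply the equicontinuity), a step the paper's proof leaves implicit.
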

\begin{proof}
From \prettyref{lem:bier} (i) we derive that $\delta\circ T\in\mathcal{C}(\Omega,\FV_{\gamma}')$.
Let $x\in\partial\Omega$ and $(x_{n})$ be a sequence in $\Omega$ with $x_{n}\to x$.
Then $(\delta_{x_{n}}\circ T)$ is a sequence in $\FV'$ and 
\[
 \lim_{n\to\infty}(\delta_{x_{n}}\circ T)(f)=\lim_{n\to\infty}T(f)(x_{n})=:(\delta_{x}^{ext}\circ T)(f)
\]
in $\K$ for every $f\in\FV$, which implies that $(\delta_{x_{n}}\circ T)$ converges to 
$\delta^{ext}_{x}\circ T$ pointwise on $\FV$ because 
$T(f)\in\mathcal{C}^{ext}(\Omega)$.
As a consequence of the Banach--Steinhaus theorem 
we get $(\delta^{ext}_{x}\circ T)\in\FV'$ and
the convergence in $\FV_{\gamma}'$.
\end{proof}

Let $\FVE$ be a $\dom$-space, $X$ a set, $\mathfrak{K}$ a family of sets and 
$\pi\colon\bigcup_{m\in M}\omega_{m} \to X$ such that $\bigcup_{K\in\mathfrak{K}}K\subset X$. 
We say that a function $f\in\bigcap_{m\in M}\dom T^{E}_{m}$ \emph{\gls{vanish_infty_piK}} if 
\begin{align}\label{van.a.inf}
&\forall\;\varepsilon >0,\, j\in J,\, m\in M,\,\alpha\in\mathfrak{A}\;\exists\;K\in\mathfrak{K}:\;
\sup_{\substack{x\in\omega_{m},\\ \pi(x)\notin K}}p_{\alpha}\bigl(T^{E}_{m}(f)(x)\bigr)\nu_{j,m}(x)<\varepsilon .
\end{align}
Further, we set
\[
\operatorname{AP}_{\pi,\mathfrak{K}}(\Omega,E):=\{f\in\bigcap_{m\in M}\dom T^{E}_{m}\;|\; 
f\;\text{fulfils}\;\eqref{van.a.inf}\}.
\]

\begin{prop}[{vanishing at $\infty$ w.r.t.\ to $(\pi,\mathfrak{K})$}]\label{prop:van.at.inf0}
Let $(T^{E}_{m},T^{\K}_{m})_{m\in M}$ be the generator for $(\mathcal{FV},E)$, let
$\mathcal{FV}(\Omega,Y)\subset\operatorname{AP}_{\pi,\mathfrak{K}}(\Omega,Y)$ as a linear subspace for 
$Y\in\{\K,E\}$ and $\mathfrak{K}$ be closed under taking finite unions.
\begin{enumerate}
\item[(i)] If for all $u\in\FV\varepsilon E$ it holds that $S(u)\in\bigcap_{m\in M}\dom(T^{E}_{m})$ and 
 \begin{equation}\label{eq:van.at.inf_cons}
 \forall\;m\in M,\,x\in\omega_{m}:\;\bigl(T^{E}_{m}S(u)\bigr)(x)=u(T^{\K}_{m,x}),
 \end{equation}
then $S(u)\in\operatorname{AP}_{\pi,\mathfrak{K}}(\Omega,E)$ for all $u\in\FV\varepsilon E$.
\item[(ii)] If for all $e'\in E'$ and $f\in \FVE$ it holds that $e'\circ f\in \bigcap_{m\in M}\dom(T^{\K}_{m})$ and 
 \begin{equation}\label{eq:van.at.inf_strong}
   \forall\; m\in M,\,x\in\omega_{m}:\;T^{\K}_{m}(e'\circ f)(x)=\bigl(e'\circ T^{E}_{m}(f)\bigr)(x),
 \end{equation}
then $e'\circ f\in\operatorname{AP}_{\pi,\mathfrak{K}}(\Omega)$ for all $e'\in E'$ and $f\in\FVE$. 
\end{enumerate}
\end{prop}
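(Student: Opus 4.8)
The plan is to handle the two parts separately: part (ii) is an immediate consequence of the continuity of functionals $e'\in E'$, while part (i) carries all the real content and requires a passage from pointwise to uniform vanishing over a compact set.

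\textbf{Part (i).} Fix $u\in\FV\varepsilon E$ together with indices $j\in J$, $m\in M$, $\alpha\in\mathfrak{A}$ and $\varepsilon>0$. Using \eqref{eq:van.at.inf_cons} and the linearity of $u$, for $x\in\omega_{m}$ I rewrite
\[
p_{\alpha}\bigl(T^{E}_{m}(S(u))(x)\bigr)\nu_{j,m}(x)
=p_{\alpha}\bigl(u(T^{\K}_{m,x})\bigr)\nu_{j,m}(x)
=p_{\alpha}\bigl(u(y_{x})\bigr),
\]
where $y_{x}:=T^{\K}_{m,x}(\cdot)\nu_{j,m}(x)\in\FV'$ is an element of the dual by \prettyref{rem:weights_Hausdorff_directed} b). Since $u$ is a continuous linear map from $\FV_{\kappa}'$ to $E$, its continuity for the seminorm $p_{\alpha}$ yields an absolutely convex, $\kappa(\FV',\FV)$-compact set $C=C_{\alpha}\subset\FV$ with $p_{\alpha}(u(y))\leq\sup_{f\in C}|y(f)|$ for all $y\in\FV'$ (absorbing the continuity constant into $C$). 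Evaluating at $y_{x}$ and taking the supremum over $x$ with $\pi(x)\notin K$ gives, for every $K\in\mathfrak{K}$,
\[
\sup_{\substack{x\in\omega_{m}\\ \pi(x)\notin K}}
p_{\alpha}\bigl(T^{E}_{m}(S(u))(x)\bigr)\nu_{j,m}(x)
\;\leq\;\sup_{f\in C}q_{K}(f),
\qquad
q_{K}(f):=\sup_{\substack{x\in\omega_{m}\\ \pi(x)\notin K}}\bigl|T^{\K}_{m}(f)(x)\bigr|\nu_{j,m}(x).
\]

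It remains to produce $K$ with $\sup_{f\in C}q_{K}(f)<\varepsilon$. Each $q_{K}$ is a seminorm dominated by $|\cdot|_{j,m}$, hence continuous on $\FV$; because $\mathfrak{K}$ is closed under finite unions it is directed by inclusion, and $K_{1}\subset K_{2}$ forces $q_{K_{2}}\leq q_{K_{1}}$, so $(q_{K})_{K\in\mathfrak{K}}$ is a decreasing net of continuous seminorms. By the hypothesis $\FV\subset\operatorname{AP}_{\pi,\mathfrak{K}}(\Omega)$ (i.e.\ \eqref{van.a.inf} for scalar functions), this net decreases pointwise to $0$ at each fixed $f\in C$. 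A Dini-type argument on the compact set $C$ — valid for a decreasing net of continuous functions converging pointwise to the continuous limit $0$ — then upgrades this to uniform convergence, so there is $K\in\mathfrak{K}$ with $\sup_{f\in C}q_{K}(f)<\varepsilon$. This establishes $S(u)\in\operatorname{AP}_{\pi,\mathfrak{K}}(\Omega,E)$.

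\textbf{Part (ii) and the obstacle.} Fix $e'\in E'$ and $f\in\FVE$; we may assume $e'\neq 0$ and pick $\alpha\in\mathfrak{A}$, $C_{e'}>0$ with $|e'(z)|\leq C_{e'}p_{\alpha}(z)$ for all $z\in E$. By \eqref{eq:van.at.inf_strong},
\[
\bigl|T^{\K}_{m}(e'\circ f)(x)\bigr|\nu_{j,m}(x)
=\bigl|e'\bigl(T^{E}_{m}(f)(x)\bigr)\bigr|\nu_{j,m}(x)
\leq C_{e'}\,p_{\alpha}\bigl(T^{E}_{m}(f)(x)\bigr)\nu_{j,m}(x).
\]
Given $\varepsilon,j,m$, the $E$-valued vanishing at infinity $f\in\FVE\subset\operatorname{AP}_{\pi,\mathfrak{K}}(\Omega,E)$, applied with $\varepsilon/(2C_{e'})$ and this $\alpha$, furnishes $K\in\mathfrak{K}$ making the right-hand supremum $<\varepsilon/(2C_{e'})$, whence the left-hand supremum is $<\varepsilon$; thus $e'\circ f\in\operatorname{AP}_{\pi,\mathfrak{K}}(\Omega)$. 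The only non-formal step in the whole argument is the pointwise-to-uniform passage over $C$ in part (i); the hypothesis that $\mathfrak{K}$ is closed under finite unions is used precisely there, to render $(q_{K})$ a decreasing net so that Dini's theorem applies, while part (ii) uses neither compactness nor directedness.
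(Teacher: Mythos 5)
Your proof is correct. Part (ii) is essentially identical to the paper's argument (estimate $|e'|\leq C_{e'}p_{\alpha}$ and apply the $E$-valued vanishing condition), so there is nothing to compare there. Part (i), however, takes a genuinely different route. The paper restricts $u$ to the equicontinuous polar $B_{j,m}^{\circ}=\{f\in\FV\;|\;|f|_{j,m}\leq 1\}^{\circ}$, uses that $\sigma(\FV',\FV)$ and $\kappa(\FV',\FV)$ coincide there to obtain a weak neighbourhood $V_{N,\eta}$ determined by a \emph{finite} set $N\subset\FV$, applies the scalar vanishing hypothesis to each $f\in N$, and takes the union of the resulting sets in $\mathfrak{K}$; the functionals $T^{\K}_{m,x}(\cdot)\nu_{j,m}(x)$ with $\pi(x)\notin K$ then land in $V_{N,\eta}$ and hence are mapped into $U_{\alpha,\varepsilon}$. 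You instead invoke the $\kappa$-continuity estimate $p_{\alpha}(u(y))\leq\sup_{f\in C}|y(f)|$ for an absolutely convex compact $C\subset\FV$ (note: $C$ is compact \emph{in} $\FV$, one of the sets generating $\kappa(\FV',\FV)$ on the dual, not "$\kappa(\FV',\FV)$-compact" — a harmless slip of wording) and then run a Dini argument for the decreasing net $(q_{K})_{K\in\mathfrak{K}}$ on $C$. Both mechanisms ultimately reduce the problem to finitely many scalar functions before the finite-union hypothesis on $\mathfrak{K}$ is invoked: the paper gets its finite set from the structure of weak neighbourhoods on equicontinuous sets, you get yours from a finite subcover of $C$ inside the Dini proof. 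Your version is slightly more self-contained conceptually (it never needs the fact that $\sigma$ and $\kappa$ agree on equicontinuous sets) at the cost of invoking Dini's theorem for nets; the paper's version is more elementary in that it only manipulates polars and weak neighbourhoods directly. Both are valid.
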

\begin{proof}
(i) We set $B_{j,m}:=\{f\in\FV\;|\;|f|_{j,m}\leq 1\}$ for $j\in J$ and $m\in M$. 
Let $u\in\FV\varepsilon E$. 
The topologies $\sigma(\FV',\FV)$ and 
$\kappa(\FV',\FV)$ coincide on the equicontinuous set $B^{\circ}_{j,m}$ and we deduce that 
the restriction of $u$ to $B_{j,m}^{\circ}$ is 
$\sigma(\FV',\FV)$-continuous.

Let $\varepsilon>0$, $j\in J$, $m\in M$, $\alpha\in\mathfrak{A}$ and set 
$U_{\alpha,\varepsilon}:=\{x\in E\;|\;p_{\alpha}(x)< \varepsilon\}$. 
Then there are a finite set $N\subset\FV$ and $\eta>0$ such that
$u(f')\in U_{\alpha,\varepsilon}$ for all $f'\in V_{N,\eta}$ where
\[
V_{N,\eta}:=\{f'\in\FV'\;|\; \sup_{f\in N}|f'(f)|<\eta\}\cap B_{j,m}^{\circ}
\]
because the restriction of $u$ to $B_{j,m}^{\circ}$ is 
$\sigma(\FV',\FV)$-continuous. 
Since $N\subset\FV$ is finite, $\FV\subset\operatorname{AP}_{\pi,\mathfrak{K}}(\Omega)$ 
and $\mathfrak{K}$ is closed under taking finite unions, 
there is $K\in\mathfrak{K}$ such that
\begin{equation}\label{van.at.inf1}
 \sup_{\substack{x\in \omega_{m}\\ \pi(x)\notin K}}|T^{\K}_{m}(f)(x)|
 \nu_{j,m}(x)<\eta
\end{equation}
for every $f\in N$. It follows from \eqref{van.at.inf1} and (the proof of) \prettyref{lem:topology_eps}
that
\[
D_{\pi \nsubset K,j,m}:=\{T^{\K}_{m,x}(\cdot)\nu_{j,m}(x)
\;|\;x\in \omega_{m},\,\pi(x)\notin K\}\subset V_{N,\eta}
\]
and thus $u(D_{\pi \nsubset K,j,m})\subset U_{\alpha,\varepsilon}$. Therefore we have
\[
 \sup_{\substack{x\in\omega_{m}\\ \pi(x)\notin K}}
 p_{\alpha}\bigl(T^{E}_{m}(S(u))(x)\bigr)\nu_{j,m}(x)
\underset{\eqref{eq:van.at.inf_cons}}{=}\sup_{\substack{x\in \omega_{m}\\ \pi(x)\notin K}}
 p_{\alpha}\bigl(u(T^{\K}_{m,x})\bigr)\nu_{j,m}(x)
<\varepsilon .
\]
Hence we conclude that $S(u)\in \operatorname{AP}_{\pi,\mathfrak{K}}(\Omega,E)$. 

(ii) Let $\varepsilon>0$, $f\in\FVE$ and $e'\in E'$. 
Then there exist $\alpha\in\mathfrak{A}$ and $C>0$ such that
$|e'(x)|\leq C p_{\alpha}(x)$ for every $x\in E$. 
For $j\in J$ and $m\in M$ there is 
$K\in\mathfrak{K}$ such that 
\[
\sup_{\substack{x\in \omega_{m}\\ \pi(x)\notin K}}p_{\alpha}\bigl(T^{E}_{m}(f)(x)\bigr)
 \nu_{j,m}(x)<\frac{\varepsilon}{C}
\]
since $\FVE\subset\operatorname{AP}_{\pi,\mathfrak{K}}(\Omega,E)$.
It follows that
\[
 \sup_{\substack{x\in\omega_{m}\\ \pi(x)\notin K}}
 |T^{\K}_{m}(e'\circ f)(x)|\nu_{j,m}(x)
\underset{\eqref{eq:van.at.inf_strong}}{=}\sup_{\substack{x\in\omega_{m} \\ \pi(x)\notin K}}
 \bigl|e'\bigl(T^{E}_{m}(f)(x)\bigr)\bigr|\nu_{j,m}(x)
<C\frac{\varepsilon}{C}=\varepsilon,
\]
yielding $e'\circ f\in\operatorname{AP}_{\pi,\mathfrak{K}}(\Omega)$.
\end{proof}

The first part of the proof above adapts an idea in the proof of \cite[4.4 Theorem, p.\ 199--200]{B1} 
where $(T^{E}_{m},T^{\K}_{m})_{m\in M}
=(\id_{E^{\Omega}},\id_{\K^{\Omega}})$ which is a special case of our proposition.

Our last proposition of this section is immediate. 
For $\omega\subset\Omega$ we set $\operatorname{AP}_{\omega}(\Omega,E):=\{f\in E^{\Omega}\;|\;\forall\;x\in\omega:\;f(x)=0\}$ 
and $\operatorname{AP}_{\omega}(\Omega):=\operatorname{AP}_{\omega}(\Omega,\K)$.

\begin{prop}[{vanishing on a subset}]\label{prop:zeros}
Let $\omega\subset\Omega$ and $\FV$ a $\dom$-space such that $\FV\subset\operatorname{AP}_{\omega}(\Omega)$ as a linear subspace. 
Then $S(u)\in\operatorname{AP}_{\omega}(\Omega,E)$ for all $u\in\FV\varepsilon E$. 
\end{prop}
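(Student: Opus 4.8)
The plan is to exploit that the hypothesis $\FV\subset\operatorname{AP}_{\omega}(\Omega)$ forces every point evaluation $\delta_{x}$ with $x\in\omega$ to be the zero functional on $\FV$, after which the linearity of the elements of $\FV\varepsilon E$ finishes the argument at once. This is precisely why the author calls the statement immediate.

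First I would fix $u\in\FV\varepsilon E$ and a point $x\in\omega$. Since $\FV$ is a $\dom$-space (and scalar-valued), we have $\delta_{x}\in\FV'$, so that $S(u)(x)=u(\delta_{x})$ is well-defined. Next I would invoke the assumption $\FV\subset\operatorname{AP}_{\omega}(\Omega)$: by the definition of $\operatorname{AP}_{\omega}(\Omega)$, every $f\in\FV$ satisfies $f(x)=0$ for the chosen $x\in\omega$. Hence $\delta_{x}(f)=f(x)=0$ for all $f\in\FV$, which says precisely that $\delta_{x}=0$ as an element of $\FV'$.

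Finally, since each $u\in\FV\varepsilon E$ is in particular a linear map on $\FV'$, I obtain $S(u)(x)=u(\delta_{x})=u(0)=0$. As $x\in\omega$ was arbitrary, $S(u)$ vanishes on all of $\omega$, that is $S(u)\in\operatorname{AP}_{\omega}(\Omega,E)$, as claimed. There is no genuine obstacle here; the only point worth flagging is the identification of the zero functional, namely that the containment $\FV\subset\operatorname{AP}_{\omega}(\Omega)$ is exactly what collapses $\delta_{x}$ to $0$ for $x\in\omega$, while for $x\notin\omega$ neither any information nor any claim is made.
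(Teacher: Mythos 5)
Your proof is correct and is exactly the argument the paper has in mind when it calls the proposition ``immediate'' (the paper gives no written proof): the containment $\FV\subset\operatorname{AP}_{\omega}(\Omega)$ forces $\delta_{x}=0$ in $\FV'$ for every $x\in\omega$, and linearity of $u$ gives $S(u)(x)=u(\delta_{x})=0$. Nothing further is needed.
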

\section{Further examples of \texorpdfstring{$\varepsilon$}{epsilon}-products}
\label{sect:examples}
In \prettyref{chap:linearisation} we dealt with weighted spaces of continuously partially differentiable functions.
Now, we treat many examples of weighted spaces $\FVE$ of functions with less regularity on a set $\Omega$ 
with values in a locally convex Hausdorff space $E$ over the field $\K$. 
Applying the results of the preceding sections, we give conditions on $E$ such that 
$\FV$ and $\FVE$ are $\varepsilon$-compatible, in particular, that
\[
 \FVE\cong \FV\varepsilon E
\]
holds. 
We start with the simplest example of all. 
Let $\Omega$ be a non-empty set and equip the space $E^{\Omega}$ with the topology 
of pointwise convergence, i.e.\ the locally convex topology given by the seminorms 
\[
|f|_{K,\alpha}:=\sup_{x\in K}p_{\alpha}(f(x))\chi_{K}(x),\quad f\in E^{\Omega},
\]
for finite $K\subset\Omega$ and $\alpha\in \mathfrak{A}$.
To prove $E^{\N_{0}}\cong \K^{\N_{0}}\varepsilon E$ for complete $E$ is given as an exercise in 
\cite[Aufgabe 10.5, p.\ 259]{Kaballo}, which we generalise now.

\begin{exa}\label{ex:space_of_all_functions}
Let $\Omega$ be a non-empty set and $E$ an lcHs. Then $E^{\Omega}\cong\K^{\Omega}\varepsilon E$.
\end{exa}
\begin{proof}
The strength and consistency of the generator $(\id_{E^{\Omega}},\id_{\K^{\Omega}})$ is obvious. 
Let $f\in E^{\Omega}$, $K\subset\Omega$ be finite and set $N_{K}(f):=f(\Omega)\chi_{K}(\Omega)$. Then 
we have $N_{K}(f)=f(K)\cup\{0\}$ if $K\neq\Omega$, and $N_{K}(f)=f(K)$ if $K=\Omega$. Thus $N_{K}(f)$
is finite, hence compact, $N_{K}(f)\subset\acx(f(K))$ and $\acx(f(K))$ 
is a subset of the finite dimensional subspace $\operatorname{span}(f(K))$ of $E$. It follows that 
$\acx(f(K))$ is compact by \cite[6.7.4 Proposition, p.\ 113]{Jarchow}, implying 
our statement by virtue of \prettyref{cor:full_linearisation} (iv).
\end{proof}

The next example will give us the counterpart of \prettyref{ex:weighted_C_1_diff} a)
on the level of sequence spaces. 
Let $\Omega$ be a set, $E$ an lcHs and $\mathcal{V}:=(\nu_{j})_{j\in J}$ a directed family of 
weights $\nu_{j}\colon\Omega\to [0,\infty)$ on $\Omega$. We set 
\[
\gls{ellVE}:=\{f\in E^{\Omega}\;|\;\forall\;j\in J,\,\alpha\in\mathfrak{A}:\;
|f|_{j,\alpha}:=\sup_{x\in\Omega}p_{\alpha}(f(x))\nu_{j}(x)<\infty\}
\]
and $\ell\mathcal{V}(\Omega):=\ell\mathcal{V}(\Omega,\K)$.

\begin{exa}\label{ex:sequence_vanish_infty}
Let $E$ be an lcHs, $(\Omega,\d)$ a \emph{\gls{uniformlyDMS}}, i.e.\ 
there is $r>0$ such that $\d(x,y)\geq r$ for all $x,y\in\Omega$, $x\neq y$, and 
$\mathcal{V}:=(\nu_{j})_{j\in J}$ a directed family of weights on $\Omega$ such that 
\begin{equation}\label{eq:sequence_vanish_infty}
\forall\;j\in J\;\exists\;i\in J\;\forall\;\varepsilon>0\;\exists\;K\subset\Omega\;\text{compact}\;\forall\;
x\in\Omega\setminus K:\;
\nu_{j}(x)\leq \varepsilon \nu_{i}(x).
\end{equation}
If $E$ is locally complete, then $\ell\mathcal{V}(\Omega,E)\cong\ell\mathcal{V}(\Omega)\varepsilon E$.
\end{exa}
\begin{proof}
Let $f\in\ell\mathcal{V}(\Omega,E)$ and $j\in J$. Then $f\nu_{j}$ is bounded on $\Omega$ by definition 
of $\ell\mathcal{V}(\Omega,E)$.
Since $(\Omega,\d)$ is uniformly discrete, there is $r>0$ such that
\[
\frac{p_{\alpha}(f(x)\nu_{j}(x)-f(y)\nu_{j}(y))}{\d(x,y)}\leq \frac{2}{r}|f|_{j,\alpha}<\infty,\quad 
x,y\in\Omega,\,x\neq y,
\]
for every $\alpha\in\mathfrak{A}$. Therefore $f\nu_{j}\in\mathcal{C}^{[1]}_{b}(\Omega,E)$ where 
\[
\mathcal{C}_{b}^{[1]}(\Omega,E):=\Bigl\{g\in E^{\Omega}\;|\;\forall\alpha\in\mathfrak{A}:\;\sup_{x\in \Omega}p_{\alpha}(g(x))<\infty\;\text{and}\;\sup_{\substack{x,y\in \Omega\\x\neq y}}\frac{p_{\alpha}(g(x)-g(y))}{\d (x,y)}<\infty\Bigr\}.
\]
Due to \eqref{eq:sequence_vanish_infty} there is $i\in J$ such that for all $\varepsilon>0$ there exists 
a compact set $K\subset\Omega$ such that $\nu_{j}(x)\leq \varepsilon \nu_{i}(x)$ for all $x\in\Omega\setminus K$.
As $\mathcal{V}$ is directed, we may assume w.l.o.g.\ that 
$\nu_{j}(x)\leq \nu_{i}(x)$ for all $x\in\Omega$. 
This implies that the zeros of $\nu_{i}$ are zeros of $\nu_{j}$. 
We define $h\colon\Omega\to[0,\infty)$ by $h(x):=\nu_{i}(x)/\nu_{j}(x)$ for 
$x\in\Omega$ with $\nu_{j}(x)\neq 0$ and $h(x):=1$ if $\nu_{j}(x)=0$. 
We observe that $h(x)>0$ for all $x\in\Omega$ as the zeros of $\nu_{i}$ are contained in the 
zeros of $\nu_{j}$.
It follows that
\[
f(x)\nu_{j}(x)h(x)=f(x)\nu_{i}(x)
\]
for $x\in\Omega$ with $\nu_{j}(x)\neq 0$ and 
$f(x)\nu_{j}(x)h(x)=0$ for $x\in\Omega$ with $\nu_{j}(x)=0$. 
Hence $f\nu_{j}h$ is bounded on $\Omega$. Further, 
\[
\varepsilon h(x)=\varepsilon\nu_{i}(x)/\nu_{j}(x)\geq 1,
\]
for $x\in\Omega\setminus K$ with $\nu_{j}(x)\neq 0$ because \eqref{eq:sequence_vanish_infty} is fulfilled. 
Moreover, the zeros of $\nu_{j}$ are contained in 
$N:=\{x\in\Omega\;|\;f(x)\nu_{j}(x)=0\}$.
This yields that $\oacx(f\nu_{j}(\Omega))$ is absolutely convex and compact 
by \prettyref{prop:abs_conv_comp_hoelder}. 
So our statement follows from \prettyref{cor:full_linearisation} (iv). 
\end{proof}

Let us apply the preceding result to some known sequence spaces. We recall that 
a matrix $A:=\left(a_{k,j}\right)_{k,j\in\N}$ of non-negative numbers is called \emph{\gls{KoetheM}} if 
it fulfils:
\begin{enumerate}
	\item [(1)] $\forall\;k\in\N\;\exists\;j\in\N:\;a_{k,j}>0$,
	\item [(2)] $\forall\;k,j\in\N:\;a_{k,j}\leq a_{k,j+1}$.
\end{enumerate}
We note that what we call $k$ is usually called $j$ and vice-versa 
(see e.g.\ \cite[Definition, p.\ 326]{meisevogt1997}). 
But the notation we chose is more in line with the meaning of $j$ in our 
\prettyref{def:weight} of a weight function and therefore we prefer to keep our notation consistent. 
For an lcHs $E$ we define the \emph{\gls{KoetheS}}
\[
\gls{lambda_inftyAE}:=\{x=(x_{k})\in E^{\N}\;|\;\forall\;j\in\N,\,\alpha\in \mathfrak{A}:\; 
|x|_{j,\alpha}:=\sup_{k\in\N}p_{\alpha}(x_{k})a_{k,j}<\infty\}
\]
and the \emph{spaces of $E$-valued rapidly decreasing sequences} 
which we need for some theorems on Fourier expansions (see 
\prettyref{thm:fourier.rap.dec}, \prettyref{thm:fourier_periodic}) by 
\[
\gls{sOE}:=\{x=(x_{k})\in E^{\Omega}\;|\;\forall\;j\in\N,\,\alpha\in\mathfrak{A}:\; 
|x|_{j,\alpha}:=\sup_{k\in\Omega}p_{\alpha}(x_{k})(1+|k|^{2})^{j/2}<\infty\}
\]
with $\Omega=\N^{d}$, $\N_{0}^{d}$, $\Z^{d}$. Further, we set $\lambda^{\infty}(A):=\lambda^{\infty}(A,\K)$ and 
$s(\Omega):=s(\Omega,\K)$.

\begin{cor}\label{cor:sequence_vanish_infty}
Let $E$ be a locally complete lcHs.
\begin{enumerate}
\item[a)] If $A:=\left(a_{k,j}\right)_{k,j\in\N}$ is a K\"othe matrix such that
\begin{equation}\label{eq:sequence_vanish_infty_koethe}
\forall\;j\in\N\;\exists\;i\in\N\;\forall\;\varepsilon>0\;\exists\;K\in\N\;\forall\;k\in\N,\,k>K:\;
a_{k,j}\leq \varepsilon a_{k,i},
\end{equation}
then $\lambda^{\infty}(A,E)\cong \lambda^{\infty}(A)\varepsilon E$.
\item[b)] $s(\Omega,E)\cong s(\Omega)\varepsilon E$ for $\Omega=\N^{d}$, $\N_{0}^{d}$, $\Z^{d}$.
\end{enumerate}
\end{cor}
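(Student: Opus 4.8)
The plan is to derive both statements directly from \prettyref{ex:sequence_vanish_infty} by exhibiting $\lambda^{\infty}(A,E)$ and $s(\Omega,E)$ as instances of the space $\ell\mathcal{V}(\Omega,E)$ for a suitable uniformly discrete metric space $(\Omega,\d)$ and a suitable directed family of weights $\mathcal{V}$ satisfying \eqref{eq:sequence_vanish_infty}. Since $E$ is assumed locally complete, the only work is the verification of these structural hypotheses, after which the isomorphism $\ell\mathcal{V}(\Omega,E)\cong\ell\mathcal{V}(\Omega)\varepsilon E$ supplied by that example yields the claim.

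For a), I would take $\Omega:=\N$ equipped with the metric $\d(k,l):=|k-l|$ inherited from $\R$, so that $\d(k,l)\geq 1$ for $k\neq l$ and $(\N,\d)$ is uniformly discrete. Setting $\nu_{j}(k):=a_{k,j}$ for $j,k\in\N$ gives $\ell\mathcal{V}(\N,E)=\lambda^{\infty}(A,E)$ by definition. Property (1) of a K\"othe matrix guarantees that for each $k$ there is $j$ with $\nu_{j}(k)=a_{k,j}>0$, which is exactly \eqref{loc3}, and property (2) gives $\nu_{j}\leq\nu_{j+1}$, so $\mathcal{V}$ is directed. The decisive point is that in $(\N,\d)$ the compact subsets are precisely the finite subsets, hence contained in some $\{1,\dots,K\}$; therefore the abstract condition \eqref{eq:sequence_vanish_infty} is literally the concrete condition \eqref{eq:sequence_vanish_infty_koethe}. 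With all hypotheses met, \prettyref{ex:sequence_vanish_infty} yields $\lambda^{\infty}(A,E)\cong\lambda^{\infty}(A)\varepsilon E$.

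For b), I would equip $\Omega\in\{\N^{d},\N_{0}^{d},\Z^{d}\}$ with the restriction of the Euclidean metric of $\R^{d}$; since distinct lattice points are at distance at least $1$, $(\Omega,\d)$ is again uniformly discrete. Taking $\nu_{j}(k):=(1+|k|^{2})^{j/2}$ identifies $\ell\mathcal{V}(\Omega,E)$ with $s(\Omega,E)$. These weights satisfy $\nu_{j}\geq 1>0$ (so \eqref{loc3} holds) and $\nu_{j}\leq\nu_{j+1}$ (so $\mathcal{V}$ is directed). For \eqref{eq:sequence_vanish_infty} I would fix $j$, choose $i:=j+1$, and note $\nu_{j}(k)/\nu_{i}(k)=(1+|k|^{2})^{-1/2}$; given $\varepsilon>0$ there is $R>0$ with $(1+|k|^{2})^{-1/2}\leq\varepsilon$ whenever $|k|>R$, and the bounded set $K:=\{k\in\Omega\mid |k|\leq R\}$ is finite, hence compact. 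Thus $\nu_{j}(k)\leq\varepsilon\nu_{i}(k)$ for all $k\in\Omega\setminus K$, and \prettyref{ex:sequence_vanish_infty} applies. Alternatively, b) follows from a) upon checking that the K\"othe matrix $a_{k,j}:=(1+|k|^{2})^{j/2}$ satisfies \eqref{eq:sequence_vanish_infty_koethe}.

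The argument is essentially bookkeeping; the only genuine content---and the one place to be careful---is recognizing that \emph{compact} in the discrete setting means \emph{finite}, so that the index-based tail conditions \eqref{eq:sequence_vanish_infty_koethe} and the polynomial decay estimate really do match the metric-space condition \eqref{eq:sequence_vanish_infty} of the governing example.
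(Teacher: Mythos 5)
Your proof is correct and follows essentially the same route as the paper: both reduce to \prettyref{ex:sequence_vanish_infty} by noting that $\N$ and the lattices are uniformly discrete, that compact sets there are finite, and that the weights are directed with the required decay (the paper picks $i:=2j$ in case b) where you pick $i:=j+1$, which is immaterial). No gaps.
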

\begin{proof}
We observe that $\N$ and $\Omega$ are uniformly discrete metric spaces if they are equipped 
with the metric induced by the absolute value. Further, a set in a discrete space is compact if and only if 
it is finite. In case b) we set $\nu_{j}\colon\Omega\to (0,\infty)$, $\nu_{j}(k):=(1+|k|^{2})^{j/2}$ 
for $j\in\N$. Then for $\varepsilon>0$ there is $K\in\N$ such that 
\[
\frac{(1+|k|^{2})^{j/2}}{(1+|k|^{2})^{j}}=(1+|k|^{2})^{-j/2}\leq\varepsilon
\]
for all $k\in\Omega$ with $|k|>K$. In both cases the family of weights are directed, in case a) due 
to condition (2) of the definition of a K\"othe matrix. 
Hence we can apply \prettyref{ex:sequence_vanish_infty} in both cases.
\end{proof}

Due to \cite[Proposition 27.10, p.\ 330--331]{meisevogt1997} condition \eqref{eq:sequence_vanish_infty_koethe} 
is equivalent to $\lambda^{\infty}(A)$ being a Schwartz space. 
Since $\lambda^{\infty}(A)$ is also a Fr\'echet space by \cite[Lemma 27.1, p.\ 326]{meisevogt1997}, 
another way to prove \prettyref{cor:sequence_vanish_infty} a) (and b) as well) 
is given by \prettyref{cor:full_linearisation} (ii). 

Our next examples are \emph{Favard-spaces}.
Let $E$ be an lcHs, $0<\gamma\leq 1$, $\Omega$ a compact Hausdorff space, 
$\varphi\colon [0,\infty)\times\Omega\to\Omega$ a continuous \emph{\gls{semiflow}}, i.e.\ 
\[
\varphi(t+r,s)=\varphi(t,\varphi(r,s))\quad\text{and}\quad\varphi(0,s)=s,\quad t,r\in[0,\infty),\,s\in\Omega,
\]
and $(\widetilde{T}^{E}_{t})_{t\geq 0}$ the induced semigroup given by 
$\widetilde{T}^{E}_{t}\colon\mathcal{C}(\Omega,E)\to\mathcal{C}(\Omega,E)$, 
$\widetilde{T}^{E}_{t}(f):=f(\varphi(t,\cdot))$. 
The semigroup $(\widetilde{T}^{\K}_{t})_{t\geq 0}$ is (equi-)bounded and strongly continuous 
by \cite[Chap.\ II, 3.31 Exercises (1), p.\ 95]{engel2000}. 
The vector-valued \emph{\gls{FavardSpace}} of order $\gamma$ of the semigroup 
$(\widetilde{T}^{E}_{t})_{t\geq 0}$ is defined by 
\[
F_{\gamma}(\Omega,E):=\{f\in \mathcal{C}(\Omega,E)\;|\;\forall\;\alpha\in\mathfrak{A}:\;
\sup_{x\in\Omega,t>0}p_{\alpha}(\widetilde{T}^{E}_{t}(f)(x)-f(x))t^{-\gamma}<\infty\}
\]
equipped with the system of seminorms given by 
\[
|f|_{\alpha}:=\max\bigl(\sup_{x\in\Omega}p_{\alpha}(f(x)),
                        \sup_{x\in\Omega,t>0}p_{\alpha}(\widetilde{T}^{E}_{t}(f)(x)-f(x))t^{-\gamma}\bigr),\quad
              f\in F_{\gamma}(\Omega,E),
\]
for $\alpha\in\mathfrak{A}$ (see \cite[Definition 3.1.2, p.\ 160]{butzer1967} and 
\cite[Proposition 3.1.3, p.\ 160]{butzer1967}). Further, we set $F_{\gamma}(\Omega):=F_{\gamma}(\Omega,\K)$. 
$F_{\gamma}(\Omega,E)$ is a $\dom$-space, 
which follows from the setting $\omega:=[0,\infty)\times\Omega$, 
$\dom T^{E}:=\mathcal{C}(\Omega,E)$ and $T^{E}\colon\mathcal{C}(\Omega,E)\to E^{\omega}$ 
given by 
\[
 T^{E}(f)(0,x):=f(x)\;\;\text{and}\;\;
 T^{E}(f)(t,x):=\widetilde{T}^{E}_{t}(f)(x)-f(x),\quad t>0,\,x\in\Omega,
\]
as well as $\operatorname{AP}(\Omega,E):=E^{\Omega}$ and 
the weight given by $\nu(0,x):=1$ and $\nu(t,x):=t^{-\gamma}$ for $t>0$ and $x\in\Omega$.

\begin{exa}\label{ex:Favard}
Let $E$ be a semi-Montel space, $0<\gamma\leq 1$, $\Omega$ a compact Hausdorff space, 
$\varphi\colon [0,\infty)\times\Omega\to\Omega$ a continuous semiflow. Then 
$F_{\gamma}(\Omega,E)\cong F_{\gamma}(\Omega)\varepsilon E$ holds for the
Favard space of order $\gamma$ of the induced semigroup $(\widetilde{T}^{E}_{t})_{t\geq 0}$.
\end{exa}
\begin{proof}
The generator $(T^{E},T^{\K})$ for $(\mathcal{F_{\gamma}},E)$ is consistent 
by \prettyref{prop:stetig.cons} and \prettyref{lem:bier} b)(ii). Its strength is clear. 
Thus our statement follows from \prettyref{cor:full_linearisation} (iii).
\end{proof}

The \emph{space of \gls{cadlagf}} on a set $\Omega\subset\R$ with values in an lcHs $E$ is defined by 
\[
  \gls{DOE}:=\{f\in E^{\Omega}\;|\;\forall\;x\in\Omega:\;\lim_{w\to x\rlim}f(w)=f(x)\;\text{and}\;
  \lim_{w\to x\llim}f(w)\;\text{exists}\}.\footnote{We note that for $x\in\Omega$ we only demand $\lim_{w\to x\rlim}f(w)=f(x)$ if $x$ is an 
  accumulation point of $[x,\infty)\cap\Omega$, and the existence of the limit $\lim_{w\to x\llim}f(w)$ if $x$ is an 
  accumulation point of $(-\infty,x]\cap\Omega$.}
\]
Further, we set $D(\Omega):=D(\Omega,\K)$. Due to \prettyref{prop:cadlag_precomp} the maps given by 
\[
|f|_{K,\alpha}:=\sup_{x\in \Omega}p_{\alpha}(f(x))\chi_{K}(x),\quad f\in D(\Omega,E),
\]
for compact $K\subset\Omega$ and $\alpha\in\mathfrak{A}$ form a system of seminorms 
inducing a locally convex Hausdorff topology on $D(\Omega,E)$. 

\begin{exa}\label{ex:cadlag}
Let $E$ be an lcHs and $\Omega\subset\R$ locally compact. If $E$ is quasi-complete, then 
$D(\Omega)\varepsilon E\cong D(\Omega,E)$.
\end{exa}
\begin{proof}
First, we show that the generator $(\id_{E^{\Omega}},\id_{\K^{\Omega}})$ for $(D,E)$ is strong and 
consistent. The strength is a consequence of a simple calculation, so we only prove the consistency explicitly. 
We have to show that $S(u)\in D(\Omega,E)$ for all $u\in  D(\Omega)\varepsilon E$. 
Let $x\in\Omega$ be an accumulation point of $[x,\infty)\cap\Omega$ resp.\ $(-\infty,x]\cap\Omega$, 
$(x_{n})$ be a sequence in $\Omega$ such that $x_{n}\to x\rlim$ resp.\ $x_{n}\to x\llim$.
We have
\[
\delta_{x_{n}}(f)=f(x_{n})\to f(x)=\delta_{x}(f),\quad x_{n}\to x\rlim,
\]
and
\[
\delta_{x_{n}}(f)=f(x_{n})\to \lim_{n\to\infty}f(x_{n})=:T(f)(x),\quad x_{n}\to x\llim,
\]
for every $f\in D(\Omega)$, which implies that $(\delta_{x_{n}})$ converges to $\delta_{x}$ if $x_{n}\to x\rlim$, 
and to $\delta_{x}\circ T$ if $x_{n}\to x\llim$ in $D(\Omega)_{\sigma}'$.
Since $\Omega$ is locally compact, there are a compact neighbourhood $U(x)\subset\Omega$ of $x$ 
and $n_{0}\in\N$ such that $x_{n}\in U(x)$ for all $n\geq n_{0}$. Hence we deduce
\[
\sup_{n\geq n_{0}}|\delta_{x_{n}}(f)|\leq |f|_{U(x)}
\]
for every $f\in D(\Omega)$. Therefore the set $\{\delta_{x_{n}}\;|\;n\geq n_{0}\}$ is 
equicontinuous in $D(\Omega)'$, which implies that $(\delta_{x_{n}})$ converges to $\delta_{x}$ 
if $x_{n}\to x\rlim$ and to $\delta_{x}\circ T$ if $x_{n}\to x\llim$ in $D(\Omega)_{\gamma}'$ 
and thus in $D(\Omega)_{\kappa}'$.
From
\[
S(u)(x)=u(\delta_{x})=\lim_{n\to\infty}u(\delta_{x_{n}})=\lim_{n\to\infty}S(u)(x_{n}),\quad x_{n}\to x\rlim,
\]
and 
\[
u(\delta_{x}\circ T)=\lim_{n\to\infty} u(\delta_{x_{n}})=\lim_{n\to\infty}S(u)(x_{n}),\quad x_{n}\to x\llim,
\]
for every $u\in D(\Omega)\varepsilon E$ follows the consistency. 
Second, let $f\in D(\Omega,E)$, $K\subset\Omega$ be compact and consider $N_{K}(f)=f(\Omega)\chi_{K}(\Omega)$.
We observe that $N_{K}(f)=f(K)\cup\{0\}$ if $K\neq \Omega$, and 
$N_{K}(f)=f(K)$ if $K=\Omega$. 
We note that $N_{K}(f)\subset \oacx(\overline{f(K)})$ and $\oacx(\overline{f(K)})$ is absolutely convex 
and compact by \prettyref{prop:cadlag_precomp} because $E$ is quasi-complete. 
Thus we derive our statement from \prettyref{cor:full_linearisation} (iv).
\end{proof}

We turn to Cauchy continuous functions. Let $\Omega$ be a metric space, $E$ an lcHs and 
the space $\mathcal{CC}(\Omega,E)$ of Cauchy continuous functions from $\Omega$ to $E$ be 
equipped with the system of seminorms given by
\[
|f|_{K,\alpha}:=\sup_{x\in K}p_{\alpha}(f(x))\chi_{K}(x),\quad f\in\mathcal{CC}(\Omega,E),
\]
for $K\subset\Omega$ precompact and $\alpha\in\mathfrak{A}$. 

\begin{exa}\label{ex:cauchy_cont}
Let $E$ be an lcHs and $\Omega$ a metric space. If $E$ is a Fr\'{e}chet space or a semi-Montel space, then 
$\mathcal{CC}(\Omega,E)\cong\mathcal{CC}(\Omega)\varepsilon E$.
\end{exa}
\begin{proof}
The generator $(\id_{E^{\Omega}},\id_{\K^{\Omega}})$ for $(\mathcal{CC},E)$ 
is consistent by \prettyref{prop:c-stetig.cons} with \prettyref{lem:bier3}. 
Its strength follows from the uniform continuity of every $e'\in E'$.
First, we consider the case that $E$ is a Fr\'{e}chet space. 
Let $f\in\mathcal{CC}(\Omega,E)$, $K\subset\Omega$ be precompact and consider 
$N_{K}(f)=f(\Omega)\chi_{K}(\Omega)$. Then $N_{K}(f)=f(K)\cup\{0\}$ if $K\neq\Omega$, and 
$N_{K}(f)=f(K)$ if $K=\Omega$.
The set $f(K)$ is precompact in the metrisable space $E$ by \cite[Proposition 4.11, p.\ 576]{beer2009}. 
Thus we obtain $\mathcal{CC}(\Omega,E)\subset\mathcal{CC}(\Omega,E)_{\kappa}$ 
by virtue of \prettyref{lem:FVE_rel_comp} c). 
Since $E$ is complete, the first part of the statement
follows from \prettyref{thm:full_linearisation} with \prettyref{cond:surjectivity_linearisation} a). 
If $E$ is a semi-Montel space, then it is a 
consequence of \prettyref{cor:full_linearisation} (iii).
\end{proof}

Let $(\Omega,\d)$ be a metric space, $E$ an lcHs and the space $\gls{Cbu}$ 
of bounded uniformly continuous functions from $\Omega$ to $E$ 
be equipped with the system of seminorms given by
\[
|f|_{\alpha}:=\sup_{x\in\Omega}p_{\alpha}(f(x)),\quad f\in\mathcal{C}_{bu}(\Omega,E),
\]
for $\alpha\in\mathfrak{A}$.

\begin{exa}\label{ex:uniformly_cont}
Let $E$ be an lcHs and $(\Omega,\d)$ a metric space. If $E$ is a semi-Montel space, then 
$\mathcal{C}_{bu}(\Omega,E)\cong\mathcal{C}_{bu}(\Omega)\varepsilon E$.
\end{exa}
\begin{proof}
The generator $(\id_{E^{\Omega}},\id_{\K^{\Omega}})$ for $(\mathcal{C}_{bu},E)$ 
is consistent by \prettyref{prop:u-stetig.cons} with \prettyref{lem:bier4}. It is also strong 
due to the uniform continuity of every $e'\in E'$, 
yielding our statement by \prettyref{cor:full_linearisation} (iii).
\end{proof}

\begin{rem}
If $\N$ is equipped with the metric induced by the absolut value, then 
$\mathcal{C}_{bu}(\N,E)=\ell^{\infty}(\N,E)$ where $\ell^{\infty}(\N,E)$ is the space of 
bounded $E$-valued sequences. If $E$ is a separable infinite-dimensional 
Hilbert space, then the map $S\colon\mathcal{C}_{bu}(\N)\varepsilon E 
\to\mathcal{C}_{bu}(\N,E)$ is not surjective by \cite[2.8 Beispiel, p.\ 140]{B2} 
and \cite[Satz 10.5, p.\ 235--236]{Kaballo}. 
Hence one cannot drop the condition that $E$ is a semi-Montel space in 
\prettyref{ex:uniformly_cont}.
\end{rem}

Let $(\Omega,\d)$ be a metric space, $z\in\Omega$, $E$ an lcHs, $0<\gamma\leq 1$ and define the 
space of $E$-valued $\gamma$-\gls{hoelderCont} functions on $\Omega$ that vanish at $z$ by
\[
\gls{Cgamma_z}:=\{f\in E^{\Omega}\;|\;f(z)=0\;\text{and}\; 
\forall\;\alpha\in \mathfrak{A}:\;|f|_{\alpha}<\infty\}
\]
where
\[
 |f|_{\alpha}:=\sup_{\substack{x,w\in\Omega\\x\neq w}}\frac{p_{\alpha}\bigl(f(x)-f(w)\bigr)}{\d(x,w)^{\gamma}}.
\]
The topological subspace $\gls{Cgamma_z0}$ of 
$\gamma$-H\"older continuous functions that vanish at infinity 
consists of all $f\in\mathcal{C}^{[\gamma]}_{z}(\Omega,E)$ such that for all $\varepsilon>0$ 
there is $\delta>0$ with
\[
 \sup_{\substack{x,w\in\Omega\\0<\d(x,w)<\delta}}\frac{p_{\alpha}\bigl(f(x)-f(w)\bigr)}{\d(x,w)^{\gamma}}
 <\varepsilon.
\]
Further, we set $\mathcal{C}^{[\gamma]}_{z}(\Omega):=\mathcal{C}^{[\gamma]}_{z}(\Omega,\K)$ and 
$\mathcal{C}^{[\gamma]}_{z,0}(\Omega):=\mathcal{C}^{[\gamma]}_{z,0}(\Omega,\K)$. 
Moreover, we define $M:=J:=\{1\}$, $\omega_{1}:=\Omega^{2}\setminus\{(x,x)\;|\;x\in\Omega\}$ 
and $T^{E}_{1}\colon E^{\Omega}\to E^{\omega_{1}}$, $T^{E}_{1}(f)(x,w):=f(x)-f(w)$, and 
\[
\nu_{1,1}\colon\omega_{1}\to [0,\infty),\;\nu_{1,1}(x,w)
:=\frac{1}{\d(x,w)^{\gamma}}.
\]
Then we have for every $\alpha\in\mathfrak{A}$ that
\[
 |f|_{\alpha}=\sup_{(x,w)\in\omega_{1}}p_{\alpha}\bigl(T^{E}_{1}(f)(x,w)\bigr)\nu_{1,1}(x,w),
 \quad f\in\mathcal{C}^{[\gamma]}_{z}(\Omega,E).
\]

\begin{exa}\label{ex:hoelder}
Let $E$ be an lcHs, $(\Omega,\d)$ a metric space, $z\in\Omega$ and $0<\gamma\leq 1$. Then 
\begin{enumerate}
\item [a)] $\mathcal{C}^{[\gamma]}_{z}(\Omega,E)\cong \mathcal{C}^{[\gamma]}_{z}(\Omega)\varepsilon E$ 
if $E$ is a semi-Montel space,
\item [b)] $\mathcal{C}^{[\gamma]}_{z,0}(\Omega,E)\cong \mathcal{C}^{[\gamma]}_{z,0}(\Omega)\varepsilon E$ 
if $\Omega$ is precompact and $E$ quasi-complete.
\end{enumerate}
\end{exa}
\begin{proof}
Let us start with a). From \prettyref{prop:zeros} for vanishing at $z$ and 
a simple calculation follows that $(T^{E}_{1},T^{\K}_{1})$ is a strong 
and consistent generator for $(\mathcal{C}^{[\gamma]}_{z},E)$. 
This proves part a) by \prettyref{cor:full_linearisation} (iii). 
Concerning part b), we set 
$\mathfrak{K}:=\bigl\{\{(x,w)\in\Omega^{2}\;|\;\d(x,w)\geq \delta\}\;|\;\delta>0\bigr\}$, 
and let $\pi\colon\omega_{1}\to\omega_{1}$ be the identity. 
Then 
$\mathcal{C}^{[\gamma]}_{z,0}(\Omega,E)
=\mathcal{C}^{[\gamma]}_{z}(\Omega,E)\cap\operatorname{AP}_{\pi,\mathfrak{K}}(\Omega,E)$ 
with $\operatorname{AP}_{\pi,\mathfrak{K}}(\Omega,E)$ 
from \prettyref{prop:van.at.inf0}
and the generator $(T^{E}_{1},T^{\K}_{1})$ for $(\mathcal{C}^{[\gamma]}_{z,0},E)$ is strong and consistent by 
\prettyref{prop:zeros} for vanishing at $z$ and 
\prettyref{prop:van.at.inf0} for vanishing at infinity w.r.t.\ $(\pi,\mathfrak{K})$. 

Let $f\in\mathcal{C}^{[\gamma]}_{z,0}(\Omega,E)$ and 
$K_{\delta}:=\{(x,w)\in\Omega^{2}\;|\;\d(x,w)\geq \delta\}$ for $\delta>0$. For
\[
  N_{\pi\subset K_{\delta},1,1}(f)
 =\{T^{E}_{1}(f)(x,w)\nu_{1,1}(x,w)\;|\;(x,w)\in K_{\delta}\}
 =\bigl\{\tfrac{f(x)-f(w)}{\d(x,w)^{\gamma}}\;|\;(x,w)\in K_{\delta}\bigr\}
\]
we have 
\begin{align*}
  N_{\pi\subset K_{\delta},1,1}(f)
&\subset \delta^{-\gamma}\{c(f(x)-f(w))\;|\;x,w\in\Omega,\,|c|\leq 1\}\\
&=\delta^{-\gamma}\operatorname{ch}\bigl(f(\Omega)-f(\Omega)\bigr).
\end{align*}
The set $f(\Omega)$ is precompact because 
$\Omega$ is precompact and the $\gamma$-H\"{o}lder continuous function 
$f$ is uniformly continuous. It 
follows that the linear combination $f(\Omega)-f(\Omega)$ is precompact and 
the circled hull of a precompact set is still precompact 
by \cite[Chap.\ I, 5.1, p.\ 25]{schaefer}. 
Therefore $N_{\pi\subset K_{\delta},1,1}(f)$ is precompact for every $\delta>0$, 
giving the precompactness of 
\[
N_{1,1}(f)=\{T^{E}_{1}(f)(x,w)\nu_{1,1}(x,w)\;|\;(x,w)\in\omega_{1}\}
\]
by \prettyref{prop:vanish_at_infinity_precomp}. 
Hence statement b) is a consequence of \prettyref{cor:full_linearisation} (iv),
\prettyref{prop:vanish_at_infinity_precomp} and the quasi-completeness of $E$. 
\end{proof}

Let $\Omega$ be a topological Hausdorff space and $\mathcal{V}:=(\nu_{j})_{j\in J}$ a directed family of weights 
$\nu_{j}\colon \Omega\to [0,\infty)$. The weighted space of continuous functions on $\Omega$ 
with values in an lcHs $E$ is given by
\[
\gls{CVE}:=\{f\in\mathcal{C}(\Omega,E)\;|\;\forall\;j\in J,\,\alpha\in\mathfrak{A}:\;
|f|_{j,\alpha}<\infty\}
\]
where
\[
|f|_{j,\alpha}:=\sup_{x\in\Omega}p_{\alpha}(f(x))\nu_{j}(x).
\]
Its topological subspace of functions that vanish at infinity in the weighted topology is defined by 
 \begin{align*}
  \gls{CVE_0}:=\{f\in\mathcal{CV}(\Omega,E)\;|\;&\forall\;j\in J,\,\alpha\in\mathfrak{A},
  \,\varepsilon>0\\
  &\exists\;K\subset \Omega\;\text{compact}:\;|f|_{\Omega\setminus K,j,\alpha}<\varepsilon\} 
 \end{align*}
where 
 \[
  |f|_{\Omega\setminus K,j,\alpha}:=\sup_{x\in\Omega\setminus K}
  p_{\alpha}(f(x))\nu_{j}(x).
 \]
Further, we define $\mathcal{CV}(\Omega):=\mathcal{CV}(\Omega,\K)$ and $\mathcal{CV}_{0}(\Omega):=\mathcal{CV}(\Omega,\K)$.
In particular, we set $\gls{Cb}:=\mathcal{CV}(\Omega,E)$, 
i.e.\ the space of bounded continuous functions, and have
$\mathcal{CV}_{0}(\Omega,E)=\mathcal{C}_{0}(\Omega,E)$ if $\mathcal{V}:=\{1\}$.
In \cite{B3,B1,B2} Bierstedt studies these spaces in the case that 
$\mathcal{V}$ is a \emph{\gls{Nachbin-family}} which means that the functions $\nu_{j}$ are upper semi-continuous 
for all $j\in J$ and directed in the sense that for $j_{1},j_{2}\in J$ and $\lambda\geq 0$ there 
is $j_{3}\in J$ such that $\lambda\nu_{j_{1}},\lambda\nu_{j_{2}}\leq\nu_{j_{3}}$. Formally this is 
stronger than our definition of being directed in \prettyref{rem:weights_Hausdorff_directed} c).
The notion $\mathcal{U}\leq\mathcal{V}$ for two Nachbin-families means that for every $\mu\in\mathcal{U}$ there 
is $\nu\in\mathcal{V}$ such that $\mu\leq \nu$.
One of his main results from \cite{B2} is the following theorem.

\begin{thm}[{\cite[2.4 Theorem (2), p.\ 138--139]{B2}}]\label{thm:bierstedt}
Let $E$ be a quasi-complete lcHs, $\Omega$ a completely regular Hausdorff space 
and $\mathcal{V}$ a Nachbin-family on $\Omega$. If
\begin{enumerate}
\item [(i)] $\mathcal{Z}:=\left\{v\colon \Omega\to\R\;|\; v\;\text{constant},\;v\geq 0\right\}
\leq \mathcal{V}$, or
\item [(ii)] $\widetilde{\mathcal{W}}:=\left\{\mu \chi_{K}\;|\; \mu>0,\;K\subset\Omega\;\text{compact}\right\}
\leq \mathcal{V}$ 
 and $\Omega$ is a $k_{\R}$-space, 
\end{enumerate}
then $\mathcal{CV}_{0}(\Omega,E)\cong \mathcal{CV}_{0}(\Omega)\varepsilon E$.
\end{thm}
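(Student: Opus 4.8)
The plan is to realise $\mathcal{CV}_{0}(\Omega,E)$ as a $\dom$-space whose generator is the trivial pair $(\id_{E^{\Omega}},\id_{\K^{\Omega}})$ and then to invoke \prettyref{cor:full_linearisation} (iv). Concretely, I would keep $M$ a singleton, take $\omega=\Omega$, $T^{E}=\id_{E^{\Omega}}$, and encode the vanishing-at-infinity condition in $\operatorname{AP}(\Omega,E)$: setting $\pi=\id_{\Omega}$, $\mathfrak{K}:=\{K\subset\Omega\mid K\ \text{compact}\}$ (which is closed under finite unions) and $\operatorname{AP}(\Omega,E):=\mathcal{C}(\Omega,E)\cap\operatorname{AP}_{\pi,\mathfrak{K}}(\Omega,E)$, one checks at once that $\mathcal{FV}(\Omega,E)=\mathcal{CV}_{0}(\Omega,E)$ with exactly the stated seminorms $|f|_{j,\alpha}=\sup_{x\in\Omega}p_{\alpha}(f(x))\nu_{j}(x)$. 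Strength of $(\id_{E^{\Omega}},\id_{\K^{\Omega}})$ is immediate: for $e'\in E'$ and $f\in\mathcal{CV}_{0}(\Omega,E)$ the function $e'\circ f$ is continuous, and from $|e'(f(x))|\nu_{j}(x)\le C\,p_{\alpha}(f(x))\nu_{j}(x)$ it is finite and vanishes at infinity, so $e'\circ f\in\mathcal{CV}_{0}(\Omega)$ and the operator identity is a triviality.

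The first substantial step is consistency, and this is where the dichotomy (i)/(ii) enters. By \prettyref{prop:stetig.cons} together with \prettyref{prop:van.at.inf0} (i) it suffices to prove $\delta\in\mathcal{C}(\Omega,\mathcal{CV}_{0}(\Omega)_{\kappa}')$, because the purely pointwise equation $S(u)(x)=u(\delta_{x})$ is automatic and the inclusion $\mathcal{CV}_{0}(\Omega,Y)\subset\operatorname{AP}_{\pi,\mathfrak{K}}(\Omega,Y)$ for $Y\in\{\K,E\}$ holds by definition. In case (i), $\mathcal{Z}\le\mathcal{V}$ forces some $\nu_{j_{0}}\ge 1$, whence $\sup_{x}p_{\alpha}(f(x))\le|f|_{j_{0},\alpha}$ and the inclusion $\mathcal{CV}_{0}(\Omega)\hookrightarrow\mathcal{C}_{b}(\Omega)$ is continuous; \prettyref{lem:bier} (ii), applied with $T$ this inclusion, yields $\delta\in\mathcal{C}(\Omega,\mathcal{CV}_{0}(\Omega)_{\gamma}')$. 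In case (ii), $\widetilde{\mathcal{W}}\le\mathcal{V}$ makes the weighted topology finer than compact convergence, so $\mathcal{CV}_{0}(\Omega)\hookrightarrow\mathcal{CW}(\Omega)$ is continuous, and since $\Omega$ is a $k_{\R}$-space \prettyref{lem:bier} (i) gives the same conclusion. As $\gamma(\mathcal{CV}_{0}(\Omega)',\mathcal{CV}_{0}(\Omega))$ is finer than $\kappa(\mathcal{CV}_{0}(\Omega)',\mathcal{CV}_{0}(\Omega))$, continuity into $\mathcal{CV}_{0}(\Omega)_{\gamma}'$ implies continuity into $\mathcal{CV}_{0}(\Omega)_{\kappa}'$. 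Thus \prettyref{prop:stetig.cons} gives continuity of $S(u)$ and \prettyref{prop:van.at.inf0} (i) gives that $S(u)$ vanishes at infinity, so $S(u)\in\mathcal{CV}_{0}(\Omega,E)$ and the generator is consistent.

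It remains to verify the hypothesis of \prettyref{cor:full_linearisation} (iv): for each $f\in\mathcal{CV}_{0}(\Omega,E)$ and $j\in J$ one needs $K\in\kappa(E)$ with $N_{j}(f):=\{f(x)\nu_{j}(x)\mid x\in\Omega\}\subset K$. I would first show $N_{j}(f)$ is precompact. For compact $L\subset\Omega$ the upper semicontinuous weight $\nu_{j}$ is bounded on $L$ by some $C_{L}$, so $\{f(x)\nu_{j}(x)\mid x\in L\}\subset[0,C_{L}]\cdot f(L)$, the continuous image of the compact set $[0,C_{L}]\times f(L)$ under scalar multiplication, hence precompact. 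Combining these truncations with the vanishing-at-infinity of $f$ encoded in $\operatorname{AP}_{\pi,\mathfrak{K}}$, \prettyref{prop:vanish_at_infinity_precomp} yields precompactness of the whole $N_{j}(f)$. Since $E$ is quasi-complete it has the convex compactness property, so $\overline{N_{j}(f)}$ is compact and $\oacx(N_{j}(f))$ is absolutely convex and compact; thus $N_{j}(f)\subset\oacx(N_{j}(f))\in\kappa(E)$. An appeal to \prettyref{cor:full_linearisation} (iv) then gives $\mathcal{CV}_{0}(\Omega,E)\cong\mathcal{CV}_{0}(\Omega)\varepsilon E$.

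The main obstacle is the consistency step, where the two topological hypotheses must be matched to the two branches of \prettyref{lem:bier}; and the precompactness of $N_{j}(f)$ is the only place where quasi-completeness is genuinely used, the truncation estimate handling the compact pieces while the weighted vanishing at infinity together with the convex compactness property is what upgrades precompactness to an absolutely convex compact majorant. I expect the verification that $\mathcal{FV}(\Omega,E)$ built from $\operatorname{AP}_{\pi,\mathfrak{K}}$ really coincides with $\mathcal{CV}_{0}(\Omega,E)$ to be routine bookkeeping rather than a difficulty.
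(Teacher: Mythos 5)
Your proof is correct, but note that the paper itself gives no proof of \prettyref{thm:bierstedt}: the statement is quoted from Bierstedt \cite{B2} as background, and the paper's own contribution in that direction is the variant \prettyref{ex:cont_loc_comp}, which trades Bierstedt's hypotheses (quasi-complete $E$, upper semicontinuous weights, completely regular resp.\ $k_{\R}$ space $\Omega$) for continuous weights, locally compact $\Omega$ and $E$ with (metric) ccp. What you have written is therefore a reconstruction of Bierstedt's theorem inside the paper's own machinery, structurally parallel to the proofs of \prettyref{ex:cont_loc_comp} and \prettyref{ex:diff_vanish_at_infinity} a): strength is trivial, consistency comes from \prettyref{prop:stetig.cons}, \prettyref{lem:bier} and \prettyref{prop:van.at.inf0}, and surjectivity of $S$ from \prettyref{cor:full_linearisation} (iv). The two points where you had to adapt are handled correctly. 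First, you match hypothesis (i) to \prettyref{lem:bier} (ii) via the continuous inclusion into $\mathcal{C}_{b}(\Omega)$ (since $\mathcal{Z}\leq\mathcal{V}$ yields some $\nu_{j_{0}}\geq 1$) and hypothesis (ii) to \prettyref{lem:bier} (i) via the continuous inclusion into $\mathcal{CW}(\Omega)$ on the $k_{\R}$-space $\Omega$; this is exactly why no continuity of the weights is needed for consistency, in contrast to \prettyref{ex:cont_loc_comp}. Second, since merely upper semicontinuous weights do not give $f\nu_{j}\in\mathcal{C}_{0}(\Omega,E)$, the route through \prettyref{prop:abs_conv_comp_C_0} is unavailable; your replacement---boundedness of the usc weight on each compact $L$, so that the portion of $N_{j}(f)$ over $L$ lies in the compact set $[0,C_{L}]\cdot f(L)$, combined with \prettyref{prop:vanish_at_infinity_precomp} and quasi-completeness to produce an absolutely convex compact majorant---is precisely the mechanism of \prettyref{ex:diff_vanish_at_infinity} a) and is the one place where quasi-completeness, rather than mere ccp, is genuinely consumed. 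The only bookkeeping you leave implicit is that under (i) or (ii) every point of $\Omega$ carries a strictly positive weight, so that $\delta_{x}\in\mathcal{CV}_{0}(\Omega)'$ and one really has a $\dom$-space; this is immediate from either hypothesis.
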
 

We note that $\mathcal{C}\widetilde{\mathcal{W}}(\Omega,E)=\mathcal{CW}(\Omega,E)$ with our definition 
of $\mathcal{W}=\{\chi_{K}\;|\; K\subset\Omega\;\text{compact}\}$ from above \prettyref{lem:bier}. 
The only difference is that $\mathcal{W}$ is not a Nachbin-family because it is not directed in the sense of 
Nachbin-families but in the sense of \prettyref{rem:weights_Hausdorff_directed} c).
We improve this result by strengthening the conditions on $\Omega$ and $\mathcal{V}$ which allows us 
to weaken the assumptions on $E$.
 
\begin{exa}\label{ex:cont_loc_comp}
Let $E$ be an lcHs, $\Omega$ a locally compact topological Hausdorff space
and $\mathcal{V}$ a directed family of continuous weights on $\Omega$. 
\begin{enumerate}
\item[(i)] If $E$ has ccp, or 
\item[(ii)] if $E$ has metric ccp and $\Omega$ is second-countable,
\end{enumerate}
then $\mathcal{CV}_{0}(\Omega,E)\cong \mathcal{CV}_{0}(\Omega)\varepsilon E$.
\end{exa}
\begin{proof}
We set $\mathfrak{K}:=\{K\subset\Omega\;|\;K\;\text{compact}\}$ and $\pi\colon\Omega\to\Omega$, $\pi(x):=x$. 
It follows from \prettyref{prop:stetig.cons} combined with \prettyref{lem:bier} (i) (continuity) 
and \prettyref{prop:van.at.inf0} (vanish at infinity w.r.t.\ $(\pi,\mathfrak{K})$) that the generator 
$(\id_{E^{\Omega}},\id_{\K^{\Omega}})$ is strong and consistent since $\mathcal{V}$ is a family of 
continuous weights and $\Omega$ a $k_{\R}$-space due to local compactness.

Let $f\in\mathcal{CV}_{0}(\Omega,E)$, $j\in J$ and consider $N_{j}(f)=(f\nu_{j})(\Omega)$. 
By \prettyref{prop:abs_conv_comp_C_0} the set $K:=\oacx(N_{j}(f))$ is absolutely convex and compact 
as $f\nu_{j}\in\mathcal{C}_{0}(\Omega,E)$, implying our statement by \prettyref{cor:full_linearisation} (iv).
\end{proof}

\begin{exa}\label{ex:cont_usual}
Let $E$ be an lcHs and $\Omega$ a [metrisable] $k_{\R}$-space. 
If $E$ has [metric] ccp, then $\mathcal{CW}(\Omega,E)\cong \mathcal{CW}(\Omega)\varepsilon E$.
\end{exa}
\begin{proof}
First, we observe that the generator $(\id_{E^{\Omega}},\id_{\K^{\Omega}})$ for $(\mathcal{CW},E)$ 
is consistent by \prettyref{prop:stetig.cons} and \prettyref{lem:bier} b)(i). Its strength is obvious. 
Let $f\in\mathcal{CW}(\Omega,E)$, $K\subset\Omega$ be compact and consider $N_{K}(f)=f(\Omega)\nu_{K}(\Omega)$. 
Then $N_{K}(f)=f(K)\cup\{0\}$ if $K\neq\Omega$, and $N_{K}(f)=f(K)$ if $K=\Omega$, 
which yields that $N_{K}(f)$ is compact in $E$. 
If $\Omega$ is even metrisable, then $f(K)$ is also metrisable by 
\cite[Chap.\ IX, \S2.10, Proposition 17, p.\ 159]{bourbakiII} and thus the finite union $N_{K}(f)$ as well
by \cite[Theorem 1, p.\ 361]{stone} since the compact set $N_{K}(f)$ is collectionwise normal and locally 
countably compact by \cite[5.1.18 Theorem, p.\ 305]{engelking}.  
Further, $\oacx(N_{K}(f))$ is absolutely convex and compact in $E$ if $E$ has ccp 
resp.\ if $\Omega$ is metrisable and $E$ has metric ccp. 
We conclude that
$
\mathcal{CW}(\Omega,E)\cong \mathcal{CW}(\Omega)\varepsilon E
$
if $E$ has ccp resp.\ if $\Omega$ is metrisable and $E$ has metric ccp by 
\prettyref{cor:full_linearisation} (iv).
\end{proof}

Bierstedt also considers closed subspaces of $\mathcal{CV}(\Omega)$ and $\mathcal{CV}_{0}(\Omega)$, for 
instance subspaces of holomorpic functions on open $\Omega$, and of holomorpic functions on the inner points of $\Omega$ 
which are continuous on the boundary in \cite[3.1 Bemerkung, p.\ 141]{B2} and \cite[3.7 Satz, p.\ 144]{B2}. 

Let $\Omega\subset\C$ be open and bounded and $E$ an lcHs over $\C$. 
We denote by $\gls{AOE}$ the space of continuous functions 
from $\overline{\Omega}$ to an lcHs $E$ which are holomorphic on $\Omega$
and equip $\mathcal{A}(\overline{\Omega},E)$ with the system of seminorms given by
\[
|f|_{\alpha}:=\sup_{x\in\overline{\Omega}}p_{\alpha}(f(x)),\quad f\in\mathcal{A}(\overline{\Omega},E),
\]
for $\alpha\in\mathfrak{A}$. We set $\mathcal{A}(\overline{\Omega}):=\mathcal{A}(\overline{\Omega},\C)$, $J:=M:=\{1\}$ 
and $\nu_{1,1}:=1$ on $\overline{\Omega}$.

\begin{exa}\label{ex:disc_algebra}
Let $E$ be an lcHs and $\Omega\subset\C$ open and bounded. 
Then $\mathcal{A}(\overline{\Omega},E)\cong\mathcal{A}(\overline{\Omega})\varepsilon E$ if $E$ has metric ccp.
\end{exa}
\begin{proof}
The space $\mathcal{A}(\overline{\Omega})$ is a Banach space and hence barrelled.
The inclusion $I\colon\mathcal{A}(\overline{\Omega})
\to\mathcal{CW}^{\infty}_{\overline{\partial}}(\Omega)$ is continuous
due to the Cauchy inequality ($I$ is an inclusion due to the identity theorem). 
It follows from \prettyref{prop:stetig.cons}, \prettyref{lem:bier} b)(i),
\prettyref{prop:diff_cons_barrelled} c) and \eqref{eq:holomorphic_coincide_0} 
that the generator 
$(\id_{E^{\overline{\Omega}}},\id_{\C^{\overline{\Omega}}})$ is consistent 
and as in \prettyref{prop:weighted_diff_strong_cons} that it is strong, too. 

Let $f\in\mathcal{A}(\overline{\Omega},E)$ and $N_{1,1}(f)=f(\overline{\Omega})$. 
The set $K:=\oacx(N_{1,1}(f))$ is absolutely convex and compact by \prettyref{prop:abs_conv_comp_C_0} 
since $f\in\mathcal{C}(\overline{\Omega},E)=\mathcal{C}_{0}(\overline{\Omega},E)$, 
implying our statement by \prettyref{cor:full_linearisation} (iv).
\end{proof}

For quasi-complete $E$ this is already covered by \cite[3.1 Bemerkung, p.\ 141]{B2}.
More general than holomorphic functions, we may also consider kernels of hypoelliptic linear partial 
differential operators in $\mathcal{CV}(\Omega)$ and $\mathcal{CV}_{0}(\Omega)$.
For an open set $\Omega\subset\R^{d}$, a directed family $\mathcal{V}:=(\nu_{j})_{j\in\N}$ of weights 
$\nu_{j}\colon\Omega\to[0,\infty)$, an lcHs $E$ and a linear partial differential operator 
$P(\partial)^{E}$ which is hypoelliptic if $E=\K$ we define the space of zero solutions 
\[
\gls{CVPE}
:=\{f\in\mathcal{C}^{\infty}_{P(\partial)}(\Omega,E)\;|\;\forall\;j\in\N,\,\alpha\in\mathfrak{A}:\;
|f|_{j,\alpha}<\infty\},
\]
where $\gls{C_infty_P}$ is the kernel of $P(\partial)^{E}$ 
in $\mathcal{C}^{\infty}(\Omega,E)$, 
\[
|f|_{j,\alpha}:=\sup_{x\in\Omega}p_{\alpha}(f(x))\nu_{j}(x),
\]
and its topological subspace 
\[
\gls{CVPE_0}:=\mathcal{CV}_{P(\partial)}(\Omega,E)\cap\mathcal{CV}_{0}(\Omega,E).
\]
Further, we set $\mathcal{CV}_{P(\partial)}(\Omega):=\mathcal{CV}_{P(\partial)}(\Omega,\K)$ and 
$\mathcal{CV}_{0,P(\partial)}(\Omega):=\mathcal{CV}_{0,P(\partial)}(\Omega,\K)$. 
We say that $\mathcal{V}$ is \emph{\gls{loc_b_away_0}} on $\Omega$ if
\[
 \forall\;K\subset\Omega\;\text{compact}\;\exists\; j\in\N :\;\inf_{x\in K}\nu_{j}(x)>0.
\]
This is an extension of the definition of being \emph{locally bounded away from zero} from $\mathcal{V}^{k}$ 
with $k\in\N_{\infty}$ to the case $k=0$ (see \prettyref{prop:weighted_diff_strong_cons}). If $\mathcal{V}$ 
is a Nachbin-family, this means that $\widetilde{\mathcal{W}}\leq\mathcal{V}$ (see \prettyref{thm:bierstedt} (ii)).

\begin{prop}\label{prop:frechet_bierstedt}
Let $\Omega\subset\R^{d}$ be open, $\mathcal{V}:=(\nu_{j})_{j\in\N}$ an increasing family of weights 
which is locally bounded away from zero on $\Omega$ and $P(\partial)^{\K}$ a 
hypoelliptic linear partial differential operator. 
Then $\mathcal{CV}_{P(\partial)}(\Omega)$ and $\mathcal{CV}_{0,P(\partial)}(\Omega)$
are Fr\'echet spaces.
\end{prop}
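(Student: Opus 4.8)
The plan is to verify the three defining features of a Fréchet space—metrizability, a directed system of seminorms, and completeness—for each of the two spaces, completeness being the only substantial point. Since the index set is $J=\N$, the system $(|\cdot|_{j})_{j\in\N}$ is countable, and as $\mathcal{V}$ is increasing we have $|f|_{j}\leq|f|_{j+1}$, so the seminorms are directed; hence $\mathcal{CV}_{P(\partial)}(\Omega)$ and $\mathcal{CV}_{0,P(\partial)}(\Omega)$ are metrizable locally convex spaces. They are Hausdorff because $\mathcal{V}$ is locally bounded away from zero, which in particular yields for each $x\in\Omega$ some $j$ with $\nu_{j}(x)>0$, so $|f|_{j}=0$ for all $j$ forces $f=0$. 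It thus remains to prove completeness, for which (metrizability) it suffices to treat Cauchy sequences.

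First I would record that the weighted topology is finer than the topology $\tau_c$ of compact convergence: given a compact $K\subset\Omega$, local boundedness away from zero yields $j$ with $c_K:=\inf_{x\in K}\nu_{j}(x)>0$, whence $\sup_{x\in K}|f(x)|\leq c_K^{-1}|f|_{j}$ for every $f$. Consequently a Cauchy sequence $(f_n)$ in $\mathcal{CV}_{P(\partial)}(\Omega)$ is $\tau_c$-Cauchy, and since $(\mathcal{C}(\Omega),\tau_c)$ is complete it has a limit $f\in\mathcal{C}(\Omega)$, with $f_n\to f$ uniformly on compacta and in particular pointwise.

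The key step—and the main obstacle—is to show that the limit $f$ lies again in the kernel $\mathcal{C}^{\infty}_{P(\partial)}(\Omega)$; this is exactly where hypoellipticity of $P(\partial)^{\K}$ enters. Since $\tau_c$-convergence implies convergence in $\mathcal{D}'(\Omega)$ and $P(\partial)$ is continuous on $\mathcal{D}'(\Omega)$, from $P(\partial)f_n=0$ I obtain $P(\partial)f=0$ in the sense of distributions; hypoellipticity then forces $f\in\mathcal{C}^{\infty}(\Omega)$ with $P(\partial)f=0$ classically, so $f\in\mathcal{C}^{\infty}_{P(\partial)}(\Omega)$. (Equivalently, one may note that by hypoellipticity $\tau_c$ induces the full $\mathcal{C}^{\infty}$-topology on $\ker P(\partial)$, making it a closed subspace of $(\mathcal{C}(\Omega),\tau_c)$.) That $f\in\mathcal{CV}_{P(\partial)}(\Omega)$ and $f_n\to f$ in the weighted topology is then routine: for fixed $j$ the bound $|f_n|_{j}\leq C_j$ (Cauchy sequences are bounded) passes to the pointwise limit to give $|f|_{j}\leq C_j<\infty$, and letting $m\to\infty$ in $\sup_x|f_n(x)-f_m(x)|\nu_{j}(x)<\varepsilon$ (valid for $n,m\geq N$) yields $|f_n-f|_{j}\leq\varepsilon$ for $n\geq N$. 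Hence $\mathcal{CV}_{P(\partial)}(\Omega)$ is complete, thus Fréchet.

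Finally, for $\mathcal{CV}_{0,P(\partial)}(\Omega)=\mathcal{CV}_{P(\partial)}(\Omega)\cap\mathcal{CV}_0(\Omega)$ I would show it is a closed subspace of the Fréchet space $\mathcal{CV}_{P(\partial)}(\Omega)$, whence it is Fréchet itself. If $f_n\to f$ in $\mathcal{CV}_{P(\partial)}(\Omega)$ with each $f_n$ vanishing at infinity in the weighted topology, then given $j,\varepsilon$ I pick $n$ with $|f_n-f|_{j}<\varepsilon/2$ and a compact $K$ with $\sup_{x\in\Omega\setminus K}|f_n(x)|\nu_{j}(x)<\varepsilon/2$; the triangle inequality gives $\sup_{x\in\Omega\setminus K}|f(x)|\nu_{j}(x)\leq\varepsilon$, so $f\in\mathcal{CV}_0(\Omega)$ and the subspace is closed.
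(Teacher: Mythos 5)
Your proof is correct and follows essentially the same route as the paper: both reduce to showing that a Cauchy sequence converges uniformly on compacta (via local boundedness away from zero), identify the limit in the kernel using hypoellipticity, and then transfer the Cauchy estimates to the weighted seminorms, with $\mathcal{CV}_{0,P(\partial)}(\Omega)$ handled as a closed subspace. The only cosmetic difference is that you spell out the distributional argument behind the hypoellipticity step, whereas the paper packages it as the completeness of $\mathcal{CW}_{P(\partial)}(\Omega)$.
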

\begin{proof}
We note that $\mathcal{CV}_{P(\partial)}(\Omega)$ is metrisable 
as $\mathcal{V}$ is countable. Let $(f_{n})$ be a Cauchy sequence in $\mathcal{CV}_{P(\partial)}(\Omega)$. 
From $\mathcal{V}$ being locally bounded away from zero it follows that 
for every compact $K\subset\Omega$ there is $j\in\N$ such that
\begin{equation}\label{eq:hypo_weighted_Frechet}
     \sup_{x\in K}|f(x)|
\leq \sup_{z\in K}\nu_{j}(z)^{-1}\sup_{x\in K}|f(x)|\nu_{j}(x)
\leq \sup_{z\in K}\nu_{j}(z)^{-1}|f|_{j}, 
\quad f\in\mathcal{CV}_{P(\partial)}(\Omega),
\end{equation}
which means that the inclusion $I\colon \mathcal{CV}_{P(\partial)}(\Omega)\to\mathcal{CW}_{P(\partial)}(\Omega)$ 
is continuous. 
Thus $(f_{n})$ is also a Cauchy sequence in $\mathcal{CW}_{P(\partial)}(\Omega)$ and has a limit $f$ there 
as $\mathcal{CW}_{P(\partial)}(\Omega)$ is complete due to the hypoellipticity of $P(\partial)^{\K}$. 
Let $j\in\N$, $\varepsilon>0$ and $x\in\Omega$. Then there is $m_{j,\varepsilon,x}\in\N$ such that 
for all $m\geq m_{j,\varepsilon,x}$ it holds that
\[
|f_{m}(x)-f(x)|<\frac{\varepsilon}{2\nu_{j}(x)}
\]
if $\nu_{j}(x)\neq 0$.
Further, there is $m_{j,\varepsilon}\in\N$ such that for all $n,m\geq m_{j,\varepsilon}$ it holds that
\[
|f_{n}-f_{m}|_{j}<\frac{\varepsilon}{2}.
\]
Hence for $n\geq m_{j,\varepsilon}$ we choose $m\geq\max(m_{j,\varepsilon},m_{j,\varepsilon,x})$ and derive 
\[
|f_{n}(x)-f(x)|\nu_{j}(x)\leq |f_{n}(x)-f_{m}(x)|\nu_{j}(x)+|f_{m}(x)-f(x)|\nu_{j}(x)
< \frac{\varepsilon}{2}+\frac{\varepsilon}{2\nu_{j}(x)}\nu_{j}(x)=\varepsilon.
\]
It follows that $|f_{n}-f|_{j}\leq \varepsilon$ and $|f|_{j}\leq \varepsilon+|f_{n}|_{j}$ 
for all $n\geq m_{j,\varepsilon}$, implying the convergence of $(f_{n})$ to $f$ 
in $\mathcal{CV}_{P(\partial)}(\Omega)$.
Therefore $\mathcal{CV}_{P(\partial)}(\Omega)$ is a Fr\'echet space.
$\mathcal{CV}_{0,P(\partial)}(\Omega)$ is a closed subspace of $\mathcal{CV}_{P(\partial)}(\Omega)$ and so 
a Fr\'echet space as well.
\end{proof}

Due to the proposition above the spaces 
$\mathcal{CV}_{P(\partial)}(\Omega)$ and $\mathcal{CV}_{0,P(\partial)}(\Omega)$ are closed subspaces of 
$\mathcal{CV}(\Omega)$ resp.\ $\mathcal{CV}_{0}(\Omega)$. Hence we have the following consequence of 
\prettyref{thm:bierstedt} (ii), \cite[2.12 Satz (1), p.\ 141]{B2} and \cite[3.1 Bemerkung, p.\ 141]{B2}. 

\begin{cor}\label{cor:subspace_bierstedt}
Let $E$ be an lcHs, $\Omega\subset\R^{d}$ open, $\mathcal{V}$ a Nachbin-family on $\Omega$ 
which is locally bounded away from zero and $P(\partial)^{\K}$ a hypoelliptic linear partial differential operator. 
\begin{enumerate}
\item [a)] $\mathcal{CV}_{P(\partial)}(\Omega,E)\cong\mathcal{CV}_{P(\partial)}(\Omega)\varepsilon E$ 
if $E$ is a semi-Montel space.
\item [b)] $\mathcal{CV}_{0,P(\partial)}(\Omega,E)\cong\mathcal{CV}_{0,P(\partial)}(\Omega)\varepsilon E$ if 
$E$ is quasi-complete. 
\end{enumerate} 
\end{cor}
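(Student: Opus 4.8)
The plan is to realise both spaces as $\dom$-spaces with the trivial generator $(\id_{E^{\Omega}},\id_{\K^{\Omega}})$ and to deduce the isomorphisms from \prettyref{cor:full_linearisation}, exactly as in \prettyref{ex:cont_loc_comp} and \prettyref{ex:disc_algebra}; the only genuinely new point is the verification of consistency for functions constrained to lie in $\ker P(\partial)$. Concretely, I would take $M:=\{1\}$, $\omega_{1}:=\Omega$, $T^{E}_{1}:=\id$ and $\operatorname{AP}(\Omega,E):=\mathcal{C}^{\infty}_{P(\partial)}(\Omega,E)$, so that $F(\Omega,E)=\mathcal{C}^{\infty}_{P(\partial)}(\Omega,E)$ and $\mathcal{FV}(\Omega,E)$ becomes $\mathcal{CV}_{P(\partial)}(\Omega,E)$ (resp.\ its intersection with the vanishing-at-infinity condition for the $0$-subspace). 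By \prettyref{prop:frechet_bierstedt} these scalar spaces are Fr\'echet, hence barrelled, and strength is immediate: for $f$ in the kernel and $e'\in E'$ one has $e'\circ f\in\mathcal{C}^{\infty}(\Omega)$ with $P(\partial)^{\K}(e'\circ f)=e'\circ P(\partial)^{E}f=0$, whence $e'\circ f\in\mathcal{C}^{\infty}_{P(\partial)}(\Omega)$, and the defining identity for $\id$ is trivial.

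The heart of the argument is consistency, i.e.\ that $S(u)\in\mathcal{C}^{\infty}_{P(\partial)}(\Omega,E)$ for every $u$ in the $\varepsilon$-product. Here I would invoke \prettyref{prop:diff_cons_barrelled} c) with $k=\infty$, for which I must produce a continuous inclusion $I\colon\mathcal{CV}_{P(\partial)}(\Omega)\to\mathcal{CW}^{\infty}(\Omega)$. I would factor $I$ as
\[
\mathcal{CV}_{P(\partial)}(\Omega)\longrightarrow\mathcal{CW}_{P(\partial)}(\Omega)\longrightarrow\mathcal{CW}^{\infty}(\Omega),
\]
where the first map is continuous because $\mathcal{V}$ is locally bounded away from zero (this is precisely estimate \eqref{eq:hypo_weighted_Frechet}), and the second is continuous because on the kernel of a hypoelliptic operator the topology of compact convergence of function values already controls all derivatives. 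The latter is the standard consequence of hypoellipticity underlying the completeness of $\mathcal{CW}_{P(\partial)}(\Omega)$ exploited in \prettyref{prop:frechet_bierstedt}: the natural continuous bijection between the Fr\'echet spaces $\mathcal{CW}^{\infty}_{P(\partial)}(\Omega)$ and $\mathcal{CW}_{P(\partial)}(\Omega)$ is an isomorphism by the open mapping theorem. Then \prettyref{prop:diff_cons_barrelled} c) yields $S(u)\in\mathcal{C}^{\infty}(\Omega,E)$ together with $(\partial^{\beta})^{E}S(u)(x)=u(\delta_{x}\circ(\partial^{\beta})^{\K})$, and the computation \eqref{eq:cons_partial_diff} gives $P(\partial)^{E}(S(u))(x)=u(\delta_{x}\circ P(\partial)^{\K})=0$, since $\delta_{x}\circ P(\partial)^{\K}$ vanishes on $\mathcal{CV}_{P(\partial)}(\Omega)=\ker P(\partial)^{\K}$. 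Thus the generator is strong and consistent.

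With this in hand the two parts follow from \prettyref{cor:full_linearisation}. For a), semi-Montel $E$ gives the isomorphism directly from part (iii). For b), I would argue as in \prettyref{ex:cont_loc_comp}: for $f\in\mathcal{CV}_{0,P(\partial)}(\Omega,E)$ and $j\in\N$ the function $f\nu_{j}$ lies in $\mathcal{C}_{0}(\Omega,E)$, so the set $N_{j}(f)=(f\nu_{j})(\Omega)$ has $\oacx(N_{j}(f))$ absolutely convex and compact by \prettyref{prop:abs_conv_comp_C_0}, the quasi-completeness of $E$ supplying the ccp needed there; part (iv) then applies. The main obstacle is exactly the consistency step, and within it the continuity of the inclusion into $\mathcal{CW}^{\infty}(\Omega)$ — everything else is a routine transcription of the arguments already carried out for weighted spaces of continuous and of smooth functions.

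Finally I would remark that, alternatively, b) can be obtained by restricting \prettyref{thm:bierstedt} (ii) to the closed subspace $\mathcal{CV}_{0,P(\partial)}(\Omega)$ and identifying the range of $S$ via the weak--strong principle \cite[Theorem 9, p.\ 232]{B/F/J} (applicable since quasi-complete spaces are locally complete), which is the route suggested by the citation to \cite{B2}.
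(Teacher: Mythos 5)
Your route is genuinely different from the paper's: the paper gives this corollary essentially no proof of its own, but observes (using the preceding discussion) that $\mathcal{CV}_{P(\partial)}(\Omega)$ and $\mathcal{CV}_{0,P(\partial)}(\Omega)$ are closed subspaces of $\mathcal{CV}(\Omega)$ resp.\ $\mathcal{CV}_{0}(\Omega)$ and then quotes Bierstedt's results, namely \prettyref{thm:bierstedt} (ii), \cite[2.12 Satz (1), p.\ 141]{B2} and \cite[3.1 Bemerkung, p.\ 141]{B2}, the last of which passes the isomorphism to closed subspaces; the ``alternative'' you mention in your final paragraph is in fact the paper's actual argument. Your attempt to re-derive the statement from the linearisation machinery is instead the strategy of \prettyref{ex:subspace_bierstedt}, where the hypotheses on $\mathcal{V}$ are deliberately strengthened, and this is exactly where your proof has two genuine gaps.

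First, the corollary only assumes that $\mathcal{V}$ is a Nachbin family, i.e.\ a directed family of \emph{upper semi-continuous} weights over an \emph{arbitrary} index set $J$. \prettyref{prop:frechet_bierstedt} is proved for a countable increasing family $(\nu_{j})_{j\in\N}$; for uncountable $J$ the space $\mathcal{CV}_{P(\partial)}(\Omega)$ need not be metrisable and you have no barrelledness, so \prettyref{prop:diff_cons_barrelled} c) --- the heart of your consistency step --- is not applicable. This can be repaired: since the inclusion $I\colon\mathcal{CV}_{P(\partial)}(\Omega)\to\mathcal{CW}_{P(\partial)}(\Omega)$ is continuous, every $u\in\mathcal{CV}_{P(\partial)}(\Omega)\varepsilon E$ satisfies $u\circ I^{t}\in\mathcal{CW}_{P(\partial)}(\Omega)\varepsilon E$ and $S(u)=S(u\circ I^{t})$, so one may apply \prettyref{prop:diff_cons_barrelled} c) to the Fr\'echet space $\mathcal{CW}_{P(\partial)}(\Omega)$ instead --- this is precisely the detour taken in \prettyref{prop:strict_top_isomorphism} for the non-barrelled strict topology --- but you did not make this move. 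Second, in part b) you apply \prettyref{prop:abs_conv_comp_C_0} to $f\nu_{j}$, which presupposes $f\nu_{j}\in\mathcal{C}_{0}(\Omega,E)$; Nachbin weights are only upper semi-continuous, so $f\nu_{j}$ need not be continuous (this is why \prettyref{ex:subspace_bierstedt} c) explicitly assumes $\nu_{j}\in\mathcal{C}(\Omega)$). For merely upper semi-continuous weights one must instead obtain precompactness of $N_{j}(f)$ via \prettyref{prop:vanish_at_infinity_precomp} together with local boundedness of the weights, as in \prettyref{ex:diff_vanish_at_infinity} a), and then use quasi-completeness of $E$ to pass to the compact closed absolutely convex hull. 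As written, your argument establishes the corollary only for countable increasing families of continuous weights, not for general Nachbin families.
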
 

Like before we may improve this result by strengthening the conditions on $\mathcal{V}$ 
and $\mathcal{CV}_{P(\partial)}(\Omega)$ resp.\ $\mathcal{CV}_{0,P(\partial)}(\Omega)$ which allows us 
to weaken the assumptions on $E$.

\begin{exa}\label{ex:subspace_bierstedt}
Let $E$ be an lcHs, $\Omega\subset\R^{d}$ open, $\mathcal{V}:=(\nu_{j})_{j\in\N}$ an increasing family of weights 
which is locally bounded away from zero on $\Omega$ and $P(\partial)^{\K}$ a 
hypoelliptic linear partial differential operator. 
\begin{enumerate}
\item [a)] $\mathcal{CV}_{P(\partial)}(\Omega,E)\cong\mathcal{CV}_{P(\partial)}(\Omega)\varepsilon E$ 
if $E$ is complete and $\mathcal{CV}_{P(\partial)}(\Omega)$ a semi-Montel space.
\item [b)] $\mathcal{CV}_{P(\partial)}(\Omega,E)\cong\mathcal{CV}_{P(\partial)}(\Omega)\varepsilon E$ 
if $E$ is locally complete and $\mathcal{CV}_{P(\partial)}(\Omega)$ a Schwartz space.
\item [c)] $\mathcal{CV}_{0,P(\partial)}(\Omega,E)\cong\mathcal{CV}_{0,P(\partial)}(\Omega)\varepsilon E$ 
if $E$ has metric ccp and $\nu_{j}\in\mathcal{C}(\Omega)$ for all $j\in\N$.
\item [d)] $\mathcal{CV}_{0,P(\partial)}(\Omega,E)\cong\mathcal{CV}_{0,P(\partial)}(\Omega)\varepsilon E$ if 
$E$ is locally complete and $\mathcal{CV}_{0,P(\partial)}(\Omega)$ a Schwartz space.
\end{enumerate} 
\end{exa} 
\begin{proof}
Let $\mathcal{F}$ stand for $\mathcal{CV}_{P(\partial)}$ or $\mathcal{CV}_{0,P(\partial)}$. 
The space $\mathcal{F}(\Omega)$ is a Fr\'echet space and hence barrelled by \prettyref{prop:frechet_bierstedt}.
The inclusion $I\colon\mathcal{F}(\Omega)\to\mathcal{CW}_{P(\partial)}(\Omega)$ is continuous since 
$\mathcal{V}$ is locally bounded away from zero on $\Omega$. 
The hypoellipticity of $P(\partial)^{\K}$ (see e.g.\ \cite[p.\ 690]{F/J/W}) yields that 
$\mathcal{CW}_{P(\partial)}(\Omega)=\mathcal{CW}^{\infty}_{P(\partial)}(\Omega)$ as locally convex spaces. 
Thus the inclusion $I\colon\mathcal{F}(\Omega)\to\mathcal{CW}_{P(\partial)}^{\infty}(\Omega)$ is continuous. 
It follows from \prettyref{prop:diff_cons_barrelled} c) that the generator 
$(\id_{E^{\Omega}},\id_{\K^{\Omega}})$ is consistent if $\mathcal{F}=\mathcal{CV}_{P(\partial)}$, and 
combined with \prettyref{prop:van.at.inf0} (vanish at infinity w.r.t.\ $(\pi,\mathfrak{K})$) if 
$\mathcal{F}=\mathcal{CV}_{0,P(\partial)}$ where $\mathfrak{K}$ and $\pi$ are chosen as in \prettyref{ex:cont_loc_comp}.
The strength of the generator follows as in \prettyref{prop:weighted_diff_strong_cons} and, 
if $\mathcal{F}=\mathcal{CV}_{0,P(\partial)}$, in combination with \prettyref{prop:van.at.inf0} b).
This proves part a), b) and d) due to \prettyref{cor:full_linearisation} (i) and (ii). 

Let us turn to part c). Let $f\in\mathcal{CV}_{0,P(\partial)}(\Omega,E)$, $j\in\N$ and $N_{j}(f):=(f\nu_{j})(\Omega)$. 
The set $K:=\oacx(N_{j}(f))$ is absolutely convex compact by \prettyref{prop:abs_conv_comp_C_0} 
as $f\nu_{j}\in\mathcal{C}_{0}(\Omega,E)$, implying our statement by \prettyref{cor:full_linearisation} (iv).
\end{proof}

At least for some weights and operators $P(\partial)$ we can show that $\mathcal{CV}_{P(\partial)}(\Omega,E)$ 
coincides with a corresponding space $\mathcal{CV}_{P(\partial)}^{\infty}(\Omega,E)$ 
from \prettyref{ex:weighted_smooth_functions} if $E$ is locally complete.

\begin{prop}\label{prop:co_top_isomorphism}
Let $E$ be a locally complete lcHs, $\Omega\subset\R^{d}$ and $P(\partial)^{\K}$ a hypoelliptic 
linear partial differential operator.
Then we have $\mathcal{CW}_{P(\partial)}(\Omega)\varepsilon E
\cong \mathcal{CW}_{P(\partial)}(\Omega,E)$ 
and $\mathcal{CW}_{P(\partial)}(\Omega,E)=\mathcal{CW}^{\infty}_{P(\partial)}(\Omega,E)$ 
as locally convex spaces. 
\end{prop}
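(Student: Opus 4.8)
The plan is to reduce everything to the scalar identity $\mathcal{CW}_{P(\partial)}(\Omega)=\mathcal{CW}^{\infty}_{P(\partial)}(\Omega)$ of locally convex spaces, which holds by hypoellipticity of $P(\partial)^{\K}$ (the same fact used in the proof of \prettyref{ex:subspace_bierstedt}, cf.\ \cite[p.\ 690]{F/J/W}). First I would record two consequences. As sets, both $\mathcal{CW}_{P(\partial)}(\Omega,E)$ and $\mathcal{CW}^{\infty}_{P(\partial)}(\Omega,E)$ coincide with $\mathcal{C}^{\infty}_{P(\partial)}(\Omega,E)=\ker P(\partial)^{E}\cap\mathcal{C}^{\infty}(\Omega,E)$, since for continuous $f$ the suprema over compact sets are automatically finite. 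Moreover, $\mathcal{CW}_{P(\partial)}(\Omega)=\mathcal{CW}^{\infty}_{P(\partial)}(\Omega)$ is a closed subspace of the Fr\'echet--Schwartz space $\mathcal{CW}^{\infty}(\Omega)$ and is therefore itself Fr\'echet--Schwartz, in particular barrelled.

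For the first assertion I would apply \prettyref{cor:full_linearisation}~(ii), so it remains to check that the generator $(\id_{E^{\Omega}},\id_{\K^{\Omega}})$ for $(\mathcal{CW}_{P(\partial)},E)$ is strong and consistent. Strength is the elementary observation that for $e'\in E'$ and $f\in\mathcal{CW}_{P(\partial)}(\Omega,E)$ the scalar function $e'\circ f$ is again smooth and lies in $\ker P(\partial)^{\K}$. For consistency, the inclusion $\mathcal{CW}_{P(\partial)}(\Omega)\hookrightarrow\mathcal{CW}^{\infty}(\Omega)$ is continuous (it is a topological embedding, by the scalar identity), so \prettyref{prop:diff_cons_barrelled}~c) applies and yields $S(u)\in\mathcal{C}^{\infty}(\Omega,E)$ together with $(\partial^{\beta})^{E}S(u)(x)=u(\delta_{x}\circ(\partial^{\beta})^{\K})$ for all $u\in\mathcal{CW}_{P(\partial)}(\Omega)\varepsilon E$; the computation in \eqref{eq:cons_partial_diff} (as in \prettyref{prop:weighted_diff_strong_cons}) then gives $P(\partial)^{E}S(u)=0$, so $S(u)\in\mathcal{C}^{\infty}_{P(\partial)}(\Omega,E)$. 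Since $\mathcal{CW}_{P(\partial)}(\Omega)$ is Fr\'echet--Schwartz and $E$ is locally complete, \prettyref{cor:full_linearisation}~(ii) shows that $S\colon\mathcal{CW}_{P(\partial)}(\Omega)\varepsilon E\to\mathcal{CW}_{P(\partial)}(\Omega,E)$ is an isomorphism, which is the first claim.

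For the topological identity I would argue that both linearisations are realised by the \emph{same} map. By \prettyref{ex:diff_usual}~b) (with $k=\infty$, $E$ locally complete, and $\mathcal{CW}^{\infty}_{P(\partial)}(\Omega)$ closed in $\mathcal{CW}^{\infty}(\Omega)$ because $P(\partial)$ has continuous coefficients) the map $S\colon\mathcal{CW}^{\infty}_{P(\partial)}(\Omega)\varepsilon E\to\mathcal{CW}^{\infty}_{P(\partial)}(\Omega,E)$ is an isomorphism. Because $\mathcal{CW}_{P(\partial)}(\Omega)=\mathcal{CW}^{\infty}_{P(\partial)}(\Omega)$ as locally convex spaces, the two $\varepsilon$-products $\mathcal{CW}_{P(\partial)}(\Omega)\varepsilon E$ and $\mathcal{CW}^{\infty}_{P(\partial)}(\Omega)\varepsilon E$ are literally the same space, and both instances of $S$ are given by the single formula $u\mapsto[x\mapsto u(\delta_{x})]$; hence they are the identical map, call it $S_{0}$. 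Consequently the identity of the common underlying set $\mathcal{C}^{\infty}_{P(\partial)}(\Omega,E)$ factors as $S_{0}\circ S_{0}^{-1}$ through the two isomorphisms above and is therefore a topological isomorphism $\mathcal{CW}_{P(\partial)}(\Omega,E)\to\mathcal{CW}^{\infty}_{P(\partial)}(\Omega,E)$, i.e.\ the two spaces coincide as locally convex spaces. The only genuinely non-formal input is the scalar hypoellipticity identity; everything else is bookkeeping, and the point to be careful about is that the two versions of $S$ really have the same domain and the same defining formula, so that no new isomorphism must be built by hand. Alternatively, the topological identity can be shown directly: transporting the scalar estimate $\sup_{x\in K,|\beta|\le m}|(\partial^{\beta})g(x)|\le C\sup_{x\in K'}|g(x)|$ for $g\in\mathcal{C}^{\infty}_{P(\partial)}(\Omega)$ to $g=e'\circ f$ and taking the supremum over $e'\in B_{\alpha}^{\circ}$, using $p_{\alpha}(y)=\sup_{e'\in B_{\alpha}^{\circ}}|e'(y)|$, yields $|f|_{K,m,\alpha}\le C|f|_{K',\alpha}$.
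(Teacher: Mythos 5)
Your proof is correct and follows essentially the same route as the paper: the paper likewise deduces the topological identity from the fact that $\mathcal{CW}_{P(\partial)}(\Omega)\varepsilon E=\mathcal{CW}^{\infty}_{P(\partial)}(\Omega)\varepsilon E$ by hypoellipticity, so that the two instances of $S$ are one and the same map, an isomorphism onto $\mathcal{CW}^{\infty}_{P(\partial)}(\Omega,E)$ by \prettyref{ex:diff_usual}~b) and an isomorphism into $\mathcal{CW}_{P(\partial)}(\Omega,E)$ by \prettyref{prop:diff_cons_barrelled}~c) and \prettyref{thm:linearisation}. The only (harmless) variations are that you establish the first assertion directly via \prettyref{cor:full_linearisation}~(ii) rather than reading it off from the $\mathcal{CW}^{\infty}$-linearisation, and your closing seminorm-transport argument via $p_{\alpha}(y)=\sup_{e'\in B_{\alpha}^{\circ}}|e'(y)|$ is a valid, more elementary alternative for the topological identity that the paper does not use.
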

\begin{proof}
We already know that 
\[
S_{\mathcal{CW}^{\infty}_{P(\partial)}(\Omega)}\colon
\mathcal{CW}^{\infty}_{P(\partial)}(\Omega)\varepsilon E \to
\mathcal{CW}^{\infty}_{P(\partial)}(\Omega,E)
\]
is an isomorphism by \prettyref{ex:diff_usual} b). 
The hypoellipticity of $P(\partial)^{\K}$ (see e.g.\ \cite[p.\ 690]{F/J/W}) yields that 
$\mathcal{CW}_{P(\partial)}(\Omega)\varepsilon E
=\mathcal{CW}^{\infty}_{P(\partial)}(\Omega)\varepsilon E$. 
Thus $S_{\mathcal{CW}_{P(\partial)}(\Omega)}(u)
=S_{\mathcal{CW}^{\infty}_{P(\partial)}(\Omega)}(u)
\in\mathcal{C}^{\infty}_{P(\partial)}(\Omega,E)$ 
for all $u\in\mathcal{CW}_{P(\partial)}(\Omega)\varepsilon E $. In particular, we obtain that 
\[
S_{\mathcal{CW}_{P(\partial)}(\Omega)}\colon
\mathcal{CW}_{P(\partial)}(\Omega)\varepsilon E \to
\mathcal{CW}^{\infty}_{P(\partial)}(\Omega,E)
\]
is an isomorphism. From \prettyref{prop:diff_cons_barrelled} c) and \prettyref{thm:linearisation} with 
$(T^{E},T^{\K}):=(\id_{E^{\Omega}},\id_{\K^{\Omega}})$ we deduce that
\[
S_{\mathcal{CW}_{P(\partial)}(\Omega)}\colon
\mathcal{CW}_{P(\partial)}(\Omega)\varepsilon E \to
\mathcal{CW}_{P(\partial)}(\Omega,E)
\]
is an isomorphism into, and from
\[
S_{\mathcal{CW}_{P(\partial)}(\Omega)}
\bigl(\mathcal{CW}_{P(\partial)}(\Omega)\varepsilon E\bigr)
=\mathcal{C}^{\infty}_{P(\partial)}(\Omega,E)
\]
that $\mathcal{CW}_{P(\partial)}(\Omega,E)=\mathcal{CW}^{\infty}_{P(\partial)}(\Omega,E)$ 
as locally convex spaces, which proves our statement. 
\end{proof}

Hence the topology $\tau_{c}$ of compact convergence induced by $\mathcal{C}(\Omega,E)$ 
and the usual topology from \prettyref{ex:k_smooth_functions} induced by $\mathcal{C}^{\infty}(\Omega,E)$ 
coincide on $\mathcal{C}_{P(\partial)}(\Omega,E)$ if $P(\partial)^{\K}$ is hypoelliptic and $E$ locally complete 
by \prettyref{prop:co_top_isomorphism}. In particular, we have 
\begin{equation}\label{eq:holomorphic_coincide_1}
 (\mathcal{O}(\Omega,E),\tau_{c})
\underset{\eqref{eq:holomorphic_coincide_0}}{=}\mathcal{CW}_{\overline{\partial}}(\Omega,E)
=\mathcal{CW}^{\infty}_{\overline{\partial}}(\Omega,E)
\end{equation}
if $E$ is locally complete. 
For more interesting weights than $\mathcal{W}$ we introduce the following condition. 

\begin{cond}\label{cond:weights}
Let $\mathcal{V}:=(\nu_{j})_{j\in\N}$ be an increasing family of continuous weights on $\R^{d}$. 
Let there be $r\colon\R^{d}\to (0,1]$ and for any $j\in\N$ let there be $\psi_{j}\in L^{1}(\R^{d})$, $\psi_{j}>0$, 
and $\N\ni I_{m}(j)\geq j$ and $A_{m}(j)>0$, $m\in\{1,2,3\}$, such that for any $x\in\R^{d}$:
\begin{enumerate}
  \item [$(\alpha.1)$] $\sup_{\zeta\in\R^{d},\,\|\zeta\|_{\infty}\leq r(x)}\nu_{j}(x+\zeta)
  \leq A_{1}(j)\inf_{\zeta\in\R^{d},\,\|\zeta\|_{\infty}\leq r(x)}\nu_{I_{1}(j)}(x+\zeta)$,
  \item [$(\alpha.2)$] $\nu_{j}(x)\leq A_{2}(j)\psi_{j}(x)\nu_{I_{2}(j)}(x)$,
  \item [$(\alpha.3)$] $\nu_{j}(x)\leq A_{3}(j)r(x)\nu_{I_{3}(j)}(x)$.
\end{enumerate}
\end{cond}

Here, $\|\zeta\|_{\infty}:=\sup_{1\leq n\leq d}|\zeta_{n}|$ 
for $\zeta=(\zeta_{n})\in\R^{d}$.
The preceding condition is a special case of \cite[Condition 2.1, p.\ 176]{kruse2018_4} with 
$\Omega:=\Omega_{n}:=\R^{d}$ for all $n\in\N$.
If $\mathcal{V}$ fulfils \prettyref{cond:weights} 
and we set $\mathcal{V}^{\infty}:=(\nu_{j,m})_{j\in\N,m\in\N_{0}}$ where 
$\nu_{j,m}\colon\{\beta\in\N_{0}^{d}\;|\;|\beta|\leq m\}\times\R^{d}\to[0,\infty)$, $\nu_{j,m}(\beta,x):=\nu_{j}(x)$, 
then $\mathcal{CV}^{\infty}(\R^{d})$ and its closed subspace $\mathcal{CV}^{\infty}_{P(\partial)}(\R^{d})$ 
for $P(\partial)$ with continuous coefficients are nuclear by 
\cite[Theorem 3.1, p.\ 188]{kruse2018_4} in combination with 
\cite[Remark 2.7, p.\ 178--179]{kruse2018_4} and Fr\'echet spaces 
by \cite[Proposition 3.7, p.\ 240]{kruse2018_2}. 

\begin{prop}\label{prop:AV_CV_coincide_hol_har}
Let $E$ be a locally complete lcHs, $\mathcal{V}:=(\nu_{j})_{j\in\N}$ an increasing family of continuous weights 
on $\R^{d}$ and $\mathcal{V}^{\infty}$ defined as above. 
If $\mathcal{V}$ fulfils \prettyref{cond:weights}, then $\mathcal{CV}_{\overline{\partial}}(\C)$ 
and $\mathcal{CV}_{\Delta}(\R^{d})$ are nuclear Fr\'echet spaces and 
$\mathcal{CV}_{\overline{\partial}}(\C,E)=\mathcal{CV}^{\infty}_{\overline{\partial}}(\C,E)$ and 
$\mathcal{CV}_{\Delta}(\R^{d},E)=\mathcal{CV}^{\infty}_{\Delta}(\R^{d},E)$ as locally convex spaces.
\end{prop}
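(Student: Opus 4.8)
The plan is to prove, for $(P(\partial),\Omega)$ equal to $(\overline{\partial},\C)$ or $(\Delta,\R^{d})$ and for an arbitrary locally complete lcHs $E$, that the two topologies on their common underlying space agree, i.e.\ $\mathcal{CV}_{P(\partial)}(\Omega,E)=\mathcal{CV}^{\infty}_{P(\partial)}(\Omega,E)$ as locally convex spaces. The nuclear-Fréchet assertions then follow by specialisation: taking $E=\K$ (which is locally complete) gives $\mathcal{CV}_{\overline{\partial}}(\C)=\mathcal{CV}^{\infty}_{\overline{\partial}}(\C)$ and $\mathcal{CV}_{\Delta}(\R^{d})=\mathcal{CV}^{\infty}_{\Delta}(\R^{d})$ as locally convex spaces, and the right-hand sides are nuclear Fréchet spaces by \cite[Theorem 3.1, p.\ 188]{kruse2018_4}, \cite[Remark 2.7, p.\ 178--179]{kruse2018_4} and \cite[Proposition 3.7, p.\ 240]{kruse2018_2}, exactly as recorded above the proposition. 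Both spaces are subspaces of $\mathcal{C}^{\infty}_{P(\partial)}(\Omega,E)$, and with $\mathcal{V}^{\infty}$ chosen as $\nu_{j,m}(\beta,x):=\nu_{j}(x)$ the seminorm $|f|_{j,\alpha}$ of $\mathcal{CV}_{P(\partial)}$ is precisely the $m=0$ seminorm $|f|_{j,0,\alpha}$ of $\mathcal{CV}^{\infty}_{P(\partial)}$; hence $|f|_{j,\alpha}\le|f|_{j,m,\alpha}$, the inclusion $\mathcal{CV}^{\infty}_{P(\partial)}(\Omega,E)\subseteq\mathcal{CV}_{P(\partial)}(\Omega,E)$ holds and the identity map in that direction is automatically continuous. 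All the content lies in the reverse inclusion and continuity.

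For the reverse direction I would first establish a vector-valued interior estimate, reducing it to the scalar case so that only local completeness of $E$ is used (needed to ensure $f\in\mathcal{C}^{\infty}$ via \eqref{eq:holomorphic_coincide_0} in the holomorphic case; automatic for harmonic functions). Fix $f$ in the kernel and $e'\in E'$; then $e'\circ f$ lies in the corresponding scalar kernel and $(\partial^{\beta})^{\K}(e'\circ f)=e'\bigl((\partial^{\beta})^{E}f\bigr)$ as in \prettyref{prop:weighted_diff_strong_cons}. With the radius $r(x)>0$ from \prettyref{cond:weights}, the closed ball $\overline{\mathbb{B}_{r(x)}(x)}$ lies in $\Omega$ (here it is crucial that $\Omega$ is all of $\C$ resp.\ $\R^{d}$), so the classical Cauchy inequalities together with \eqref{eq:complex.real.deriv}, resp.\ the classical interior derivative estimates for harmonic functions, yield, for $|\beta|=m$,
\[
\bigl|(\partial^{\beta})^{\K}(e'\circ f)(x)\bigr|\le\frac{C_{m}}{r(x)^{m}}\sup_{\|w-x\|_{\infty}\le r(x)}\bigl|e'(f(w))\bigr|.
\]
Taking the supremum over $e'\in B_{\alpha}^{\circ}$ and using $p_{\alpha}(\cdot)=\sup_{e'\in B_{\alpha}^{\circ}}|e'(\cdot)|$ from \cite[Proposition 22.14, p.\ 256]{meisevogt1997} turns this into
\[
p_{\alpha}\bigl((\partial^{\beta})^{E}f(x)\bigr)\le\frac{C_{m}}{r(x)^{m}}\sup_{\|w-x\|_{\infty}\le r(x)}p_{\alpha}(f(w)),
\]
which sidesteps any $E$-valued integration.

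The remaining step is to absorb the factor $r(x)^{-m}$ and to transport the weight from the centre $x$ to the neighbouring points $w$ by means of \prettyref{cond:weights}. Multiplying by $\nu_{j}(x)$ and iterating $(\alpha.3)$ exactly $m$ times (legitimate since $r(x)>0$) gives $\nu_{j}(x)r(x)^{-m}\le B_{m,j}\,\nu_{I_{3}^{(m)}(j)}(x)$, where $I_{3}^{(m)}$ denotes the $m$-fold iterate of $I_{3}$ and $B_{m,j}=\prod_{i=0}^{m-1}A_{3}(I_{3}^{(i)}(j))$. Next, $(\alpha.1)$ applied with the index $I_{3}^{(m)}(j)$ shows, for every $w$ with $\|w-x\|_{\infty}\le r(x)$, that $\nu_{I_{3}^{(m)}(j)}(x)\le A_{1}(I_{3}^{(m)}(j))\,\nu_{I_{1}(I_{3}^{(m)}(j))}(w)$, whence $\nu_{I_{3}^{(m)}(j)}(x)\,p_{\alpha}(f(w))\le A_{1}(I_{3}^{(m)}(j))\,|f|_{I_{1}(I_{3}^{(m)}(j)),\alpha}$. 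Combining the three displays and taking the supremum over $x$ and over $|\beta|\le m$ yields
\[
|f|_{j,m,\alpha}\le C_{m}\,B_{m,j}\,A_{1}\bigl(I_{3}^{(m)}(j)\bigr)\,|f|_{I_{1}(I_{3}^{(m)}(j)),\alpha},
\]
which simultaneously proves $\mathcal{CV}_{P(\partial)}(\Omega,E)\subseteq\mathcal{CV}^{\infty}_{P(\partial)}(\Omega,E)$ as sets and continuity of the identity map $\mathcal{CV}_{P(\partial)}(\Omega,E)\to\mathcal{CV}^{\infty}_{P(\partial)}(\Omega,E)$. Together with the trivial reverse continuity this gives the asserted equality of locally convex spaces, and hence the proposition. (Note that $(\alpha.2)$ is not needed here; it only feeds into the nuclearity of the scalar spaces supplied by the cited references.)

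I expect the main obstacle to be the bookkeeping of \prettyref{cond:weights}: one must choose the order of the two weight manipulations correctly — first iterate $(\alpha.3)$ to kill $r(x)^{-m}$ at the centre, then use $(\alpha.1)$ to move the resulting weight to the evaluation points $w$ — and track the composed index maps $I_{3}^{(m)}$ and $I_{1}\circ I_{3}^{(m)}$ together with the accumulated constants. The only genuinely analytic input, the interior estimate, is rendered painless by testing with functionals $e'\in B_{\alpha}^{\circ}$ and invoking $p_{\alpha}=\sup_{e'\in B_{\alpha}^{\circ}}|e'|$, which reduces it to the classical scalar Cauchy resp.\ harmonic estimates and uses nothing beyond local completeness of $E$; this is the same mechanism that made \prettyref{prop:co_top_isomorphism} work for the constant weights $\mathcal{W}$.
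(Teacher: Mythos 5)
Your proof is correct, and the scalar half of it (Cauchy's inequalities for $\partial_{\C}$ resp.\ for harmonic functions, followed by iterating $(\alpha.3)$ at the centre and then applying $(\alpha.1)$ to move the weight to the evaluation points) is exactly the estimate in the paper. Where you genuinely diverge is in how the vector-valued equality is obtained: the paper first proves the scalar identity $\mathcal{CV}_{P(\partial)}(\R^{d})=\mathcal{CV}^{\infty}_{P(\partial)}(\R^{d})$, deduces that this space is nuclear Fr\'echet, and then passes to $E$-valued functions through the chain $\mathcal{CV}^{\infty}_{P(\partial)}(\R^{d},E)\cong \mathcal{CV}^{\infty}_{P(\partial)}(\R^{d})\varepsilon E\cong\mathcal{CV}_{P(\partial)}(\R^{d})\varepsilon E\cong\mathcal{CV}_{P(\partial)}(\R^{d},E)$ via \prettyref{ex:weighted_diff} f) and \prettyref{ex:subspace_bierstedt} b), checking with \prettyref{thm:full_linearisation} that the composite isomorphism is the identity. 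You instead run the same interior estimate directly on $e'\circ f$ and dualise with $p_{\alpha}=\sup_{e'\in B_{\alpha}^{\circ}}|e'(\cdot)|$, so the vector-valued seminorm inequality falls out without any linearisation machinery; the scalar statement and nuclearity then come for free by setting $E=\K$. Your route is more elementary and in fact never uses local completeness of $E$ (elements of $\mathcal{CV}_{P(\partial)}(\Omega,E)$ are $\mathcal{C}^{\infty}$ by definition, so your appeal to \eqref{eq:holomorphic_coincide_0} is unnecessary), whereas the paper's route needs it but recycles, and simultaneously re-exhibits, the $\varepsilon$-compatibility that is used elsewhere in the text. One cosmetic point: for $|\beta|<m$ your chain produces the index $I_{1}(I_{3}^{(|\beta|)}(j))$ rather than $I_{1}(I_{3}^{(m)}(j))$, and since monotonicity of $I_{1}$ is not assumed you should, as the paper does, take the maximum of the finitely many resulting indices and invoke that $\mathcal{V}$ is increasing before writing a single seminorm on the right-hand side.
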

\begin{proof}
Let $P(\partial):=\overline{\partial}$ ($d:=2$ and $\K:=\C$) or $P(\partial):=\Delta$. 
First, we show that $\mathcal{CV}_{P(\partial)}(\R^{d})=\mathcal{CV}^{\infty}_{P(\partial)}(\R^{d})$ 
as locally convex spaces, which implies that 
$\mathcal{CV}_{P(\partial)}(\R^{d})$ is a nuclear Fr\'echet space as 
$\mathcal{CV}^{\infty}_{P(\partial)}(\R^{d})$ is such a space. 
Let $f\in\mathcal{CV}_{\overline{\partial}}(\C)$, $j\in\N$, $m\in\N_{0}$, $z\in\C$ 
and $\beta:=(\beta_{1},\beta_{2})\in\N_{0}^{2}$. 
Then it follows from $\|\cdot\|_{\infty}\leq |\cdot|$ and Cauchy's inequality that
\begin{align*}
 |\partial^{\beta}f(z)|\nu_{j}(z)
 &\underset{\mathclap{\eqref{eq:complex.real.deriv}}}{=}\;|\iu^{\beta_{2}}\partial^{|\beta|}_{\C}f(z)|\nu_{j}(z)
  \leq \frac{|\beta|!}{r(z)^{|\beta|}}\sup_{|w-z|=r(z)}|f(w)|\nu_{j}(z)\\
 &\underset{\mathclap{(\alpha.3)}}{\leq}\;|\beta|!C(j,|\beta|)\sup_{|w-z|=r(z)}|f(w)|\nu_{B_{3}(j)}(z)\\
 &\underset{\mathclap{(\alpha.1)}}{\leq}\;|\beta|!C(j,|\beta|)A_{1}(B_{3}(j))
  \sup_{|w-z|=r(z)}|f(w)|\nu_{I_{1}B_{3}(j)}(w)\\
 &\leq |\beta|!C(j,|\beta|)A_{1}(B_{3}(j))|f|_{\mathcal{CV}_{\overline{\partial}}(\C),I_{1}B_{3}(j)}
\end{align*}
where $C(j,|\beta|):=A_{3}(j)A_{3}(I_{3}(j))\cdots A_{3}((B_{3}-1)(j))$ and 
$B_{3}-1$ is the $(|\beta|-1)$-fold composition of $I_{3}$. Choosing $k:=\max_{|\beta|\leq m}I_{1}B_{3}(j)$, 
it follows that 
\[
 |f|_{\mathcal{CV}_{\overline{\partial}}^{\infty}(\C),j,m}
 \leq\sup_{|\beta|\leq m}|\beta|!C(j,|\beta|)A_{1}(B_{3}(j))|f|_{\mathcal{CV}_{\overline{\partial}}(\C),k}<\infty
\]
and thus $f\in\mathcal{CV}^{\infty}_{\overline{\partial}}(\C)$ and 
$\mathcal{CV}_{\overline{\partial}}(\C)=\mathcal{CV}^{\infty}_{\overline{\partial}}(\C)$ as locally convex spaces.
In the case $P(\partial)=\Delta$ an analogous proof works due to Cauchy's inequality for harmonic functions, i.e.\ 
for all $f\in\mathcal{CV}_{\Delta}(\R^{d})$, $j\in\N$, $x\in\R^{d}$ and $\beta\in\N_{0}^{d}$ it holds that
\[
 |\partial^{\beta}f(x)|\nu_{j}(x)\leq \Bigl(\frac{d|\beta|}{r(x)}\Bigr)^{|\beta|}\sup_{|w-x|<r(x)}|f(w)|\nu_{j}(x)
\]
(see e.g.\ \cite[Theorem 2.10, p.\ 23]{gilbarg_trudinger2001}).

Nuclear Fr\'echet spaces are Fr\'echet--Schwartz spaces and hence we have
\[
\mathcal{CV}^{\infty}_{P(\partial)}(\R^{d},E)\cong \mathcal{CV}^{\infty}_{P(\partial)}(\R^{d})\varepsilon E
\cong\mathcal{CV}_{P(\partial)}(\R^{d})\varepsilon E
\cong\mathcal{CV}_{P(\partial)}(\R^{d},E)
\]
by \prettyref{ex:weighted_diff} f) and \prettyref{ex:subspace_bierstedt} b). 
The isomorphism $\mathcal{CV}^{\infty}_{P(\partial)}(\R^{d},E)\cong\mathcal{CV}_{P(\partial)}(\R^{d},E)$ 
is 
\[
S_{\mathcal{CV}^{\infty}_{P(\partial)}(\R^{d})}^{-1}\circ 
\id_{\mathcal{CV}^{\infty}_{P(\partial)}(\R^{d})\varepsilon E}\circ S_{\mathcal{CV}_{P(\partial)}(\R^{d})}
=S_{\mathcal{CV}^{\infty}_{P(\partial)}(\R^{d})}^{-1}\circ S_{\mathcal{CV}_{P(\partial)}(\R^{d})}
=\id_{\mathcal{CV}^{\infty}_{P(\partial)}(\R^{d},E)}
\] 
by \prettyref{thm:full_linearisation}.
\end{proof}

\begin{rem}\label{rem:ex_NF_cond_weights}
Let $E$ be a locally complete lcHs and $P(\partial)^{\K}$ a hypoelliptic linear partial differential operator.
\begin{enumerate}
\item[a)] Let $0\leq \tau<\infty$. Then $\mathcal{V}:=(\nu_{j})_{j\in\N}$ given by 
$\nu_{j}(x):=\exp(-(\tau+\tfrac{1}{j})|x|)$, $x\in\R^{d}$, fulfils \prettyref{cond:weights} 
by \cite[Example 2.8 (iii), p.\ 179]{kruse2018_4}. 
Then $\mathcal{CV}_{P(\partial)}(\R^{d},E)$ is the space of smooth functions of \emph{\gls{exponential_type}} 
$\tau$ in the kernel of $P(\partial)$. If $\tau=0$, then the elements of these spaces are also called functions 
of \emph{\gls{infra-exponential_type}}. In particular, 
if $P(\partial)=\overline{\partial}$, $d=2$ and $\K=\C$, or $P(\partial)=\Delta$, then 
$\gls{A_dbarE}:=\mathcal{CV}_{\overline{\partial}}(\C,E)$ is the space 
of entire and $\gls{A_LapE}:=\mathcal{CV}_{\Delta}(\R^{d},E)$ 
the space of harmonic functions of exponential type $\tau$. 
\item[b)] Further examples of families of weights fulfilling \prettyref{cond:weights} 
can be found in \cite[Example 2.8, p.\ 179]{kruse2018_4} and \cite[1.5 Examples, p.\ 205]{meise1987}.
\end{enumerate}
\end{rem}

Next, we take a look at $k$-times continuously partially differentiable functions 
that vanish with all their derivatives when weighted at infinity. 
Let $k\in\N_{\infty}$, $\Omega\subset\R^{d}$ be open, 
$\omega_{m}:=M_{m}\times\Omega$ with $M_{m}:=\{\beta\in\N_{0}^{d}\;|\;|\beta|\leq \min(m,k)\}$ 
for all $m\in\N_{0}$
and $\mathcal{V}^{k}:=(\nu_{j,m})_{j\in J, m\in\N_{0}}$ be a directed family of weights on 
$(\omega_{m})_{m\in\N_{0}}$. 
We define the topological subspace of 
$\mathcal{CV}^{k}(\Omega,E)$ from \prettyref{ex:weighted_smooth_functions} a)(i) 
consisting of the functions that vanishs with all their derivatives when weighted at infinity by 
 \begin{align*}
  \gls{CV_k_OE_0}:=\{f\in\mathcal{CV}^{k}(\Omega,E)\;|\;&\forall\;j\in J,\,
  m\in\N_{0},\,\alpha\in\mathfrak{A},\,\varepsilon>0\\
  &\exists\;K\subset \Omega\;\text{compact}:\;|f|_{\Omega\setminus K,j,m,\alpha}<\varepsilon\} 
 \end{align*}
where 
 \[
  |f|_{\Omega\setminus K,j,m,\alpha}:=\sup_{\substack{x\in\Omega\setminus K\\ \beta\in M_{m}}}
  p_{\alpha}\bigl((\partial^{\beta})^{E}f(x)\bigr)\nu_{j,m}(\beta,x).
 \]
Further, we define its subspace
$\gls{CV_k_OE_0P}
:=\{f\in\mathcal{CV}^{k}_{0}(\Omega,E)\;|\;f\in\operatorname{ker}P(\partial)^{E}\}$ 
where 
\[
 P(\partial)^{E}\colon \mathcal{C}^{k}(\Omega,E)\to E^{\Omega},\;
 P(\partial)^{E}(f)(x):=\sum_{i=1}^{n}a_{i}(x)(\partial^{\beta_{i}})^{E}(f)(x),
\]
with $n\in\N$, $\beta_{i}\in\N_{0}^{d}$ such that $|\beta_{i}|\leq k$ and 
$a_{i}\colon\Omega\to\K$ for $1\leq i\leq n$. 

\begin{rem}\label{rem:CV=CV_0}
If $\mathcal{V}^{k}$ fulfils condition $(V_{\infty})$ from \prettyref{ex:weighted_C_1_diff}, 
then we have $\mathcal{CV}^{k}_{0}(\Omega,E)=\mathcal{CV}^{k}(\Omega,E)$ (see \cite[Remark 3.4, p.\ 239]{kruse2018_2}).
\end{rem}

So $\mathcal{CW}^{k}(\Omega,E)$, $\mathcal{S}(\R^{d},E)$ and $\mathcal{O}_{M}(\R^{d},E)$ 
are concrete examples of spaces $\mathcal{CV}^{k}_{0}(\Omega,E)$ (see \prettyref{cor:Schwartz}).
We present the counterpart for differentiable functions to Bierstedt's
\prettyref{thm:bierstedt} for the space $\mathcal{CV}_{0}(\Omega,E)$ 
of continuous functions from a completely regular Hausdorff space 
$\Omega$ to an lcHs $E$ weighted with a Nachbin-family $\mathcal{V}$ 
that vanish at infinity in the weighted topology. For this purpose we need the following definition. 
We call $\mathcal{V}^{k}$ \emph{\gls{locally_bounded}} on $\Omega$ if 
\[
\forall\;K\subset\Omega\;\text{compact},\,j\in J,\,m\in\N_{0},\,\beta\in M_{m}:\;
\sup_{x\in K}\nu_{j,m}(\beta,x)<\infty.
\]

\begin{exa}\label{ex:diff_vanish_at_infinity}
Let $E$ be an lcHs, $k\in\N_{\infty}$, $\mathcal{V}^{k}$ be a directed family of weights 
which is locally bounded away from zero on an open set $\Omega\subset\R^{d}$. 
\begin{enumerate}
\item [a)] $\mathcal{CV}^{k}_{0}(\Omega,E)\cong\mathcal{CV}^{k}_{0}(\Omega)\varepsilon E$ if $E$ is quasi-complete, $\mathcal{V}^{k}$ locally bounded and 
$\mathcal{CV}^{k}_{0}(\Omega)$ barrelled.
\item [b)] $\mathcal{CV}^{k}_{0}(\Omega,E)\cong\mathcal{CV}^{k}_{0}(\Omega)\varepsilon E$ if $E$ has metric ccp, 
$\mathcal{CV}^{k}_{0}(\Omega)$ is barrelled and 
$\nu_{j,m}(\beta,\cdot)\in\mathcal{C}(\Omega)$ for all $j\in J$, $m\in\N_{0}$,
$\beta\in\N_{0}^{d}$, $|\beta|\leq \min(m,k)$.
\item [c)] $\mathcal{CV}^{k}_{0}(\Omega,E)\cong\mathcal{CV}^{k}_{0}(\Omega)\varepsilon E$ if $E$ is locally complete and 
$\mathcal{CV}^{k}_{0}(\Omega)$ a Fr\'echet--Schwartz space.
\item [d)] $\mathcal{CV}^{k}_{0,P(\partial)}(\Omega,E)\cong\mathcal{CV}^{k}_{0,P(\partial)}(\Omega)\varepsilon E$ if 
$E$ is quasi-complete, $\mathcal{V}^{k}$ loc.\ bounded and $\mathcal{CV}^{k}_{0,P(\partial)}(\Omega)$ barrelled. 
\item [e)] $\mathcal{CV}^{k}_{0,P(\partial)}(\Omega,E)\cong\mathcal{CV}^{k}_{0,P(\partial)}(\Omega)\varepsilon E$ 
if $E$ has metric ccp, $\mathcal{CV}^{k}_{0,P(\partial)}(\Omega)$ is barrelled and 
$\nu_{j,m}(\beta,\cdot)\in\mathcal{C}(\Omega)$ for all $j\in J$, $m\in\N_{0}$,
$\beta\in\N_{0}^{d}$, $|\beta|\leq \min(m,k)$.
\item [f)] $\mathcal{CV}^{k}_{0,P(\partial)}(\Omega,E)\cong\mathcal{CV}^{k}_{0,P(\partial)}(\Omega)\varepsilon E$ if 
$E$ is locally complete and $\mathcal{CV}^{k}_{0,P(\partial)}(\Omega)$ a Fr\'echet--Schwartz space.
\end{enumerate} 
\end{exa} 
\begin{proof}
The generator $(T^{E}_{m},T^{\K}_{m})_{m\in\N_{0}}$ for $(\mathcal{CV}^{k}_{0},E)$
and $(\mathcal{CV}^{k}_{0,P(\partial)},E)$ is given 
by $\dom T^{E}_{m}:=\mathcal{C}^{k}(\Omega,E)$ and 
\[
 T^{E}_{m}\colon\mathcal{C}^{k}(\Omega,E)\to E^{\omega_{m}},\; 
 f\longmapsto [(\beta,x)\mapsto (\partial^{\beta})^{E}f(x)], 
\]
for all $m\in\N_{0}$ and the same with $\K$ instead of $E$.  

Set $X:=\Omega$, $\mathfrak{K}:=\{K\subset\Omega\;|\;K\;\text{compact}\}$ and 
$\pi\colon\bigcup_{m\in\N_{0}}\omega_{m}\to X$, $\pi(\beta,x):=x$. 
We have 
\[
|f|_{\Omega\setminus K,j,m,\alpha}=\sup_{\substack{x\in\omega_{m}\\ \pi(x)\notin K}}
  p_{\alpha}\bigl(T^{E}_{m}(f)(x)\bigr)\nu_{j,m}(x),
\]
for $f\in\mathcal{CV}^{k}_{0}(\Omega,E)$, $K\in\mathfrak{K}$, $j\in J$ and 
$m\in\N_{0}$, implying that \eqref{van.a.inf} is satisfied. 
With $\operatorname{AP}_{\pi,\mathfrak{K}}(\Omega,E)$ 
from \prettyref{prop:van.at.inf0} we note that
\[
  \mathcal{CV}^{k}_{0}(\Omega,E)
 =\mathcal{CV}^{k}(\Omega,E)\cap\operatorname{AP}_{\pi,\mathfrak{K}}(\Omega,E).
\]
As in \prettyref{prop:weighted_diff_strong_cons} it follows that the generator 
$(T^{E}_{m},T^{\K}_{m})_{m\in\N_{0}}$ fulfils \eqref{eq:van.at.inf_cons} 
and \eqref{eq:van.at.inf_strong}
where we use \prettyref{prop:diff_cons_barrelled}, the barrelledness of 
$\mathcal{CV}^{k}_{0}(\Omega)$ resp.\ $\mathcal{CV}^{k}_{0,P(\partial)}(\Omega)$ 
and the assumption that $\mathcal{V}^{k}$ is locally bounded away from zero on 
$\Omega$. Therefore the generator is strong and consistent by virtue of 
\prettyref{prop:van.at.inf0}. 

a)+d) Let $f\in\mathcal{CV}^{k}_{0}(\Omega,E)$, $K\in\mathfrak{K}$, $j\in J$ and $m\in\N_{0}$. 
We claim that the set 
\[
  N_{j,m}(f)
=\{\partial^{\beta})^{E}f(x)\nu_{j,m}(\beta,x)\;|\;x\in\Omega,\,\beta\in M_{m}\}
\]
is precompact in $E$ by \prettyref{prop:vanish_at_infinity_precomp}. 
Since $f$ vanishes at infinity in the weighted topology, 
condition (i) of \prettyref{prop:vanish_at_infinity_precomp} is fulfilled. 
Hence we only need to show that condition (ii) is satisfied as well, i.e.\ we 
have to show that
\[
N_{\pi\subset K,j,m}(f)=\bigcup_{\beta\in M_{m}}(\partial^{\beta})^{E}f\nu_{j,m}(\beta,\cdot)(K)
\]
is precompact in $E$. 
Thus we only have to prove that the sets $(\partial^{\beta})^{E}f\nu_{j,m}(\beta,\cdot)(K)$ are 
precompact since $N_{\pi\subset K,j,m}(f)$ is a finite union of these sets. But this is a consequence of 
the proof of \cite[\S1, 16.\ Lemma, p.\ 15]{B3} using the continuity of $(\partial^{\beta})^{E}f$ 
and the boundedness of $\nu_{j,m}(\beta,K)$, 
which follows from $\mathcal{V}^{k}$ being locally bounded.
So we deduce statements a) and d) from \prettyref{cor:full_linearisation} (iv), 
\prettyref{prop:vanish_at_infinity_precomp} and the quasi-completeness of $E$.

b)+e) The set $K_{\beta}:= \oacx ((\partial^{\beta})^{E}f\nu_{j,m}(\beta,\cdot)(\Omega))$
is absolutely convex and compact by \prettyref{prop:abs_conv_comp_C_0} (ii) 
for every $f\in\mathcal{CV}^{k}_{0}(\Omega,E)$, $j\in J$, $m\in\N_{0}$ and $\beta\in M_{m}$ 
as $E$ has metric ccp and $\nu_{j,m}(\beta,\cdot)\in\mathcal{C}(\Omega)$. 
We have
\[
 N_{j,m}(f)=\{(\partial^{\beta})^{E}f(x)\nu_{j,m}(\beta,x)\;|\;x\in \Omega,\,\beta\in M_{m}\}
    \subset\acx\bigl(\bigcup_{\beta\in M_{m}}K_{\beta}\bigr)
\]
and the set on the right-hand side is absolutely convex and compact by \cite[6.7.3 Proposition, p.\ 113]{Jarchow}. 
Now, statements b)+e) follow from \prettyref{cor:full_linearisation} (iv).

c)+f) They follow from \prettyref{cor:full_linearisation} (ii).
\end{proof}

The spaces $\mathcal{CV}^{k}_{0}(\Omega)$ are Fr\'{e}chet spaces and thus barrelled if $J$ is countable 
by \cite[Proposition 3.7, p.\ 240]{kruse2018_2}. In \cite[Theorem 5.2, p.\ 255]{kruse2018_2} 
the question is answered when they have the approximation property.
The spaces $\mathcal{CV}^{\infty}_{0}(\Omega)$ and $\mathcal{CV}^{\infty}_{P(\partial),0}(\Omega)$ 
are closed subspaces of $\mathcal{CV}^{\infty}(\Omega)$ and $\mathcal{CV}^{\infty}_{P(\partial)}(\Omega)$, 
respectively. For conditions that they are Fr\'echet--Schwartz spaces see the remarks 
below \prettyref{ex:weighted_diff}.

We already saw different choices for $\mathfrak{K}$ in \prettyref{ex:hoelder} b) and 
\prettyref{ex:diff_vanish_at_infinity}. For holomorphic functions on an open subset $\Omega$ 
of an infinite dimensional Banach space $X$ the family $\mathfrak{K}$ of \emph{$\Omega$-bounded} sets, 
i.e.\ bounded sets $K\subset\Omega$ with positive distance to $X\setminus\Omega$, 
is used in \cite[p.\ 2]{garcia2000} and \cite[p.\ 2]{jorda2013}. 
This family is clearly closed under taking finite unions, 
so \prettyref{prop:van.at.inf0} is applicable as well. 

Now, we consider an example of weighted smooth functions where the corresponding space of scalar-valued 
functions may not be barrelled. For an open set $\Omega\subset\R^{d}$, an lcHs $E$ and 
a linear partial differential operator $P(\partial)^{E}$ 
which is hypoelliptic if $E=\K$ we define the space of bounded zero solutions 
\[
\gls{C_infty_P_b}
:=\{f\in\mathcal{C}^{\infty}_{P(\partial)}(\Omega,E)\;|\;\forall\;\alpha\in\mathfrak{A}:\;
\|f\|_{\infty,\alpha}:=\sup_{x\in\Omega}p_{\alpha}(f(x))<\infty\}
\]
where $\mathcal{C}^{\infty}_{P(\partial)}(\Omega,E)$ is the kernel of $P(\partial)^{E}$ 
in $\mathcal{C}^{\infty}(\Omega,E)$. 
Further, we set $\mathcal{C}^{\infty}_{P(\partial),b}(\Omega):=\mathcal{C}^{\infty}_{P(\partial),b}(\Omega,\K)$. 
Apart from the topology given by $(\|\cdot\|_{\infty,\alpha})_{\alpha\in\mathfrak{A}}$ 
there is another weighted locally convex topology 
on $\mathcal{C}^{\infty}_{P(\partial),b}(\Omega,E)$ which is of interest, namely, the one induced by the seminorms 
\[
|f|_{\nu,\alpha}:=\sup_{x\in\Omega}p_{\alpha}(f(x))|\nu(x)|,\quad f\in\mathcal{C}^{\infty}_{P(\partial),b}(\Omega,E),
\]
for $\nu\in\mathcal{C}_{0}(\Omega)$ and $\alpha\in\mathfrak{A}$.
We denote by $(\mathcal{C}^{\infty}_{P(\partial),b}(\Omega,E),\beta)$ 
the space $\mathcal{C}^{\infty}_{P(\partial),b}(\Omega,E)$ 
equipped with the topology $\beta$ induced by the seminorms 
$(|\cdot|_{\nu,\alpha})_{\nu\in\mathcal{C}_{0}(\Omega),\alpha\in\mathfrak{A}}$. 
The topology $\beta$ is called the \emph{\gls{strict_topology}}.
It is a bit tricky to prove the $\varepsilon$-compatibility of 
$(\mathcal{C}^{\infty}_{P(\partial),b}(\Omega),\beta)$ 
and $(\mathcal{C}^{\infty}_{P(\partial),b}(\Omega,E),\beta)$  
because $(\mathcal{C}^{\infty}_{P(\partial),b}(\Omega),\beta)$ 
may not be barrelled. 

\begin{rem}\label{rem:strict_top_non_barrelled}
Let $\Omega\subset\R^{d}$ be open and $P(\partial)^{\K}$ a hypoelliptic linear partial differential operator. 
Then $(\mathcal{C}^{\infty}_{P(\partial),b}(\Omega),\beta)$ is non-barrelled if $\tau_{c}$ does not coincide with 
the $\|\cdot\|_{\infty}$-topology by \cite[Section I.1, 1.15 Proposition, p.\ 12]{cooper1978}, e.g.\ 
$(\mathcal{C}^{\infty}_{\overline{\partial},b}(\D),\beta)$ is non-barrelled.
\end{rem}

Hence we cannot use \prettyref{prop:diff_cons_barrelled} c) directly. 

\begin{prop}\label{prop:strict_top_isomorphism}
Let $\Omega\subset\R^{d}$ be open, $P(\partial)^{\K}$ a hypoelliptic linear  partial differential operator 
and $E$ an lcHs.
Then $(\mathcal{C}^{\infty}_{P(\partial),b}(\Omega),\beta)\varepsilon E
\cong (\mathcal{C}^{\infty}_{P(\partial),b}(\Omega,E),\beta)$ if $E$ has metric ccp.
\end{prop}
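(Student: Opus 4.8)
The plan is to realise $(\mathcal{C}^{\infty}_{P(\partial),b}(\Omega,E),\beta)$ as a $\dom$-space with the trivial generator $(\id_{E^{\Omega}},\id_{\K^{\Omega}})$: take $\omega:=\Omega$, $T^{E}:=\id$ on $E^{\Omega}$, the weight family $\mathcal{V}:=\{|\nu|\;|\;\nu\in\mathcal{C}_{0}(\Omega)\}$ (which is directed since $\max(|\nu_{1}|,|\nu_{2}|)\in\mathcal{C}_{0}(\Omega)$) and $\operatorname{AP}(\Omega,E):=\mathcal{C}^{\infty}_{P(\partial)}(\Omega,E)\cap\{f\in E^{\Omega}\;|\;f\text{ bounded}\}$, so that $\mathcal{FV}(\Omega,E)=\mathcal{C}^{\infty}_{P(\partial),b}(\Omega,E)$ carries exactly the seminorms $|f|_{\nu,\alpha}$. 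Strength of the generator is then immediate, as in \prettyref{prop:weighted_diff_strong_cons}: for $e'\in E'$ and $f\in\mathcal{C}^{\infty}_{P(\partial),b}(\Omega,E)$ the composition $e'\circ f$ is again smooth, bounded and in $\ker P(\partial)^{\K}$.

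The crux is consistency, where I cannot invoke \prettyref{prop:diff_cons_barrelled} c) because $(\mathcal{C}^{\infty}_{P(\partial),b}(\Omega),\beta)$ need not be barrelled (\prettyref{rem:strict_top_non_barrelled}); instead I would borrow regularity from an ambient complete space. Write $X_{\beta}:=(\mathcal{C}^{\infty}_{P(\partial),b}(\Omega),\beta)$ and $Y:=\mathcal{CW}_{P(\partial)}(\Omega)$. Since $\beta\supseteq\tau_{c}$ — for compact $K$ pick $\nu\in\mathcal{C}_{0}(\Omega)$ with $\nu\geq 1$ on $K$, so that $\|f\|_{K,\alpha}\leq|f|_{\nu,\alpha}$ — the inclusion $I\colon X_{\beta}\to Y$ is continuous. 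Hence for $u\in X_{\beta}\varepsilon E$ the element $v:=(I\varepsilon\id_{E})(u)=u\circ I^{t}$ belongs to $Y\varepsilon E$. As $E$ has metric ccp it is locally complete, so \prettyref{prop:co_top_isomorphism} gives $Y\varepsilon E\cong\mathcal{CW}_{P(\partial)}(\Omega,E)=\mathcal{CW}^{\infty}_{P(\partial)}(\Omega,E)$, whence $S_{Y}(v)\in\mathcal{C}^{\infty}_{P(\partial)}(\Omega,E)$. Because $I^{t}(\delta_{x}^{Y})=\delta_{x}^{Y}\circ I=\delta_{x}$, one gets $S(u)(x)=u(\delta_{x})=v(\delta_{x}^{Y})=S_{Y}(v)(x)$, so $S(u)$ is $\mathcal{C}^{\infty}$-smooth and lies in $\ker P(\partial)^{E}$.

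It remains to see that $S(u)$ is bounded, i.e.\ lies in $\operatorname{AP}(\Omega,E)$; this is where the non-barrelled case is genuinely delicate. Fix $\alpha\in\mathfrak{A}$: continuity of $u$ on $(X_{\beta})_{\kappa}'$ furnishes an absolutely convex $\beta$-compact set $K\subset X_{\beta}$ with $p_{\alpha}(u(y))\leq\sup_{f\in K}|y(f)|$ for all $y\in X_{\beta}'$, so $p_{\alpha}(S(u)(x))=p_{\alpha}(u(\delta_{x}))\leq\sup_{f\in K}|f(x)|\leq\sup_{f\in K}\|f\|_{\infty}$. The set $K$ is $\beta$-bounded, and $\beta$-bounded subsets are $\|\cdot\|_{\infty}$-bounded by the theory of strict topologies \cite[Section I.1]{cooper1978}, so $\sup_{f\in K}\|f\|_{\infty}<\infty$ and $S(u)$ is bounded. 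The consistency identity $(T^{E}S(u))(x)=S(u)(x)=u(\delta_{x})$ is then trivially satisfied, so the generator is strong and consistent.

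Finally I would apply \prettyref{cor:full_linearisation} (iv). For $f\in\mathcal{C}^{\infty}_{P(\partial),b}(\Omega,E)$ and $\nu\in\mathcal{C}_{0}(\Omega)$ the set $N_{\nu}(f)=(f\nu)(\Omega)$ is the range of $f\nu\in\mathcal{C}_{0}(\Omega,E)$, so $\oacx(N_{\nu}(f))$ is absolutely convex and compact by \prettyref{prop:abs_conv_comp_C_0} (as $E$ has metric ccp and the open set $\Omega\subset\R^{d}$ is second-countable); thus $N_{\nu}(f)\subset K:=\oacx(N_{\nu}(f))\in\kappa(E)$. Hence \prettyref{cor:full_linearisation} (iv) yields that $S$ is an isomorphism, i.e.\ $(\mathcal{C}^{\infty}_{P(\partial),b}(\Omega),\beta)\varepsilon E\cong(\mathcal{C}^{\infty}_{P(\partial),b}(\Omega,E),\beta)$. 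The main obstacle is the consistency step, which I resolve by transporting smoothness from the complete space $Y$ through $I\varepsilon\id_{E}$ and by controlling boundedness via the coincidence of $\beta$-bounded and norm-bounded sets.
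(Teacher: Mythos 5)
Your proof is correct and follows essentially the same route as the paper: the paper also reduces consistency to the compact-open case (it verifies by a cut-off-function argument that $\tau_{c}$-continuous functionals are $\beta$-continuous and that the $\beta$-compact set $K$ from the continuity estimate of $u$ is $\tau_{c}$-compact and $\|\cdot\|_{\infty}$-bounded, which is precisely the dual formulation of your continuity of $I$ and well-definedness of $I\varepsilon\id_{E}$), and it likewise concludes boundedness of $S(u)$ from the norm-boundedness of $K$ and finishes with \prettyref{prop:abs_conv_comp_C_0} and \prettyref{cor:full_linearisation} (iv). Your packaging of the smoothness step through $I\varepsilon\id_{E}$ and \prettyref{prop:co_top_isomorphism} instead of \prettyref{prop:diff_cons_barrelled} c) is only a cosmetic variation.
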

\begin{proof}
We set $\operatorname{AP}(\Omega,E):=\mathcal{C}^{\infty}_{P(\partial),b}(\Omega,E)$ 
and observe that $(\id_{E^{\Omega}},\id_{\Omega^{\K}})$
is the generator of $((\mathcal{C}^{\infty}_{P(\partial),b}(\Omega),\beta),E)$. 
First, we prove that the generator is consistent. 
Clearly, we only need to show that $S(u)\in\operatorname{AP}(\Omega,E)$ 
for every $u\in(\mathcal{C}^{\infty}_{P(\partial),b}(\Omega),\beta)\varepsilon E$. Let 
$u\in(\mathcal{C}^{\infty}_{P(\partial),b}(\Omega),\beta)\varepsilon E$. 
Next, we show that $u\in\mathcal{CW}^{\infty}_{P(\partial)}(\Omega)\varepsilon E$ 
with $\mathcal{CW}^{\infty}_{P(\partial)}(\Omega)$ from \prettyref{ex:weighted_smooth_functions} b).
For $\alpha\in\mathfrak{A}$ there are an absolutely convex, compact 
$K\subset(\mathcal{C}^{\infty}_{P(\partial),b}(\Omega),\beta)$ 
and $C>0$ such that for all $f'\in(\mathcal{C}^{\infty}_{P(\partial),b}(\Omega),\beta)'$ it holds that
\begin{equation}\label{eq:strict_top}
p_{\alpha}(u(f'))\leq C\sup_{f\in K}|f'(f)|.
\end{equation}
We denote by $\tau_{c}$ the topology of compact convergence on $\mathcal{C}^{\infty}_{P(\partial),b}(\Omega)$, i.e.\ 
the topology of uniform convergence on compact subsets of $\Omega$. 
From the compactness of $K$ in $(\mathcal{C}^{\infty}_{P(\partial),b}(\Omega),\beta)$ it follows that 
$K$ is $\|\cdot\|_{\infty}$-bounded and $\tau_{c}$-compact by 
\cite[Proposition 1 (viii), p.\ 586]{cooper1971} since $(\mathcal{C}^{\infty}_{P(\partial),b}(\Omega),\beta)$ 
carries the induced topology of $(\mathcal{C}_{b}(\Omega),\beta)$ and the strict topology $\beta$ 
is the mixed topology $\gamma(\tau_{c},\|\cdot\|_{\infty})$ by \cite[Proposition 3, p.\ 590]{cooper1971}. 
Let $f'\in(\mathcal{C}^{\infty}_{P(\partial)}(\Omega),\tau_{c})'$. Then there are $M\subset\Omega$ compact 
and $C_{0}>0$ such that 
\[
|f'(f)|\leq C_{0}\sup_{x\in M}|f(x)|
\]
for all $f\in \mathcal{C}^{\infty}_{P(\partial)}(\Omega)$.
Choosing a compactly supported cut-off function $\nu\in\mathcal{C}^{\infty}_{c}(\Omega)$ 
with $\nu=1$ near $M$, we obtain 
\[
|f'(f)|\leq C_{0}\sup_{x\in \Omega}|f(x)||\nu(x)|=C_{0}|f|_{\nu}
\] 
for all $f\in \mathcal{C}^{\infty}_{P(\partial)}(\Omega)$. 
Therefore $f'\in(\mathcal{C}^{\infty}_{P(\partial)}(\Omega),\beta)'$. 
In combination with the $\tau_{c}$-compactness of $K$ it follows from \eqref{eq:strict_top} that 
$u\in(\mathcal{C}^{\infty}_{P(\partial)}(\Omega),\tau_{c})\varepsilon E$. Using that 
$(\mathcal{C}^{\infty}_{P(\partial)}(\Omega),\tau_{c})=\mathcal{CW}^{\infty}_{P(\partial)}(\Omega)$ 
as locally convex spaces by the hypoellipticity of $P(\partial)^{\K}$ (see e.g.\ \cite[p.\ 690]{F/J/W}), 
we obtain that $u\in\mathcal{CW}^{\infty}_{P(\partial)}(\Omega)\varepsilon E$. 
Due to \prettyref{prop:diff_cons_barrelled} c) this yields that 
$S(u)\in\mathcal{C}^{\infty}_{P(\partial)}(\Omega,E)$. Furthermore, we note that 
\begin{align*}
\|S(u)\|_{\infty,\alpha}&=\sup_{x\in\Omega}p_{\alpha}(S(u)(x))=\sup_{x\in\Omega}p_{\alpha}(u(\delta_{x}))
\underset{\eqref{eq:strict_top}}{\leq} C\sup_{x\in\Omega}\sup_{f\in K}|\delta_{x}(f)|\\
&=C\sup_{f\in K}\|f\|_{\infty}<\infty
\end{align*}
as $K$ is $\|\cdot\|_{\infty}$-bounded, 
implying that $S(u)\in\mathcal{C}^{\infty}_{P(\partial),b}(\Omega,E)=\operatorname{AP}(\Omega,E)$. 
Hence the generator $(\id_{E^{\Omega}},\id_{\Omega^{\K}})$ is consistent. 

It is easily seen that $e'\circ f\in\mathcal{C}^{\infty}_{P(\partial),b}(\Omega)=\operatorname{AP}(\Omega)$ for all 
$e'\in E'$ and $f\in\mathcal{C}^{\infty}_{P(\partial),b}(\Omega,E)$ 
(see the proof of \prettyref{prop:weighted_diff_strong_cons}), which proves that the generator is strong as well. 
Moreover, we define $N_{\nu}(f):=\{f(x)|\nu(x)|\;|\;x\in\Omega\}$ for 
$f\in(\mathcal{C}^{\infty}_{P(\partial),b}(\Omega,E),\beta)$ and $\nu\in\mathcal{C}_{0}(\Omega)$. 
The set $K:=\oacx(N_{\nu}(f))$ is absolutely convex and compact in $E$ by \prettyref{prop:abs_conv_comp_C_0} (ii)
because $f|\nu|\in\mathcal{C}_{0}(\Omega,E)$ and $\Omega$ second-countable, yielding our statement 
by \prettyref{cor:full_linearisation} (iv).
\end{proof}

If $\Omega\subset\C$ is an open, simply connected set, $P(\partial)=\overline{\partial}$ and $E$ is complete, then 
the preceding result is also a consequence of \cite[3.10 Satz, p.\ 146]{B2}.

Next, we consider the vector-valued \emph{\gls{Beurling_Bjoerck_space}} $\mathcal{S}_{\mu}(\R^{d},E)$
which generalises the Schwartz space and whose scalar-valued counterpart was studied by Bj\"orck 
in \cite{bjoerck1965}, by Schmeisser and Triebel in \cite{schmeisser1987} 
(see \cite[Definition 1.8.1, p.\ 375]{bjoerck1965}, \cite[1.2.1.2 Definition, p.\ 15]{schmeisser1987}) 
whereas semigroups on its toplogical dual space were treated by Alvarez et al.\ in \cite{alvarez2008}. 
Since Fourier transformation is involved in the definition 
of $\mathcal{S}_{\mu}(\R^{d},E)$, we start with the following statement.

\begin{prop}\label{prop:Fourier-trafo_Bjoerck}
Let $E$ be a locally complete lcHs over $\C$, $f\in\mathcal{S}(\R^{d},E)$ and $x\in\R^{d}$.
Then $f\e^{-\iu\langle x,\cdot\rangle}$ is Pettis-integrable on $\R^{d}$ where $\langle\cdot,\cdot\rangle$ 
is the usual scalar product on $\R^{d}$.
\end{prop}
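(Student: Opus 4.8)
The plan is to check the two defining requirements of Pettis-integrability for $g:=f\e^{-\iu\langle x,\cdot\rangle}$, namely scalar integrability and the existence of the vector integral, and to obtain the latter from the local completeness of $E$ via a Banach-disk reduction. The starting point is a single pointwise estimate that does double duty. Since $f\in\mathcal{S}(\R^{d},E)$, taking the multi-index $\beta=0$ in the Schwartz seminorm with order $m:=d+1$ gives, for every $\alpha\in\mathfrak{A}$,
\[
p_{\alpha}(g(t))=p_{\alpha}(f(t))\leq |f|_{d+1,\alpha}\,(1+|t|^{2})^{-(d+1)/2}=:|f|_{d+1,\alpha}\,\phi(t),\quad t\in\R^{d},
\]
because $|\e^{-\iu\langle x,t\rangle}|=1$. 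Here $\phi\in L^{1}(\R^{d})$ as $d+1>d$. As $g$ is continuous, each $e'\circ g$ is measurable, and the bound $|e'(g(t))|\leq C\,p_{\alpha}(g(t))\leq C|f|_{d+1,\alpha}\,\phi(t)$ (valid with suitable $C,\alpha$ depending on $e'$) shows at once that $e'\circ g\in L^{1}(\R^{d})$ for all $e'\in E'$; equivalently, one may invoke \prettyref{lem:strong_is_weak} to see $e'\circ f\in\mathcal{S}(\R^{d})$. Thus $g$ is scalarly integrable over every measurable subset of $\R^{d}$.

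Next I would set up the Banach disk. Write $g=\phi\cdot u$ with $u(t):=f(t)\e^{-\iu\langle x,t\rangle}(1+|t|^{2})^{(d+1)/2}$; then $u$ is continuous and the very same seminorm bound gives $p_{\alpha}(u(t))\leq|f|_{d+1,\alpha}$ for all $t\in\R^{d}$ and all $\alpha\in\mathfrak{A}$, so $u(\R^{d})$ is bounded in $E$. Consequently $D:=\oacx(u(\R^{d}))$ is a closed, bounded, absolutely convex set, hence a Banach disk because $E$ is locally complete (by \cite[10.2.1 Proposition, p.\ 197]{Jarchow}), and $E_{D}$ is a Banach space. Since $0<\phi(t)\leq 1$ and $D$ is absolutely convex, one has $g(t)=\phi(t)u(t)\in D\subset E_{D}$ with $\|g(t)\|_{D}\leq\phi(t)$.

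Finally I would conclude by appealing to the local-completeness criterion for Pettis-integrability, \prettyref{lem:pettis.loc.complete} (\cite[Lemma 4.7, p.\ 369]{kruse2018_1}): the domination of $g$ by the $L^{1}$-weight $\phi$ through a function $u$ with range in the Banach disk $D$, together with the scalar integrability verified above, yields that $g$ is Pettis-integrable on $\R^{d}$, its indefinite integral taking values in $E_{D}$. The main obstacle is precisely this last step: under mere local completeness (rather than sequential completeness, as in the earlier version this proposition improves) one cannot produce the integral by taking a limit of Cauchy partial integrals directly in $E$, so the decomposition $g=\phi\cdot u$ — isolating the decay into an $L^{1}$ factor and the vector behaviour into a bounded factor sitting in a Banach disk — is what makes the argument go through.
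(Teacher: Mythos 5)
Your first half is correct and matches the paper: the bound $p_{\alpha}(f(t))\leq|f|_{d+1,\alpha}(1+|t|^{2})^{-(d+1)/2}$ gives scalar integrability, and the decomposition of $f\e^{-\iu\langle x,\cdot\rangle}$ into an $\mathcal{L}^{1}$-factor $\phi$ times a bounded factor $u$ is exactly the right idea. The gap is in the concluding step. You invoke \prettyref{lem:pettis.loc.complete} as a ``local-completeness domination criterion'', but that lemma only asserts Pettis-integrability of weakly $\mathcal{C}^{1}$ functions over \emph{compact} subsets with respect to locally finite measures; it contains no mechanism for integrating over all of $\R^{d}$ against an $\mathcal{L}^{1}$ weight. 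More substantively, the Banach-disk reduction by itself does not produce the integral: $D:=\oacx(u(\R^{d}))$ is closed, bounded and absolutely convex, hence a Banach disk in a locally complete space, but it need not be \emph{compact}. The resulting estimate $|I(e')|\leq\|\phi\|_{1}\sup_{y\in D}|e'(y)|$ for $I(e'):=\int e'(g(t))\,\d t$ therefore only places $I$ in the bidual $(E_{b}')'$, not in $\mathcal{J}(E)$ via Mackey--Arens. The alternative of Bochner-integrating $g$ in the Banach space $E_{D}$ also fails as stated, because continuity of $u$ into $E$ does not yield (strong) measurability of $u$ into the finer-normed space $E_{D}$.

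What closes this gap in the paper is an extra quantitative input that your proposal omits: one differentiates $u$ and bounds $(\partial^{e_{n}})^{E}u$ by Schwartz seminorms of $f$, so that $u$ is weakly $\mathcal{C}^{1}_{b}$ and hence $u\in\mathcal{C}^{[1]}_{b}(\R^{d},E)$ by \prettyref{prop:abs_conv_comp_C_1_b}. Enlarging the disk so that it also contains the difference quotients of $u$ makes $u$ Lipschitz, hence uniformly continuous, into the corresponding local Banach space; combined with the auxiliary weight $h(\zeta)=1+|\zeta|^{2}$ (with $uh$ bounded and $1\leq\varepsilon h$ off compact sets) this forces $u(\R^{d})$ to be precompact there, so $\oacx(u(\R^{d}))$ is genuinely \emph{compact} in $E$ by \prettyref{prop:abs_conv_comp_hoelder}, and Mackey--Arens then identifies $I$ with an element of $E$. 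The correct concluding reference is \prettyref{prop:pettis.ccp.to.loc.complete}~(iii), whose hypotheses are precisely the Lipschitz and vanishing-at-infinity conditions your argument does not verify.
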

\begin{proof}
We choose $m:=d+1$ and set $\psi\colon\R^{d}\to [0,\infty)$, $\psi(\zeta):=(1+|\zeta|^{2})^{-m/2}$, as well as 
$g\colon\R^{d}\to [0,\infty)$, $g(\zeta):=\psi(\zeta)^{-1}$. 
Then $\psi\in\mathcal{L}^{1}(\R^{d},\lambda)$ and $\psi g=1$. 
Moreover, let $x=(x_{i})\in\R^{d}$ and set $u\colon\R^{d}\to E$, 
$u(\zeta):=f(\zeta)\e^{-\iu\langle x,\zeta\rangle}g(\zeta)$. 
We note that
\begin{flalign*}
&\hspace*{0.43cm}(\partial^{e_{n}})^{E}u(\zeta)\\
&=(\partial^{e_{n}})^{E}f(\zeta)\e^{-\iu\langle x,\zeta\rangle}g(\zeta)
 -\iu x_{n}f(\zeta)\e^{-\iu\langle x,\zeta\rangle}g(\zeta)
 +mf(\zeta)\e^{-\iu\langle x,\zeta\rangle}(1+|\zeta|^{2})^{(m/2)-1}\zeta_{n}
\end{flalign*}
for all $\zeta=(\zeta_{i})\in\R^{d}$ and $1\leq n\leq d$, which implies 
\[
     p_{\alpha}((\partial^{e_{n}})^{E}u(\zeta))\\
\leq p_{\alpha}((\partial^{e_{n}})^{E}f(\zeta))g(\zeta)
 +|x_{n}|p_{\alpha}(f(\zeta))g(\zeta)
 +mp_{\alpha}(f(\zeta))g(\zeta)\\
\]
for all $\alpha\in\mathfrak{A}$ and hence 
\[
\sup_{\substack{\zeta\in\R^{d}\\ \beta\in\N_{0}^{d},|\beta|\leq 1}}p_{\alpha}((\partial^{\beta})^{E}u(\zeta))
\leq (1+|x_{n}|+m)|f|_{\mathcal{S}(\R^{d}),m,\alpha}.
\]
Therefore $u=f\e^{-\iu\langle x,\cdot\rangle}g$ is (weakly) $\mathcal{C}^{1}_{b}$, which yields 
$u\in\mathcal{C}_{b}^{[1]}(\R^{d},E)$ by \prettyref{prop:abs_conv_comp_C_1_b}. 
Now, we choose $h\colon\R^{d}\to (0,\infty)$, $h(\zeta):=1+|\zeta|^{2}$. 
Then  
\[
     \sup_{\zeta\in\R^{d}}p_{\alpha}(u(\zeta)h(\zeta))
\leq \sup_{\zeta\in\R^{d}}p_{\alpha}(f(\zeta))(1+|\zeta|^{2})^{(m+2)/2}
\leq |f|_{\mathcal{S}(\R^{d}),m+2,\alpha}<\infty
\]
for all $\alpha\in\mathfrak{A}$, and for every $\varepsilon>0$ there is $r>0$ such that 
$1\leq \varepsilon h(\zeta)$ for all $\zeta\notin\overline{\mathbb{B}_{r}(0)}=:K$.
We deduce from \prettyref{prop:pettis.ccp.to.loc.complete} (iii) 
that $f\e^{-\iu\langle x,\cdot\rangle}$ is Pettis-integrable on $\R^{d}$.
\end{proof}

Thus, for $f\in\mathcal{S}(\R^{d},E)$ with locally complete $E$ the \emph{\gls{Fourier_transformation}} 
\[
  \mathfrak{F}^{E}(f)\colon\R^{d}\to E,\;\mathfrak{F}^{E}(f)(x)
:=(2\pi)^{-d/2}\int_{\R^{d}}f(\zeta)\e^{-\iu\langle x,\zeta\rangle}\d\zeta,
\]
is defined. From the Pettis-integrability we get $(e'\circ\mathfrak{F}^{E})(f)=\mathfrak{F}^{\C}(e'\circ f)$ for 
every $e'\in E'$. As $\mathfrak{F}^{\C}(e'\circ f)\in\mathcal{S}(\R^{d})$ for every $e'\in E'$ 
by \cite[Proposition 1.8.2, p.\ 375]{bjoerck1965}, 
we obtain from the weak-strong principle \prettyref{cor:weak_strong_CV} 
(or \cite[Theorem 9, p.\ 232]{B/F/J} and \cite[Mackey's theorem 23.15, p.\ 268]{meisevogt1997}) 
that $\mathfrak{F}^{E}(f)\in\mathcal{S}(\R^{d},E)$.

For a locally complete lcHs $E$ over $\C$ and a continuous function $\mu\colon\R^{d}\to[0,\infty)$ such that 
\begin{enumerate}
\item[$\gls{gamma}$] there are $a\in\R$, $b>0$ with $\mu(x)\geq a+b\ln(1+|x|)$ for all $x\in\R^{d}$,
\end{enumerate}
we set 
\[
\gls{SRE_mu}:=\{f\in\mathcal{C}^{\infty}(\R^{d},E)\;|\;\forall\;m,j\in\N_{0},\,
\alpha\in\mathfrak{A}:\;|f|_{j,m,\alpha}<\infty\}
\]
where $|f|_{m,j,\alpha}:=\max(q_{m,j,\alpha}(f),q_{m,j,\alpha}(\mathfrak{F}^{E}(f)))$ with
\[
q_{m,j,\alpha}(f):=\sup_{\substack{x\in\R^{d}\\\beta\in\mathbb{N}_{0}^{d},|\beta|\leq m}}
                   p_{\alpha}((\partial^{\beta})^{E}f(x))\e^{j\mu(x)}.
\]
We note that from $q_{m,j,\alpha}(f)<\infty$ for all $m,j\in\N_{0}$, $\alpha\in\mathfrak{A}$ 
and condition $(\gamma)$ it follows that $f\in\mathcal{S}(\R^{d},E)$ 
and hence $q_{m,j,\alpha}(\mathfrak{F}^{E}(f))$ is defined. 
Further, we set $\mathcal{S}_{\mu}(\R^{d}):=\mathcal{S}_{\mu}(\R^{d},\C)$. 
We observe that $\mathcal{S}_{\mu}(\R^{d},E)$ is a $\dom$-space. 
Indeed, let $\omega_{m}:=\widetilde{\omega}_{m}\cup\widetilde{\omega}_{m,1}$ where 
$\widetilde{\omega}_{m}:=M_{m}\times\R^{d}$ with 
$M_{m}:=\{\beta\in\N_{0}^{d}\;|\;|\beta|\leq m\}$ and 
$\widetilde{\omega}_{m,1}:=\widetilde{\omega}_{m}\times\{1\}$ for all $m\in\N_{0}$. 
Setting $\dom T^{E}_{m}:=\mathcal{S}(\R^{d},E)$ and $T^{E}_{m}\colon\mathcal{S}(\R^{d},E)\to E^{\omega_{m}}$ 
by 
\[
 T^{E}_{m}(f)(\beta,x):=(\partial^{\beta})^{E}f(x)\;\;\text{and}\;\;
 T^{E}_{m}(f)(\beta,x,1):=((\partial^{\beta})^{E}\circ\mathfrak{F}^{E})f(x),
 \quad (\beta,x)\in\widetilde{\omega}_{m},
\]
for every $m\in\N_{0}$ as well as $\operatorname{AP}(\R^{d},E):=E^{\R^{d}}$, 
we have that $\mathcal{S}_{\mu}(\R^{d},E)$ is a $\dom$-space with weights given by 
$\nu_{j,m}(\beta,x):=\nu_{j,m}(\beta,x,1):=\e^{j\mu(x)}$ 
for all $(\beta,x)\in\widetilde{\omega}_{m}$ and $m,j\in\N_{0}$.

The condition $(\gamma)$ is introduced in \cite[p.\ 363]{bjoerck1965}. 
Choosing $\mu(x):=\ln(1+|x|)$, $x\in\R^{d}$, we get the Schwartz space 
$\mathcal{S}_{\mu}(\R^{d},E)=\mathcal{S}(\R^{d},E)$ back. 

\begin{exa}\label{ex:Bjoerck}
Let $E$ be a locally complete lcHs over $\C$ and $\mu\colon\R^{d}\to[0,\infty)$ continuous such that 
condition $(\gamma)$ is fulfilled. 
\begin{enumerate}
\item[(i)] If $E$ has metric ccp, or
\item[(ii)] if $\mu\in\mathcal{C}^{1}(\R^{d})$ and there are $k\in\N_{0}$, $C>0$ such that 
$|\partial^{e_{n}}\mu(x)|\leq C\e^{k\mu(x)}$ for all $x\in\R^{d}$ and $1\leq n\leq d$,
\end{enumerate}
then $\mathcal{S}_{\mu}(\R^{d},E)\cong\mathcal{S}_{\mu}(\R^{d})\varepsilon E$. 
\end{exa}
\begin{proof}
First, we show that the generator $(T^{E}_{m},T^{\C}_{m})_{m\in\N_{0}}$ for $(\mathcal{S}_{\mu},E)$ 
is strong and consistent. From 
\[
 (\partial^{\beta})^{\C}(e'\circ f)(x)=e'\bigl((\partial^{\beta})^{E}f(x)\bigr),
 \quad (\beta,x)\in\widetilde{\omega}_{m},
\]
where $\widetilde{\omega}_{m}=\{\beta\in\N_{0}^{d}\;|\;|\beta|\leq m\}\times\R^{d}$, 
we get in combination with the Pettis-integrability by \prettyref{prop:Fourier-trafo_Bjoerck} that
\begin{equation}\label{eq:Bjoerck}
 \bigl((\partial^{\beta})^{\C}\circ\mathfrak{F}^{\C}\bigr)(e'\circ f)(x)
=e'\bigl(\bigl((\partial^{\beta})^{E}\circ\mathfrak{F}^{E}\bigr)f(x)\bigr),
 \quad (\beta,x)\in\widetilde{\omega}_{m}
\end{equation}
for all $e'\in E'$, $f\in\mathcal{S}_{\mu}(\R^{d},E)$ and $m\in\N_{0}$, which means 
that the generator is strong. 
For consistency we consider the case $\mu(x)=\ln(1+|x|)$, $x\in\R^{d}$, i.e.\ 
the Schwartz space, first. Due to \prettyref{cor:Schwartz} the map 
$S\colon\mathcal{S}(\R^{d})\varepsilon E\to \mathcal{S}(\R^{d},E)$ is an isomorphism 
and according to \prettyref{thm:full_linearisation} its inverse is given by 
\[
 R^{t}\colon \mathcal{S}(\R^{d},E)\to\mathcal{S}(\R^{d})\varepsilon E,
 \;f\mapsto \mathcal{J}^{-1}\circ R_{f}^{t}.
\]
Let $u\in\mathcal{S}(\R^{d})\varepsilon E$. Thanks to the proof of \prettyref{cor:Schwartz} 
we only need to show that 
\[
 u(\delta_{x}\circ(\partial^{\beta}\circ\mathfrak{F}^{\C}))
=(\partial^{\beta})^{E}\mathfrak{F}^{E}(S(u))(x),\quad x\in\R^{d}.
\]
We set $f:=S(u)\in\mathcal{S}(\R^{d},E)$ and from \eqref{eq:Bjoerck}
we obtain
\[
  R_{f}^{t}\bigl(\delta_{x}\circ((\partial^{\beta})^{\C}\circ\mathfrak{F}^{\C})\bigr)(e')
=(\partial^{\beta})^{\C}\mathfrak{F}^{\C}(e'\circ f)(x)
=e'\bigl((\partial^{\beta})^{E}\mathfrak{F}^{E}(f)(x)\bigr),\quad e'\in E',
\]
for all $x\in\R^{d}$ and $\beta\in\N_{0}^{d}$, which results in
\begin{align}\label{eq:bjoerck_fourier_eps}
  u\bigl(\delta_{x}\circ((\partial^{\beta})^{\C}\circ\mathfrak{F}^{\C})\bigr)
&=S^{-1}(f)\bigl(\delta_{x}\circ((\partial^{\beta})^{\C}\circ\mathfrak{F}^{\C})\bigr)
 =\mathcal{J}^{-1}\bigl(R_{f}^{t}(\delta_{x}\circ((\partial^{\beta})^{\C}\circ\mathfrak{F}^{\C}))\bigr)
  \nonumber\\
&=(\partial^{\beta})^{E}\mathfrak{F}^{E}(f)(x)=(\partial^{\beta})^{E}\mathfrak{F}^{E}(S(u))(x).
\end{align}
Thus $(T^{E}_{m},T^{\C}_{m})_{m\in\N_{0}}$ is a consistent generator for $(\mathcal{S},E)$. 

Let us turn to general $\mu$. Let $u\in\mathcal{S}_{\mu}(\R^{d})\varepsilon E$. 
We show that $u\in\mathcal{S}(\R^{d})\varepsilon E$. Then it follows from the first part of the proof that 
$(T^{E}_{m},T^{\C}_{m})_{m\in\N_{0}}$ is a consistent generator for $(\mathcal{S}_{\mu},E)$.
For $\alpha\in\mathfrak{A}$ there are an absolutely convex compact set $K\subset\mathcal{S}_{\mu}(\R^{d})$ 
and $C>0$ such that for all $f'\in\mathcal{S}_{\mu}(\R^{d})'$ it holds
\begin{equation}\label{eq:bjoerck_eps}
p_{\alpha}(u(f'))\leq C\sup_{f\in K}|f'(f)|.
\end{equation}
The compactness of $K$ in $\mathcal{S}_{\mu}(\R^{d})$ and the estimate 
\begin{align*}
 \sup_{\substack{x\in \R^{d}\\\beta\in\N_{0}^{d},|\beta|\leq m}}|(\partial^{\beta})^{\C}f(x)|(1+|x|^{2})^{j/2}
&\leq \sup_{\substack{x\in \R^{d}\\\beta\in\N_{0}^{d},|\beta|\leq m}}|(\partial^{\beta})^{\C}f(x)
      |\e^{(j/2)(2/b)(\mu(x)-a)}\\
&\leq \e^{-(aj)/b}|f|_{j,m},\quad f\in \mathcal{S}_{\mu}(\R^{d}),
\end{align*}
for all $j,m\in\N_{0}$ by condition $(\gamma)$ imply that the inclusion 
$\mathcal{S}_{\mu}(\R^{d})\hookrightarrow \mathcal{S}(\R^{d})$ is continuous 
and thus that $K$ is compact in $\mathcal{S}(\R^{d})$. 
Let $f'\in\mathcal{S}(\R^{d})'$. Then there are $j,m\in\N_{0}$ and $C_{0}>0$ such that 
\[
|f'(f)|\leq C_{0}\sup_{\substack{x\in \R^{d}\\\beta\in\N_{0}^{d},|\beta|\leq m}}|(\partial^{\beta})^{\C}f(x)|
            (1+|x|^{2})^{j/2}
       \leq C_{0}\e^{-(aj)/b}|f|_{j,m}
\]
for all $f\in\mathcal{S}_{\mu}(\R^{d})$. Hence $f'\in\mathcal{S}_{\mu}(\R^{d})'$ and 
from \eqref{eq:bjoerck_eps} we obtain that $u\in\mathcal{S}(\R^{d})\varepsilon E$ 
because $K$ is absolutely convex and compact in $\mathcal{S}(\R^{d})$. 

Condition $(\gamma)$ implies that $\mu(x)\to\infty$ for $|x|\to\infty$. 
Noting that for every $j\in\N$ and $\varepsilon>0$ there is $r>0$ such 
that
\begin{equation}\label{eq:bjoerck_vanish_infty}
\frac{\e^{j\mu(x)}}{\e^{2j\mu(x)}}=\e^{-j\mu(x)}<\varepsilon
\end{equation}
for all $x\notin\overline{\mathbb{B}_{r}(0)}$, we deduce 
$|f|_{\R^{d}\setminus \overline{\mathbb{B}_{r}(0)},m,j,\alpha}\leq\varepsilon |f|_{m,2j,\alpha}$
for every $f\in\mathcal{S}_{\mu}(\R^{d},E)$, $m\in\N_{0}$ and $\alpha\in\mathfrak{A}$. 

(i) Thus, if $E$ has metric ccp, then the sets $K_{\beta}:= \oacx ((\partial^{\beta})^{E}f\e^{j\mu}(\R^{d}))$
and $K_{\beta,1}:= \oacx ((\partial^{\beta})^{E}\mathfrak{F}^{E}(f)\e^{j\mu}(\R^{d}))$ are absolutely convex 
and compact by \prettyref{prop:abs_conv_comp_C_0} (ii) 
for every $f\in \mathcal{S}_{\mu}(\R^{d},E)$, $j,m\in\N_{0}$ and $\beta\in M_{m}$ as 
$(\partial^{\beta})^{E}f\e^{j\mu}\in\mathcal{C}_{0}(\R^{d},E)$ and 
$(\partial^{\beta})^{E}\mathfrak{F}^{E}(f)\e^{j\mu}\in\mathcal{C}_{0}(\R^{d},E)$.

(ii) We set $g_{0}\colon\R^{d}\to E$, $g_{0}(x):=(\partial^{\beta})^{E}f(x)\e^{j\mu(x)}$, and 
$g_{1}\colon\R^{d}\to E$, $g_{1}(x):=(\partial^{\beta})^{E}\mathfrak{F}^{E}(f)(x)\e^{j\mu(x)}$,
for $j,m\in\N_{0}$ and $\beta\in M_{m}$.
We observe that
\[
 (\partial^{e_{n}})^{E}g_{0}(x)
=(\partial^{\beta+e_{n}})^{E}f(x)\e^{j\mu(x)}+j(\partial^{\beta})^{E}f(x)\e^{j\mu(x)}\partial^{e_{n}}\mu(x)
\]
and 
\[
 (\partial^{e_{n}})^{E}g_{1}(x)
=(\partial^{\beta+e_{n}})^{E}\mathfrak{F}^{E}(f)(x)\e^{j\mu(x)}
 +j(\partial^{\beta})^{E}\mathfrak{F}^{E}(f)(x)\e^{j\mu(x)}\partial^{e_{n}}\mu(x)
\]
for all $x\in\R^{d}$ and $1\leq n\leq d$. 
As in \prettyref{ex:weighted_C_1_diff} it follows from condition (ii) that there are $k\in\N_{0}$, $C>0$ such that 
\[
\sup_{\substack{x\in\R^{d}\\ \gamma\in\N_{0}^{d},|\gamma|\leq 1}}p_{\alpha}((\partial^{\gamma})^{E}g_{i}(x))
\leq |f|_{m+1,j,\alpha}+Cj|f|_{m,j+k,\alpha}
\]
for all $\alpha\in\mathfrak{A}$ and $i=0,1$.
Thus $g_{0}$ and $g_{1}$ are (weakly) $\mathcal{C}^{1}_{b}$. We set $h:=\e^{j\mu}$ and note that 
\[
\sup_{x\in\R^{d}}p_{\alpha}(g_{i}(x)h(x))\leq |f|_{m,2j,\alpha}<\infty
\]
for all $\alpha\in\mathfrak{A}$ and $i=0,1$. 
This yields that $K_{\beta}=\oacx(g_{0}(\R^{d}))$ and $K_{\beta,1}=\oacx(g_{1}(\R^{d}))$
are absolutely convex and compact 
by \prettyref{prop:abs_conv_comp_hoelder} with \eqref{eq:bjoerck_vanish_infty} 
and \prettyref{prop:abs_conv_comp_C_1_b}. 

Then we have in both cases
\begin{align*}
 N_{j,m}(f)=&\phantom{\cup}\bigl(\{(\partial^{\beta})^{E}f(x)\e^{j\mu(x)}\;|\;x\in \R^{d},\,\beta\in M_{m}\}\\
 &\cup\:\{(\partial^{\beta})^{E}\mathfrak{F}^{E}(f)(x)\e^{j\mu(x)}\;|\;x\in \R^{d},\,\beta\in M_{m}\}\bigr)\\
 \subset& \acx\bigl(\bigcup_{\beta\in M_{m}}(K_{\beta}\cup K_{\beta,1})\bigr)
\end{align*}
and the set on the right-hand side is absolutely convex and compact 
by \cite[6.7.3 Proposition, p.\ 113]{Jarchow}, 
which implies that $\mathcal{S}_{\mu}(\R^{d},E)\cong\mathcal{S}_{\mu}(\R^{d})\varepsilon E$
by \prettyref{cor:full_linearisation} (iv).
\end{proof}

We come back to these spaces in \prettyref{thm:Bjoerck_Fourier}.
Another example that is related to Fourier transformation is the space 
of vector-valued smooth functions that are $2\pi$-periodic in each variable. 
We equip the space $\mathcal{C}^{\infty}(\R^{d},E)$ for an lcHs $E$ 
with the system of seminorms generated by 
\[
|f|_{K,m,\alpha}:=\hspace{-0.2cm}\sup_{\substack{x\in \R^{d}\\ \beta\in\N_{0}^{d},|\beta|\leq m}}\hspace{-0.2cm}
p_{\alpha}((\partial^{\beta})^{E}f(x))\chi_{K}(x)
=\hspace{-0.2cm}\sup_{\substack{x\in K\\ \beta\in\N_{0}^{d},|\beta|\leq m}}\hspace{-0.2cm}
p_{\alpha}((\partial^{\beta})^{E}f(x)),\quad f\in\mathcal{C}^{\infty}(\R^{d},E),
\]
for $K\subset\R^{d}$ compact, $m\in\N_{0}$ and $\alpha\in\mathfrak{A}$, 
i.e.\ we consider $\mathcal{CW}^{\infty}(\R^{d},E)$. 
By $\gls{C_infty_2pi}$ we denote the topological subspace of 
$\mathcal{C}^{\infty}(\R^{d},E)$ consisting of the functions 
which are $2\pi$-periodic in each variable. 
Further, we set $\mathcal{C}^{\infty}_{2\pi}(\R^{d}):=\mathcal{C}^{\infty}_{2\pi}(\R^{d},\K)$.

\begin{exa}\label{ex:smooth_periodic_eps_compat}
If $E$ is a locally complete lcHs, then 
$\mathcal{C}^{\infty}_{2\pi}(\R^{d},E)\cong \mathcal{C}^{\infty}_{2\pi}(\R^{d})\varepsilon E$.
\end{exa}
\begin{proof}
First, we note that for each $x\in\R^{d}$ and $1\leq n\leq d$ we have $\delta_{x}=\delta_{x+2\pi e_{n}}$ 
in $\mathcal{C}^{\infty}_{2\pi}(\R^{d})'$ and thus
\[
 S_{\mathcal{C}^{\infty}_{2\pi}(\R^{d})}(u)(x)-S_{\mathcal{C}^{\infty}_{2\pi}(\R^{d})}(u)(x+2\pi e_{n})
 =u(\delta_{x}-\delta_{x+2\pi e_{n}})=0,\quad u\in\mathcal{C}^{\infty}_{2\pi}(\R^{d})\varepsilon E,
\]
implying that $S_{\mathcal{C}^{\infty}_{2\pi}(\R^{d})}(u)$ is $2\pi$-periodic in each variable. 
In addition, we observe that $e'\circ f$ is $2\pi$-periodic in each 
variable for all $e'\in E'$ and $f\in\mathcal{C}^{\infty}_{2\pi}(\R^{d},E)$. 
Now, we obtain as in \prettyref{ex:diff_usual} a) for $k=\infty$ that 
$S_{\mathcal{C}^{\infty}_{2\pi}(\R^{d})}\colon \mathcal{C}^{\infty}_{2\pi}(\R^{d})\varepsilon E
\to \mathcal{C}^{\infty}_{2\pi}(\R^{d},E)$ is an isomorphism.
\end{proof}

We return to $\mathcal{C}^{\infty}_{2\pi}(\R^{d},E)$ in \prettyref{thm:Borel_Ritt} 
and \prettyref{thm:fourier_periodic}. 
Now, we direct our attention to spaces of continuously partially differentiable functions 
on an open bounded set such that all derivatives can be continuously extended to the boundary.
Let $E$ be an lcHs, $k\in\N_{\infty}$ and $\Omega\subset\R^{d}$ open and bounded. 
The space $\mathcal{C}^{k}(\overline{\Omega},E)$ is given by 
\[
 \gls{C_k_onclosure}:=\{f\in\mathcal{C}^{k}(\Omega,E)\;|\;(\partial^{\beta})^{E}f\;
 \text{cont.\ extendable on}\;\overline{\Omega}\;\text{for all}\;\beta\in\N^{d}_{0},\,|\beta|\leq k\}
\]
and equipped with the system of seminorms given by 
\[
 |f|_{\alpha}:=\sup_{\substack{x\in \Omega\\ \beta\in\N^{d}_{0}, |\beta|\leq k}}
               p_{\alpha}\bigl((\partial^{\beta})^{E}f(x)\bigr),\quad f\in\mathcal{C}^{k}(\overline{\Omega},E),
\]
for $\alpha\in \mathfrak{A}$ if $k<\infty$, and by 
\[
 |f|_{m,\alpha}:=\sup_{\substack{x\in \Omega\\ \beta\in\N^{d}_{0}, |\beta|\leq m}}
 p_{\alpha}\bigl((\partial^{\beta})^{E}f(x)\bigr),\quad f\in\mathcal{C}^{\infty}(\overline{\Omega},E),
\]
for $m\in\N_{0}$ and $\alpha\in \mathfrak{A}$ if $k=\infty$. 
Further, we set $\mathcal{C}^{k}(\overline{\Omega}):=\mathcal{C}^{k}(\overline{\Omega},\K)$.

\begin{exa}\label{ex:diff_ext_boundary}
Let $E$ be an lcHs, $k\in\N_{\infty}$ and $\Omega\subset\R^{d}$ open and bounded. 
\begin{enumerate}
\item[(i)] If $E$ has metric ccp, or
\item[(ii)] if $E$ is locally complete, $k=\infty$ and there exists $C>0$ such that for each $x,y\in\Omega$ 
there is a continuous path from $x$ to $y$ in $\Omega$ whose length is bounded by $C|x-y|$, 
\end{enumerate}
then $\mathcal{C}^{k}(\overline{\Omega},E)\cong\mathcal{C}^{k}(\overline{\Omega})\varepsilon E$. 
\end{exa}
\begin{proof}
The generator coincides with the one of \prettyref{ex:diff_vanish_at_infinity}. 
Due to \prettyref{prop:diff_cons_barrelled} we have $S(u)\in\mathcal{C}^{k}(\Omega,E)$ and 
\[
(\partial^{\beta})^{E}S(u)(x)=u(\delta_{x}\circ(\partial^{\beta})^{\K}),
\quad \beta\in\N_{0}^{d},\;|\beta|\leq k,\; x\in\Omega,
\]
for all $u\in\mathcal{C}^{k}(\overline{\Omega})\varepsilon E$ 
since $\mathcal{C}^{k}(\overline{\Omega})$ is a Banach space if $k<\infty$, and a Fr\'{e}chet space if $k=\infty$, 
in particular, both are barrelled. 
As a consequence of \prettyref{prop:cont_ext} and \prettyref{lem:cont_ext} with $T=(\partial^{\beta})^{\K}$ 
for $\beta\in\N^{d}_{0}$, $|\beta|\leq k$, 
we obtain that $(\partial^{\beta})^{E}S(u)\in\mathcal{C}^{ext}(\Omega,E)$ 
for all $u\in\mathcal{C}^{k}(\overline{\Omega})\varepsilon E$. 
Thus the generator is consistent. It is easy to check that it is strong, too. 
This yields (ii) by \prettyref{cor:full_linearisation} (ii) since 
$\mathcal{C}^{\infty}(\overline{\Omega})$ is a nuclear Fr\'echet space 
by \cite[Examples 28.9 (5), p.\ 350]{meisevogt1997} under the conditions on $\Omega$.

Let us turn to part (i). Let $f\in\mathcal{C}^{k}(\overline{\Omega},E)$, $J:=\{1\}$, $m\in\N_{0}$ and 
set $M_{m}:=\{\beta\in\N^{d}_{0}\;|\;|\beta|\leq k\}$ if $k<\infty$,
and $M_{m}:=\{\beta\in\N^{d}_{0}\;|\;|\beta|\leq m\}$ if $k=\infty$. 
We denote by $f_{\beta}$ the continuous extension of $(\partial^{\beta})^{E}f$ on the 
compact metrisable set $\overline{\Omega}$. The set
\[
 N_{1,m}(f)=\{(\partial^{\beta})^{E}f(x)\;|\;x\in \Omega,\,\beta\in M_{m}\}
 \subset\bigcup_{\beta\in M_{m}}f_{\beta}(\overline{\Omega}) 
\]
is relatively compact and metrisable since it is a subset of a finite union of the compact metrisable sets 
$f_{\beta}(\overline{\Omega})$ as in \prettyref{ex:diff_usual}. 
Due to \prettyref{cor:full_linearisation} (iv) we obtain our statement (i) as $E$ has metric ccp.
\end{proof}

We close this section by an examination of the topological subspace
\[
\gls{E_0_E}:=\{f\in\mathcal{C}^{\infty}([0,1],E)\;|\;\forall\;k\in\N_{0}:\;(\partial^{k})^{E}f(1)=0\}
\]
where $(\partial^{k})^{E}f(1):=\lim_{x\to 1\rlim}(\partial^{k})^{E}f(x)$. 
Further, we set $\mathcal{E}_{0}:=\mathcal{E}_{0}(\K)$.

\begin{exa}\label{ex:E_0}
Let $E$ be a locally complete lcHs. Then $\mathcal{E}_{0}\varepsilon E\cong\mathcal{E}_{0}(E)$.
\end{exa}
\begin{proof}
We note that $\Omega:=(0,1)$ satisfies the condition on $\Omega$ in \prettyref{ex:diff_ext_boundary} (ii) 
with $C:=1$ and thus $\mathcal{C}^{\infty}([0,1])$ and its closed subspace $\mathcal{E}_{0}$ are nuclear Fr\'echet 
spaces. The generator coincides with the one of \prettyref{ex:diff_ext_boundary}.
From the proof of \prettyref{ex:diff_ext_boundary} we know that 
\[
 \lim_{x\to 1\rlim}(\partial^{k})^{E}S(u)(x)
=u(\delta_{1}\circ(\partial^{k})^{\K})=u(0)=0,\quad k\in\N_{0},
\]
for all $u\in\mathcal{E}_{0}\varepsilon E$. In combination with \prettyref{ex:diff_ext_boundary} 
this yields the consistency of the generator. Again, its strength is easy to check. 
Therefore our statement is valid by \prettyref{cor:full_linearisation} (ii).
\end{proof}
\section{Riesz--Markov--Kakutani representation theorems}
\label{sect:riesz_markov_kakutani}
In this subsection we generalise the concept of strength and consistency such that it is not strictly 
bounded to $\dom$-spaces and their generators anymore. 
This allows us to answer the question: Given $T^{\K}_{m}\in\F'$ is there $T^{E}_{m}\in L(\FE,E)$ such that 
$(T^{E}_{m},T^{\K}_{m})$ is strong and consistent? 
Furthermore, we will see that the operators $T^{E}_{m}$ are usually the ones that 
can be obtained from integral representations of $T^{\K}_{m}$, 
i.e.\ we transfer Riesz--Markov--Kakutani theorems from the scalar-valued to the vector-valued case. 
We recall that the \emph{\gls{rieszmarkovkakutani}} for compact topological Hausdorff spaces $\Omega$ says that 
for every $T^{\R}\in\mathcal{C}_{b}(\Omega)'$ there is a unique regular $\R$-valued Borel measure $\mu$ 
on $\Omega$ such that
\begin{equation}\label{eq:riesz_markov_kakutani_original}
T^{\R}(f)=\int_{\Omega}f(x)\d\mu(x),\quad f\in \mathcal{C}_{b}(\Omega),
\end{equation}
which was proved by Riesz \cite[p.\ 976]{riesz1909} in the case $\Omega:=[0,1]$ and by Kakutani 
\cite[Theorem 9, p.\ 1009]{kakutani1941} for general compact Hausdorff $\Omega$ 
(see Saks \cite[Eq.\ (1.1), 6.,  p.\ 408, 411]{saks1938} for compact metric $\Omega$). 
Markov treated the case where $\Omega$ is a normal (not necessarily Hausdorff) topological space and 
the $T^{\R}$ are positive linear functionals on $\mathcal{C}_{b}(\Omega)$ such that $T^{\R}(1)=1$ 
\cite[Definition 2, p.\ 167]{markov1938}. In this case, for every such $T^{\R}$ there is a unique exterior density 
$\mu$ on $\Omega$ in the sense of \cite[Definition 3, p.\ 167]{markov1938} 
such that \eqref{eq:riesz_markov_kakutani_original} holds by \cite[Theorem 22, p.\ 184]{markov1938} 
and the right-hand side is read in the sense of \cite[Eq.\ (71), (72), (80), p.\ 180--181]{markov1938} 
(see also \cite[IV.6.2 Theorem, p.\ 262]{dunford1958} for a more familiar version 
with regular (finitely) additive bounded Borel measures $\mu$). 

\begin{defn}[{strong, consistent}]\label{def:cons_strong}
Let $E$ be an lcHs and $\Omega$ a non-empty set. Let $\F\subset\K^{\Omega}$ and $\FE\subset E^{\Omega}$ 
be lcHs such that $\delta_{x}\in\F'$ for all $x\in\Omega$. 
Let $(\omega_m)_{m\in M}$ be a family of non-empty sets, 
$T^{\K}_{m}\colon\dom T^{\K}_{m}\to \K^{\omega_{m}}$ 
and $T^{E}_{m}\colon\dom T^{E}_{m}\to E^{\omega_{m}}$ be linear with 
$\F\subset\dom T^{\K}_{m}\subset\K^{\Omega}$ and $\FE\subset\dom T^{E}_{m}\subset E^{\Omega}$ for all $m\in M$.
\begin{enumerate}
\item[a)] We call $(T^{E}_{m},T^{\K}_{m})_{m\in M}$ a \emph{\gls{consistent_fam}} for $(\F,E)$, 
in short $(\mathcal{F},E)$, if we have for every $u\in\F\varepsilon E$, $m\in M$ and $x\in\omega_{m}$ that
\begin{enumerate}
\item[(i)] $S(u)\in\FE$ and $T^{\K}_{m,x}:=\delta_{x}\circ T^{\K}_{m}\in\F'$,
\item[(ii)] $T^{E}_{m}S(u)(x)=u(T^{\K}_{m,x})$.
\end{enumerate}
\item[b)] We call $(T^{E}_{m},T^{\K}_{m})_{m\in M}$ a \emph{\gls{strong_fam}} for $(\F,E)$, 
in short $(\mathcal{F},E)$, if we have for every $e'\in E'$, $f\in\FE$, $m\in M$ and $x\in\omega_{m}$ that
\begin{enumerate}
\item[(i)] $e'\circ f\in\F$,
\item[(ii)] $T^{\K}_{m}(e'\circ f)(x)=\bigl(e'\circ T^{E}_{m}(f)\bigr)(x)$.
\end{enumerate}
\end{enumerate}
\end{defn}

Note that $\omega_{m}$ need not be a subset of $\Omega$. 
As a convention we omit the index $m$ of the set $\omega_{m}$, the operators $T^{E}_{m}$ and $T^{\K}_{m}$ 
if $M$ is a singleton. 
The following remark shows that the preceding definition of a consistent resp.\ strong family 
coincides with the usual definition in the case of generators of $\dom$-spaces (see \prettyref{def:consist_strong}). 

\begin{rem}
Let $(T^{E}_{m},T^{\K}_{m})_{m\in M}$ be a generator for $(\mathcal{FV},E)$. 
We note that the condition $T^{\K}_{m,x}\in\FV'$ for all $m\in M$ and $x\in\omega_{m}$ in a)(i) 
of \prettyref{def:cons_strong} is always satisfied for generators 
by \prettyref{rem:weights_Hausdorff_directed} b). Moreover, 
if $S(u)\in\operatorname{AP}(\Omega,E)\cap\dom T^{E}_{m}$ for $u\in\FV\varepsilon E$ and all $m\in M$
and a)(ii) of \prettyref{def:cons_strong} is fulfilled, then $S(u)\in\FVE$ 
by \prettyref{lem:topology_eps}, implying that a)(i) is satisfied. 
Further, if $f\in\FVE$ and $e'\circ f\in \operatorname{AP}(\Omega)\cap\dom T^{\K}_{m}$ 
for all $e'\in E'$ and $m\in M$ and b)(ii) of \prettyref{def:cons_strong} is fulfilled, 
then $e'\circ f\in\FV$ by \prettyref{lem:strong_is_weak}, implying that b)(i) is satisfied. 
\end{rem}

The next proposition is the key result in transferring Riesz--Markov--Kakutani theorems 
from the scalar-valued to the vector-valued case. To state this proposition we need that our
map $S\colon \F\varepsilon E\to \FE$ is an isomorphism and that its inverse is given as in 
\prettyref{thm:full_linearisation}, i.e.\ that 
\[
 R^{t}\colon \FE\to\F\varepsilon E,\;f\mapsto \mathcal{J}^{-1}\circ R_{f}^{t},
\]
is the inverse of $S$ where $R_{f}^{t}(f')(e')=f'(e'\circ f)$, for $f'\in\F'$ and $e'\in E'$, and 
$\mathcal{J}\colon E\to E'^{\star}$ is the canonical injection in the algebraic dual $E'^{\star}$ of $E'$.

\begin{prop}\label{prop:pettis_consistent}
Let $E$ be an lcHs, $(\Omega,\Sigma,\mu)$ a measure space and 
$\F$ and $\FE$ $\varepsilon$-compatible with inverse $R^{t}$ of $S$ 
and $(T^{E}_{0},T^{\K}_{0})$ a strong family for $(\mathcal{F},E)$ with $\omega_{0}:=\Omega$.
If $T^{\K}_{0}(f)$ is integrable for every $f\in\F$ and $T^{E}_{0}(f)$ is Pettis-integrable on $\Omega$ 
for every $f\in\FE$ and 
\[
T^{\K}\colon \F\to\K,\;T^{\K}(f):=\int_{\Omega}T^{\K}_{0}(f)(x)\d\mu(x),
\]
is continuous, then 
\[
u(T^{\K})=\int_{\Omega}T^{E}_{0}S(u)(x)\d\mu(x),\quad u\in\F\varepsilon E.
\] 
\end{prop}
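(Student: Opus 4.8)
The plan is to establish the identity by testing both sides against an arbitrary $e'\in E'$ and then invoking that $E$ is Hausdorff, so that $E'$ separates the points of $E$. Both sides are legitimate elements of $E$: the left-hand side because $T^{\K}\in\F'$ by the continuity hypothesis, so $u(T^{\K})$ makes sense for $u\in\F\varepsilon E=L(\F_{\kappa}',E)$; the right-hand side because $S(u)\in\FE$ (as $S$ is an isomorphism), whence $T^{E}_{0}S(u)$ is Pettis-integrable on $\Omega$ by assumption and $\int_{\Omega}T^{E}_{0}S(u)(x)\,\d\mu(x)\in E$ is defined.

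First I would unravel the right-hand side. By the defining property of the Pettis integral, for every $e'\in E'$,
\[
e'\Bigl(\int_{\Omega}T^{E}_{0}S(u)(x)\,\d\mu(x)\Bigr)=\int_{\Omega}e'\bigl(T^{E}_{0}S(u)(x)\bigr)\,\d\mu(x).
\]
Since $(T^{E}_{0},T^{\K}_{0})$ is a strong family for $(\mathcal{F},E)$ and $S(u)\in\FE$, strength yields $e'\circ S(u)\in\F$ together with $e'\bigl(T^{E}_{0}S(u)(x)\bigr)=T^{\K}_{0}(e'\circ S(u))(x)$ for all $x\in\omega_{0}=\Omega$. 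Substituting this and recalling the definition of $T^{\K}$ — where the integrability of $T^{\K}_{0}(e'\circ S(u))$ is guaranteed because $e'\circ S(u)\in\F$ — the right-hand side collapses to $T^{\K}(e'\circ S(u))$.

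Next I would treat the left-hand side. For $u\in\F\varepsilon E=L(\F_{\kappa}',E)$ the composition $e'\circ u$ is a continuous linear functional on $\F_{\kappa}'$, hence an element of $(\F_{\kappa}')'=\F$, since $\kappa(\F',\F)$ is a topology of the dual pair $\langle\F',\F\rangle$ and therefore has dual $\F$. Evaluating at the point functionals gives $(e'\circ u)(\delta_{x})=e'(u(\delta_{x}))=e'\bigl(S(u)(x)\bigr)$ for every $x\in\Omega$, which identifies $e'\circ u=e'\circ S(u)$ as elements of $\F$. Using the symmetry of the duality pairing between $\F$ and $\F'$ I then obtain
\[
e'(u(T^{\K}))=(e'\circ u)(T^{\K})=T^{\K}(e'\circ u)=T^{\K}(e'\circ S(u)).
\]

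Comparing the two computations, both sides produce the same scalar $T^{\K}(e'\circ S(u))$ for every $e'\in E'$, and the Hausdorff property of $E$ then forces $u(T^{\K})=\int_{\Omega}T^{E}_{0}S(u)(x)\,\d\mu(x)$. The only point demanding care — rather than genuine difficulty — is the identification $(\F_{\kappa}')'=\F$ together with the recognition that $e'\circ u$ and $e'\circ S(u)$ coincide in $\F$; once this is in place, everything else is a direct application of strength, of $\varepsilon$-compatibility, and of the defining property of the Pettis integral.
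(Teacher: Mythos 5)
Your proof is correct and takes essentially the same route as the paper's: both reduce the claim to the scalar identity $T^{\K}(e'\circ S(u))=\int_{\Omega}T^{\K}_{0}(e'\circ S(u))(x)\,\d\mu(x)=\langle e',\int_{\Omega}T^{E}_{0}S(u)(x)\,\d\mu(x)\rangle$ obtained from strength and Pettis-integrability, and then identify the two elements of $E$ by testing against all $e'\in E'$. The only (valid) cosmetic difference is that where the paper invokes the explicit inverse formula $u=\mathcal{J}^{-1}\circ R_{S(u)}^{t}$ to get $e'(u(T^{\K}))=T^{\K}(e'\circ S(u))$, you re-derive this from $(\F_{\kappa}')'=\F$ (Mackey--Arens) together with evaluation at the $\delta_{x}$, which incidentally shows that $\varepsilon$-into-compatibility already suffices for the argument.
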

\begin{proof}
Let $u\in\F\varepsilon E$ and set $f:=S(u)\in\FE$. We have
\[
 R_{f}^{t}(T^{\K})(e')
=T^{\K}(e'\circ f)
=\int_{\Omega}T^{\K}_{0}(e'\circ f)(x)\d\mu(x)
=\langle e',\int_{\Omega}T^{E}_{0}f(x)\d\mu(x)\rangle,\quad e'\in E',
\]
by the strength of $(T^{E}_{0},T^{\K}_{0})$ and the Pettis-integrability of $T^{E}_{0}(f)$, which yields
\[
 u(T^{\K})
=S^{-1}(f)(T^{\K})
=\mathcal{J}^{-1}\bigl(R_{f}^{t}(T^{\K})\bigr)
=\int_{\Omega}T^{E}_{0}f(x)\d\mu(x)=\int_{\Omega}T^{E}_{0}S(u)(x)\d\mu(x)
\]
due to $R^{t}$ being the inverse of $S$.
\end{proof}

\begin{prop}\label{prop:pettis_consistent_rev}
Let $E$ be an lcHs, $(\Omega,\Sigma,\mu)$ a measure space, $(T^{E}_{0},T^{\K}_{0})$ a strong family for $(\mathcal{F},E)$ 
with $\omega_{0}:=\Omega$ such that $T^{E}_{0}(f)$ is Pettis-integrable on $\Omega$ for every $f\in\FE$, and 
$(T^{E},T^{\K})$ a consistent family for $(\mathcal{F},E)$ such that 
\[
T^{E}(f)=\int_{\Omega}T^{E}_{0}f(x)\d\mu(x),\quad f\in\FE.
\]
Then $(T^{E},T^{\K})$ is a strong family for $(\mathcal{F},E)$, $T^{\K}_{0}(f)$ is integrable for every $f\in\F$ and
\[
T^{\K}=\int_{\Omega}T^{\K}_{0}(f)(x)\d\mu(x),\quad f\in\F.
\]
\end{prop}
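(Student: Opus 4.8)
The plan is to reduce all three assertions, which concern scalar functions $g\in\F$, to the purely vector-valued hypotheses by embedding $\F$ into $\FE$ along a fixed direction. Since $E\neq\{0\}$, I first fix $e_{0}\in E$ with $e_{0}\neq 0$ and, by Hahn--Banach, $e'_{0}\in E'$ with $e'_{0}(e_{0})=1$. For $g\in\F$ the elementary tensor $u_{g}:=\Theta(g\otimes e_{0})\in\F\varepsilon E$ satisfies $u_{g}(y)=y(g)e_{0}$ for $y\in\F'$, whence $S(u_{g})(x)=u_{g}(\delta_{x})=g(x)e_{0}$. By the consistency of $(T^{E},T^{\K})$ (condition a)(i) of \prettyref{def:cons_strong}) we have $S(u_{g})\in\FE$ and $T^{\K}\in\F'$; thus $g\cdot e_{0}:=S(u_{g})\in\FE$ for every $g\in\F$. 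This is the bridge that lets the Pettis-integrability and strength hypotheses, stated only for members of $\FE$, act on scalar functions. Here the singleton $\omega$ on which $T^{E}$ and $T^{\K}$ live is used tacitly, identifying $E^{\omega}\cong E$ and $\K^{\omega}\cong\K$.

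Next I would settle integrability and the integral formula for $T^{\K}$. Fix $g\in\F$ and put $f:=g\cdot e_{0}\in\FE$. Pettis-integrability of $T^{E}_{0}(f)$ on $\Omega$ means in particular that $x\mapsto e'_{0}(T^{E}_{0}(f)(x))$ is $\mu$-integrable; by the strength of $(T^{E}_{0},T^{\K}_{0})$ this integrand equals $T^{\K}_{0}(e'_{0}\circ f)(x)=T^{\K}_{0}(g)(x)$, using $e'_{0}\circ f=e'_{0}(e_{0})\,g=g$. Hence $T^{\K}_{0}(g)$ is integrable, which is the second assertion. For the third, consistency a)(ii) gives $T^{E}(f)=T^{E}S(u_{g})=u_{g}(T^{\K})=T^{\K}(g)\,e_{0}$. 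Applying $e'_{0}$ and using $T^{E}(f)=\int_{\Omega}T^{E}_{0}f(x)\,\d\mu(x)$ together with the defining property of the Pettis integral yields
\[
T^{\K}(g)=e'_{0}(T^{E}(f))=\int_{\Omega}e'_{0}(T^{E}_{0}f(x))\,\d\mu(x)=\int_{\Omega}T^{\K}_{0}(g)(x)\,\d\mu(x),
\]
which is the claimed representation of $T^{\K}$.

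Finally I would verify that $(T^{E},T^{\K})$ is a strong family. Condition b)(i) of \prettyref{def:cons_strong}, namely $e'\circ f\in\F$ for all $e'\in E'$ and $f\in\FE$, is immediate, since it is already part of the hypothesis that $(T^{E}_{0},T^{\K}_{0})$ is strong. For b)(ii), given $e'\in E'$ and $f\in\FE$, I set $g:=e'\circ f\in\F$ and invoke the representation of $T^{\K}$ just established, so that $T^{\K}(e'\circ f)=\int_{\Omega}T^{\K}_{0}(e'\circ f)(x)\,\d\mu(x)$. On the other hand, Pettis-integrability of $T^{E}_{0}(f)$ and the strength of $(T^{E}_{0},T^{\K}_{0})$ give
\[
e'(T^{E}(f))=\int_{\Omega}e'(T^{E}_{0}f(x))\,\d\mu(x)=\int_{\Omega}T^{\K}_{0}(e'\circ f)(x)\,\d\mu(x),
\]
so that $T^{\K}(e'\circ f)=e'(T^{E}(f))$, as required. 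The only genuine obstacle is the very first step: all hypotheses are formulated for $\FE$ while the conclusions concern arbitrary $g\in\F$, so one needs a canonical way to lift scalar functions into $\FE$; this is exactly what the inclusion $\F\otimes E\subset\F\varepsilon E$ and the map $S$ supply. One must also take care to derive the integral formula for $T^{\K}$ \emph{before} deducing strength, so as to avoid a circular dependence between parts one and three.
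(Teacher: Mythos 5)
Your proof is correct and follows essentially the same route as the paper's: embed $g\in\F$ into $\FE$ as $S(\Theta(g\otimes e_{0}))=g\cdot e_{0}$, use consistency to get $T^{E}(g\cdot e_{0})=T^{\K}(g)e_{0}$, combine strength of $(T^{E}_{0},T^{\K}_{0})$ with Pettis-integrability to obtain integrability of $T^{\K}_{0}(g)$ and the integral formula, and then deduce strength of $(T^{E},T^{\K})$ from that formula. The only cosmetic difference is that you normalise $e_{0}'(e_{0})=1$ where the paper keeps $e_{0}'(e_{0})\neq 0$ and divides; your ordering of the steps also matches the paper's and correctly avoids the circularity you flag.
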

\begin{proof}
We set $f\cdot e\colon \Omega\to E$, $(f\cdot e)(x):=f(x)e$, for $e\in E$ and $f\in\F$.
Since $(T^{E},T^{\K})$ is a consistent family for $(\mathcal{F},E)$, we get 
$f\cdot e = S(\Theta(e\otimes f))\in\FE$ and 
\begin{equation}\label{eq:pettis_cons_rev}
 T^{E}(f\cdot e)
=\Theta(e\otimes f)(T^{\K})
=T^{\K}(f)e
\end{equation}
with the map $\Theta$ from \eqref{eq:tensor_into_eps_product}.
From the strength of $(T^{E}_{0},T^{\K}_{0})$ we deduce that 
\[
 e'\circ T^{E}_{0}(f\cdot e)
=T^{\K}_{0}(e'\circ (f\cdot e))
=T^{\K}_{0}(e'(e)f)=e'(e)T^{\K}_{0}(f)
\]
and from the Pettis-integrability of $T^{E}_{0}(f\cdot e)$ that
\[
 T^{\K}(f)e'(e)
\underset{\eqref{eq:pettis_cons_rev}}{=}\langle e', T^{E}(f\cdot e)\rangle
=\int_{\Omega}e'(e)T^{\K}_{0}(f)(x)\d\mu(x)
\]
for all $e'\in E'$. This implies that $e'(e)T^{\K}_{0}(f)$ is integrable for all $e'\in E'$. 
Further, since $E$ is non-trivial by our assumptions in \prettyref{chap:notation}, 
there is some $e_0\in E$, $e_0\neq 0$. By the Hahn--Banach theorem there is some 
$e_0'\in E'$ with $e_0'(e_0)\neq 0$, which yields that $T^{\K}_{0}(f)$ is integrable and 
\[
T^{\K}(f)=\int_{\Omega}T^{\K}_{0}(f)(x)\d\mu(x).
\]
Furthermore, we conclude in combination with the strength of $(T^{E}_{0},T^{\K}_{0})$ and the 
Pettis-integrability of $T^{E}_{0}(f)$ for all $f\in\FE$ that
\[
 \langle e',T^{E}(f)\rangle
=\int_{\Omega}T^{\K}_{0}(e'\circ f)(x)\d\mu(x)
=T^{\K}(e'\circ f)
\]
for all $f\in\FE$ and $e'\in E'$, which means that $(T^{E},T^{\K})$ is a strong family for $(\mathcal{F},E)$.
\end{proof}

Let us apply the preceding propositions to the space $D([0,1],E)$ of $E$-valued c\`{a}dl\`{a}g functions on $[0,1]$. For $f\in D([0,1],E)$ we set $f(x\llim):=\lim_{w\to x\llim}f(w)$ 
if $x\in(0,1]$, and $f(0\llim):=0$.

\begin{prop}\label{prop:representation_cadlag}
Let $E$ be a quasi-complete lcHs. 
Then for every $T^{\K}\in D([0,1])'$ there is $T^{E}\in L(D([0,1],E),E)$ such that 
$(T^{E},T^{\K})$ is a consistent family for $(D,E)$ and 
there are a unique regular $\K$-valued Borel measure $\mu$ on $[0,1]$ and a unique
$\varphi\in\ell^{1}([0,1],\K)$ such that 
\begin{equation}\label{eq:representation_cadlag}
T^{E}(f)=\int_{[0,1]}f(x)\d\mu(x)+\sum_{x\in [0,1]}(f(x)-f(x\llim))\overline{\varphi(x)},\quad f\in D([0,1],E).
\end{equation}
On the other hand, if $(T^{E},T^{\K})$ is a consistent family, there is a unique regular $\K$-valued
Borel measure $\mu$ on $[0,1]$ such that \eqref{eq:representation_cadlag} holds 
and $T^{E}\in L(D([0,1],E),E)$.
\end{prop}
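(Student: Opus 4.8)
\emph{Setup and strategy.} The plan is to transfer the scalar decomposition of $T^{\K}$ to the vector-valued case by means of \prettyref{prop:pettis_consistent}. By \prettyref{ex:cadlag} the spaces $D([0,1])$ and $D([0,1],E)$ are $\varepsilon$-compatible for quasi-complete $E$, so $S$ is an isomorphism whose inverse is the map $R^{t}$ from \prettyref{thm:full_linearisation}; moreover $D([0,1])$ is a Banach space for the sup-norm since $[0,1]$ is compact. I would work with two strong families for $(D,E)$, both with index set $\omega=[0,1]$: the generator $(\id_{E^{\Omega}},\id_{\K^{\Omega}})$ of \prettyref{ex:cadlag}, and the \emph{jump family} $(T^{E}_{1},T^{\K}_{1})$ given by $T^{E}_{1}(f)(x):=f(x)-f(x\llim)$. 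The strength of the jump family is immediate: every $e'\in E'$ is linear and continuous, hence commutes with the left limit, so $e'\circ T^{E}_{1}(f)=T^{\K}_{1}(e'\circ f)$ and $e'\circ f\in D([0,1])$.

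\emph{The scalar decomposition.} The analytic core is to show that each $T^{\K}\in D([0,1])'$ is represented by a unique regular Borel measure $\mu$ and a unique $\varphi\in\ell^{1}([0,1],\K)$ through
\[
T^{\K}(g)=\int_{[0,1]}g\,\d\mu+\sum_{x\in[0,1]}\bigl(g(x)-g(x\llim)\bigr)\overline{\varphi(x)},\quad g\in D([0,1]).
\]
For existence I would embed $D([0,1])$ isometrically into $\mathcal{C}(A)$, where $A$ is the split interval (double arrow space): a càdlàg $g$ extends to a continuous function on the compact Hausdorff space $A$ by assigning the value $g(x)$ to the right copy and $g(x\llim)$ to the left copy of $x$, and one checks $\|\widehat g\|_{\mathcal{C}(A)}=\sup_{x}|g(x)|$. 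Extending $T^{\K}$ by Hahn--Banach and applying the classical Riesz--Markov--Kakutani theorem \eqref{eq:riesz_markov_kakutani_original} on $\mathcal{C}(A)$ produces a regular measure on $A$; splitting it over the two copies of each point and moving the atomic left-limit masses into $\mu$ yields the diffuse-plus-point part $\mu$ and the summable jump coefficients $\varphi$. Uniqueness is elementary: testing on $\mathcal{C}([0,1])$ kills the jump sum and forces $\mu=0$ by Riesz uniqueness, after which testing on the càdlàg indicators $\chi_{[x_{0},1]}$ forces $\varphi=0$. I expect this scalar decomposition, and especially the bookkeeping for its uniqueness, to be the main obstacle; everything afterwards is formal.

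\emph{Construction of $T^{E}$ and consistency.} With $(\mu,\varphi)$ fixed I would set
\[
T^{E}(f):=\int_{[0,1]}f\,\d\mu+\sum_{x\in[0,1]}\bigl(f(x)-f(x\llim)\bigr)\overline{\varphi(x)},\quad f\in D([0,1],E).
\]
The $E$-valued integral exists as a Pettis integral: $f$ is Borel measurable by right continuity and $\oacx(f([0,1]))$ is compact by \prettyref{prop:cadlag_precomp} and the quasi-completeness of $E$, so the criteria of \prettyref{app:pettis} apply; likewise $T^{E}_{1}(f)$ is Pettis-integrable against the finite atomic $\K$-valued measure $\mu_{1}:=\sum_{x}\overline{\varphi(x)}\delta_{x}$, the defining series converging absolutely. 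Applying \prettyref{prop:pettis_consistent} to the generator with $\mu$ and to the jump family with $\mu_{1}$ gives, for every $u\in D([0,1])\varepsilon E$, the identities $u(T^{\K}_{\mathrm{int}})=\int_{[0,1]}S(u)\,\d\mu$ and $u(T^{\K}_{\mathrm{jump}})=\sum_{x}(S(u)(x)-S(u)(x\llim))\overline{\varphi(x)}$, where $T^{\K}_{\mathrm{int}}$ and $T^{\K}_{\mathrm{jump}}$ are the two continuous scalar functionals above. Adding them and using $T^{\K}=T^{\K}_{\mathrm{int}}+T^{\K}_{\mathrm{jump}}$ yields $T^{E}(S(u))=u(T^{\K})$, i.e.\ $(T^{E},T^{\K})$ is consistent; and $T^{E}=\mathrm{ev}_{T^{\K}}\circ S^{-1}$ is continuous because the singleton $\{T^{\K}\}$ is equicontinuous, so evaluation at $T^{\K}$ is a continuous map on $D([0,1])\varepsilon E=L_{e}(D([0,1])_{\kappa}',E)$.

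\emph{The converse.} If $(T^{E},T^{\K})$ is any consistent family, then $T^{\K}\in D([0,1])'$ by \prettyref{def:cons_strong} a)(i), so the scalar decomposition supplies a unique $\mu$ and $\varphi$, and the operator built in the previous step satisfies $T^{E}(S(u))=u(T^{\K})$ for all $u$. Since $S$ is surjective, consistency forces the given $T^{E}$ to agree with that operator on all of $D([0,1],E)$; hence \eqref{eq:representation_cadlag} holds with the unique $\mu$ and $T^{E}\in L(D([0,1],E),E)$. Alternatively, one may read the scalar integral representation of $T^{\K}$ directly off the integral form of $T^{E}$ via \prettyref{prop:pettis_consistent_rev} applied to the generator.
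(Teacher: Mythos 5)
Your vector-valued transfer is essentially the paper's own proof: it likewise combines the $\varepsilon$-compatibility $D([0,1])\varepsilon E\cong D([0,1],E)$ from \prettyref{ex:cadlag}, the Pettis-integrability and Pettis-summability statements of \prettyref{prop:cadlag_pettis}, \prettyref{prop:pettis_consistent} with $(T^{E}_{0},T^{\K}_{0})=(\id_{E^{[0,1]}},\id_{\K^{[0,1]}})$ for the integral part together with a hand-written analogue of that proposition for the jump sum (your repackaging of the jump sum as Pettis-integration against the finite atomic measure $\sum_{x}\overline{\varphi(x)}\delta_{x}$ is an equivalent, slightly cleaner way to invoke it), and \prettyref{prop:pettis_consistent_rev} for the converse and the uniqueness of $\mu$. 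The one genuine divergence is at the scalar core: the paper does not prove the decomposition of $T^{\K}\in D([0,1])'$ into $\mu$ plus $\varphi$ but cites it as Pestman's representation theorem, whereas you sketch a self-contained derivation via the split-interval embedding $D([0,1])\hookrightarrow\mathcal{C}(A)$ and the classical Riesz--Markov--Kakutani theorem. That route is sound --- the embedding is in fact an isometric isomorphism onto $\mathcal{C}(A)$, so Hahn--Banach is not even needed, and only the atomic part of the ``left-copy'' component of the resulting measure feeds the jump sum, which is exactly why $\varphi$ lands in $\ell^{1}$ --- and it buys self-containedness at the price of the bookkeeping you yourself identify as the main obstacle.

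One wrinkle in that bookkeeping: with the convention $f(0\llim):=0$ fixed just before the proposition, the jump term at $x=0$ of a continuous $g$ equals $g(0)$ rather than $0$, so ``testing on $\mathcal{C}([0,1])$ kills the jump sum'' fails at the left endpoint; indeed the pairs $(\mu,\varphi)$ and $(\mu+\overline{\varphi(0)}\delta_{0},\varphi-\varphi(0)\chi_{\{0\}})$ induce the same functional. To make your uniqueness argument (and the literal uniqueness of the pair) go through you must either use $g(0\llim):=g(0)$ inside the scalar representation, so that no jump ever occurs at $0$, or normalise $\varphi(0)=0$. This is a convention-matching issue with the cited theorem rather than a defect of your strategy, but it does need to be said.
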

\begin{proof}
Due to the representation theorem \cite[Theorem 1, p.\ 383]{pestman1995} there 
are a unique regular $\K$-valued Borel measure $\mu$ on $[0,1]$ and a unique
$\varphi\in\ell^{1}([0,1],\K)$ such that 
\begin{equation}\label{eq:unique_meas_cadlag}
T^{\K}(f)=\int_{[0,1]}f(x)\d\mu(x)+\sum_{x\in [0,1]}(f(x)-f(x\llim))\overline{\varphi(x)},\quad f\in D([0,1],\K).
\end{equation}
By \prettyref{ex:cadlag} $S\colon D([0,1])\varepsilon E\to D([0,1],E)$ 
is an isomorphism with inverse $R^{t}\colon f\mapsto \mathcal{J}\circ R^{t}_{f}$. 

The next part is the analogon of \prettyref{prop:pettis_consistent} for $\sum_{x\in[0,1]}$. We set 
\[
T^{\K}_{1}\colon D([0,1])\to \K,\;T^{\K}_{1}(f):=\sum_{x\in [0,1]}(f(x)-f(x\llim))\overline{\varphi(x)},
\]
and note that $T^{\K}_{1}\in D([0,1])'$. 
Let $u\in D([0,1])\varepsilon E$ and set $f:=S(u)\in D([0,1],E)$. We have
\begin{align*}
  R_{f}^{t}(T^{\K}_{1})(e')
&=T^{\K}_{1}(e'\circ f)
 =\sum_{x\in [0,1]}((e'\circ f)(x)-(e'\circ f)(x\llim))\overline{\varphi(x)}\\
&=\langle e',\sum_{x\in [0,1]}(f(x)-f(x\llim))\overline{\varphi(x)}\rangle,\quad e'\in E',
\end{align*}
due to the Pettis-summability of $x\mapsto (f(x)-f(x\llim))\overline{\varphi(x)}$ on $[0,1]$
by \prettyref{prop:cadlag_pettis}, which yields
\begin{align}\label{eq:pettis_sum_cons}
  u(T^{\K}_{1})
&=S^{-1}(f)(T^{\K}_{1})
 =\mathcal{J}^{-1}\bigl(R_{f}^{t}(T^{\K}_{1})\bigr)
 =\sum_{x\in [0,1]}(f(x)-f(x\llim))\overline{\varphi(x)}\notag\\
&=\sum_{x\in [0,1]}(S(u)(x)-S(u)(x\llim))\overline{\varphi(x)}.
\end{align}

We note that every $f\in D([0,1],E)$ is Pettis-integrable and that  
$x\mapsto (f(x)-f(x\llim))\overline{\varphi(x)}$ is Pettis-summable on $[0,1]$
by \prettyref{prop:cadlag_pettis}. Further, 
\[
p_{\alpha}\bigl(\int_{[0,1]}f(x)\d\mu(x)+\sum_{x\in[0,1]}(f(x)-f(x\llim))\overline{\varphi(x)}\bigr)
\leq (|\mu|([0,1])+2\|\varphi\|_{\ell^{1}})\sup_{x\in [0,1]}p_{\alpha}(f(x))
\]
for all $f\in D([0,1],E)$ and $\alpha\in\mathfrak{A}$. 
The rest follows from \prettyref{prop:pettis_consistent} 
with $(T^{E}_{0},T^{\K}_{0}):=(\id_{E^{[0,1]}},\id_{\K^{[0,1]}})$ combined with \eqref{eq:pettis_sum_cons}. 
For the uniqueness of $\mu$ in \eqref{eq:representation_cadlag} use that the $\mu$ in \eqref{eq:unique_meas_cadlag} is 
unique and \prettyref{prop:pettis_consistent_rev} (and for the uniqueness of $\varphi$ use an analogon 
of \prettyref{prop:pettis_consistent_rev} for $(T^{E}_{1},T^{\K}_{1})$).
\end{proof}

Let us turn to continuous functions that vanish at infinity.

\begin{prop}\label{prop:representation_C_0}
Let $\Omega$ be a locally compact [second countable] topological Hausdorff space and $E$ an lcHs with [metric] ccp. 
Then for every $T^{\K}\in\mathcal{C}_{0}(\Omega)'$ there is $T^{E}\in L(\mathcal{C}_{0}(\Omega,E),E)$ such that 
$(T^{E},T^{\K})$ is a consistent family for $(\mathcal{C}_{0},E)$ and 
there is a unique regular $\K$-valued Borel measure $\mu$ on $\Omega$ such that 
\begin{equation}\label{eq:representation_C_0}
T^{E}(f)=\int_{\Omega}f(x)\d\mu(x),\quad f\in \mathcal{C}_{0}(\Omega,E).
\end{equation}
On the other hand, if $(T^{E},T^{\K})$ is a consistent family, then there is a unique regular $\K$-valued 
Borel measure $\mu$ on $\Omega$ such that \eqref{eq:representation_C_0} holds 
and $T^{E}\in L(\mathcal{C}_{0}(\Omega,E),E)$.
\end{prop}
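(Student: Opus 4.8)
The plan is to follow the pattern of the preceding \prettyref{prop:representation_cadlag}, with the two-term càdlàg representation replaced by a single integral. First I would invoke the scalar Riesz--Markov--Kakutani theorem for the locally compact Hausdorff space $\Omega$, which yields a unique regular $\K$-valued Borel measure $\mu$ of finite total variation on $\Omega$ with $T^{\K}(f)=\int_{\Omega}f(x)\d\mu(x)$ for all $f\in\mathcal{C}_{0}(\Omega)$. Next I would record that $\mathcal{C}_{0}(\Omega)$ and $\mathcal{C}_{0}(\Omega,E)$ are $\varepsilon$-compatible: this is \prettyref{ex:cont_loc_comp} with $\mathcal{V}:=\{1\}$ (so $\mathcal{CV}_{0}=\mathcal{C}_{0}$), whose hypotheses (i) and (ii) are exactly the two cases here, and the inverse of $S\colon\mathcal{C}_{0}(\Omega)\varepsilon E\to\mathcal{C}_{0}(\Omega,E)$ is the map $R^{t}$ of \prettyref{thm:full_linearisation}.

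The core step is Pettis-integrability. For $f\in\mathcal{C}_{0}(\Omega,E)$ the set $K:=\oacx(f(\Omega))$ is absolutely convex and compact, hence $K\in\tau(E)$, by \prettyref{prop:abs_conv_comp_C_0}; here the [metric] ccp of $E$ (together with the second countability of $\Omega$ in case (ii)) is used. Since $\mu$ has finite total variation and each $e'\circ f\in\mathcal{C}_{0}(\Omega)$ is bounded and $\mu$-integrable, the functional $e'\mapsto\int_{\Omega}e'(f(x))\d\mu(x)$ on $E'$ satisfies $|\int_{\Omega}e'(f(x))\d\mu(x)|\leq|\mu|(\Omega)\sup_{y\in K}|e'(y)|$; as in \prettyref{thm:full_linearisation} e) the Mackey--Arens theorem then gives that it lies in $(E'_{\tau})'=\mathcal{J}(E)$, i.e.\ $f$ is Pettis-integrable on $\Omega$. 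Setting $T^{E}(f):=\int_{\Omega}f(x)\d\mu(x)$ and using $p_{\alpha}(y)=\sup_{e'\in B_{\alpha}^{\circ}}|e'(y)|$ from \eqref{wT.1.1}, the estimate $p_{\alpha}(T^{E}(f))\leq\int_{\Omega}p_{\alpha}(f(x))\d|\mu|(x)\leq|\mu|(\Omega)\sup_{x\in\Omega}p_{\alpha}(f(x))$ shows $T^{E}\in L(\mathcal{C}_{0}(\Omega,E),E)$.

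Now I would apply \prettyref{prop:pettis_consistent} to the strong family $(T^{E}_{0},T^{\K}_{0}):=(\id_{E^{\Omega}},\id_{\K^{\Omega}})$ with $\omega_{0}:=\Omega$ (its strength and the integrability hypotheses are immediate, and $T^{\K}(f)=\int_{\Omega}f(x)\d\mu(x)$ is the given continuous functional). The proposition delivers $u(T^{\K})=\int_{\Omega}S(u)(x)\d\mu(x)=T^{E}(S(u))$ for every $u\in\mathcal{C}_{0}(\Omega)\varepsilon E$. For a singleton index set ($\omega$ a point, $\K^{\omega}=\K$, $E^{\omega}=E$, $T^{\K}_{*}=T^{\K}$) this is precisely the consistency condition \prettyref{def:cons_strong} a)(ii) of the family $(T^{E},T^{\K})$, while a)(i) holds because $S(u)\in\mathcal{C}_{0}(\Omega,E)$ and $T^{\K}\in\mathcal{C}_{0}(\Omega)'$. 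This settles the first assertion.

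For the converse, assume $(T^{E},T^{\K})$ is a consistent family. Then $T^{\K}=T^{\K}_{*}\in\mathcal{C}_{0}(\Omega)'$, so the scalar theorem provides a unique representing measure $\mu$, and the first part yields a consistent family $(\widehat{T}^{E},T^{\K})$ with $\widehat{T}^{E}(f)=\int_{\Omega}f(x)\d\mu(x)$; since $\widehat{T}^{E}(S(u))=u(T^{\K})=T^{E}(S(u))$ for all $u$ and $S$ is onto $\mathcal{C}_{0}(\Omega,E)$, we get $T^{E}=\widehat{T}^{E}$ and hence \eqref{eq:representation_C_0}. If two measures represent $T^{E}$, then by \prettyref{prop:pettis_consistent_rev} (with $(\id_{E^{\Omega}},\id_{\K^{\Omega}})$) both represent $T^{\K}$ on $\mathcal{C}_{0}(\Omega)$, so they coincide by the uniqueness in the scalar theorem. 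The step I expect to be the main obstacle is the Pettis-integrability claim: one must push the weak integral, a priori only an element of $E'^{\star}$, back into $E$, which is exactly where the compactness of $\oacx(f(\Omega))$ — and thus the [metric] ccp hypothesis — and the Mackey--Arens theorem are needed.
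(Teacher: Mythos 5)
Your proposal is correct and follows essentially the same route as the paper: scalar Riesz--Markov--Kakutani, $\varepsilon$-compatibility via \prettyref{ex:cont_loc_comp}, Pettis-integrability from the compactness of $\oacx(f(\Omega))$, and then \prettyref{prop:pettis_consistent} and \prettyref{prop:pettis_consistent_rev} for consistency and uniqueness. The only cosmetic difference is that you inline the Mackey--Arens argument for Pettis-integrability where the paper simply cites \prettyref{prop:pettis.ccp.to.loc.complete} (i) resp.\ (ii) with $\psi:=g:=1$, whose proof is exactly that argument.
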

\begin{proof}
Due to the Riesz--Markov--Kakutani representation theorem (see \cite[6.19 Theorem, p.\ 130]{rudin1970}) there 
is a unique regular $\K$-valued Borel measure $\mu$ on $\Omega$ such that 
\begin{equation}\label{eq:unique_meas_C_0}
T^{\K}(f)=\int_{\Omega}f(x)\d\mu(x),\quad f\in \mathcal{C}_{0}(\Omega).
\end{equation}
By \prettyref{ex:cont_loc_comp} $S\colon\mathcal{C}_{0}(\Omega)\varepsilon E\to\mathcal{C}_{0}(\Omega,E)$ 
is an isomorphism with inverse $R^{t}\colon f\mapsto \mathcal{J}\circ R^{t}_{f}$. 
We note that every $f\in\mathcal{C}_{0}(\Omega,E)$ is Pettis-integrable 
by \prettyref{prop:pettis.ccp.to.loc.complete} (i) resp.\ (ii) with $\psi:=g:=1$ since 
\[
\int_{\Omega}|\psi(x)|\d|\mu|(x)=|\mu|(\Omega)<\infty
\]
and 
\[
p_{\alpha}\bigl(\int_{\Omega}f(x)\d\mu(x)\bigr)\leq |\mu|(\Omega)\sup_{x\in\Omega}p_{\alpha}(f(x)),
\quad f\in \mathcal{C}_{0}(\Omega,E),
\]
for all $\alpha\in\mathfrak{A}$. The rest follows from \prettyref{prop:pettis_consistent} 
with $(T^{E}_{0},T^{\K}_{0}):=(\id_{E^{\Omega}},\id_{\K^{\Omega}})$. For the uniqueness of $\mu$ in 
\eqref{eq:representation_C_0} use that the $\mu$ in \eqref{eq:unique_meas_C_0} is 
unique and \prettyref{prop:pettis_consistent_rev}.
\end{proof}

Next, we consider the space of bounded continuous $E$-valued functions on a locally compact topological 
Hausdorff space $\Omega$, i.e.\ 
\[
\mathcal{C}_{b}(\Omega,E)=\{f\in\mathcal{C}(\Omega,E)\;|\;\forall\;\alpha\in\mathfrak{A}:\;
\sup_{x\in\Omega}p_{\alpha}(f(x))<\infty\},
\]
but equipped with the strict topology $\beta$ (see \prettyref{rem:strict_top_non_barrelled}) 
which is induced by the seminorms 
\[
|f|_{\nu,\alpha}:=\sup_{x\in\Omega}p_{\alpha}(f(x))|\nu(x)|,\quad f\in\mathcal{C}_{b}(\Omega,E),
\]
for $\nu\in\mathcal{C}_{0}(\Omega)$ and $\alpha\in\mathfrak{A}$. 

\begin{prop}\label{prop:representation_C_b_strict}
Let $\Omega$ be a locally compact [second countable] topological Hausdorff space and $E$ an lcHs with [metric] ccp. 
Then for every $T^{\K}\in(\mathcal{C}_{b}(\Omega),\beta)'$ there is 
$T^{E}\in L((\mathcal{C}_{b}(\Omega,E),\beta),E)$ such that 
$(T^{E},T^{\K})$ is a consistent family for $((\mathcal{C}_{b}(\Omega),\beta),E)$ and 
there is a unique regular $\K$-valued Borel measure $\mu$ on $\Omega$ such that 
\begin{equation}\label{eq:representation_C_b_strict}
T^{E}(f)=\int_{\Omega}f(x)\d\mu(x),\quad f\in \mathcal{C}_{b}(\Omega,E).
\end{equation}
On the other hand, if $(T^{E},T^{\K})$ is a consistent family, then there is a unique regular $\K$-valued
Borel measure $\mu$ on $\Omega$ such that \eqref{eq:representation_C_b_strict} holds 
and $T^{E}\in L((\mathcal{C}_{b}(\Omega,E),\beta),E)$.
\end{prop}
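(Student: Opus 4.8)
The plan is to follow the template of \prettyref{prop:representation_C_0} and \prettyref{prop:representation_cadlag} verbatim, replacing the classical Riesz--Markov--Kakutani theorem by its counterpart for the strict topology. The first ingredient I need is the $\varepsilon$-compatibility of $(\mathcal{C}_{b}(\Omega),\beta)$ and $(\mathcal{C}_{b}(\Omega,E),\beta)$, i.e.\ that $S\colon(\mathcal{C}_{b}(\Omega),\beta)\varepsilon E\to(\mathcal{C}_{b}(\Omega,E),\beta)$ is an isomorphism with inverse $R^{t}\colon f\mapsto\mathcal{J}^{-1}\circ R_{f}^{t}$. This is obtained exactly as in \prettyref{prop:strict_top_isomorphism}, only with $\operatorname{AP}(\Omega,E):=\mathcal{C}_{b}(\Omega,E)$ and the differential operator dropped. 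Since $(\mathcal{C}_{b}(\Omega),\beta)$ need not be barrelled (cf.\ \prettyref{rem:strict_top_non_barrelled}), the consistency of the generator $(\id_{E^{\Omega}},\id_{\K^{\Omega}})$ is not read off the barrelled machinery but from the mixed-topology identity $\beta=\gamma(\tau_{c},\|\cdot\|_{\infty})$: a $\beta$-compact set is $\|\cdot\|_{\infty}$-bounded and $\tau_{c}$-compact, which forces every $u$ to lie in $\mathcal{CW}(\Omega)\varepsilon E$ (so that $S(u)\in\mathcal{C}(\Omega,E)$ by \prettyref{prop:stetig.cons} and \prettyref{lem:bier}) and to satisfy $\|S(u)\|_{\infty,\alpha}<\infty$, hence $S(u)\in\mathcal{C}_{b}(\Omega,E)$. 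The compactness hypothesis of \prettyref{cor:full_linearisation} (iv) is then verified through $N_{\nu}(f)=(f|\nu|)(\Omega)$ with $f|\nu|\in\mathcal{C}_{0}(\Omega,E)$, using [metric] ccp (and second countability in the metric case) via the $\mathcal{C}_{0}$-proposition of the appendix, precisely as at the end of \prettyref{prop:strict_top_isomorphism}.

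Next, for a given $T^{\K}\in(\mathcal{C}_{b}(\Omega),\beta)'$ I would invoke the representation of the strict-topology dual (Buck's theorem): there is a unique regular $\K$-valued Borel measure $\mu$ on $\Omega$, tight because of $\beta$-continuity, with $T^{\K}(f)=\int_{\Omega}f(x)\,\d\mu(x)$ for all $f\in\mathcal{C}_{b}(\Omega)$, and a weight $\nu\in\mathcal{C}_{0}(\Omega)$, $\nu\geq 0$, and $C>0$ with $|T^{\K}(f)|\leq C\sup_{x\in\Omega}|f(x)|\nu(x)$. Using the tightness of $\mu$ together with [metric] ccp, every $f\in\mathcal{C}_{b}(\Omega,E)$ is Pettis-integrable against $\mu$ by \prettyref{prop:pettis.ccp.to.loc.complete}, so that $T^{E}(f):=\int_{\Omega}f(x)\,\d\mu(x)$ is well-defined. \prettyref{prop:pettis_consistent} applied to the strong family $(T^{E}_{0},T^{\K}_{0}):=(\id_{E^{\Omega}},\id_{\K^{\Omega}})$ then yields $u(T^{\K})=\int_{\Omega}S(u)(x)\,\d\mu(x)=T^{E}(S(u))$ for all $u\in(\mathcal{C}_{b}(\Omega),\beta)\varepsilon E$, which is condition (ii) of consistency; condition (i) holds by the $\varepsilon$-compatibility above. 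For the $\beta$-continuity of $T^{E}$ I would transfer the continuity of $T^{\K}$ through the Pettis/strong relation $\langle e',T^{E}(f)\rangle=T^{\K}(e'\circ f)$, obtaining for every $\alpha\in\mathfrak{A}$ and $f\in\mathcal{C}_{b}(\Omega,E)$
\[
p_{\alpha}\bigl(T^{E}(f)\bigr)=\sup_{e'\in B_{\alpha}^{\circ}}\bigl|T^{\K}(e'\circ f)\bigr|\leq C\sup_{x\in\Omega}p_{\alpha}(f(x))|\nu(x)|=C|f|_{\nu,\alpha},
\]
where I use $\sup_{e'\in B_{\alpha}^{\circ}}|e'(y)|=p_{\alpha}(y)$ as in \prettyref{lem:strong_is_weak}. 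Hence $T^{E}\in L((\mathcal{C}_{b}(\Omega,E),\beta),E)$ and $(T^{E},T^{\K})$ is a consistent family for $((\mathcal{C}_{b}(\Omega),\beta),E)$ satisfying \eqref{eq:representation_C_b_strict}.

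For the converse I would assume $(T^{E},T^{\K})$ is a consistent family, note that $T^{\K}\in(\mathcal{C}_{b}(\Omega),\beta)'$, and apply the representation just described to obtain $\mu$ and the integral operator $f\mapsto\int_{\Omega}f\,\d\mu$. Uniqueness of $\mu$ follows from the uniqueness clause of the scalar (Buck) representation together with \prettyref{prop:pettis_consistent_rev}, which forces the representing measure of $T^{E}$ to coincide with that of $T^{\K}$ and simultaneously identifies $T^{E}$ with the integral operator, yielding \eqref{eq:representation_C_b_strict} and $T^{E}\in L((\mathcal{C}_{b}(\Omega,E),\beta),E)$.

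I expect the main obstacle to be the $\varepsilon$-compatibility step rather than the measure-theoretic bookkeeping: because $(\mathcal{C}_{b}(\Omega),\beta)$ is in general non-barrelled, consistency of the generator cannot be read off \prettyref{prop:diff_cons_barrelled} and must be extracted from the mixed-topology description of $\beta$ and the relative compactness of $\beta$-bounded sets in $\tau_{c}$. A secondary delicate point is pinning down the precise strict-topology Riesz representation, with its tightness and uniqueness clauses, in the merely locally compact (non-compact) setting, and verifying that this tightness makes \prettyref{prop:pettis.ccp.to.loc.complete} applicable to bounded-range functions, so that both Pettis-integrability and the invocation of \prettyref{prop:pettis_consistent_rev} for uniqueness go through.
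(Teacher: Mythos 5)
Your overall architecture matches the paper's proof: the strict-topology Riesz--Markov--Kakutani theorem (\cite[7.6.3 Theorem, p.\ 141]{Jarchow}) supplies the unique regular measure $\mu$, the $\varepsilon$-compatibility of $(\mathcal{C}_{b}(\Omega),\beta)$ and $(\mathcal{C}_{b}(\Omega,E),\beta)$ is obtained by the mixed-topology argument of \prettyref{prop:strict_top_isomorphism} with the differential operator dropped, and consistency, the converse direction and uniqueness come from \prettyref{prop:pettis_consistent} and \prettyref{prop:pettis_consistent_rev} exactly as in \prettyref{prop:representation_C_0}. Your sharpening of the continuity estimate to $p_{\alpha}(T^{E}(f))\leq C|f|_{\nu,\alpha}$ is correct and in fact necessary, since $\sup_{x\in\Omega}p_{\alpha}(f(x))$ is not a $\beta$-continuous seminorm.

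The one step that does not go through as written is the Pettis-integrability of a general $f\in\mathcal{C}_{b}(\Omega,E)$ via \prettyref{prop:pettis.ccp.to.loc.complete}. That proposition requires $g\colon\Omega\to[0,\infty)$ with $\psi g\geq 1$ (hence $g>0$ everywhere) and $fg\in\mathcal{C}_{0}(\Omega,E)$; taking $f$ to be a nonzero constant forces $g$ to be a strictly positive element of $\mathcal{C}_{0}(\Omega)$, which exists only when $\Omega$ is $\sigma$-compact. So in the case of a general locally compact $\Omega$ and $E$ with ccp this tool is simply not applicable, and even in the second-countable case you would still have to construct $\psi=1/g\in\mathcal{L}^{1}(\Omega,|\mu|)$ from tightness, which you leave open. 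The paper sidesteps all of this: the $\beta$-continuity of $T^{\K}$ already yields $\nu\in\mathcal{C}_{0}(\Omega)$ and $C>0$ with
\[
\bigl|\int_{\Omega}\langle e',f(x)\rangle\d\mu(x)\bigr|=|T^{\K}(e'\circ f)|\leq C\sup_{x\in K}|e'(x)|,\quad e'\in E',
\]
for $K:=\oacx(f\nu(\Omega))$, and $K$ is absolutely convex and compact by \prettyref{prop:abs_conv_comp_C_0} because $f\nu\in\mathcal{C}_{0}(\Omega,E)$; the Mackey--Arens theorem then produces the Pettis-integral directly. You already exploit precisely this compactness in your $\varepsilon$-compatibility step, so the repair is to reuse it here instead of invoking \prettyref{prop:pettis.ccp.to.loc.complete}.
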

\begin{proof}
Due to the Riesz--Markov--Kakutani representation theorem 
\cite[7.6.3 Theorem, p.\ 141]{Jarchow} for the strict topology 
there is a unique regular $\K$-valued Borel measure $\mu$ on $\Omega$ such that 
\[
T^{\K}(f)=\int_{\Omega}f(x)\d\mu(x),\quad f\in \mathcal{C}_{b}(\Omega).
\]
Since $T^{\K}$ is continuous, there are $\nu\in\mathcal{C}_{0}(\Omega)$ and $C>0$ such that 
\[
 \bigl|\int_{\Omega}\langle e',f(x)\rangle\d\mu(x)\bigr|
=|T^{\K}(e'\circ f)|
\leq C\sup_{x\in\Omega}|(e'\circ f)(x)\nu(x)|
\leq C\sup_{x\in K}|e'(x)|,\quad e'\in E',
\]
for $f\in \mathcal{C}_{b}(\Omega,E)$ with $K:=\oacx(f\nu(\Omega))$. As $K$ is absolutely convex and compact by 
\prettyref{prop:abs_conv_comp_C_0}, $f$ is Pettis-integrable on $\Omega$ w.r.t.\ $\mu$ by the Mackey--Arens theorem. 
The remaining parts of the proof follow from \prettyref{ex:cont_loc_comp} and \prettyref{prop:pettis_consistent}  
as in \prettyref{prop:representation_C_0}. 
\end{proof}

\begin{prop}\label{prop:representation_C_compact_open}
Let $\Omega$ be a locally compact [second countable] topological Hausdorff space and $E$ an lcHs with [metric] ccp. 
Then for every $T^{\K}\in\mathcal{CW}(\Omega)'$ there is 
$T^{E}\in L(\mathcal{CW}(\Omega,E),E)$ such that 
$(T^{E},T^{\K})$ is a consistent family for $(\mathcal{CW},E)$ and 
there is a unique regular $\K$-valued Borel measure $\mu$ on $\Omega$ with compact support such that 
\begin{equation}\label{eq:representation_C_compact_open}
T^{E}(f)=\int_{\Omega}f(x)\d\mu(x),\quad f\in \mathcal{C}(\Omega,E).
\end{equation}
On the other hand, if $(T^{E},T^{\K})$ is a consistent family, then there is a unique regular $\K$-valued 
Borel measure $\mu$ on $\Omega$ with compact support such that \eqref{eq:representation_C_compact_open} holds 
and $T^{E}\in L(\mathcal{CW}(\Omega,E),E)$.
\end{prop}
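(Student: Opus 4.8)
The plan is to mirror the proofs of \prettyref{prop:representation_C_0} and \prettyref{prop:representation_C_b_strict}, replacing the relevant Riesz--Markov--Kakutani theorem and the $\varepsilon$-compatibility input by their counterparts for the compact-open topology. The two structural ingredients I would assemble first are: (a) the $\varepsilon$-compatibility $\mathcal{CW}(\Omega,E)\cong\mathcal{CW}(\Omega)\varepsilon E$ with inverse $R^{t}$ of $S$, supplied by \prettyref{ex:cont_usual} once its hypotheses are checked, and (b) a representation of every $T^{\K}\in\mathcal{CW}(\Omega)'$ by a \emph{compactly supported} regular $\K$-valued Borel measure. For (a) I would note that a locally compact Hausdorff space is a $k_{\R}$-space, while in the bracketed case a second countable locally compact Hausdorff space is metrisable by Urysohn's theorem; thus the [metric] ccp hypothesis on $E$ together with $\Omega$ being a [metrisable] $k_{\R}$-space matches exactly the requirements of \prettyref{ex:cont_usual}.

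For (b) I would exploit that $\tau_{c}$-continuity of $T^{\K}$ yields a compact set $K_{0}\subset\Omega$ and $C>0$ with $|T^{\K}(f)|\leq C\sup_{x\in K_{0}}|f(x)|$ for all $f\in\mathcal{C}(\Omega)$. Hence $T^{\K}$ factors through the restriction map $\mathcal{C}(\Omega)\to\mathcal{C}(K_{0})$ (surjective by Tietze extension), giving a sup-norm-continuous functional on the compact space $K_{0}$; applying the classical Riesz--Markov--Kakutani theorem \cite[6.19 Theorem, p.\ 130]{rudin1970} on $K_{0}$ and extending the resulting measure by zero produces a unique regular $\K$-valued Borel measure $\mu$ with $\operatorname{supp}(\mu)\subset K_{0}$ and $T^{\K}(f)=\int_{\Omega}f\,\d\mu$.

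With $\mu$ in hand I would argue exactly as before. Every $f\in\mathcal{C}(\Omega,E)$ is Pettis-integrable w.r.t.\ $\mu$: since $f(K_{0})$ is compact (and metrisable in the bracketed case), the set $\oacx(f(K_{0}))$ is absolutely convex and compact by [metric] ccp, and the estimate $|\int_{\Omega}(e'\circ f)\,\d\mu|\leq|\mu|(K_{0})\sup_{y\in\oacx(f(K_{0}))}|e'(y)|$ places the functional $e'\mapsto\int(e'\circ f)\,\d\mu$ in $(E'_{\tau})'=\mathcal{J}(E)$ by the Mackey--Arens theorem. Defining $T^{E}(f):=\int_{\Omega}f\,\d\mu$ gives a map in $L(\mathcal{CW}(\Omega,E),E)$ because $p_{\alpha}(T^{E}(f))\leq|\mu|(K_{0})|f|_{K_{0},\alpha}$, and \prettyref{prop:pettis_consistent} with $(T^{E}_{0},T^{\K}_{0}):=(\id_{E^{\Omega}},\id_{\K^{\Omega}})$ yields $u(T^{\K})=\int_{\Omega}S(u)\,\d\mu=T^{E}(S(u))$ for all $u\in\mathcal{CW}(\Omega)\varepsilon E$, which is precisely consistency of $(T^{E},T^{\K})$.

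For the converse, a given consistent family already forces $T^{\K}\in\mathcal{CW}(\Omega)'$ by \prettyref{def:cons_strong} a)(i), so (b) again produces a unique compactly supported $\mu$; the Pettis-integrability step above then lets me invoke \prettyref{prop:pettis_consistent_rev} (with the same $(\id,\id)$) to conclude $T^{E}(f)=\int_{\Omega}f\,\d\mu$ and $T^{E}\in L(\mathcal{CW}(\Omega,E),E)$, with uniqueness of $\mu$ inherited from the scalar representation. I expect the main obstacle to be ingredient (b): unlike the $\mathcal{C}_{0}$ and strict-topology cases there is no single off-the-shelf representation theorem to cite, so the compact-support statement must be extracted from the $\tau_{c}$-continuity estimate via the factorisation through $\mathcal{C}(K_{0})$, and one must verify carefully that the extended measure is regular, unique, and genuinely supported in a compact set.
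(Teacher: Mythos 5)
Your proof is correct and, apart from one ingredient, follows the paper's argument step for step: the $\varepsilon$-compatibility from \prettyref{ex:cont_usual} (locally compact Hausdorff $\Rightarrow$ $k_{\R}$-space, second countable $\Rightarrow$ metrisable), Pettis-integrability of $f$ via the estimate against $K:=\oacx(f(K_{0}))$ and the Mackey--Arens theorem, consistency via \prettyref{prop:pettis_consistent} with $(\id_{E^{\Omega}},\id_{\K^{\Omega}})$, and the converse plus uniqueness via \prettyref{prop:pettis_consistent_rev}, exactly as in \prettyref{prop:representation_C_b_strict}. The only divergence is your ingredient (b): you anticipated that no off-the-shelf representation theorem exists for the compact-open topology and therefore derive the compactly supported measure by hand, factoring $T^{\K}$ through the restriction to $\mathcal{C}(K_{0})$ via Tietze extension and applying the classical Riesz--Markov--Kakutani theorem on the compact space $K_{0}$. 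The paper instead simply cites the version of the representation theorem for the topology of compact convergence given in the remark after \cite[Chap.\ 4, \S4.8, Proposition 14, p.\ INT IV.48]{bourbakiI}, which delivers the unique regular compactly supported measure in one step. Your derivation is a legitimate proof of that cited fact (the extension-by-zero measure is regular and unique because a finite regular Borel measure with compact support is determined by its integrals against $\mathcal{C}_{c}(\Omega)$), so nothing is missing; it just trades a citation for a short self-contained argument whose regularity and uniqueness details you correctly flag as needing verification.
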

\begin{proof} 
By the Riesz--Markov--Kakutani representation theorem given in the remark after 
\cite[Chap.\ 4, \S4.8, Proposition 14, p.\ INT IV.48]{bourbakiI} for the topology of compact convergence
there is a unique regular $\K$-valued Borel measure $\mu$ on $\Omega$ with compact support such that 
\[
T^{\K}(f)=\int_{\Omega}f(x)\d\mu(x),\quad f\in \mathcal{C}(\Omega).
\]
Since $T^{\K}$ is continuous, there are a compact set $M\subset\Omega$ and $C>0$ such that 
\[
 \bigl|\int_{\Omega}\langle e',f(x)\rangle\d\mu(x)\bigr|
=|T^{\K}(e'\circ f)|
\leq C\sup_{x\in M}|(e'\circ f)(x)|
\leq C\sup_{x\in K}|e'(x)|,\quad e'\in E',
\]
for $f\in \mathcal{C}(\Omega,E)$ with the absolutely convex and compact set $K:=\oacx(f(M))$, 
implying that $f$ is Pettis-integrable on $\Omega$ w.r.t.\ $\mu$ by the Mackey--Arens theorem. 
The rest of the proof is identical to the one of \prettyref{prop:representation_C_b_strict}.
\end{proof}

\begin{prop}\label{prop:representation_C_infty}
Let $\Omega\subset\R^{d}$ be open and $E$ a locally complete lcHs. 
Then for every $T^{\K}\in\mathcal{CW}^{\infty}(\Omega)'$ there is 
$T^{E}\in L(\mathcal{CW}^{\infty}(\Omega,E),E)$ such that 
$(T^{E},T^{\K})$ is a consistent family for $(\mathcal{CW}^{\infty},E)$. 
Given any open neighbourhood $U\subset\Omega$ of the compact distributional support 
$\operatorname{supp} T^{\K}$ of $T^{\K}$
there are $m\in\N_{0}$ and a family of $\K$-valued Radon measures 
$(\mu_{\beta})_{\beta\in\N_{0}^{d},|\beta|\leq m}$ on $\Omega$ such that 
$\operatorname{supp} \mu_{\beta}\subset U$ for all $\beta\in\N_{0}^{d}$, $|\beta|\leq m$, and
\begin{equation}\label{eq:representation_C_infty}
T^{E}(f)=\sum_{|\beta|\leq m}\int_{\Omega}(\partial^{\beta})^{E}f(x)\d\mu_{\beta}(x),
\quad f\in \mathcal{C}^{\infty}(\Omega,E).
\end{equation}
On the other hand, if $(T^{E},T^{\K})$ is a consistent family, then given any open neighbourhood 
$U\subset\Omega$ of the compact distributional support 
$\operatorname{supp} T^{\K}$ of $T^{\K}$
there are $m\in\N_{0}$ and a family of $\K$-valued Radon measures 
$(\mu_{\beta})_{\beta\in\N_{0}^{d},|\beta|\leq m}$ on $\Omega$ such that 
\eqref{eq:representation_C_infty} holds, 
$\operatorname{supp} \mu_{\beta}\subset U$ for all $\beta\in\N_{0}^{d}$, $|\beta|\leq m$ 
and $T^{E}\in L(\mathcal{CW}^{\infty}(\Omega,E),E)$.
\end{prop}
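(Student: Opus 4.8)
The plan is to reduce everything, exactly as in \prettyref{prop:representation_C_0}--\prettyref{prop:representation_C_compact_open}, to a scalar representation theorem combined with \prettyref{prop:pettis_consistent} and \prettyref{prop:pettis_consistent_rev}; the only new feature is that the representing functional is now a finite \emph{sum} of integrals against the different operators $(\partial^{\beta})^{E}$ and measures $\mu_{\beta}$. First I would note that $\mathcal{CW}^{\infty}(\Omega)'$ is the space of distributions with compact support, so $\operatorname{supp}T^{\K}$ is well-defined and compact. Fixing an open neighbourhood $U$ of $\operatorname{supp}T^{\K}$ and a relatively compact open set $V$ with $\operatorname{supp}T^{\K}\subset V\subset\overline{V}\subset U$, the structure theorem for compactly supported distributions provides $m\in\N_{0}$ and $\K$-valued Radon measures $(\mu_{\beta})_{|\beta|\leq m}$ with $\operatorname{supp}\mu_{\beta}\subset\overline{V}\subset U$ compact such that
\[
T^{\K}(f)=\sum_{|\beta|\leq m}\int_{\Omega}\partial^{\beta}f(x)\,\d\mu_{\beta}(x),\quad f\in\mathcal{C}^{\infty}(\Omega).
\]
By \prettyref{ex:diff_usual} a) (with $k=\infty$ and $E$ locally complete) the spaces $\mathcal{CW}^{\infty}(\Omega)$ and $\mathcal{CW}^{\infty}(\Omega,E)$ are $\varepsilon$-compatible and the inverse of $S$ is the map $R^{t}$ from \prettyref{thm:full_linearisation}.

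Next I would fix $\beta$ with $|\beta|\leq m$ and check that $((\partial^{\beta})^{E},(\partial^{\beta})^{\K})$ is a strong family for $(\mathcal{CW}^{\infty},E)$ with $\omega=\Omega$; strength is the identity $(\partial^{\beta})^{\K}(e'\circ f)=e'\circ(\partial^{\beta})^{E}f$ from (the proof of) \prettyref{prop:weighted_diff_strong_cons}. The crucial point is the Pettis-integrability of $(\partial^{\beta})^{E}f$ against $\mu_{\beta}$ for $f\in\mathcal{CW}^{\infty}(\Omega,E)$ in the merely \emph{locally complete} setting. Here I would use that $K_{\beta}:=\operatorname{supp}\mu_{\beta}$ is compact, so $(\partial^{\beta})^{E}f(K_{\beta})$ is compact and metrisable by \cite[Chap.\ IX, \S2.10, Proposition 17, p.\ 159]{bourbakiII}; hence its closed absolutely convex hull $C_{\beta}:=\oacx((\partial^{\beta})^{E}f(K_{\beta}))$ is absolutely convex and compact by \cite[Proposition 2, p.\ 354]{Bonet2002}. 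Since
\[
\Bigl|\int_{\Omega}e'\bigl((\partial^{\beta})^{E}f(x)\bigr)\,\d\mu_{\beta}(x)\Bigr|\leq|\mu_{\beta}|(K_{\beta})\sup_{x\in C_{\beta}}|e'(x)|,\quad e'\in E',
\]
the Mackey--Arens theorem yields a representing element in $E$, i.e.\ $(\partial^{\beta})^{E}f$ is Pettis-integrable on $\Omega$ w.r.t.\ $\mu_{\beta}$, exactly as in \prettyref{prop:representation_C_compact_open}.

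Then I would apply \prettyref{prop:pettis_consistent} to each pair $((\partial^{\beta})^{E},(\partial^{\beta})^{\K})$ and measure $\mu_{\beta}$, the scalar functional $f\mapsto\int_{\Omega}\partial^{\beta}f\,\d\mu_{\beta}$ being continuous on $\mathcal{CW}^{\infty}(\Omega)$, and sum over $\beta$ to obtain
\[
u(T^{\K})=\sum_{|\beta|\leq m}\int_{\Omega}(\partial^{\beta})^{E}S(u)(x)\,\d\mu_{\beta}(x),\quad u\in\mathcal{CW}^{\infty}(\Omega)\varepsilon E.
\]
Defining $T^{E}(f):=\sum_{|\beta|\leq m}\int_{\Omega}(\partial^{\beta})^{E}f(x)\,\d\mu_{\beta}(x)$ for $f\in\mathcal{CW}^{\infty}(\Omega,E)$, the estimate $p_{\alpha}(T^{E}(f))\leq(\sum_{|\beta|\leq m}|\mu_{\beta}|(K_{\beta}))\sup_{x\in K,|\beta|\leq m}p_{\alpha}((\partial^{\beta})^{E}f(x))$ with $K:=\bigcup_{|\beta|\leq m}K_{\beta}$ shows $T^{E}\in L(\mathcal{CW}^{\infty}(\Omega,E),E)$, and the displayed identity reads $T^{E}S(u)=u(T^{\K})$, i.e.\ $(T^{E},T^{\K})$ is a consistent family; together with $S(u)\in\mathcal{CW}^{\infty}(\Omega,E)$ and $T^{\K}\in\mathcal{CW}^{\infty}(\Omega)'$ this settles the first assertion. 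For the converse I would argue by uniqueness: if $(T^{E},T^{\K})$ is any consistent family, then $T^{\K}\in\mathcal{CW}^{\infty}(\Omega)'$ by \prettyref{def:cons_strong} a)(i), and consistency forces $T^{E}(f)=S^{-1}(f)(T^{\K})$ for every $f\in\mathcal{CW}^{\infty}(\Omega,E)$ since $S$ is onto. Given $U$, the first part produces measures $(\mu_{\beta})$ and an operator $\widetilde{T}^{E}\in L(\mathcal{CW}^{\infty}(\Omega,E),E)$ with $\operatorname{supp}\mu_{\beta}\subset U$ which is consistent for the same $T^{\K}$; hence $\widetilde{T}^{E}(f)=S^{-1}(f)(T^{\K})=T^{E}(f)$, so \eqref{eq:representation_C_infty} holds and $T^{E}$ is continuous. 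The main obstacle I anticipate is the Pettis-integrability step: because $E$ is only locally complete one cannot invoke a completeness or ccp hypothesis directly, and the argument hinges on the compactness of the supports $\operatorname{supp}\mu_{\beta}$ (to get a compact, metrisable range) together with the local-completeness result \cite[Proposition 2, p.\ 354]{Bonet2002}, so arranging the structure theorem so that the $\mu_{\beta}$ have compact support inside $U$ is an essential preliminary.
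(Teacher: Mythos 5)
Your proposal is correct and follows essentially the same route as the paper: the structure theorem for compactly supported distributions from \cite{Treves}, Pettis-integrability of $(\partial^{\beta})^{E}f$ against the compactly supported $\mu_{\beta}$ for locally complete $E$, the $\varepsilon$-compatibility from \prettyref{ex:diff_usual} a), and \prettyref{prop:pettis_consistent} applied to each pair $((\partial^{\beta})^{E},(\partial^{\beta})^{\K})$. The only cosmetic difference is that you re-derive the integrability step via \cite[Proposition 2, p.\ 354]{Bonet2002} and Mackey--Arens, whereas the paper simply invokes \prettyref{lem:pettis.loc.complete} and \prettyref{rem:pettis.loc.complete}, whose proof is exactly that argument; your explicit uniqueness argument for the converse is likewise what the paper leaves implicit.
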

\begin{proof}
$T^{\K}\in\mathcal{CW}^{\infty}(\Omega)'$ is a distribution with compact support 
and thus has finite order by \cite[Corollary, p.\ 259]{Treves}. Denote by $m\in\N_{0}$ the 
order of $T^{\K}$. Given any open neighbourhood $U\subset\Omega$ 
of $\operatorname{supp} T^{\K}$ there is a family of $\K$-valued Radon measures 
$(\mu_{\beta})_{\beta\in\N_{0}^{d},|\beta|\leq m}$ on $\Omega$ such that 
\[
T^{\K}(f)=\sum_{|\beta|\leq m}\int_{\Omega}(\partial^{\beta})^{\K}f(x)\d\mu_{\beta}(x),
\quad f\in \mathcal{C}^{\infty}(\Omega),
\]
and $\operatorname{supp} \mu_{\beta}\subset U$ for all $\beta\in\N_{0}^{d}$, $|\beta|\leq m$ 
by \cite[Theorem 24.4, p.\ 259]{Treves}.
Since the support $K_{\beta}:=\operatorname{supp} \mu_{\beta}$ of $\mu_{\beta}$ is compact
\[
\int_{\Omega}(\partial^{\beta})^{\K}f(x)\d\mu_{\beta}(x)=\int_{K_{\beta}}(\partial^{\beta})^{\K}f(x)\d\mu_{\beta}(x),
\quad f\in \mathcal{C}^{\infty}(\Omega),
\]
and $(\partial^{\beta})^{E}f\in\mathcal{C}^{1}(\Omega,E)$ for $f\in\mathcal{C}^{\infty}(\Omega,E)$, 
it follows from \prettyref{lem:pettis.loc.complete} 
and \prettyref{rem:pettis.loc.complete} that $(\partial^{\beta})^{E}f$ is Pettis-integrable on 
$\Omega$ w.r.t.\ $\mu_{\beta}$ for all $\beta$ and that 
\[
     p_{\alpha}\bigl(\int_{\Omega}(\partial^{\beta})^{E}f(x)\d\mu_{\beta}(x)\bigr)
\leq |\mu_{\beta}|(K_{\beta})\sup_{x\in K_{\beta}}p_{\alpha}((\partial^{\beta})^{E}f(x)),\quad\alpha\in\mathfrak{A}.
\]
By \prettyref{ex:diff_usual} a) the map 
$S\colon \mathcal{CW}^{\infty}(\Omega)\varepsilon E\to \mathcal{CW}^{\infty}(\Omega,E)$ 
is an isomorphism with inverse $R^{t}\colon f\mapsto \mathcal{J}\circ R^{t}_{f}$. 
The remaining parts of the proof follow from \prettyref{prop:pettis_consistent} 
with $(T^{E}_{0},T^{\K}_{0}):=((\partial^{\beta})^{E},(\partial^{\beta})^{\K})$.
\end{proof}

\begin{prop}\label{prop:representation_Schwartz}
Let $E$ be a locally complete lcHs. 
Then for every $T^{\K}\in\mathcal{S}(\R^{d})'$ there is 
$T^{E}\in L(\mathcal{S}(\R^{d},E),E)$ such that 
$(T^{E},T^{\K})$ is a consistent family for $(\mathcal{S},E)$
and there are $m\in\N_{0}$ and a family of continuous functions 
$(g_{\beta})_{\beta\in\N_{0}^{d},|\beta|\leq m}$ on $\R^{d}$ 
growing at infinity slower than some polynomial such that
\begin{equation}\label{eq:representation_Schwartz}
T^{E}(f)=\sum_{|\beta|\leq m}\int_{\R^{d}}g_{\beta}(x)(\partial^{\beta})^{E}f(x)\d x,
\quad f\in \mathcal{S}(\R^{d},E).
\end{equation}
On the other hand, if $(T^{E},T^{\K})$ is a consistent family, 
then there are $m\in\N_{0}$ and a family of continuous functions 
$(g_{\beta})_{\beta\in\N_{0}^{d},|\beta|\leq m}$ on $\R^{d}$ 
growing at infinity slower than some polynomial such that
\eqref{eq:representation_Schwartz} holds
and $T^{E}\in L(\mathcal{S}(\R^{d},E),E)$.
\end{prop}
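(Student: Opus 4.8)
The existence of a consistent $T^{E}$ is not the difficult point. Since $\F=\mathcal{S}(\R^{d})$ and $\FE=\mathcal{S}(\R^{d},E)$ are $\varepsilon$-compatible by \prettyref{cor:Schwartz}, with inverse $R^{t}$ of $S$ as in \prettyref{thm:full_linearisation}, one may simply set $T^{E}(f):=S^{-1}(f)(T^{\K})=\mathcal{J}^{-1}\bigl(R_{f}^{t}(T^{\K})\bigr)$, which lies in $L(\mathcal{S}(\R^{d},E),E)$ and satisfies $T^{E}(S(u))=u(T^{\K})$ for all $u\in\mathcal{S}(\R^{d})\varepsilon E$. The substance of the statement is the integral representation of this map. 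The plan is to start on the scalar side: by the structure theorem for tempered distributions (see e.g.\ \cite{Treves}) there are $m\in\N_{0}$ and continuous functions $(g_{\beta})_{|\beta|\leq m}$ on $\R^{d}$ of at most polynomial growth such that
\[
T^{\K}(f)=\sum_{|\beta|\leq m}\int_{\R^{d}}g_{\beta}(x)(\partial^{\beta})^{\K}f(x)\,\d x,\quad f\in\mathcal{S}(\R^{d}),
\]
where the signs arising from integration by parts have been absorbed into the $g_{\beta}$.

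Next I would lift each summand separately by means of \prettyref{prop:pettis_consistent}. Fix $\beta$ and choose $p\in\N$ so large that $x\mapsto g_{\beta}(x)(1+|x|^{2})^{-p}$ is $\lambda$-integrable, where $\lambda$ denotes Lebesgue measure; this is possible since $g_{\beta}$ grows slower than a polynomial. I then set $\d\mu_{\beta}:=g_{\beta}(1+|\cdot|^{2})^{-p}\,\d\lambda$, a finite $\K$-valued measure, and take the strong family $(T^{E}_{0},T^{\K}_{0}):=\bigl((1+|\cdot|^{2})^{p}(\partial^{\beta})^{E},(1+|\cdot|^{2})^{p}(\partial^{\beta})^{\K}\bigr)$ with $\omega_{0}:=\R^{d}$; strength is checked exactly as in \prettyref{prop:weighted_diff_strong_cons}, using $e'\circ f\in\mathcal{S}(\R^{d})$. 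The point of this bookkeeping is that the rough factor $g_{\beta}$ now sits inside the integrable density $\mu_{\beta}$, while the integrand $(1+|\cdot|^{2})^{p}(\partial^{\beta})^{E}f$ remains smooth and rapidly decreasing, i.e.\ an element of $\mathcal{S}(\R^{d},E)$. With this choice $T^{\K}_{0}(f)$ is $\mu_{\beta}$-integrable and the functional $f\mapsto\int_{\R^{d}}T^{\K}_{0}(f)\,\d\mu_{\beta}=\int_{\R^{d}}g_{\beta}(\partial^{\beta})^{\K}f\,\d x$ is continuous on $\mathcal{S}(\R^{d})$.

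The main obstacle is the Pettis-integrability of $(1+|\cdot|^{2})^{p}(\partial^{\beta})^{E}f$ with respect to $\mu_{\beta}$ for a merely locally complete space $E$. Here I would argue as in \prettyref{prop:Fourier-trafo_Bjoerck}: after decomposing the density into its (at most four) non-negative continuous parts, the weighted function $w:=(1+|\cdot|^{2})^{p}(\partial^{\beta})^{E}f$ is smooth, bounded together with its first-order partial derivatives and rapidly decreasing, hence (weakly) $\mathcal{C}^{1}_{b}$, and $wh$ is bounded for $h:=1+|\cdot|^{2}$ with $1\leq\varepsilon h$ off a compact set for every $\varepsilon>0$. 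Thus \prettyref{prop:abs_conv_comp_C_1_b} and \prettyref{prop:pettis.ccp.to.loc.complete} (iii) yield the Pettis-integrability. This is the step where local completeness of $E$ is exploited, and the weighting trick above is precisely what lets us invoke the $\mathcal{C}^{1}_{b}$-type criterion despite $g_{\beta}$ being only continuous; without moving the polynomial weight into the integrand and the rough $g_{\beta}$ into the $L^{1}$-density, the closed absolutely convex hull of the range need not be compact in $E$.

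Finally, \prettyref{prop:pettis_consistent} applied to each $\beta$ gives
\[
u\Bigl(f\mapsto\int_{\R^{d}}g_{\beta}(\partial^{\beta})^{\K}f\,\d x\Bigr)=\int_{\R^{d}}g_{\beta}(x)(\partial^{\beta})^{E}S(u)(x)\,\d x,\quad u\in\mathcal{S}(\R^{d})\varepsilon E,
\]
and summing over $|\beta|\leq m$ together with the scalar representation of $T^{\K}$ yields $u(T^{\K})=\sum_{|\beta|\leq m}\int_{\R^{d}}g_{\beta}(\partial^{\beta})^{E}S(u)\,\d x$. Putting $f:=S(u)$ and using the surjectivity of $S$ proves \eqref{eq:representation_Schwartz}. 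For the converse, if $(T^{E},T^{\K})$ is any consistent family, then $T^{\K}\in\mathcal{S}(\R^{d})'$ and consistency forces $T^{E}(f)=S^{-1}(f)(T^{\K})$ on all of $\mathcal{S}(\R^{d},E)$; hence $T^{E}$ coincides with the integral operator just constructed and, in particular, $T^{E}\in L(\mathcal{S}(\R^{d},E),E)$.
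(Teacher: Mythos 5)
Your proof is correct and follows the same overall skeleton as the paper's: the scalar structure theorem for tempered distributions, a termwise lift of each summand via \prettyref{prop:pettis_consistent} with a strong family built from $(\partial^{\beta})^{E}$, and the converse obtained from the fact that consistency forces $T^{E}(f)=S^{-1}(f)(T^{\K})$. The only place where you diverge is the Pettis-integrability step. The paper keeps the Lebesgue measure and the family $(g_{\beta}(\partial^{\beta})^{E},g_{\beta}(\partial^{\beta})^{\K})$, bounds $|\int\langle e',g_{\beta}(\partial^{\beta})^{E}f\rangle\d x|$ by $C\sup_{x\in K_{\beta}}|e'(x)|$ with $K_{\beta}:=\oacx(((\partial^{\beta})^{E}f)(1+|\cdot|^{2})^{k/2}(\R^{d}))$ --- whose compactness is taken from the proofs of \prettyref{ex:weighted_C_1_diff} and \prettyref{cor:Schwartz} --- and concludes by Mackey--Arens; you instead push the rough factor $g_{\beta}$ into a finite $\K$-valued density $g_{\beta}(1+|\cdot|^{2})^{-p}\d\lambda$ and the polynomial weight into the operator, so that \prettyref{prop:pettis.ccp.to.loc.complete} (iii) applies off the shelf. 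Both routes rest on the same compactness inputs (\prettyref{prop:abs_conv_comp_hoelder} and \prettyref{prop:abs_conv_comp_C_1_b}), so the difference is one of bookkeeping rather than substance; your version has the small merit of making the absolute convergence of the integral explicit (finite measure against a bounded, rapidly decreasing integrand), a point the paper's displayed estimate passes over rather tersely.
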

\begin{proof}
Let $T^{\K}\in\mathcal{S}(\R^{d})'$. Then there are $m\in\N_{0}$ 
and a family of continuous functions 
$(g_{\beta})_{\beta\in\N_{0}^{d},|\beta|\leq m}$ on $\R^{d}$ 
growing at infinity slower than some polynomial such that
\[
T^{\K}(f)=\sum_{|\beta|\leq m}\int_{\R^{d}}g_{\beta}(x)(\partial^{\beta})^{\K}f(x)\d x,
\quad f\in \mathcal{S}(\R^{d}),
\]
by \cite[Theorem 25.4, p.\ 272]{Treves}. Here, $g_{\beta}$ \emph{growing at infinity slower than some polynomial}
means that there are $k\in\N_{0}$ and $C>0$ such that $|g_{\beta}(x)|\leq C(1+|x|^{2})^{k/2}$ for all $x\in\R^{d}$. 
Since the family $(g_{\beta})$ is finite, we can take one $k$ and one $C$ for all $\beta$. 
Due to the proof of \prettyref{ex:weighted_C_1_diff} and \prettyref{cor:Schwartz} we know that 
$K_{\beta}:=\oacx(((\partial^{\beta})^{E}f)(1+|\cdot|^{2})^{k/2}(\R^{d}))$ is absolutely convex and compact 
for $f\in \mathcal{S}(\R^{d},E)$. 
The estimate 
\[
\bigl|\int_{\R^{d}}\langle e',g_{\beta}(x)(\partial^{\beta})^{E}f(x)\rangle\d x\bigr|
\leq C\sup_{x\in\R^{d}}|e'((\partial^{\beta})^{E}f(x))|(1+|x|^{2})^{k/2}
=C\sup_{x\in K_{\beta}}|e'(x)|
\]
for all $e'\in E'$ and $f\in \mathcal{S}(\R^{d},E)$ yields that $g_{\beta}(\partial^{\beta})^{E}f$ 
is Pettis-integrable on $\R^{d}$ w.r.t.\ the Lebesgue measure by the Mackey--Arens theorem. 
Further, it implies that 
\[
     p_{\alpha}\bigl(\int_{\R^{d}}g_{\beta}(x)(\partial^{\beta})^{E}f(x)\d x\bigr)
\leq C\sup_{x\in\R^{d}}p_{\alpha}((\partial^{\beta})^{E}f(x))(1+|x|^{2})^{k/2},\quad\alpha\in\mathfrak{A},
\]
as in \prettyref{lem:pettis.loc.complete}.
By \prettyref{cor:Schwartz} the map 
$S\colon \mathcal{S}(\R^{d})\varepsilon E\to \mathcal{S}(\R^{d},E)$ 
is an isomorphism with inverse $R^{t}\colon f \mapsto \mathcal{J}\circ R^{t}_{f}$. 
The remaining parts of the proof follow from \prettyref{prop:pettis_consistent} 
with $(T^{E}_{0},T^{\K}_{0}):=(g_{\beta}(\partial^{\beta})^{E},g_{\beta}(\partial^{\beta})^{\K})$.
\end{proof}

\begin{rem}\fakephantomsection\label{rem:representation_subspaces}
\begin{enumerate}
\item[a)] Let $\Omega\subset\R^{d}$ be open and $E$ a locally complete lcHs. Then 
\prettyref{prop:representation_C_infty} is still valid with $\mathcal{CW}^{\infty}$ 
replaced by $\mathcal{CW}^{\infty}_{P(\partial)}$ due to the Hahn--Banach theorem and  
\prettyref{ex:diff_usual} b). If $P(\partial)^{\K}$ is a hypoelliptic linear partial differential operator, 
then one can represent $T^{E}$ as in \eqref{eq:representation_C_compact_open} 
due to \prettyref{prop:co_top_isomorphism} but the measure $\mu$ need not be unique anymore. 
\item[b)] Let $\Omega\subset\R^{d}$ be open and $E$ an lcHs with metric ccp. 
Then \prettyref{prop:representation_C_b_strict} is still valid with $\mathcal{C}_{b}$ replaced 
by $\mathcal{C}^{\infty}_{P(\partial),b}$ for a hypoelliptic linear partial differential operator $P(\partial)^{\K}$
due to the Hahn--Banach theorem and \prettyref{prop:strict_top_isomorphism} 
but the measure $\mu$ need not be unique anymore. 
\item[c)] All families $(T^{E},T^{\K})$ considered in this section are strong 
which is a consequence of \prettyref{prop:pettis_consistent_rev} 
(and of Pettis-summability in \prettyref{prop:representation_cadlag}). 
\end{enumerate}
\end{rem}
\chapter{Applications}
\label{chap:applications}
\section{Lifting the properties of maps from the scalar-valued case}
\label{sect:lifting}
In this section we briefly show how to use the $\varepsilon$-compatibility of 
spaces $\F$ and $\FE$ to lift properties like injectivity, surjectivity, 
bijectivity and continuity from a map $T^{\K}$ to a map $T^{E}$ if 
$(T^{E},T^{\K})$ forms a consistent family. 
Especially, we pay attention to surjectivity whose transfer to the vector-valued case is 
accomplished by Grothendieck's classical theory of tensor products of Fr\'echet 
spaces \cite{Gro} and by the splitting theory of Vogt for Fr\'{e}chet spaces \cite{vogt1983} and of 
Bonet and Doma\'nski  for PLS-spaces \cite{D/L}.
In order to apply splitting theory, we recall the definitions of the topological invariants $(\Omega)$, 
$(DN)$ and $(PA)$.

Let us recall that a Fr\'echet space $F$ with an increasing fundamental system of 
seminorms $(\vertiii{\cdot}_{k})_{k\in\N}$ satisfies $\gls{Omega}$ if
\[
\forall\; p\in\N\; \exists\; q\in\N\;\forall\; k\in\N\;\exists\; n\in\N,\,C>0\;\forall\; r>0:\;
 U_{q}\subset Cr^n U_k + \frac{1}{r} U_p
\]
where $U_{k}:=\{x\in F \; | \; \vertiii{x}_{k}\leq 1\}$ (see \cite[Chap.\ 29, Definition, p.\ 367]{meisevogt1997}). 

We recall that a Fr\'echet space $(F,(\vertiii{\cdot}_{k})_{k\in\N})$ satisfies $\gls{DN}$
by \cite[Chap.\ 29, Definition, p.\ 359]{meisevogt1997} if
\[
\exists\;p\in\N\;\forall\;k\in\N\;\exists\;n\in\N,\,C>0\;\forall\;x\in F:\;
\vertiii{x}^{2}_{k}\leq C\vertiii{x}_{p}\vertiii{x}_{n}.
\]
A \emph{\gls{PLS_space}} is a projective limit $X=\lim\limits_{\substack{\longleftarrow\\N\in\N}}X_{N}$, where the $X_{N}$
given by inductive limits $X_{N}=\lim\limits_{\substack{\longrightarrow\\n\in \N}}(X_{N,n},\vertiii{\cdot}_{N,n})$ 
are DFS-spaces (which are also called LS-spaces), and it satisfies $\gls{PA}$ if
\begin{gather*}
\forall \;N\;\exists\; M\; \forall\; K\; \exists\; n\; \forall\; m\; \forall\; \eta >0\; \exists\; k,C,r_0 >0\;  
\forall\; r>r_0\; \forall\; x'\in X'_{N}:\\
	\vertiii{x'\circ i^{M}_{N}}^{\ast}_{M,m}
	\leq C\bigl(r^{\eta}\vertiii{x'\circ i^{K}_{N}}^{\ast}_{K,k}+\frac{1}{r}\vertiii{x'}^{\ast}_{N,n}\bigr)
\end{gather*}
where $\vertiii{\cdot}^{\ast}$ denotes the dual norm of $\vertiii{\cdot}$ and 
$i^{M}_{N}$, $i^{K}_{N}$ the linking maps (see \cite[Section 4, Eq.\ (24), p.\ 577]{Dom1}).

Examples of Fr\'{e}chet spaces with $(DN)$ are the spaces of rapidly decreasing sequences 
$s(\N^{d})$, $s(\N_{0}^{d})$ and $s(\Z^{d})$, the space $\mathcal{C}^{\infty}([a,b])$ of all
$\mathcal{C}^{\infty}$-smooth functions on $(a,b)$ such that all derivatives 
can be continuously extended to the boundary and the space of smooth
functions $\mathcal{C}^{\infty}_{2\pi}(\R^{d})$ that are $2\pi$-periodic 
in each variable. 
Examples of ultrabornological PLS-space with $(PA)$ are Fr\'{e}chet--Schwartz spaces, 
the space of tempered distributions $\mathcal{S}(\R^{d})_{b}'$, 
the space of distributions $\mathcal{D}(\Omega)_{b}'$ and ultradistributions of Beurling type 
$\mathcal{D}_{(\omega)}(\Omega)_{b}'$ on an open set $\Omega\subset\R^{d}$. 
These and many more examples may be found in \cite{Dom1}, \cite[Corollary 4.8, p.\
1116]{D/L} and \cite[Example 3, p.\ 7]{kruse2019_1}.

\begin{prop}\fakephantomsection\label{prop:isom_op_spaces}
\begin{enumerate}
\item [a)] Let $Y$ be a Fr\'echet space and $X$ a semi-reflexive lcHs. 
Then $L_{b}(X_{b}',Y_{b}')\cong L_{b}(Y,(X_{b}')_{b}')$ via taking adjoints.
\item [b)] Let $E$ be an lcHs and $X$ a Montel space. 
Then $L_{b}(X_{b}',E)\cong X\varepsilon E$ where the isomorphism is the identity map.
\end{enumerate}
\end{prop}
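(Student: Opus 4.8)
The plan is to prove the two statements separately, each by exhibiting the claimed isomorphism explicitly and checking that it and its inverse are continuous. For part a), the natural candidate map is the adjoint (transpose) map $T\mapsto T^{t}$, where for $T\in L(X_{b}',Y_{b}')$ the adjoint $T^{t}$ is defined on $(Y_{b}')'$ and then restricted appropriately. First I would fix the meaning of the spaces: since $Y$ is a Fr\'echet space, $Y$ is reflexive-friendly enough that $Y\hookrightarrow (Y_{b}')_{b}'$ is a topological embedding, and since $X$ is semi-reflexive we have $X=(X_{b}')'$ as linear spaces, with $(X_{b}')_{b}'$ carrying a topology that makes $X$ sit inside it. The key computation is that taking adjoints $T\mapsto T^{t}$, $T^{t}\colon Y\to (X_{b}')_{b}'$ (via $Y\subset (Y_{b}')_{b}'$), is a linear bijection onto $L_{b}(Y,(X_{b}')_{b}')$. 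The continuity in both directions follows from the general duality between the bounded-convergence topologies: a bounded subset of $X_{b}'$ is mapped to a bounded subset and the seminorms transform accordingly. I would verify that $T^{t}$ actually lands in $L(Y,(X_{b}')_{b}')$ using the semi-reflexivity of $X$ (so that $(X_{b}')_{b}'$ is the correct target) and that $(T^{t})^{t}=T$ recovers the original operator, giving bijectivity.

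For part b), the plan is to show that the two topologies on the common underlying set coincide, namely that $L(X_{b}',E)=X\varepsilon E$ as sets and that the bounded-convergence topology $L_{b}(X_{b}',E)$ equals the $\varepsilon$-topology $L_{e}(X_{\kappa}',E)$. Recall $X\varepsilon E=L_{e}(X_{\kappa}',E)$ by definition \eqref{notation0}. The first step is the set-theoretic equality: since $X$ is Montel, it is in particular semi-reflexive and quasi-complete, so $X_{\kappa}'=X_{\gamma}'$ and moreover the bounded and compact subsets of $X$ coincide (every bounded set is relatively compact in a Montel space). This forces the equicontinuous subsets of $X'$ used in the $\varepsilon$-product topology and the bounded subsets of $X_{b}'$ to generate the same system of seminorms on the operators. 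The second step is to confirm that a linear map $X_{b}'\to E$ is continuous iff it is continuous as a map $X_{\kappa}'\to E$; this again uses that on $X'$ the strong topology $\beta(X',X)$ and the topology $\kappa(X',X)$ of uniform convergence on absolutely convex compact sets agree, precisely because bounded and compact subsets of the Montel space $X$ coincide. Hence the identity map is the desired isomorphism.

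The main obstacle I anticipate is getting the duality bookkeeping exactly right in part a), specifically ensuring that the target space $(X_{b}')_{b}'$ is handled correctly and that $Y$ really embeds as a topological subspace of its bidual so that the adjoint of an operator into $Y_{b}'$ can be viewed as an operator into $(X_{b}')_{b}'$ with values in the image of $Y$. The semi-reflexivity of $X$ is exactly what guarantees that $X$ and $(X_{b}')'$ agree as vector spaces, but one must be careful about which topology $(X_{b}')_{b}'$ carries and whether the adjoint construction is a homeomorphism rather than merely a continuous bijection; this is where the Fr\'echet hypothesis on $Y$ does the work, since bounded sets in $Y$ are well-behaved and the equicontinuity/boundedness transfer under adjoints is symmetric. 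For part b), the only subtlety is verifying the coincidence of bounded and compact sets in $X$, which is immediate from the definition of a Montel space, so part b) should follow essentially by unwinding definitions once this observation is in place.
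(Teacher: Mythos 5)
Your proposal follows essentially the same route as the paper: part a) via the adjoint map $T\mapsto T^{t}$ using the embedding $Y\hookrightarrow(Y_{b}')_{b}'$ (valid since $Y$ is Fr\'echet) and semi-reflexivity of $X$ to identify $(X_{b}')'$ with $X$ for the surjectivity/continuity of the inverse, and part b) by showing the identity map works because bounded sets in the Montel space $X$ are relatively compact. One small correction in part b): the equivalence of the equicontinuous and the strongly bounded subsets of $X'$ (which makes the two operator topologies agree) is not a consequence of bounded sets being compact, but of $X$ being barrelled --- which Montel spaces are by definition, so the conclusion stands, but the cited reason should be barrelledness (as in the paper, via the fact that strongly bounded subsets of the dual of a barrelled space are equicontinuous) rather than the bounded-equals-precompact property, which is only needed for the set-theoretic equality $L(X_{b}',E)=L(X_{\kappa}',E)$.
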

\begin{proof}
a) We consider the map
\[
{^{t}(\cdot)}\colon L_{b}(X_{b}',Y_{b}')\to L_{b}(Y,(X_{b}')_{b}'),\; u\mapsto {^{t}u},
\]
defined by ${^{t}u}(y)(x'):=u(x')(y)$ for $y\in Y$ and $x'\in X'$.
First, we prove that ${^{t}(\cdot)}$ is well-defined. Let $u\in L(X_{b}',Y_{b}')$ and $y\in Y$. 
Since $u\in L(X_{b}',Y_{b}')$ and 
$\{y\}$ is bounded in $Y$, there are a bounded set $B\subset X$ and $C>0$ such that
\[
|{^{t}u}(y)(x')|=|u(x')(y)|\leq C\sup_{x\in B}|x'(x)|
\]
for all $x'\in X'$, implying ${^{t}u}(y)\in(X_{b}')'$.
  
Let us denote by $(\|\cdot\|_{Y,n})_{n\in\N}$ the (directed) system of seminorms generating the metrisable locally convex topology of $Y$. 
The canonical embedding $J\colon Y\to (Y_{b}')_{b}'$ is an isomorphism between $Y$ and $J(Y)$ 
by \cite[Corollary 25.10, p.\ 298]{meisevogt1997} because $Y$ is a Fr\'echet space. 
For a bounded set $M\subset X_{b}'$ we note that
\[
 \sup_{x'\in M}|{^{t}u}(y)(x')|
=\sup_{x'\in M}|u(x')(y)|
=\sup_{x'\in M}|\langle J(y),u(x')\rangle|.
\]
The next step is to prove that $u(M)$ is bounded in $Y_{b}'$. 
Let $N\subset Y$ be bounded. Since $u\in L(X_{b}',Y_{b}')$, there are again a bounded set $B\subset X$ 
and a constant $C>0$ such that
\[
\sup_{x'\in M}\sup_{y\in N}|u(x')(y)|\leq C\sup_{x'\in M}\sup_{x\in B}|x'(x)|<\infty
\]
where the last estimate follows from the boundedness of $M\subset X_{b}'$. 
Hence $u(M)$ is bounded in $Y_{b}'$.
By the remark about the canonical embedding there are $n\in\N$ and $C_{0}>0$ such that
\[
 \sup_{x'\in M}|{^{t}u}(y)(x')|
=\sup_{y'\in u(M)}|\langle J(y),y'\rangle|
\leq C_{0}\|y\|_{Y,n},
\]
so ${^{t}u}\in L(Y,(X_{b}')_{b}')$ and the map ${^{t}(\cdot)}$ is well-defined.

Let us turn to injectivity. Let $u,v\in L(X_{b}',Y_{b}')$ with ${^{t}u}={^{t}v}$. This is equivalent to
\[
u(x')(y)={^{t}u}(y)(x')={^{t}v}(y)(x')=v(x')(y)
\]
for all $y\in Y$ and $x'\in X'$. This implies $u(x')=v(x')$ for all $x'\in X'$, hence $u=v$.

Next, we turn to surjectivity. We consider the map
\[
{^{\mathrm{t}}(\cdot)}\colon L_{b}(Y,(X_{b}')_{b}')\to L_{b}(X_{b}',Y_{b}'),\; u\mapsto {^{\mathrm{t}}u},
\]
defined by ${^{\mathrm{t}}u}(x')(y):=u(y)(x')$ for $x'\in X'$ and $y\in Y$. We show that this map is well-defined.
Let $u\in L_{b}(Y,(X_{b}')_{b}')$ and $x'\in X'$. 
Since $u\in L_{b}(Y,(X_{b}')_{b}')$ and $\{x'\}$ is bounded in $X'$, there are $n\in\N$ and $C>0$ such that
\[
|{^{\mathrm{t}}u}(x')(y)|=|u(y)(x')|\leq C\|y\|_{Y,n}
\]
for all $y\in Y$, yielding ${^{\mathrm{t}}u}(x')\in Y'$.
Let $B\subset Y$ be bounded. The semi-reflexivity of $X$ implies that for every $u(y)$, $y\in B$, 
there is a unique $x_{u(y)}\in X$ such that $u(y)(x')=x'(x_{u(y)})$ for all $x'\in X'$. Then we get
\[
 \sup_{y\in B}|{^{\mathrm{t}}u}(x')(y)|
=\sup_{y\in B}|u(y)(x')|
=\sup_{y\in B}|x'(x_{u(y)})|.
\]
We claim that $D:=\{x_{u(y)}\;|\;y\in B\}$ is a bounded set in $X$. Let $N\subset X'$ be finite. 
Then the set $M:=\{{^{\mathrm{t}}u}(x')\;|\;x'\in N\}\subset Y'$ is finite. 
We have
\[
 \sup_{y\in B}\sup_{x'\in N}|x'(x_{u(y)})|
=\sup_{y\in B}\sup_{x'\in N}|{^{\mathrm{t}}u}(x')(y)|
=\sup_{y\in B}\sup_{y'\in M}|y'(y)|<\infty
\]
where the last estimate follows from the fact that the bounded set $B$ is weakly bounded. 
Thus $D$ is weakly bounded and by \cite[Mackey's theorem 23.15, p.\ 268]{meisevogt1997} bounded in $X$. 
Therefore it follows from
\[
 \sup_{y\in B}|{^{\mathrm{t}}u}(x')(y)|
=\sup_{y\in B}|x'(x_{u(y)})|
=\sup_{x\in D}|x'(x)|
\]
for all $x'\in X'$ that ${^{\mathrm{t}}u}\in L(X_{b}',Y_{b}')$, which means that ${^{\mathrm{t}}(\cdot)}$ is well-defined.
Let $u\in L(Y,(X_{b}')_{b}')$. Then we have ${^{\mathrm{t}}u}\in L_{b}(X_{b}',Y_{b}')$. 
In addition, for all $y\in Y$ and all $x'\in X'$
\[
{^{t}({^{\mathrm{t}}u})}(y)(x')={^{\mathrm{t}}u}(x')(y)=u(y)(x')
\]
is valid and so ${^{t}({^{\mathrm{t}}u})}(y)=u(y)$ for all $y\in Y$, proving the surjectivity.

The last step is to prove the continuity of ${^{t}(\cdot)}$ and its inverse. 
Let $M\subset Y$ and $B\subset X_{b}'$ be bounded sets. Then
\begin{align*}
  \sup_{y\in M}\sup_{x'\in B}|{^{t}u}(y)(x')|
&=\sup_{y\in M}\sup_{x'\in B}|u(x')(y)|
 =\sup_{x'\in B}\sup_{y\in M}|u(x')(y)|\\
&=\sup_{x'\in B}\sup_{y\in M}|{^{\mathrm{t}}({^{t}u})}(x')(y)|
\end{align*}
holds for all $u\in L(X_{b}',Y_{b}')$. Therefore, ${^{t}(\cdot)}$ and its inverse are continuous.
	 
b) Let $T\in L(X_{b}',E)$. For $\alpha\in\mathfrak{A}$ there are a bounded set $B\subset X$ and $C>0$ such that
\[
p_{\alpha}(T(x'))\leq C\sup_{x\in B}|x'(x)|\leq C\sup_{x\in \oacx{(B)}}|x'(x)|
\]
for every $x'\in X'$ where $\oacx{(B)}$ is the closure of the absolutely convex hull of $B$. 
The set $\oacx{(B)}$ is absolutely convex and compact by \cite[6.2.1 Proposition, p.\ 103]{Jarchow} 
and \cite[6.7.1 Proposition, p.\ 112]{Jarchow} since $B$ is bounded in the Montel space $X$. 
Hence we gain $T\in L(X_{\kappa}',E)$.

Let $M\subset X'$ be equicontinuous. Due to \cite[8.5.1 Theorem (a), p.\ 156]{Jarchow} $M$ is bounded in $X_{b}'$. 
Therefore,
\[
\id\colon L_{b}(X_{b}',E)\to L_{e}(X_{\kappa}',E)=X\varepsilon E
\]
is continuous.

Let $T\in L(X_{\kappa}',E)$. For $\alpha\in\mathfrak{A}$ there are an absolutely convex compact set $B\subset X$ 
and $C>0$ such that
\[
p_{\alpha}(T(x'))\leq C\sup_{x\in B}|x'(x)|
\]
for every $x'\in X'$. Since the compact set $B$ is bounded, we get $T\in L(X_{b}',E)$.

Let $M$ be a bounded set in $X_{b}'$. Then $M$ is equicontinuous by virtue of \cite[Theorem 33.2, p.\ 349]{Treves}, 
as $X$, being a Montel space, is barrelled. 
Thus
\[
\id\colon L_{e}(X_{\kappa}',E)\to L_{b}(X_{b}',E)
\]
is continuous.
\end{proof}

For part e) of the next theorem we need that our
map $S_{\mathcal{F}_{2}(\Omega_{2})}\colon \mathcal{F}_{2}(\Omega_{2})\varepsilon E\to 
\mathcal{F}_{2}(\Omega_{2},E)$ is an isomorphism and that its inverse is given as in 
\prettyref{thm:full_linearisation}, i.e.\ that 
\[
 R^{t}\colon \mathcal{F}_{2}(\Omega_{2},E)\to \mathcal{F}_{2}(\Omega_{2})\varepsilon E,\;f\mapsto \mathcal{J}^{-1}\circ R_{f}^{t},
\]
is the inverse of $S_{\mathcal{F}_{2}(\Omega_{2})}$ where $R_{f}^{t}(f')(e')=f'(e'\circ f)$ 
for $f'\in\mathcal{F}_{2}(\Omega_{2})'$ and $e'\in E'$, and $\mathcal{J}\colon E\to E'^{\star}$ is the canonical injection 
in the algebraic dual $E'^{\star}$ of $E'$.

\begin{thm}\label{thm:eps_prod_surj_inj} 
Let $E$ be an lcHs, $\mathcal{F}_{1}(\Omega_{1})$ and $\mathcal{F}_{1}(\Omega_{1},E)$ as well as 
$\mathcal{F}_{2}(\Omega_{2})$ and $\mathcal{F}_{2}(\Omega_{2},E)$ be $\varepsilon$-into-compatible. 
Let $(T^{E},T^{\K})$ be a consistent family for $(\mathcal{F}_{1}, E)$ such that 
$T^{\K}\colon \mathcal{F}_{1}(\Omega_{1})\to \mathcal{F}_{2}(\Omega_{2})$ is continuous and 
$T^{E}\colon \mathcal{F}_{1}(\Omega_{1},E)\to \mathcal{F}_{2}(\Omega_{2},E)$. 
Then the following holds:
\begin{enumerate}
\item[a)] $T^{E}\circ S_{\mathcal{F}_{1}(\Omega_{1})}
=S_{\mathcal{F}_{2}(\Omega_{2})}\circ(T^{\K}\varepsilon\id_{E})$.
\item[b)] If $S_{\mathcal{F}_{1}(\Omega_{1})}$ is surjective and $T^{\K}$ is injective, 
then $T^{E}$ is injective, continuous and 
\[
T^{E}=S_{\mathcal{F}_{2}(\Omega_{2})}\circ(T^{\K}\varepsilon\id_{E})\circ S^{-1}_{\mathcal{F}_{1}(\Omega_{1})}. 
\]
If in addition $S_{\mathcal{F}_{2}(\Omega_{2})}$ is surjective and $T^{\K}$ an isomorphism, then 
$T^{E}$ is an isomorphism with inverse
\[
 (T^{E})^{-1}
=S_{\mathcal{F}_{1}(\Omega_{1})}\circ((T^{\K})^{-1}\varepsilon\id_{E})\circ S^{-1}_{\mathcal{F}_{2}(\Omega_{2})}. 
\]
\item[c)] If $S_{\mathcal{F}_{2}(\Omega_{2})}$ and $T^{\K}\varepsilon\id_{E}$ are surjective, then 
$T^{E}$ is surjective.
\item[d)] If $S_{\mathcal{F}_{2}(\Omega_{2})}$ and $T^{\K}$ are surjective, 
$\mathcal{F}_{1}(\Omega_{1})$, $\mathcal{F}_{2}(\Omega_{2})$ and $E$ are Fr\'echet spaces and 
\begin{enumerate}
\item[(i)] $\mathcal{F}_{1}(\Omega_{1})$ and $\mathcal{F}_{2}(\Omega_{2})$ are nuclear, or
\item[(ii)] $E$ is nuclear, 
\end{enumerate}
then $T^{E}$ is surjective. 
\item[e)] If $S_{\mathcal{F}_{2}(\Omega_{2})}$ is surjective with inverse $R^{t}$,  
$T^{\K}$ is surjective, 
$\mathcal{F}_{1}(\Omega_{1})$ and $\mathcal{F}_{2}(\Omega_{2})$ are Fr\'echet spaces, 
$\ker T^{\K}$ is nuclear and has $(\Omega)$, and 
\begin{enumerate}
\item[(i)] $\mathcal{F}_{1}(\Omega_{1})$ and $\mathcal{F}_{2}(\Omega_{2})$ are Montel spaces, 
$E=F_{b}'$ where $F$ is a Fr\'echet space satisfying $(DN)$, or
\item[(ii)] $\mathcal{F}_{1}(\Omega_{1})$ and $\mathcal{F}_{2}(\Omega_{2})$ are Schwartz spaces, 
$E$ is an ultrabornological PLS-space satisfying $(PA)$, 
\end{enumerate}
then $T^{E}$ is surjective. 
\end{enumerate}
\end{thm}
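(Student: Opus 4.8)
The plan is to reduce the whole statement to the surjectivity of $T^{\K}\varepsilon\id_{E}$ and then to invoke splitting theory. By part c) it suffices to show that
\[
T^{\K}\varepsilon\id_{E}\colon\mathcal{F}_{1}(\Omega_{1})\varepsilon E\to\mathcal{F}_{2}(\Omega_{2})\varepsilon E
\]
is surjective, since $S_{\mathcal{F}_{2}(\Omega_{2})}$ is assumed surjective. As $T^{\K}$ is a continuous surjection between Fr\'echet spaces, the open mapping theorem makes it a topological homomorphism, so that
\[
0\to\ker T^{\K}\hookrightarrow\mathcal{F}_{1}(\Omega_{1})\xrightarrow{T^{\K}}\mathcal{F}_{2}(\Omega_{2})\to 0
\]
is a short exact sequence of Fr\'echet spaces, and the task becomes to show that the functor $-\,\varepsilon E$ keeps it exact on the right. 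Note that only surjectivity of $S_{\mathcal{F}_{2}(\Omega_{2})}$ is used here, so the explicit form of its inverse $R^{t}$ is not needed for this reduction.

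For case (i) I would first rewrite the $\varepsilon$-products as spaces of operators. Since $\mathcal{F}_{i}(\Omega_{i})$ is a (Fr\'echet--)Montel space, Proposition \ref{prop:isom_op_spaces} b) gives $\mathcal{F}_{i}(\Omega_{i})\varepsilon E=L_{b}((\mathcal{F}_{i}(\Omega_{i}))_{b}',E)$, and with $E=F_{b}'$ Proposition \ref{prop:isom_op_spaces} a) (using that a Montel space is semi-reflexive and coincides with its own strong bidual) yields $\mathcal{F}_{i}(\Omega_{i})\varepsilon F_{b}'\cong L_{b}(F,\mathcal{F}_{i}(\Omega_{i}))$ via taking adjoints. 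Tracing the definition $(T^{\K}\varepsilon\id_{E})(u)=u\circ(T^{\K})^{t}$ through these adjoint identifications, $T^{\K}\varepsilon\id_{E}$ becomes post-composition $L_{b}(F,\mathcal{F}_{1}(\Omega_{1}))\to L_{b}(F,\mathcal{F}_{2}(\Omega_{2}))$, $v\mapsto T^{\K}\circ v$. Applying $L(F,-)$ to the short exact sequence above, the obstruction to surjectivity of this post-composition lies in $\operatorname{Ext}^{1}(F,\ker T^{\K})$, which vanishes by Vogt's splitting theorem \cite{vogt1983}, because $F$ satisfies $(DN)$ and $\ker T^{\K}$ is nuclear and satisfies $(\Omega)$. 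Hence $T^{\K}\varepsilon\id_{E}$ is surjective.

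For case (ii) I would instead appeal to the splitting theory of Bonet and Doma\'nski for PLS-spaces \cite{D/L}. Here $\mathcal{F}_{1}(\Omega_{1})$ and $\mathcal{F}_{2}(\Omega_{2})$ are Fr\'echet--Schwartz spaces, $E$ is an ultrabornological PLS-space with $(PA)$, and the nuclear kernel $\ker T^{\K}$ has $(\Omega)$. Applying $-\,\varepsilon E$ to the short exact sequence and using the vanishing of the relevant $\operatorname{Proj}^{1}$-obstruction --- which is precisely what the combination of $(\Omega)$ on $\ker T^{\K}$ and $(PA)$ on $E$ guarantees in \cite{D/L} --- gives exactness of $0\to\ker T^{\K}\varepsilon E\to\mathcal{F}_{1}(\Omega_{1})\varepsilon E\to\mathcal{F}_{2}(\Omega_{2})\varepsilon E\to 0$, in particular the surjectivity of $T^{\K}\varepsilon\id_{E}$. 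In either case, feeding this back into part c) yields the surjectivity of $T^{E}$.

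The main obstacle I anticipate is the bookkeeping in case (i): one must verify carefully that the chain of adjoint and bidual isomorphisms from Proposition \ref{prop:isom_op_spaces} is natural in the correct variable, so that $T^{\K}\varepsilon\id_{E}$ is genuinely conjugate to $v\mapsto T^{\K}\circ v$, and that the obstruction is indeed $\operatorname{Ext}^{1}(F,\ker T^{\K})$ rather than its transpose. A secondary point of care is matching the precise hypotheses of the cited splitting theorems (placement of nuclearity, ultrabornologicity, and the exact roles of $(DN)$, $(\Omega)$ and $(PA)$) to the structural invariants assumed here; once the functorial picture is set up correctly, the vanishing of the obstruction, and hence the surjectivity, follow at once from \cite{vogt1983} and \cite{D/L}.
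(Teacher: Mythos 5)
Your treatment of part e) is essentially the paper's own argument: the paper likewise rewrites $\mathcal{F}_{i}(\Omega_{i})\varepsilon E$ as an operator space via \prettyref{prop:isom_op_spaces}, applies Vogt's splitting theorem in case (i) (in the form $\operatorname{Ext}^{1}(F,(\ker T^{\K})'')=0$, giving surjectivity of the post-composition map $L(F,\mathcal{F}_{1}(\Omega_{1})'')\to L(F,\mathcal{F}_{2}(\Omega_{2})'')$) and the Bonet--Doma\'nski theory in case (ii), and then transports the result back; your shortcut of feeding the surjectivity of $T^{\K}\varepsilon\id_{E}$ into part c) is a legitimate repackaging of the paper's final computation $T^{E}(\psi_{1}(u))=g$, which is exactly the identity underlying c). Two small cautions: in case (ii) the splitting machinery of \cite{D/L} is set up for $L(E_{b}',-)$, not for $-\,\varepsilon E$ directly, so you still need $\operatorname{Proj}^{1}E=0$ (from ultrabornologicity), the adjoint identification $L_{b}(\mathcal{F}_{n}(\Omega_{n})_{b}',E'')\cong L_{b}(E_{b}',\mathcal{F}_{n}(\Omega_{n})'')$ (which uses regularity of the dual LFS-space and reflexivity of $E$), and $\operatorname{Ext}^{1}_{PLS}((\ker T^{\K})_{b}',E)=0$; your phrase ``the relevant $\operatorname{Proj}^{1}$-obstruction'' papers over these steps.

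The genuine gap is coverage: the statement has five parts and your proposal proves only part e), while explicitly relying on part c) (and hence on the consistency identity of part a)) as if it were already established. Parts a)--c) are short -- a) is the computation $(T^{E}\circ S_{\mathcal{F}_{1}(\Omega_{1})})(u)(x)=u(\delta_{x}\circ T^{\K})=S_{\mathcal{F}_{2}(\Omega_{2})}((T^{\K}\varepsilon\id_{E})(u))(x)$ from consistency, and b), c) follow formally -- but they must be written down, since a) is the hinge on which everything else turns. More seriously, part d) is never addressed and does not follow from your splitting-theory reduction: there is no $(DN)$, $(\Omega)$ or $(PA)$ hypothesis there, and the surjectivity of $T^{\K}\varepsilon\id_{E}$ is instead obtained by identifying $\varepsilon$ with $\widehat{\otimes}_{\pi}$ via nuclearity of either the $\mathcal{F}_{i}(\Omega_{i})$ or of $E$, and then invoking the lifting theorem for surjections under $\widehat{\otimes}_{\pi}$ of Fr\'echet spaces. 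That is a different mechanism from the one you describe and is missing from the proposal.
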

\begin{proof}
a) Let $u\in\mathcal{F}_{1}(\Omega_{1})\varepsilon E$. Then  
\begin{align*}
 (T^{E}\circ S_{\mathcal{F}_{1}(\Omega_{1})})(u)(x)
&=u(\delta_{x}\circ T^{\K})=(u\circ (T^{\K})^{t})(\delta_{x})
 =(T^{\K}\varepsilon\id_{E})(u)(\delta_{x})\\
&=S_{\mathcal{F}_{2}(\Omega_{2})}\bigl((T^{\K}\varepsilon\id_{E})(u)\bigr)(x),\quad x\in \Omega_{2},
\end{align*}
as $(T^{E},T^{\K})$ is consistent for $(\mathcal{F}_{1}, E)$, which proves part a). 

b) If $S_{\mathcal{F}_{1}(\Omega_{1})}$ is surjective, then $S_{\mathcal{F}_{1}(\Omega_{1})}$ is 
an isomorphism, because it is an isomorphism into, and we have 
\[
T^{E}=S_{\mathcal{F}_{2}(\Omega_{2})}\circ(T^{\K}\varepsilon\id_{E})\circ S^{-1}_{\mathcal{F}_{1}(\Omega_{1})}
\]
by part a). If $T^{\K}$ is injective, then $T^{\K}\varepsilon\id_{E}$ 
is also injective by \cite[Chap.\ I, \S1, Proposition 1, p.\ 20]{Sch1} and thus $T^{E}$ by the formula above as well 
since $S_{\mathcal{F}_{1}(\Omega_{1})}$ is an isomorphism and $S_{\mathcal{F}_{2}(\Omega_{2})}$ an isomorphism into. 
If $S_{\mathcal{F}_{2}(\Omega_{2})}$ is surjective and $T^{\K}$ an isomorphism, 
then $S_{\mathcal{F}_{2}(\Omega_{2})}$ and $T^{\K}\varepsilon\id_{E}$ are isomorphisms, the latter by 
\cite[Chap.\ I, \S1, Proposition 1, p.\ 20]{Sch1} and its inverse is $(T^{\K})^{-1}\varepsilon\id_{E}$. 
The rest of part b) follows from the formula for $T^{E}$ above. 

c) Let $f\in\mathcal{F}_{2}(\Omega_{2},E)$. Then there is $g\in\mathcal{F}_{1}(\Omega_{1})\varepsilon E$ such 
that $(S_{\mathcal{F}_{2}(\Omega_{2})}\circ(T^{\K}\varepsilon\id_{E}))(g)=f$. Hence we obtain  
$h:=S_{\mathcal{F}_{1}(\Omega_{1})}(g)\in\mathcal{F}_{1}(\Omega_{1},E)$ and $T^{E}(h)=f$ by part a).

d) For $n=1,2$ the continuous linear injection (see \eqref{eq:tensor_into_eps_product})
\[
\Theta_{n}\colon \mathcal{F}_{n}(\Omega_{n})\otimes_{\pi} E \to \mathcal{F}_{n}(\Omega_{n})\varepsilon E,\; 
\sum_{j=1}^{k} f_{j}\otimes e_{j} \longmapsto \bigl[y\mapsto \sum_{j=1}^{k} y(f_{j})e_{j}\bigr],
\]
from the \gls{projtenprod} $\gls{FotimespiE}$ with the projective topology 
extends to a continuous linear map 
$\widehat{\Theta}_{n}\colon \mathcal{F}_{n}(\Omega_{n})\widehat{\otimes}_{\pi} E 
\to \mathcal{F}_{n}(\Omega_{n})\varepsilon E$
on the completion $\gls{FotimespiEcompl}$ 
of $\mathcal{F}_{n}(\Omega_{n})\otimes_{\pi} E$. 
The map $\widehat{\Theta}_{n}$ is also a topological isomorphism since $\mathcal{F}_{n}(\Omega_{n})$ is nuclear 
for $n=1,2$ in case (i) resp.\ $E$ is nuclear in case (ii).
Furthermore, $T^{\K}\otimes_{\pi}\operatorname{id}_{E}\colon 
\mathcal{F}_{1}(\Omega_{1})\otimes_{\pi}E\to \mathcal{F}_{2}(\Omega_{2})\otimes_{\pi}E$ 
is defined by the relation $\Theta_{2}\circ (T^{\K} \otimes_{\pi}\operatorname{id}_{E})
=(T^{\K}\varepsilon \operatorname{id}_{E})\circ \Theta_{1}$. We denote by 
$T^{\K}\, \widehat{\otimes}_{\pi}\operatorname{id}_{E}$ the continuous linear extension of 
$T^{\K} \otimes_{\pi}\operatorname{id}_{E}$ to the completion 
$\mathcal{F}_{1}(\Omega_{1})\widehat{\otimes}_{\pi} E$. 
Moreover, $\mathcal{F}_{n}(\Omega_{n})$ for $n=1,2$ and $E$ are Fr\'echet spaces, 
$T^{\K}$ and $\id_{E}$ are linear, continuous and surjective,
so $T^{\K}\,\widehat{\otimes}_{\pi}\operatorname{id}_{E}$ is surjective by \cite[10.24 Satz, p.\ 255]{Kaballo}.
We observe that 
\[
T^{\K}\varepsilon \operatorname{id}_{E}
=\widehat{\Theta}_{2}\circ(T^{\K}\, \widehat{\otimes}_{\pi}\operatorname{id}_{E})
  \circ\widehat{\Theta}^{\,-1}_{1}
\]
and deduce that $T^{\K}\varepsilon \operatorname{id}_{E}$ is surjective. 
Now, we apply part c), which proves part d). 

e) Throughout this proof we use the notation $X'':=(X_{b}')_{b}'$ for a locally convex Hausdorff space $X$ 
and $T:=T^{\K}$.
The space $\mathcal{F}_{1}(\Omega_{1})$ is a Fr\'echet space 
and so its closed subspace $\ker T$ as well. 
Further, $\mathcal{F}_{n}(\Omega_{n})$ is a Montel space for $n=1,2$ and $\ker T$ nuclear, 
thus they are reflexive. 
The sequence
\begin{equation}\label{thm19.1}
0\to\ker T\overset{i}{\to}\mathcal{F}_{1}(\Omega_{1})
\overset{T}{\to}\mathcal{F}_{2}(\Omega_{2})\to 0,
\end{equation}
where $i$ means the inclusion, is a topologically exact sequence of Fr\'echet spaces because 
$T$ is surjective by assumption. Let us denote by $J_{0}\colon\ker T\to(\ker T)''$ and 
$J_{n}\colon\mathcal{F}_{n}(\Omega_{n})\to\mathcal{F}_{n}(\Omega_{n})''$ for $n=1,2$ the canonical embeddings 
which are topological isomorphisms since $\ker T$ and
$\mathcal{F}_{n}(\Omega_{n})$ are reflexive for $n=1,2$. 
Then the exactness of \eqref{thm19.1} implies that
\begin{equation}\label{thm19.2}
0\to(\ker T)''\overset{i_{0}}{\to}\mathcal{F}_{1}(\Omega_{1})''
\overset{T_{1}}{\to}\mathcal{F}_{2}(\Omega_{2})''\to 0,
\end{equation}
where $i_{0}:=J_{0}\circ i\circ J^{-1}_{0}$ and $T_{1}:=J_{2}\circ T\circ J^{-1}_{1}$, 
is an exact topological sequence. 
This exact sequence is topological because the (strong) bidual of a Fr\'echet space is again a Fr\'echet space 
by \cite[Corollary 25.10, p.\ 298]{meisevogt1997}.

(i) Let $E=F_{b}'$ where $F$ is a Fr\'echet space with $(DN)$. 
Then $\operatorname{Ext}^{1}(F,(\ker T)'')=0$ by \cite[5.1 Theorem, p.\ 186]{vogt1987} 
since $\ker T$ is nuclear and satisfies $(\Omega)$ and 
therefore $(\ker T)''$ as well. 
Combined with the exactness of \eqref{thm19.2} this implies that the sequence
\[
0\to L(F,(\ker T)'')\overset{i^{\ast}_{0}}{\to}L(F,\mathcal{F}_{1}(\Omega_{1})'')
\overset{T^{\ast}_{1}}{\to}L(F,\mathcal{F}_{2}(\Omega_{2})'')\to 0
\]
is exact by \cite[Proposition 2.1, p.\ 13--14]{palamodov1971} where $i^{\ast}_{0}(B):=i_{0}\circ B$ and
$T^{\ast}_{1}(D):=T_{1}\circ D$ for 
$B\in L(F,(\ker T)'')$ and $D\in L(F,\mathcal{F}_{1}(\Omega_{1})'')$. 
In particular, we obtain that
\begin{equation}\label{thm19.3}
T^{\ast}_{1}\colon L(F,\mathcal{F}_{1}(\Omega_{1})'')\to
L(F,\mathcal{F}_{2}(\Omega_{2})'')
\end{equation}
is surjective. 
Via $E=F_{b}'$ and \prettyref{prop:isom_op_spaces} ($X=\mathcal{F}_{n}(\Omega_{n})$ and $Y=F$) we have 
the isomorphisms into
\begin{gather*}
\psi_{n}:=S_{\mathcal{F}_{n}(\Omega_{n})}\circ{^{\mathrm{t}}(\cdot)}\colon L(F,\mathcal{F}_{n}(\Omega_{n})'')\to 
\mathcal{F}_{n}(\Omega_{n},E),\\
\psi_{n}(u)=\bigl(S_{\mathcal{F}_{n}(\Omega_{n})}\circ{^{\mathrm{t}}(\cdot)}\bigr)(u)
           =\bigl[x\mapsto{^{\mathrm{t}}u}(\delta_{x})\bigr],
\end{gather*}
for $n=1,2$ and the inverse
\[ 
 \psi^{-1}_{2}(f)
=(S\circ{^{\mathrm{t}}(\cdot)})^{-1}(f)
=({^{t}(\cdot)}\circ S_{\mathcal{F}_{2}(\Omega_{2})}^{-1})(f)
={^{t}(\mathcal{J}^{-1}\circ R_{f}^{t})},\quad f\in\mathcal{F}_{2}(\Omega_{2},E).
\]
Let $g\in\mathcal{F}_{2}(\Omega_{2},E)$. 
Then $\psi^{-1}_{2}(g)\in L(F,\mathcal{F}_{2}(\Omega_{2})'')$ and by the surjectivity of \eqref{thm19.3} 
there is $u\in L(F,\mathcal{F}_{1}(\Omega_{1})'')$ such that 
$T^{\ast}_{1}u=\psi^{-1}_{2}(g)$. 
So we get $\psi_{1}(u)\in\mathcal{F}_{1}(\Omega_{1},E)$. 
Next, we show that $T^{E}\psi_{1}(u)=g$ is valid. 
Let $y\in F$ and $x\in\Omega_{2}$. Then 
\[
T^{E}(\psi_{1}(u))(x)={^{\mathrm{t}}u}(\delta_{x}\circ T)
\]
by consistency and
\begin{align*}
 T^{E}(\psi_{1}(u))(x)(y)
&={^{\mathrm{t}}u}(\delta_{x}\circ T)(y)
 =u(y)(\delta_{x}\circ T)
 =\langle\delta_{x}\circ T,J^{-1}_{1}(u(y))\rangle\\
&=\langle\delta_{x},TJ^{-1}_{1}(u(y))\rangle
 =\langle[J_{2}\circ T\circ J^{-1}_{1}](u(y)),\delta_{x}\rangle
 =\langle(T_{1}\circ u)(y),\delta_{x}\rangle\\
&=\langle(T^{\ast}_{1}u)(y),\delta_{x}\rangle
 =\psi^{-1}_{2}(g)(y)(\delta_{x})
 ={^{t}(\mathcal{J}^{-1}\circ R_{g}^{t})}(y)(\delta_{x})\\
&=(\mathcal{J}^{-1}\circ R_{g}^{t})(\delta_{x})(y)
 =\mathcal{J}^{-1}(\mathcal{J}(g(x))(y)
 =g(x)(y).
\end{align*}
Thus $T^{E}(\psi_{1}(u))(x)=g(x)$ for every $x\in\Omega_{2}$, which proves the surjectivity.

(ii) Let $E$ be an ultrabornological PLS-space satisfying $(PA)$. 
Since the nuclear Fr\'echet space $\ker T$ is also a Schwartz space, 
its strong dual $(\ker T)_{b}'$ is a DFS-space. 
By \cite[Theorem 4.1, p.\ 577]{Dom1} we obtain $\operatorname{Ext}^{1}_{PLS}((\ker T)_{b}',E)=0$ 
as the bidual $(\ker T)''$ satisfies $(\Omega)$, $E$ is a PLS-space satisfying $(PA)$ 
and condition (c) in the theorem is fulfilled because $(\ker T)_{b}'$ is the strong dual of 
a nuclear Fr\'echet space.
Moreover, we have $\operatorname{Proj}^{1} E=0$ due to \cite[Corollary 3.3.10, p.\ 46]{wengenroth2003} 
because $E$ is an ultrabornological PLS-space. 
Then the exactness of the sequence \eqref{thm19.2}, \cite[Theorem 3.4, p.\ 567]{Dom1} 
and \cite[Lemma 3.3, p.\ 567]{Dom1} 
(in the lemma the same condition (c) as in \cite[Theorem 4.1, p.\ 577]{Dom1} is fulfilled 
and we choose $H=(\ker T)''$,
$F=\mathcal{F}_{1}(\Omega_{1})''$ and $G=\mathcal{F}_{2}(\Omega_{2})''$), imply that the sequence
\[
0\to L(E_{b}',(\ker T)'')
\overset{i^{\ast}_{0}}{\to}L(E_{b}',\mathcal{F}_{1}(\Omega_{1})'')
\overset{T^{\ast}_{1}}{\to}L(E_{b}',\mathcal{F}_{2}(\Omega_{2})'')\to 0
\]
is exact. The maps $i^{\ast}_{0}$ and $T^{\ast}_{1}$ are defined as in part (i). 
Especially, we get that
\begin{equation}\label{thm19.4}
T^{\ast}_{1}\colon L(E_{b}',\mathcal{F}_{1}(\Omega_{1})'')\to
L(E_{b}',\mathcal{F}_{2}(\Omega_{2})'')
\end{equation}
is surjective. 

By \cite[Remark 4.4, p.\ 1114]{D/L} we have $L_{b}(\mathcal{F}_{n}(\Omega_{n})_{b}',E'')
\cong L_{b}(E_{b}',\mathcal{F}_{n}(\Omega_{n})'')$ for $n=1,2$ via taking adjoints 
since $\mathcal{F}_{n}(\Omega_{n})$, being a Fr\'echet--Schwartz space, is a PLS-space and hence 
its strong dual an LFS-space, which is regular by 
\cite[Corollary 6.7, $10.\Leftrightarrow 11.$, p.\ 114]{wengenroth2003}, 
and $E$ is an ultrabornological PLS-space, in particular, reflexive by \cite[Theorem 3.2, p.\ 58]{domanski2004}. 
In addition, the map
\[
P\colon L_{b}(\mathcal{F}_{n}(\Omega_{n})_{b}',E'')\to 
L_{b}(\mathcal{F}_{n}(\Omega_{n})_{b}',E),
\]
defined by $P(u)(y):=\mathcal{J}^{-1}(u(y))$ for $u\in L(\mathcal{F}_{n}(\Omega_{n})_{b}',E'')$ 
and $y\in \mathcal{F}_{n}(\Omega_{n})'$, is an isomorphism because $E$ is reflexive.
Due to \prettyref{prop:isom_op_spaces} b) with $X=\mathcal{F}_{n}(\Omega_{n})$ we obtain 
the isomorphisms into
\begin{gather*}
\psi_{n}:=S\circ\mathcal{J}^{-1}\circ{^{\mathrm{t}}(\cdot)}\colon L_{b}(E_{b}',\mathcal{F}_{n}(\Omega_{n})'')
\to\mathcal{F}_{n}(\Omega_{n},E),\\
\psi_{n}(u)=[S_{\mathcal{F}_{n}(\Omega_{n})}\circ\mathcal{J}^{-1}\circ{^{\mathrm{t}}(\cdot)}](u)
           =\bigl[x\mapsto\mathcal{J}^{-1}({^{\mathrm{t}}u}(\delta_{x}))\bigr],
\end{gather*}
for $n=1,2$ and the inverse given by 
\begin{align*}
 \psi^{-1}_{2}(f)
&=(S_{\mathcal{F}_{2}(\Omega_{2})}\circ\mathcal{J}^{-1}\circ{^{\mathrm{t}}(\cdot)})^{-1}(f)
 =[{^{t}(\cdot)}\circ\mathcal{J}\circ S^{-1}_{\mathcal{F}_{2}(\Omega_{2})}](f)
 ={^{t}(\mathcal{J}\circ\mathcal{J}^{-1}\circ R_{f}^{t})}\\
&={^{t}(R_{f}^{t})}
\end{align*}
for $f\in\mathcal{F}_{2}(\Omega_{2},E)$.

Let $g\in\mathcal{F}_{2}(\Omega_{2},E)$. 
Then $\psi^{-1}_{2}(g)\in L(E_{b}',\mathcal{F}_{2}(\Omega_{2})'')$ 
and by the surjectivity of \eqref{thm19.4} there exists $u\in L(E_{b}',\mathcal{F}_{1}(\Omega_{1})'')$ 
such that $T^{\ast}_{1}u=\psi^{-1}_{2}(g)$. 
So we have $\psi_{1}(u)\in\mathcal{F}_{1}(\Omega_{1},E)$. 
The last step is to show that $T^{E}\psi_{1}(u)=g$. 
As in part (i) we gain for every $x\in\Omega_{2}$
\[	
T^{E}(\psi_{1}(u))(x)=\mathcal{J}^{-1}({^{\mathrm{t}}u}(\delta_{x}\circ T))
\]
by consistency and for every $y\in E'$
\begin{align*}
  {^{\mathrm{t}}u}(\delta_{x}\circ T)(y)
&=u(y)(\delta_{x}\circ T)
 =(T^{\ast}_{1}u)(y)(\delta_{x})
 =\psi^{-1}_{2}(g)(y)(\delta_{x})
 ={^{t}(R_{g}^{t})}(y)(\delta_{x})\\
&=\delta_{x}(y\circ g)
 =y(g(x))
 =\mathcal{J}(g(x))(y).
\end{align*}
Thus we have ${^{\mathrm{t}}u}(\delta_{x}\circ T)=\mathcal{J}(g(x))$ and therefore
$T^{E}(\psi_{1}(u))(x)=g(x)$ for all $x\in\Omega_{2}$.
\end{proof}

\prettyref{thm:eps_prod_surj_inj} d) and e) are generalisations of \cite[Corollary 4.3, p.\ 2689]{kruse2018_5}
and \cite[Theorem 5, p.\ 7--8]{kruse2019_1} where 
$T^{\C}$ is the Cauchy--Riemann operator $\overline{\partial}$ on certain weighted spaces 
$\mathcal{CV}^{\infty}(\Omega)$ of smooth functions.
Our next result is the well-known application of tensor product theory and splitting 
theory to linear partial differential operators we already mentioned in the introduction. 

\begin{cor}\label{cor:surjectivity_hypo_elliptic}
Let $E$ be a locally complete lcHs, $\Omega_{1}\subset\R^{d}$ open and $P(\partial)^{\K}$ 
be a linear partial differential operator with $\mathcal{C}^{\infty}$-smooth coefficients.
Then the following holds:
\begin{enumerate}
\item[a)] $P(\partial)^{E}=S_{\mathcal{C}^{\infty}(\Omega_{1})}\circ(P(\partial)^{\K}\varepsilon\id_{E})
\circ S^{-1}_{\mathcal{C}^{\infty}(\Omega_{1})}$.
\item[b)] If $\K=\C$, $P(D):=P(D)^{\C}:=P(-\iu\partial)^{\C}$ has constant coefficients and is
\begin{enumerate}
\item[(i)] elliptic, or
\item[(ii)] hypoelliptic and $\Omega_{1}$ convex,
\end{enumerate}
and 
\begin{enumerate}
\item[(iii)] $E$ is a Fr\'echet space, or
\item[(iv)] $E=F_{b}'$ where $F$ is a Fr\'echet space satisfying $(DN)$, $d\geq 2$, or
\item[(v)] $E$ is an ultrabornological PLS-space satisfying $(PA)$, $d\geq 2$,
\end{enumerate}
then $P(D)^{E}\colon\mathcal{C}^{\infty}(\Omega_{1},E)\to\mathcal{C}^{\infty}(\Omega_{1},E)$ is surjective. 
\end{enumerate}
\end{cor}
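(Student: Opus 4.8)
The plan is to read this off \prettyref{thm:eps_prod_surj_inj} with $\mathcal{F}_{1}(\Omega_{1})=\mathcal{F}_{2}(\Omega_{2})=\mathcal{C}^{\infty}(\Omega_{1})=\mathcal{CW}^{\infty}(\Omega_{1})$, a nuclear Fr\'echet space, and $(T^{E},T^{\K})=(P(\partial)^{E},P(\partial)^{\K})$ (for part b) the constant-coefficient $P(D)$, a special case). First I would record the two structural inputs that feed the machine. Since $E$ is locally complete, \prettyref{ex:diff_usual} a) for $k=\infty$ gives that $S:=S_{\mathcal{C}^{\infty}(\Omega_{1})}\colon\mathcal{C}^{\infty}(\Omega_{1})\varepsilon E\to\mathcal{C}^{\infty}(\Omega_{1},E)$ is an isomorphism whose inverse is the map $R^{t}$ of \prettyref{thm:full_linearisation}. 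Next, $P(\partial)^{\K}\colon\mathcal{C}^{\infty}(\Omega_{1})\to\mathcal{C}^{\infty}(\Omega_{1})$ is continuous, and $(P(\partial)^{E},P(\partial)^{\K})$ is a consistent family for $(\mathcal{C}^{\infty},E)$: indeed $S(u)\in\mathcal{C}^{\infty}(\Omega_{1},E)$ and $(\partial^{\beta})^{E}S(u)(x)=u(\delta_{x}\circ(\partial^{\beta})^{\K})$ by \prettyref{prop:diff_cons_barrelled} c) (barrelledness is free for the Fr\'echet space $\mathcal{C}^{\infty}(\Omega_{1})$), so summing against the coefficients exactly as in \eqref{eq:cons_partial_diff} yields $P(\partial)^{E}S(u)(x)=u(\delta_{x}\circ P(\partial)^{\K})$, which is condition a)(ii) of \prettyref{def:cons_strong}.

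For part a) I would invoke \prettyref{thm:eps_prod_surj_inj} a), giving $P(\partial)^{E}\circ S=S\circ(P(\partial)^{\K}\varepsilon\id_{E})$; composing on the right with $S^{-1}$ (legitimate as $S$ is an isomorphism) produces the claimed factorisation $P(\partial)^{E}=S\circ(P(\partial)^{\K}\varepsilon\id_{E})\circ S^{-1}$.

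For part b) the one genuinely analytic input is the surjectivity of the scalar operator $P(D)^{\K}\colon\mathcal{C}^{\infty}(\Omega_{1})\to\mathcal{C}^{\infty}(\Omega_{1})$, which holds because $\Omega_{1}$ is $P$-convex for supports: for elliptic $P(D)$ this is automatic on every open set, and for hypoelliptic $P(D)$ it holds on convex $\Omega_{1}$ since convex sets are $P$-convex for supports for every operator (classical, e.g.\ H\"ormander). Granting this, both $S$ and $P(D)^{\K}$ are surjective, so I would branch on the hypothesis on $E$. In case (iii) $E$ is Fr\'echet while $\mathcal{C}^{\infty}(\Omega_{1})$ is nuclear Fr\'echet, so \prettyref{thm:eps_prod_surj_inj} d)(i) applies directly. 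In cases (iv) and (v) I would use the splitting parts \prettyref{thm:eps_prod_surj_inj} e)(i) resp.\ e)(ii): here $\mathcal{C}^{\infty}(\Omega_{1})$ is simultaneously Montel and Schwartz (being nuclear Fr\'echet), $\ker P(D)^{\K}$ is a closed subspace of a nuclear space and hence nuclear, and—the crucial point—$\ker P(D)^{\K}$ satisfies the invariant $(\Omega)$ for $d\geq 2$, while the remaining hypotheses on $E$ ($E=F_{b}'$ with $F$ Fr\'echet satisfying $(DN)$, resp.\ $E$ an ultrabornological PLS-space with $(PA)$) are assumed. Each application returns the surjectivity of $P(D)^{E}$.

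The main obstacle is \emph{not} the $\varepsilon$-product bookkeeping, which is entirely supplied by \prettyref{thm:eps_prod_surj_inj}, but the verification of its scalar hypotheses: first, the surjectivity of $P(D)^{\K}$ on $\mathcal{C}^{\infty}(\Omega_{1})$, resting on $P$-convexity of $\Omega_{1}$ for supports; and second, the property $(\Omega)$ of $\ker P(D)^{\K}$ needed in cases (iv) and (v), which is what forces the restriction $d\geq 2$ and must be cited from the literature on the linear topological invariants of kernels of elliptic (more generally, suitable hypoelliptic) operators.
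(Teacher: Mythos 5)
Your proposal is correct and follows essentially the same route as the paper: part a) via \prettyref{thm:eps_prod_surj_inj} a) together with \prettyref{ex:diff_usual} a) and the consistency computation \eqref{eq:cons_partial_diff}, and part b) by feeding the scalar surjectivity of $P(D)^{\C}$ (H\"ormander) and, for cases (iv)--(v), the nuclearity and property $(\Omega)$ of $\ker P(D)^{\C}$ into \prettyref{thm:eps_prod_surj_inj} d) and e). The only cosmetic difference is that the paper cites the $(\Omega)$-property explicitly from Vogt (elliptic case) and Petzsche (hypoelliptic, convex case), which is exactly the literature input you flagged as needing citation.
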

\begin{proof}
Part a) follows from \prettyref{thm:eps_prod_surj_inj} a), \prettyref{ex:diff_usual} a) 
and the consistency of $(P(\partial)^{E},P(\partial)^{\K})$
because \eqref{eq:cons_partial_diff} holds for $u\in \mathcal{CW}^{\infty}(\Omega_{1})\varepsilon E$ as well. 

Let us turn to part b). The inverse of $S_{\mathcal{C}^{\infty}(\Omega_{1})}$ 
is given by $R^{t}$ by \prettyref{ex:diff_usual} a). The map 
$P(D)=P(D)^{\C}\colon\mathcal{C}^{\infty}(\Omega_{1})\to\mathcal{C}^{\infty}(\Omega_{1})$ 
is surjective by \cite[Corollary 10.6.8, p.\ 43]{H2} and \cite[Theorem 10.6.2, p.\ 41]{H2} in case (ii) 
resp.\ by \cite[Corollary 10.8.2, p.\ 51]{H2} in case (i). 
The space $\mathcal{CW}^{\infty}(\Omega_{1})$, i.e.\ 
$\mathcal{C}^{\infty}(\Omega_{1})$ with its usual topology (see \prettyref{ex:weighted_smooth_functions} b)), 
is a nuclear Fr\'echet space and thus 
its closed subspace $\ker P(D)^{\C}$ as well. In case (i) $\ker P(D)^{\C}$ has 
$(\Omega)$  due to \cite[Proposition 2.5 (b), p.\ 173]{vogt1983} and in case (ii) 
due to \cite[4.5 Corollary (a), p.\ 202]{petzsche1980}. Hence the surjectivity of 
$P(D)^{E}$ follows from \prettyref{thm:eps_prod_surj_inj} d)+e).  
\end{proof}

Recently, it was shown in \cite[Theorem 4.2, p.\ 13]{debrouwere_kalmes2022} that in the case 
that $\Omega_{1}$ is convex, $\ker P(D)^{\C}$ has property $(\Omega)$ for any $P(D)$ with constant coefficients 
(so without the assumption of $P(D)$ being hypoelliptic). Hence we may replace the assumption of $P(D)$ 
being hypoelliptic in (ii) by the assumption that 
$P(D)^{\C}\colon\mathcal{C}^{\infty}(\Omega_{1})\to\mathcal{C}^{\infty}(\Omega_{1})$ 
is surjective. Even more recently, a necessary and sufficient condition for the surjectivity of $P(D)^{\C}$ and 
$\ker P(D)^{\C}$ having property $(\Omega)$ for $P(D)$ with constant coefficients and general open $\Omega_{1}\subset\R^{d}$ 
was derived in \cite[Theorem 1.1. (a), p.\ 3]{debrouwere_kalmes2023} using shifted fundamental solutions. 

Even though \prettyref{cor:surjectivity_hypo_elliptic} b) is known, 
it is often proved without using tensor products or splitting theory 
(see e.g.\ \cite[Theorem 10.10, p.\ 240]{Kaballo}) 
or it is phrased as the surjectivity of $P(D)^{\C}\widehat{\otimes}_{\pi}\id_{E}$ 
(see e.g.\ \cite[Eq.\ (52.4), p.\ 541]{Treves}) and the proof of the relation 
\[
 P(D)^{E}
=S_{\mathcal{C}^{\infty}(\Omega_{1})}\circ
 (\widehat{\Theta}_{1}\circ(P(D)^{\C}\, \widehat{\otimes}_{\pi}\operatorname{id}_{E})\circ\widehat{\Theta}^{\,-1}_{1})
 \circ S^{-1}_{\mathcal{C}^{\infty}(\Omega_{1})}
\]
for Fr\'echet spaces $E$ is omitted (see e.g.\ \cite[p.\ 545--546]{Treves}), 
or only the surjectivity of $T_{1}^{\ast}=P(D)_{1}^{\ast}$ in part e) of \prettyref{thm:eps_prod_surj_inj} 
is actually shown and it is only stated but not proved
that this implies the surjectivity of $P(D)^{E}$ (see e.g.\ the statement of surjectivity of $P(D)^{E}$ in 
\cite[p.\ 168]{vogt1983} for elliptic $P(D)$ and $E=F_{b}'$ for a Fr\'echet space $F$ with $(DN)$
and that it is `only' shown that $P(D)_{1}^{\ast}$ is surjective by 
\cite[Proposition 2.5 (b), p.\ 173]{vogt1983} and \cite[Theorem 2.4 (b), p.\ 173]{vogt1983} where the symbol
$P(D)^{\ast}$ is used instead of $P(D)_{1}^{\ast}$ in \cite[p.\ 172]{vogt1983} since the isomorphism 
$J_{1}=J_{2}$ is omitted). So, apart from being the probably most classical application of tensor products or
splitting theory, that is the reason why we still included \prettyref{cor:surjectivity_hypo_elliptic}.
 
Let us give another application of \prettyref{thm:eps_prod_surj_inj} d) and e), namely, 
a vector-valued \gls{BorelRitt}.

\begin{thm}\label{thm:Borel_Ritt}
Let $E$ be an lcHs and $(x_{n})_{n\in\N_{0}}$ a sequence in $E$. If
\begin{enumerate}
\item[(i)] $E$ is a Fr\'echet space, or 
\item[(ii)] $E=F_{b}'$ where $F$ is a Fr\'echet space satisfying $(DN)$, or
\item[(iii)] $E$ is an ultrabornological PLS-space satisfying $(PA)$, 
\end{enumerate}
then there is $f\in\mathcal{C}^{\infty}_{2\pi}(\R,E)$ such that $(\partial^{n})^{E}f(0)=x_{n}$ 
for all $n\in\N_{0}$.
\end{thm}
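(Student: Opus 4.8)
The plan is to recognise this as a lifting statement for the scalar Borel map and to apply \prettyref{thm:eps_prod_surj_inj}. I set $\Omega_{1}:=\R$ with $\mathcal{F}_{1}(\Omega_{1}):=\mathcal{C}^{\infty}_{2\pi}(\R)$ and $\mathcal{F}_{1}(\Omega_{1},E):=\mathcal{C}^{\infty}_{2\pi}(\R,E)$, which are $\varepsilon$-compatible by \prettyref{ex:smooth_periodic_eps_compat} since each of the three classes of $E$ in (i)--(iii) is locally complete. On the target side I set $\Omega_{2}:=\N_{0}$, so that $\mathcal{F}_{2}(\Omega_{2}):=\K^{\N_{0}}$ and $\mathcal{F}_{2}(\Omega_{2},E):=E^{\N_{0}}$ carry the topology of pointwise convergence and are $\varepsilon$-compatible by \prettyref{ex:space_of_all_functions}. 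The relevant operators are the Taylor coefficient maps $T^{\K}\colon f\mapsto(f^{(n)}(0))_{n\in\N_{0}}$ and $T^{E}\colon f\mapsto((\partial^{n})^{E}f(0))_{n\in\N_{0}}$, so that surjectivity of $T^{E}$ is exactly the assertion.

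First I would verify that $(T^{E},T^{\K})$ is a consistent family for $(\mathcal{F}_{1},E)$: here $\omega=\N_{0}$ and $T^{\K}_{n}=\delta_{0}\circ(\partial^{n})^{\K}\in\mathcal{C}^{\infty}_{2\pi}(\R)'$, while for $u\in\mathcal{C}^{\infty}_{2\pi}(\R)\varepsilon E$ \prettyref{prop:diff_cons_barrelled} c) (applicable since $\mathcal{C}^{\infty}_{2\pi}(\R)$ is a barrelled topological subspace of $\mathcal{CW}^{\infty}(\R)$) gives $T^{E}S(u)(n)=(\partial^{n})^{E}S(u)(0)=u(\delta_{0}\circ(\partial^{n})^{\K})=u(T^{\K}_{n})$. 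Continuity of $T^{\K}$ is immediate because each coordinate functional is continuous and $\K^{\N_{0}}$ carries the pointwise topology. The scalar surjectivity of $T^{\K}$ is the periodic Borel theorem: given $(a_{n})$, classical Borel furnishes $g\in\mathcal{C}^{\infty}(\R)$ with $\operatorname{supp}g\subset(-\pi,\pi)$ and $g^{(n)}(0)=a_{n}$, and its $2\pi$-periodisation $f(x):=\sum_{k\in\Z}g(x-2\pi k)$ lies in $\mathcal{C}^{\infty}_{2\pi}(\R)$ with $f^{(n)}(0)=a_{n}$.

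With these in place the three cases follow from \prettyref{thm:eps_prod_surj_inj}, using that $S_{\mathcal{F}_{2}(\Omega_{2})}$ is an isomorphism whose inverse is $R^{t}$ by \prettyref{thm:full_linearisation} (via \prettyref{cor:full_linearisation} (iv)), and that $\mathcal{C}^{\infty}_{2\pi}(\R)$ and $\K^{\N_{0}}$ are nuclear Fr\'echet, hence Montel and Schwartz. For (i), where $E$ is Fr\'echet, I apply part d)(i); for (ii) part e)(i); for (iii) part e)(ii). In each case the surjectivity of $S_{\mathcal{F}_{2}(\Omega_{2})}$ and of $T^{\K}$, already established, are among the hypotheses.

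The main obstacle is the hypothesis in part e) that $\ker T^{\K}$ be nuclear with property $(\Omega)$. Nuclearity is clear, being a closed subspace of the nuclear Fr\'echet space $\mathcal{C}^{\infty}_{2\pi}(\R)$. For $(\Omega)$ I would identify $\ker T^{\K}$ — the $2\pi$-periodic smooth functions flat at $0$ — with the test-function space $\mathcal{D}_{[0,2\pi]}:=\{g\in\mathcal{C}^{\infty}(\R)\;|\;\operatorname{supp}g\subseteq[0,2\pi]\}$ via the mutually inverse maps given by restriction to $[0,2\pi]$ followed by extension by zero, and by $2\pi$-periodisation; both are continuous linear bijections between Fr\'echet spaces, hence isomorphisms by the open mapping theorem. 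Since $\mathcal{D}_{[0,2\pi]}\cong s$ and $s$ has $(\Omega)$, this supplies the missing hypothesis. The only delicate point in the identification is checking that periodising a function flat at both endpoints of $[0,2\pi]$ yields a function flat at $0$ (a direct computation, as only the $k=0$ summand is active near $(0,2\pi)$ and all summands vanish to infinite order at the endpoints).
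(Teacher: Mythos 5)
Your proposal is correct and follows essentially the same route as the paper: both reduce the statement to the surjectivity of the scalar Borel map $T^{\K}$ and then invoke \prettyref{thm:eps_prod_surj_inj} d) for case (i) and e) for cases (ii)--(iii), with the key hypothesis being that $\ker T^{\K}$ is nuclear and has $(\Omega)$ because it is isomorphic to $s$. The only differences are that you prove the periodic scalar Borel theorem and the identification $\ker T^{\K}\cong s$ by hand (via periodisation and via $\mathcal{D}_{[0,2\pi]}$, respectively), where the paper cites \cite[Satz 9.12, p.\ 206]{Kaballo} and the proof of \cite[Lemma 31.3, p.\ 392--393]{meisevogt1997}; both of your self-contained arguments are sound.
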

\begin{proof}
By the Borel--Ritt theorem \cite[Satz 9.12, p.\ 206]{Kaballo} the map 
\[
T^{\K}\colon\mathcal{C}^{\infty}_{2\pi}(\R)\to\K^{\N_{0}},\;
T^{\K}(f):=\bigl((\partial^{n})^{\K}f(0)\bigr)_{n\in\N_{0}},
\]
is surjective and obviously linear and continuous as well. 
Now, we define the map $T^{E}\colon\mathcal{C}^{\infty}_{2\pi}(\R,E)\to E^{\N_{0}}$ 
by replacing $\K$ by $E$ in the definition of $T^{\K}$. 
Due to \prettyref{ex:space_of_all_functions} $\K^{\N_{0}}$ and $E^{\N_{0}}$ are $\varepsilon$-compatible and 
the inverse of $S_{\K^{\N_{0}}}$ is given by $R^{t}$.
In addition, $\mathcal{C}^{\infty}_{2\pi}(\R)$ and $\mathcal{C}^{\infty}_{2\pi}(\R,E)$
are $\varepsilon$-compatible by \prettyref{ex:smooth_periodic_eps_compat} 
as in all three cases $E$ is complete. 
We observe that $(T^{E},T^{\K})$ is consistent by \prettyref{prop:diff_cons_barrelled} c).
The spaces $\K^{\N_{0}}$ and $\mathcal{C}^{\infty}_{2\pi}(\R)$ are nuclear Fr\'echet spaces. 
The first by \cite[Theorem 51.1, p.\ 526]{Treves}
and the second because it is a subspace of the nuclear space $\mathcal{C}^{\infty}(\R)$ 
by \cite[Examples 28.9 (1), p.\ 349--350]{meisevogt1997} and
\cite[Proposition 28.6, p.\ 347]{meisevogt1997}. 
Hence in case (i) our statement follows from \prettyref{thm:eps_prod_surj_inj} d). 
Moreover, $\ker T^{\K}$ is nuclear since $\mathcal{C}^{\infty}_{2\pi}(\R)$ is nuclear. By the proof 
of \cite[Lemma 31.3, p.\ 392--393]{meisevogt1997} $\ker T^{\K}$ is isomorphic to $s(\N_{0})$. The 
space $s(\N_{0})$ has $(\Omega)$ by \cite[Lemma 29.11 (3), p.\ 368]{meisevogt1997} and 
thus $\ker T^{\K}$ as well because $(\Omega)$ is a linear topological invariant 
by \cite[Lemma 29.11 (1), p.\ 368]{meisevogt1997}. Therefore our statement in case (ii) and (iii) 
follows from \prettyref{thm:eps_prod_surj_inj} e). 
\end{proof} 
 
We close this section with an application of \prettyref{thm:eps_prod_surj_inj} b) 
to the \gls{Fourier_transformation} on the Beurling--Bj\"orck spaces $\mathcal{S}_{\mu}(\R^{d},E)$ 
from \prettyref{ex:Bjoerck}.

\begin{thm}\label{thm:Bjoerck_Fourier}
Let $E$ be a locally complete lcHs over $\C$ and $\mu\colon\R^{d}\to[0,\infty)$ continuous such that 
$\mu(x)=\mu(-x)$ for all $x\in\R^{d}$ and condition $(\gamma)$ is fulfilled. 
\begin{enumerate}
\item[(i)] If $E$ has metric ccp, or
\item[(ii)] if $\mu\in\mathcal{C}^{1}(\R^{d})$ and there are $k\in\N_{0}$, $C>0$ such that 
$|\partial^{e_{n}}\mu(x)|\leq C\e^{k\mu(x)}$ for all $x\in\R^{d}$ and $1\leq n\leq d$,
\end{enumerate}
then 
$\mathfrak{F}^{E}\colon\mathcal{S}_{\mu}(\R^{d},E)\to\mathcal{S}_{\mu}(\R^{d},E)$ is an isomorphism 
with $\mathfrak{F}^{E}=S\circ(\mathfrak{F}^{\C}\varepsilon\id_{E})\circ S^{-1}$.
\end{thm}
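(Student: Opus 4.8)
The plan is to deduce the statement from \prettyref{thm:eps_prod_surj_inj} b), applied with $\mathcal{F}_{1}(\Omega_{1}):=\mathcal{F}_{2}(\Omega_{2}):=\mathcal{S}_{\mu}(\R^{d})$ and the one-operator family $(T^{E},T^{\C}):=(\mathfrak{F}^{E},\mathfrak{F}^{\C})$ with $\omega_{0}:=\R^{d}$. Part b) of that theorem then delivers both that $\mathfrak{F}^{E}$ is an isomorphism and the factorisation $\mathfrak{F}^{E}=S\circ(\mathfrak{F}^{\C}\varepsilon\id_{E})\circ S^{-1}$. To invoke it I would check four things: the $\varepsilon$-compatibility of $\mathcal{S}_{\mu}(\R^{d})$ and $\mathcal{S}_{\mu}(\R^{d},E)$, the consistency of $(\mathfrak{F}^{E},\mathfrak{F}^{\C})$ for $(\mathcal{S}_{\mu},E)$, that $\mathfrak{F}^{\C}$ is a continuous isomorphism of $\mathcal{S}_{\mu}(\R^{d})$, and that $\mathfrak{F}^{E}$ maps $\mathcal{S}_{\mu}(\R^{d},E)$ into itself.

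The first two are essentially already available from \prettyref{ex:Bjoerck}. Under either hypothesis (i) or (ii), that example shows $S\colon\mathcal{S}_{\mu}(\R^{d})\varepsilon E\to\mathcal{S}_{\mu}(\R^{d},E)$ is an isomorphism, so both occurrences of $S$ in \prettyref{thm:eps_prod_surj_inj} are surjective and $S^{-1}=R^{t}$ as in \prettyref{thm:full_linearisation}. For consistency I would simply specialise \eqref{eq:bjoerck_fourier_eps} to $\beta=0$, which reads
\[
u(\delta_{x}\circ\mathfrak{F}^{\C})=\mathfrak{F}^{E}(S(u))(x),\quad x\in\R^{d},
\]
for all $u\in\mathcal{S}_{\mu}(\R^{d})\varepsilon E$; together with $S(u)\in\mathcal{S}_{\mu}(\R^{d},E)$ and $\delta_{x}\circ\mathfrak{F}^{\C}\in\mathcal{S}_{\mu}(\R^{d})'$ this is exactly \prettyref{def:cons_strong} a). Alternatively, it can be obtained directly from $S^{-1}=R^{t}$ and the strength identity $\mathfrak{F}^{\C}(e'\circ f)=e'\circ\mathfrak{F}^{E}(f)$ recorded after \prettyref{prop:Fourier-trafo_Bjoerck}, since for $f:=S(u)$ one then gets $R_{f}^{t}(\delta_{x}\circ\mathfrak{F}^{\C})=\mathcal{J}(\mathfrak{F}^{E}(f)(x))$ and hence $u(\delta_{x}\circ\mathfrak{F}^{\C})=\mathcal{J}^{-1}(R_{f}^{t}(\delta_{x}\circ\mathfrak{F}^{\C}))=\mathfrak{F}^{E}(f)(x)$.

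The remaining two points are where the assumption $\mu(x)=\mu(-x)$ enters. With the normalisation $(2\pi)^{-d/2}$ the scalar transform satisfies the inversion identity $\mathfrak{F}^{\C}\mathfrak{F}^{\C}g=g(-\cdot)$ on $\mathcal{S}(\R^{d})$, and evenness of $\mu$ turns the reflection $R\colon g\mapsto g(-\cdot)$ into an isometric isomorphism of $\mathcal{S}_{\mu}(\R^{d})$, because $q_{m,j}(g(-\cdot))=q_{m,j}(g)$ after substituting $x\mapsto-x$. From the defining seminorms $|g|_{m,j}=\max(q_{m,j}(g),q_{m,j}(\mathfrak{F}^{\C}g))$ I would read off $|\mathfrak{F}^{\C}g|_{m,j}=\max(q_{m,j}(\mathfrak{F}^{\C}g),q_{m,j}(g))\leq|g|_{m,j}$, giving continuity of $\mathfrak{F}^{\C}$; combined with $(\mathfrak{F}^{\C})^{4}=\id$ and the continuity of $R$ this makes $\mathfrak{F}^{\C}$ a topological isomorphism with inverse $R\circ\mathfrak{F}^{\C}$. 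The same reflection argument in the vector-valued setting — using $e'\circ\mathfrak{F}^{E}\mathfrak{F}^{E}f=\mathfrak{F}^{\C}\mathfrak{F}^{\C}(e'\circ f)=(e'\circ f)(-\cdot)$ for all $e'\in E'$, hence $\mathfrak{F}^{E}\mathfrak{F}^{E}f=f(-\cdot)$ since $E$ is Hausdorff — shows that $\mathfrak{F}^{E}$ sends $\mathcal{S}_{\mu}(\R^{d},E)$ into itself.

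Once these are in place, \prettyref{thm:eps_prod_surj_inj} b) applies verbatim and yields the claim. I expect the main obstacle to be the consistency identity for $(\mathfrak{F}^{E},\mathfrak{F}^{\C})$, but this has effectively been settled inside the proof of \prettyref{ex:Bjoerck}; the rest is bookkeeping organised around the evenness of $\mu$ and the abstract transfer result.
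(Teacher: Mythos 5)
Your proposal is correct and follows essentially the same route as the paper: $\varepsilon$-compatibility from \prettyref{ex:Bjoerck}, consistency of $(\mathfrak{F}^{E},\mathfrak{F}^{\C})$ from \eqref{eq:bjoerck_fourier_eps} with $\beta=0$, the evenness of $\mu$ together with the inversion identity $\mathfrak{F}^{\C}\mathfrak{F}^{\C}f=f(-\cdot)$ to make $\mathfrak{F}^{\C}$ an isomorphism of $\mathcal{S}_{\mu}(\R^{d})$, and then \prettyref{thm:eps_prod_surj_inj} b). Your additional explicit check that $\mathfrak{F}^{E}$ maps $\mathcal{S}_{\mu}(\R^{d},E)$ into itself is a hypothesis of that theorem which the paper leaves implicit, so including it is a small but welcome piece of extra care rather than a deviation.
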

\begin{proof}
Due to \prettyref{ex:Bjoerck} $\mathcal{S}_{\mu}(\R^{d})$ and $\mathcal{S}_{\mu}(\R^{d},E)$ are 
$\varepsilon$-compatible. 
The Fourier transformation $\mathfrak{F}^{\C}\colon\mathcal{S}_{\mu}(\R^{d})\to\mathcal{S}_{\mu}(\R^{d})$ 
is a well-defined isomorphism by the definition of $\mathcal{S}_{\mu}(\R^{d})$ and since
$(\mathfrak{F}^{\C}\circ\mathfrak{F}^{\C})(f)(x)=f(-x)$ for all $f\in\mathcal{S}_{\mu}(\R^{d})$ 
as well as $\mu(x)=\mu(-x)$ for all $x\in\R^{d}$. 
Due to \eqref{eq:bjoerck_fourier_eps} with $\beta=0$ 
we have that $(\mathfrak{F}^{E},\mathfrak{F}^{\C})$
is a consistent family for $(\mathcal{S}_{\mu},E)$ and thus it follows from 
\prettyref{thm:eps_prod_surj_inj} b) that $\mathfrak{F}^{E}$ is an isomorphism 
and $\mathfrak{F}^{E}=S\circ(\mathfrak{F}^{\C}\varepsilon\id_{E})\circ S^{-1}$, 
which completes the proof.
\end{proof}
\section{Extension of vector-valued functions}
\label{sect:extension}
We study the problem of extending vector-valued functions via the existence of weak extensions in this section. 
The precise description of this problem reads as follows.
Let $E$ be a locally convex Hausdorff space over the field $\K$ of real or complex numbers and 
$\F:=\mathcal{F}(\Omega,\K)$ a locally convex Hausdorff space of 
$\K$-valued functions on a set $\Omega$. 
Suppose that the point evaluations $\delta_{x}$ belong to the dual $\F'$ for every $x\in\Omega$ and that 
there is a locally convex Hausdorff space $\FE$ of $E$-valued functions on $\Omega$ such 
that the map 
\begin{equation}\label{eq:intro}
S\colon \F\varepsilon E \to \FE,\;u\longmapsto [x\mapsto u(\delta_{x})],
\end{equation}
is an isomorphism into, i.e.\ $\F$ and $\FE$ are $\varepsilon$-into-compatible. 
Thus $\F\varepsilon E$ is a linearisation of a subspace of $\FE$.
Linearisations that are based on the Dixmier--Ng theorem were used by 
Bonet, Doma\'nski and Lindstr\"{o}m in \cite[Lemma 10, p.\ 243]{BonDomLind2001} resp.\ 
Laitila and Tylli in \cite[Lemma 5.2, p.\ 14]{Laitila2006} to describe the space of weakly holomorphic resp.\ harmonic
functions on the unit disc $\Omega=\D\subset\C$ with values in a (complex) Banach space $E$ (see also \cite{kruse_2022}).

\begin{que}\label{que:weak_strong}
Let $\Lambda$ be a subset of $\Omega$ and $G$ a linear subspace of $E'$.
Let $f\colon\Lambda\to E$ be such that for every $e'\in G$,
the function $e'\circ f\colon\Lambda\to \K$ has an extension in $\F$.
When is there an extension $F\in\FE$ of $f$, i.e.\ $F_{\mid \Lambda}=f$\ ?
\end{que}

An affirmative answer for $\Lambda=\Omega$ and $G=E'$ is called a \emph{\gls{weak_strong_p}}. 
For weighted continuous functions on a completely regular Hausdorff space $\Omega$ with values in 
a semi-Montel or Schwartz space $E$ a weak-strong principle is given by Bierstedt in \cite[2.10 Lemma, p.\ 140]{B2}. 
Weak-strong principles for holomorphic functions on open subsets $\Omega\subset\C$ were shown 
by Dunford in \cite[Theorem 76, p.\ 354]{Dunford1938} for Banach spaces $E$ and by
Grothendieck in \cite[Th\'{e}or\`{e}me 1, p.\ 37--38]{Grothendieck1953} for quasi-complete $E$. 
For a wider class of function spaces weak-strong principles are due to Grothendieck, mainly, in the case 
that $\F$ is nuclear and $E$ complete (see \cite[Chap.\ II, \S3, n$^\circ$3, Th\'{e}or\`{e}me 13, p.\ 80]{Gro}), 
which covers the case that $\F$ is the space $\mathcal{C}^{\infty}(\Omega)$ of smooth functions on 
an open set $\Omega\subset\R^{d}$ (with its usual topology).
 
Gramsch \cite{Gramsch1977} analysed the weak-strong principles of Grothendieck and realised that they can be used 
to extend functions if $\Lambda$ is a set of uniqueness, i.e.\ from $f\in\F$ and $f(x)=0$ for all $x\in\Lambda$
follows that $f=0$, and $\F$ a semi-Montel space, $E$ complete and $G=E'$ (see \cite[0.1, p.\ 217]{Gramsch1977}).
An extension result for holomorphic functions where $G=E'$ and $E$ is sequentially complete 
was shown by Bogdanowicz in \cite[Corollary 3, p.\ 665]{Bogdanowicz1969}.

Grosse-Erdmann proved for holomorphic functions on $\Lambda=\Omega$ in \cite[5.2 Theorem, p.\ 35]{grosse-erdmann1992}
that it is sufficient to test locally bounded functions $f$ with values 
in a locally complete space $E$ with functionals from a weak$^{\star}$-dense subspace $G$ of $E'$.  
Arendt and Nikolski \cite{Arendt2000,Arendt2006} shortened his proof in the case that $E$ is a 
Fr\'{e}chet space (see \cite[Theorem 3.1, p.\ 787]{Arendt2000} and \cite[Remark 3.3, p.\ 787]{Arendt2000}).
Arendt gave an affirmative answer in \cite[Theorem 5.4, p.\ 74]{Arendt2016} for harmonic functions 
on an open subset $\Lambda=\Omega\subset\R^{d}$ where the 
range space $E$ is a Banach space and $G$ a weak$^{\star}$-dense subspace of $E'$. 

In \cite{Gramsch1977} Gramsch also derived extension results for a large class of Fr\'{e}chet--Montel spaces $\F$ 
in the case that $\Lambda$ is a special set of uniqueness, 
$E$ sequentially complete and $G$ strongly dense in $E'$ (see \cite[3.3 Satz, p.\ 228--229]{Gramsch1977}). 
He applied it to the space of holomorphic functions and Grosse-Erdmann \cite{grosse-erdmann2004} 
expanded this result to the case of $E$ being $B_{r}$-complete and $G$ only a weak$^{\star}$-dense subspace of $E'$ 
(see \cite[Theorem 2, p.\ 401]{grosse-erdmann2004} and \cite[Remark 2 (a), p.\ 406]{grosse-erdmann2004}). 
In a series of papers \cite{B/F/J,F/J,F/J/W,jorda2005,jorda2013} 
these results were generalised and improved by Bonet, Frerick, Jord\'{a} and Wengenroth 
who used \eqref{eq:intro} to obtain extensions for vector-valued functions via extensions of linear operators. 
In \cite{jorda2005,jorda2013} this was done by Jord\'{a} for holomorphic functions on a domain (i.e.\ open and connected) 
$\Omega\subset\C$ and weighted holomorphic functions on a domain $\Omega$ in a Banach space.
In \cite{B/F/J} this was done by Bonet, Frerick and Jord\'{a} for closed subsheaves $\F$ of the sheaf of 
smooth functions $\mathcal{C}^{\infty}(\Omega)$ on a domain $\Omega\subset\R^{d}$. 
Their results implied some consequences on the work of Bierstedt and Holtmanns \cite{Bierstedt2003} as well.
Further, in \cite{F/J} this was done by Frerick and Jord\'{a} for closed subsheaves $\F$ of smooth functions 
on a domain $\Omega\subset\R^{d}$ which are closed in the sheaf $\mathcal{C}(\Omega)$ of continuous functions
and in \cite{F/J/W} by the first two authors and Wengenroth in the case that $\F$ is the space of bounded functions 
in the kernel of a hypoelliptic linear  partial differential operator, 
in particular, the spaces of bounded holomorphic or harmonic functions. 

In this section we present a unified approach to the extension problem for a large class of function spaces. 
The spaces we treat are usually of the kind 
that $\F$ belongs to the class of semi-Montel spaces, Fr\'echet--Schwartz spaces or Banach spaces.
Even quite general weighted spaces $\F$ are treated, at least, if $E$ is a semi-Montel space. 
Our approach is based on three ideas. First, it is based on the representation of (a subspace of) $\FE$ as a space 
of continuous linear operators via the map $S$ from \eqref{eq:intro}. 
We note that almost all our examples of such spaces $\FE$ are actually of the form of a general weighted space $\FVE$ 
from \prettyref{def:weighted_space}. 
Second, it is based on the idea to consider a set of uniqueness $\Lambda$ not necessarily as a subset of $\Omega$ 
but rather as a set of functionals acting on $\F$. 
In the definition of a set of uniqueness given above one may identify $\Lambda$ with the set of functionals 
$\{\delta_{x}\;|\; x\in\Lambda\}$ and this shift of perspective allows us to consider certain sets of functionals 
of the form $T^{\K}_{m,x}$ as sets of uniqueness for $\F$ 
(see \prettyref{def:set_uniqueness}).
Third, the generalised concept of consistency and strength of a family of operators 
$(T^{E}_{m},T^{\K}_{m})_{m\in M}$ acting on $(\FE,\F)$ from \prettyref{def:cons_strong}
enables us to generalise \prettyref{que:weak_strong} and affirmatively answer this generalised question. 

These three ideas are used to extend the mentioned results and we always have to balance the sets $\Lambda$ 
from which we extend our functions and the subspaces $G\subset E'$ 
with which we test. The case of `thin' sets $\Lambda$ and `thick' subspaces $G$ is handled 
in \prettyref{sub:thin_0}, the converse case of `thick' sets $\Lambda$ and `thin' subspaces $G$ 
in \prettyref{sub:thick}.
\subsection{Extension from thin sets}\label{sub:thin_0}
Using the functionals $T^{\K}_{m,x}$, we extend the definition of a set of uniqueness 
and a space of restrictions given in \cite[Definition 4, 5, p.\ 230]{B/F/J}. This prepares the ground for a generalisation 
of \prettyref{que:weak_strong} using a strong, consistent family $(T^{E}_{m},T^{\K}_{m})_{m\in M}$.

\begin{defn}[{set of uniqueness}]\label{def:set_uniqueness}
Let $\Omega$ be a non-empty set, $\F\subset\K^{\Omega}$ an lcHs, $(\omega_m)_{m\in M}$ be a family of non-empty sets 
and $T^{\K}_{m}\colon \F\to \K^{\omega_{m}}$ be linear for all $m\in M$.
Then $U\subset\bigcup_{m\in M}(\{m\}\times\omega_{m})$ is called 
a \emph{\gls{set_uniqueness}} for $(T^{\K}_{m},\mathcal{F})_{m\in M}$ if
\begin{enumerate}
\item [(i)] $\forall\; (m,x)\in U:\; T^{\K}_{m,x}\in \F' $,
\item [(ii)] $\forall\; f\in\F: \;[\forall\,(m,x)\in U:\;T^{\K}_{m}(f)(x)=0]\;\;\Rightarrow\;\; f=0$.
\end{enumerate}
We omit the index $m$ in $\omega_{m}$ and $T^{\K}_{m}$ if $M$ is a singleton and consider $U$ as a subset of $\omega$.
\end{defn}

If $U$ is a set of uniqueness for $(T^{\K}_{m},\mathcal{F})_{m\in M}$, 
then $\operatorname{span}\{T^{\K}_{m,x}\;|\;(m,x)\in U\}$ is
dense in $\F_{\sigma}'$ (and $\F_{\kappa}'$) by the bipolar theorem. 

\begin{rem}\label{rem:Schauder_coeff_set_uni} 
Let $\Omega$ be a non-empty set and $\F\subset\K^{\Omega}$ an lcHs.
\begin{enumerate}
 \item[a)] A simple set of uniqueness for $(\id_{\K^{\Omega}},\mathcal{F})$ is given by $U:=\Omega$ 
 if $\delta_{x}\in\F'$ for all $x\in\Omega$.
 \item[b)] If $\F$ has a Schauder basis $(f_{n})_{n\in\N}$  with associated sequence of coefficient functionals 
 $T^{\K}:=(T^{\K}_{n})_{n\in\N}$, then $U:=\N$ is a set of uniqueness for $(T^{\K},\mathcal{F})$.
\end{enumerate}
\end{rem}

An example for b) is the space of holomorphic functions on an open disc $\D_{r}(z_{0})\subset\C$ 
with radius $0<r\leq\infty$ and center $z_0\in\C$. 
If we equip this space with the topology of compact convergence, then it has the shifted monomials $((\cdot-z_{0})^{n})_{n\in\N_{0}}$ 
as a Schauder basis 
with the point evaluations $(\delta_{z_0}\circ\partial_{\C}^{n})_{n\in\N_{0}}$ 
given by $(\delta_{z_0}\circ\partial_{\C}^{n})(f):=f^{(n)}(z_{0})$ 
as associated sequence of coefficient functionals. We will explore further sets of uniqueness for concrete function spaces in the upcoming examples 
and come back to b) in \prettyref{sect:sequence_space}. 

\begin{defn}[{\gls{rest_space}}]
Let $G\subset E'$ be a separating subspace and $U$ a set of uniqueness for $(T^{\K}_{m},\mathcal{F})_{m\in M}$.
Let $\gls{F_GUE}$ be the space of functions $f\colon U\to E$ such that for every $e'\in G$ there is 
$f_{e'}\in\F$ with $T^{\K}_{m}(f_{e'})(x)=(e'\circ f)(m,x)$ for all $(m,x)\in U$.
\end{defn}

\begin{rem}\label{rem:R_f}
Since $U$ is a set of uniqueness, the functions $f_{e'}$ are unique and the map 
$\gls{R_f_curly}\colon E'\to \F,$ $\mathscr{R}_{f}(e'):=f_{e'}$, is well-defined and linear. 
The map $\mathscr{R}_{f}$ resembles the map $R_{f}$ defined above \prettyref{lem:strong_is_weak}.
\end{rem}

\begin{rem}\label{rem:R_well-defined}
Let $\F$ and $\FE$ be $\varepsilon$-into-compatible.
Consider a set of uniqueness $U$ for $(T^{\K}_{m},\mathcal{F})_{m\in M}$, a separating subspace $G\subset E'$ and 
a strong, consistent family $(T^{E}_{m},T^{\K}_{m})_{m\in M}$ for $(\mathcal{F},E)$.
For $u\in \F\varepsilon E$ set $f:=S(u)$. Then $f\in\FE$ by the $\varepsilon$-into-compatibility 
and we set $\widetilde{f}\colon U\to E$, $\widetilde{f}(m,x):=T^{E}_{m}(f)(x)$. It follows that
\[
(e'\circ \widetilde{f})(m,x)= (e'\circ T^{E}_{m}(f))(x)=T^{\K}_{m}(e'\circ f)(x)
\]
for all $(m,x)\in U$ and $f_{e'}:=e'\circ f\in\F$ for all $e'\in E'$ by the strength of the family.
We conclude that $\widetilde{f}\in\mathcal{F}_{G}(U,E)$.
\end{rem} 

\begin{rem}
If $U$ is a set of uniqueness for $(T^{\K}_{m},\mathcal{F})_{m\in M}$, then 
the existence of operators $(T^{E}_{m})_{m\in M}$ such that
$(T^{E}_{m},T^{\K}_{m})_{m\in M}$ is a strong, consistent family for $(\mathcal{F},E)$ 
is often guaranteed by the Riesz--Markov--Kakutani representation theorems 
in \prettyref{sect:riesz_markov_kakutani}.
\end{rem}

Under the assumptions of \prettyref{rem:R_well-defined} the map
\[
R_{U,G}\colon S(\F\varepsilon E)\to \mathcal{F}_{G}(U,E),\;f\mapsto (T^{E}_{m}(f)(x))_{(m,x)\in U},
\]
is well-defined. The map $R_{U,G}$ is also linear since $T^{E}_{m}$ is linear for all $m\in M$. 
Further, the strength of the defining family guarantees that $R_{U,G}$ is injective.

\begin{prop}\label{prop:injectivity}
Let $\F$ and $\FE$ be $\varepsilon$-into-compatible, 
$G\subset E'$ a separating subspace and $U$ a set of uniqueness for $(T^{\K}_{m},\mathcal{F})_{m\in M}$.
If $(T^{E}_{m},T^{\K}_{m})_{m\in M}$ is a strong family for $(\mathcal{F},E)$, 
then the map 
\[
T^{E}\colon \FE\to E^{U}, \;f\mapsto (T^{E}_{m}(f)(x))_{(m,x)\in U},
\]
is injective, in particular, $R_{U,G}$ is injective.
\end{prop}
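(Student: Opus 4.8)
The plan is to show that $T^{E}$ is injective by exploiting the strength of the family together with the defining property of a set of uniqueness. The essential observation is that $f$ maps into $E$, and to conclude $f = 0$ it suffices to show $e' \circ f = 0$ for all $e'$ in a separating subspace of $E'$, since a separating subspace detects every nonzero element of $E$.

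First I would take $f \in \FE$ with $T^{E}(f) = 0$, that is, $T^{E}_{m}(f)(x) = 0$ in $E$ for every $(m,x) \in U$. The goal is to deduce $f = 0$. Fix an arbitrary $e' \in E'$. By the strength of $(T^{E}_{m},T^{\K}_{m})_{m\in M}$ for $(\mathcal{F},E)$ (see \prettyref{def:cons_strong} b)), we have $e' \circ f \in \F$ and the commutation relation
\[
T^{\K}_{m}(e'\circ f)(x) = \bigl(e' \circ T^{E}_{m}(f)\bigr)(x), \quad (m,x) \in U.
\]
Since $T^{E}_{m}(f)(x) = 0$ by assumption, the right-hand side vanishes, so $T^{\K}_{m}(e'\circ f)(x) = 0$ for all $(m,x) \in U$. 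As $U$ is a set of uniqueness for $(T^{\K}_{m},\mathcal{F})_{m\in M}$, condition (ii) of \prettyref{def:set_uniqueness} applied to the function $e'\circ f \in \F$ yields $e' \circ f = 0$.

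This holds for every $e' \in E'$; in particular it holds for every $e'$ in any separating subspace. Because $E$ is an lcHs and $E'$ separates the points of $E$ (indeed $e'\circ f = 0$ for all $e' \in E'$ means $f(x) = 0$ for every $x \in \Omega$ by the Hausdorff property), we conclude $f = 0$. This proves the injectivity of $T^{E}$. The injectivity of $R_{U,G}$ is then immediate: since $R_{U,G}$ is the restriction of $T^{E}$ to the subspace $S(\F\varepsilon E) \subset \FE$ (using the $\varepsilon$-into-compatibility so that $S(\F\varepsilon E)$ indeed sits inside $\FE$), any $f \in S(\F\varepsilon E)$ with $R_{U,G}(f) = 0$ also satisfies $T^{E}(f) = 0$, hence $f = 0$.

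I do not anticipate a serious obstacle here; the argument is a clean chaining of the strength hypothesis with the uniqueness property, and the only point requiring a moment's care is making explicit that testing against all of $E'$ (rather than merely the subspace $G$) is permissible in the statement about $T^{E}$ on all of $\FE$ — the separating subspace $G$ enters only in the domain structure of $\mathcal{F}_{G}(U,E)$ and is not needed for this particular injectivity claim. One should simply note that strength gives $e'\circ f \in \F$ for every $e' \in E'$, so no restriction to $G$ is required.
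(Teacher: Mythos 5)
Your proof is correct and follows essentially the same route as the paper's: apply strength to reduce to $T^{\K}_{m}(e'\circ f)(x)=0$ on $U$, invoke the set-of-uniqueness property to get $e'\circ f=0$ for all $e'\in E'$, and conclude $f=0$ since $E'$ separates the points of the lcHs $E$. Your closing remark that $G$ plays no role in this injectivity claim is also consistent with the paper, which likewise tests against all of $E'$.
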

\begin{proof}
Let $f\in \FE$ with $T^{E}(f)=0$. Then
\[ 
0=(e'\circ T^{E}(f))(m,x)=(e'\circ T^{E}_{m}(f))(x)=T^{\K}_{m}(e'\circ f)(x),\quad (m,x)\in U,
\]
and $e'\circ f \in\F$ for all $e'\in E'$ by the strength of the family. 
Since $U$ is a set of uniqueness, we get that $e'\circ f=0$ for all $e'\in E'$, which implies $f=0$.
\end{proof}

\begin{que}\label{que:surj_restr_set_unique}
Let $\F$ and $\FE$ be $\varepsilon$-into-compatible,
$G\subset E'$ a separating subspace, 
$(T^{E}_{m},T^{\K}_{m})_{m\in M}$ a strong family for $(\mathcal{F},E)$
and $U$ a set of uniqueness for $(T^{\K}_{m},\mathcal{F})_{m\in M}$.
When is the injective restriction map 
\[
R_{U,G}\colon S(\F\varepsilon E)\to \mathcal{F}_{G}(U,E),\;f\mapsto (T^{E}_{m}(f)(x))_{(m,x)\in U},
\]
surjective?
\end{que}

The \prettyref{que:weak_strong} is a special case of this question if there is a 
set of uniqueness $U$ for $(T^{\K}_{m},\mathcal{F})_{m\in M}$ with 
$\{T^{\K}_{m,x}\;|\; (m,x)\in U\}=\{\delta_{x}\;|\;x\in\Lambda\}$, $\Lambda\subset\Omega$. 
We observe that a positive answer to the surjectivity of $R_{\Omega,G}$ results in the following weak-strong principle.

\begin{prop}\label{prop:weak_strong_principle}
Let $\F$ and $\FE$ be $\varepsilon$-into-compatible, $G\subset E'$ a separating subspace such that 
$e'\circ f\in \F$ for all $e'\in G$ and $f\in\FE$. 
If 
\[
R_{\Omega,G}\colon S(\F\varepsilon E)\to\mathcal{F}_{G}(\Omega,E),\;f\mapsto f,
\]
with the set of uniqueness $\Omega$ for $(\id_{\K^{\Omega}},\mathcal{F})$ is surjective, then 
\[
\F\varepsilon E\cong \FE \quad\text{via}\;S
 \quad\text{and}\quad
\FE=\{f\colon\Omega\to E\;|\;\forall\;e'\in G:\;e'\circ f\in\F \}.
\]
\end{prop}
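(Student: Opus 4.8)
The plan is to prove \prettyref{prop:weak_strong_principle} by identifying the range of $S$ with the full space $\{f\colon\Omega\to E\;|\;\forall\,e'\in G:\;e'\circ f\in\F\}$. First I would unravel what $\mathcal{F}_{G}(\Omega,E)$ means in the present situation: here $M$ is a singleton, $T^{\K}=\id_{\K^{\Omega}}$, and the set of uniqueness is $U=\Omega$ itself (valid by \prettyref{rem:Schauder_coeff_set_uni} a), since $\delta_{x}\in\F'$). With these choices, the restriction map $R_{\Omega,G}$ is simply $f\mapsto f$, and $\mathcal{F}_{G}(\Omega,E)$ is exactly the set of functions $f\colon\Omega\to E$ such that for every $e'\in G$ there is $f_{e'}\in\F$ with $f_{e'}=e'\circ f$ pointwise, i.e.\ such that $e'\circ f\in\F$ for all $e'\in G$. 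Thus $\mathcal{F}_{G}(\Omega,E)=\{f\colon\Omega\to E\;|\;\forall\,e'\in G:\;e'\circ f\in\F\}$ by definition, and I would record this identification explicitly.

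Next I would use the hypotheses to pin down the two inclusions. By $\varepsilon$-into-compatibility, $S\colon\F\varepsilon E\to\FE$ is an isomorphism into, so $S(\F\varepsilon E)$ is a subspace of $\FE$. The assumption that $e'\circ f\in\F$ for all $e'\in G$ and $f\in\FE$ gives the inclusion $\FE\subset\mathcal{F}_{G}(\Omega,E)$. On the other hand, the surjectivity hypothesis on $R_{\Omega,G}$ says precisely that every $f\in\mathcal{F}_{G}(\Omega,E)$ lies in $S(\F\varepsilon E)$, hence in $\FE$; this yields the reverse inclusion $\mathcal{F}_{G}(\Omega,E)\subset S(\F\varepsilon E)\subset\FE$. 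Combining, all three spaces coincide as sets:
\[
S(\F\varepsilon E)=\FE=\mathcal{F}_{G}(\Omega,E),
\]
which establishes the second displayed assertion once the set-identification above is in hand.

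Finally I would upgrade the equality $S(\F\varepsilon E)=\FE$ from a statement about ranges to the topological isomorphism $\F\varepsilon E\cong\FE$ via $S$. Since $S$ is already an isomorphism into by $\varepsilon$-into-compatibility, it is a topological isomorphism onto its range $S(\F\varepsilon E)$ equipped with the subspace topology of $\FE$. Having shown $S(\F\varepsilon E)=\FE$, the range is all of $\FE$, so $S$ is surjective; an isomorphism into that is onto is a topological isomorphism, giving $\F\varepsilon E\cong\FE$ via $S$.

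The argument is essentially bookkeeping: the real content is entirely front-loaded into the two hypotheses (the surjectivity of $R_{\Omega,G}$ and the weak-compatibility $e'\circ f\in\F$), so I do not anticipate a genuine obstacle. The one point requiring care is making the identification $\mathcal{F}_{G}(\Omega,E)=\{f\;|\;\forall\,e'\in G:\;e'\circ f\in\F\}$ cleanly, ensuring that the uniqueness of the $f_{e'}$ (guaranteed by $U=\Omega$ being a set of uniqueness, as noted in \prettyref{rem:R_f}) forces $f_{e'}=e'\circ f$ rather than merely some extension, so that the defining condition on $\mathcal{F}_{G}(\Omega,E)$ collapses exactly to $e'\circ f\in\F$. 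Once that is verified, the three inclusions and the onto-ness of $S$ follow immediately.
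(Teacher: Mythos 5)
Your proof is correct and follows essentially the same route as the paper's: identify $\mathcal{F}_{G}(\Omega,E)$ with the weak space, use the surjectivity of $R_{\Omega,G}$ to get $\mathcal{F}_{G}(\Omega,E)=S(\F\varepsilon E)\subset\FE$, and use the hypothesis $e'\circ f\in\F$ to get the reverse inclusion, whence $S$ is onto and hence an isomorphism. The only difference is that you spell out the collapse of the definition of $\mathcal{F}_{G}(\Omega,E)$ for $U=\Omega$ and $T^{\K}=\id_{\K^{\Omega}}$, which the paper leaves implicit.
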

\begin{proof}
From the $\varepsilon$-into-compatibility and the surjectivity of $R_{\Omega,G}$ we obtain
\[
 \{f\colon\Omega\to E\;|\;\forall\;e'\in G:\;e'\circ f\in\F\}=\mathcal{F}_{G}(\Omega,E)=S(\F\varepsilon E)\subset \FE. 
\]
Further, the assumption that $e'\circ f\in \F$ for all $e'\in G$ and $f\in\FE$, 
implies that $\FE$ is a subspace of the space on the left-hand side, which proves our statement, 
in particular, the surjectivity of $S$.
\end{proof}

To answer \prettyref{que:surj_restr_set_unique} for general sets of uniqueness we have to restrict to a certain class
of separating subspaces of $E'$.

\begin{defn}[{determine  boundedness \cite[p.\ 230]{B/F/J}}]
A linear subspace $G\subset E'$ \emph{\gls{det_bound}} if every $\gls{sigmaEG}$-bounded set $B\subset E$ is 
already bounded in $E$.
\end{defn}

In \cite[p.\ 139]{fernandez1989} such a space $G$ is called uniform boundedness deciding by Fern\'andez et al.\ and 
in \cite[p.\ 63]{nygaard2006} $w^{\ast}$-thick by Nygaard if $E$ is a Banach space.

\begin{rem} 
\begin{enumerate}
\item[a)] Let $E$ be an lcHs. Then $G:=E'$ determines boundedness 
by \cite[Mackey's theorem 23.15, p.\ 268]{meisevogt1997}.
\item[b)] Let $X$ be a barrelled lcHs, $Y$ an lcHs and $E:=L_{b}(X,Y)$. For $x\in X$ and $y'\in Y'$ we set 
$\delta_{x,y'}\colon L(X,Y)\to \K$, $T\mapsto y'(T(x))$, and $G:=\{\delta_{x,y'}\;|\;x\in X,\,y'\in Y'\}\subset E'$. 
Then the span of $G$ determines boundedness (in $E$) by Mackey's theorem and the uniform boundedness principle. 
For Banach spaces $X$,$Y$ this is already observed in \cite[Remark 11, p.\ 233]{B/F/J} and, if in addition $Y=\K$, 
in \cite[Remark 1.4 b), p.\ 781]{Arendt2000}.
\item[c)] Further examples and a characterisation of subspaces $G\subset E'$ that determine boundedness can be found 
in \cite[Remark 1.4, p.\ 781--782]{Arendt2000}, \cite[Theorem 1.5, p.\ 63--64]{nygaard2006}
and \cite[Theorem 2.3, 2.4, p.\ 67--68]{nygaard2006} in the case that $E$ is a Banach space. 
\end{enumerate}
\end{rem}

\addtocontents{toc}{\SkipTocEntry}
\subsection*{\texorpdfstring{$\F$}{F(Omega)} a semi-Montel space and \texorpdfstring{$E$}{E} (sequentially) complete}

Our next results are in need of spaces $\F$ such that closed graph theorems hold with Banach spaces as domain spaces and 
$\F$ as the range space. Let us formally define this class of spaces. 

\begin{defn}[{BC-space \cite[p.\ 395]{powell1955}}]
We call an lcHs $F$ a \emph{\gls{BC_space}} if for every Banach space $X$ and every linear map $f\colon X\to F$ 
with closed graph in $X\times F$, one has that $f$ is continuous. 
\end{defn}

A characterisation of BC-spaces is given by Powell in \cite[6.1 Corollary, p.\ 400--401]{powell1955}. 
Since every Banach space is ultrabornological and barrelled, 
the \cite[Closed graph theorem 24.31, p.\ 289]{meisevogt1997} of de Wilde 
and the Pt\'{a}k--K\={o}mura--Adasch--Valdivia closed graph theorem \cite[\S34, 9.(7), p.\ 46]{Koethe} 
imply that webbed spaces and $B_{r}$-complete spaces are BC-spaces. 
We recall that an lcHs $F$ is said to be \emph{\gls{Brcomplete}} 
if every $\sigma(F',F)$-dense 
$\sigma^{f}(F',F)$-closed linear subspace of $F'$ equals $F'$ 
where $\sigma^{f}(F',F)$ is the finest topology coinciding with $\sigma(F',F)$ on all equicontinuous sets in $F'$ 
(see \cite[\S34, p.\ 26]{Koethe}). An lcHs $F$ is called \emph{\gls{Bcomplete}} 
if every $\sigma^{f}(F',F)$-closed linear subspace of $F'$ is weakly closed.
In particular, $B$-complete spaces are $B_{r}$-complete and every $B_{r}$-complete space is complete 
by \cite[\S34, 2.(1), p.\ 26]{Koethe}.
These definitions are equivalent to the original definitions of $B_{r}$- and $B$-completeness 
by Pt\'{a}k \cite[Definition 2, 5, p.\ 50, 55]{ptak1958} due to \cite[\S34, 2.(2), p.\ 26--27]{Koethe} 
and we note that they are also called \emph{infra-Pt\'{a}k spaces} and \emph{Pt\'{a}k spaces}, respectively. In particular, Fr\'{e}chet spaces are $B$-complete by \cite[9.5.2 Krein--\u{S}mulian Theorem, p.\ 184]{Jarchow} but we will encounter non-Fr\'echet $B$-complete spaces as well.

The following proposition is a modification of \cite[Satz 10.6, p.\ 237]{Kaballo} and 
uses the map $\mathscr{R}_{f}\colon e'\mapsto f_{e'}$ from \prettyref{rem:R_f}. 

\begin{prop}\label{prop:ext_F_semi_M}
Let $U$ be a set of uniqueness for $(T^{\K}_{m},\mathcal{F})_{m\in M}$ and $\F$ a BC-space.
Then $\mathscr{R}_{f}(B_{\alpha}^{\circ})$ is bounded in $\F$ for every $f\in\mathcal{F}_{E'}(U,E)$ 
and $\alpha\in\mathfrak{A}$ where $B_{\alpha}:=\{x\in E\;|\; p_{\alpha}(x)<1\}$. 
In addition, if $\F$ is a semi-Montel space, then $\mathscr{R}_{f}(B_{\alpha}^{\circ})$ is relatively compact in $\F$.
\end{prop}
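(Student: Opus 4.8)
The plan is to deduce boundedness of $\mathscr{R}_{f}(B^{\circ}_{\alpha})$ from the continuity of a suitable restriction of $\mathscr{R}_{f}$, which I will obtain from the closed graph property of the BC-space $\F$. Since $B_{\alpha}$ is a $p_{\alpha}$-ball and hence a zero-neighbourhood in $E$, its polar $B^{\circ}_{\alpha}$ is an absolutely convex subset of $E'$ which is $\sigma(E',E)$-compact by the Alao\u{g}lu--Bourbaki theorem. In particular $B^{\circ}_{\alpha}$ is a weak$^{\ast}$-complete disk, so its linear span $X:=(E')_{B^{\circ}_{\alpha}}=\bigcup_{n\in\N}nB^{\circ}_{\alpha}$, equipped with the gauge functional $\|\cdot\|_{X}$ of $B^{\circ}_{\alpha}$ as a norm, is a Banach space (a weak$^{\ast}$-compact disk is a Banach disk, cf.\ \cite[10.2.1 Proposition, p.\ 197]{Jarchow}), and its closed unit ball contains $B^{\circ}_{\alpha}$. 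Restricting the linear map $\mathscr{R}_{f}\colon E'\to\F$ from \prettyref{rem:R_f} to $X$, it then suffices to show that $\mathscr{R}_{f}\!\mid_{X}\colon X\to\F$ is continuous: continuity sends the bounded set $B^{\circ}_{\alpha}$ to a bounded set in $\F$, and since $\F$ is a BC-space, continuity follows once I have verified that $\mathscr{R}_{f}\!\mid_{X}$ has closed graph in $X\times\F$. This approach is a variant of \cite[Satz 10.6, p.\ 237]{Kaballo}.

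The core of the argument is the closedness of the graph, so I would take a net $(e'_{\tau})$ in $X$ with $e'_{\tau}\to e'$ in $X$ and $\mathscr{R}_{f}(e'_{\tau})\to g$ in $\F$, and prove $g=\mathscr{R}_{f}(e')$. Here the set of uniqueness $U$ does the decisive work: since $g-\mathscr{R}_{f}(e')\in\F$, it is enough by \prettyref{def:set_uniqueness}(ii) to check that $T^{\K}_{m}(g)(x)=T^{\K}_{m}(\mathscr{R}_{f}(e'))(x)$ for all $(m,x)\in U$. For the left-hand side I use $T^{\K}_{m,x}=\delta_{x}\circ T^{\K}_{m}\in\F'$ from \prettyref{def:set_uniqueness}(i), which gives $T^{\K}_{m}(g)(x)=\lim_{\tau}T^{\K}_{m}(\mathscr{R}_{f}(e'_{\tau}))(x)=\lim_{\tau}e'_{\tau}(f(m,x))$ by the defining property $T^{\K}_{m}(f_{e'})(x)=(e'\circ f)(m,x)$ of $\mathscr{R}_{f}$. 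The right-hand side equals $e'(f(m,x))$ for the same reason. The two limits agree because norm convergence in $X$ forces $\sigma(E',E)$-convergence: from $y\in\lambda B^{\circ}_{\alpha}$ one reads off $|y(z)|\le p_{\alpha}(z)\,\|y\|_{X}$ for every $z\in E$, so $e'_{\tau}(f(m,x))\to e'(f(m,x))$. Hence $g=\mathscr{R}_{f}(e')$ and the graph is closed.

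With continuity of $\mathscr{R}_{f}\!\mid_{X}$ in hand, $\mathscr{R}_{f}(B^{\circ}_{\alpha})$ is the image of a bounded subset of the Banach space $X$ and is therefore bounded in $\F$, which is the first assertion. The addendum is then immediate, since in a semi-Montel space every bounded set is relatively compact. The main obstacle I anticipate is precisely the graph-closedness step, because $\mathscr{R}_{f}$ is a priori only given pointwise through the unknown functions $f_{e'}$ and need not be visibly continuous; the point is that one never needs continuity of $\mathscr{R}_{f}$ directly, but only the continuity of the fixed functionals $T^{\K}_{m,x}$ on $\F$ together with the separating power of $U$, which transports the convergence from $X$ (available only after evaluation, i.e.\ weakly) into an identity in $\F$.
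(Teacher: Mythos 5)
Your proposal is correct and follows essentially the same route as the paper's proof: both restrict $\mathscr{R}_{f}$ to the Banach space $E_{B_{\alpha}^{\circ}}'$ spanned by the weak$^{\ast}$-compact polar, verify the closed graph by testing against the functionals $T^{\K}_{m,x}$ for $(m,x)\in U$ and invoking the set-of-uniqueness property, and then apply the BC-space hypothesis. The only cosmetic difference is that you make explicit the estimate $|y(z)|\le p_{\alpha}(z)\|y\|_{X}$ forcing weak$^{\ast}$-convergence, which the paper leaves implicit.
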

\begin{proof}
Let $f\in\mathcal{F}_{E'}(U,E)$ and $\alpha\in\mathfrak{A}$. The polar $B_{\alpha}^{\circ}$ 
is compact in $E_{\sigma}'$ and thus $E_{B_{\alpha}^{\circ}}'$ is a Banach space 
by \cite[Corollary 23.14, p.\ 268]{meisevogt1997}.
We claim that the restriction of $\mathscr{R}_{f}$ to $E_{B_{\alpha}^{\circ}}'$ has closed graph. Indeed, let 
$(e_{\tau}')$ be a net in $E_{B_{\alpha}^{\circ}}'$ converging to $e'$ in $E_{B_{\alpha}^{\circ}}'$ and  
$\mathscr{R}_{f}(e_{\tau}')$ converging to $g$ in $\F$. For $(m,x)\in U$ we note that
\begin{align*}
T^{\K}_{m,x}\bigl(\mathscr{R}_{f}(e_{\tau}')\bigr)&=T^{\K}_{m}(f_{e_{\tau}'})(x)
=(e_{\tau}'\circ f)(m,x)
\to (e'\circ f)(m,x)=T^{\K}_{m}(f_{e'})(x)\\
&=T^{\K}_{m}\bigl(\mathscr{R}_{f}(e')\bigr)(x).
\end{align*}
The left-hand side converges to $T^{\K}_{m,x}(g)$ since $T^{\K}_{m,x}\in\F'$ for all 
$(m,x)\in U$. Hence we have $T^{\K}_{m}(g)(x)=T^{\K}_{m}\bigl(\mathscr{R}_{f}(e')\bigr)(x)$ 
for all $(m,x)\in U$. From $U$ being a set of uniqueness follows that $g=\mathscr{R}_{f}(e')$. 
Thus the restriction of $\mathscr{R}_{f}$ to $E_{B_{\alpha}^{\circ}}'$ has closed graph and is continuous 
since $\F$ is a BC-space. This yields that $\mathscr{R}_{f}(B_{\alpha}^{\circ})$ is bounded 
as $B_{\alpha}^{\circ}$ is bounded in $E_{B_{\alpha}^{\circ}}'$.
If $\F$ is also a semi-Montel space, then $\mathscr{R}_{f}(B_{\alpha}^{\circ})$ is even relatively compact.
\end{proof}

Now, we are ready to prove our first extension theorem. Its proof of surjectivity of $R_{U,E'}$ 
is just an adaptation of the proof of 
surjectivity of $S$ given in \prettyref{thm:full_linearisation}. 
Let $U$ be a set of uniqueness for $(T^{\K}_{m},\mathcal{F})_{m\in M}$. 
For $f\in\mathcal{F}_{E'}(U,E)$ we consider the dual map 
\[
\mathscr{R}_{f}^{t}\colon \F' \to E'^{\star},\;\mathscr{R}_{f}^{t}(y)(e'):=y(f_{e'}),
\]
where $E'^{\star}$ is the algebraic dual of $E'$. Further, we recall the notation $\mathcal{J}\colon E\to E'^{\star}$ 
for the canonical injection.

\begin{thm}\label{thm:ext_F_semi_M}
Let $\F$ and $\FE$ be $\varepsilon$-into-compatible, 
$(T^{E}_{m},T^{\K}_{m})_{m\in M}$ a strong, consistent family for $(\mathcal{F},E)$, 
$\F$ a semi-Montel BC-space and $U$ a set of uniqueness for $(T^{\K}_{m},\mathcal{F})_{m\in M}$. 
If
\begin{enumerate}
\item [(i)] $E$ is complete, or
\item [(ii)] $E$ is sequentially complete and for every $f\in\mathcal{F}_{E'}(U,E)$ and $f'\in\F'$ there is 
a sequence $(f_{n}')_{n\in\N}$ in $\F'$ converging to $f'$ in $\F_{\kappa}'$ such that 
$\mathscr{R}_{f}^{t}(f_{n}')\in\mathcal{J}(E)$ for every $n\in\N$, 
\end{enumerate}
then the restriction map $R_{U,E'}\colon S(\F\varepsilon E)\to \mathcal{F}_{E'}(U,E)$ is surjective.
\end{thm}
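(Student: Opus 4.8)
The plan is to adapt the proof of surjectivity of $S$ in \prettyref{thm:full_linearisation} almost verbatim, with the map $R_{f}$ replaced by the map $\mathscr{R}_{f}$ from \prettyref{rem:R_f} and the functionals $\delta_{x}$ replaced by the functionals $T^{\K}_{m,x}$, $(m,x)\in U$. So fix $f\in\mathcal{F}_{E'}(U,E)$ and consider the dual map $\mathscr{R}_{f}^{t}\colon\F'\to E'^{\star}$, $\mathscr{R}_{f}^{t}(y)(e'):=y(f_{e'})$. I would aim to show that $\mathscr{R}_{f}^{t}\in L(\F_{\kappa}',\mathcal{J}(E))$, where $\mathcal{J}(E)$ carries the seminorms $p_{B_{\alpha}^{\circ}}(\mathcal{J}(x))=p_{\alpha}(x)$; then $u:=\mathcal{J}^{-1}\circ\mathscr{R}_{f}^{t}\in L(\F_{\kappa}',E)=\F\varepsilon E$, and $S(u)$ will be the desired extension.

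First I would record the continuity estimate: for every $\alpha\in\mathfrak{A}$ and $y\in\F'$,
\[
p_{B_{\alpha}^{\circ}}\bigl(\mathscr{R}_{f}^{t}(y)\bigr)
=\sup_{e'\in B_{\alpha}^{\circ}}\bigl|y\bigl(\mathscr{R}_{f}(e')\bigr)\bigr|
=\sup_{x\in \mathscr{R}_{f}(B_{\alpha}^{\circ})}|y(x)|\leq\sup_{x\in K_{\alpha}}|y(x)|,
\]
where $K_{\alpha}:=\overline{\mathscr{R}_{f}(B_{\alpha}^{\circ})}$. Since $B_{\alpha}^{\circ}$ is absolutely convex and $\mathscr{R}_{f}$ linear, $\mathscr{R}_{f}(B_{\alpha}^{\circ})$ is absolutely convex, and it is relatively compact in the semi-Montel BC-space $\F$ by \prettyref{prop:ext_F_semi_M}; hence $K_{\alpha}$ is absolutely convex and compact by \cite[6.2.1 Proposition, p.\ 103]{Jarchow}. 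This estimate shows that $\mathscr{R}_{f}^{t}$ is continuous with respect to $\kappa(\F',\F)$, once its range is known to lie in $\mathcal{J}(E)$.

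The main step is therefore to prove $\mathscr{R}_{f}^{t}(f')\in\mathcal{J}(E)$ for every $f'\in\F'$. For $(m,x)\in U$ one computes, using $T^{\K}_{m}(f_{e'})(x)=(e'\circ f)(m,x)$,
\[
\mathscr{R}_{f}^{t}(T^{\K}_{m,x})(e')=T^{\K}_{m,x}(f_{e'})=(e'\circ f)(m,x)=\mathcal{J}\bigl(f(m,x)\bigr)(e'),\quad e'\in E',
\]
so that $\mathscr{R}_{f}^{t}(T^{\K}_{m,x})=\mathcal{J}(f(m,x))\in\mathcal{J}(E)$; by linearity $\mathscr{R}_{f}^{t}$ maps $\operatorname{span}\{T^{\K}_{m,x}\mid(m,x)\in U\}$ into $\mathcal{J}(E)$. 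Since $U$ is a set of uniqueness, this span is $\kappa(\F',\F)$-dense in $\F'$ by the bipolar theorem (the remark after \prettyref{def:set_uniqueness}). In case (i), exactly as in case a) of the proof of \prettyref{thm:full_linearisation}, I would pick a net in this span converging to $f'$; the displayed estimate makes $(\mathscr{R}_{f}^{t}(f'_{\tau}))$ a Cauchy net in the complete space $\mathcal{J}(E)$ (complete since $\mathcal{J}$ is an isomorphism onto $\mathcal{J}(E)$ and $E$ is complete) whose limit coincides with $\mathscr{R}_{f}^{t}(f')$. In case (ii) the hypothesis hands us a sequence $(f'_{n})$ converging to $f'$ in $\F_{\kappa}'$ with $\mathscr{R}_{f}^{t}(f'_{n})\in\mathcal{J}(E)$, and the same estimate makes $(\mathscr{R}_{f}^{t}(f'_{n}))$ a Cauchy sequence in the sequentially complete space $\mathcal{J}(E)$ converging to $\mathscr{R}_{f}^{t}(f')$. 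Either way $\mathscr{R}_{f}^{t}(f')\in\mathcal{J}(E)$, so $u:=\mathcal{J}^{-1}\circ\mathscr{R}_{f}^{t}\in\F\varepsilon E$.

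It remains to verify that $S(u)$ extends $f$. By consistency of $(T^{E}_{m},T^{\K}_{m})_{m\in M}$ we have $S(u)\in\FE$ and $T^{E}_{m}(S(u))(x)=u(T^{\K}_{m,x})$ for all $(m,x)\in U$, whence
\[
T^{E}_{m}(S(u))(x)=\mathcal{J}^{-1}\bigl(\mathscr{R}_{f}^{t}(T^{\K}_{m,x})\bigr)=\mathcal{J}^{-1}\bigl(\mathcal{J}(f(m,x))\bigr)=f(m,x),\quad(m,x)\in U.
\]
Thus $R_{U,E'}(S(u))=f$, which proves the surjectivity (the well-definedness of $R_{U,E'}$ as a map into $\mathcal{F}_{E'}(U,E)$ being \prettyref{rem:R_well-defined}). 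The only genuinely delicate point is the density-plus-completeness argument of the previous paragraph, where the compactness of $K_{\alpha}$ coming from \prettyref{prop:ext_F_semi_M} is precisely what allows convergence in $\F_{\kappa}'$ to be upgraded to convergence of the images under $\mathscr{R}_{f}^{t}$ in $\mathcal{J}(E)$.
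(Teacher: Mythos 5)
Your proof is correct and follows essentially the same route as the paper's: the paper likewise adapts the surjectivity argument of \prettyref{thm:full_linearisation}, replacing $R_{f}$ by $\mathscr{R}_{f}$ and $\delta_{x}$ by $T^{\K}_{m,x}$, invoking \prettyref{prop:ext_F_semi_M} for the compactness of $K_{\alpha}$, and using the density of $\operatorname{span}\{T^{\K}_{m,x}\;|\;(m,x)\in U\}$ in $\F_{\kappa}'$ together with (sequential) completeness of $\mathcal{J}(E)$ before concluding via consistency. No gaps.
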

\begin{proof} 
Let $f\in\mathcal{F}_{E'}(U,E)$. As in \prettyref{thm:full_linearisation} 
we equip $\mathcal{J}(E)$ with the system of seminorms given by 
\begin{equation}\label{eq1:ext_F_semi_M}
p_{B^{\circ}_{\alpha}}(\mathcal{J}(x)):=\sup_{e'\in B^{\circ}_{\alpha}}|\mathcal{J}(x)(e')|=p_{\alpha}(x),\quad x\in E,
\end{equation}
for all $\alpha\in \mathfrak{A}$ where $B_{\alpha}:=\{x\in E\;|\; p_{\alpha}(x)<1\}$.
We claim $\mathscr{R}_{f}^{t}\in L(\F_{\kappa}',\mathcal{J}(E))$. 
Indeed, we have for $y\in \F'$ 
\begin{equation}\label{eq2:ext_F_semi_M}
p_{B^{\circ}_{\alpha}}\bigl(\mathscr{R}_{f}^{t}(y)\bigr)
=\sup_{e'\in B^{\circ}_{\alpha}}|y(f_{e'})|
=\sup_{x\in \mathscr{R}_{f}(B^{\circ}_{\alpha})}|y(x)|
\leq\sup_{x\in K_{\alpha}}|y(x)|
\end{equation}
where $K_{\alpha}:=\overline{\mathscr{R}_{f}(B^{\circ}_{\alpha})}$. 
Due to \prettyref{prop:ext_F_semi_M} the set $\mathscr{R}_{f}(B^{\circ}_{\alpha})$ is absolutely convex 
and relatively compact, implying that $K_{\alpha}$ is absolutely convex and compact in 
$\F$ by \cite[6.2.1 Proposition, p.\ 103]{Jarchow}. 
Further, we have for all $e'\in E'$ and $(m,x)\in U$
\begin{equation}\label{eq3:ext_F_semi_M}
\mathscr{R}_{f}^{t}(T^{\K}_{m,x})(e')=T^{\K}_{m,x}(f_{e'})=(e'\circ f)(m,x)=\mathcal{J}\bigl(f(m,x)\bigr)(e')
\end{equation}
and thus $\mathscr{R}_{f}^{t}(T^{\K}_{m,x})\in\mathcal{J}(E)$. 

First, let condition (i) be satisfied, i.e.\ let $E$ be complete, and $f'\in \F'$. 
The span of $\{T^{\K}_{m,x}\;|\; (m,x)\in U\}$ is dense in
$\F_{\kappa}'$ since $U$ is a set of uniqueness for $\F$. Thus there is a net $(f_{\tau}')$ 
converging to $f'$ in $\FV_{\kappa}'$ with $\mathscr{R}_{f}^{t}(f_{\tau}')\in\mathcal{J}(E)$ 
and 
\begin{equation}\label{eq4:ext_F_semi_M}
p_{B^{\circ}_{\alpha}}\bigl(\mathscr{R}_{f}^{t}(f_{\tau}')-\mathscr{R}_{f}^{t}(f')\bigr)
 \underset{\eqref{eq2:ext_F_semi_M}}{\leq} \sup_{x\in K_{\alpha}}|(f_{\tau}'-f')(x)|\to 0
\end{equation}
for all $\alpha\in \mathfrak{A}$. We gain that $(\mathscr{R}_{f}^{t}(f_{\tau}'))$ is a Cauchy net 
in the complete space $\mathcal{J}(E)$.
Hence it has a limit $g\in\mathcal{J}(E)$ which coincides with $\mathscr{R}_{f}^{t}(f')$ since
\begin{align*}
\qquad p_{B^{\circ}_{\alpha}}\bigl(g-\mathscr{R}_{f}^{t}(f')\bigr)
&\leq p_{B^{\circ}_{\alpha}}\bigl(g-\mathscr{R}_{f}^{t}(f_{\tau}')\bigr)
 +p_{B^{\circ}_{\alpha}}\bigl(\mathscr{R}_{f}^{t}(f_{\tau}')-\mathscr{R}_{f}^{t}(f')\bigr)\\
&\;\;\mathclap{\underset{\eqref{eq4:ext_F_semi_M}}{\leq}}\;\;\; 
 p_{B^{\circ}_{\alpha}}\bigl(g-\mathscr{R}_{f}^{t}(f_{\tau}')\bigr)
 + \sup_{x\in K_{\alpha}}\bigl|(f_{\tau}'-f')(x)\bigr|\to 0
\end{align*}
for all $\alpha\in \mathfrak{A}$. We conclude that $\mathscr{R}_{f}^{t}(f')\in\mathcal{J}(E)$ for every $f'\in \F'$. 

Second, let condition (ii) be satisfied and $f'\in \F'$. Then there is 
a sequence $(f_{n}')$ in $\F'$ converging to $f'$ in $\F_{\kappa}'$ such that 
$\mathscr{R}_{f}^{t}(f_{n}')\in\mathcal{J}(E)$ for every $n\in\N$. From \eqref{eq2:ext_F_semi_M} we derive 
that $(\mathscr{R}_{f}^{t}(f_{n}'))$ is a Cauchy sequence in the sequentially complete 
space $\mathcal{J}(E)$ converging to $\mathscr{R}_{f}^{t}(f')\in\mathcal{J}(E)$.

Therefore we obtain in both cases that $\mathscr{R}_{f}^{t}\in L(\F_{\kappa}',\mathcal{J}(E))$.
So we get for all $\alpha\in \mathfrak{A}$ and $y\in \F'$ 
\[
p_{\alpha}\bigl((\mathcal{J}^{-1}\circ \mathscr{R}_{f}^{t})(y)\bigr)
\underset{\eqref{eq1:ext_F_semi_M}}{=}
  p_{B^{\circ}_{\alpha}}\bigl(\mathcal{J}((\mathcal{J}^{-1}\circ \mathscr{R}_{f}^{t})(y))\bigr)
= p_{B^{\circ}_{\alpha}}\bigl(\mathscr{R}_{f}^{t}(y)\bigr)
\underset{\eqref{eq2:ext_F_semi_M}}{\leq}\sup_{x\in K_{\alpha}}|y(x)|.
\]
This implies $\mathcal{J}^{-1}\circ \mathscr{R}_{f}^{t}\in L(\F_{\kappa}', E)=\F\varepsilon E$ (as linear spaces).
We set $F:=S(\mathcal{J}^{-1}\circ \mathscr{R}_{f}^{t})$ and obtain from consistency that
\[
 T^{E}_{m}(F)(x)
=T^{E}_{m}S(\mathcal{J}^{-1}\circ \mathscr{R}_{f}^{t})(x)
=\mathcal{J}^{-1}\bigl(\mathscr{R}_{f}^{t}(T^{\K}_{m,x})\bigr)
\underset{\eqref{eq3:ext_F_semi_M}}{=}\mathcal{J}^{-1}\bigl(\mathcal{J}(f(m,x))\bigr)
=f(m,x)
\]
for every $(m,x)\in U$, which means $R_{U,E'}(F)=f$.
\end{proof}

If $E$ is complete and $U$ a set of uniqueness for $(T^{\K}_{m},\mathcal{F})_{m\in M}$ with 
$\{T^{\K}_{m,x}\;|\; (m,x)\in U\}=\{\delta_{x}\;|\;x\in\Lambda\}$, $\Lambda\subset\Omega$, then 
we get \cite[0.1, p.\ 217]{Gramsch1977} as a special case. 
Condition (i) and (ii) are adaptations of \prettyref{cond:surjectivity_linearisation} a) and c) 
from $\mathcal{FV}(\Omega,E)$ and $R_{f}$ to $\mathcal{F}_{E'}(U,E)$ and $\mathscr{R}_{f}$. 
We also treat an adaptation of \prettyref{cond:surjectivity_linearisation} e) in \prettyref{thm:fix_topo_E_semi_M}. 
\prettyref{cond:surjectivity_linearisation} b) and d) may be adapted as well 
but we restrict to the ones we actually apply. 
First, we apply \prettyref{thm:ext_F_semi_M} to the space of bounded zero-solutions of a
hypoelliptic linear  partial differential operator equipped with the strict topology $\beta$ from
\prettyref{prop:strict_top_isomorphism}.

\begin{prop}\label{prop:strict_topo_hypo_B_complete_semi_Montel}
Let $\Omega\subset\R^{d}$ be open and $P(\partial)^{\K}$ a hypoelliptic linear partial differential operator. 
Then $(\mathcal{C}^{\infty}_{P(\partial),b}(\Omega),\beta)$ is a B-complete semi-Montel space.
\end{prop}
\begin{proof}
Due to the proof of \prettyref{prop:strict_top_isomorphism} we know that $\beta$ coincides with the mixed topology 
$\gamma(\tau_{c},\|\cdot\|_{\infty})$. 
It is easy to check that the closed $\|\cdot\|_{\infty}$-unit ball $B_{\|\cdot\|_{\infty}}$ 
is $\tau_{c}$-compact in $\mathcal{C}^{\infty}_{P(\partial),b}(\Omega)$.
Thus \cite[Section I.1, 1.13 Proposition, p.\ 11]{cooper1978} yields 
that $(\mathcal{C}^{\infty}_{P(\partial),b}(\Omega),\beta)$ is a semi-Montel space. 
From \cite[2.9 Theorem, p.\ 185]{ruess1977} it follows that the space is $B$-complete.
\end{proof}

\begin{cor}
Let $\Omega\subset\R^{d}$ be open, $E$ a complete lcHs, 
$P(\partial)^{\K}$ a hypoelliptic linear partial differential operator,
$(T^{E}_{m},T^{\K}_{m})_{m\in M}$ a strong, consistent family 
for $((\mathcal{C}^{\infty}_{P(\partial),b}(\Omega),\beta),E)$
and $U$ a set of uniqueness for $(T^{\K}_{m},(\mathcal{C}^{\infty}_{P(\partial),b}(\Omega),\beta))_{m\in M}$. 
If $f\colon U\to E$ is a function such that there is $f_{e'}\in\mathcal{C}^{\infty}_{P(\partial),b}(\Omega)$ 
for each $e'\in E'$ with $T^{\K}_{m}(f_{e'})(x)=(e'\circ f)(m,x)$ for all $(m,x)\in U$, 
then there is a unique $F\in\mathcal{C}^{\infty}_{P(\partial),b}(\Omega,E)$ 
with $T^{E}_{m}(F)(x)=f(m,x)$ for all $(m,x)\in U$.
\end{cor}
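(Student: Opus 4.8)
The plan is to apply \prettyref{thm:ext_F_semi_M} directly, having verified all its hypotheses for the concrete space $\F=(\mathcal{C}^{\infty}_{P(\partial),b}(\Omega),\beta)$ under the stated assumptions. First I would observe that by \prettyref{prop:strict_topo_hypo_B_complete_semi_Montel} the space $(\mathcal{C}^{\infty}_{P(\partial),b}(\Omega),\beta)$ is a $B$-complete semi-Montel space. Since every $B$-complete space is $B_{r}$-complete, and $B_{r}$-complete spaces are BC-spaces (by the Pt\'{a}k--K\={o}mura--Adasch--Valdivia closed graph theorem, as noted in the text following the definition of BC-space), we conclude that $(\mathcal{C}^{\infty}_{P(\partial),b}(\Omega),\beta)$ is a semi-Montel BC-space. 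This is exactly the structural requirement on $\F$ in \prettyref{thm:ext_F_semi_M}.

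Next I would assemble the remaining hypotheses of \prettyref{thm:ext_F_semi_M}. By \prettyref{prop:strict_top_isomorphism}, using that $E$ (being complete) has in particular metric ccp, the spaces $(\mathcal{C}^{\infty}_{P(\partial),b}(\Omega),\beta)$ and $(\mathcal{C}^{\infty}_{P(\partial),b}(\Omega,E),\beta)$ are $\varepsilon$-compatible, hence certainly $\varepsilon$-into-compatible. The family $(T^{E}_{m},T^{\K}_{m})_{m\in M}$ is assumed strong and consistent, and $U$ is assumed to be a set of uniqueness for $(T^{\K}_{m},\mathcal{C}^{\infty}_{P(\partial),b}(\Omega))_{m\in M}$. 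Since $E$ is complete, condition (i) of \prettyref{thm:ext_F_semi_M} is satisfied. Therefore the restriction map
\[
R_{U,E'}\colon S(\F\varepsilon E)\to \mathcal{F}_{E'}(U,E)
\]
is surjective.

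It then remains to translate this surjectivity into the statement of the corollary. Given $f\colon U\to E$ with the stated property that for each $e'\in E'$ there is $f_{e'}\in\mathcal{C}^{\infty}_{P(\partial),b}(\Omega)$ satisfying $T^{\K}_{m}(f_{e'})(x)=(e'\circ f)(m,x)$ for all $(m,x)\in U$, we have precisely $f\in\mathcal{F}_{E'}(U,E)$ (taking $G=E'$). By the surjectivity of $R_{U,E'}$ there is $u\in\F\varepsilon E$ with $F:=S(u)\in\mathcal{C}^{\infty}_{P(\partial),b}(\Omega,E)$ and $T^{E}_{m}(F)(x)=f(m,x)$ for all $(m,x)\in U$. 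Uniqueness of $F$ follows from \prettyref{prop:injectivity}: the strength of the family makes $T^{E}\colon\FE\to E^{U}$ injective, so any two extensions agreeing on $U$ under all the $T^{E}_{m}$ must coincide.

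The only genuine point requiring care, rather than routine bookkeeping, is the chain of structural implications establishing that the strict-topology space is a semi-Montel BC-space; this is precisely what \prettyref{prop:strict_topo_hypo_B_complete_semi_Montel} secures, together with the observation that $B$-completeness implies the BC-property. Once that is in hand, every other hypothesis is either assumed outright in the corollary or supplied verbatim by \prettyref{prop:strict_top_isomorphism} and the completeness of $E$, so the proof is essentially a matter of invoking \prettyref{thm:ext_F_semi_M} (i) and \prettyref{prop:injectivity}.
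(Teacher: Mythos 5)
Your proposal is correct and follows essentially the same route as the paper's own proof: \prettyref{prop:strict_topo_hypo_B_complete_semi_Montel} gives the semi-Montel BC-space structure (via $B$-complete $\Rightarrow B_{r}$-complete $\Rightarrow$ BC, exactly as you spell out), \prettyref{prop:strict_top_isomorphism} gives the $\varepsilon$-(into-)compatibility, and then \prettyref{thm:ext_F_semi_M}~(i) together with \prettyref{prop:injectivity} yields existence and uniqueness. The paper merely states these steps more tersely; your verification of the hypotheses, including that completeness of $E$ supplies condition~(i), is the same argument.
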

\begin{proof}
The space $(\mathcal{C}^{\infty}_{P(\partial),b}(\Omega),\beta)$ is a semi-Montel BC-space by 
\prettyref{prop:strict_topo_hypo_B_complete_semi_Montel}.
Moreover, $(\mathcal{C}^{\infty}_{P(\partial),b}(\Omega),\beta)$ and 
$(\mathcal{C}^{\infty}_{P(\partial),b}(\Omega,E),\beta)$ 
are $\varepsilon$-compatible by \prettyref{prop:strict_top_isomorphism}, 
yielding our statement by \prettyref{thm:ext_F_semi_M} (i) and \prettyref{prop:injectivity}. 
\end{proof}

Especially, for any $m\in\N_{0}$ the family 
$((\partial^{\beta})^{E},(\partial^{\beta})^{\K})_{\beta\in\N_{0}^{d},|\beta|\leq m}$ 
is strong and consistent for $((\mathcal{C}^{\infty}_{P(\partial),b}(\Omega),\beta),E)$ 
by the proof of \prettyref{prop:strict_top_isomorphism}. 
It is always possible to construct a strong, consistent family $(T^{E}_{m},T^{\K}_{m})_{m\in M}$ 
for $((\mathcal{C}^{\infty}_{P(\partial),b}(\Omega),\beta),E)$
from a given set of uniqueness $(T^{\K}_{m},(\mathcal{C}^{\infty}_{P(\partial),b}(\Omega),\beta))_{m\in M}$ 
due to \prettyref{rem:representation_subspaces} b) and c).

Similarly, we may apply \prettyref{thm:ext_F_semi_M} to the space 
$\mathcal{E}^{\{M_{p}\}}(\Omega,E)$ of ultradifferentiable functions of class $\{M_{p}\}$ of Roumieu-type 
from \prettyref{ex:weighted_smooth_functions} f).
$\mathcal{E}^{\{M_{p}\}}(\Omega)$ is a projective limit of a countable sequence of DFS-spaces by 
\cite[Theorem 2.6, p.\ 44]{Kom7} and thus webbed because being webbed is stable under the formation 
of projective and inductive limits of countable sequences by 
\cite[5.3.3 Corollary, p.\ 92]{Jarchow}. 
Further, if the sequence $(M_{p})_{p\in\N_{0}}$ satisfies Komatsu's conditions (M.1) and (M.3)', 
then $\mathcal{E}^{\{M_{p}\}}(\Omega)$ is a Montel space by \cite[Theorem 5.12, p.\ 65--66]{Kom7}. 
The spaces $\mathcal{E}^{\{M_{p}\}}(\Omega)$ and $\mathcal{E}^{\{M_{p}\}}(\Omega,E)$ are $\varepsilon$-compatible 
if (M.1) and (M.3)' hold and $E$ is complete by \prettyref{ex:ultradifferentiable} b). 
Hence \prettyref{thm:ext_F_semi_M} (i) is applicable.

\begin{rem}
We remark that \prettyref{rem:R_well-defined} and \prettyref{thm:ext_F_semi_M} still hold 
if the map $S\colon\F\varepsilon E\to\FE$ is only a linear isomorphism into, 
i.e.\ an isomorphism into of linear spaces, since the topological nature of $\varepsilon$-into-compatibility 
is not used in the proof. In particular, this means that it can be applied to the space $\gls{MOE}$ 
of meromorphic functions on an open, connected set $\Omega\subset\C$ with values in an lcHs $E$ over $\C$ 
(see \cite[p.\ 356]{Bonet2002}). 
The space $\mathcal{M}(\Omega)$ is a Montel LF-space, thus webbed by 
\cite[5.3.3 Corollary (b), p.\ 92]{Jarchow}, due to the proof of 
\cite[Theorem 3 (a), p.\ 294--295]{grosse-erdmann1995} if it is equipped with the locally convex topology 
$\tau_{ML}$ given in \cite[p.\ 292]{grosse-erdmann1995}. 
By \cite[Proposition 6, p.\ 357]{Bonet2002} the map $S\colon\mathcal{M}(\Omega)\varepsilon E\to\mathcal{M}(\Omega,E)$ 
is a linear isomorphism if $E$ is locally complete and does not contain the space $\C^{\N}$. 
Therefore we can apply \prettyref{thm:ext_F_semi_M} if $E$ is complete and does not contain $\C^{\N}$. 
This augments \cite[Theorem 12, p.\ 12]{jorda2005} where $E$ is assumed to be locally complete with 
suprabarrelled strong dual and $(T^{E},T^{\C})=(\id_{E^{\Omega}},\id_{\C^{\Omega}})$.
\end{rem}

\addtocontents{toc}{\SkipTocEntry}
\subsection*{\texorpdfstring{$\F$}{F(Omega)} a Fr\'{e}chet--Schwartz space and \texorpdfstring{$E$}{E} locally complete}

We recall the following abstract extension result. 

\begin{prop}[{\cite[Proposition 7, p.\ 231]{B/F/J}}]\label{prop:ext_FS_set_uni}
Let $E$ be a locally complete lcHs, $Y$ a Fr\'{e}chet--Schwartz space, $X\subset Y_{b}'(=Y_{\kappa}')$ dense 
and $\mathsf{A}\colon X\to E$ linear.
Then the following assertions are equivalent:
\begin{enumerate}
\item [a)] There is a (unique) extension $\widehat{\mathsf{A}}\in Y\varepsilon E$ of $\mathsf{A}$.
\item [b)] $(\mathsf{A}^{t})^{-1}(Y)$ $(=\{e'\in E'\;|\; e'\circ \mathsf{A}\in Y\})$ determines boundedness in $E$.
\end{enumerate}
\end{prop}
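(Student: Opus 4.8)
The plan is to prove the easy implication (a)$\Rightarrow$(b) by inspection and to obtain (b)$\Rightarrow$(a) by the representing-element technique already used for the surjectivity of $S$ in \prettyref{thm:full_linearisation} and \prettyref{thm:ext_F_semi_M}, now adapted to a Fréchet–Schwartz domain $Y$, a merely locally complete codomain $E$, and the thin testing space $G:=(\mathsf{A}^{t})^{-1}(Y)$. For (a)$\Rightarrow$(b): if $\widehat{\mathsf{A}}\in Y\varepsilon E=L_{e}(Y_{\kappa}',E)$ extends $\mathsf{A}$, then for every $e'\in E'$ the functional $e'\circ\widehat{\mathsf{A}}$ is continuous on $Y_{\kappa}'$ and hence, by reflexivity of the Montel space $Y$, an element of $(Y_{\kappa}')'=Y$ restricting to $e'\circ\mathsf{A}$ on $X$; thus $(\mathsf{A}^{t})^{-1}(Y)=E'$, which determines boundedness in $E$ by \cite[Mackey's theorem 23.15, p.\ 268]{meisevogt1997}. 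Uniqueness in (a) is immediate, since two extensions in $Y\varepsilon E$ agree on the dense subspace $X\subset Y_{\kappa}'$ and are continuous.

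For (b)$\Rightarrow$(a) I would first set up the analogue of the map $\mathscr{R}_{f}$ from \prettyref{rem:R_f}. As $X$ is dense in $Y_{\kappa}'$, it separates the points of the reflexive space $Y$, so for each $e'\in G$ there is a unique $\mathscr{R}_{\mathsf{A}}(e')\in Y$ with $\xi(\mathscr{R}_{\mathsf{A}}(e'))=e'(\mathsf{A}\xi)$ for all $\xi\in X$, giving a linear map $\mathscr{R}_{\mathsf{A}}\colon G\to Y$. The key preliminary step is to show that $\mathscr{R}_{\mathsf{A}}(B_{\alpha}^{\circ}\cap G)$ is bounded, indeed relatively compact, in $Y$ for each $\alpha\in\mathfrak{A}$, where $B_{\alpha}:=\{x\in E:p_{\alpha}(x)<1\}$. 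As in \prettyref{prop:ext_F_semi_M}, the restriction of $\mathscr{R}_{\mathsf{A}}$ to $G\cap E_{B_{\alpha}^{\circ}}'$ has closed graph into $Y$: if $e'_{\tau}\to e'$ in the Banach space $E_{B_{\alpha}^{\circ}}'$ and $\mathscr{R}_{\mathsf{A}}(e'_{\tau})\to g$ in $Y$, testing with $\xi\in X$ and using that $X$ separates $Y$ forces $g=\mathscr{R}_{\mathsf{A}}(e')$. Since $Y$, being Fréchet, is $B$-complete and hence a BC-space, this yields continuity, and the semi-Montel property of $Y$ upgrades boundedness of the image to relative compactness.

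Next I would fix $y\in Y_{\kappa}'$ and produce the representing element of $\mathscr{R}_{\mathsf{A}}^{t}(y)\colon e'\mapsto y(\mathscr{R}_{\mathsf{A}}(e'))$, which on $\xi\in X$ equals $\mathcal{J}(\mathsf{A}\xi)|_{G}$. Using that the Fréchet–Schwartz structure makes $\operatorname{span}(X)$ locally dense in $Y_{\kappa}'$ (in the spirit of \prettyref{cor:full_linearisation} (ii)), choose a Banach disk $D\subset Y_{\kappa}'$ and a sequence $(\xi_{n})$ in $\operatorname{span}(X)$ converging to $y$ in $(Y_{\kappa}')_{D}$. The boundedness of $\mathscr{R}_{\mathsf{A}}(B_{\alpha}^{\circ}\cap G)$ then gives, exactly as in the proof of \prettyref{thm:full_linearisation} (d),
\[
\sup_{e'\in B_{\alpha}^{\circ}\cap G}\bigl|\bigl(\mathscr{R}_{\mathsf{A}}^{t}(\xi_{n})-\mathscr{R}_{\mathsf{A}}^{t}(\xi_{m})\bigr)(e')\bigr|\le C_{\alpha}\,q_{D}(\xi_{n}-\xi_{m})\to 0,
\]
so $(\mathcal{J}(\mathsf{A}\xi_{n}))$ is Cauchy in a Banach disk of $\mathcal{J}(E)$; local completeness of $E$ supplies a limit $\mathcal{J}(\widehat{\mathsf{A}}(y))$ representing $\mathscr{R}_{\mathsf{A}}^{t}(y)$ on $G$. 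As $G$ is separating (a consequence of (b)), this defines $\widehat{\mathsf{A}}(y)\in E$ linearly in $y$ with $\widehat{\mathsf{A}}(\xi)=\mathsf{A}\xi$ on $X$. Continuity of $\widehat{\mathsf{A}}\colon Y_{\kappa}'\to E$ is where (b) is decisive: by \prettyref{prop:isom_op_spaces} b) one has $Y\varepsilon E=L_{b}(Y_{\kappa}',E)$, and $Y_{\kappa}'=Y_{b}'$ is bornological, so it suffices that $\widehat{\mathsf{A}}$ send bounded sets to bounded sets; for bounded $M\subset Y_{\kappa}'$, the boundedness of $\mathscr{R}_{\mathsf{A}}(B_{\alpha}^{\circ}\cap G)$ shows $\widehat{\mathsf{A}}(M)$ is $\sigma(E,G)$-bounded, whence bounded in $E$ because $G$ determines boundedness.

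The step I expect to be the main obstacle is the closed-graph argument for $\mathscr{R}_{\mathsf{A}}$, since $G$ is only a subspace of $E'$ and $G\cap E_{B_{\alpha}^{\circ}}'$ need not be complete, so the BC-space property of $Y$ cannot be invoked naïvely. The remedy is to pass to the closure inside the Banach space $E_{B_{\alpha}^{\circ}}'$ and to use the boundedness-determining hypothesis on $G$ to ensure that the polars $B_{\alpha}^{\circ}\cap G$ remain large enough to control the seminorms $p_{\alpha}$ and the $\sigma(E,G)$-bornology; it is precisely this interplay between the thinness of $G$ and the compactness furnished by the Schwartz property of $Y$ that constitutes the technical heart of the argument.
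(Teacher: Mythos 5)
The paper does not prove this proposition at all; it is quoted verbatim from \cite[Proposition 7, p.\ 231]{B/F/J}, so there is no in-paper proof to compare against. Judged on its own merits, your implication a)$\Rightarrow$b), the uniqueness argument, and the final continuity step (bounded sets of the bornological DFS-space $Y_{b}'$ go to $\sigma(E,G)$-bounded, hence bounded, sets) are all correct.

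The implication b)$\Rightarrow$a), however, has a genuine gap exactly where you flag it, and your proposed remedy does not close it. The closed-graph argument of \prettyref{prop:ext_F_semi_M} needs the domain of $\mathscr{R}_{\mathsf{A}}$ to be the full Banach space $E_{B_{\alpha}^{\circ}}'$; here it is only the (in general non-closed) subspace $G\cap E_{B_{\alpha}^{\circ}}'$. Taking the closure of the graph in $E_{B_{\alpha}^{\circ}}'\times Y$ does produce a closed linear relation which is single-valued (your separation argument via $X$ is fine), but its domain is then merely a dense, possibly proper, subspace of a Banach space, and a closed operator on a dense proper subspace need not be continuous -- so the BC-space property of $Y$ still cannot be invoked, and no amount of appealing to the Schwartz property of $Y$ repairs this. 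The Cauchy estimate in your third paragraph therefore rests on an unproved claim.

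The gap is avoidable, because the boundedness you need should come from hypothesis b) itself rather than from a closed graph theorem: for any bounded set $B\subset Y_{b}'$ and any $e'\in G$ one has
\[
\sup_{\xi\in X\cap B}|e'(\mathsf{A}\xi)|=\sup_{\xi\in X\cap B}|\xi(\mathscr{R}_{\mathsf{A}}(e'))|<\infty
\]
since $\mathscr{R}_{\mathsf{A}}(e')\in Y$, so $\mathsf{A}(X\cap B)$ is $\sigma(E,G)$-bounded and hence, by b), bounded in $E$. Combining this with the fact that the local closure of the dense subspace $X$ in $Y_{b}'$ is all of $Y'$ (the \cite[Lemma 6 (b), p.\ 231]{B/F/J} statement you implicitly use via \prettyref{cor:full_linearisation} (ii)), every $y'\in Y'$ is the limit of a sequence $(\xi_{n})\subset X$ that is Mackey--Cauchy in $Y_{b}'$; the displayed estimate then makes $(\mathsf{A}\xi_{n})$ Mackey--Cauchy in $E$, and local completeness furnishes the limit $\widehat{\mathsf{A}}(y')$. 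This is the correct division of labour: hypothesis b) is precisely the substitute for the closed graph theorem, which is only available in the case $G=E'$ treated in \prettyref{prop:ext_F_semi_M}. Your draft instead defers b) to the very last step and tries to carry the main burden with a tool that does not apply.
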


Next, we generalise \cite[Theorem 9, p.\ 232]{B/F/J} using the preceding proposition. The proof 
of the generalisation is simply obtained by replacing the set of uniqueness in the proof of 
\cite[Theorem 9, p.\ 232]{B/F/J} by our more general set of uniqueness. 

\begin{thm}\label{thm:ext_FS_set_uni}
Let $E$ be a locally complete lcHs, $G\subset E'$ determine boundedness and $\F$ and $\FE$ 
be $\varepsilon$-into-compatible. 
Let $(T^{E}_{m},T^{\K}_{m})_{m\in M}$ be a strong, consistent family for $(\mathcal{F},E)$,
$\F$ a Fr\'{e}chet--Schwartz space and $U$ a set of uniqueness for $(T^{\K}_{m},\mathcal{F})_{m\in M}$. 
Then the restriction map $R_{U,G}\colon S(\F\varepsilon E) \to \mathcal{F}_{G}(U,E)$ is surjective.
\end{thm}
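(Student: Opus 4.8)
The plan is to reduce Theorem~\ref{thm:ext_FS_set_uni} to the abstract extension result \prettyref{prop:ext_FS_set_uni} by constructing, from a given function $f\in\mathcal{F}_{G}(U,E)$, a suitable densely defined linear operator $\mathsf{A}$ on $\F_{\kappa}'$ and verifying the boundedness-determining condition. First I would take $f\in\mathcal{F}_{G}(U,E)$ and set $X:=\operatorname{span}\{T^{\K}_{m,x}\;|\;(m,x)\in U\}\subset\F_{\kappa}'$, which is dense in $\F_{\kappa}'$ because $U$ is a set of uniqueness (as noted right after \prettyref{def:set_uniqueness}, the span of these functionals is $\sigma(\F',\F)$-dense, hence $\kappa(\F',\F)$-dense by the bipolar theorem). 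On $X$ I would define $\mathsf{A}\colon X\to E$ by $\mathsf{A}(T^{\K}_{m,x}):=f(m,x)$ and extend linearly. The key point to check here is that $\mathsf{A}$ is well-defined: if a finite linear combination $\sum_{k}\lambda_{k}T^{\K}_{m_{k},x_{k}}$ vanishes in $\F'$, then for every $e'\in G$ we have $\sum_{k}\lambda_{k}(e'\circ f)(m_{k},x_{k})=\sum_{k}\lambda_{k}T^{\K}_{m_{k}}(f_{e'})(x_{k})=\bigl(\sum_{k}\lambda_{k}T^{\K}_{m_{k},x_{k}}\bigr)(f_{e'})=0$, so $e'\bigl(\sum_{k}\lambda_{k}f(m_{k},x_{k})\bigr)=0$ for all $e'\in G$, and since $G$ is separating this forces $\sum_{k}\lambda_{k}f(m_{k},x_{k})=0$. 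Thus $\mathsf{A}$ is a genuine linear map.

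Next I would verify condition b) of \prettyref{prop:ext_FS_set_uni}, namely that $(\mathsf{A}^{t})^{-1}(\F)=\{e'\in E'\;|\;e'\circ\mathsf{A}\in\F\}$ determines boundedness in $E$. The observation is that $G\subset(\mathsf{A}^{t})^{-1}(\F)$: for $e'\in G$ the functional $e'\circ\mathsf{A}$ acts on $X$ by $(e'\circ\mathsf{A})(T^{\K}_{m,x})=e'(f(m,x))=(e'\circ f)(m,x)=T^{\K}_{m}(f_{e'})(x)=T^{\K}_{m,x}(f_{e'})$, so $e'\circ\mathsf{A}$ agrees on the dense set $X$ with the evaluation-at-$f_{e'}$ functional, i.e.\ with the image of $f_{e'}\in\F$ under the canonical embedding $\F\hookrightarrow(\F_{\kappa}')'$. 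Hence $e'\circ\mathsf{A}\in\F$ (identifying $\F$ with its image in $\F''$, which is legitimate since $\F$ is reflexive being a Fr\'echet--Schwartz space). Since $G$ determines boundedness and $G\subset(\mathsf{A}^{t})^{-1}(\F)$, any larger subspace containing $G$ determines boundedness as well, so condition b) holds.

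Applying \prettyref{prop:ext_FS_set_uni} with $Y:=\F$ then yields a unique $\widehat{\mathsf{A}}\in\F\varepsilon E$ extending $\mathsf{A}$. Finally I would set $F:=S(\widehat{\mathsf{A}})\in\FE$, which lies in $\FE$ by $\varepsilon$-into-compatibility, and check $R_{U,G}(F)=f$. For $(m,x)\in U$, consistency of $(T^{E}_{m},T^{\K}_{m})_{m\in M}$ gives $T^{E}_{m}(F)(x)=T^{E}_{m}S(\widehat{\mathsf{A}})(x)=\widehat{\mathsf{A}}(T^{\K}_{m,x})=\mathsf{A}(T^{\K}_{m,x})=f(m,x)$, where the third equality uses that $\widehat{\mathsf{A}}$ extends $\mathsf{A}$ and $T^{\K}_{m,x}\in X$. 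This shows $R_{U,G}(F)=f$, proving surjectivity. The main obstacle I anticipate is the bookkeeping around the identification of $e'\circ\mathsf{A}$ with an element of $\F$ via the canonical embedding and ensuring the density/extension argument respects the $\kappa$-topology precisely; the reflexivity of Fr\'echet--Schwartz spaces (they are semi-Montel, hence semi-reflexive, so $\F_{b}'=\F_{\kappa}'$ and $(\F_{\kappa}')'=\F$) is what makes this identification clean, and invoking \prettyref{prop:ext_FS_set_uni} correctly requires matching its hypothesis $X\subset Y_{b}'(=Y_{\kappa}')$ dense, which is exactly our density of $X$ in $\F_{\kappa}'$.
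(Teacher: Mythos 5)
Your proposal is correct and follows essentially the same route as the paper's proof: define $\mathsf{A}$ on $X=\operatorname{span}\{T^{\K}_{m,x}\;|\;(m,x)\in U\}$ by $\mathsf{A}(T^{\K}_{m,x}):=f(m,x)$, use the separating property of $G$ for well-definedness, show $G\subset(\mathsf{A}^{t})^{-1}(\F)$ so that this set determines boundedness, invoke \prettyref{prop:ext_FS_set_uni}, and conclude via consistency that $F:=S(\widehat{\mathsf{A}})$ restricts to $f$. Your added detail on the well-definedness of $\mathsf{A}$ and on the identification of $e'\circ\mathsf{A}$ with $f_{e'}$ via reflexivity only makes explicit what the paper leaves implicit.
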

\begin{proof}
Let $f\in \mathcal{F}_{G}(U,E)$. 
We choose $X:=\operatorname{span}\{T^{\K}_{m,x}\;|\;(m,x)\in U\}$ and $Y:=\F$.
Let $\mathsf{A}\colon X\to E$ be the linear map generated by $\mathsf{A}(T^{\K}_{m,x}):=f(m,x)$. 
The map $\mathsf{A}$ is well-defined since $G$ is $\sigma(E',E)$-dense. 
Let $e'\in G$ and $f_{e'}$ be the unique element in $\F$ such that 
$T^{\K}_{m}(f_{e'})(x)=(e'\circ \mathsf{A})(T^{\K}_{m,x})$ for all $(m,x)\in U$. This equation 
allows us to consider $f_{e'}$ as a linear form on $X$ 
(by setting $f_{e'}(T^{\K}_{m,x}):=(e'\circ \mathsf{A})(T^{\K}_{m,x})$), 
which yields $e'\circ \mathsf{A}\in\F$ for all $e'\in G$. It follows that
$G\subset(\mathsf{A}^{t})^{-1}(Y)$, implying that $(\mathsf{A}^{t})^{-1}(Y)$ determines boundedness. 
Applying \prettyref{prop:ext_FS_set_uni}, there is an extension $\widehat{\mathsf{A}}\in\F\varepsilon E$ of 
$\mathsf{A}$ and we set $F:=S(\widehat{\mathsf{A}})$. We note that 
\[
T^{E}_{m}(F)(x)=T^{E}_{m}S(\widehat{\mathsf{A}})(x)=\widehat{\mathsf{A}}(T^{\K}_{m,x})
=\mathsf{A}(T^{\K}_{m,x})=f(m,x)
\]
for all $(m,x)\in U$ by consistency, yielding $R_{U,G}(F)=f$.
\end{proof}

Let us apply the preceding theorem to our weighted spaces of continuously partially differentiable functions 
and its subspaces from \prettyref{ex:weighted_smooth_functions} and \prettyref{ex:diff_vanish_at_infinity}.

\begin{cor}\label{cor:weak_strong_CV}
Let $E$ be a locally complete lcHs, $G\subset E'$ determine boundedness,
$\mathcal{V}^{\infty}$ a directed family of weights which is locally bounded away from zero on 
an open set $\Omega\subset\R^{d}$, 
let $\mathcal{F}(\Omega)$ be a Fr\'{e}chet--Schwartz space and $U\subset\N_{0}^{d}\times\Omega$ 
a set of uniqueness for $(\partial^{\beta},\mathcal{F})_{\beta\in\N_{0}^{d}}$ 
where $\mathcal{F}$ stands for $\mathcal{CV}^{\infty}$, $\mathcal{CV}^{\infty}_{0}$, 
$\mathcal{CV}^{\infty}_{P(\partial)}$ or $\mathcal{CV}^{\infty}_{P(\partial),0}$. 
Then the following holds:
\begin{enumerate}
\item[a)] If $f\colon U\to E$ is a function such that there is $f_{e'}\in\mathcal{F}(\Omega)$ for each $e'\in G$ 
with $\partial^{\beta}f_{e'}(x)=(e'\circ f)(\beta,x)$ for all $(\beta,x)\in U$, 
then there is a unique $F\in\mathcal{F}(\Omega,E)$ with $(\partial^{\beta})^{E}F(x)=f(\beta,x)$ 
for all $(\beta,x)\in U$.
\item[b)] If $U\subset\Omega$ and $f\colon U\to E$ is a function such that 
$e'\circ f$ admits an extension $f_{e'}\in\mathcal{F}(\Omega)$ for every $e'\in G$, 
then there is a unique extension $F\in\mathcal{F}(\Omega,E)$ of $f$. 
\item[c)] $\FE=\{f\colon\Omega\to E\;|\;\forall\;e'\in G:\;e'\circ f\in\F \}$.
\end{enumerate}
\end{cor}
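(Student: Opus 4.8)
The plan is to deduce this corollary as a direct application of \prettyref{thm:ext_FS_set_uni} together with the weak-strong principle machinery already developed, verifying the hypotheses of that theorem for each of the four concrete spaces $\mathcal{F}\in\{\mathcal{CV}^{\infty},\mathcal{CV}^{\infty}_{0},\mathcal{CV}^{\infty}_{P(\partial)},\mathcal{CV}^{\infty}_{P(\partial),0}\}$. The key observation is that all the structural ingredients needed are already in place: by \prettyref{ex:weighted_diff} e)+f) and \prettyref{ex:diff_vanish_at_infinity} c)+f) the spaces $\mathcal{F}(\Omega)$ and $\mathcal{F}(\Omega,E)$ are $\varepsilon$-into-compatible (indeed $\varepsilon$-compatible) whenever $\mathcal{F}(\Omega)$ is a Fr\'echet--Schwartz space and $E$ is locally complete; and by \prettyref{prop:weighted_diff_strong_cons} the generator $((\partial^{\beta})^{E},(\partial^{\beta})^{\K})_{\beta\in\N_{0}^{d}}$ is strong and consistent, since $\mathcal{V}^{\infty}$ is locally bounded away from zero and $\mathcal{F}(\Omega)$, being Fr\'echet--Schwartz, is barrelled.

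For part a), I would proceed as follows. The index set is $M:=\N_{0}^{d}$ with $\omega_{\beta}:=\Omega$ and the operators $T^{\K}_{\beta}:=(\partial^{\beta})^{\K}$, $T^{E}_{\beta}:=(\partial^{\beta})^{E}$, so that $T^{\K}_{\beta,x}=\delta_{x}\circ(\partial^{\beta})^{\K}$. By hypothesis $U\subset\N_{0}^{d}\times\Omega$ is a set of uniqueness for $(\partial^{\beta},\mathcal{F})_{\beta\in\N_{0}^{d}}$, and $G\subset E'$ determines boundedness. The given function $f\colon U\to E$ satisfies: for each $e'\in G$ there is $f_{e'}\in\mathcal{F}(\Omega)$ with $(\partial^{\beta})^{\K}f_{e'}(x)=(e'\circ f)(\beta,x)$ for all $(\beta,x)\in U$, which is precisely the statement $f\in\mathcal{F}_{G}(U,E)$ in the notation of the restriction-space definition. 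All hypotheses of \prettyref{thm:ext_FS_set_uni} are thus met, so the restriction map $R_{U,G}\colon S(\mathcal{F}(\Omega)\varepsilon E)\to\mathcal{F}_{G}(U,E)$ is surjective: there is $F\in S(\mathcal{F}(\Omega)\varepsilon E)\subset\mathcal{F}(\Omega,E)$ with $(\partial^{\beta})^{E}F(x)=f(\beta,x)$ for all $(\beta,x)\in U$. Uniqueness of $F$ follows from \prettyref{prop:injectivity}, since the strength of the generator makes the map $T^{E}\colon\mathcal{F}(\Omega,E)\to E^{U}$ injective on a set of uniqueness.

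Part b) is the special case of a) in which $U\subset\Omega$ is identified with $\{(0,x)\mid x\in U\}$ and only the zeroth-order operator $(\partial^{0})=\id$ is used; here $U$ being a set of uniqueness for $(\id_{\K^{\Omega}},\mathcal{F})$ is the classical notion, and the conclusion $(\partial^{0})^{E}F(x)=F(x)=f(x)$ for $x\in U$ says exactly that $F$ extends $f$. Part c) is the weak-strong principle obtained from \prettyref{prop:weak_strong_principle}: taking $U:=\Omega$ as the set of uniqueness for $(\id_{\K^{\Omega}},\mathcal{F})$ (valid by \prettyref{rem:Schauder_coeff_set_uni} a) since $\delta_{x}\in\mathcal{F}(\Omega)'$), the surjectivity of $R_{\Omega,G}$ from part b), combined with the fact that $e'\circ f\in\mathcal{F}(\Omega)$ for all $e'\in G$ and $f\in\mathcal{F}(\Omega,E)$ (strength of the generator), yields both $\mathcal{F}(\Omega)\varepsilon E\cong\mathcal{F}(\Omega,E)$ via $S$ and the asserted identity for $\mathcal{F}(\Omega,E)$. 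The only point demanding mild care — and the main obstacle, such as it is — is confirming that each of the four spaces genuinely satisfies the Fr\'echet--Schwartz hypothesis and the $\varepsilon$-into-compatibility: for $\mathcal{CV}^{\infty}_{P(\partial)}$ and $\mathcal{CV}^{\infty}_{P(\partial),0}$ one must invoke that these are closed subspaces of $\mathcal{CV}^{\infty}$ resp. $\mathcal{CV}^{\infty}_{0}$ (via continuity of $P(\partial)$ into $(\mathcal{C}(\Omega),\tau_c)$, as noted after \prettyref{ex:weighted_diff}) and hence inherit the Fr\'echet--Schwartz property by \cite[Proposition 24.18, p.\ 284]{meisevogt1997}; otherwise the argument is a uniform application of the cited theorem and remarks.
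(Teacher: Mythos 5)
Your proof is correct and follows essentially the same route as the paper: the $\varepsilon$-compatibility and the strength/consistency of the generator $((\partial^{\beta})^{E},\partial^{\beta})_{\beta\in\N_{0}^{d}}$ come from \prettyref{ex:weighted_diff} e)+f) and \prettyref{ex:diff_vanish_at_infinity} c)+f) (via \prettyref{prop:weighted_diff_strong_cons}), part a) and its special case b) then follow from \prettyref{thm:ext_FS_set_uni} and \prettyref{prop:injectivity}, and part c) from \prettyref{prop:weak_strong_principle}. Note only that the Fr\'echet--Schwartz property of $\mathcal{F}(\Omega)$ is a hypothesis of the corollary, so the final verification you flag as the "main obstacle" is not actually needed.
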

\begin{proof}
The strength and consistency of $((\partial^{\beta})^{E},\partial^{\beta})_{\beta\in\N_{0}^{d}}$ 
for $(\mathcal{F},E)$ and the $\varepsilon$-compatibility of 
$\F$ and $\FE$ follow from \prettyref{ex:weighted_diff} e)+f) 
and \prettyref{ex:diff_vanish_at_infinity} c)+f).
This implies that part a) and its special case part b) hold by \prettyref{thm:ext_FS_set_uni} 
and \prettyref{prop:injectivity}. 
Part c) follows from part b) and \prettyref{prop:weak_strong_principle} since 
$U:=\Omega$ is a set of uniqueness for $(\id_{\K^{\Omega}},\mathcal{F})$. 
\end{proof}

\begin{rem}\label{rem:set_unique_CV}
Let $\mathcal{V}^{\infty}$ be a directed family of weights which is locally bounded away from zero 
on an open set $\Omega\subset\R^{d}$. 
\begin{enumerate}
\item[a)] Then any dense set $U\subset\Omega$ is a set of uniqueness for $(\id_{\K^\Omega},\mathcal{F})$ with 
$\mathcal{F}=\mathcal{CV}^{\infty}$, $\mathcal{CV}^{\infty}_{0}$, $\mathcal{CV}^{\infty}_{P(\partial)}$ 
or $\mathcal{CV}^{\infty}_{P(\partial),0}$ due to continuity.
\item[b)] Let $\Omega$ be connected and $x_{0}\in\Omega$. 
Then $U:=\{(e_{n},x)\;|\;1\leq n\leq d,\,x\in\Omega\}\cup\{(0,x_{0})\}$ 
is a set of uniqueness for $(\partial^{\beta},\mathcal{F})_{\beta\in\N_{0}}$ by the mean value theorem 
with $\mathcal{F}$ from a).
\item[c)] Let $\K:=\R$, $d:=1$, $\Omega:=(a,b)\subset\R$, $g\colon (a,b)\to \N$ and $x_{0}\in(a,b)$. 
Then $U:=\{(g(x),x)\;|\;x\in(a,b)\}\cup\{(n,x_{0})\;|\;n\in\N_{0}\}$ is a set of uniqueness 
for $(\partial^{\beta},\mathcal{F})_{\beta\in\N_{0}}$ with $\mathcal{F}$ from a). 
Indeed, if $f\in\mathcal{F}(\Omega)$ and $0=\partial^{g(x)}f(x)$ for all $x\in(a,b)$, then $f$ is a polynomial 
by \cite[Chap.\ 11, Theorem, p.\ 53]{donoghue1969}. If, in addition, $0=\partial^{n}f(x_{0})$ for all $n\in\N_{0}$, 
then the polynomial $f$ must vanish on the whole interval $\Omega$.
\item[d)] Let $\Omega\subset\C$ be connected. Then any set $U\subset\Omega$ with an accumulation point in 
$\Omega$ is a set of uniqueness for $(\id_{\C^\Omega},\mathcal{CV}^{\infty}_{\overline{\partial}})$ 
by the identity theorem for holomorphic functions.
\item[e)] Let $\Omega\subset\C$ be connected and $z_{0}\in\Omega$. 
Then $U:=\{(n,z_{0})\;|\;n\in\N_{0}\}$ is a set of uniqueness 
for $(\partial^{n}_{\C},\mathcal{CV}^{\infty}_{\overline{\partial}})_{n\in\N_{0}}$ by local power series expansion and the identity theorem.
\item[f)] Let $\Omega\subset\R^{d}$ be connected. 
Then any non-empty open set $U\subset\Omega$ is a set of uniqueness for 
$(\id_{\K^\Omega},\mathcal{CV}^{\infty}_{\Delta})$ by the identity theorem for harmonic functions 
(see e.g.\ \cite[Theorem 5, p.\ 218]{gustin1948}).
\item[g)] Further examples of sets of uniqueness for $(\id_{\K^\Omega},\mathcal{CV}^{\infty}_{\Delta})$ 
are given in \cite{kokurin2015}.
\end{enumerate}
\end{rem}

In part e) a special case of \prettyref{rem:Schauder_coeff_set_uni} b) is used, namely, 
that $\mathcal{CW}^{\infty}_{\overline{\partial}}(\D_{r}(z_{0}))$ 
has a Schauder basis with associated coefficient functionals $(\delta_{z_{0}}\circ\partial^{n}_{\C})_{n\in\N_{0}}$ 
where $0<r\leq\infty$ is such that $\D_{r}(z_{0})\subset\Omega$.
In order to obtain some sets of uniqueness which are more sensible w.r.t.\ the family of weights $\mathcal{V}^{\infty}$, 
we turn to entire and harmonic functions fulfilling some growth conditions. 
For a family $\mathcal{V}:=(\nu_{j})_{j\in\N}$ of continuous weights on $\R^{d}$ set 
$\mathcal{V}^{\infty}:=(\nu_{j,m})_{j\in\N,m\in\N_{0}}$ where 
$\nu_{j,m}\colon\{\beta\in\N_{0}^{d}\;|\;|\beta|\leq m\}\times\R^{d}\to [0,\infty)$, $\nu_{j,m}(\beta,x):=\nu_{j}(x)$.
We know that $\mathcal{CV}_{P(\partial)}(\R^{d},E)=\mathcal{CV}_{P(\partial)}^{\infty}(\R^{d},E)$ 
as locally convex spaces and that $\mathcal{CV}_{P(\partial)}(\R^{d})$ is a nuclear Fr\'echet space 
for $P(\partial)=\overline{\partial}$ or $P(\partial)=\Delta$ by \prettyref{prop:AV_CV_coincide_hol_har} 
if $E$ is a locally complete lcHs and $\mathcal{V}$ fulfils \prettyref{cond:weights}. 
In particular, \prettyref{cond:weights} is fulfilled if 
$\nu_{j}(x):=\exp(-(\tau+\tfrac{1}{j})|x|)$, $x\in\R^{d}$, for all $j\in\N$ and some $0\leq \tau<\infty$ 
and thus we can apply \prettyref{cor:weak_strong_CV} to the spaces 
$A_{\overline{\partial}}^{\tau}(\C,E)=\mathcal{CV}_{\overline{\partial}}(\C,E)$ 
of entire and $A_{\Delta}^{\tau}(\R^{d},E)=\mathcal{CV}_{\Delta}(\R^{d},E)$ of harmonic functions of exponential 
type $\tau$ by \prettyref{rem:ex_NF_cond_weights}.
Hence we may complement our list in \prettyref{rem:set_unique_CV} by some more examples for spaces of functions 
of exponential type $0\leq\tau<\infty$.

\begin{rem}\label{rem:ex_set_uni_AP}
The following sets $U\subset\C$ are sets of uniqueness for $(\id_{\C^{\C}},A_{\overline{\partial}}^{\tau})$.
\begin{enumerate}
\item[a)] If $\tau<\pi$, then $U:=\N_{0}$ is a set of uniqueness by \cite[9.2.1 Carlson's theorem, p.\ 153]{boas1954}. 
\item[b)] Let $\delta>0$ and $(\lambda_{n})_{n\in\N}\subset(0,\infty)$ such that $\lambda_{n+1}-\lambda_{n}>\delta$ for all $n\in\N$. 
Then $U:=(\lambda_{n})_{n\in\N}$ is a set of uniqueness if $\limsup_{r\to\infty}r^{-2\tau/\pi}\psi(r)=\infty$ where 
$\psi(r):=\exp(\sum_{\lambda_{n}<r}\lambda_{n}^{-1})$, $r>0$, by \cite[9.5.1 Fuchs's theorem, p.\ 157--158]{boas1954}.
\end{enumerate}
The following sets $U$ are sets of uniqueness for $(\partial^{n}_{\C},A_{\overline{\partial}}^{\tau})_{n\in\N_{0}}$.
\begin{enumerate}
\item[c)] Let $(\lambda_{n})_{n\in\N_{0}}\subset\C$ with $|\lambda_{n}|<1$ for all $n\in\N_{0}$. If $\tau<\ln(2)$, 
then $U:=\{(n,\lambda_{n})\;|\;n\in\N_{0}\}$ is a set of uniqueness by \cite[9.11.1 Theorem, p.\ 172]{boas1954}. 
If $\tau<\ln(2+\sqrt{3})$, then $U:=\{(2n+1,0)\;|\;n\in\N_{0}\}\cup\{(2n,\lambda_{n})\;|\;n\in\N_{0}\}$ is 
a set of uniqueness by \cite[9.11.3 Theorem, p.\ 173]{boas1954}.
\item[d)] Let $(\lambda_{n})_{n\in\N_{0}}\subset\C$ with $\limsup_{n\to\infty}n^{-1}\sum_{k=1}^{n}|\lambda_{k}|\leq 1$. 
If $\tau<\e^{-1}$, then $U:=\{(n,\lambda_{n})\;|\;n\in\N_{0}\}$ is a set of uniqueness by \cite[9.11.4 Theorem, p.\ 173]{boas1954}.
\end{enumerate}
The following sets $U\subset\R^{d}$ are sets of uniqueness for $(\id_{\R^{\R^{d}}},A_{\Delta}^{\tau})$.
\begin{enumerate}
\item[e)] Let $d:=2$. If there is $k\in\N$ with $\tau<\pi/k$, then $U:=\Z\cup(\Z+\iu k)$ is a set of uniqueness by \cite[Theorem 1, p.\ 425]{boas1972}.
\item[f)] Let $d:=2$. If $\tau<\pi$ and $\theta\notin\pi\Q$, then $U:=\Z\cup(\e^{\iu\theta}\Z)$ is a set of uniqueness 
by \cite[Theorem 2, p.\ 426]{boas1972}.
\item[g)] If $\tau<\pi$, then $U:=\{0,1\}\times\Z^{d-1}$ is a set of uniqueness by \cite[Corollary 1.8, p.\ 312]{rao1974}. 
\item[h)] If $\tau<\pi$ and $a\in\R$ with $|a|\leq\sqrt{1/(d-1)}$, then $U:=\Z^{d-1}\times\{0,a\}$ is a set of uniqueness by 
\cite[Theorem A, p.\ 335]{zeilberger1976}.
\item[i)] Further examples of sets of uniqueness can be found in \cite{armitage1979}.
\end{enumerate}
The following sets $U$ are sets of uniqueness for $((\partial^{\beta})^{\R},A_{\Delta}^{\tau})_{\beta\in\N_{0}^{d}}$.
\begin{enumerate}
\item[j)] If $\tau<\pi$, then $U:=\{(\beta,(x,0))\;|\;\beta\in\{0,e_{d}\},\,x\in\Z^{d-1}\}$ is 
a set of uniqueness by \cite[Theorem B, p.\ 335]{zeilberger1976}. Further examples can be found in \cite{armitage1979}.
\end{enumerate}
\end{rem}

We need the following weak-strong principle in our last section for the space $\mathcal{E}_{0}(E)$ 
of $E$-valued infinitely continuously partially differentiable functions on $(0,1)$ such that all derivatives 
can be continuously extended to the boundary and vanish at $1$.

\begin{cor}\label{cor:weak_strong_E_0}
Let $E$ be a locally complete lcHs and $G\subset E'$ determine boundedness. Then 
$\mathcal{E}_{0}(E)=\{f\colon(0,1)\to E\;|\;\forall\;e'\in G:\;e'\circ f\in\mathcal{E}_{0} \}$.
\end{cor}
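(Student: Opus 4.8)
The plan is to read this corollary as a weak-strong principle, namely the instance $\Lambda=\Omega=(0,1)$, $G\subset E'$, and to derive it by feeding the $\varepsilon$-compatibility from \prettyref{ex:E_0} into the abstract extension theorem \prettyref{thm:ext_FS_set_uni} and then into the weak-strong principle \prettyref{prop:weak_strong_principle}. All the real work has in fact been done already; the proof is an assembly of results from \prettyref{sub:thin_0}.

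First I would collect the ingredients that are available. By \prettyref{ex:E_0} the spaces $\mathcal{E}_{0}$ and $\mathcal{E}_{0}(E)$ are $\varepsilon$-compatible, hence in particular $\varepsilon$-into-compatible, with $S\colon\mathcal{E}_{0}\varepsilon E\to\mathcal{E}_{0}(E)$ an isomorphism. The proof of \prettyref{ex:E_0} also records that $\mathcal{C}^{\infty}([0,1])$ and its closed subspace $\mathcal{E}_{0}$ are nuclear Fr\'echet spaces; since nuclear Fr\'echet spaces are Fr\'echet--Schwartz spaces, $\mathcal{E}_{0}$ is a Fr\'echet--Schwartz space. Finally, a subspace $G$ that determines boundedness is automatically separating: were some nonzero $x\in E$ annihilated by all of $G$, then $\{nx\mid n\in\N\}$ would be $\sigma(E,G)$-bounded but unbounded in the Hausdorff space $E$, a contradiction.

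Next I would verify that the identity generator $(\id_{E^{\Omega}},\id_{\K^{\Omega}})$ is a strong, consistent family for $(\mathcal{E}_{0},E)$ and that $U:=\Omega=(0,1)$ is a set of uniqueness for $(\id_{\K^{\Omega}},\mathcal{E}_{0})$. The set-of-uniqueness claim is immediate from \prettyref{rem:Schauder_coeff_set_uni} a), since $\delta_{x}\in\mathcal{E}_{0}'$ for every $x\in(0,1)$. Consistency of $(\id,\id)$ reduces to $S(u)\in\mathcal{E}_{0}(E)$ together with $S(u)(x)=u(\delta_{x})$, both of which hold by \prettyref{ex:E_0} and the definition of $S$. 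Strength reduces to the assertion $e'\circ f\in\mathcal{E}_{0}$ for all $e'\in E'$ and $f\in\mathcal{E}_{0}(E)$: here $(\partial^{k})^{\K}(e'\circ f)=e'\circ(\partial^{k})^{E}f$ is continuous on $(0,1)$ and extends continuously to $[0,1]$ because $e'$ is linear and continuous, while $(\partial^{k})^{\K}(e'\circ f)(1)=e'\bigl((\partial^{k})^{E}f(1)\bigr)=e'(0)=0$ for every $k\in\N_{0}$, so that indeed $e'\circ f\in\mathcal{E}_{0}$.

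With these facts in hand I would apply \prettyref{thm:ext_FS_set_uni} to this data, obtaining that the restriction map $R_{\Omega,G}\colon S(\mathcal{E}_{0}\varepsilon E)\to(\mathcal{E}_{0})_{G}(\Omega,E)$ is surjective. Then I would invoke \prettyref{prop:weak_strong_principle}, whose hypotheses are precisely the $\varepsilon$-into-compatibility, the separation of $G$ and the stability $e'\circ f\in\mathcal{E}_{0}$ just verified, and this surjectivity of $R_{\Omega,G}$; its conclusion is exactly $\mathcal{E}_{0}(E)=\{f\colon(0,1)\to E\mid\forall\,e'\in G:\;e'\circ f\in\mathcal{E}_{0}\}$, which is the assertion. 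I do not anticipate a genuine obstacle: the substance is carried entirely by \prettyref{ex:E_0} and the machinery of \prettyref{sub:thin_0}, and the only step deserving a moment of care is the strength verification, i.e.\ checking that the pointwise boundary condition $(\partial^{k})^{E}f(1)=0$ survives composition with continuous functionals, which is routine.
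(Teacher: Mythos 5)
Your proposal is correct and follows essentially the same route as the paper: the paper's proof likewise cites \prettyref{ex:E_0} for the $\varepsilon$-compatibility and the Fr\'echet--Schwartz property of $\mathcal{E}_{0}$, and then applies \prettyref{thm:ext_FS_set_uni} and \prettyref{prop:weak_strong_principle} with the generator $(\id_{E^{U}},\id_{\K^{U}})$ and $U:=(0,1)$. The additional verifications you spell out (strength, consistency, $G$ separating) are correct and merely make explicit what the paper leaves implicit.
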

\begin{proof}
By \prettyref{ex:E_0} $\mathcal{E}_{0}$ is a Fr\'echet--Schwartz space and 
$\mathcal{E}_{0}$ and $\mathcal{E}_{0}(E)$ are $\varepsilon$-compatible.
We derive our statement from \prettyref{thm:ext_FS_set_uni} and \prettyref{prop:weak_strong_principle} 
with $(T^{E},T^{\K}):=(\id_{E^{U}},\id_{\K^{U}})$ and $U:=(0,1)$.
\end{proof}

\addtocontents{toc}{\SkipTocEntry}
\subsection*{\texorpdfstring{$\Fv$}{Fv(Omega)} a Banach space and \texorpdfstring{$E$}{E} locally complete}

In this subsection we consider function spaces $\FE$ with a certain structure, namely, spaces $\FVE$ from 
\prettyref{def:weighted_space} where the family of weights $\mathcal{V}=(\nu_{j,m})_{j\in J,m\in M}$ 
only consists of one weight function, i.e.\ the sets $J$ and $M$ can be chosen as singletons. 
So for two non-empty sets $\Omega$ and $\omega$, a weight $\nu\colon\omega\to (0,\infty)$, 
a linear operator $T^{E}\colon E^{\Omega}\supset\dom T^{E}\to E^{\omega}$ and 
a linear subspace $\operatorname{AP}(\Omega,E)$ of $E^{\Omega}$ we consider the space
\[
\gls{FvE}=\bigl\{f\in \fe\;|\; 
 \forall\;\alpha\in \mathfrak{A}:\; |f|_{\alpha}<\infty\bigr\}
\]
where 
\[
\fe=\operatorname{AP}(\Omega,E)\cap \dom T^{E}
\]
and
\[
|f|_{\alpha}=|f|_{\Fv,\alpha}=\sup_{x \in \omega}p_{\alpha}\bigl(T^{E}(f)(x)\bigr)\nu(x).
\]

For instance, if $\Omega:=\omega$, $T^{E}:=\id_{E^{\Omega}}$ and $\nu:=1$ on $\Omega$, then 
$\FvE$ is the linear subspace of $\fe$ consisting of bounded functions.
We use the methods developed in \cite{F/J/W,jorda2013} where, in particular, the special case that 
$\Fv$ is the space of bounded smooth functions on an open set $\Omega\subset\R^{d}$ in the kernel of a 
hypoelliptic linear partial differential operator resp.\ a weighted space of holomorphic functions on 
an open subset $\Omega$ of a Banach space is treated. The lack of compact subsets of an infinite dimensional 
Banach space $\Fv$ is compensated in \cite{F/J/W,jorda2013} by equipping $\f$ 
with a locally convex Hausdorff topology such that the closed unit ball of $\Fv$ is compact in $\f$. 
Among others, the space $\fe:=(\mathcal{O}(\Omega,E),\tau_{c})$ of holomorphic functions 
on an open set $\Omega\subset\C$ with values in a locally complete space $E$ equipped 
with topology $\tau_{c}$ of compact convergence is used in \cite{F/J/W} and 
the space $\FvE:=\gls{H_inftyOE}$ of $E$-valued bounded holomorphic functions on $\Omega$.

\begin{prop}\label{prop:mingle-mangle}
Let $\f$ and $\fe$ be $\varepsilon$-into-compatible, 
$(T^{E},T^{\K})$ a consistent family for $(F,E)$ and a generator for $(\mathcal{F}\nu,E)$ and
the map $i\colon\Fv\to\f$, $f\mapsto f$, continuous. We set 
\[
 \gls{Feps}:=S(\{u\in\f\varepsilon E\;|\;u(B_{\Fv}^{\circ \f'})\;\text{is bounded in}\; E\})
\]
where $\gls{B_Fv_circ_f'}:=\{y'\in\f'\;|\;\forall\;f\in B_{\Fv}:\;|y'(f)|\leq 1\}$ 
and $B_{\Fv}$ is the closed unit ball of $\Fv$.
Then the following holds: 
\begin{enumerate}
\item[a)] $\Fv$ is a $\dom$-space.
\item[b)] Let $u\in\f\varepsilon E$. Then 
\[
\sup_{y'\in B_{\Fv}^{\circ \f'}}p_{\alpha}(u(y'))=|S(u)|_{\Fv,\alpha},\quad\alpha\in\mathfrak{A}.
\]
In particular, 
\[
 \Feps=S(\{u\in\f\varepsilon E\;|\;\forall\;\alpha\in\mathfrak{A}:\;|S(u)|_{\Fv,\alpha}<\infty\}).
\]
\item[c)] $S(\Fv\varepsilon E)\subset\Feps\subset\FvE$ as linear spaces. If $\f$ and $\fe$ are even 
$\varepsilon$-compatible, then $\Feps=\FvE$.
\item[d)] If $\FvE$ is Hausdorff, then 
\begin{enumerate}
\item[(i)] $(T^{E},T^{\K})$ is a consistent generator for $(\mathcal{F}\nu,E)$.
\item[(ii)] $\Fv$ and $\FvE$ are $\varepsilon$-into-compatible.
\item[(iii)] $(T^{E},T^{\K})$ is a strong generator for $(\mathcal{F}\nu,E)$ if it is a strong family for $(F,E)$.
\end{enumerate}
\end{enumerate}
\end{prop}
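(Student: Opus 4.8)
The plan is to handle the four parts in order, with the seminorm identity in b) as the technical core and a), c), d) following from it together with the linearisation results already available. Throughout I write $S=S_{\f}$ for the map on $\f\varepsilon E$, $S_{\Fv}$ for the one on $\Fv\varepsilon E$, $T^{\K}_{x}:=\delta_{x}\circ T^{\K}\in\f'$, and $i^{t}\colon\f'\to\Fv'$, $y'\mapsto y'\circ i$, for the transpose of the continuous inclusion $i$. For a), note that $E=\K$ forces $\mathfrak{A}=\{|\cdot|\}$, so the system of seminorms of $\Fv$ is the single seminorm $|\cdot|_{\Fv}$ and is trivially directed. Continuity of $i\colon\Fv\to\f$ gives a constant $C_{q}$ with $q\circ i\leq C_{q}|\cdot|_{\Fv}$ for every continuous seminorm $q$ on $\f$; hence $|f|_{\Fv}=0$ forces $q(f)=0$ for all such $q$, and the Hausdorffness of $\f$ (inherited from $\varepsilon$-into-compatibility) yields $f=0$, so $|\cdot|_{\Fv}$ is a norm. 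Composing $\delta_{x}\in\f'$ with the continuous $i$ gives $\delta_{x}\in\Fv'$. Thus $\Fv$ is a $\dom$-space.

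For b) I would first use the consistency of the family $(T^{E},T^{\K})$ for $(\f,E)$ to write $T^{E}S(u)(x)=u(T^{\K}_{x})$, and then, with $D:=\{\nu(x)T^{\K}_{x}\mid x\in\omega\}$ and $u(\nu(x)T^{\K}_{x})=\nu(x)u(T^{\K}_{x})$, obtain $\sup_{y'\in D}p_{\alpha}(u(y'))=|S(u)|_{\Fv,\alpha}$. The task is then to replace $D$ by $B_{\Fv}^{\circ\f'}$. A polar computation shows $B_{\Fv}=D^{\circ}$ (polar taken in $\f$), so by the bipolar theorem $B_{\Fv}^{\circ\f'}=D^{\circ\circ}=\overline{\acx(D)}^{\sigma(\f',\f)}$. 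As $D\subset B_{\Fv}^{\circ\f'}$, the inequality $\sup_{D}\leq\sup_{B_{\Fv}^{\circ\f'}}$ is clear; for the converse put $C:=\sup_{D}p_{\alpha}(u(\cdot))$ (trivial if $C=\infty$) and observe that $q:=p_{\alpha}\circ u$ is a $\kappa(\f',\f)$-continuous seminorm, so that $\{q\leq C\}$ is a $\kappa$-closed absolutely convex set containing $\acx(D)$.

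The crucial point — and the step I expect to be the main obstacle, since here $B_{\Fv}^{\circ\f'}$ need not be equicontinuous (the unit ball $B_{\Fv}$ is not a zero-neighbourhood of $\f$, in contrast to the situation of \prettyref{lem:topology_eps}) — is that the $\sigma(\f',\f)$- and $\kappa(\f',\f)$-closures of the convex set $\acx(D)$ coincide. This is because $\kappa(\f',\f)$ lies between $\sigma(\f',\f)$ and the Mackey topology $\tau(\f',\f)$ (compact sets being weakly compact) and is therefore compatible with the duality $\langle\f',\f\rangle$, so closed convex sets agree for the two topologies. Hence $B_{\Fv}^{\circ\f'}=\overline{\acx(D)}^{\kappa}\subset\{q\leq C\}$, giving $\sup_{B_{\Fv}^{\circ\f'}}p_{\alpha}(u(\cdot))\leq C$ and the desired equality. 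The ``in particular'' is then immediate, as $u(B_{\Fv}^{\circ\f'})$ is bounded in $E$ exactly when $|S(u)|_{\Fv,\alpha}<\infty$ for every $\alpha$. For c), the inclusion $\Feps\subset\FvE$ follows because consistency forces $S(u)\in\fe$ while b) makes all $|S(u)|_{\Fv,\alpha}$ finite. For $S(\Fv\varepsilon E)\subset\Feps$ I would use the factorisation $S\circ(i\varepsilon\id_{E})=S_{\Fv}$ (from $\delta_{x}^{\f}\circ i=\delta_{x}^{\Fv}$): writing $u:=(i\varepsilon\id_{E})(v)=v\circ i^{t}$ and checking $i^{t}(B_{\Fv}^{\circ\f'})\subset B_{\Fv}^{\circ\Fv'}$, the set $u(B_{\Fv}^{\circ\f'})\subset v(B_{\Fv}^{\circ\Fv'})$ is bounded because $B_{\Fv}^{\circ\Fv'}$ is equicontinuous ($B_{\Fv}$ being a zero-neighbourhood of $\Fv$) and $v\in\Fv\varepsilon E$ maps equicontinuous sets to bounded ones; thus $S_{\Fv}(v)=S(u)\in\Feps$. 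If moreover $S$ is onto $\fe$, any $f\in\FvE\subset\fe$ equals $S(u)$ with $|S(u)|_{\Fv,\alpha}=|f|_{\alpha}<\infty$, so $f\in\Feps$ by b) and $\Feps=\FvE$.

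Finally, for d), assuming $\FvE$ Hausdorff (hence a $\dom$-space), part (i) follows by transferring family-consistency through the same factorisation: for $u\in\Fv\varepsilon E$ set $w:=(i\varepsilon\id_{E})(u)\in\f\varepsilon E$, so $S_{\Fv}(u)=S(w)\in\fe$ and $T^{E}S_{\Fv}(u)(x)=w(T^{\K}_{x})=u(T^{\K}_{x}\circ i)=u(T^{\K}_{x}|_{\Fv})$, which is precisely the generator-consistency of \prettyref{def:consist_strong} b), while \prettyref{lem:topology_eps} places $S_{\Fv}(u)$ in $\FvE$. Part (ii) is then immediate from \prettyref{thm:linearisation}. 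For (iii), since $\FvE\subset\fe$ and the target space $F(\Omega)$ appearing in the strong-generator condition is the very same $\f$ as in the strong-family condition, the strong-family property of $(T^{E},T^{\K})$ for $(\f,E)$ restricts verbatim to the strong-generator property for $(\mathcal{F}\nu,E)$.
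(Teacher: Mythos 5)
Your proof is correct and takes essentially the same route as the paper's: part b) rests on the bipolar theorem identifying $B_{\Fv}^{\circ \f'}$ with the closed absolutely convex hull of $D=\{T^{\K}_{x}(\cdot)\nu(x)\;|\;x\in\omega\}$ and on passing the supremum to $D$ via $\kappa(\f',\f)$-continuity of $p_{\alpha}\circ u$, while a), c), d) follow from the factorisation $S_{\f}\circ(i\varepsilon\id_{E})=S_{\Fv}$ together with \prettyref{thm:linearisation}. Your extra explicit justification that the $\sigma$- and $\kappa$-closures of $\acx(D)$ coincide, and your use of equicontinuity of $B_{\Fv}^{\circ\Fv'}$ in place of the paper's compact-set estimate in c), are only presentational differences.
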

\begin{proof}
Part a) follows from the continuity of the map $i$ and the $\varepsilon$-into-compatibility of $\f$ and $\fe$. 
Let us turn to part b). As in \prettyref{lem:topology_eps} it follows from the bipolar theorem that 
\[
 B_{\Fv}^{\circ \f'}=\oacx\{T^{\K}_{x}(\cdot)\nu(x)\;|\;x\in\omega\},
\]
where $\oacx$ denotes the closure w.r.t.\ $\kappa(\f',\Fv)$ of the absolutely convex hull $\acx$ of the set 
$D:=\{T^{\K}_{x}(\cdot)\nu(x)\;|\;x\in\omega\}$ on the right-hand side, 
and that 
\begin{align*}
  \sup_{y'\in B_{\Fv}^{\circ \f'}}p_{\alpha}(u(y'))
&=\sup_{y'\in\acx(D)}p_{\alpha}(u(y')) 
 =\sup_{y'\in D}p_{\alpha}(u(y'))=\sup_{x\in\omega}p_{\alpha}\bigl(u(T^{\K}_{x})\bigr)\nu(x)\\
&=\sup_{x\in\omega}p_{\alpha}\bigl(T^{E}(S(u))(x)\bigr)\nu(x)
 =|S(u)|_{\Fv,\alpha}
\end{align*}
by consistency, which proves part b).

Let us address part c). The continuity of the map $i$ implies 
the continuity of the inclusion $\Fv\varepsilon E\hookrightarrow\f\varepsilon E$ and 
thus we obtain $u_{\mid \f'}\in \f\varepsilon E$ for every $u\in\Fv\varepsilon E$.
If $u\in\Fv\varepsilon E$ and $\alpha\in\mathfrak{A}$, 
then there are $C_{0},C_{1}>0$ and an absolutely convex compact set $K\subset\Fv$ 
such that $K\subset C_{1}B_{\Fv}$ and 
\[
 \sup_{y'\in B_{\Fv}^{\circ \f'}}p_{\alpha}(u(y'))
 \leq C_{0}\sup_{y'\in B_{\Fv}^{\circ \f'}}\sup_{f\in K}|y'(f)|
 \leq C_{0}C_{1},
\]
which implies $S(\Fv\varepsilon E)\subset\Feps$. 
If $f:=S(u)\in\Feps$ and $\alpha\in\mathfrak{A}$, then $S(u)\in\fe$ and
\[
 |f|_{\Fv,\alpha}=\sup_{x\in\omega}p_{\alpha}(u(T^{\K}_{x})\nu(x))<\infty
\]
by consistency, yielding $\Feps\subset\FvE$. If $\f$ and $\fe$ are even 
$\varepsilon$-compatible, then $S(\f\varepsilon E)=\fe$, which yields $\Feps=\FvE$ by part b).

Let us turn to part d). By part a) $\Fv$ is a $\dom$-space. Since $\FvE$ is Hausdorff, it is also a 
$\dom$-space due to \prettyref{rem:weights_Hausdorff_directed} c).
We have $u_{\mid \f'}\in \f\varepsilon E$ for every $u\in\Fv\varepsilon E$ and 
\[
S_{\Fv}(u)(x)=u(\delta_{x})=u_{\mid \f'}(\delta_{x})
=S_{\f}(u_{\mid \f'})(x),\quad x\in\Omega.
\]
In combination with $S(\f\varepsilon E)\subset\fe$ and the consistency of $(T^{E},T^{\K})$ for $(F,E)$ 
this yields that $(T^{E},T^{\K})$ is a consistent generator for $(\mathcal{F}\nu,E)$. 
Thus part (i) holds and implies part (ii) by \prettyref{thm:linearisation}. 
If $(T^{E},T^{\K})$ is in addition a strong family for $(F,E)$, 
then the inclusion $\FvE\subset\fe$ implies that $e'\circ f\in\fe$ and 
$T^{\K}(e'\circ f)(x)=(e'\circ T^{E}(f))(x)$ for all $e'\in E'$, $f\in\FvE$ and $x\in\omega$. 
It follows that $(T^{E},T^{\K})$ is a strong generator for $(\mathcal{F}\nu,E)$. 
\end{proof}

The canonical situation in part c) is that $\Feps$ and $\FvE$ coincide as linear spaces for 
locally complete $E$ as we will encounter in the forthcoming examples, 
e.g.\ if $\FvE:=H^{\infty}(\Omega,E)$ and $\fe:=(\mathcal{O}(\Omega,E),\tau_{c})$ for an open set $\Omega\subset\C$. 
That all three spaces in part c) coincide is usually only guaranteed by \prettyref{cor:full_linearisation} (iii)
if $E$ is a semi-Montel space. Therefore the `mingle-mangle' space $\Feps$ is a good replacement for 
$S(\Fv\varepsilon E)$ for our purpose. 

\begin{rem}\label{rem:R_well-defined_Banach}
Let $(T^{E},T^{\K})$ be a strong, consistent family for $(F,E)$ and a generator for $(\mathcal{F}\nu,E)$.
Let $\f$ and $\fe$ be $\varepsilon$-into-compatible and the inclusion $\Fv\hookrightarrow\f$ continuous. 
Consider a set of uniqueness $U$ for $(T^{\K},\mathcal{F}\nu)$ and a separating subspace $G\subset E'$.
For $u\in \f\varepsilon E$ such that $u(B_{\Fv}^{\circ \f'})$ is bounded in $E$, 
i.e.\ $S(u)\in \Feps$, we set $f:=S(u)$. 
Then $f\in\fe$ by the $\varepsilon$-into-compatibility 
and we define $\widetilde{f}\colon U\to E$, $\widetilde{f}(x):=T^{E}(f)(x)$. This yields
\begin{equation}\label{eq:injective}
(e'\circ \widetilde{f})(x)= (e'\circ T^{E}(f))(x)=T^{\K}(e'\circ f)(x)
\end{equation}
for all $x\in U$ and $f_{e'}:=e'\circ f\in\f$ for each $e'\in E'$ by the strength of the family. 
Moreover, $T^{\K}_{x}(\cdot)\nu(x)\in B_{\Fv}^{\circ \f'}$ for every $x\in\omega$, 
which implies that for every $e'\in E'$ there are $\alpha\in\mathfrak{A}$ and $C>0$ such that
\[
|f_{e'}|_{\Fv}=\sup_{x\in\omega}\bigl|e'\bigl(u(T^{\K}_{x}(\cdot)\nu(x)\bigr)\bigr|
\leq C\sup_{y'\in B_{\Fv}^{\circ \f'}}p_{\alpha}(u(y'))<\infty
\]
by strength and consistency. Hence $f_{e'}\in\Fv$ for every $e'\in E'$ and $\widetilde{f}\in\mathcal{F}\nu_{G}(U,E)$.
\end{rem}

Under the assumptions of \prettyref{rem:R_well-defined_Banach} the map
\begin{equation}\label{eq:well_def_B_unique}
R_{U,G}\colon \Feps\to \mathcal{F}\nu_{G}(U,E),\;f\mapsto (T^{E}(f)(x))_{x\in U}, 
\end{equation}
is well-defined and linear. In addition, we derive from \eqref{eq:injective} that $R_{U,G}$ is injective since 
$U$ is a set of uniqueness and $G\subset E'$ separating. 
The replacement of \prettyref{que:surj_restr_set_unique} reads as follows.

\begin{que}\label{que:surj_restr_set_unique_banach}
Let the assumptions of \prettyref{rem:R_well-defined_Banach} be fulfilled.
When is the injective restriction map 
\[
R_{U,G}\colon \Feps\to \mathcal{F}\nu_{G}(U,E),\;f\mapsto (T^{E}(f)(x))_{x\in U},
\]
surjective?
\end{que}

Due to \prettyref{prop:mingle-mangle} c) the \prettyref{que:weak_strong} is a special case of this question 
if $\Lambda\subset\Omega=:\omega$ and $U:=\Lambda$ is a set of uniqueness for $(\id_{\K^{\Omega}},\mathcal{F}\nu)$.
We recall the following extension result for continuous linear operators. 

\begin{prop}[{\cite[Proposition 2.1, p.\ 691]{F/J/W}}]\label{prop:ext_B_set_uni}
Let $E$ be a locally complete lcHs, $G\subset E'$ determine boundedness, $Z$ a Banach space whose closed unit ball $B_{Z}$ is a 
compact subset of an lcHs $Y$ and $X\subset Y'$ be a $\sigma(Y',Z)$-dense subspace. 
If $\mathsf{A}\colon X\to E$ is a $\sigma(X,Z)$-$\sigma(E,G)$-continuous linear map, 
then there exists a (unique) extension $\widehat{\mathsf{A}}\in Y\varepsilon E$ of $\mathsf{A}$ 
such that $\widehat{\mathsf{A}}(B_{Z}^{\circ Y'})$ is bounded in $E$ where 
$B_{Z}^{\circ Y'}:=\{y'\in Y'\;|\;\forall\;z\in B_{Z}:\; |y'(z)|\leq 1\}$.
\end{prop}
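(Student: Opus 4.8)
The plan is to reconstruct the operator-theoretic extension argument of \cite{F/J/W}. First I would exploit the compactness of $B_{Z}$ to realise $Z$ as a subspace of $Y$, so that the dual pairing $\langle X,Z\rangle$ makes sense; since $X$ is $\sigma(Y',Z)$-dense in $Y'$ and $Y'$ separates the points of the lcHs $Y$, this pairing is separating, whence the topological dual of $(X,\sigma(X,Z))$ is exactly $Z$. Now fix $e'\in G$. By hypothesis $e'\circ\mathsf{A}\colon X\to\K$ is $\sigma(X,Z)$-continuous, so there is a unique $z_{e'}\in Z$ with $(e'\circ\mathsf{A})(x)=\langle x,z_{e'}\rangle$ for all $x\in X$. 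This defines a linear map $R\colon G\to Z$, $R(e'):=z_{e'}$, which is the transpose of $\mathsf{A}$ and encodes all the data of the sought extension.

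The candidate extension is the map $\widehat{\mathsf{A}}$ determined by requiring $e'(\widehat{\mathsf{A}}(y'))=\langle y',z_{e'}\rangle$ for $y'\in Y'$ and $e'\in G$, where $z_{e'}\in Z\subset Y$ is paired with $y'\in Y'$; on $X$ this forces $\widehat{\mathsf{A}}(x)=\mathsf{A}(x)$ because $e'(\mathsf{A}(x))=\langle x,z_{e'}\rangle$, so $\widehat{\mathsf{A}}$ genuinely extends $\mathsf{A}$. Granting for the moment that each functional $e'\mapsto\langle y',z_{e'}\rangle$ on $G$ is represented by a (necessarily unique, as $G$ separates) element $\widehat{\mathsf{A}}(y')\in E$, linearity in $y'$ is immediate and the two remaining properties come out cheaply. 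For the boundedness of $\widehat{\mathsf{A}}(B_{Z}^{\circ Y'})$ I would note that for each fixed $e'\in G$
\[
\sup_{y'\in B_{Z}^{\circ Y'}}|e'(\widehat{\mathsf{A}}(y'))|=\sup_{y'\in B_{Z}^{\circ Y'}}|\langle y',z_{e'}\rangle|=\|z_{e'}\|_{Z}<\infty,
\]
so $\widehat{\mathsf{A}}(B_{Z}^{\circ Y'})$ is $\sigma(E,G)$-bounded and hence bounded in $E$ because $G$ determines boundedness. Since $B_{Z}$ is absolutely convex and compact in $Y$, its polar $B_{Z}^{\circ Y'}$ is a $\kappa(Y',Y)$-neighbourhood of $0$, and boundedness of its image then upgrades $\widehat{\mathsf{A}}$ to a member of $L(Y'_{\kappa},E)=Y\varepsilon E$; uniqueness follows since $\widehat{\mathsf{A}}$ is continuous and agrees with $\mathsf{A}$ on the $\sigma(Y',Z)$-dense set $X$.

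The main obstacle is the deferred claim: that each functional $e'\mapsto\langle y',z_{e'}\rangle$ actually lies in $\mathcal{J}(E)$, i.e.\ is represented by a point of $E$. This is exactly where local completeness of $E$ enters, and it parallels the surjectivity step of \prettyref{thm:full_linearisation} and the proof of \prettyref{thm:ext_F_semi_M}. The plan is to approximate: since $X$ is $\sigma(Y',Z)$-dense and $B_{Z}^{\circ Y'}$ is absorbing in $Y'_{\kappa}$, one reduces to $y'\in B_{Z}^{\circ Y'}$ and chooses a net $(x_{\iota})$ in a fixed multiple $C\,B_{Z}^{\circ Y'}\cap X$ with $x_{\iota}\to y'$ in $\sigma(Y',Z)$. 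Then $e'(\mathsf{A}(x_{\iota}))=\langle x_{\iota},z_{e'}\rangle\to\langle y',z_{e'}\rangle$ for every $e'\in G$, and $\sup_{\iota}|e'(\mathsf{A}(x_{\iota}))|\le C\|z_{e'}\|_{Z}$, so the whole net lies in a $\sigma(E,G)$-bounded, hence (by $G$ determining boundedness) bounded, closed disk $D\subset E$. Local completeness makes $E_{D}$ a Banach space, and the task becomes to promote the $\sigma(E,G)$-limit to a limit in $E_{D}$; here I would invoke a closed-graph argument for the map $x\mapsto\mathsf{A}(x)$ into the Banach space $E_{D}$, its graph being closed because $G$ separates points and pins down the limit, thereby producing $\widehat{\mathsf{A}}(y')\in E_{D}\subset E$ representing the functional.

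Carefully matching the topologies in this last upgrade — weak $\sigma(E,G)$-convergence versus $E_{D}$-norm convergence — is the genuinely delicate point, and is the reason both local completeness of $E$ and the boundedness-determining property of $G$ are indispensable. Once this is in place, the injectivity and the $\varepsilon$-into-compatibility used elsewhere in \prettyref{sub:thin_0} guarantee that the extension obtained is the unique one, completing the argument.
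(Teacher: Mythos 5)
The paper does not actually prove this proposition itself --- it recalls it from \cite{F/J/W} --- so your argument has to be measured against the argument that is needed there. Your opening steps are sound and are indeed how that proof begins: identifying $(X,\sigma(X,Z))'$ with $Z$ to obtain $z_{e'}=R(e')$, reading off the $\sigma(E,G)$-boundedness of $\widehat{\mathsf{A}}(B_{Z}^{\circ Y'})$ from $\sup_{y'\in B_{Z}^{\circ Y'}}|y'(z_{e'})|\leq\|z_{e'}\|_{Z}$ together with the fact that $G$ determines boundedness, and upgrading ``bounded on the $\kappa(Y',Y)$-neighbourhood $B_{Z}^{\circ Y'}$'' to membership in $L(Y_{\kappa}',E)=Y\varepsilon E$.

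The gap is in the deferred existence claim, and it is twofold. First, the assertion that each $y'\in B_{Z}^{\circ Y'}$ can be approximated in $\sigma(Y',Z)$ by a net from a \emph{fixed multiple} $C\,B_{Z}^{\circ Y'}\cap X$ does not follow from the $\sigma(Y',Z)$-density of $X$ plus $B_{Z}^{\circ Y'}$ being absorbing: by the bipolar theorem in the duality $\langle Y',Z\rangle$ it is equivalent to $\{x_{\mid Z}\;|\;x\in X\}\cap C B_{Z'}$ being norming for $Z$, and a merely separating subspace of $Z'$ need not be norming. (The claim is in fact true here, but only because $B_{Z}$ is compact in $Y$: all restrictions $x_{\mid B_{Z}}$, $y'_{\mid B_{Z}}$ are continuous on the compact ball, hence lie in the Dixmier--Ng predual $Z_{\ast}:=\{z'\in Z'\;|\;z'_{\mid B_{Z}}\;\text{continuous}\}$, where ``separating'' forces norm-density; this is a genuine argument, not a formality.) Second, and more seriously: even granted such a bounded net $(x_{\iota})$, the net $(\mathsf{A}(x_{\iota}))$ is only $\sigma(E,G)$-convergent inside the bounded disk $D$. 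Local completeness converts \emph{norm-Cauchy sequences in $E_{D}$} into convergent ones; it does nothing for bounded weakly Cauchy nets, and the ``closed graph argument into $E_{D}$'' you invoke cannot manufacture the missing limit --- a closed graph theorem yields continuity of an everywhere-defined map, not existence of limit points. The way to close the gap is to upgrade the approximation from $\sigma(Y',Z)$ to the \emph{norm} of $Z'$: since $\{x_{\mid Z}\;|\;x\in X\}$ is norm-dense in $Z_{\ast}\supseteq\{y'_{\mid Z}\;|\;y'\in Y'\}$, one finds a sequence $(x_{n})$ in $X$ with $\|x_{n\mid Z}-y'_{\mid Z}\|_{Z'}\to 0$; then $\|\mathsf{A}(x_{n})-\mathsf{A}(x_{m})\|_{D}\leq\|(x_{n}-x_{m})_{\mid Z}\|_{Z'}$ shows that $(\mathsf{A}(x_{n}))$ is Cauchy in the Banach space $E_{D}$, and its limit is the required $\widehat{\mathsf{A}}(y')$. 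Without this upgrade the construction of $\widehat{\mathsf{A}}(y')$ does not go through; the same norm-density is also what justifies uniqueness, which does not follow from $\kappa(Y',Y)$-continuity plus $\sigma(Y',Z)$-density alone.
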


Now, we are able to generalise \cite[Theorem 2.2, p.\ 691]{F/J/W} and \cite[Theorem 10, p.\ 5]{jorda2013}.

\begin{thm}\label{thm:ext_B_unique}
Let $E$ be a locally complete lcHs, $G\subset E'$ determine boundedness 
and $\f$ and $\fe$ be $\varepsilon$-into-compatible. 
Let $(T^{E},T^{\K})$ be a generator for $(\mathcal{F}\nu,E)$ 
and a strong, consistent family for $(F,E)$, $\Fv$ a Banach space whose closed unit ball $B_{\Fv}$ 
is a compact subset of $\f$ and $U$ a set of uniqueness for $(T^{\K},\mathcal{F}\nu)$.
Then the restriction map 
\[
 R_{U,G}\colon \Feps \to \mathcal{F}\nu_{G}(U,E) 
\]
is surjective.
\end{thm}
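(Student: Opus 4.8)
The plan is to reduce the statement to the abstract operator-extension result \prettyref{prop:ext_B_set_uni} by setting up the correct Banach space, compact ball, dense subspace, and continuous linear map, exactly as was done in the Fréchet--Schwartz case (\prettyref{thm:ext_FS_set_uni}) but now using the Banach-space machinery of \cite{F/J/W}. First I would fix $f\in\mathcal{F}\nu_{G}(U,E)$ and recall from \prettyref{rem:R_well-defined_Banach} that the hypotheses guarantee $R_{U,G}$ is a well-defined injective linear map and that for every $e'\in G$ there is a unique $f_{e'}\in\Fv$ with $T^{\K}(f_{e'})(x)=(e'\circ f)(x)$ for all $x\in U$. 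The goal is to produce an element $F\in\Feps$ with $R_{U,G}(F)=f$.

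The key step is choosing the data for \prettyref{prop:ext_B_set_uni}. I would take $Z:=\Fv$ (a Banach space), $Y:=\f$, so that the closed unit ball $B_{\Fv}=B_Z$ is by hypothesis a compact subset of $Y=\f$. For the dense subspace I would set $X:=\operatorname{span}\{T^{\K}_{x}(\cdot)\nu(x)\;|\;x\in\omega\}\subset\f'$; since $U$ is a set of uniqueness for $(T^{\K},\mathcal{F}\nu)$, the functionals $\{T^{\K}_{x}(\cdot)\nu(x)\}$ (and in particular those indexed by $U$) are total over $\Fv=Z$, so by the bipolar argument used in \prettyref{lem:topology_eps} and \prettyref{prop:mingle-mangle} b) the span $X$ is $\sigma(Y',Z)$-dense in $\f'$. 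Then I would define the linear map $\mathsf{A}\colon X\to E$ on the spanning functionals by $\mathsf{A}(T^{\K}_{x}(\cdot)\nu(x)):=T^{E}(f)(x)\nu(x)$ for $x\in U$, extended linearly; the set of uniqueness property ensures this is well-defined, since a vanishing linear combination of the $T^{\K}_{x}(\cdot)\nu(x)$ tested against any $e'\in G$ yields, via $T^{\K}(f_{e'})(x)=(e'\circ f)(x)$, a vanishing combination of the $(e'\circ f)(x)$, and separation of $G$ forces the $E$-value to vanish.

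The main obstacle, and the heart of the proof, is verifying that $\mathsf{A}$ is $\sigma(X,Z)$--$\sigma(E,G)$-continuous. For this I would check the defining estimate: for each $e'\in G$ and each $y'=\sum_k\lambda_k T^{\K}_{x_k}(\cdot)\nu(x_k)\in X$ with $x_k\in U$, using $e'\circ\mathsf{A}(y')=\sum_k\lambda_k (e'\circ f)(x_k)=\sum_k\lambda_k T^{\K}(f_{e'})(x_k)=y'(f_{e'})$, so that $|e'(\mathsf{A}(y'))|=|y'(f_{e'})|$. This identity says precisely that $\sigma(X,Z)$-convergence of a net $y'_\tau\to y'$ (i.e. convergence when tested against elements of $Z=\Fv$, in particular against $f_{e'}$) forces $e'(\mathsf{A}(y'_\tau))\to e'(\mathsf{A}(y'))$ for every $e'\in G$, which is exactly $\sigma(E,G)$-continuity. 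Establishing that $f_{e'}$ genuinely lies in $\Fv=Z$ (so that testing against it is legitimate) is where \prettyref{rem:R_well-defined_Banach} is essential.

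With continuity in hand, \prettyref{prop:ext_B_set_uni} supplies a unique extension $\widehat{\mathsf{A}}\in\f\varepsilon E$ of $\mathsf{A}$ with $\widehat{\mathsf{A}}(B_{\Fv}^{\circ\f'})$ bounded in $E$. Setting $F:=S(\widehat{\mathsf{A}})$, the boundedness condition means precisely $F\in\Feps$ by the definition of $\Feps$ in \prettyref{prop:mingle-mangle}. Finally, consistency of $(T^{E},T^{\K})$ for $(F,E)$ gives, for every $x\in U$,
\[
T^{E}(F)(x)\nu(x)=T^{E}S(\widehat{\mathsf{A}})(x)\nu(x)=\widehat{\mathsf{A}}(T^{\K}_{x}(\cdot)\nu(x))=\mathsf{A}(T^{\K}_{x}(\cdot)\nu(x))=T^{E}(f)(x)\nu(x),
\]
and since $\nu(x)>0$ we conclude $T^{E}(F)(x)=T^{E}(f)(x)$, that is $R_{U,G}(F)=f$. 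This proves surjectivity of $R_{U,G}$.
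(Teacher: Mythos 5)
Your proof is correct and follows essentially the same route as the paper's: choose $Z:=\Fv$, $Y:=\f$, the span of the point-evaluation-type functionals over $U$ as the $\sigma(Y',Z)$-dense subspace $X$, verify that $\mathsf{A}$ is $\sigma(X,Z)$-$\sigma(E,G)$-continuous via the identity $e'(\mathsf{A}(y'))=y'(f_{e'})$, apply \prettyref{prop:ext_B_set_uni}, and recover $R_{U,G}(S(\widehat{\mathsf{A}}))=f$ by consistency. Two small slips worth fixing: you declare $X:=\operatorname{span}\{T^{\K}_{x}(\cdot)\nu(x)\;|\;x\in\omega\}$ but only define $\mathsf{A}$ on the functionals indexed by $x\in U$, so $X$ must be the span over $U$ (the paper takes $X=\operatorname{span}\{T^{\K}_{x}\;|\;x\in U\}$, which is equivalent to your weighted version since $\nu>0$); and the prescribed value should be written $f(x)\nu(x)$ rather than $T^{E}(f)(x)\nu(x)$, since $f$ is a priori only a function on $U$ and $f_{e'}\in\Fv$ is part of the definition of $\mathcal{F}\nu_{G}(U,E)$ rather than a consequence of \prettyref{rem:R_well-defined_Banach}.
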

\begin{proof}
Let $f\in \mathcal{F}\nu_{G}(U,E)$. 
We set $X:=\operatorname{span}\{T^{\K}_{x}\;|\;x\in U\}$, $Y:=\f$ and $Z:=\Fv$. 
The consistency of $(T^{E},T^{\K})$ for $(F,E)$ yields that $X\subset Y'$.
From $U$ being a set of uniqueness of $Z$ follows that $X$ is 
$\sigma(Z',Z)$-dense. Since $B_{Z}$ is a compact subset of $Y$, it follows that $Z$ is a linear subspace of $Y$ 
and the inclusion $Z\hookrightarrow Y$ is continuous, which yields $y'_{\mid Z}\in Z'$ for every $y'\in Y'$. 
Thus $X$ is $\sigma(Y',Z)$-dense. 
Let $\mathsf{A}\colon X\to E$ be the linear map determined by $\mathsf{A}(T^{\K}_{x}):=f(x)$. 
The map $\mathsf{A}$ is well-defined since $G$ is $\sigma(E',E)$-dense. 
Due to
\[
e'(\mathsf{A}(T^{\K}_{x}))=(e'\circ f)(x)=T^{\K}_{x}(f_{e'})
\]
for every $e'\in G$ and $x\in U$ we have that $\mathsf{A}$ is $\sigma(X,Z)$-$\sigma(E,G)$-continuous. 
We apply \prettyref{prop:ext_B_set_uni} and gain an extension $\widehat{\mathsf{A}}\in Y\varepsilon E$ 
of $\mathsf{A}$ such that $\widehat{\mathsf{A}}(B_{Z}^{\circ Y'})$ is bounded in $E$. 
We set $\widetilde{F}:=S(\widehat{\mathsf{A}})\in\Feps$ and get for all $x\in U$ that
\[
T^{E}(\widetilde{F})(x)=T^{E}S(\widehat{\mathsf{A}})(x)=\widehat{\mathsf{A}}(T^{\K}_{x})=f(x)
\]
by consistency for $(F,E)$, implying $R_{U,G}(\widetilde{F})=f$.
\end{proof}

Let $\Omega\subset\R^{d}$ be open, $E$ an lcHs and 
$P(\partial)^{E}\colon\mathcal{C}^{\infty}(\Omega,E)\to\mathcal{C}^{\infty}(\Omega,E)$ 
a linear partial differential operator which is hypoelliptic if $E=\K$. 
We consider the weighted space $\mathcal{CV}_{P(\partial)}(\Omega,E)$ of zero solutions from 
\prettyref{prop:frechet_bierstedt} where the familiy of weights $\mathcal{V}$ only consists of one 
continuous weight $\nu\colon\Omega\to(0,\infty)$, i.e.\ the space
\[
 \mathcal{C}\nu_{P(\partial)}(\Omega,E)
=\{f\in\mathcal{C}^{\infty}_{P(\partial)}(\Omega,E)\;|\;\forall\;\alpha\in\mathfrak{A}:\;
   |f|_{\nu,\alpha}:=\sup_{x\in\Omega}p_{\alpha}(f(x))\nu(x)<\infty\}.
\]

\begin{cor}\label{cor:hypo_weighted_ext_unique}
Let $E$ be a locally complete lcHs, $G\subset E'$ determine boundedness, $\Omega\subset\R^{d}$ open, 
$P(\partial)^{\K}$ a hypoelliptic linear partial differential operator, $\nu\colon\Omega\to(0,\infty)$ continuous and 
$U$ a set of uniqueness for $(\id_{\K^{\Omega}},\mathcal{C}\nu_{P(\partial)})$.
If $f\colon U\to E$ is a function such that $e'\circ f$ admits an extension 
$f_{e'}\in\mathcal{C}\nu_{P(\partial)}(\Omega)$ 
for every $e'\in G$, then there exists a unique extension $F\in\mathcal{C}\nu_{P(\partial)}(\Omega,E)$ of $f$.
\end{cor}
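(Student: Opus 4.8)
The plan is to realise this corollary as a direct instance of the abstract extension result \prettyref{thm:ext_B_unique}, applied to the weighted space $\mathcal{C}\nu_{P(\partial)}$ in the role of $\Fv$. The setup dictates the choices: take $\omega:=\Omega$, the single weight $\nu$, the operator $T^{E}:=\id_{E^{\Omega}}$ with $\dom T^{E}:=\mathcal{C}^{\infty}(\Omega,E)$, and the additional-properties space $\operatorname{AP}(\Omega,E):=\mathcal{C}^{\infty}_{P(\partial)}(\Omega,E)$, so that $\FvE=\mathcal{C}\nu_{P(\partial)}(\Omega,E)$. For the ambient space $\f$ I would choose $\fe:=(\mathcal{O}(\Omega,E),\tau_{c})$-style object, but in the present generality the natural companion is $\fe:=\mathcal{CW}_{P(\partial)}(\Omega,E)=\mathcal{CW}^{\infty}_{P(\partial)}(\Omega,E)$, the kernel of $P(\partial)^{E}$ in $\mathcal{C}^{\infty}(\Omega,E)$ equipped with the topology $\tau_{c}$ of compact convergence, with generator again $(\id_{E^{\Omega}},\id_{\K^{\Omega}})$.

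The verification of the hypotheses of \prettyref{thm:ext_B_unique} then proceeds in the following steps. First, $\f=\mathcal{CW}_{P(\partial)}(\Omega)$ and $\fe=\mathcal{CW}_{P(\partial)}(\Omega,E)$ are $\varepsilon$-into-compatible: this is \prettyref{prop:co_top_isomorphism}, which even gives $\varepsilon$-compatibility for locally complete $E$ via the isomorphism $S\colon\mathcal{CW}_{P(\partial)}(\Omega)\varepsilon E\cong\mathcal{CW}_{P(\partial)}(\Omega,E)$. Second, $(\id_{E^{\Omega}},\id_{\K^{\Omega}})$ is a strong, consistent family for $(F,E)$ with $F=\mathcal{CW}_{P(\partial)}$, which follows as in \prettyref{prop:co_top_isomorphism} from \prettyref{prop:diff_cons_barrelled} c) (the barrelledness of the Fr\'echet space $\mathcal{CW}_{P(\partial)}(\Omega)$) together with the obvious strength, and it is a generator for $(\mathcal{C}\nu_{P(\partial)},E)$ by the definitions above. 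Third, the inclusion $i\colon\mathcal{C}\nu_{P(\partial)}(\Omega)\to\mathcal{CW}_{P(\partial)}(\Omega)$ is continuous because $\nu$ is bounded below on compact sets, exactly as in the estimate \eqref{eq:hypo_weighted_Frechet}. Fourth, and this is the geometric heart of the matter, I would show that $\Fv=\mathcal{C}\nu_{P(\partial)}(\Omega)$ is a Banach space whose closed unit ball $B_{\Fv}$ is $\tau_{c}$-compact in $\f$; completeness is the one-weight case of \prettyref{prop:frechet_bierstedt}, and the $\tau_{c}$-compactness of $B_{\Fv}$ rests on a normal-families argument: a $\nu$-bounded set of zero solutions is locally uniformly bounded, hence by the hypoellipticity (interior elliptic-type estimates) $\tau_{c}$-relatively compact, and $\tau_{c}$-closedness of $B_{\Fv}$ follows since $\nu$ is continuous and $P(\partial)^{\K}$ has closed kernel.

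With these four hypotheses in hand, $U$ being a set of uniqueness for $(\id_{\K^{\Omega}},\mathcal{C}\nu_{P(\partial)})$ is precisely the remaining assumption of \prettyref{thm:ext_B_unique}, so the restriction map $R_{U,G}\colon\Feps\to\mathcal{C}\nu_{P(\partial)}{}_{,G}(U,E)$ is surjective, and injective by the strength of the family together with $G$ being separating (as in \prettyref{prop:injectivity} adapted to the mingle-mangle space via \prettyref{rem:R_well-defined_Banach}). Finally I would translate this back into the statement of the corollary: the given $f\colon U\to E$ with $e'\circ f$ extendable for all $e'\in G$ is exactly an element of $\mathcal{C}\nu_{P(\partial)}{}_{,G}(U,E)$ in the special case $U\subset\Omega$, $T^{\K}=\id$; surjectivity produces $\widetilde{F}\in\Feps$ restricting to $f$, and since $E$ is locally complete and $\f,\fe$ are $\varepsilon$-compatible, \prettyref{prop:mingle-mangle} c) gives $\Feps=\mathcal{C}\nu_{P(\partial)}(\Omega,E)$, so $F:=\widetilde{F}$ is the desired extension, unique by injectivity of $R_{U,G}$. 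The main obstacle I anticipate is the fourth step: carefully justifying that the $\nu$-weighted unit ball of zero solutions is $\tau_{c}$-compact, where one must invoke hypoellipticity to upgrade local boundedness to normal-family compactness, and confirm that this is the correct ambient Banach-disk structure needed by \prettyref{prop:ext_B_set_uni}.
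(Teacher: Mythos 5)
Your proposal is correct and follows essentially the same route as the paper: choose $\f=(\mathcal{C}^{\infty}_{P(\partial)}(\Omega),\tau_{c})$, invoke \prettyref{prop:co_top_isomorphism} for $\varepsilon$-compatibility and the strong consistent family, use \eqref{eq:hypo_weighted_Frechet} and \prettyref{prop:frechet_bierstedt} for the Banach space structure and continuity of the inclusion, identify $\Feps=\FvE$ via \prettyref{prop:mingle-mangle} c), and conclude with \prettyref{thm:ext_B_unique}. The only cosmetic difference is your fourth step: instead of a hands-on normal-families argument via interior estimates, the paper simply observes that $\f$ is a Fr\'echet--Schwartz, hence Montel, space, so the bounded and $\tau_{c}$-closed unit ball $B_{\Fv}$ is automatically compact — the same fact packaged through machinery already established in the text.
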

\begin{proof}
We choose $\f:=(\mathcal{C}^{\infty}_{P(\partial)}(\Omega),\tau_{c})$ 
and $\fe:=(\mathcal{C}^{\infty}_{P(\partial)}(\Omega,E),\tau_{c})$. Then we  
have $\mathcal{F}\nu(\Omega)=\mathcal{C}\nu_{P(\partial)}(\Omega)$ and 
$\mathcal{F}\nu(\Omega,E)=\mathcal{C}\nu_{P(\partial)}(\Omega,E)$
with the generator $(T^{E},T^{\K}):=(\operatorname{\id}_{E^{\Omega}},\operatorname{\id}_{\K^{\Omega}})$ 
for $(\mathcal{F}\nu,E)$.
We note that $\f$ and $\fe$ are $\varepsilon$-compatible 
and $(T^{E},T^{\K})$ is a strong, consistent family for $(F,E)$ by \prettyref{prop:co_top_isomorphism}. 
We observe that $\mathcal{F}\nu(\Omega)$ is a Banach space by \prettyref{prop:frechet_bierstedt} and 
for every compact $K\subset\Omega$ we have 
\[
\sup_{x\in K}|f(x)|\underset{\eqref{eq:hypo_weighted_Frechet}}{\leq}\sup_{z\in K}\nu(z)^{-1}|f|_{\nu} 
\leq \sup_{z\in K}\nu(z)^{-1},
\quad f\in B_{\mathcal{F}\nu(\Omega)},
\]
yielding that $B_{\mathcal{F}\nu(\Omega)}$ is bounded in $\f$.  
The space $\f=(\mathcal{C}^{\infty}_{P(\partial)}(\Omega),\tau_{c})$ is a Fr\'echet--Schwartz space, 
thus a Montel space, and it is easy to check that $B_{\mathcal{F}\nu(\Omega)}$ is $\tau_{c}$-closed. 
Hence the bounded and $\tau_{c}$-closed set $B_{\mathcal{F}\nu(\Omega)}$ is compact in $\f$. 
Finally, we remark that the $\varepsilon$-compatibility of $\f$ and $\fe$ 
in combination with the consistency of $(\id_{E^{\Omega}},\id_{\K^{\Omega}})$ for $(F,E)$ gives
$\Feps=\FvE$ as linear spaces by \prettyref{prop:mingle-mangle} c). 
From \prettyref{thm:ext_B_unique} follows our statement.
\end{proof}

If $\Omega=\D\subset\C$ is the open unit disc, $P(\partial)=\overline{\partial}$ the Cauchy--Riemann operator 
and $\nu=1$ on $\D$, then $\mathcal{C}\nu_{P(\partial)}(\Omega,E)=H^{\infty}(\D,E)$ and 
a sequence $U:=(z_{n})_{n\in\N}\subset\D$ of distinct elements is a set of uniqueness for $(\id_{\C^{\D}},H^{\infty})$ 
if and only if it satisfies the Blaschke condition $\sum_{n\in\N}(1-|z_{n}|)=\infty$ 
(see e.g.\ \cite[15.23 Theorem, p.\ 303]{rudin1970}).

For a continuous function $\nu\colon\D\to(0,\infty)$ and a complex lcHs $E$ we define the \emph{\gls{Bloch_type_space}} 
\[
\gls{BnuDE}:=\{f\in\mathcal{O}(\D,E)\;|\;\forall\;\alpha\in\mathfrak{A}:\;|f|_{\nu,\alpha}<\infty\}
\]
with 
\[
|f|_{\nu,\alpha}:=\max\bigl(p_{\alpha}(f(0)),\sup_{z\in\D}p_{\alpha}((\partial_{\C}^{1})^{E}f(z))\nu(z)\bigr).
\]
If $E=\C$, we write $f'(z):=(\partial_{\C}^{1})^{\C}f(z)$ for $z\in\D$ and $f\in\mathcal{O}(\D)$. 

\begin{prop}\label{prop:Bloch_Banach}
If $\nu\colon\D\to(0,\infty)$ is continuous, then $\mathcal{B}\nu(\D)$ is a Banach space.
\end{prop}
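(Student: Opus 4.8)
The plan is to follow the blueprint of \prettyref{prop:frechet_bierstedt}: show first that $|\cdot|_{\nu}$ is genuinely a norm, then establish completeness by pushing a Cauchy sequence into the Fr\'echet space $(\mathcal{O}(\D),\tau_{c})$ of holomorphic functions with the topology of compact convergence, extracting a holomorphic limit there, and finally upgrading the convergence back to the weighted norm.

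First I would verify that $|\cdot|_{\nu}$ separates points. If $f\in\mathcal{O}(\D)$ satisfies $|f|_{\nu}=0$, then $f(0)=0$ and $|f'(z)|\nu(z)=0$ for all $z\in\D$; since $\nu>0$ this forces $f'\equiv 0$, so $f$ is constant on the connected set $\D$, and $f(0)=0$ yields $f\equiv 0$. Hence $\mathcal{B}\nu(\D)$ is a normed space.

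The heart of the argument is the continuity of the inclusion $\mathcal{B}\nu(\D)\hookrightarrow(\mathcal{O}(\D),\tau_{c})$. Given a compact $K\subset\D$, the set $L:=\operatorname{cx}(K\cup\{0\})$ is a compact subset of $\D$, because $\D$ is convex and contains $0$ and the convex hull of the compact set $K\cup\{0\}$ in $\C=\R^{2}$ is compact; by continuity and strict positivity of $\nu$ there is $c_{K}>0$ with $\nu\geq c_{K}$ on $L$. For $z\in K$ the segment $\{tz\mid t\in[0,1]\}$ lies in $L$, so writing $f(z)=f(0)+z\int_{0}^{1}f'(tz)\,\d t$ I estimate
\[
|f(z)|\leq |f(0)|+|z|\int_{0}^{1}\frac{1}{\nu(tz)}\,|f'(tz)|\nu(tz)\,\d t\leq \Bigl(1+\frac{1}{c_{K}}\Bigr)|f|_{\nu}.
\]
Taking the supremum over $z\in K$ gives $\sup_{z\in K}|f(z)|\leq C_{K}|f|_{\nu}$ with $C_{K}:=1+c_{K}^{-1}$, which is precisely the asserted continuity. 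I expect this to be the main step; the underlying idea (the analogue of being locally bounded away from zero) is merely that a continuous strictly positive weight is bounded below on compacta, while the passage from controlling $f'$ to controlling $f$ is handled by integrating along segments, which is legitimate because $\D$ is convex.

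With this estimate a Cauchy sequence $(f_{n})$ in $\mathcal{B}\nu(\D)$ is Cauchy in the complete space $(\mathcal{O}(\D),\tau_{c})$, hence $f_{n}\to f$ in $\tau_{c}$ for some $f\in\mathcal{O}(\D)$, and by the Cauchy estimates $f_{n}'\to f'$ locally uniformly, in particular pointwise on $\D$. To finish, given $\varepsilon>0$ I pick $N$ with $|f_{n}-f_{m}|_{\nu}\leq\varepsilon/2$ for all $n,m\geq N$; fixing $z\in\D$ and letting $m\to\infty$ in $|f_{n}(0)-f_{m}(0)|\leq\varepsilon/2$ and $|f_{n}'(z)-f_{m}'(z)|\nu(z)\leq\varepsilon/2$ gives $|f_{n}(0)-f(0)|\leq\varepsilon/2$ and $|f_{n}'(z)-f'(z)|\nu(z)\leq\varepsilon/2$; taking the supremum over $z$ then yields $|f_{n}-f|_{\nu}\leq\varepsilon/2<\varepsilon$ for $n\geq N$. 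In particular $|f|_{\nu}\leq|f_{N}|_{\nu}+\varepsilon/2<\infty$, so $f\in\mathcal{B}\nu(\D)$ and $f_{n}\to f$ in norm. Thus $\mathcal{B}\nu(\D)$ is complete and therefore a Banach space.
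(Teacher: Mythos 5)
Your proof is correct and follows essentially the same route as the paper: the key estimate $\sup_{z\in K}|f(z)|\leq C_{K}|f|_{\nu}$ obtained by integrating $f'$ along the segment $[0,z]$ and bounding $\nu$ from below on a compact set is exactly the inequality the paper establishes, and the completeness argument via $(\mathcal{O}(\D),\tau_{c})$ is the one the paper invokes by analogy with its Proposition on $\mathcal{CV}_{P(\partial)}(\Omega)$. You merely spell out the details that the paper leaves to the reader.
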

\begin{proof}
Let $f\in\mathcal{B}\nu(\D)$. From the estimates 
\begin{align*}
|f(z)|&\leq |f(0)|+\bigl|\int_{0}^{z}f'(\zeta)\d\zeta\bigr|
\leq |f(0)|+\frac{|z|}{\min_{\xi\in[0,z]}\nu(\xi)}\sup_{\zeta\in[0,z]}|f'(\zeta)|\nu(\zeta)\\
&\leq 2\max\Bigl(1,\frac{|z|}{\min_{\xi\in[0,z]}\nu(\xi)}\Bigr)|f|_{\nu}
\end{align*}
for every $z\in\D$ and 
\begin{equation}\label{eq:Bloch}
\max_{|z|\leq r}|f(z)|\leq  2\max\Bigl(1,\frac{r}{\min_{|z|\leq r}\nu(z)}\Bigr)|f|_{\nu}
\end{equation}
for all $0<r<1$ and $f\in\mathcal{B}\nu(\D)$ it follows that 
$\mathcal{B}\nu(\D)$ is a Banach space 
by using the completeness of $(\mathcal{O}(\D),\tau_{c})$ analogously to the proof 
of \prettyref{prop:frechet_bierstedt}.
\end{proof}

\begin{prop}\label{prop:complex_diff_cons_strong}
Let $\Omega\subset\C$ be open and $E$ a locally complete lcHs over $\C$. 
Then $((\partial_{\C}^{n})^{E},(\partial_{\C}^{n})^{\C})_{n\in\N_{0}}$ 
is a strong, consistent family for $((\mathcal{O}(\Omega),\tau_{c}),E)$.
\end{prop}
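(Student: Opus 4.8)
The plan is to transfer the results already established for real partial derivatives on $\mathcal{CW}^{\infty}(\Omega)$ to the complex derivatives demanded here, the bridge being the relation \eqref{eq:complex.real.deriv}. First I would record the structural facts. Applying \eqref{eq:holomorphic_coincide_1} with the scalar field (which is complete, hence locally complete) gives $(\mathcal{O}(\Omega),\tau_{c})=\mathcal{CW}^{\infty}_{\overline{\partial}}(\Omega)$ as locally convex spaces, so $(\mathcal{O}(\Omega),\tau_{c})$ is the closed kernel of $\overline{\partial}$ in the Fr\'echet space $\mathcal{CW}^{\infty}(\Omega)$; in particular it is a barrelled $\dom$-space and the inclusion $I\colon(\mathcal{O}(\Omega),\tau_{c})\to\mathcal{CW}^{\infty}(\Omega)$ is continuous. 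Likewise $\mathcal{O}(\Omega,E)=\mathcal{CW}^{\infty}_{\overline{\partial}}(\Omega,E)$ for locally complete $E$ by \eqref{eq:holomorphic_coincide_1}.

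For consistency, let $u\in(\mathcal{O}(\Omega),\tau_{c})\varepsilon E$. The functionals $T^{\C}_{n,z}=\delta_{z}\circ(\partial^{n}_{\C})^{\C}$ lie in $\mathcal{O}(\Omega)'$ by Cauchy's inequality, which bounds $|(\partial^{n}_{\C})^{\C}f(z)|$ by a multiple of $\sup_{|w-z|=r}|f(w)|$ for a small circle inside $\Omega$. Since $(\mathcal{O}(\Omega),\tau_{c})$ is barrelled with continuous inclusion into $\mathcal{CW}^{\infty}(\Omega)$, \prettyref{prop:diff_cons_barrelled} c) with $k=\infty$ yields $S(u)\in\mathcal{C}^{\infty}(\Omega,E)$ together with $(\partial^{\beta})^{E}S(u)(z)=u(\delta_{z}\circ(\partial^{\beta})^{\C})$ for all $\beta\in\N_{0}^{2}$ and $z\in\Omega$; moreover \prettyref{prop:weighted_diff_strong_cons} applied to $P(\partial)=\overline{\partial}$ (see \eqref{eq:cons_partial_diff}) gives $S(u)\in\ker\overline{\partial}^{E}$, so $S(u)\in\mathcal{O}(\Omega,E)$ by \eqref{eq:holomorphic_coincide_0}. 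Specialising to $\beta=(n,0)$ and invoking \eqref{eq:complex.real.deriv}, which reads $(\partial^{(n,0)})^{E}S(u)=(\partial^{n}_{\C})^{E}S(u)$ on $\mathcal{O}(\Omega,E)$ and $(\partial^{(n,0)})^{\C}=(\partial^{n}_{\C})^{\C}$ on $\mathcal{O}(\Omega)$ so that $\delta_{z}\circ(\partial^{(n,0)})^{\C}=T^{\C}_{n,z}$, then produces $(\partial^{n}_{\C})^{E}S(u)(z)=u(T^{\C}_{n,z})$, which is precisely condition a) of \prettyref{def:cons_strong}.

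For strength, let $f\in\mathcal{O}(\Omega,E)$ and $e'\in E'$. Applying $e'$ to the difference quotient defining $(\partial^{1}_{\C})^{E}f$ and using the continuity and linearity of $e'$ shows that $e'\circ f$ is holomorphic with $(\partial^{1}_{\C})^{\C}(e'\circ f)(z)=e'((\partial^{1}_{\C})^{E}f(z))$. Because $E$ is locally complete, every higher complex derivative $(\partial^{n}_{\C})^{E}f$ exists and again belongs to $\mathcal{O}(\Omega,E)$ (as recalled at the start of the excerpt), so iterating the difference-quotient argument gives by induction on $n$ that $e'\circ f\in\mathcal{O}(\Omega)$ and $(\partial^{n}_{\C})^{\C}(e'\circ f)(z)=e'((\partial^{n}_{\C})^{E}f(z))$ for all $n\in\N_{0}$; this is condition b) of \prettyref{def:cons_strong}.

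The conceptual heart of the argument, and the only genuinely delicate point, is the passage from real to complex differentiation: \prettyref{prop:diff_cons_barrelled} only delivers the identity for the operators $(\partial^{\beta})^{\K}$, so the work lies in checking, via \eqref{eq:complex.real.deriv} and the holomorphy of $S(u)$ obtained from $\overline{\partial}$-consistency, that the functionals $\delta_{z}\circ(\partial^{(n,0)})^{\C}$ and $\delta_{z}\circ(\partial^{n}_{\C})^{\C}$ coincide on $\mathcal{O}(\Omega)$, so that the real-derivative statement matches the complex one demanded here.
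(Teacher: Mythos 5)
Your proposal is correct and follows essentially the same route as the paper: consistency is obtained from \prettyref{prop:diff_cons_barrelled} c) applied to the barrelled Fr\'echet space $(\mathcal{O}(\Omega),\tau_{c})$ together with the real--complex derivative relation \eqref{eq:complex.real.deriv}, and strength follows from applying $e'$ to the defining difference quotients. You spell out a few steps the paper leaves implicit (membership $S(u)\in\mathcal{O}(\Omega,E)$ via $\overline{\partial}$-consistency and \eqref{eq:holomorphic_coincide_0}, continuity of $\delta_{z}\circ(\partial^{n}_{\C})^{\C}$ via Cauchy's inequality), but the argument is the same.
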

\begin{proof}
We recall from \eqref{eq:complex.real.deriv} that the real and complex derivatives are related by
\begin{equation}\label{eq:complex.real.deriv_1}
 (\partial^{\beta})^{E}f(z)=\iu^{\beta_{2}}(\partial^{|\beta|}_{\C})^{E}f(z),\quad z\in\Omega,
\end{equation}
for every $f\in\mathcal{O}(\Omega,E)$ and $\beta=(\beta_{1},\beta_{2})\in\N_{0}^{2}$.  
Further, the Fr\'{e}chet space $(\mathcal{O}(\Omega),\tau_{c})$ is barrelled. 
Due to \prettyref{prop:diff_cons_barrelled} c) and 
\eqref{eq:complex.real.deriv_1} we have for all $u\in(\mathcal{O}(\Omega),\tau_{c})\varepsilon E$ 
\[
(\partial^{n}_{\C})^{E}S(u)(z)=u(\delta_{z}\circ(\partial^{n}_{\C})^{\C}),\quad n\in\N_{0},\,z\in\Omega,
\]
which means that $((\partial_{\C}^{n})^{E},(\partial_{\C}^{n})^{\C})_{n\in\N_{0}}$ is consistent. 

Moreover, we have 
\[
(\partial^{n}_{\C})^{\C}(e'\circ f)(z)=e'\bigl((\partial^{n}_{\C})^{E}f(z)),\quad n\in\N_{0},\,z\in\Omega,
\]
for all $e'\in E'$ and $f\in\mathcal{O}(\Omega,E)$, implying the strength 
of $((\partial_{\C}^{n})^{E},(\partial_{\C}^{n})^{\C})_{n\in\N_{0}}$.
\end{proof}

Let $E$ be an lcHs and $\nu\colon\D\to(0,\infty)$ be continuous. 
We set $\omega:=\{0\}\cup\{(1,z)\;|\;z\in\D\}$, 
define the operator $T^{E}\colon \mathcal{O}(\D,E)\to E^{\omega}$ by 
\[
T^{E}(f)(0):=f(0)\quad\text{and}\quad T^{E}(f)(1,z):=(\partial_{\C}^{1})^{E}f(z),\;z\in\D,
\] 
and the weight $\nu_{\ast}\colon\omega\to (0,\infty)$ by 
\[
\nu_{\ast}(0):=1\quad\text{and}\quad\nu_{\ast}(1,z):=\nu(z),\;z\in\D.
\]
Then we have for every $\alpha\in\mathfrak{A}$ that
\[
 |f|_{\nu,\alpha}=\sup_{x\in\omega}p_{\alpha}\bigl(T^{E}(f)(x)\bigr)\nu_{\ast}(x),
 \quad f\in\mathcal{B}\nu(\D,E),
\]
and with $F(\D,E):=\mathcal{O}(\D,E)$ we observe that 
$\mathcal{F}\nu_{\ast}(\D,E)=\mathcal{B}\nu(\D,E)$ with generator $(T^{E},T^{\C})$.

\begin{cor}\label{cor:Bloch_ext_unique}
Let $E$ be a locally complete lcHs, $G\subset E'$ determine boundedness, $\nu\colon\D\to(0,\infty)$ continuous 
and $U_{\ast}\subset\D$ have an accumulation point in $\D$. 
If $f\colon \{0\}\cup(\{1\}\times U_{\ast})\to E$ is a function such that there is 
$f_{e'}\in\mathcal{B}\nu(\D)$ for each $e'\in G$ with $f_{e'}(0)=e'(f(0))$ and 
$f_{e'}'(z)=e'(f(1,z))$ for all $z\in U_{\ast}$, 
then there exists a unique $F\in\mathcal{B}\nu(\D,E)$ with $F(0)=f(0)$ and $(\partial_{\C}^{1})^{E}F(z)=f(1,z)$ 
for all $z\in U_{\ast}$.
\end{cor}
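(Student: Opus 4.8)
The plan is to read this off from \prettyref{thm:ext_B_unique} with the choices $F(\D,E):=\mathcal{O}(\D,E)$, $\f:=(\mathcal{O}(\D),\tau_{c})$ and $\fe:=(\mathcal{O}(\D,E),\tau_{c})$, so that $\mathcal{F}\nu_{\ast}(\D,E)=\mathcal{B}\nu(\D,E)$ with the generator $(T^{E},T^{\C})$ defined just before the statement on $\omega=\{0\}\cup\{(1,z)\;|\;z\in\D\}$. First I would record that $\f$ and $\fe$ are $\varepsilon$-compatible: by \eqref{eq:holomorphic_coincide_1} we have $\fe=\mathcal{CW}_{\overline{\partial}}(\D,E)$ and $\f=\mathcal{CW}_{\overline{\partial}}(\D)$ for locally complete $E$, and \prettyref{prop:co_top_isomorphism} (with $P(\partial)=\overline{\partial}$) then gives $\fe\cong\f\varepsilon E$ via $S$; in particular $\f$ and $\fe$ are $\varepsilon$-into-compatible, as required.

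Next I would verify that $(T^{E},T^{\C})$ is a strong, consistent family for $(F,E)$. Since $T^{E}$ is assembled from $(\partial^{0}_{\C})^{E}$ evaluated at the single point $0$ and from $(\partial^{1}_{\C})^{E}$ evaluated at the points of $\D$, this is immediate from \prettyref{prop:complex_diff_cons_strong}, which asserts that $((\partial^{n}_{\C})^{E},(\partial^{n}_{\C})^{\C})_{n\in\N_{0}}$ is a strong, consistent family for $((\mathcal{O}(\D),\tau_{c}),E)$; restricting to the indices $n\in\{0,1\}$ that enter the definition of $\omega$ and $T^{E}$ preserves strength and consistency. By \prettyref{prop:Bloch_Banach} the space $\mathcal{F}\nu_{\ast}(\D)=\mathcal{B}\nu(\D)$ is a Banach space, and the estimate \eqref{eq:Bloch} shows that its closed unit ball $B_{\mathcal{B}\nu(\D)}$ is $\tau_{c}$-bounded; since $B_{\mathcal{B}\nu(\D)}$ is also $\tau_{c}$-closed and $\f=(\mathcal{O}(\D),\tau_{c})$ is a Fr\'echet--Schwartz, hence Montel, space, $B_{\mathcal{B}\nu(\D)}$ is compact in $\f$. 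The same estimate makes the inclusion $\mathcal{B}\nu(\D)\hookrightarrow\f$ continuous, so the functionals $T^{\C}_{x}\in\f'$ supplied by \prettyref{prop:complex_diff_cons_strong} restrict to elements of $\mathcal{B}\nu(\D)'$.

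It then remains to check that $U:=\{0\}\cup(\{1\}\times U_{\ast})$ is a set of uniqueness for $(T^{\C},\mathcal{F}\nu_{\ast})$. If $g\in\mathcal{B}\nu(\D)$ satisfies $T^{\C}(g)(0)=g(0)=0$ and $T^{\C}(g)(1,z)=g'(z)=0$ for all $z\in U_{\ast}$, then the holomorphic function $g'$ vanishes on $U_{\ast}$, which has an accumulation point in $\D$; by the identity theorem $g'\equiv 0$, so $g$ is constant, and $g(0)=0$ forces $g=0$. Finally, the $\varepsilon$-compatibility of $\f$ and $\fe$ together with \prettyref{prop:mingle-mangle} c) yields $\Feps=\mathcal{B}\nu(\D,E)$ as linear spaces. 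With all hypotheses of \prettyref{thm:ext_B_unique} in place, the restriction map $R_{U,E'}\colon\Feps\to\mathcal{F}\nu_{\ast,G}(U,E)$ is surjective; applying it to the given datum $f$, which by hypothesis lies in $\mathcal{F}\nu_{\ast,G}(U,E)$, produces $F\in\mathcal{B}\nu(\D,E)$ with $F(0)=f(0)$ and $(\partial^{1}_{\C})^{E}F(z)=f(1,z)$ for $z\in U_{\ast}$, while uniqueness follows from the injectivity of $R_{U,G}$ noted after \eqref{eq:well_def_B_unique}. The only step that needs genuine care is the set-of-uniqueness argument, where the accumulation-point hypothesis on $U_{\ast}$ is precisely what the identity theorem requires.
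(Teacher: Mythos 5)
Your proof is correct and follows essentially the same route as the paper's: the same choices of $\f=(\mathcal{O}(\D),\tau_{c})$, the same appeals to \prettyref{prop:co_top_isomorphism} with \eqref{eq:holomorphic_coincide_1}, \prettyref{prop:complex_diff_cons_strong}, \prettyref{prop:Bloch_Banach} with \eqref{eq:Bloch}, \prettyref{prop:mingle-mangle} c), the identity theorem for the set of uniqueness, and finally \prettyref{thm:ext_B_unique}. The only cosmetic slip is writing $R_{U,E'}$ where you mean $R_{U,G}$ in the final application; otherwise nothing to change.
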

\begin{proof}
We take $F(\D):=(\mathcal{O}(\D),\tau_{c})$ 
and $F(\D,E):=(\mathcal{O}(\D,E),\tau_{c})$. Then 
we have $\mathcal{F}\nu_{\ast}(\D)=\mathcal{B}\nu(\D)$ 
and $\mathcal{F}\nu_{\ast}(\Omega,E)=\mathcal{B}\nu(\D,E)$ 
with the weight $\nu_{\ast}$ and generator $(T^{E},T^{\C})$
for $(\mathcal{F}\nu_{\ast},E)$ described above. 
The spaces $F(\D)$ and $F(\D,E)$ are $\varepsilon$-compatible by 
\prettyref{prop:co_top_isomorphism} in combination with \eqref{eq:holomorphic_coincide_1}, 
and the generator is a strong, consistent family for $(F,E)$ 
by \prettyref{prop:complex_diff_cons_strong}. 
Due to \prettyref{prop:Bloch_Banach} $\mathcal{F}\nu_{\ast}(\D)=\mathcal{B}\nu(\D)$ is a Banach space
and we deduce from \eqref{eq:Bloch} that $B_{\mathcal{F}\nu_{\ast}(\D)}$ is compact 
in the Montel space $(\mathcal{O}(\D),\tau_{c})$.
We note that the $\varepsilon$-compatibility of $\f$ and $\fe$ 
in combination with the consistency of $(T^{E},T^{\C})$ for $(F,E)$ gives
$\mathcal{F}_{\varepsilon}\nu_{\ast}(\D,E)=\mathcal{F}\nu_{\ast}(\D,E)$ as linear spaces by 
\prettyref{prop:mingle-mangle} c). In addition, $U:=\{0\}\cup\{(1,z)\;|\;z\in U_{\ast}\}$ is a set of uniqueness 
for $(T^{\C},\mathcal{F}\nu_{\ast})$ by the identity theorem, proving our statement by \prettyref{thm:ext_B_unique}.
\end{proof}

\addtocontents{toc}{\SkipTocEntry}
\subsection*{\texorpdfstring{$E$}{E} a Fr\'echet space}

In this section we restrict to the case that $E$ is a Fr\'echet space and $G\subset E'$ is generated 
by a sequence that \emph{fixes the topology} in $E$.

\begin{defn}[{\cite[Definition 12, p.\ 8]{B/F/J}}]\label{def:fix_top_2}
 Let $Y$ be a Fr\'{e}chet space. An increasing sequence $(B_{n})_{n\in\N}$ of bounded subsets of $Y_{b}'$ \emph{\gls{fix_topo}} in $Y$ if $(B_{n}^{\circ})_{n\in\N}$ is a fundamental system of zero neighbourhoods of $Y$.
\end{defn}

\begin{rem}\label{rem:ex_almost_norming}
Let $Y$ be a Banach space. If $B\subset Y_{b}'$ is bounded, i.e.\ bounded w.r.t.\ the operator norm, 
such that $B$ fixes the topology in $Y$, i.e.\ $B^{\circ}$ is bounded in $Y$, 
then $B$ is called an \emph{\gls{almost_norming}} subset.
Examples of almost norming subspaces are given in \cite[Remark 1.2, p.\ 780--781]{Arendt2000}. 
For instance, the set of point evaluations $B:=\{\delta_{1/n}\;|\; n\in\N\}$ is almost norming for
the $Y:=H^{\infty}(\D):=\mathcal{C}^{\infty}_{\overline{\partial},b}(\D)$.
\end{rem}

\begin{defn}[{\gls{sb_rest_space}}]
Let $E$ be a Fr\'{e}chet space, $(B_{n})$ fix the topology in $E$ and $G:=\operatorname{span}(\bigcup_{n\in\N} B_{n})$. 
Let $\FV$ be a $\dom$-space, $U$ a set of uniqueness for $(T^{\K}_{m},\mathcal{FV})_{m\in M}$ and
set
\[
\gls{FV_GUE_sb}:=\{f\in\mathcal{FV}_{G}(U,E)\;|\;\forall\;n\in\N:\; \{f_{e'}\;|\;e'\in B_{n}\}\;
\text{is bounded in}\;\FV\}.
\]
\end{defn}

Let $E$ be a Fr\'{e}chet space, $(B_{n})$ fix the topology in $E$, $G:=\operatorname{span}(\bigcup_{n\in\N} B_{n})$,
$(T^{E}_{m},T^{\K}_{m})_{m\in M}$ be a strong, consistent generator for $(\mathcal{FV},E)$ 
and $U$ a set of uniqueness for $(T^{\K}_{m},\mathcal{FV})_{m\in M}$. 
For $u\in \FV\varepsilon E$ we have $R_{U,G}(f)\in\mathcal{FV}_{G}(U,E)$ with $f:=S(u)$
by \prettyref{rem:R_well-defined} and for $j\in J$ and $m\in M$ 
\[
\sup_{e'\in B_{n}}|f_{e'}|_{j,m}=\sup_{e'\in B_{n}}\sup_{x\in\omega_{m}}|e'(T^{E}_{m}(f)(x)\nu_{j,m}(x))|
=\sup_{e'\in B_{n}}\sup_{y\in N_{j,m}(f)}|e'(y)|
\]
with $N_{j,m}(f):=\{T^{E}_{m}(f)(x)\nu_{j,m}(x)\;|\;x\in\omega_{m}\}$. This set is bounded in $E$ since 
\[
\sup_{y\in N_{j,m}(f)}p_{\alpha}(f)=|f|_{j,m,\alpha}<\infty
\]
for all $\alpha\in\mathfrak{A}$, implying $\sup_{e'\in B_{n}}|f_{e'}|_{j,m}<\infty$ and 
$R_{U,G}(f)\in\mathcal{FV}_{G}(U,E)_{sb}$. 
Hence the injective linear map
\[
R_{U,G}\colon S(\FV\varepsilon E)\to \mathcal{FV}_{G}(U,E)_{sb},\;f\mapsto (T^{E}_{m}(f)(x))_{(m,x)\in U},
\]
is well-defined.

\begin{que}\label{que:surjective_sb_rest_space}
Let $E$ be a Fr\'{e}chet space, $(B_{n})$ fix the topology in $E$ and $G:=\operatorname{span}(\bigcup_{n\in\N} B_{n})$. 
Let $(T^{E}_{m},T^{\K}_{m})_{m\in M}$ be a strong, consistent generator for $(\mathcal{FV},E)$
and $U$ a set of uniqueness for $(T^{\K}_{m},\mathcal{FV})_{m\in M}$.
When is the injective restriction map 
\[
R_{U,G}\colon S(\FV\varepsilon E)\to \mathcal{FV}_{G}(U,E)_{sb},\;f\mapsto (T^{E}_{m}(f)(x))_{(m,x)\in U},
\]
surjective?
\end{que}

\begin{rem}
Let $E$ be a Fr\'{e}chet space with increasing system of seminorms 
$(p_{\alpha_{n}})_{n\in\N}$, $B_{n}:=B_{\alpha_{n}}^{\circ}$ where 
$B_{\alpha_{n}}:=\{x\in E\;|\;p_{\alpha_{n}}(x)<1\}$, 
$(T^{E}_{m},T^{\K}_{m})_{m\in M}$ a strong, consistent generator for $(\mathcal{FV},E)$
and $U$ a set of uniqueness for $(T^{\K}_{m},\mathcal{FV})_{m\in M}$. If 
$\FV$ is a BC-space, then $\mathcal{FV}_{E'}(U,E)_{sb}=\mathcal{FV}_{E'}(U,E)$ by \prettyref{prop:ext_F_semi_M}. 
Hence \prettyref{thm:ext_F_semi_M} (i) answers \prettyref{que:surjective_sb_rest_space} in this case. 
\end{rem}

Let us turn to the case where $G$ need not coincide with $E'$.

\addtocontents{toc}{\SkipTocEntry}
\subsection*{\texorpdfstring{$\FV$}{FV(Omega)} a Fr\'{e}chet--Schwartz space and \texorpdfstring{$E$}{E} a Fr\'echet space}

We recall the following result.

\begin{prop}[{\cite[Lemma 9, p.\ 504]{F/J}}]\label{prop:seq_bound}
 Let $E$ be a Fr\'{e}chet space, $(B_{n})$ fix the topology in $E$, $Y$ a Fr\'{e}chet--Schwartz space and 
 $X\subset Y_{b}'(=Y_{\kappa}')$ a dense subspace. Set $G:=\operatorname{span}(\bigcup_{n\in\N} B_{n})$ and let
 $\mathsf{A}\colon X\to E$ be a linear map which is 
 $\sigma(X,Y)$-$\sigma(E,G)$-continuous and satisfies that $\mathsf{A}^{t}(B_{n})$ is bounded in $Y$ 
 for each $n\in\N$. 
 Then $\mathsf{A}$ has a (unique) extension $\widehat{\mathsf{A}}\in Y\varepsilon E$.
\end{prop}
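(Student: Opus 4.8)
The plan is to reduce the statement to the extension of a continuous linear map from a dense subspace into a complete range space. Concretely, I would first reinterpret $Y\varepsilon E=L_{e}(Y_{\kappa}',E)$ as the set of continuous linear operators $Y_{\kappa}'\to E$, and observe that since $Y$ is a Fr\'echet--Schwartz space it is Montel, so indeed $Y_{b}'=Y_{\kappa}'$. It then suffices to prove that $\mathsf{A}\colon X\to E$ is continuous for the topology $\kappa(Y',Y)$ induced on $X$; the desired operator $\widehat{\mathsf{A}}$ will be the unique continuous linear extension of $\mathsf{A}$ to $Y_{\kappa}'$, which exists because $X$ is dense in $Y_{\kappa}'$ and $E$ is complete (being a Fr\'echet space), with uniqueness automatic from density.

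For the continuity I would exploit the hypothesis that $(B_{n})$ fixes the topology in $E$: by \prettyref{def:fix_top_2} the polars $(B_{n}^{\circ})$ form a fundamental system of zero neighbourhoods, so the gauges $p_{n}(x):=\sup_{e'\in B_{n}}|e'(x)|$ constitute an increasing fundamental system of seminorms on $E$. Next I would verify that the boundedness condition is well posed: the $\sigma(X,Y)$-$\sigma(E,G)$-continuity of $\mathsf{A}$ means that for every $e'\in G$ the form $e'\circ\mathsf{A}$ is $\sigma(X,Y)$-continuous on $X$, and since the dual of $(X,\sigma(X,Y))$ is $Y$ (as $X\subset Y'$ and $X$ is dense in $Y_{b}'$, the pairing $\langle X,Y\rangle$ separates points on both sides), each $\mathsf{A}^{t}(e')=e'\circ\mathsf{A}$ indeed lies in $Y$. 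In particular $\mathsf{A}^{t}(B_{n})\subset Y$, so the assumption that $\mathsf{A}^{t}(B_{n})$ is bounded in $Y$ makes sense.

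The core computation is then the estimate, for $x\in X$,
\[
p_{n}(\mathsf{A}(x))=\sup_{e'\in B_{n}}|e'(\mathsf{A}(x))|=\sup_{e'\in B_{n}}|\langle x,\mathsf{A}^{t}(e')\rangle|\leq\sup_{y\in K_{n}}|\langle x,y\rangle|,
\]
where $K_{n}:=\oacx(\mathsf{A}^{t}(B_{n}))$. Because $\mathsf{A}^{t}(B_{n})$ is bounded in the Montel space $Y$, it is relatively compact, and its closed absolutely convex hull $K_{n}$ is absolutely convex and compact in the complete space $Y$ by \cite[6.2.1 Proposition, p.\ 103]{Jarchow} and \cite[6.7.1 Proposition, p.\ 112]{Jarchow}. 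Hence $x\mapsto\sup_{y\in K_{n}}|\langle x,y\rangle|$ is one of the seminorms defining $\kappa(Y',Y)$, i.e.\ a continuous seminorm on $Y_{\kappa}'$, and the displayed inequality says exactly that $\mathsf{A}$ is continuous from $(X,\kappa(Y',Y)_{\mid X})$ into $E$.

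Finally I would assemble the pieces: the continuous linear map $\mathsf{A}$ on the dense subspace $X\subset Y_{\kappa}'$ extends uniquely, using completeness of $E$, to $\widehat{\mathsf{A}}\in L(Y_{\kappa}',E)=Y\varepsilon E$, and $\widehat{\mathsf{A}}$ restricts to $\mathsf{A}$ on $X$. I expect the only genuinely delicate point to be the well-posedness step---checking that the weak continuity forces $\mathsf{A}^{t}(e')\in Y$ and that $\langle X,Y\rangle$ really identifies the dual of $(X,\sigma(X,Y))$ with $Y$---together with the passage from boundedness of $\mathsf{A}^{t}(B_{n})$ to compactness of $K_{n}$, which is where the Schwartz (equivalently, Montel) property of $Y$ is essential; the remainder is a routine seminorm estimate and the standard density-completeness extension argument.
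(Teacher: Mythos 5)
Your proof is correct. The key steps all hold: since $X$ is dense in $Y_{b}'$ it is $\sigma(Y',Y)$-dense, so the pairing $\langle X,Y\rangle$ separates points and $(X,\sigma(X,Y))'$ is identified with $Y$, which makes $\mathsf{A}^{t}(e')\in Y$ for $e'\in G$ and renders the boundedness hypothesis meaningful; the set $K_{n}:=\oacx(\mathsf{A}^{t}(B_{n}))$ is absolutely convex and compact because $Y$ is Fr\'echet--Montel; the estimate $p_{n}(\mathsf{A}(x))\leq\sup_{y\in K_{n}}|x(y)|$ then gives continuity of $\mathsf{A}$ for the topology induced by $\kappa(Y',Y)$, and the unique extension to $Y_{\kappa}'$ exists by density and completeness of the Fr\'echet space $E$. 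Note that the thesis does not prove this proposition but quotes it from Frerick and Jord\'a; the route suggested by the surrounding machinery (and the one the cited source follows) is instead to reduce to \prettyref{prop:ext_FS_set_uni}: one checks that $(\mathsf{A}^{t})^{-1}(Y)$ contains the space $E'((B_{n})_{n\in\N})$ of $\ell^{1}$-combinations $\sum_{k}a_{k}e_{k}'$ with $(e_{k}')\subset B_{n}$ --- the series $\sum_{k}a_{k}\mathsf{A}^{t}(e_{k}')$ converges in the Fr\'echet space $Y$ because $\mathsf{A}^{t}(B_{n})$ is bounded --- and invokes the fact that this LB-space determines boundedness in $E$, exactly the trick reused in \prettyref{cor:ext_B_unique_seq_bound}. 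That route hides the topology of $Y_{\kappa}'$ behind the abstract equivalence of \prettyref{prop:ext_FS_set_uni}, whereas your argument is more self-contained, makes explicit where the Schwartz/Montel property of $Y$ enters (compactness of $K_{n}$), and uses the Fr\'echet hypothesis on $E$ only through completeness in the final extension step. Both arguments are valid.
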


Next, we improve \cite[Theorem 1 ii), p.\ 501]{F/J}.

\begin{thm}\label{thm:ext_FS_set_uni_seq_bounded}
Let $E$ be a Fr\'{e}chet space, $(B_{n})$ fix the topology in $E$ and $G:=\operatorname{span}(\bigcup_{n\in\N} B_{n})$,
$(T^{E}_{m},T^{\K}_{m})_{m\in M}$ a strong, consistent generator for $(\mathcal{FV},E)$, 
$\FV$ a Fr\'{e}chet--Schwartz space and $U$ a set of uniqueness for $(T^{\K}_{m},\mathcal{FV})_{m\in M}$. 
Then the restriction map $R_{U,G}\colon S(\FV\varepsilon E)\to \mathcal{FV}_{G}(U,E)_{sb}$ is surjective.
\end{thm}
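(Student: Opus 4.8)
The plan is to mirror the proof of \prettyref{thm:ext_FS_set_uni}, replacing the application of \prettyref{prop:ext_FS_set_uni} by \prettyref{prop:seq_bound} and supplying the extra sequential-boundedness hypothesis. First I would fix $f\in\mathcal{FV}_{G}(U,E)_{sb}$ and set $X:=\operatorname{span}\{T^{\K}_{m,x}\;|\;(m,x)\in U\}$ and $Y:=\FV$. Since $U$ is a set of uniqueness, $X$ is $\sigma(\FV',\FV)$-dense by the bipolar theorem, and because $\FV$ is a Fr\'echet--Schwartz space it is reflexive with $\FV_{b}'=\FV_{\kappa}'$, so $\kappa(\FV',\FV)$ is compatible with the dual pairing and the subspace $X$ is also dense in $Y_{b}'=Y_{\kappa}'$. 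I would then define $\mathsf{A}\colon X\to E$ on the generators by $\mathsf{A}(T^{\K}_{m,x}):=f(m,x)$. As $(B_{n}^{\circ})$ is a fundamental system of zero neighbourhoods, $\bigcap_{n}B_{n}^{\circ}=\{0\}$, so $G$ separates the points of $E$; this makes $\mathsf{A}$ well-defined, because $\sum_{k}\lambda_{k}T^{\K}_{m_{k},x_{k}}=0$ in $\FV'$ forces $e'\bigl(\sum_{k}\lambda_{k}f(m_{k},x_{k})\bigr)=\bigl(\sum_{k}\lambda_{k}T^{\K}_{m_{k},x_{k}}\bigr)(f_{e'})=0$ for every $e'\in G$.

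Next I would verify the two hypotheses of \prettyref{prop:seq_bound}. For the weak continuity, note that for $x'=\sum_{k}\lambda_{k}T^{\K}_{m_{k},x_{k}}\in X$ and $e'\in G$ one has $e'(\mathsf{A}(x'))=\sum_{k}\lambda_{k}T^{\K}_{m_{k},x_{k}}(f_{e'})=\langle x',f_{e'}\rangle$, so if $x'_{\tau}\to 0$ in $\sigma(X,Y)$ then $e'(\mathsf{A}(x'_{\tau}))=\langle x'_{\tau},f_{e'}\rangle\to 0$ for every $e'\in G$, i.e.\ $\mathsf{A}$ is $\sigma(X,Y)$-$\sigma(E,G)$-continuous. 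The same computation shows that for $e'\in G$ the functional $\mathsf{A}^{t}(e')=e'\circ\mathsf{A}$ on $X$ is the restriction of the element $f_{e'}\in\FV$, viewed inside $(\FV_{b}')_{b}'=\FV$ via reflexivity; under this identification $\mathsf{A}^{t}(B_{n})=\{f_{e'}\;|\;e'\in B_{n}\}$ (recall $B_{n}\subset G$), which is bounded in $\FV$ for every $n\in\N$ exactly because $f\in\mathcal{FV}_{G}(U,E)_{sb}$.

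Then \prettyref{prop:seq_bound} yields a (unique) extension $\widehat{\mathsf{A}}\in\FV\varepsilon E$ of $\mathsf{A}$. Setting $F:=S(\widehat{\mathsf{A}})$, the $\varepsilon$-into-compatibility of $\FV$ and $\FVE$ from \prettyref{thm:linearisation} (guaranteed by the consistency of the generator) gives $F\in\FVE$, and consistency together with the fact that $\widehat{\mathsf{A}}$ extends $\mathsf{A}$ on $X$ gives, for every $(m,x)\in U$,
\[
T^{E}_{m}(F)(x)=T^{E}_{m}S(\widehat{\mathsf{A}})(x)=\widehat{\mathsf{A}}(T^{\K}_{m,x})=\mathsf{A}(T^{\K}_{m,x})=f(m,x),
\]
so $R_{U,G}(F)=f$, which proves surjectivity.

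The step I expect to require the most care is the identification of $\mathsf{A}^{t}(e')$ with the element $f_{e'}\in\FV$ and the check that this identification turns the defining boundedness condition of $\mathcal{FV}_{G}(U,E)_{sb}$ into precisely the hypothesis ``$\mathsf{A}^{t}(B_{n})$ bounded in $\FV$'' of \prettyref{prop:seq_bound}; this rests on the reflexivity of the Fr\'echet--Schwartz space $\FV$, so that $f_{e'}$ genuinely acts as a bidual element on $X\subset\FV'$, and on $X$ being dense in $\FV_{\kappa}'$. The remaining verifications, namely the well-definedness and the weak continuity of $\mathsf{A}$, are routine consequences of $U$ being a set of uniqueness and $G$ being separating.
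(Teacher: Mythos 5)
Your proposal is correct and follows essentially the same route as the paper's proof: define $\mathsf{A}$ on $X=\operatorname{span}\{T^{\K}_{m,x}\,|\,(m,x)\in U\}$ by $\mathsf{A}(T^{\K}_{m,x}):=f(m,x)$, check $\sigma(X,Y)$-$\sigma(E,G)$-continuity and that $\mathsf{A}^{t}(B_{n})=\{f_{e'}\,|\,e'\in B_{n}\}$ is bounded in $\FV$ using the defining property of $\mathcal{FV}_{G}(U,E)_{sb}$, apply \prettyref{prop:seq_bound}, and recover $R_{U,G}(S(\widehat{\mathsf{A}}))=f$ by consistency. The extra care you take with the density of $X$ in $\FV_{b}'=\FV_{\kappa}'$ and the identification of $f_{e'}$ as a linear form on $X$ matches what the paper does implicitly (via the remark after the definition of a set of uniqueness and the analogous step in \prettyref{thm:ext_FS_set_uni}).
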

\begin{proof}
Let $f\in \mathcal{FV}_{G}(U,E)_{sb}$. We set $X:=\operatorname{span}\{T^{\K}_{m,x}\;|\;(m,x)\in U\}$ and 
$Y:=\FV$. Let $\mathsf{A}\colon X\to E$ be the linear map determined by
$\mathsf{A}(T^{\K}_{m,x}):=f(m,x)$ which is well-defined since $G$ is $\sigma(E',E)$-dense. 
From 
\[
e'(\mathsf{A}(T^{\K}_{m,x}))=(e'\circ f)(m,x)=T^{\K}_{m,x}(f_{e'})
\]
for every $e'\in G$ and $(m,x)\in U$ it follows that $\mathsf{A}$ is $\sigma(X,Y)$-$\sigma(E,G)$-continuous and
\[
\sup_{e'\in B_{n}}|\mathsf{A}^{t}(e')|_{j,k}=\sup_{e'\in B_{n}}|f_{e'}|_{j,k}<\infty
\]
for all $j\in J$, $k\in M$ and $n\in\N$. Due to \prettyref{prop:seq_bound} there is 
an extension $\widehat{\mathsf{A}}\in\FV\varepsilon E$ of $\mathsf{A}$. 
We set $F:=S(\widehat{\mathsf{A}})$ and get for all $(m,x)\in U$ that
\[
T^{E}_{m}(F)(x)=T^{E}_{m}S(\widehat{\mathsf{A}})(x)=\widehat{\mathsf{A}}(T^{\K}_{m,x})=f(m,x)
\]
by consistency, which means $R_{U,G}(F)=f$.
\end{proof}

\begin{cor}
Let $E$ be a Fr\'{e}chet space, $(B_{n})$ fix the topology in $E$ and $G:=\operatorname{span}(\bigcup_{n\in\N} B_{n})$. 
Let $\mathcal{V}:=(\nu_{j})_{j\in\N}$ be an increasing family of weights which is locally bounded away from zero 
on an open set $\Omega\subset\R^{d}$, $P(\partial)^{\K}$ a hypoelliptic linear partial differential operator, 
$\mathcal{CV}_{P(\partial)}(\Omega)$ a Schwartz space and 
$U\subset\Omega$ a set of uniqueness for $(\id_{\K^{\Omega}},\mathcal{CV}_{P(\partial)})$. 
If $f\colon U\to E$ is a function such that $e'\circ f$ admits an extension 
$f_{e'}\in\mathcal{CV}_{P(\partial)}(\Omega)$ for each $e'\in G$ 
and $\{f_{e'}\;|\;e'\in B_{n}\}$ is bounded in $\mathcal{CV}_{P(\partial)}(\Omega)$ for each $n\in\N$, 
then there is a unique extension $F\in\mathcal{CV}_{P(\partial)}(\Omega,E)$ of $f$.
\end{cor}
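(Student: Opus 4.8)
The plan is to obtain this corollary as a direct instance of \prettyref{thm:ext_FS_set_uni_seq_bounded}, specialised to the weighted space $\mathcal{F}=\mathcal{CV}_{P(\partial)}$ equipped with the single-operator generator $(T^{E},T^{\K}):=(\id_{E^{\Omega}},\id_{\K^{\Omega}})$. Under this choice $M$ is a singleton, $\omega=\Omega$ and $T^{\K}_{x}=\delta_{x}$ for $x\in\Omega$, so a set of uniqueness $U\subset\Omega$ for $(\id_{\K^{\Omega}},\mathcal{CV}_{P(\partial)})$ is precisely a set of uniqueness in the sense of \prettyref{def:set_uniqueness}, and the restriction map $R_{U,G}$ reduces to $F\mapsto F_{\mid U}$ because $T^{E}(F)(x)=F(x)$.

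First I would check that the hypotheses of \prettyref{thm:ext_FS_set_uni_seq_bounded} hold. The space $\mathcal{CV}_{P(\partial)}(\Omega)$ is a Fr\'echet space by \prettyref{prop:frechet_bierstedt} and a Schwartz space by assumption, hence a Fr\'echet--Schwartz space. As in the proof of \prettyref{ex:subspace_bierstedt}, the hypoellipticity of $P(\partial)^{\K}$ together with $\mathcal{V}$ being locally bounded away from zero makes the inclusion $\mathcal{CV}_{P(\partial)}(\Omega)\to\mathcal{CW}^{\infty}_{P(\partial)}(\Omega)$ continuous, so that $(\id_{E^{\Omega}},\id_{\K^{\Omega}})$ is a strong, consistent generator for $(\mathcal{CV}_{P(\partial)},E)$ (consistency by \prettyref{prop:diff_cons_barrelled} c), strength as in \prettyref{prop:weighted_diff_strong_cons}); in particular $\mathcal{CV}_{P(\partial)}(\Omega)$ and $\mathcal{CV}_{P(\partial)}(\Omega,E)$ are $\varepsilon$-compatible by \prettyref{ex:subspace_bierstedt} b), as $E$, being a Fr\'echet space, is locally complete. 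Finally, since $(B_{n})$ fixes the topology in $E$, the polars $(B_{n}^{\circ})$ form a fundamental system of zero neighbourhoods, so $\bigcap_{n}B_{n}^{\circ}=\{0\}$; hence for $0\neq x\in E$ there is $n$ with $\sup_{e'\in B_{n}}|e'(x)|>1$, giving some $e'\in B_{n}\subset G$ with $e'(x)\neq 0$, so $G$ is separating.

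Next I would verify that the datum $f$ lies in the restriction space $(\mathcal{CV}_{P(\partial)})_{G}(U,E)_{sb}$ appearing in the theorem. For each $e'\in G$ the hypothesis furnishes $f_{e'}\in\mathcal{CV}_{P(\partial)}(\Omega)$ with $T^{\K}(f_{e'})(x)=f_{e'}(x)=(e'\circ f)(x)$ for all $x\in U$, which is exactly the defining condition of $(\mathcal{CV}_{P(\partial)})_{G}(U,E)$, while the boundedness of $\{f_{e'}\mid e'\in B_{n}\}$ in $\mathcal{CV}_{P(\partial)}(\Omega)$ for each $n\in\N$ is precisely the extra condition encoded by the subscript $sb$. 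Applying \prettyref{thm:ext_FS_set_uni_seq_bounded}, the map $R_{U,G}\colon S(\mathcal{CV}_{P(\partial)}(\Omega)\varepsilon E)\to(\mathcal{CV}_{P(\partial)})_{G}(U,E)_{sb}$ is surjective, so there is $F\in S(\mathcal{CV}_{P(\partial)}(\Omega)\varepsilon E)=\mathcal{CV}_{P(\partial)}(\Omega,E)$, the identification coming from $\varepsilon$-compatibility, with $F_{\mid U}=R_{U,G}(F)=f$. Uniqueness follows from \prettyref{prop:injectivity}: the generator being strong, $U$ a set of uniqueness and $G$ separating, the map $T^{E}=\id$ on $\mathcal{CV}_{P(\partial)}(\Omega,E)$ is injective, whence two extensions agreeing on $U$ must coincide.

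I expect no genuine analytic obstacle here, since the substantial extension machinery is already carried by \prettyref{prop:seq_bound} and \prettyref{thm:ext_FS_set_uni_seq_bounded}. The only real work is the bookkeeping of matching the abstract objects $R_{U,G}$, $(\mathcal{CV}_{P(\partial)})_{G}(U,E)_{sb}$ and the generator $(\id,\id)$ with the concrete weighted space, and recording that the assumptions ``Schwartz'' and ``locally bounded away from zero'' are exactly what \prettyref{prop:frechet_bierstedt} and \prettyref{ex:subspace_bierstedt} require to supply the Fr\'echet--Schwartz property, the strong consistent generator and the $\varepsilon$-compatibility.
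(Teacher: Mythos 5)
Your proposal is correct and follows essentially the same route as the paper: establish that $\mathcal{CV}_{P(\partial)}(\Omega)$ is a Fr\'echet--Schwartz space via \prettyref{prop:frechet_bierstedt} together with the Schwartz hypothesis, obtain the strong consistent generator $(\id_{E^{\Omega}},\id_{\K^{\Omega}})$ from the proof of \prettyref{ex:subspace_bierstedt}, and then apply \prettyref{thm:ext_FS_set_uni_seq_bounded} for existence and \prettyref{prop:injectivity} for uniqueness. The only difference is that you spell out more of the bookkeeping (the identification of $R_{U,G}$ with restriction, membership of $f$ in the $sb$-space, and the separating property of $G$) than the paper's terser proof does.
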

\begin{proof}
$\mathcal{CV}_{P(\partial)}(\Omega)$ is a Fr\'echet--Schwartz space and $(\id_{E^{\Omega}},\id_{\K^{\Omega}})$ a 
strong, consistent generator for $(\mathcal{CV}_{P(\partial)},E)$ by \prettyref{prop:frechet_bierstedt}
and the proof of \prettyref{ex:subspace_bierstedt} b). 
Now, \prettyref{thm:ext_FS_set_uni_seq_bounded} and \prettyref{prop:injectivity} prove our statement.
\end{proof}

We already mentioned examples of families of weights $\mathcal{V}$ such that $\mathcal{CV}_{P(\partial)}(\R^{d})$ 
is a nuclear Fr\'echet space and sets of uniqueness for $(\id_{\K^{\R^{d}}},\mathcal{CV}_{P(\partial)})$ 
in \prettyref{rem:ex_NF_cond_weights} and \prettyref{rem:ex_set_uni_AP} and 
if $P(\partial)=\overline{\partial}$ or $P(\partial)=\Delta$. Further sets of uniqueness are given in \prettyref{rem:ex_fix_top_AP}. 
If $E$ is a Banach space, then an almost norming set fixes the topology and examples can be found 
via \prettyref{rem:ex_almost_norming}.

\addtocontents{toc}{\SkipTocEntry}
\subsection*{\texorpdfstring{$\Fv$}{Fv(Omega)} a Banach space and \texorpdfstring{$E$}{E} a Fr\'echet space}

Let $E$ be a Fr\'{e}chet space, $(B_{n})$ fix the topology in $E$ 
and recall the assumptions of \prettyref{rem:R_well-defined_Banach}.
Let $(T^{E},T^{\K})$ be a strong, consistent family for $(F,E)$ and a generator for $(\mathcal{F}\nu,E)$. 
Let $\f$ and $\fe$ be $\varepsilon$-into-compatible and the inclusion $\Fv\hookrightarrow\f$ continuous.
Consider a set of uniqueness $U$ for $(T^{\K},\mathcal{F}\nu)$ and $G:=\operatorname{span}(\bigcup_{n\in\N} B_{n})\subset E'$.
For $u\in\f\varepsilon E$ such that $u(B_{\Fv}^{\circ \f'})$ is bounded in $E$
we have $R_{U,G}(f)\in\mathcal{F}\nu_{G}(U,E)$ with $f:=S(u)\in\Feps$ by \eqref{eq:well_def_B_unique}. 
We note that
\[
 \sup_{e'\in B_{n}}|f_{e'}|_{\Fv}
=\sup_{e'\in B_{n}}\sup_{x\in\omega}|e'(T^{E}(f)(x)\nu(x))|
=\sup_{e'\in B_{n}}\sup_{y\in N_{\omega}(f)}|e'(y)|
\]
with the bounded set $N_{\omega}(f):=\{T^{E}(f)(x)\nu(x)\;|\;x\in\omega\}\subset E$, 
implying $R_{U,G}(f)\in\mathcal{FV}_{G}(U,E)_{sb}$. 
Thus the injective linear map
\[
R_{U,G}\colon \Feps \to \mathcal{F}\nu_{G}(U,E)_{sb},\;f\mapsto (T^{E}(f)(x))_{x\in U},
\]
is well-defined. 

\begin{que}
Let the assumptions of \prettyref{rem:R_well-defined_Banach} be fulfilled, 
$E$ be a Fr\'{e}chet space, $(B_{n})$ fix the topology in $E$ and $G:=\operatorname{span}(\bigcup_{n\in\N} B_{n})$. 
When is the injective restriction map 
\[
R_{U,G}\colon \Feps \to \mathcal{F}\nu_{G}(U,E)_{sb},\;f\mapsto (T^{E}(f)(x))_{x\in U},
\]
surjective?
\end{que}

Now, we can generalise \cite[Corollary 2.4, p.\ 692]{F/J/W} and \cite[Theorem 11, p.\ 5]{jorda2013}.

\begin{cor}\label{cor:ext_B_unique_seq_bound}
Let $E$ be a Fr\'{e}chet space, $(B_{n})$ fix the topology in $E$, 
set $G:=\operatorname{span}(\bigcup_{n\in\N} B_{n})$ and 
let $\f$ and $\fe$ be $\varepsilon$-into-compatible. 
Let $(T^{E},T^{\K})$ be a generator for $(\mathcal{F}\nu,E)$ and a strong, consistent family for $(F,E)$,
$\Fv$ a Banach space whose closed unit ball $B_{\Fv}$ is a compact subset of $\f$ 
and $U$ a set of uniqueness for $(T^{\K},\mathcal{F}\nu)$.
Then the restriction map 
\[
 R_{U,G}\colon\Feps \to \mathcal{F}\nu_{G}(U,E)_{sb} 
\]
is surjective.
\end{cor}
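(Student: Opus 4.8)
The plan is to run the argument of \prettyref{thm:ext_B_unique} almost verbatim, the only change being that the operator extension result \prettyref{prop:ext_B_set_uni}, which requires $G$ to determine boundedness, is replaced by its counterpart for a sequence $(B_{n})$ fixing the topology in $E$ (in the spirit of \prettyref{prop:seq_bound}), and that the sequential boundedness encoded in $\mathcal{F}\nu_{G}(U,E)_{sb}$ is exactly what is needed to meet the hypotheses of that result.

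First I would fix $f\in\mathcal{F}\nu_{G}(U,E)_{sb}$ and put $X:=\operatorname{span}\{T^{\K}_{x}\;|\;x\in U\}$, $Y:=\f$ and $Z:=\Fv$, which is a $\dom$-space by \prettyref{prop:mingle-mangle} a). Since $(T^{E},T^{\K})$ is consistent for $(F,E)$ we have $X\subset\f'=Y'$, and since $U$ is a set of uniqueness for $(T^{\K},\mathcal{F}\nu)$ the span $X$ is $\sigma(Z',Z)$-dense. As $B_{\Fv}$ is compact in $\f$, the space $Z=\Fv$ is a linear subspace of $Y=\f$ and the inclusion $\Fv\hookrightarrow\f$ is continuous, so $y'_{\mid\Fv}\in\Fv'$ for every $y'\in\f'$ and therefore $X$ is also $\sigma(Y',Z)$-dense. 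Finally, because $(B_{n})$ fixes the topology in $E$, the subspace $G=\operatorname{span}(\bigcup_{n}B_{n})$ separates points of $E$ and hence is $\sigma(E',E)$-dense.

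Next I would define the linear map $\mathsf{A}\colon X\to E$ by $\mathsf{A}(T^{\K}_{x}):=f(x)$, which is well defined since $G$ is $\sigma(E',E)$-dense and $U$ is a set of uniqueness. For $e'\in G$ and $x\in U$ the identity
\[
e'(\mathsf{A}(T^{\K}_{x}))=(e'\circ f)(x)=T^{\K}_{x}(f_{e'})
\]
shows that $\mathsf{A}$ is $\sigma(X,Z)$-$\sigma(E,G)$-continuous and that its dual map satisfies $\mathsf{A}^{t}(e')=f_{e'}\in\Fv=Z$ for every $e'\in G$. The defining property of $\mathcal{F}\nu_{G}(U,E)_{sb}$ then reads precisely as the statement that $\mathsf{A}^{t}(B_{n})=\{f_{e'}\;|\;e'\in B_{n}\}$ is bounded in $Z$ for every $n\in\N$. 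Having arranged these hypotheses, I would invoke the operator extension result adapted to the present situation (the version of \prettyref{prop:ext_B_set_uni} in which the assumption that $G$ determines boundedness is replaced by ``$(B_{n})$ fixes the topology in $E$ and $\mathsf{A}^{t}(B_{n})$ is bounded in $Z$ for each $n$'', obtained by combining the arguments of \prettyref{prop:ext_B_set_uni} and \prettyref{prop:seq_bound}). This yields a unique $\widehat{\mathsf{A}}\in\f\varepsilon E$ extending $\mathsf{A}$ with $\widehat{\mathsf{A}}(B_{\Fv}^{\circ \f'})$ bounded in $E$, so that $\widetilde{F}:=S(\widehat{\mathsf{A}})\in\Feps$ by the very definition of $\Feps$ in \prettyref{prop:mingle-mangle}. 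Then consistency of $(T^{E},T^{\K})$ for $(F,E)$ gives $T^{E}(\widetilde{F})(x)=T^{E}S(\widehat{\mathsf{A}})(x)=\widehat{\mathsf{A}}(T^{\K}_{x})=\mathsf{A}(T^{\K}_{x})=f(x)$ for all $x\in U$, i.e.\ $R_{U,G}(\widetilde{F})=f$, which is the desired surjectivity.

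The main obstacle is the operator extension step itself: one must check that ``$(B_{n})$ fixes the topology and $\mathsf{A}^{t}(B_{n})$ is bounded in the Banach space $Z$'' still forces the prospective extension to be continuous from $\f_{\kappa}'$ into the Fréchet space $E$ and bounded on $B_{\Fv}^{\circ \f'}$. This is where the compactness of $B_{\Fv}$ in $\f$ (providing the predual role of $\Fv$) has to be played off against the fixing-topology condition, with the sequential boundedness of $\mathsf{A}^{t}(B_{n})$ taking over the function that the determining-boundedness property served in \prettyref{thm:ext_B_unique}. Once this extension result is in place, everything else is a routine transcription of the proof of \prettyref{thm:ext_B_unique}, and injectivity of $R_{U,G}$ is already guaranteed since $U$ is a set of uniqueness and $G$ is separating.
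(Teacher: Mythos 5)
Your outline leaves the decisive step unproven: everything hinges on a ``version of \prettyref{prop:ext_B_set_uni} in which the assumption that $G$ determines boundedness is replaced by `$(B_{n})$ fixes the topology in $E$ and $\mathsf{A}^{t}(B_{n})$ is bounded in $Z$'\,'', which you only assert can be ``obtained by combining the arguments'' of \prettyref{prop:ext_B_set_uni} and \prettyref{prop:seq_bound}. That combination is not routine: \prettyref{prop:seq_bound} relies on $Y$ being a Fr\'echet--Schwartz space, whereas here $Y=\f$ is a general lcHs in which $B_{\Fv}$ is merely compact, and \prettyref{prop:ext_B_set_uni} uses the determining-boundedness hypothesis in an essential way to pass from $\sigma(E,G)$-boundedness to boundedness in $E$. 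You correctly identify this as ``the main obstacle'', but you do not resolve it, so the proof as written has a genuine gap exactly where the new difficulty lies.

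The paper avoids the need for any new extension lemma by a reduction that you should be aware of: by \cite[Proposition 7, p.\ 503]{F/J} the LB-space $E'((B_{n})_{n\in\N})=\bigcup_{n}E'(B_{n})$ with $E'(B_{n})=\{\sum_{k}a_{k}e_{k}'\;|\;(a_{k})\in\ell^{1},\,(e_{k}')\subset B_{n}\}$ determines boundedness in $E$. Given $f\in\mathcal{F}\nu_{G}(U,E)_{sb}$, the set $\{f_{e'}\;|\;e'\in B_{n}\}$ is bounded in the Banach space $\Fv$, so for each $(a_{k})\in\ell^{1}$ and $(e_{k}')\subset B_{n}$ the series $\sum_{k}a_{k}f_{e_{k}'}$ converges in $\Fv$ and is a weak extension of $(\sum_{k}a_{k}e_{k}')\circ f$; hence $f\in\mathcal{F}\nu_{E'((B_{n})_{n\in\N})}(U,E)$ and \prettyref{thm:ext_B_unique} applies verbatim with this larger determining-boundedness subspace in place of $G$. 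This is where the sequential boundedness encoded in the subscript $sb$ is actually used, and it replaces the hybrid extension result your argument would otherwise have to establish from scratch.
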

\begin{proof}
Let $f\in\mathcal{F}\nu_{G}(U,E)_{sb}$. Then $\{f_{e'}\;|\;e'\in B_{n}\}$ is bounded in $\Fv$ for each $n\in\N$. 
We deduce for each $n\in\N$, $(a_{k})_{k\in\N}\in\ell^{1}$ and $(e_{k}')_{k\in\N}\subset B_{n}$ that 
$(\sum_{k\in\N}a_{k}e_{k}')\circ f$ admits the extension $\sum_{k\in\N}a_{k}f_{e_{k}'}$ in $\Fv$. 
Due to \cite[Proposition 7, p.\ 503]{F/J} the LB-space 
$E'((B_{n})_{n\in\N}):=\lim\limits_{\substack{\longleftarrow\\n\in\N}}E'(B_{n})$, where 
\[
E'(B_{n}):=\{\sum_{k\in\N}a_{k}e_{k}'\;|\;(a_{k})_{k\in\N}\in\ell^{1},\,(e_{k}')_{k\in\N}\subset B_{n}\}
\]
is endowed with its Banach space topology for $n\in\N$, determines boundedness in $E$. 
Hence we conclude that $f\in \mathcal{F}\nu_{E'((B_{n})_{n\in\N})}(U,E)$, which yields that there 
is $u\in\f\varepsilon E$ with bounded $u(B_{\Fv}^{\circ \f'})\subset E$ 
such that $R_{U,G}(S(u))=f$ by \prettyref{thm:ext_B_unique}.
\end{proof}

As an application we directly obtain the following two corollaries of \prettyref{cor:ext_B_unique_seq_bound} 
since its assumptions are fulfilled by the proof of 
\prettyref{cor:hypo_weighted_ext_unique} and \prettyref{cor:Bloch_ext_unique}, respectively.

\begin{cor}\label{cor:hypo_weighted_ext_unique_seq_bound}
Let $E$ be a Fr\'{e}chet space, $(B_{n})$ fix the topology in $E$ and $G:=\operatorname{span}(\bigcup_{n\in\N} B_{n})$,
$\Omega\subset\R^{d}$ open, $P(\partial)^{\K}$ a hypoelliptic linear partial differential operator, 
$\nu\colon\Omega\to(0,\infty)$ continuous and $U$ a set of uniqueness 
for $(\id_{\K^{\Omega}},\mathcal{C}\nu_{P(\partial)})$.
If $f\colon U\to E$ is a function such that $e'\circ f$ admits an extension 
$f_{e'}\in\mathcal{C}\nu_{P(\partial)}(\Omega)$ for each $e'\in G$ and $\{f_{e'}\;|\;e'\in B_{n}\}$ is 
bounded in $\mathcal{C}\nu_{P(\partial)}(\Omega)$ for each $n\in\N$, 
then there exists a unique extension $F\in\mathcal{C}\nu_{P(\partial)}(\Omega,E)$ of $f$.
\end{cor}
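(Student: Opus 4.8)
The plan is to deduce this statement directly from \prettyref{cor:ext_B_unique_seq_bound}, reusing verbatim the framework set up in the proof of \prettyref{cor:hypo_weighted_ext_unique}. Concretely, I would choose $\f:=(\mathcal{C}^{\infty}_{P(\partial)}(\Omega),\tau_{c})$ and $\fe:=(\mathcal{C}^{\infty}_{P(\partial)}(\Omega,E),\tau_{c})$, take the single continuous weight $\nu$, and use the generator $(T^{E},T^{\K}):=(\id_{E^{\Omega}},\id_{\K^{\Omega}})$ for $(\mathcal{F}\nu,E)$, so that $\Fv=\mathcal{C}\nu_{P(\partial)}(\Omega)$ and $\FvE=\mathcal{C}\nu_{P(\partial)}(\Omega,E)$.

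With these choices I would then check, one by one, that all structural hypotheses of \prettyref{cor:ext_B_unique_seq_bound} are already available from the proof of \prettyref{cor:hypo_weighted_ext_unique}: the spaces $\f$ and $\fe$ are $\varepsilon$-compatible (hence $\varepsilon$-into-compatible) and $(T^{E},T^{\K})$ is a strong, consistent family for $(F,E)$, both by \prettyref{prop:co_top_isomorphism}; the space $\Fv=\mathcal{C}\nu_{P(\partial)}(\Omega)$ is a Banach space by \prettyref{prop:frechet_bierstedt}; its closed unit ball $B_{\Fv}$ is bounded and $\tau_{c}$-closed in the Montel space $\f$, hence $\tau_{c}$-compact; and $\Feps=\FvE$ as linear spaces by \prettyref{prop:mingle-mangle} c). The hypotheses on $E$ and $G$ (namely $E$ a Fr\'echet space, $(B_{n})$ fixing the topology, $G=\operatorname{span}(\bigcup_{n\in\N} B_{n})$) are given, and $U\subset\Omega$ is a set of uniqueness for $(\id_{\K^{\Omega}},\mathcal{C}\nu_{P(\partial)})=(T^{\K},\mathcal{F}\nu)$ by assumption.

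Next I would observe that the hypothesis imposed on $f$ is precisely the statement $f\in\mathcal{F}\nu_{G}(U,E)_{sb}$: the existence, for each $e'\in G$, of an extension $f_{e'}\in\mathcal{C}\nu_{P(\partial)}(\Omega)$ with $T^{\K}(f_{e'})(x)=(e'\circ f)(x)$ on $U$ places $f$ in $\mathcal{F}\nu_{G}(U,E)$, while the boundedness of $\{f_{e'}\mid e'\in B_{n}\}$ in $\Fv$ for every $n\in\N$ is exactly the additional $sb$-condition. Applying \prettyref{cor:ext_B_unique_seq_bound}, the restriction map $R_{U,G}\colon\Feps\to\mathcal{F}\nu_{G}(U,E)_{sb}$ is surjective, so there is $F\in\Feps=\FvE=\mathcal{C}\nu_{P(\partial)}(\Omega,E)$ with $R_{U,G}(F)=f$; since $T^{E}=\id_{E^{\Omega}}$ and $U\subset\Omega$, this means $F(x)=f(x)$ for all $x\in U$, i.e.\ $F$ extends $f$. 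Uniqueness follows from the injectivity of $R_{U,G}$ (established above \prettyref{que:surj_restr_set_unique_banach}), which holds because $U$ is a set of uniqueness and $G$ separates points of the Hausdorff space $E$, the latter being a consequence of $(B_{n})$ fixing the topology.

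There is no substantial analytic obstacle here, since the genuinely nontrivial work (the Banach-space property of $\Fv$, the $\tau_{c}$-compactness of $B_{\Fv}$ in $\f$, and the identity $\Feps=\FvE$) has already been carried out in \prettyref{cor:hypo_weighted_ext_unique}. The only point requiring care is the bookkeeping: I must make sure that every hypothesis of \prettyref{cor:ext_B_unique_seq_bound} is matched exactly by the present data and that the description of $f$ coincides with membership in $\mathcal{F}\nu_{G}(U,E)_{sb}$, so that the application is genuinely immediate rather than merely plausible.
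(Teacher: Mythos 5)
Your proposal is correct and follows exactly the route the paper takes: the paper derives this corollary as an immediate application of \prettyref{cor:ext_B_unique_seq_bound}, noting that all its hypotheses were already verified in the proof of \prettyref{cor:hypo_weighted_ext_unique}, which is precisely the bookkeeping you carry out. Your additional observations (that the hypothesis on $f$ is membership in $\mathcal{F}\nu_{G}(U,E)_{sb}$, and that uniqueness follows from injectivity of $R_{U,G}$) are accurate and consistent with the paper's framework.
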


\begin{cor}\label{cor:Bloch_ext_unique_seq_bound}
Let $E$ be a Fr\'{e}chet space, $(B_{n})$ fix the topology in $E$ and $G:=\operatorname{span}(\bigcup_{n\in\N} B_{n})$, 
$\nu\colon\D\to(0,\infty)$ continuous and $U_{\ast}\subset\D$ have an accumulation point in $\D$. 
If $f\colon \{0\}\cup(\{1\}\times U_{\ast})\to E$ is a function such that there is 
$f_{e'}\in\mathcal{B}\nu(\D)$ for each $e'\in G$ with $f_{e'}(0)=e'(f(0))$ and $f_{e'}'(z)=e'(f(1,z))$ 
for all $z\in U_{\ast}$ and $\{f_{e'}\;|\;e'\in B_{n}\}$ is bounded in $\mathcal{B}\nu(\D)$ for each $n\in\N$, 
then there exists a unique $F\in\mathcal{B}\nu(\D,E)$ with $F(0)=f(0)$ and $(\partial_{\C}^{1})^{E}F(z)=f(1,z)$ 
for all $z\in U_{\ast}$.
\end{cor}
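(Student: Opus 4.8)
The plan is to obtain this as a direct instance of \prettyref{cor:ext_B_unique_seq_bound}, using the concrete setup already prepared in the proof of \prettyref{cor:Bloch_ext_unique}. Accordingly, I would take $F(\D):=(\mathcal{O}(\D),\tau_{c})$ and $F(\D,E):=(\mathcal{O}(\D,E),\tau_{c})$, put $\omega:=\{0\}\cup\{(1,z)\;|\;z\in\D\}$, and use the operator $T^{E}\colon\mathcal{O}(\D,E)\to E^{\omega}$ given by $T^{E}(f)(0):=f(0)$ and $T^{E}(f)(1,z):=(\partial_{\C}^{1})^{E}f(z)$ together with the weight $\nu_{\ast}$ with $\nu_{\ast}(0):=1$ and $\nu_{\ast}(1,z):=\nu(z)$, exactly as defined above \prettyref{cor:Bloch_ext_unique}. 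With these choices $\mathcal{F}\nu_{\ast}(\D)=\mathcal{B}\nu(\D)$ and $\mathcal{F}\nu_{\ast}(\D,E)=\mathcal{B}\nu(\D,E)$, and the generator in question is $(T^{E},T^{\C})$.

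Next I would check, one by one, that the hypotheses of \prettyref{cor:ext_B_unique_seq_bound} are met; these are precisely the verifications carried out in \prettyref{cor:Bloch_ext_unique}. The spaces $F(\D)$ and $F(\D,E)$ are $\varepsilon$-compatible (in particular $\varepsilon$-into-compatible) by \prettyref{prop:co_top_isomorphism} combined with \eqref{eq:holomorphic_coincide_1}, and $(T^{E},T^{\C})$ is a generator for $(\mathcal{F}\nu_{\ast},E)$ which is a strong, consistent family for $(F,E)$ by \prettyref{prop:complex_diff_cons_strong}. By \prettyref{prop:Bloch_Banach} the space $\mathcal{B}\nu(\D)$ is a Banach space, and the estimate \eqref{eq:Bloch} shows that its closed unit ball $B_{\mathcal{B}\nu(\D)}$ is bounded in $F(\D)$; since $B_{\mathcal{B}\nu(\D)}$ is readily seen to be $\tau_{c}$-closed and $(\mathcal{O}(\D),\tau_{c})$ is a Montel space, $B_{\mathcal{B}\nu(\D)}$ is a compact subset of $F(\D)$. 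Finally, because $U_{\ast}$ has an accumulation point in $\D$, the identity theorem shows that $U:=\{0\}\cup\{(1,z)\;|\;z\in U_{\ast}\}$ is a set of uniqueness for $(T^{\C},\mathcal{F}\nu_{\ast})$. The only hypotheses genuinely new relative to \prettyref{cor:Bloch_ext_unique} are that $E$ is a Fr\'echet space and that $(B_{n})$ fixes the topology in $E$ with $G=\operatorname{span}(\bigcup_{n\in\N}B_{n})$, which are assumed.

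It then remains to translate the data of the statement into membership in the sequentially bounded restriction space. The assumption that for each $e'\in G$ there is $f_{e'}\in\mathcal{B}\nu(\D)$ with $f_{e'}(0)=e'(f(0))$ and $f_{e'}'(z)=e'(f(1,z))$ for all $z\in U_{\ast}$ says exactly that $T^{\C}(f_{e'})(x)=(e'\circ f)(x)$ for all $x\in U$, i.e.\ $f\in\mathcal{F}\nu_{\ast,G}(U,E)$; and the extra assumption that $\{f_{e'}\;|\;e'\in B_{n}\}$ is bounded in $\mathcal{B}\nu(\D)$ for each $n$ upgrades this to $f\in\mathcal{F}\nu_{\ast,G}(U,E)_{sb}$. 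Applying \prettyref{cor:ext_B_unique_seq_bound} yields $\widetilde{F}\in\mathcal{F}_{\varepsilon}\nu_{\ast}(\D,E)$ with $R_{U,G}(\widetilde{F})=f$. By \prettyref{prop:mingle-mangle} c), the $\varepsilon$-compatibility of $F(\D)$ and $F(\D,E)$ gives $\mathcal{F}_{\varepsilon}\nu_{\ast}(\D,E)=\mathcal{F}\nu_{\ast}(\D,E)=\mathcal{B}\nu(\D,E)$, so $F:=\widetilde{F}\in\mathcal{B}\nu(\D,E)$; unwinding $R_{U,G}$ through the definition of $T^{E}$ gives $F(0)=T^{E}(F)(0)=f(0)$ and $(\partial_{\C}^{1})^{E}F(z)=T^{E}(F)(1,z)=f(1,z)$ for all $z\in U_{\ast}$. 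Uniqueness is immediate from the injectivity of $R_{U,G}$, which holds because $U$ is a set of uniqueness and $G$ is separating.

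Because every analytic ingredient is already established in the two cited corollaries, I do not expect a genuine obstacle here; the main point requiring care will be the bookkeeping of the two boundary conditions (the value at $0$ and the derivative on $U_{\ast}$) against the two components of the operator $T^{E}$, together with the correct identification of the boundedness condition ``$\{f_{e'}\;|\;e'\in B_{n}\}$ bounded'' with membership in the subscript-$sb$ restriction space $\mathcal{F}\nu_{\ast,G}(U,E)_{sb}$ rather than merely in $\mathcal{F}\nu_{\ast,G}(U,E)$, which is exactly what distinguishes this Fr\'echet-target statement from the semi-Montel and Fr\'echet--Schwartz versions.
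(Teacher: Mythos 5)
Your proposal is correct and follows exactly the paper's route: the paper obtains this corollary as a direct application of \prettyref{cor:ext_B_unique_seq_bound}, noting that all of its hypotheses (the $\varepsilon$-compatibility of $(\mathcal{O}(\D),\tau_{c})$ and $(\mathcal{O}(\D,E),\tau_{c})$, the strong consistent family $(T^{E},T^{\C})$, the compactness of $B_{\mathcal{B}\nu(\D)}$ in $(\mathcal{O}(\D),\tau_{c})$, and the set of uniqueness coming from the identity theorem) were already verified in the proof of \prettyref{cor:Bloch_ext_unique}. Your additional bookkeeping — identifying the hypotheses with membership in $\mathcal{F}\nu_{\ast,G}(U,E)_{sb}$ and passing from $\mathcal{F}_{\varepsilon}\nu_{\ast}(\D,E)$ to $\mathcal{B}\nu(\D,E)$ via \prettyref{prop:mingle-mangle} c) — is exactly what the paper leaves implicit.
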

\subsection{Extension from thick sets}\label{sub:thick}
In order to obtain an affirmative answer to \prettyref{que:surj_restr_set_unique} 
for general separating subspaces of $E'$ we have to restrict to the spaces $\FV$ from \prettyref{def:weighted_space}
and a certain class of sets of uniqueness.

\begin{defn}[{fix the topology}]\label{def:fix_top_1}
Let $\FV$ be a $\dom$-space. We say that $U\subset\bigcup_{m\in M}(\{m\}\times\omega_{m})$ \emph{\gls{fix_topo}} 
in $\FV$ if for every $j\in J$ and $m\in M$ there are $i\in J$, $k\in M$ and $C>0$ such that 
\[
|f|_{j,m}\leq C \sup_{\substack{x\in\omega_{k}\\(k,x)\in U}}|T^{\K}_{k}(f)(x)|\nu_{i,k}(x),\quad
f\in \FV .
\]
\end{defn}

In particular, $U$ is a set of uniqueness if it fixes the topology. The present definition of fixing 
the topology is a generalisation of \cite[Definition 13, p.\ 234]{B/F/J}. 
Sets that fix the topology appear under several different notions. 
Rubel and Shields call them dominating in \cite[4.10 Definition, p.\ 254]{rubelshields1966} in the context 
of bounded holomorphic functions. 
In the context of the space of holomorphic functions with the topology of compact convergence studied 
by Grosse-Erdmann \cite[p.\ 401]{grosse-erdmann2004} they are said to determine locally uniform convergence. 
Ehrenpreis \cite[p.\ 3,4,13]{ehrenpreis1970} (cf.\ \cite[Definition 3.2, p.\ 166]{schneider1974}) 
refers to them as sufficient sets when he considers inductive limits of weighted spaces of entire resp.\ 
holomorphic functions, including the case of Banach spaces. 
In the case of Banach spaces sufficient sets coincide with weakly sufficient sets defined 
by Schneider \cite[Definition 2.1, p.\ 163]{schneider1974} (see e.g.\ \cite[\S7, 1), p.\ 547]{korobenik1987}) 
and these notions are extended beyond spaces of holomorphic functions 
by Korobe{\u{\i}}nik \cite[p.\ 531]{korobenik1987}.
Seip \cite[p.\ 93]{seip1992b} uses the term sampling sets in the context of weighted Banach spaces 
of holomorphic functions whereas Beurling uses the term balayage in \cite[p.\ 341]{beurling1989} 
and \cite[Definition, p.\ 343]{beurling1989}.
Leibowitz \cite[Exercise 4.1.4, p.\ 53]{leibowitz1970}, Stout \cite[7.1 Definition, p.\ 36]{stout1971} 
and Globevnik \cite[p.\ 291--292]{globevnik1979} 
call them boundaries in the context of subalgebras of the algebra $\mathcal{C}(\Omega,\C)$ 
of complex-valued continuous functions on a compact Hausdorff space $\Omega$ with sup-norm. 
Fixing the topology is also connected to the notion of frames used by Bonet et al.\ in \cite{bonet2017}. 
Let us set
\begin{equation}\label{eq:frame} 
\ell\mathcal{V}(U,E):=\{f\colon U\to E\;|\;\forall\;j\in J,m\in M,\alpha\in\mathfrak{A}:\;\|f\|_{j,m,\alpha}<\infty\}
\end{equation}
with
\[
\|f\|_{j,m,\alpha}:=\sup_{\substack{x\in\omega_{m}\\(m,x)\in U}}p_{\alpha}(f(m,x))\nu_{j,m}(x)
\]
for an lcHs $E$ and a set $U$ which fixes the topology in $\FV$. If $M$ is a singleton, 
$\omega_{m}=\Omega=U$, then $\ell\mathcal{V}(U,E)$ coincides with the space defined right
above \prettyref{ex:sequence_vanish_infty}.
If $U$ is countable, then the inclusion $\ell\mathcal{V}(U)\hookrightarrow\K^{U}$ continuous 
where $\K^{U}$ is equipped with the topology of pointwise convergence and $\ell\mathcal{V}(U)$ contains 
the space of sequences (on $U$) with compact support as a linear subspace, 
then $(T^{\K}_{k,x})_{(k,x)\in U}$ is an $\ell\mathcal{V}(U)$-frame in the sense of 
\cite[Definition 2.1, p.\ 3]{bonet2017}.

\begin{defn}[{\gls{lb_rest_space}}]
Let $\FV$ be a $\dom$-space, $U$ fix the topology in $\FV$ and $G\subset E'$ a separating subspace. We set 
\[
N_{U,i,k}(f):=\{f(k,x)\nu_{i,k}(x)\;|\;x\in\omega_{k},\,(k,x)\in U\}
\]
for $i\in J$, $k\in M$ and $f\in\mathcal{FV}_{G}(U,E)$ and 
\begin{align*}
   \gls{FV_GUE_lb}
:=&\{f\in\mathcal{FV}_{G}(U,E)\;|\;\forall\;i\in J,\,k\in M:\;N_{U,i,k}(f)\;\text{bounded in}\; E\}\\
 =&\mathcal{FV}_{G}(U,E)\cap\ell\mathcal{V}(U,E).
\end{align*}
\end{defn}

Consider a set $U$ which fixes the topology in $\FV$, a separating subspace $G\subset E'$ and 
a strong, consistent family $(T^{E}_{m},T^{\K}_{m})_{m\in M}$ for $(\mathcal{FV},E)$.
For $u\in \FV\varepsilon E$ set $f:=S(u)\in\FVE$ by \prettyref{thm:linearisation}. Then 
we have $R_{U,G}(f)\in \mathcal{FV}_{G}(U,E)$ with $f:=S(u)$ 
by \prettyref{rem:R_well-defined} and for $i\in J$ and $k\in M$
\[
\sup_{y\in N_{U,i,k}(R_{U,G}(f))}p_{\alpha}(y)
=\sup_{\substack{x\in\omega_{k}\\(k,x)\in U}}p_{\alpha}(T^{E}_{k}(f)(x))\nu_{i,k}(x)
\leq |f|_{i,k,\alpha}<\infty
\]
for all $\alpha\in\mathfrak{A}$, implying the boundedness of $N_{U,i,k}(R_{U,G}(f))$ in $E$. 
Thus $R_{U,G}(f)\in\mathcal{FV}_{G}(U,E)_{lb}$ and the injective linear map
\[
R_{U,G}\colon S(\FV\varepsilon E)\to \mathcal{FV}_{G}(U,E)_{lb},\;f\mapsto (T^{E}_{m}(f)(x))_{(m,x)\in U},
\]
is well-defined. 

\begin{que}
Let $G\subset E'$ be a separating subspace, 
$(T^{E}_{m},T^{\K}_{m})_{m\in M}$ a strong, consistent generator for $(\mathcal{FV},E)$
and $U$ fix the topology in $\FV$.
When is the injective restriction map 
\[
R_{U,G}\colon S(\FV\varepsilon E)\to \mathcal{FV}_{G}(U,E)_{lb},\;f\mapsto (T^{E}_{m}(f)(x))_{(m,x)\in U},
\]
surjective?
\end{que}

If $G\subset E'$ determines boundedness and $U$ fixes the topology in $\FV$, then the preceding question 
and \prettyref{que:surj_restr_set_unique} coincide.

\begin{rem}\label{rem:rest_spaces_coincide}
Let $G\subset E'$ determine boundedness, 
$(T^{E}_{m},T^{\K}_{m})_{m\in M}$ a strong, consistent generator for $(\mathcal{FV},E)$
and $U$ fix the topology in $\FV$. Then 
\[
\mathcal{FV}_{G}(U,E)_{lb}=\mathcal{FV}_{G}(U,E).
\]
\end{rem}
\begin{proof}
We only need to show that the inclusion `$\supset$' holds. Let $f\in\mathcal{FV}_{G}(U,E)$. Then there is $f_{e'}\in\FV$ with 
$T^{\K}_{m}(f_{e'})(x)=(e'\circ f)(m,x)$ for all $(m,x)\in U$ and 
\[
\sup_{y\in N_{U,i,k}(f)}|e'(y)|=\sup_{\substack{x\in\omega_{k}\\(k,x)\in U}}|(e'\circ f)(k,x)|\nu_{i,k}(x)\leq |f_{e'}|_{i,k}<\infty
\]
for each $e'\in G$, $i\in J$ and $k\in M$. Since $G\subset E'$ determines boundedness, this means that $N_{U,i,k}(f)$ is bounded in $E$ 
and hence $f\in\mathcal{FV}_{G}(U,E)_{lb}$.
\end{proof}

\addtocontents{toc}{\SkipTocEntry}
\subsection*{\texorpdfstring{$\FV$}{FV(Omega)} arbitrary and \texorpdfstring{$E$}{E} a semi-Montel space}

\begin{defn}[{generalised Schwartz space}]\label{def:general_schwartz}
We call an lcHs $E$ a \emph{\gls{gen_Schwartz_space}} if every bounded set in $E$ is already precompact.
\end{defn}

In particular, semi-Montel spaces and Schwartz spaces are generalised Schwartz spaces 
by \cite[10.4.3 Corollary, p.\ 202]{Jarchow}. Conversely, a generalised Schwartz space is a Schwartz space 
if it is quasi-normable by \cite[10.7.3 Corollary, p.\ 215]{Jarchow}. Moreover, 
looking at the proof of \prettyref{lem:FVE_rel_comp} b), we see that this lemma not only holds for 
semi-Montel or Schwartz spaces but for all generalised Schwartz spaces. 

\begin{prop}\label{prop:ext_E_semi_M}
Let $E$ be an lcHs,
$\FV$ a $\dom$-space and $U$ fix the topology in $\FV$.
Then $\mathscr{R}_{f}\in L(E_{b}',\FV)$ 
and $\mathscr{R}_{f}(B_{\alpha}^{\circ})$ is bounded in $\FV$  
for every $f\in\mathcal{FV}_{E'}(U,E)_{lb}$ and $\alpha\in\mathfrak{A}$ 
where $B_{\alpha}:=\{x\in E\;|\;p_{\alpha}(x)<1\}$ and $\mathscr{R}_{f}$ 
is the map from \prettyref{rem:R_f}. In addition, if $E$ is a generalised Schwartz space, 
then $\mathscr{R}_{f}\in L(E_{\gamma}',\FV)$ 
and $\mathscr{R}_{f}(B_{\alpha}^{\circ})$ is relatively compact in $\FV$.
\end{prop}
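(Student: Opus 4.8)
The plan is to reduce everything to a single estimate that bounds the $\FV$-seminorms of $\mathscr{R}_{f}(e')$ by the support function of $e'$ over the sets $N_{U,i,k}(f)$, and then to extract continuity and (relative) compactness from it exactly as in \prettyref{prop:ext_F_semi_M} and the proof of \prettyref{lem:strong_is_weak}. Throughout I fix $f\in\mathcal{FV}_{E'}(U,E)_{lb}$ and recall from \prettyref{rem:R_f} that $\mathscr{R}_{f}\colon E'\to\FV$, $e'\mapsto f_{e'}$, is a well-defined linear map (here $G=E'$, so $f_{e'}$ exists for every $e'\in E'$).

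First I would prove the key estimate: given $e'\in E'$, $j\in J$ and $m\in M$, there exist $i\in J$, $k\in M$ and $C>0$ (independent of $e'$) with
\begin{equation*}
|\mathscr{R}_{f}(e')|_{j,m}\leq C\sup_{y\in N_{U,i,k}(f)}|e'(y)|. \tag{$\ast$}
\end{equation*}
To obtain this I would invoke \prettyref{def:fix_top_1}: since $U$ fixes the topology in $\FV$, there are $i,k,C$ with $|g|_{j,m}\leq C\sup_{(k,x)\in U}|T^{\K}_{k}(g)(x)|\nu_{i,k}(x)$ for all $g\in\FV$. Applying this to $g=f_{e'}$ and using the defining property $T^{\K}_{k}(f_{e'})(x)=(e'\circ f)(k,x)=e'(f(k,x))$ for $(k,x)\in U$, together with $\nu_{i,k}(x)\geq 0$ so that the weight may be absorbed into the linear functional, yields $(\ast)$ because $N_{U,i,k}(f)=\{f(k,x)\nu_{i,k}(x)\mid (k,x)\in U\}$. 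This step is essentially the whole content of the proof; the rest is duality bookkeeping.

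For the first assertion I would argue as follows. Since $f\in\mathcal{FV}_{E'}(U,E)_{lb}$, each $N_{U,i,k}(f)$ is bounded in $E$, so the right-hand side of $(\ast)$ is a continuous seminorm on $E_{b}'$, whence $\mathscr{R}_{f}\in L(E_{b}',\FV)$. For the boundedness of $\mathscr{R}_{f}(B_{\alpha}^{\circ})$ I would use that $|e'(y)|\leq p_{\alpha}(y)$ for every $e'\in B_{\alpha}^{\circ}$ and $y\in E$, so that $(\ast)$ gives $\sup_{e'\in B_{\alpha}^{\circ}}|\mathscr{R}_{f}(e')|_{j,m}\leq C\sup_{y\in N_{U,i,k}(f)}p_{\alpha}(y)<\infty$ for all $j\in J$, $m\in M$, the finiteness being exactly the boundedness of $N_{U,i,k}(f)$; as the system $(|\cdot|_{j,m})$ is directed this means $\mathscr{R}_{f}(B_{\alpha}^{\circ})$ is bounded in $\FV$.

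For the addendum, assuming $E$ is a generalised Schwartz space (\prettyref{def:general_schwartz}), each bounded $N_{U,i,k}(f)$ is precompact, so the right-hand side of $(\ast)$ is a continuous seminorm on $E_{\gamma}'$ and thus $\mathscr{R}_{f}\in L(E_{\gamma}',\FV)$. Finally I would note that $B_{\alpha}^{\circ}$, being the polar of the zero neighbourhood $B_{\alpha}$, is equicontinuous and hence $\sigma(E',E)$-compact by the Alao\u{g}lu--Bourbaki theorem; since $\sigma(E',E)$ and $\gamma(E',E)$ agree on equicontinuous subsets of $E'$, the set $B_{\alpha}^{\circ}$ is compact in $E_{\gamma}'$, and the continuity of $\mathscr{R}_{f}\colon E_{\gamma}'\to\FV$ forces $\mathscr{R}_{f}(B_{\alpha}^{\circ})$ to be compact, in particular relatively compact, in $\FV$. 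I expect the main obstacle to be purely in pinning down the key estimate $(\ast)$ correctly---matching the indices $(i,k)$ produced by fixing the topology with the weighted point sets $N_{U,i,k}(f)$ and exploiting that the weights are scalars that pass through $e'$---while the passage from $E_{b}'$ to $E_{\gamma}'$ and the resulting relative compactness are standard once the generalised Schwartz hypothesis is in place.
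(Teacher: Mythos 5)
Your proposal is correct and follows essentially the same route as the paper: the same key estimate $|\mathscr{R}_{f}(e')|_{j,m}\leq C\sup_{y\in N_{U,i,k}(f)}|e'(y)|$ obtained from \prettyref{def:fix_top_1} and the defining property of $f_{e'}$, followed by the same deduction of continuity on $E_{b}'$ (resp.\ $E_{\gamma}'$ via precompactness of the bounded sets $N_{U,i,k}(f)$ in a generalised Schwartz space) and the Alao\u{g}lu--Bourbaki argument for the relative compactness of $\mathscr{R}_{f}(B_{\alpha}^{\circ})$. Your extra justification that $\sigma(E',E)$ and $\gamma(E',E)$ agree on the equicontinuous set $B_{\alpha}^{\circ}$ is a correct elaboration of what the paper leaves implicit.
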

\begin{proof}
Let $f\in \mathcal{FV}_{E'}(U,E)_{lb}$, $j\in J$ and $m\in M$. Then there are $i\in J$, $k\in M$ and $C>0$ such that 
for every $e'\in E'$
\begin{align*}
|\mathscr{R}_{f}(e')|_{j,m}&=|f_{e'}|_{j,m}
\leq C\sup_{\substack{x\in\omega_{k}\\(k,x)\in U}}|T^{\K}_{k}(f_{e'})(x)|\nu_{i,k}(x)\\
&=C \sup_{\substack{x\in\omega_{k}\\(k,x)\in U}}|(e'\circ f)(k,x)|\nu_{i,k}(x)
=C\sup_{y\in N_{U,i,k}(f)}|e'(y)|,
\end{align*}
which proves the first part because $N_{U,i,k}(f)$ is bounded in $E$. Let us consider the second part.
The bounded set $N_{U,i,k}(f)$ is already precompact in $E$ because $E$ is a generalised Schwartz space. 
Therefore we have $\mathscr{R}_{f}\in L(E_{\gamma}',\FV)$. The polar $B_{\alpha}^{\circ}$ is relatively compact 
in $E_{\gamma}'$ for every $\alpha\in\mathfrak{A}$ by the Alao\u{g}lu--Bourbaki theorem 
and thus $\mathscr{R}_{f}(B_{\alpha}^{\circ})$ in $\FV$ as well.
\end{proof}

\begin{thm}\label{thm:fix_topo_E_semi_M}
Let $E$ be a semi-Montel space, 
$(T^{E}_{m},T^{\K}_{m})_{m\in M}$ a strong, consistent generator for $(\mathcal{FV},E)$
and $U$ fix the topology in $\FV$. 
Then the restriction map $R_{U,E'}\colon S(\FV\varepsilon E)\to \mathcal{FV}_{E'}(U,E)_{lb}$ is surjective.
\end{thm}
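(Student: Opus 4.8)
The plan is to adapt the surjectivity part of the proof of \prettyref{thm:full_linearisation} under its \prettyref{cond:surjectivity_linearisation} e), interchanging the roles of $\FV$ and $E$: here the semi-Montel hypothesis sits on $E$ rather than on $\FV$, and the role formerly played by $R_{f}$, $N_{j,m}(f)$ and $\FVE$ is taken over by the map $\mathscr{R}_{f}$ from \prettyref{rem:R_f}, the sets $N_{U,i,k}(f)$ and the restriction space $\mathcal{FV}_{E'}(U,E)_{lb}$. Fix $f\in\mathcal{FV}_{E'}(U,E)_{lb}$ and consider its dual map $\mathscr{R}_{f}^{t}\colon\FV'\to E'^{\star}$, $\mathscr{R}_{f}^{t}(y)(e'):=y(f_{e'})$. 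As in \prettyref{thm:full_linearisation} I would equip $\mathcal{J}(E)$ with the seminorms $p_{B_{\alpha}^{\circ}}(\mathcal{J}(x))=p_{\alpha}(x)$ and aim to show $\mathcal{J}^{-1}\circ\mathscr{R}_{f}^{t}\in L(\FV_{\kappa}',E)=\FV\varepsilon E$; the preimage of $f$ under $R_{U,E'}$ will then be $S(u)$ with $u:=\mathcal{J}^{-1}\circ\mathscr{R}_{f}^{t}$.

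The first and decisive step is to prove $\mathscr{R}_{f}^{t}(y)\in\mathcal{J}(E)$ for every $y\in\FV'$. Given such a $y$, choose $j\in J$, $m\in M$ and $C>0$ with $|y(g)|\le C|g|_{j,m}$ for $g\in\FV$. Since $U$ fixes the topology in $\FV$ (see \prettyref{def:fix_top_1}) there are $i\in J$, $k\in M$ and $C'>0$ such that, applying the defining relation $T^{\K}_{k}(f_{e'})(x)=(e'\circ f)(k,x)$ of $\mathcal{FV}_{E'}(U,E)$,
\[
|\mathscr{R}_{f}(e')|_{j,m}=|f_{e'}|_{j,m}\le C'\sup_{\substack{x\in\omega_{k}\\(k,x)\in U}}|(e'\circ f)(k,x)|\nu_{i,k}(x)=C'\sup_{z\in N_{U,i,k}(f)}|e'(z)|,
\]
so that
\[
|\mathscr{R}_{f}^{t}(y)(e')|=|y(f_{e'})|\le C|\mathscr{R}_{f}(e')|_{j,m}\le CC'\sup_{z\in K}|e'(z)|,\quad e'\in E',
\]
with $K:=\oacx(N_{U,i,k}(f))$. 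As $f\in\mathcal{FV}_{E'}(U,E)_{lb}$, the set $N_{U,i,k}(f)$ is bounded in the semi-Montel space $E$, hence $K$ is absolutely convex and compact by \cite[6.2.1 Proposition, p.\ 103]{Jarchow} and \cite[6.7.1 Proposition, p.\ 112]{Jarchow}; the Mackey--Arens theorem then yields $\mathscr{R}_{f}^{t}(y)\in(E'_{\tau})'=\mathcal{J}(E)$.

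For continuity I would use that every semi-Montel space is a generalised Schwartz space, so the second part of \prettyref{prop:ext_E_semi_M} gives that $K_{\alpha}:=\overline{\mathscr{R}_{f}(B_{\alpha}^{\circ})}$ is absolutely convex and compact in $\FV$ for each $\alpha\in\mathfrak{A}$; exactly as in \prettyref{thm:full_linearisation} the estimate
\[
p_{B_{\alpha}^{\circ}}\bigl(\mathscr{R}_{f}^{t}(y)\bigr)=\sup_{e'\in B_{\alpha}^{\circ}}|y(f_{e'})|=\sup_{x\in\mathscr{R}_{f}(B_{\alpha}^{\circ})}|y(x)|\le\sup_{x\in K_{\alpha}}|y(x)|,\quad y\in\FV',
\]
shows $\mathscr{R}_{f}^{t}\in L(\FV_{\kappa}',\mathcal{J}(E))$ and hence $u:=\mathcal{J}^{-1}\circ\mathscr{R}_{f}^{t}\in\FV\varepsilon E$. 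It remains to verify $R_{U,E'}(S(u))=f$: for $(m,x)\in U$ and $e'\in E'$ the defining identity of $f_{e'}$ gives
\[
\mathscr{R}_{f}^{t}(T^{\K}_{m,x})(e')=T^{\K}_{m,x}(f_{e'})=T^{\K}_{m}(f_{e'})(x)=(e'\circ f)(m,x)=\mathcal{J}(f(m,x))(e'),
\]
so $\mathscr{R}_{f}^{t}(T^{\K}_{m,x})=\mathcal{J}(f(m,x))$, and by consistency $T^{E}_{m}(S(u))(x)=u(T^{\K}_{m,x})=\mathcal{J}^{-1}(\mathscr{R}_{f}^{t}(T^{\K}_{m,x}))=f(m,x)$, i.e.\ $R_{U,E'}(S(u))=f$. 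The main obstacle is the first step, namely manufacturing for an arbitrary $y\in\FV'$ a single compact set $K\subset E$ dominating $\mathscr{R}_{f}^{t}(y)$; this is precisely where the semi-Montel hypothesis on $E$ (turning the bounded set $N_{U,i,k}(f)$ into a relatively compact one) and the hypothesis that $U$ fixes the topology (providing the estimate above via \prettyref{prop:ext_E_semi_M}) are jointly used. Once $\mathscr{R}_{f}^{t}$ is known to land in $\mathcal{J}(E)$, the continuity and the final verification are routine transcriptions of \prettyref{thm:full_linearisation}.
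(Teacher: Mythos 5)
Your proposal is correct and follows essentially the same route as the paper's own proof: both establish $\mathscr{R}_{f}^{t}(y)\in\mathcal{J}(E)$ by combining the fixing-the-topology estimate (via \prettyref{prop:ext_E_semi_M}) with the compactness of $\oacx(N_{U,i,k}(f))$ in the semi-Montel space $E$ and the Mackey--Arens theorem, then obtain $\mathcal{J}^{-1}\circ\mathscr{R}_{f}^{t}\in\FV\varepsilon E$ from the continuity estimate of \prettyref{thm:ext_F_semi_M}, and finish by consistency. The only cosmetic difference is that the paper invokes $(E'_{\kappa})'=\mathcal{J}(E)$ where you write $(E'_{\tau})'=\mathcal{J}(E)$; both are valid here since you produce a genuinely compact absolutely convex set.
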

\begin{proof}
Let $f\in \mathcal{FV}_{E'}(\Omega,E)_{lb}$ and $e'\in E'$. 
For every $f'\in \FV'$ there are $j\in J$, $m\in M$ and $C_{0}>0$ with
\[
|\mathscr{R}_{f}^{t}(f')(e')|=|f'(f_{e'})|\leq C_{0} |f_{e'}|_{j,m}.
\]
By the proof of \prettyref{prop:ext_E_semi_M} there are $i\in J$, $k\in M$ and $C>0$ such that
\[
     |\mathscr{R}_{f}^{t}(f')(e')|
\leq C_{0}C\sup_{y\in N_{U,i,k}(f)}|e'(y)|
\leq C_{0}C\sup_{y\in \oacx(N_{U,i,k}(f))}|e'(y)|.
\]
The set $\oacx(N_{U,i,k}(f))$ is absolutely convex and compact by \cite[6.2.1 Proposition, p.\ 103]{Jarchow} 
and \cite[6.7.1 Proposition, p.\ 112]{Jarchow} because $E$ is a semi-Montel space. 
Therefore $\mathscr{R}_{f}^{t}(f')\in (E'_{\kappa})'=\mathcal{J}(E)$ by the Mackey--Arens theorem.
As in \prettyref{thm:ext_F_semi_M} we obtain $\mathcal{J}^{-1}\circ\mathscr{R}_{f}^{t}\in \FV\varepsilon E$ 
by \eqref{eq1:ext_F_semi_M}, \eqref{eq2:ext_F_semi_M} and \prettyref{prop:ext_E_semi_M}. 
Setting $F:=S(\mathcal{J}^{-1}\circ\mathscr{R}_{f}^{t})$, 
we conclude $T^{E}_{m}(F)(x)=f(m,x)$ for all $(m,x)\in U$ by \eqref{eq3:ext_F_semi_M} and so $R_{U,E'}(F)=f$.
\end{proof}

\begin{rem}
Let $E$ be a Fr\'{e}chet space with increasing system of seminorms 
$(p_{\alpha_{n}})_{n\in\N}$, $B_{n}:=B_{\alpha_{n}}^{\circ}$ where 
$B_{\alpha_{n}}:=\{x\in E\;|\;p_{\alpha_{n}}(x)<1\}$, 
$(T^{E}_{m},T^{\K}_{m})_{m\in M}$ a strong, consistent generator for $(\mathcal{FV},E)$
and $U$ a set of uniqueness for $(T^{\K}_{m},\mathcal{FV})_{m\in M}$. 
If $U$ fixes the topology of $\FV$,
then $\mathcal{FV}_{E'}(U,E)_{sb}=\mathcal{FV}_{E'}(U,E)$ by \prettyref{rem:rest_spaces_coincide} and \prettyref{prop:ext_E_semi_M}. Hence \prettyref{thm:fix_topo_E_semi_M} answers 
\prettyref{que:surjective_sb_rest_space} if $E$ is a Fr\'echet--Montel space. 
\end{rem}

Our first application of \prettyref{thm:fix_topo_E_semi_M} concerns the space $\mathcal{C}_{bu}(\Omega,E)$
of bounded uniformly continuous functions from a metric space $\Omega$ to an lcHs $E$ 
from \prettyref{ex:uniformly_cont}.

\begin{cor}\label{cor:uniformly_cont}
Let $\Omega$ be a metric space, $U\subset\Omega$ a dense subset and $E$ a semi-Montel space.
If $f\colon U\to E$ is a function such that $e'\circ f$ admits an extension $f_{e'}\in\mathcal{C}_{bu}(\Omega)$ 
for each $e'\in E'$, then there is a unique extension $F\in\mathcal{C}_{bu}(\Omega,E)$ of $f$. In particular, 
\[
\mathcal{C}_{bu}(\Omega,E)=\{f\colon\Omega\to E\;|\;\forall\;e'\in E':\;e'\circ f\in\mathcal{C}_{bu}(\Omega)\}.
\]
\end{cor}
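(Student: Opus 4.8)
The plan is to read off \prettyref{cor:uniformly_cont} as a direct instance of the thick-set extension theorem \prettyref{thm:fix_topo_E_semi_M}, using the $\dom$-space structure of $\mathcal{C}_{bu}(\Omega,E)$ recorded in \prettyref{ex:uniformly_cont}. There one takes $J=M=\{1\}$, $\omega:=\Omega$, $T^{E}:=\id_{E^{\Omega}}$, the single weight $\nu\equiv 1$ and $\operatorname{AP}(\Omega,E):=\mathcal{C}_{bu}(\Omega,E)$, so that $|f|_{\alpha}=\sup_{x\in\Omega}p_{\alpha}(f(x))$. From \prettyref{ex:uniformly_cont} I would recall that the generator $(\id_{E^{\Omega}},\id_{\K^{\Omega}})$ is strong and consistent for $(\mathcal{C}_{bu},E)$ and that $\mathcal{C}_{bu}(\Omega)$ and $\mathcal{C}_{bu}(\Omega,E)$ are $\varepsilon$-compatible because $E$ is a semi-Montel space; in particular they are $\varepsilon$-into-compatible and $S$ maps $\mathcal{C}_{bu}(\Omega)\varepsilon E$ into $\mathcal{C}_{bu}(\Omega,E)$.

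First I would verify that the dense set $U$ fixes the topology in $\mathcal{C}_{bu}(\Omega)$ in the sense of \prettyref{def:fix_top_1}. Since every $f\in\mathcal{C}_{bu}(\Omega)$ is continuous and $U$ is dense, each value $f(x)$ with $x\in\Omega$ is a limit of values $f(y)$ with $y\in U$, whence $\sup_{x\in\Omega}|f(x)|=\sup_{x\in U}|f(x)|$. With $T^{\K}=\id_{\K^{\Omega}}$ and $\nu\equiv 1$ this is precisely the defining inequality of \prettyref{def:fix_top_1} with constant $C=1$, so $U$ fixes the topology and is in particular a set of uniqueness.

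The remaining and main point is to check that the given $f$ lies in the restriction space $\mathcal{FV}_{E'}(U,E)_{lb}$ with $\FV=\mathcal{C}_{bu}(\Omega)$. By hypothesis, for every $e'\in E'$ there is $f_{e'}\in\mathcal{C}_{bu}(\Omega)$ with $f_{e'}(x)=e'(f(x))$ for all $x\in U$, so $f\in\mathcal{FV}_{E'}(U,E)$; the one step that is not immediate is the $lb$-refinement, namely that $N_{U,1,1}(f)=f(U)$ is bounded in $E$. Here I would argue that for each $e'\in E'$ the set $\{e'(f(x)):x\in U\}=\{f_{e'}(x):x\in U\}$ is bounded, since $f_{e'}$ is bounded; thus $f(U)$ is weakly bounded and hence bounded by \cite[Mackey's theorem 23.15, p.\ 268]{meisevogt1997}. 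With this, \prettyref{thm:fix_topo_E_semi_M} applies and produces $F\in S(\mathcal{C}_{bu}(\Omega)\varepsilon E)\subset\mathcal{C}_{bu}(\Omega,E)$ with $R_{U,E'}(F)=f$; as $T^{E}=\id$ this means $F_{\mid U}=f$, i.e.\ $F$ is the desired extension. Uniqueness is immediate because two functions in $\mathcal{C}_{bu}(\Omega,E)$ agreeing on the dense set $U$ coincide ($E$ being Hausdorff), or alternatively by the injectivity in \prettyref{prop:injectivity}.

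For the displayed weak-strong principle the inclusion ``$\subseteq$'' follows from the strength of the generator: for $f\in\mathcal{C}_{bu}(\Omega,E)$ and $e'\in E'$ one has $e'\circ f\in\mathcal{C}_{bu}(\Omega)$. For ``$\supseteq$'' I would apply the first part of the statement with $U:=\Omega$, which is dense in itself, taking $f_{e'}:=e'\circ f$ as the (trivial) extension; this yields $F\in\mathcal{C}_{bu}(\Omega,E)$ with $F=F_{\mid\Omega}=f$, so $f\in\mathcal{C}_{bu}(\Omega,E)$. The only genuinely non-routine step in the whole argument is the passage from weak to strong boundedness of $f(U)$ via Mackey's theorem.
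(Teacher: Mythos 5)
Your proposal is correct and follows essentially the same route as the paper: both reduce the statement to \prettyref{thm:fix_topo_E_semi_M} via the strong, consistent generator and $\varepsilon$-compatibility from \prettyref{ex:uniformly_cont}, with the dense set $U$ fixing the topology. Your direct Mackey-theorem argument for the boundedness of $f(U)$ is exactly the content of \prettyref{rem:rest_spaces_coincide} specialised to $G=E'$, and your derivation of the displayed identity is the argument of \prettyref{prop:weak_strong_principle}, so nothing is genuinely different.
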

\begin{proof}
$(\id_{E^{\Omega}},\id_{\K^{\Omega}})$ is a strong, consistent generator for $(\mathcal{C}_{bu},E)$ and 
we have $\mathcal{C}_{bu}(\Omega)\varepsilon E\cong \mathcal{C}_{bu}(\Omega,E)$ via $S$ by \prettyref{ex:uniformly_cont}.
Due to \prettyref{thm:fix_topo_E_semi_M}, \prettyref{prop:injectivity} and \prettyref{rem:rest_spaces_coincide} 
with $G=E'$ the extension $F$ exists and is unique because the dense set $U\subset\Omega$ fixes the topology 
in $\mathcal{C}_{bu}(\Omega)$. The rest follows from \prettyref{prop:weak_strong_principle}. 
\end{proof}

Next, we consider the space $\mathcal{A}(\overline{\Omega},E)$ of continuous functions
from $\overline{\Omega}$ to an lcHs $E$ over $\C$ which are holomorphic on an open and bounded set 
$\Omega\subset\C$ from \prettyref{ex:disc_algebra}.

\begin{cor}\label{cor:disc_algebra}
Let $\Omega\subset\C$ be open and bounded, $U\subset\overline{\Omega}$ fix the topology 
in $\mathcal{A}(\overline{\Omega})$ and $E$ a semi-Montel space over $\C$. 
If $f\colon U\to E$ is a function such that $e'\circ f$ admits an extension 
$f_{e'}\in\mathcal{A}(\overline{\Omega})$ for each $e'\in E'$, 
then there is a unique extension $F\in\mathcal{A}(\overline{\Omega},E)$ of $f$. In particular, 
\[
 \mathcal{A}(\overline{\Omega},E)
=\{f\colon\overline{\Omega}\to E\;|\;\forall\;e'\in E':\;e'\circ f\in\mathcal{A}(\overline{\Omega})\}.
\]
\end{cor}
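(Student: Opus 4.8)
The plan is to read this corollary as a direct instance of the thick-set extension machinery of \prettyref{sub:thick}, specialised to the disc-algebra-type space $\mathcal{A}(\overline{\Omega},E)$ and a semi-Montel codomain. This space is exactly the $\dom$-space $\mathcal{FV}(\overline{\Omega},E)$ from \prettyref{ex:disc_algebra}, with $J=M=\{1\}$, $\omega_{1}=\overline{\Omega}$, $\nu_{1,1}=1$, $T^{E}_{1}=\id_{E^{\overline{\Omega}}}$ and additional property space given by the functions holomorphic on $\Omega$. First I would recall from that example that the generator $(\id_{E^{\overline{\Omega}}},\id_{\C^{\overline{\Omega}}})$ is strong and consistent, so that all the hypotheses about the interplay of the defining operators with the map $S$ are already in place and need not be re-derived.

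Next I would verify the two completeness/duality inputs required to feed the abstract theorem. Since $E$ is semi-Montel, it is quasi-complete and therefore has metric ccp by the diagram of implications recorded in \prettyref{chap:notation}; hence \prettyref{ex:disc_algebra} already gives that $\mathcal{A}(\overline{\Omega})$ and $\mathcal{A}(\overline{\Omega},E)$ are $\varepsilon$-compatible via $S$, and in particular $\varepsilon$-into-compatible. Moreover $G:=E'$ determines boundedness by Mackey's theorem. As $U$ fixes the topology in $\mathcal{A}(\overline{\Omega})$ by hypothesis and $E$ is semi-Montel, \prettyref{thm:fix_topo_E_semi_M} then yields that the restriction map
\[
R_{U,E'}\colon S(\mathcal{A}(\overline{\Omega})\varepsilon E)\to\mathcal{A}(\overline{\Omega})_{E'}(U,E)_{lb}
\]
is surjective, while \prettyref{prop:injectivity} (applied with the set of uniqueness $U$, noting that fixing the topology implies being a set of uniqueness) makes it injective.

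To convert this into the stated extension result I would invoke \prettyref{rem:rest_spaces_coincide}: because $E'$ determines boundedness and $U$ fixes the topology, $\mathcal{A}(\overline{\Omega})_{E'}(U,E)_{lb}=\mathcal{A}(\overline{\Omega})_{E'}(U,E)$. The assumption on $f$ says precisely that $f\in\mathcal{A}(\overline{\Omega})_{E'}(U,E)$, since each $e'\circ f$ extends to some $f_{e'}\in\mathcal{A}(\overline{\Omega})$. Surjectivity then produces $F=S(u)\in\mathcal{A}(\overline{\Omega},E)$ with $T^{E}_{1}(F)(x)=f(x)$ for all $x\in U$, i.e. $F_{\mid U}=f$, and injectivity delivers uniqueness. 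For the final ``in particular'' identity I would specialise to $U=\overline{\Omega}$, which fixes the topology trivially (the required estimate holds with constant $1$), so that the just-proved surjectivity of $R_{\overline{\Omega},E'}$ together with the strength established in \prettyref{ex:disc_algebra} lets me quote \prettyref{prop:weak_strong_principle} with $G=E'$ to obtain the asserted description of $\mathcal{A}(\overline{\Omega},E)$.

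I do not expect a genuine obstacle here: every analytic ingredient (consistency, strength, $\varepsilon$-compatibility through metric ccp, and the Mackey--Arens argument internal to \prettyref{thm:fix_topo_E_semi_M}) is already available from earlier results. The only points deserving explicit care are the observation that ``semi-Montel'' forces metric ccp so that \prettyref{ex:disc_algebra} is applicable, and the bookkeeping that matches ``$U$ fixes the topology'' and ``$G=E'$ determines boundedness'' to the precise hypotheses of the thick-set theorem and of \prettyref{rem:rest_spaces_coincide}; these are routine but should be recorded.
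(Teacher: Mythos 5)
Your proposal is correct and follows essentially the same route as the paper's own proof: it invokes \prettyref{ex:disc_algebra} for the strong, consistent generator and the $\varepsilon$-compatibility (via semi-Montel $\Rightarrow$ metric ccp), then \prettyref{thm:fix_topo_E_semi_M}, \prettyref{prop:injectivity} and \prettyref{rem:rest_spaces_coincide} with $G=E'$ for existence and uniqueness, and \prettyref{prop:weak_strong_principle} for the final identity. The bookkeeping points you flag are exactly the ones the paper relies on implicitly.
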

\begin{proof}
$(\id_{E^{\overline{\Omega}}},\id_{\C^{\overline{\Omega}}})$ is a strong, consistent generator for $(\mathcal{A},E)$ and 
$\mathcal{A}(\overline{\Omega})\varepsilon E\cong \mathcal{A}(\overline{\Omega},E)$ via $S$ 
by \prettyref{ex:disc_algebra}.
Due to \prettyref{thm:fix_topo_E_semi_M}, \prettyref{prop:injectivity} and \prettyref{rem:rest_spaces_coincide} 
with $G=E'$ the extension $F$ exists and is unique. The remaining part follows 
from \prettyref{prop:weak_strong_principle}. 
\end{proof}

If $\Omega\subset\C$ is connected, then the boundary $\partial\Omega$ of $\Omega$ fixes the topology 
in $\mathcal{A}(\overline{\Omega})$ by the maximum principle. 
If $\Omega=\D$, then $\partial\D$ is the intersection of all sets 
that fix the topology in $\mathcal{A}(\overline{\D})$ by \cite[7.7 Example, p.\ 39]{stout1971}. 

If $E$ is a generalised Schwartz space which is not a semi-Montel space, 
we do not know whether the extension results in \prettyref{cor:uniformly_cont} and \prettyref{cor:disc_algebra}
hold but we still have a weak-strong principle due to the following observation which is based on 
\cite[Chap.\ 3, \S9, Proposition 2, p.\ 231]{horvath} with $\sigma(E,E')$ replaced by $\sigma(E,G)$. 

\begin{prop}\label{prop:general_Schwartz}
If
\begin{enumerate}
\item[(i)] $E$ is a semi-Montel space and $G\subset E'$ a separating subspace, or 
\item[(ii)] $E$ is a generalised Schwartz space and $G\subset \widehat{E}'$ a separating subspace, 
i.e.\ separates the points of the completion $\widehat{E}$,
\end{enumerate}
then the initial topology of $E$ and the topology $\sigma(E,G)$ coincide on the bounded sets of $E$.
\end{prop}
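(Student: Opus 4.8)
The plan is to prove that on bounded subsets of $E$, the weaker topology $\sigma(E,G)$ agrees with the original topology, splitting into the two cases and reducing case (ii) to case (i) via the completion. The statement generalises the classical fact (cited as \cite[Chap.\ 3, \S9, Proposition 2, p.\ 231]{horvath}) that on a semi-Montel space the weak topology coincides with the initial topology on bounded sets; here the novelty is that we only test against a \emph{separating} subspace $G$ rather than all of $E'$ (resp.\ $\widehat{E}'$), and that we allow generalised Schwartz spaces.

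First I would treat case (i). Let $B\subset E$ be bounded. Since $E$ is semi-Montel, the closure $\overline{B}$ is compact in the initial topology. The identity map $\id\colon E\to (E,\sigma(E,G))$ is continuous because $G\subset E'$, so its restriction to the compact set $\overline{B}$ is a continuous bijection onto its image. The key step is that $\sigma(E,G)$ is Hausdorff: this is exactly where the hypothesis that $G$ \emph{separates the points} of $E$ is used. A continuous bijection from a compact space to a Hausdorff space is a homeomorphism, hence the inverse is continuous as well, which means the two topologies induce the same relative topology on $\overline{B}$, and therefore on $B$. This is the heart of the argument and the step I expect to carry the most weight, though it is short once the compactness and the Hausdorff separation are in place.

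Next I would reduce case (ii) to case (i) by passing to the completion $\widehat{E}$. Because $E$ is a generalised Schwartz space, every bounded set $B\subset E$ is precompact in $E$ (by \cite[10.4.3 Corollary, p.\ 202]{Jarchow}), so its closure $\overline{B}^{\widehat{E}}$ in $\widehat{E}$ is compact. One checks that $\widehat{E}$ is itself a semi-Montel space — indeed, a precompact set in $E$ has compact closure in the complete space $\widehat{E}$, and bounded subsets of $\widehat{E}$ are handled analogously — so that case (i) applies to $\widehat{E}$ with the separating subspace $G\subset\widehat{E}'$. Thus the initial topology of $\widehat{E}$ and $\sigma(\widehat{E},G)$ agree on the bounded (hence relatively compact) subsets of $\widehat{E}$, in particular on $B\subset E\subset\widehat{E}$. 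Since $E$ carries the subspace topology from $\widehat{E}$ and $\sigma(E,G)$ is the restriction of $\sigma(\widehat{E},G)$ to $E$, the coincidence descends to $E$, completing the proof.

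The main obstacle is ensuring that the Hausdorff property of $\sigma(E,G)$ (resp.\ $\sigma(\widehat{E},G)$) is genuinely supplied by the separation hypothesis on $G$, and — in case (ii) — verifying cleanly that the completion of a generalised Schwartz space is semi-Montel so that the reduction is legitimate. Both are routine but must be stated carefully; once they are granted, the compact-to-Hausdorff homeomorphism argument finishes everything.
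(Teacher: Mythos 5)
Your proof is correct and takes essentially the same route as the paper: in case (i) the compact closure $\overline{B}$ plus the Hausdorff property of $\sigma(E,G)$ (supplied by $G$ separating points) gives the coincidence of the two topologies on $\overline{B}$, and case (ii) is handled by passing to $\widehat{E}$, where $\overline{B}^{\widehat{E}}$ is compact because $B$ is precompact in $E$, and then restricting back using $\sigma(E,G)=\sigma(\widehat{E},G)$ on $B$.

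The only step you should not lean on is the assertion that $\widehat{E}$ is itself semi-Montel: a bounded subset of the completion need not be approximable by bounded subsets of $E$, so the generalised Schwartz property of $E$ does not obviously transfer to $\widehat{E}$, and your parenthetical ``bounded subsets of $\widehat{E}$ are handled analogously'' does not close that gap. Fortunately the claim is not needed. You only ever apply the compact-to-Hausdorff argument to the single set $\overline{B}^{\widehat{E}}$ for $B\subset E$ bounded, and that set is compact directly from the precompactness of $B$ in $E$ together with \cite[3.5.1 Theorem, p.\ 64]{Jarchow}; this is exactly how the paper phrases it, avoiding any statement about all bounded subsets of $\widehat{E}$.
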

\begin{proof}
(i) Let $B\subset E$ be a bounded set. If $E$ is a semi-Montel space, then the closure $\overline{B}$ is compact 
in $E$. The topology induced by $\sigma(E,G)$ on $\overline{B}$ is Hausdorff and weaker than 
the initial topology induced by $E$. Thus the two topologies coincide on $\overline{B}$ and so on $B$ by 
the remarks above \cite[Chap.\ 3, \S9, Proposition 2, p.\ 231]{horvath}.

(ii) Let $B\subset E$ be a bounded set. If $E$ is a generalised Schwartz space, then $B$ is precompact in $E$ 
and relatively compact in the completion $\widehat{E}$ by \cite[3.5.1 Theorem, p.\ 64]{Jarchow}. 
Hence the closure $\overline{B}$ is compact in $\widehat{E}$. 
The topology induced by $\sigma(\widehat{E},G)$ on $\overline{B}$ is Hausdorff 
and weaker than the initial topology induced by $\widehat{E}$, implying
that the two topologies coincide on $\overline{B}$ as in part (i).
This yields that $\sigma(E,G)$ and the initial topology of $E$ coincide on $B$ because 
$\sigma(E,G)=\sigma(\widehat{E},G)$ on $B$ and the initial topologies of $E$ and $\widehat{E}$ 
coincide on $B$ as well.
\end{proof}

Concerning (ii), we note that a separating subspace $G\subset E'$ of $E$ need not separate the points of $\widehat{E}$ 
by \cite[5.4 Example, p.\ 36]{grosse-erdmann1992} (even though $E'=\widehat{E}'$ by \cite[3.4.2 Theorem, p.\ 61--62]{Jarchow}). 
Next, we apply \prettyref{prop:general_Schwartz} to the space $\mathcal{A}(\overline{\Omega},E)$.

\begin{rem}\label{rem:bierstedt_weak_strong}
Let $E$ be an lcHs over $\C$ and $\Omega\subset\C$ open and bounded. If
\begin{enumerate}
\item[(i)] $E$ is a semi-Montel space and $G\subset E'$ determines boundedness, or 
\item[(ii)] $E$ is a generalised Schwartz space and $G\subset\widehat{E}'$ a separating subspace which 
determines boundedness in $E$, 
\end{enumerate}
then 
\[
 \mathcal{A}(\overline{\Omega},E)
=\{f\colon\overline{\Omega}\to E\;|\;\forall\;e'\in G:\;e'\circ f\in\mathcal{A}(\overline{\Omega})\}.
\]
Indeed, let us denote the right-hand side by $\mathcal{A}(\overline{\Omega},E)_{\sigma}$ 
and set $E_{\sigma}:=(E,\sigma(E,G))$. 
Then $\mathcal{A}(\overline{\Omega},E)_{\sigma}=\mathcal{A}(\overline{\Omega},E_{\sigma})$ 
and $f(\overline{\Omega})$ is bounded for every $f\in\mathcal{A}(\overline{\Omega},E)_{\sigma}$ 
as $G$ determines boundedness in $E$. 
The initial topology of $E$ and $\sigma(E,G)$ coincide on the bounded range $f(\overline{\Omega})$ 
of $f\in\mathcal{A}(\overline{\Omega},E)_{\sigma}$ by \prettyref{prop:general_Schwartz}. Hence we deduce that
\[
\mathcal{A}(\overline{\Omega},E)_{\sigma}=\mathcal{A}(\overline{\Omega},E_{\sigma})=\mathcal{A}(\overline{\Omega},E).
\]
\end{rem}

In this way Bierstedt proves his weak-strong principles for weighted continuous functions 
in \cite[2.10 Lemma, p.\ 140]{B2} with $G=E'=\widehat{E}'$.

\addtocontents{toc}{\SkipTocEntry}
\subsection*{\texorpdfstring{$\FV$}{FV(Omega)} a Fr\'{e}chet--Schwartz space and \texorpdfstring{$E$}{E} locally complete}

\begin{defn}[{chain-structured}]\label{def:chain_structured} 
Let $\FV$ be a $\dom$-space. We say that $U\subset\bigcup_{m\in \N}(\{m\}\times\omega_{m})$ is \emph{chain-structured} if 
\begin{enumerate}
\item[(i)] $(k,x)\in U\;\;\Rightarrow\;\;\forall\;m\in \N,\,m\geq k:\;(m,x)\in U$,
\item[(ii)] $\forall\;(k,x)\in U,\,m\in \N,\,m\geq k,\,f\in\FV:\;T^{\K}_{k}(f)(x)=T^{\K}_{m}(f)(x)$.
\end{enumerate}
\end{defn}

\begin{rem} 
Let $\Omega\subset\R^{d}$ be open and $\mathcal{V}^{\infty}$ a directed family of weights. 
Concerning the operators $(T^{\K}_{m})_{m\in\N_{0}}$ 
of $\mathcal{CV}^{\infty}(\Omega)$ from \prettyref{ex:weighted_smooth_functions} a)
where $\omega_{m}:=\{\beta\in\N_{0}^{d}\;|\;|\beta|\leq m\}\times\Omega$ resp.\ 
$\omega_{m}:=\N_{0}^{d}\times\Omega$, we have for all $k\in\N_{0}$ and $f\in\mathcal{CV}^{\infty}(\Omega)$ 
that
\[
T^{\K}_{k}(f)(\beta,x)=\partial^{\beta}f(x)=T^{\K}_{m}(f)(\beta,x),
\quad \beta\in\N_{0}^{d},\;|\beta|\leq k,\;x\in\Omega,
\]
for all $m\in\N_{0}$, $m\geq k$. Hence condition (ii) of \prettyref{def:chain_structured} is fulfilled for any 
$U\subset\bigcup_{m\in \N_{0}}(\{m\}\times\omega_{m})$ in this case. 
Condition (i) says that once a `link' $(k,\beta,x)$ belongs to $U$ for some 
order $k$, then the `link' $(m,\beta,x)$ belongs to $U$ for any higher order $m$ as well. 
\end{rem}

\begin{defn}[{diagonally dominated, increasing}] 
We say that a family $\mathcal{V}:=(\nu_{j,m})_{j,m\in\N}$ of weights on $\Omega$ 
is \emph{diagonally dominated and increasing} if $\omega_{m}\subset \omega_{m+1}$ for all $m\in \N$ and 
$\nu_{j,m}\leq \nu_{\max(j,m),\max(j,m)}$ on $\omega_{\min(j,m)}$ for all $j,m\in\N$ as well as 
$\nu_{j,j}\leq \nu_{j+1,j+1}$ on $\omega_{j}$ for all $j\in\N$. 
\end{defn}

\begin{rem}\label{rem:fix_top_1=2}
Let $\FV$ be a $\dom$-space, $U\subset\bigcup_{m\in \N}(\{m\}\times\omega_{m})$ chain-structured, 
$G\subset E'$ a separating subspace and $\mathcal{V}$ diagonally dominated and increasing.  
\begin{enumerate}
 \item [a)] If $U$ fixes the topology in $\FV$, then 
 \[
 \mathcal{FV}_{G}(U,E)_{lb}=\{f\in\mathcal{FV}_{G}(U,E)\;|\;\forall\;i\in \N:\;N_{U,i}(f)\;\text{bounded in}\; E\}
 \]
 with $N_{U,i}(f):=N_{U,i,i}(f)$.
 \item [b)] Let $\FV$ be a Fr\'{e}chet space. We set $U_{m}:= \{(m,x)\in U\;|\; x\in \omega_{m}\}$ and 
 $B_{j}:=\bigcup_{m=1}^{j}\{T^{\K}_{m,x}(\cdot)\nu_{m,m}(x)\;|\; (m,x)\in U_{m}\}\subset \FV'$ for $j\in\N$. 
 Then $U$ fixes the topology in $\FV$ in the sense of \prettyref{def:fix_top_1} if and only if 
 the sequence $(B_{j})_{j\in\N}$ fixes the topology in $\FV$ in the sense of \prettyref{def:fix_top_2}. 
\end{enumerate}
\end{rem}
\begin{proof}
Let us begin with a). We only need to show that the inclusion `$\supset$' holds. 
Let $f$ be an element of the right-hand side and $i,k\in\N$. 
We set $m:=\max(i,k)$ and observe that for $(k,x)\in U$ we have $(m,x)\in U$ by (i) and
\[
(e'\circ f)(k,x)=T^{\K}_{k}(f_{e'})(x)\underset{\text{(ii)}}{=}T^{\K}_{m}(f_{e'})(x)=(e'\circ f)(m,x)
\]
for each $e'\in G$ with (i) and (ii) from the definition of $U$ being chain-structured. 
Since $G$ is separating, it follows that $f(k,x)=f(m,x)$. Hence we get for all $\alpha\in\mathfrak{A}$
\begin{align*}
\sup_{y\in N_{U,i,k}(f)}p_{\alpha}(y)&=\sup_{\substack{x\in\omega_{k}\\(k,x)\in U}}p_{\alpha}(f(k,x))\nu_{i,k}(x)
\underset{\text{(i)}}{\leq} \sup_{\substack{x\in\omega_{m}\\(m,x)\in U}}p_{\alpha}(f(k,x))\nu_{m,m}(x)\\
&= \sup_{\substack{x\in\omega_{m}\\(m,x)\in U}}p_{\alpha}(f(m,x))\nu_{m,m}(x)<\infty
\end{align*}
using that $\omega_{k}\subset\omega_{m}$ and $\mathcal{V}$ is diagonally dominated.

Let us turn to part b). `$\Rightarrow$': Let $j\in\N$ and $A\subset \FV$ be bounded. Then 
\[
 \sup_{y\in B_{j}}\sup_{f\in A}|y(f)|
=\sup_{\substack{1\leq m\leq j\\(m,x)\in U_{m}}}\sup_{f\in A}|T^{\K}_{m}(f)(x)|\nu_{m,m}(x)
\leq\sup_{f\in A}\sup_{1\leq m\leq j}|f|_{m,m}<\infty
\]
since $A$ is bounded, implying that $B_{j}$ is bounded in $\FV_{b}'$. Further, $(B_{j})$ is increasing by definition. 
Additionally, for all $j\in\N$
\begin{align*}
  B_{j}^{\circ}
&=\bigcap_{m=1}^{j}\{f\in\FV\;|\;\sup_{\substack{x\in\omega_{m}\\(m,x)\in U}}|T^{\K}_{m}(f)(x)|\nu_{m,m}(x)\leq 1\}\\
&=\{f\in\FV\;|\;\sup_{\substack{x\in\omega_{j}\\(j,x)\in U}}|T^{\K}_{j}(f)(x)|\nu_{j,j}(x)\leq 1\}
\end{align*}
because $U$ is chain-structured and $\mathcal{V}$ increasing. 
Thus $(B_{j}^{\circ})$ is a fundamental system of zero neighbourhoods of $\FV$ if $U$ fixes the topology.\\
`$\Leftarrow$': Let $j,m\in\N$. Then there are $i\in\N$ and $\varepsilon>0$ such that 
\[
\varepsilon B_{i}^{\circ}\subset \{f\in\FV\;|\;|f|_{j,m}\leq 1\}=:D_{j,m}
\]
which follows from fixing the topology in the sense of \prettyref{def:fix_top_2}.
Let $f\in D_{j,m}$ and set
\[
|f|_{U_{i}}:=\sup_{(i,x)\in U_{i}}|T^{\K}_{i}(f)(x)|\nu_{i,i}(x).
\]
If $|f|_{U_{i}}= 0$, then $tf\in\varepsilon B_{i}^{\circ}$ for all $t>0$ and 
hence $t|f|_{j,m}=|tf|_{j,m}\leq 1$ for all $t>0$, which yields 
$|f|_{j,m}=0=|f|_{U_{i}}$.
If $|f|_{U_{i}}\neq 0$, then $\tfrac{f}{|f|_{U_{i}}}\in B_{i}^{\circ}$ and 
thus $\varepsilon\tfrac{f}{|f|_{U_{i}}}\in D_{j,m}$, implying
\[
 |f|_{j,m}
=\frac{1}{\varepsilon}|f|_{U_{i}}\bigl|\varepsilon\frac{f}{|f|_{U_{i}}}\bigr|_{j,m}
\leq \frac{1}{\varepsilon}|f|_{U_{i}}.
\]
The inequality $|f|_{j,m}\leq\tfrac{1}{\varepsilon}|f|_{U_{i}}$ still holds 
if $|f|_{U_{i}}=0$.
\end{proof}

\begin{thm}[{\cite[Theorem 16, p.\ 236]{B/F/J}}]\label{thm:fix_top}
 Let $Y$ be a Fr\'{e}chet--Schwartz space, $(B_{j})_{j\in\N}$ fix the topology in $Y$ and 
 $\mathsf{A}\colon X:=\operatorname{span}(\bigcup_{j\in\N} B_{j})\to E$ be a linear map 
 which is bounded on each $B_{j}$. If
 \begin{enumerate}
  \item [a)] $(\mathsf{A}^{t})^{-1}(Y)$ is dense in $E_{b}'$ and $E$ locally complete, or 
  \item [b)] $(\mathsf{A}^{t})^{-1}(Y)$ is dense in $E_{\sigma}'$ and $E$ is $B_{r}$-complete,
 \end{enumerate}
 then $\mathsf{A}$ has a (unique) extension $\widehat{\mathsf{A}}\in Y\varepsilon E$.
\end{thm}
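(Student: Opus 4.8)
The plan is to build $\widehat{\mathsf A}$ as the inverse canonical injection applied to a transposed map, exactly in the spirit of the surjectivity part of \prettyref{thm:full_linearisation}, and to reduce b) to the situation of a). First note that uniqueness is free: since $Y$ is a Fréchet--Schwartz, hence Montel, space we have $Y_\kappa'=Y_b'$, and since $(B_j^{\circ})$ is a zero neighbourhood basis of the Hausdorff space $Y$ we have $\bigcap_j B_j^{\circ}=\{0\}$; thus any $y\in Y$ annihilating $X=\operatorname{span}(\bigcup_j B_j)$ lies in every $B_j^{\circ}$ and so $y=0$, which (by reflexivity and the bipolar theorem) makes $X$ dense in $Y_\kappa'$, so two elements of $Y\varepsilon E$ agreeing on $X$ coincide. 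Now set $G:=(\mathsf A^t)^{-1}(Y)$; density of $G$ (in either topology) makes it $\sigma(E',E)$-dense, hence separating, so an element of $E$ is determined by its values on $G$. For $e'\in G$ let $R(e')\in Y$ be the unique element with $\langle x,R(e')\rangle=e'(\mathsf A(x))$ for all $x\in X$, giving a linear map $R\colon G\to Y$. The decisive computation is that, writing $q_j(y)=\sup_{y'\in B_j}|\langle y,y'\rangle|$ for the seminorm of $Y$ attached to $B_j$, one has $q_j(R(e'))=\sup_{z\in\mathsf A(B_j)}|e'(z)|$, which is finite because $\mathsf A(B_j)$ is bounded.

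For a), this identity shows $R$ is continuous for the topology induced by $E_b'$; as $G$ is $E_b'$-dense and $Y$ is complete, $R$ extends to $\bar R\in L(E_b',Y)$. Each polar $B_\alpha^{\circ}$ is equicontinuous, hence bounded in $E_b'$, so $\bar R(B_\alpha^{\circ})$ is bounded in the Montel space $Y$ and $K_\alpha:=\oacx(\bar R(B_\alpha^{\circ}))$ is absolutely convex and compact. I would then pass to the transpose $\bar R^t\colon Y_\kappa'=Y_b'\to E_b''$ and define $\widehat{\mathsf A}:=\mathcal J^{-1}\circ\bar R^t$. On the dense subspace $X$ one checks $\bar R^t(x)=\mathcal J(\mathsf A(x))$ (for $e'\in G$ this is the defining identity of $R$, and both sides are $E_b'$-continuous in $e'$), so $\widehat{\mathsf A}$ restricts to $\mathsf A$. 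To see $\bar R^t(y')\in\mathcal J(E)$ for every $y'$, I would use that $Y_\kappa'$ is a DFS-space: a sequence $x_n\to y'$ in $Y_\kappa'$ with $x_n\in X$ converges in one Banach step, and the estimate $p_\alpha(\mathsf A(x_n)-\mathsf A(x_m))=\sup_{e'\in B_\alpha^{\circ}}|(x_n-x_m)(\bar R(e'))|\le\sup_{z\in K_\alpha}|(x_n-x_m)(z)|$ converts this into a Mackey--Cauchy condition on $(\mathsf A(x_n))$ in $E$, closable by local completeness precisely as in \prettyref{thm:full_linearisation} d). The same estimate with $y'$ in place of $x_n-x_m$ gives $p_\alpha(\widehat{\mathsf A}(y'))\le\sup_{z\in K_\alpha}|y'(z)|$, a continuous seminorm on $Y_\kappa'$, whence $\widehat{\mathsf A}\in L(Y_\kappa',E)=Y\varepsilon E$.

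For b) the cleanest route is to show the weaker hypotheses force $G=E'$, after which a) applies. Here $E$ is $B_r$-complete, hence complete and in particular locally complete, and $G$ is $\sigma(E',E)$-dense. I claim $G$ is $\sigma^f(E',E)$-closed, for which it suffices that $G\cap B_\alpha^{\circ}$ be $\sigma(E',E)$-closed for each $\alpha$. If a net $(e'_\tau)$ in $G\cap B_\alpha^{\circ}$ converges to $e'$ in $\sigma(E',E)$, then $q_j(R(e'_\tau))=\sup_{z\in\mathsf A(B_j)}|e'_\tau(z)|\le\sup_{z\in\mathsf A(B_j)}p_\alpha(z)$, so $(R(e'_\tau))$ is bounded in the Montel space $Y$ and a subnet converges to some $\eta\in Y$; pairing with $x\in X$ yields $\langle x,\eta\rangle=\lim e'_\tau(\mathsf A(x))=e'(\mathsf A(x))$, so $e'\circ\mathsf A=\eta\in Y$ and $e'\in G$. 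By the definition of $B_r$-completeness a $\sigma(E',E)$-dense, $\sigma^f(E',E)$-closed subspace of $E'$ equals $E'$, so $G=E'$. Since $E'$ determines boundedness and $E$ is locally complete, \prettyref{prop:ext_FS_set_uni} (equivalently, part a) with $G=E'$) produces the extension.

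The main obstacle is the membership step $\bar R^t(y')\in\mathcal J(E)$ in a): the genuine work lies in converting the topological convergence $x_n\to y'$ in $Y_\kappa'$ into a \emph{local} Banach-disk Cauchy condition on $(\mathsf A(x_n))$ so that local completeness applies, which is exactly why both the DFS-structure of $Y_\kappa'$ and the compactness of $K_\alpha$ (Montel property of $Y$) are indispensable; the rest is bookkeeping with the seminorm identities $p_\alpha=p_{B_\alpha^{\circ}}\circ\mathcal J$. In b) the analogous difficulty is concentrated in the verification that $G\cap B_\alpha^{\circ}$ is $\sigma(E',E)$-closed, where again the Montel property of $Y$ together with the boundedness of $\mathsf A(B_j)$ is what forces the limit functional back into $G$, and then $B_r$-completeness finishes the reduction.
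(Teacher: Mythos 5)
The thesis states this result only as a citation of \cite[Theorem 16, p.\ 236]{B/F/J} and gives no proof of its own, so there is nothing in the text to compare against; judged on its merits, your argument is correct. Part a) is the $\mathscr{R}_{f}$/transpose construction of \prettyref{thm:full_linearisation} transplanted to the operator setting: the identity $q_{j}(R(e'))=\sup_{z\in\mathsf{A}(B_{j})}|e'(z)|$ is the right pivot, the extension $\bar{R}\in L(E_{b}',Y)$ exists by density and completeness of $Y$, and the membership step $\bar{R}^{t}(y')\in\mathcal{J}(E)$ is closed by the same Mackey--Cauchy argument as in \prettyref{thm:full_linearisation} under \prettyref{cond:surjectivity_linearisation} d). The one step you should make explicit is the existence of a sequence in $X$ converging \emph{locally} to a given $y'\in Y_{\kappa}'=Y_{b}'$: density of a subspace in a DFS-space does not by itself yield Mackey-convergent approximating sequences; this is precisely \cite[Lemma 6 (b), p.\ 231]{B/F/J} (closure equals local closure in the strong dual of a Fr\'echet--Schwartz space), which the thesis invokes for exactly this purpose in the proof of \prettyref{cor:full_linearisation} (ii). Part b) is a clean reduction: boundedness of the sets $\mathsf{A}(B_{j})$ together with the Montel property of $Y$ forces $G\cap B_{\alpha}^{\circ}$ to be $\sigma(E',E)$-closed, hence $G$ to be $\sigma^{f}(E',E)$-closed (checking the trace on the polars $B_{\alpha}^{\circ}$ suffices, as these form a fundamental family of equicontinuous sets), so $B_{r}$-completeness gives $G=E'$, after which case a) --- or \prettyref{prop:ext_FS_set_uni} combined with Mackey's theorem --- finishes. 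Both halves are sound.
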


Now, we generalise \cite[Theorem 17, p.\ 237]{B/F/J}.

\begin{thm}\label{thm:ext_FS_fix_top}
 Let $E$ be an lcHs and $G\subset E'$ a separating subspace. Let 
 $(T^{E}_{m},T^{\K}_{m})_{m\in M}$ be a strong, consistent generator for $(\mathcal{FV},E)$, 
 $\FV$ a Fr\'{e}chet--Schwartz space, $\mathcal{V}$ diagonally dominated and increasing and
 $U$ be chain-structured and fix the topology in $\FV$.
 If
 \begin{enumerate}
  \item [a)] $G$ is dense in $E_{b}'$ and $E$ locally complete, or 
  \item [b)] $E$ is $B_{r}$-complete,
 \end{enumerate}
 then the restriction map $R_{U,G}\colon S(\FV\varepsilon E)\to \mathcal{FV}_{G}(U,E)_{lb}$ is surjective.
\end{thm}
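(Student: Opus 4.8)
The plan is to follow the template of the preceding extension theorems (\prettyref{thm:ext_FS_set_uni}, \prettyref{thm:ext_B_unique}), now with \prettyref{thm:fix_top} as the extension engine: given $f\in\mathcal{FV}_{G}(U,E)_{lb}$ I would build a linear map $\mathsf{A}$ on an appropriate span of the functionals $T^{\K}_{m,x}$, extend it to some $\widehat{\mathsf{A}}\in\FV\varepsilon E$, and set $F:=S(\widehat{\mathsf{A}})$. Since \prettyref{thm:fix_top} requires a \emph{sequence} $(B_{j})_{j\in\N}$ fixing the topology in the sense of \prettyref{def:fix_top_2}, the first step is to convert the hypothesis that $U$ fixes the topology in the sense of \prettyref{def:fix_top_1}: using that $\FV$ is a Fréchet space, \prettyref{rem:fix_top_1=2} b) produces exactly such a sequence, namely $B_{j}:=\bigcup_{m=1}^{j}\{T^{\K}_{m,x}(\cdot)\nu_{m,m}(x)\mid (m,x)\in U_{m}\}$, for which $X:=\operatorname{span}(\bigcup_{j}B_{j})$ is dense in $\FV_{\kappa}'$.

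Next I would define $\mathsf{A}\colon X\to E$ on the generators by $\mathsf{A}(T^{\K}_{m,x}(\cdot)\nu_{m,m}(x)):=f(m,x)\nu_{m,m}(x)$. Well-definedness follows by testing a vanishing linear combination of generators against $e'\in G$ and using $(e'\circ f)(m,x)=T^{\K}_{m}(f_{e'})(x)$ on $U$ together with $G$ being separating. Boundedness of $\mathsf{A}$ on each $B_{j}$ is where the hypothesis $f\in\mathcal{FV}_{G}(U,E)_{lb}$ enters: one computes $\mathsf{A}(B_{j})=\bigcup_{m=1}^{j}N_{U,m,m}(f)$, a finite union of sets bounded in $E$ by the definition of $\mathcal{FV}_{G}(U,E)_{lb}$ (here \prettyref{rem:fix_top_1=2} a), resting on $\mathcal{V}$ being diagonally dominated and increasing and $U$ chain-structured, confirms that boundedness of the diagonal sets $N_{U,i}(f)$ is the correct condition).

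To invoke \prettyref{thm:fix_top} I would verify the density requirement on $(\mathsf{A}^{t})^{-1}(Y)$ with $Y:=\FV$. For every $e'\in G$ the functional $e'\circ\mathsf{A}$ agrees on each generator with $f_{e'}\in\FV$ (viewed via the canonical embedding $\FV\hookrightarrow(\FV_{\kappa}')'$), so $e'\circ\mathsf{A}\in Y$ and thus $G\subset(\mathsf{A}^{t})^{-1}(Y)$. In case (a), where $G$ is dense in $E_{b}'$ and $E$ is locally complete, I apply \prettyref{thm:fix_top} a); in case (b), the separating subspace $G$ is $\sigma(E',E)$-dense by the bipolar theorem, hence dense in $E_{\sigma}'$, and with $E$ being $B_{r}$-complete \prettyref{thm:fix_top} b) applies. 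Either way I obtain an extension $\widehat{\mathsf{A}}\in\FV\varepsilon E$ of $\mathsf{A}$, and $F:=S(\widehat{\mathsf{A}})\in S(\FV\varepsilon E)\subset\FVE$ by \prettyref{thm:linearisation}.

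The final and most delicate step is verifying $R_{U,G}(F)=f$, i.e.\ $T^{E}_{m}(F)(x)=f(m,x)$ for \emph{all} $(m,x)\in U$. The naive route via consistency gives $T^{E}_{m}(F)(x)=\widehat{\mathsf{A}}(T^{\K}_{m,x})$, which one can only divide back to $f(m,x)$ when $\nu_{m,m}(x)>0$; the scaling built into the $B_{j}$ thus obstructs a direct pointwise reading at the zeros of $\nu_{m,m}$. I would instead argue weakly: for $e'\in G$ the functional $e'\circ\widehat{\mathsf{A}}$ lies in $(\FV_{\kappa}')'=\FV$ (using reflexivity of the Fréchet--Schwartz space $\FV$) and agrees with $f_{e'}$ on the dense set $X$, hence $e'\circ F=f_{e'}$ on all of $\Omega$. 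Strength of the generator then yields $e'(T^{E}_{m}(F)(x))=T^{\K}_{m}(e'\circ F)(x)=T^{\K}_{m}(f_{e'})(x)=(e'\circ f)(m,x)=e'(f(m,x))$ for every $e'\in G$ and $(m,x)\in U$, and since $G$ separates the points of $E$ we conclude $T^{E}_{m}(F)(x)=f(m,x)$, i.e.\ $R_{U,G}(F)=f$. The main obstacle is exactly this verification at points where the diagonal weight vanishes, which the weak-testing argument circumvents by avoiding the unscaled functionals altogether.
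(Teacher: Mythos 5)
Your proposal is correct and follows the paper's proof almost verbatim: same sets $B_{j}$ from \prettyref{rem:fix_top_1=2} b), same map $\mathsf{A}(T^{\K}_{m,x}(\cdot)\nu_{m,m}(x)):=f(m,x)\nu_{m,m}(x)$, same verification that $\mathsf{A}(B_{j})=\bigcup_{m=1}^{j}N_{U,m}(f)$ is bounded and that $G\subset(\mathsf{A}^{t})^{-1}(\FV)$, and the same appeal to \prettyref{thm:fix_top} a) resp.\ b). The only genuine divergence is the endgame. The paper recovers $T^{E}_{m}(F)(x)=f(m,x)$ pointwise: by \eqref{loc3} together with the chain structure and the diagonally dominated, increasing family $\mathcal{V}$ there is $j\geq m$ with $(j,x)\in U_{j}$ and $\nu_{j,j}(x)>0$, and then $f(j,x)=f(m,x)$ and $T^{\K}_{m,x}=T^{\K}_{j,x}$ let one divide by $\nu_{j,j}(x)$ in the consistency identity $T^{E}_{m}(F)(x)=\widehat{\mathsf{A}}(T^{\K}_{m,x})$. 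You instead argue weakly: $e'\circ\widehat{\mathsf{A}}\in(\FV_{\kappa}')'=\FV$ by reflexivity of the Fr\'echet--Schwartz space, it coincides with $f_{e'}$ on the $\sigma(\FV',\FV)$-dense subspace $X$, hence $e'\circ F=f_{e'}$, and strength plus the separation property of $G$ finish the job. Both arguments are valid; yours trades the explicit use of \eqref{loc3} at the zeros of the diagonal weights for reflexivity and the strength hypothesis (which is assumed anyway), and is arguably the more robust reading of why the scaling causes no harm.
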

\begin{proof}
Let $f\in \mathcal{FV}_{G}(U,E)_{lb}$. We set $X:=\operatorname{span}(\bigcup_{j\in\N} B_{j})$ 
with $B_{j}$ from \prettyref{rem:fix_top_1=2} b) and $Y:=\FV$. 
Let $\mathsf{A}\colon X\to E$ be the linear map determined by 
\[
\mathsf{A}(T^{\K}_{m,x}(\cdot)\nu_{m,m}(x)):=f(m,x)\nu_{m,m}(x) 
\]
for $1\leq m\leq j$ and $(m,x)\in U_{m}$ with $U_{m}$ from \prettyref{rem:fix_top_1=2} b).
The map $\mathsf{A}$ is well-defined since $G$ is $\sigma(E',E)$-dense, and 
bounded on each $B_{j}$ because $\mathsf{A}(B_{j})=\bigcup_{m=1}^{j}N_{U,m}(f)$.
Let $e'\in G$ and $f_{e'}$ be the unique element in $\FV$ such that 
$T^{\K}_{m}(f_{e'})(x)=(e'\circ f)(m,x)$ for all $(m,x)\in U$, which implies 
$T^{\K}_{m}(f_{e'})(x)\nu_{m,m}(x)=(e'\circ \mathsf{A})(T^{\K}_{m,x}(\cdot)\nu_{m,m}(x))$ for all $(m,x)\in U_{m}$.
This equation allows us to consider $f_{e'}$ as a linear form on $X$ 
(by $f_{e'}(T^{\K}_{m,x}(\cdot)\nu_{m,m}(x)):=(e'\circ \mathsf{A})(T^{\K}_{m,x}(\cdot)\nu_{m,m}(x))$), 
which yields $e'\circ \mathsf{A}\in\FV$ for all $e'\in G$. It follows that
$G\subset(\mathsf{A}^{t})^{-1}(Y)$. Noting that $G$ is $\sigma(E',E)$-dense, we 
apply \prettyref{thm:fix_top} and obtain an extension $\widehat{\mathsf{A}}\in\FV\varepsilon E$ of 
$\mathsf{A}$. We set $F:=S(\widehat{\mathsf{A}})$
and observe that for all $(m,x)\in U$ there is $j\in\N$, $j\geq m$, such that $(j,x)\in U_{j}$ and 
$\nu_{j,j}(x)>0$ by \eqref{loc3} and because $U$ is chain-structured and $\mathcal{V}$ diagonally dominated 
and increasing. Due to the proof of \prettyref{rem:fix_top_1=2} a) we have $f(j,x)=f(m,x)$ and thus
\begin{align*}
T^{E}_{m}(F)(x)&=T^{E}_{m}S(\widehat{\mathsf{A}})(x)=\widehat{\mathsf{A}}(T^{\K}_{m,x})
=\frac{1}{\nu_{j,j}(x)}\widehat{\mathsf{A}}(T^{\K}_{m,x}(\cdot)\nu_{j,j}(x))\\
&=\frac{1}{\nu_{j,j}(x)}\widehat{\mathsf{A}}(T^{\K}_{j,x}(\cdot)\nu_{j,j}(x))
=f(j,x)=f(m,x)
\end{align*}
by consistency, yielding $R_{U,G}(F)=f$.
\end{proof}

In particular, condition a) is fulfilled if $E$ is semi-reflexive. 
Indeed, if $E$ is semi-reflexive, then $E$ is quasi-complete by \cite[Chap.\ IV, 5.5, Corollary 1, p.\ 144]{schaefer}
and $\overline{G}^{b(E',E)}=\overline{G}^{\tau(E',E)}=E'$ by \cite[11.4.1 Proposition, p.\ 227]{Jarchow} 
and the bipolar theorem. 
For instance, condition b) is satisfied if $E$ is a Fr\'echet space or 
$E=(\mathcal{C}^{\infty}_{\overline{\partial},b}(\D),\beta)$ 
which is a $B_{r}$-complete space by \prettyref{prop:strict_topo_hypo_B_complete_semi_Montel} and 
is not a Fr\'echet space by \prettyref{rem:strict_top_non_barrelled}.

As stated, our preceding theorem generalises \cite[Theorem 17, p.\ 237]{B/F/J} 
where $\FV$ is a closed subspace of $\mathcal{CW}^{\infty}(\Omega)$ 
for open, connected $\Omega\subset\R^{d}$. A characterisation of sets 
that fix the topology in the space $\mathcal{CW}^{\infty}_{\overline{\partial}}(\Omega)$ 
of holomorphic functions on an open, connected set $\Omega\subset\C$ is given in \cite[Remark 14, p.\ 235]{B/F/J}. 
The characterisation given in \cite[Remark 14 (b), p.\ 235]{B/F/J} is still valid and applied 
in \cite[Corollary 18, p.\ 238]{B/F/J} for closed subspaces of $\mathcal{CW}^{\infty}_{P(\partial)}(\Omega)$ 
where $P(\partial)^{\K}$ is a hypoelliptic linear partial differential operator which satisfies the maximum principle, 
namely, that $U\subset\Omega$ fixes the topology 
if and only if there is a sequence $(\Omega_{n})_{n\in\N}$ of relatively compact, open subsets of $\Omega$ 
with $\bigcup_{n\in\N}\Omega_{n}=\Omega$ such that 
$\partial\Omega_{n}\subset\overline{U\cap\Omega_{n+1}}$ for all $n\in\N$. 
Among the hypoelliptic operators $P(\partial)^{\K}$ satisfying the maximum principle are the Cauchy--Riemann operator 
$\overline{\partial}$ and the Laplacian $\Delta$. 
Further examples can be found in \cite[Corollary 3.2, p.\ 33]{gilbarg_trudinger2001}.
The statement of \cite[Corollary 18, p.\ 238]{B/F/J} for the space of holomorphic functions 
is itself a generalisation of \cite[Theorem 2, p.\ 401]{grosse-erdmann2004} with 
\cite[Remark 2 (a), p.\ 406]{grosse-erdmann2004} where $E$ is $B_{r}$-complete
and of \cite[Theorem 6, p.\ 10]{jorda2005} where $E$ is semi-reflexive. 
The case that $G$ is dense in $E_{b}'$ and $E$ is sequentially complete 
is covered by \cite[3.3 Satz, p.\ 228--229]{Gramsch1977}, not only for spaces of holomorphic functions, 
but for several classes of function spaces. 

Let us turn to other families of weights than $\mathcal{W}^{\infty}$. 
Due to \prettyref{prop:AV_CV_coincide_hol_har} we already know that $U:=\{0\}\times\C$ fixes the topology in 
$\mathcal{CV}^{\infty}_{\overline{\partial}}(\C)=\mathcal{CV}_{\overline{\partial}}(\C)$ 
and $U:=\{0\}\times\R^{d}$ in $\mathcal{CV}^{\infty}_{\Delta}(\R^{d})=\mathcal{CV}_{\Delta}(\R^{d})$ 
if $\mathcal{V}:=(\nu_{j})_{j\in\N}$ fulfils \prettyref{cond:weights} and 
$\mathcal{V}^{\infty}:=(\nu_{j,m})_{j\in\N,m\in\N_{0}}$ where 
$\nu_{j,m}\colon\{\beta\in\N_{0}^{d}\;|\;|\beta|\leq m\}\times\R^{d}\to [0,\infty)$, $\nu_{j,m}(\beta,x):=\nu_{j}(x)$.

\begin{cor}
Let $E$ be an lcHs, $G\subset E'$ a separating subspace, $\mathcal{V}:=(\nu_{j})_{j\in\N}$ 
an increasing family of weights which is locally bounded away from zero on an open set $\Omega\subset\R^{d}$,  
$P(\partial)^{\K}$ a hypoelliptic linear partial differential operator, $\mathcal{CV}_{P(\partial)}(\Omega)$ 
a Schwartz space and $U\subset\Omega$ fix the topology of $\mathcal{CV}_{P(\partial)}(\Omega)$. If 
 \begin{enumerate}
  \item [a)] $G$ is dense in $E_{b}'$ and $E$ locally complete, or
  \item [b)] $E$ is $B_{r}$-complete,
 \end{enumerate}
and $f\colon U\to E$ is a function in $\ell\mathcal{V}(U)$ such that $e'\circ f$ admits an extension 
$f_{e'}\in\mathcal{CV}_{P(\partial)}(\Omega)$ for each $e'\in G$, 
then there is a unique extension $F\in\mathcal{CV}_{P(\partial)}(\Omega,E)$ of $f$.
\end{cor}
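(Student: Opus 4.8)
The plan is to apply \prettyref{thm:ext_FS_fix_top} to the space $\FV:=\mathcal{CV}_{P(\partial)}(\Omega)$, after presenting it as a $\dom$-space whose structure meets the indexing requirements of that theorem. Concretely, I would realise $\mathcal{CV}_{P(\partial)}(\Omega,E)$ as the $\dom$-space with $M:=\N$, $\omega_{m}:=\Omega$, $\operatorname{AP}(\Omega,E):=\mathcal{C}^{\infty}_{P(\partial)}(\Omega,E)$ and $T^{E}_{m}:=\id_{E^{\Omega}}$ for every $m\in\N$, together with the (redundantly reindexed) weights $\nu_{j,m}:=\nu_{j}$. Since the $T^{E}_{m}$ and the $\omega_{m}$ do not depend on $m$, this leaves the seminorms $|f|_{j,m}=|f|_{j}$ and hence the topology unchanged, and by \prettyref{rem:weights_Hausdorff_directed} it is a $\dom$-space (Hausdorff because some $\omega_{m}=\Omega$ with $T^{E}_{m}=\id_{E^{\Omega}}$, directed because $\mathcal{V}$ is increasing). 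The point of this reindexing is that, with $\nu_{j,m}:=\nu_{j}$, the family $\mathcal{V}$ becomes diagonally dominated and increasing: the inclusions $\omega_{m}\subset\omega_{m+1}$ are equalities, and both $\nu_{j,m}\leq\nu_{\max(j,m),\max(j,m)}$ and $\nu_{j,j}\leq\nu_{j+1,j+1}$ follow from $\nu_{j}\leq\nu_{j+1}$.

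Next I would check the remaining hypotheses of \prettyref{thm:ext_FS_fix_top}. The space $\FV=\mathcal{CV}_{P(\partial)}(\Omega)$ is a Fr\'echet space by \prettyref{prop:frechet_bierstedt} and a Schwartz space by assumption, hence a Fr\'echet--Schwartz space. That $(\id_{E^{\Omega}},\id_{\K^{\Omega}})_{m\in\N}$ is a strong, consistent generator for $(\mathcal{CV}_{P(\partial)},E)$ is exactly what is established in the proof of \prettyref{ex:subspace_bierstedt} b) (continuity of $I\colon\mathcal{CV}_{P(\partial)}(\Omega)\to\mathcal{CW}^{\infty}_{P(\partial)}(\Omega)$ because $\mathcal{V}$ is locally bounded away from zero, barrelledness of the Fr\'echet space $\FV$, \prettyref{prop:diff_cons_barrelled} c) for consistency, and the argument of \prettyref{prop:weighted_diff_strong_cons} for strength); the redundant $m$-indexing does not disturb this. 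Set $\widetilde U:=\N\times U\subset\bigcup_{m\in\N}(\{m\}\times\omega_{m})$. Then $\widetilde U$ is chain-structured: condition (i) of \prettyref{def:chain_structured} holds by construction and (ii) is trivial since all $T^{\K}_{m}=\id_{\K^{\Omega}}$. Moreover $\widetilde U$ fixes the topology of $\FV$ in the sense of \prettyref{def:fix_top_1}, because for each $j$ the inequality to verify is $|f|_{j}\leq C\sup_{x\in U}|f(x)|\nu_{i}(x)$, which is precisely the hypothesis that $U$ fixes the topology of $\mathcal{CV}_{P(\partial)}(\Omega)$.

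Then I would transport the datum $f$ into the restriction space. Define $\widetilde f\colon\widetilde U\to E$ by $\widetilde f(m,x):=f(x)$. For each $e'\in G$ the element $f_{e'}\in\mathcal{CV}_{P(\partial)}(\Omega)$ with $f_{e'}(x)=e'(f(x))$ on $U$ gives $T^{\K}_{m}(f_{e'})(x)=(e'\circ\widetilde f)(m,x)$ for all $(m,x)\in\widetilde U$, so $\widetilde f\in\mathcal{FV}_{G}(\widetilde U,E)$; and since $N_{\widetilde U,i}(\widetilde f)=\{f(x)\nu_{i}(x)\mid x\in U\}$ is bounded in $E$ by the assumption $f\in\ell\mathcal{V}(U,E)$ (see \eqref{eq:frame}), in fact $\widetilde f\in\mathcal{FV}_{G}(\widetilde U,E)_{lb}$. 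Now \prettyref{thm:ext_FS_fix_top} applies (case a) under $G$ dense in $E_{b}'$ and $E$ locally complete; case b) under $E$ being $B_{r}$-complete, with $G$ separating as required) and yields $u\in\FV\varepsilon E$ with $R_{\widetilde U,G}(S(u))=\widetilde f$. Setting $F:=S(u)$, we get $F\in\mathcal{CV}_{P(\partial)}(\Omega,E)$ by \prettyref{thm:linearisation}, and $F(x)=T^{E}_{m}(F)(x)=\widetilde f(m,x)=f(x)$ for all $x\in U$, i.e.\ $F$ extends $f$. Uniqueness follows from \prettyref{prop:injectivity}: since $U$ fixes the topology it is a set of uniqueness for $(\id_{\K^{\Omega}},\mathcal{CV}_{P(\partial)})$, so $T^{E}\colon\mathcal{CV}_{P(\partial)}(\Omega,E)\to E^{U}$ is injective (tested with the separating space $E'$).

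The part I expect to require the most care is the structural matching in the first two paragraphs: \prettyref{thm:ext_FS_fix_top} is phrased for a two-index weighted space and demands that the weights be diagonally dominated and increasing and the set of evaluation points be chain-structured, whereas $\mathcal{CV}_{P(\partial)}(\Omega)$ carries a single defining operator. The redundant reindexing over $m\in\N$ resolves this cleanly, but one must confirm that it genuinely reproduces the same space and topology and that it leaves the already-established strength and consistency of the generator intact. Once this bookkeeping is in place, the conclusion is a direct specialisation of \prettyref{thm:ext_FS_fix_top} combined with the injectivity from \prettyref{prop:injectivity}.
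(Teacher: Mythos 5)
Your proof is correct and follows essentially the same route as the paper, which simply invokes \prettyref{prop:frechet_bierstedt}, \prettyref{ex:subspace_bierstedt} b) and \prettyref{thm:ext_FS_fix_top} with the generator $(\id_{E^{\Omega}},\id_{\K^{\Omega}})$ for existence and \prettyref{prop:injectivity} for uniqueness. The redundant reindexing over $m\in\N$ that you carry out explicitly is exactly the bookkeeping the paper leaves implicit, and your verification of the chain-structured and diagonally-dominated conditions is accurate.
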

\begin{proof}
The existence of $F$ follows from \prettyref{prop:frechet_bierstedt}, \prettyref{ex:subspace_bierstedt} b) 
and \prettyref{thm:ext_FS_fix_top} with $(T^{E}_{m},T^{\K}_{m})_{m\in M}:=(\id_{E^{\Omega}},\id_{\K^{\Omega}})$. 
The uniqueness of $F$ is a result of \prettyref{prop:injectivity}.
\end{proof}

We have the following sufficient conditions on a family of weights $\mathcal{V}$ 
which guarantee the existence of a countable set 
$U\subset\C$ that fixes the topology of $\mathcal{CV}_{\overline{\partial}}(\C)$ due to Abanin and Varziev \cite{abanin2013}.

\begin{prop}\label{prop:AV_fix_top_Abanin}
Let $\mathcal{V}:=(\nu_{j})_{j\in\N}$ where $\nu_{j}(z):=\exp(a_{j}\mu(z)-\varphi(z))$, $z\in\C$, 
with some continuous, subharmonic function $\mu\colon\C\to[0,\infty)$, a continuous function $\varphi\colon\C\to\R$ 
and a strictly increasing, positive sequence $(a_{j})_{j\in\N}$ 
with $a:=\lim_{j\to\infty} a_{j}\in (0,\infty]$. Let there be
\begin{enumerate}
\item[(i)] $s\geq 0$ and $C>0$ such that $|\varphi(z)-\varphi(\zeta)|\leq C$ and $|\mu(z)-\mu(\zeta)|\leq C$ 
for all $z,\zeta\in\C$ with $|z-\zeta|\leq (1+|z|)^{-s}$,
\item[(ii)] $\max(\varphi(z),\mu(z))\leq |z|^{q}+C_{0}$ for some $q,C_{0}>0$ and
\item[(iii)] $\ln(|z|)=O(\mu(z))$ as $|z|\to\infty$ if $a=\infty$, or $\ln(|z|)=o(\mu(z))$ as $|z|\to\infty$ 
if $0<a<\infty$. 
\end{enumerate}
Let $(\lambda_{k})_{k\in\N}$ be the sequence of simple zeros of a function 
$L\in\mathcal{C}\widetilde{\mathcal{V}}_{\overline{\partial}}(\C)$ having no other zeros 
where $\widetilde{\mathcal{V}}:=(\nu_{j}^{2}/\nu_{m_{j}})_{j\in\N}$ for some sequence $(m_{j})_{j\in\N}$ in $\N$. 
Suppose that there are $j_{0}\in\N$ and 
a sequence of circles $\{z\in\C\;|\;|z|=R_{m}\}$ with $R_{m}\nearrow\infty$ such that 
\[
|L(z)|\nu_{j_{0}}(z)\geq C_{m},\quad m\in\N,\,z\in\C,\,|z|=R_{m},
\]
for some $C_{m}\nearrow\infty$ and 
\[
|L'(\lambda_{k})|\nu_{j_{0}}(\lambda_{k})\geq 1\quad\text{for all sufficiently large}\;k\in\N.
\]
Then $\mathcal{CV}_{\overline{\partial}}(\C)$ is a nuclear Fr\'echet space for all $a\in(0,\infty]$ 
and $U:=(\lambda_{k})_{k\in\N}$ fixes the topology 
of $\mathcal{CV}_{\overline{\partial}}(\C)$ if $a=\infty$. 
If $\mu$ is a radial function, i.e.\ $\mu(z)=\mu(|z|)$, $z\in\C$, with $\mu(2z)\sim\mu(z)$ as $|z|\to\infty$, then 
$U$ fixes the topology of $\mathcal{CV}_{\overline{\partial}}(\C)$ for all $a\in(0,\infty]$.
\end{prop}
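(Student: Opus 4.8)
The plan is to prove the two assertions separately, the nuclearity being essentially weight bookkeeping and the topology-fixing being the analytic core. For the first assertion I would verify that $\mathcal{V}=(\nu_{j})_{j\in\N}$ satisfies \prettyref{cond:weights} and then invoke \prettyref{prop:AV_CV_coincide_hol_har}, which yields at once that $\mathcal{CV}_{\overline{\partial}}(\C)$ is a nuclear Fr\'echet space for every $a\in(0,\infty]$. Concretely, I would set $r(z):=\min(1,(1+|z|)^{-s})$. The oscillation bound (i) gives, for $\|\zeta\|_{\infty}\le r(z)$, that $|\mu(z+\zeta)-\mu(z)|\le C$ and $|\varphi(z+\zeta)-\varphi(z)|\le C$, so $\nu_{j}(z+\zeta)=\exp(a_{j}\mu(z+\zeta)-\varphi(z+\zeta))$ differs from $\nu_{j}(z)$ by a factor in $[\e^{-(a_{j}+1)C},\e^{(a_{j}+1)C}]$; this yields $(\alpha.1)$ with $I_{1}(j):=j$ and $A_{1}(j):=\e^{2(a_{j}+1)C}$. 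For $(\alpha.2)$ and $(\alpha.3)$ I would exploit that $\nu_{j}/\nu_{i}=\exp(-(a_{i}-a_{j})\mu)$ for $i>j$, combined with the lower growth (iii) of $\mu$: if $a=\infty$ then $\mu(z)\ge \delta\ln|z|$ eventually, so choosing $i=I_{2}(j)$ (resp.\ $i=I_{3}(j)$) large enough makes $a_{i}-a_{j}$ large enough that $\exp(-(a_{i}-a_{j})\mu)$ is bounded by an $L^{1}(\C)$-function $\psi_{j}>0$ (resp.\ by $A_{3}(j)r$); if $0<a<\infty$ then $\ln|z|=o(\mu)$ forces $\exp(-(a_{i}-a_{j})\mu)$ to decay faster than any polynomial for every $i>j$, so $I_{2}(j)=I_{3}(j):=j+1$ suffices. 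The polynomial bound (ii) controls the behaviour for bounded $z$, so both conditions hold for all $a\in(0,\infty]$.

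The substantive part is that $U=(\lambda_{k})_{k\in\N}$ fixes the topology in the sense of \prettyref{def:fix_top_1}: for each $j$ one must find $i\ge j$ and $C>0$ with
\[
\sup_{z\in\C}|f(z)|\nu_{j}(z)\le C\sup_{k\in\N}|f(\lambda_{k})|\nu_{i}(\lambda_{k}),\qquad f\in\mathcal{CV}_{\overline{\partial}}(\C).
\]
The plan is to establish this sampling inequality following \cite{abanin2013} by reconstructing $f$ from its values at the zeros of $L$. Since the $\lambda_{k}$ are exactly the simple zeros of $L$, applying the residue theorem to $w\mapsto f(w)/((w-z)L(w))$ on the discs $|w|=R_{m}$ and letting $m\to\infty$ produces the interpolation series $f(z)=\sum_{k}\tfrac{f(\lambda_{k})}{L'(\lambda_{k})}\tfrac{L(z)}{z-\lambda_{k}}$; here the circle estimate $|L(w)|\nu_{j_{0}}(w)\ge C_{m}\to\infty$ on $|w|=R_{m}$, together with $\sup_{|w|=R_{m}}|f(w)|\nu_{j_{0}}(w)<\infty$, is precisely what forces the remainder integral to vanish in the limit.

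The hard part will be the weighted estimate of this series. I would bound each summand using $|f(\lambda_{k})|\le M/\nu_{i}(\lambda_{k})$ with $M:=\sup_{k}|f(\lambda_{k})|\nu_{i}(\lambda_{k})$, the nodal bound $1/|L'(\lambda_{k})|\le\nu_{j_{0}}(\lambda_{k})$ (valid for all large $k$, the finitely many exceptions being absorbed into $C$), and the membership $L\in\mathcal{C}\widetilde{\mathcal{V}}_{\overline{\partial}}(\C)$ bounding $|L(z)|$ in terms of the weights $\nu_{m_{j}}/\nu_{j}^{2}$. This reduces the claim to the uniform bound
\[
\nu_{j}(z)\sum_{k}\frac{\nu_{j_{0}}(\lambda_{k})}{\nu_{i}(\lambda_{k})}\,\frac{|L(z)|}{|z-\lambda_{k}|}\le C,\qquad z\in\C,
\]
which I would prove by splitting the sum into the nodes near $z$, handled via $(\alpha.1)$ and the fact that $L(z)/(z-\lambda_{k})$ extends holomorphically across $\lambda_{k}$ (so no spurious singularity appears), and the distant nodes, summed using the $L^{1}$-type decay from $(\alpha.2)$ against the growth control of $L$. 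This interlocking of the circle lower bound, the derivative lower bound at the nodes, and the weight-comparison inequalities is the genuine analytic core and is where I expect essentially all of the difficulty to concentrate; it is the content of the sufficient-set criterion of Abanin and Varziev. Equivalently, the restriction map $f\mapsto(f(\lambda_{k}))_{k}$ is then a topological isomorphism of $\mathcal{CV}_{\overline{\partial}}(\C)$ onto a subspace of $\ell\mathcal{V}(U)$.

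Finally, for $a\in(0,\infty)$ the crude index shifts in the distant-node estimate need not close, and here I would bring in the extra hypothesis that $\mu$ is radial with the doubling property $\mu(2z)\sim\mu(z)$. This lets one compare $\nu_{j}$ and $\nu_{i}$ after a fixed dilation of the argument and thereby run the same reconstruction with a finite limit $a$; this dilation mechanism is exactly how \cite{abanin2013} upgrades the conclusion from $a=\infty$ to all $a\in(0,\infty]$ in the radial case, and I would invoke their corresponding statement to conclude.
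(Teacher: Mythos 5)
Your proposal is correct and follows essentially the same route as the paper: the nuclearity is obtained by verifying \prettyref{cond:weights} (your choices of $r$, $\psi_{j}$ and the index shifts $I_{2},I_{3}$ driven by (iii) match the paper's computation up to the harmless $\sqrt{2}$ normalisation between the $\|\cdot\|_{\infty}$-boxes in $(\alpha.1)$ and the Euclidean condition in (i)) and then invoking \prettyref{prop:AV_CV_coincide_hol_har}, while the topology-fixing claim is, in the paper too, reduced to checking that the hypotheses mean $L\in\mathscr{L}(\Phi^{a}_{\varphi,\mu};U)$ and that (i) is slow variation, after which \cite[Theorem 2]{abanin2013} is cited. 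Your sketch of the Lagrange interpolation series and the weighted estimate is a correct unpacking of what that cited theorem contains, not a different argument, and like the paper you ultimately defer the hard sampling estimate (and the radial dilation upgrade for finite $a$) to Abanin and Varziev.
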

\begin{proof}
First, we check that \prettyref{cond:weights} is satisfied, which implies that 
$\mathcal{CV}_{\overline{\partial}}(\C)$ is a nuclear Fr\'echet space by \prettyref{prop:AV_CV_coincide_hol_har}. 
We set $k:=\max(s,2)$ and observe that (i) is also fulfilled with $k$ instead of $s$. 
Let $z\in\C$ and $\|\zeta\|_{\infty},\|\eta\|_{\infty}\leq (1/\sqrt{2})(1+|z|)^{-k}=:r(z)$. 
From $|\cdot|\leq \sqrt{2}\|\cdot\|_{\infty}$ and (i) it follows
\[
|\mu(z+\zeta)-\mu(z+\eta)|\leq |\mu(z+\zeta)-\mu(z)|+|\mu(z)-\mu(z+\eta)|\leq 2C
\] 
and thus $\mu(z+\zeta)\leq 2C+\mu(z+\eta)$. In the same way we obtain $-\varphi(z+\zeta)\leq 2C-\varphi(z+\eta)$. 
Hence we have
\[
a_{j}\mu(z+\zeta)-\varphi(z+\zeta)\leq 2C(a_{j}+1)+a_{j}\mu(z+\eta)-\varphi(z+\eta)
\]
for $j\in\N$, implying	
\[
\nu_{j}(z+\zeta)\leq \e^{2C(a_{j}+1)}\nu_{j}(z+\eta),
\]
which means that $(\alpha.1)$ of \prettyref{cond:weights} holds. By (iii) there are $\varepsilon>0$ and $R>0$ 
such that $\ln(|z|)\leq \varepsilon \mu(z)$ for all $z\in\C$ with $|z|\geq R$ if $a=\infty$. 
This yields for all $|z|\geq \max(2,R)$ that
\[
a_{j}\mu(z)+k\ln(1+|z|)\leq a_{j}\mu(z)+k\ln(|z|^2) = a_{j}\mu(z)+2k\ln(|z|)\leq a_{j}\mu(z)+2k\varepsilon\mu(z).
\] 
Since $a=\infty$, there is $n\in\N$ such that $a_{n}\geq a_{j}+2k\varepsilon$, resulting in 
\[
a_{j}\mu(z)+k\ln(1+|z|)\leq a_{n}\mu(z)
\] 
for all $|z|\geq \max(2,R)$. Therefore we derive 
\begin{equation}\label{eq:AV_fix_top}
a_{j}\mu(z)+k\ln(1+|z|)\leq a_{n}\mu(z)+k\ln(1+\max(2,R))
\end{equation}
for all $z\in\C$, which means that $(\alpha.2)$ and $(\alpha.3)$ hold with $\psi_{j}(z):=r(z)$. 
If $0<a<\infty$, for every $\varepsilon>0$ there is $R>0$ such that $\ln(|z|)\leq \varepsilon \mu(z)$ for all 
$z\in\C$ with $|z|\geq R$ by (iii). Thus we may choose $\varepsilon>0$ such that $a_{j+1}-a_{j}\geq 2k\varepsilon>0$ 
because $(a_{j})$ is strictly increasing. We deduce that \eqref{eq:AV_fix_top} with $n:=j+1$ 
holds in this case as well and $(\alpha.2)$ and $(\alpha.3)$, too.

Observing that the condition that $U=(\lambda_{k})_{k\in\N}$ is the sequence of simple zeros of a function 
$L\in\mathcal{C}\widetilde{\mathcal{V}}_{\overline{\partial}}(\C)$ means that 
$L\in\mathscr{L}(\Phi^{a}_{\varphi,\mu};U)$ and (i) that $\varphi$ and $\mu$ vary slowly w.r.t.\ $r(z):=(1+|z|)^{-s}$ 
in the notation of \cite[Definition, p.\ 579, 584]{abanin2013} and \cite[p.\ 585]{abanin2013}, respectively, 
the statement that $U$ fixes the topology is a consequence of \cite[Theorem 2, p.\ 585--586]{abanin2013}.
\end{proof}

\begin{rem}\fakephantomsection\label{rem:ex_fix_top_AP}
\begin{enumerate}
\item[a)] Let $D\subset\C$ be convex, bounded and open with $0\in D$. 
Let $\varphi(z):=H_{D}(z):=\sup_{\zeta\in D}\re(z\zeta)$, $z\in\C$, be the supporting 
function of $D$, $\mu(z):=\ln(1+|z|)$, $z\in\C$, and $a_{j}:=j$, $j\in\N$. 
Then $\varphi$ and $\mu$ fulfil the conditions of \prettyref{prop:AV_fix_top_Abanin} with $a=\infty$ 
by \cite[p.\ 586]{abanin2013} and the existence of an entire function $L$ which fulfils the conditions 
of \prettyref{prop:AV_fix_top_Abanin} is guaranteed by \cite[Theorem 1.6, p.\ 1537]{abanin2011}. 
Thus there is a countable set $U:=(\lambda_{k})_{k\in\N}\subset\C$ which fixes the topology 
in $A^{-\infty}_{D}:=\mathcal{CV}_{\overline{\partial}}(\C)$ with 
$\mathcal{V}:=(\exp(a_{j}\mu-\varphi))_{j\in\N}$.
\item[b)] An explicit construction of a set $U:=(\lambda_{k})_{k\in\N}\subset\C$ 
which fixes the topology in $A^{-\infty}_{D}$ is given 
in \cite[Algorithm 3.2, p.\ 3629]{abanin2010}. This construction does not rely on the entire function $L$. 
In particular (see \cite[p.\ 15]{bonet2017}), for $D:=\D$ we have $\varphi(z)=|z|$, 
for each $k\in\N$ we may take $l_{k}\in\N$, $l_{k}>2\pi k^2$, 
and set $\lambda_{k,j}:=kr_{k,j}$, $1\leq j\leq l_{k}$, where $r_{k,j}$ denote the $l_{k}$-roots of unity. 
Ordering $\lambda_{k,j}$ in a sequence of one index appropriately, 
we obtain a sequence which fixes the topology of $A^{-\infty}_{\D}$.
\item[c)] Let $\mu\colon\C\to[0,\infty)$ be a continuous, subharmonic, radial function 
which increases with $|z|$ and satisfies 
\begin{enumerate}
\item[(i)] $\sup_{\zeta\in\C,\|\zeta\|_{\infty}\leq r(z)}\mu(z+\zeta)\leq 
\inf_{\zeta\in\C,\|\zeta\|_{\infty}\leq r(z)}\mu(z+\zeta)+C$ 
for some continuous function $r\colon\C\to (0,1]$ and $C>0$,
\item[(ii)] $\ln(1+|z|^2)=o(\mu(|z|))$ as $|z|\to\infty$,
\item[(iii)] $\mu(2|z|)=O(\mu(|z|))$ as $|z|\to\infty$. 
\end{enumerate}
Then $\mathcal{V}:=(\exp(-(1/j)\mu))_{j\in\N}$ fulfils \prettyref{cond:weights} where $(\alpha.1)$ follows from (i) 
and $(\alpha.2)$, $(\alpha.3)$ as in the proof of \prettyref{prop:AV_fix_top_Abanin}. 
Thus $\mathcal{CV}_{\overline{\partial}}(\C)$ is a nuclear Fr\'echet space by \prettyref{prop:AV_CV_coincide_hol_har}.
If $\mu(|z|)=o(|z|^2)$ as $|z|\to\infty$ or $\mu(|z|)=|z|^2$, $z\in\C$, 
then $U:=\{\alpha n+\iu\beta m\;|\;n,m\in\Z\}$ fixes 
the topology in the space $A_{\mu}^{0}:=\mathcal{CV}_{\overline{\partial}}(\C)$ for any $\alpha,\beta>0$ 
by \cite[Corollary 4.6, p.\ 20]{bonet2017} and \cite[Proposition 4.7, p.\ 20]{bonet2017}, respectively. 
\item[d)] For instance, the conditions on $\mu$ in c) are  fulfilled for $\mu(z):=|z|^{\gamma}$, $z\in\C$, 
with $0<\gamma\leq 2$ by \cite[1.5 Examples (3), p.\ 205]{meise1987}. 
If $\gamma=1$, then $A_{\mu}^{0}=A^{0}_{\overline{\partial}}(\C)$ is the space 
of entire functions of exponential type zero (see \prettyref{rem:ex_NF_cond_weights}).
\item[e)] More general characterisations of countable sets that fix the topology of 
$\mathcal{CV}_{\overline{\partial}}(\C)$ can be found in \cite[Theorem 1, p.\ 580]{abanin2013} 
and \cite[Theorem 4.5, p.\ 17]{bonet2017}.
\end{enumerate}
\end{rem}

The spaces $A_{\mu}^{0}$ from c) are known as \emph{H\"ormander algebras} and the space $A^{-\infty}_{D}$ 
considered in a) is isomorphic to the strong dual of the \emph{Korenblum space} $A^{-\infty}(D)$ 
via Laplace transform by \cite[Proposition 4, p.\ 580]{melikhov2004}.

\addtocontents{toc}{\SkipTocEntry}
\subsection*{\texorpdfstring{$\Fv$}{Fv(Omega)} a Banach space and \texorpdfstring{$E$}{E} locally complete}

For a $\dom$-space $\Fv$, a set $U$ that fixes the topology in $\Fv$ and a separating subspace $G\subset E'$ 
we have
\begin{align*}
\mathcal{F}\nu_{G}(U,E)_{lb}=&\{f\in\mathcal{F}\nu_{G}(U,E)\;|\;N_{U}(f)\;\text{bounded in}\; E\}\\
=&\mathcal{F}\nu_{G}(U,E)\cap\ell\nu(U,E)
\end{align*}
where $N_{U}(f):=\{f(x)\nu(x)\;|\;x\in U\}$.
Let us recall the assumptions of \prettyref{rem:R_well-defined_Banach} but now $U$ fixes the topology.
Let $(T^{E},T^{\K})$ be a strong, consistent family for $(F,E)$ 
and a generator for $(\mathcal{F}\nu,E)$.
Let $\f$ and $\fe$ be $\varepsilon$-into-compatible and the inclusion $\Fv\hookrightarrow\f$ continuous. 
Consider a set $U$ which fixes the topology in $\Fv$ 
and a separating subspace $G\subset E'$.
For $u\in \f\varepsilon E$ such that $u(B_{\Fv}^{\circ \f'})$ is bounded in $E$
we have $R_{U,G}(f)\in\mathcal{F}\nu_{G}(U,E)$ with $f:=S(u)\in\Feps$ by \eqref{eq:well_def_B_unique}.
Further, $T^{\K}_{x}(\cdot)\nu(x)\in B_{\Fv}^{\circ \f'}$ for every $x\in\omega$, 
which implies that 
\[
\sup_{x\in U}p_{\alpha}(R_{U,G}(f)(x))\nu(x)
=\sup_{x\in U}p_{\alpha}\bigl(u(T^{\K}_{x}(\cdot)\nu(x))\bigr)\\
\leq \sup_{y'\in B_{\Fv}^{\circ \f'}}p_{\alpha}(u(y'))<\infty
\]
for all $\alpha\in\mathfrak{A}$ by consistency. Hence $R_{U,G}(f)\in\mathcal{F}\nu_{G}(U,E)_{lb}$.
Therefore the injective linear map
\[
R_{U,G}\colon \Feps \to \mathcal{F}\nu_{G}(U,E)_{lb},\;f\mapsto (T^{E}(f)(x))_{x\in U},
\]
is well-defined and the question we want to answer is:

\begin{que}
Let the assumptions of \prettyref{rem:R_well-defined_Banach} be fulfilled and $U$ fix the topology in $\Fv$.
When is the injective restriction map 
\[
R_{U,G}\colon \Feps\to \mathcal{F}\nu_{G}(U,E)_{lb},\;f\mapsto (T^{E}(f)(x))_{x\in U},
\]
surjective?
\end{que}

\begin{prop}[{\cite[Proposition 3.1, p.\ 692]{F/J/W}}]\label{prop:ext_B_fix_top}
Let $E$ be a locally complete lcHs, $G\subset E'$ a separating subspace and $Z$ a Banach space 
whose closed unit ball $B_{Z}$ is a 
compact subset of an lcHs $Y$. Let $B_{1}\subset B_{Z}^{\circ Y'}$ such that 
$B_{1}^{\circ Z}:=\{z\in Z\;|\;\forall\;y'\in B_{1}:\;|y'(z)|\leq 1\}$ is bounded in $Z$.  
If $\mathsf{A}\colon X:=\operatorname{span}B_{1}\to E$ is a linear map which is bounded on $B_{1}$ 
such that there is a $\sigma(E',E)$-dense subspace $G\subset E'$ with $e'\circ \mathsf{A}\in Z$ for all $e'\in G$, 
then there exists a (unique) extension $\widehat{\mathsf{A}}\in Y\varepsilon E$ of $\mathsf{A}$ 
such that $\widehat{\mathsf{A}}(B_{Z}^{\circ Y'})$ is bounded in $E$.
\end{prop}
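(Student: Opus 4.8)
The plan is to construct $\widehat{\mathsf A}$ directly as a continuous linear operator $Y_{\kappa}'\to E$, so that $\widehat{\mathsf A}\in L(Y_{\kappa}',E)=Y\varepsilon E$, by assigning to each $y'\in Y'$ a representative of a naturally defined functional. I emphasise at the outset that one cannot simply invoke \prettyref{prop:ext_B_set_uni}: there $G$ is required to \emph{determine} boundedness, whereas here $G$ is only separating and $\sigma(E',E)$-dense. This weakening of the hypothesis on $G$ is precisely what the two extra assumptions — $B_{1}^{\circ Z}$ bounded in $Z$ and $\mathsf A$ bounded on $B_{1}$ — are meant to compensate, and handling it is the whole difficulty.

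First I would extract two consequences of $B_{1}^{\circ Z}$ being bounded. Fix $\rho>0$ with $B_{1}^{\circ Z}\subset\rho B_{Z}$. Then $X=\operatorname{span}B_{1}$ separates the points of $Z$: if $x(z)=0$ for all $x\in B_{1}$, then the line $\K z\subset B_{1}^{\circ Z}\subset\rho B_{Z}$ is bounded, forcing $z=0$; by the bipolar theorem $X$ is therefore $\sigma(Y',Z)$-dense. Consequently, since $e'\circ\mathsf A\in Z$ for $e'\in G$ means there is a unique $z_{e'}\in Z$ with $e'(\mathsf A(x))=x(z_{e'})$ for all $x\in X$, the assignment $\mathscr R\colon G\to Z$, $\mathscr R(e'):=z_{e'}$, is a well-defined linear map, and evaluating on $B_{1}$ yields the estimate $\|z_{e'}\|_{Z}\le\rho\sup_{x\in B_{1}}|e'(\mathsf A(x))|$.

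The core step is the representation of the values of $\widehat{\mathsf A}$. As $\mathsf A$ is bounded on $B_{1}$, the set $D:=\oacx(\mathsf A(B_{1}))$ is a closed, bounded, absolutely convex subset of $E$, hence a Banach disk and $E_{D}$ a Banach space by the local completeness of $E$ (cf.\ \prettyref{chap:notation}). For $y'\in Y'$ I set $\Phi_{y'}\colon G\to\K$, $\Phi_{y'}(e'):=y'(z_{e'})=y'(\mathscr R(e'))$, which is linear in $e'$ and in $y'$; combining $|y'(z_{e'})|\le\|y'_{\mid Z}\|_{Z'}\|z_{e'}\|_{Z}$ with the estimate above gives $|\Phi_{y'}(e')|\le\rho\,\|y'_{\mid Z}\|_{Z'}\,p_{D}(e')$, where $p_{D}(e')=\sup_{v\in D}|e'(v)|$ is the support functional of $D$, i.e.\ the norm of the restriction $e'_{\mid E_{D}}\in E_{D}'$. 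Thus $\Phi_{y'}$ factors over the restriction map $E'\to E_{D}'$ and, by Hahn--Banach, extends to an element of $E_{D}''$ of norm $\le\rho\|y'_{\mid Z}\|_{Z'}$. The main obstacle is to show that this element actually lies in $E_{D}\subset E$, so that it defines $\widehat{\mathsf A}(y')\in\rho\|y'_{\mid Z}\|_{Z'}D$ representing $\Phi_{y'}$ on $G$. Here I would approximate: the scaled bipolar relation $\rho^{-1}B_{Z'}\subset\oacx(B_{1})$ (closure w.r.t.\ $\sigma(Z',Z)$) yields a net $(x_{\tau})$ in $\operatorname{acx}(\rho B_{1})\subset X$ with $x_{\tau}\to y'_{\mid Z}$ in $\sigma(Z',Z)$; then $\mathsf A(x_{\tau})\in\rho D$ and $e'(\mathsf A(x_{\tau}))=x_{\tau}(z_{e'})\to y'(z_{e'})=\Phi_{y'}(e')$ for every $e'\in G$, so $\mathsf A(x_{\tau})$ represents $\Phi_{y'}$ weakly along $G$. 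Upgrading this to membership in $E$ under the mere separation property of $G$ is exactly the delicate point: one combines the $\sigma(E_{D}'',E_{D}')$-compactness of the ball of $E_{D}''$ (Alao\u{g}lu--Bourbaki) with local completeness and Grothendieck's completeness theorem, using the compactness of $B_{Z}$ in $Y$ to pin down the correct continuity. This is the Banach-disk analogue of the argument showing that $R_{f}^{t}$ lands in $\mathcal J(E)$ in the proof of \prettyref{thm:full_linearisation}.

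Finally I would assemble and check $\widehat{\mathsf A}$. Linearity in $y'$ follows from linearity of $y'\mapsto\Phi_{y'}$ together with uniqueness of representatives ($G$ separating). For $y'=x\in X$ the element $\mathsf A(x)\in E$ itself satisfies $e'(\mathsf A(x))=x(z_{e'})=\Phi_{x}(e')$ for $e'\in G$, so $\widehat{\mathsf A}(x)=\mathsf A(x)$ and $\widehat{\mathsf A}$ extends $\mathsf A$. Since $\widehat{\mathsf A}(y')\in\rho\|y'_{\mid Z}\|_{Z'}D$, for every $\alpha\in\mathfrak A$ one has $p_{\alpha}(\widehat{\mathsf A}(y'))\le\rho(\sup_{v\in D}p_{\alpha}(v))\,\sup_{z\in B_{Z}}|y'(z)|$, and as $B_{Z}$ is absolutely convex and compact in $Y$ this is precisely a defining seminorm estimate for $L(Y_{\kappa}',E)$, giving $\widehat{\mathsf A}\in Y\varepsilon E$; the same inclusion gives $\widehat{\mathsf A}(B_{Z}^{\circ Y'})\subset\rho D$, which is bounded in $E$. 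Uniqueness of $\widehat{\mathsf A}$ under these two properties follows because two such extensions agree on the $\sigma(Y',Z)$-dense subspace $X$ and are bounded, hence $\kappa$-$\sigma$-continuous, on the equicontinuous set $B_{Z}^{\circ Y'}$.
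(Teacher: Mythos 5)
A preliminary remark: the paper states this proposition purely as a citation of \cite[Proposition 3.1, p.\ 692]{F/J/W} and contains no proof of it, so your argument can only be measured against the result itself. Your preparatory steps are correct: boundedness of $B_{1}^{\circ Z}$ does give that $X$ separates the points of $Z$, hence that $z_{e'}$ is unique with $\|z_{e'}\|_{Z}\leq\rho\sup_{x\in B_{1}}|e'(\mathsf A(x))|\leq\rho\, p_{D}(e')$ for $D=\oacx(\mathsf A(B_{1}))$, and your closing estimate $p_{\alpha}(\widehat{\mathsf A}(y'))\leq\rho\sup_{v\in D}p_{\alpha}(v)\sup_{z\in B_{Z}}|y'(z)|$ is exactly the $\kappa(Y',Y)$-continuity needed for membership in $Y\varepsilon E$. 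The gap is the step you yourself call delicate, and the reduction you set up there cannot be completed as stated. You produce a net $(\mathsf A(x_{\tau}))$ in the Banach disk $\rho D$ converging pointwise on $G$ to $\Phi_{y'}$ and then invoke Alao\u{g}lu--Bourbaki, local completeness and Grothendieck's completeness theorem. But a bounded net in a Banach disk of a locally complete --- even Banach --- space converging pointwise on a separating (even dense, even equal to $E'$) subspace need not have its limit in $E$: in $E=c_{0}$ with $D$ the closed unit ball and $G=E'=\ell^{1}$, the partial sums of $(1,1,1,\dots)$ lie in $D$ and converge pointwise on $\ell^{1}$ to an element of $\ell^{\infty}\setminus c_{0}$. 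So the ingredients you list, applied to the data you have assembled, only place $\Phi_{y'}$ in $E_{D}''$, not in $\mathcal J(E)$.

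What is missing is the precise mechanism by which the compactness of $B_{Z}$ in $Y$ enters, and it is not the approximation of $y'|_{Z}$ by elements of $X$. The usable fact is that $R\colon e'\mapsto z_{e'}$ maps $G\cap D^{\circ}$ into the compact set $\rho B_{Z}$ and has closed graph in $(G,\sigma(E',E))\times(Z,\sigma(Z,X))$ because $x(z_{e'})=e'(\mathsf A(x))$; since on the $Y$-compact set $\rho B_{Z}$ the Hausdorff topologies $\sigma(Z,X)$, $\sigma(Y,Y')$ and that of $Y$ coincide, a closed-graph-into-a-compactum argument yields that $R$ is continuous from $(G\cap D^{\circ},\sigma(E',E))$ into $(\rho B_{Z},\tau_{Y})$, whence $\Phi_{y'}=y'\circ R$ is $\sigma(E',E)$-continuous on $G\cap D^{\circ}$. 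This continuity --- which pointwise convergence of your bounded net does not supply --- is exactly what a Grothendieck-type completeness argument in the Banach space $E_{D}$ requires in order to force the representing element into $E$ rather than into $E_{D}''$; until it is established and exploited, the existence of $\widehat{\mathsf A}(y')\in E$ is unproven. A smaller defect: in your uniqueness argument $B_{Z}^{\circ Y'}$ is not an equicontinuous subset of $Y'$ (it is the polar of a compact set, not of a zero neighbourhood), and $\sigma(Y',Z)$-density of $X$ does not imply $\kappa(Y',Y)$-density; uniqueness should instead be obtained by noting that for $u\in Y\varepsilon E$ vanishing on $X$ with $u(B_{Z}^{\circ Y'})$ bounded, each $e'\circ u$ is represented via Mackey--Arens by an element of $Y$ which, being bounded on $B_{Z}^{\circ Y'}$, lies in $\operatorname{span}B_{Z}=Z$ by the bipolar theorem and is annihilated by $X$, hence is zero.
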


The following theorem is a generalisation of \cite[Theorem 3.2, p.\ 693]{F/J/W} and \cite[Theorem 12, p.\ 5]{jorda2013}.

\begin{thm}\label{thm:ext_B_fix_top}
Let $E$ be a locally complete lcHs, $G\subset E'$ a separating subspace and $\f$ 
and $\fe$ be $\varepsilon$-into-compatible. 
Let $(T^{E},T^{\K})$ be a generator for $(\mathcal{F}\nu,E)$ 
and a strong, consistent family for $(F,E)$, $\Fv$ a Banach space
whose closed unit ball $B_{\Fv}$ 
is a compact subset of $\f$ and $U$ fix the topology in $\Fv$.
Then the restriction map 
\[
 R_{U,G}\colon \Feps \to \mathcal{F}\nu_{G}(U,E)_{lb} 
\]
is surjective.
\end{thm}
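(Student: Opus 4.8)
The plan is to run the argument of \prettyref{thm:ext_B_unique} almost verbatim, the only structural change being that the abstract extension tool \prettyref{prop:ext_B_set_uni} (tailored to sets of uniqueness) is replaced by \prettyref{prop:ext_B_fix_top} (tailored to sets fixing the topology). Fix $f\in\mathcal{F}\nu_{G}(U,E)_{lb}$. I would set $Y:=\f$, $Z:=\Fv$, and build the test set inside $\f'$ as
\[
B_{1}:=\{T^{\K}_{x}(\cdot)\nu(x)\;|\;x\in U\}\subset\f',\qquad X:=\operatorname{span}B_{1}.
\]
The crucial feature of this setup is the scaling by $\nu(x)$: it is chosen precisely so that the images of $B_{1}$ under the map to be extended form the set $N_{U}(f)$, while $B_{1}$ itself still sits in $B_{\Fv}^{\circ \f'}$. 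Define $\mathsf{A}\colon X\to E$ on the generators by $\mathsf{A}(T^{\K}_{x}(\cdot)\nu(x)):=f(x)\nu(x)$. As in the thin-set case, the well-definedness of $\mathsf{A}$ would follow from $G$ being $\sigma(E',E)$-dense: if a finite combination $\sum_{k}\lambda_{k}T^{\K}_{x_{k}}(\cdot)\nu(x_{k})$ vanishes in $\f'$, then applying any $e'\in G$ to $\sum_{k}\lambda_{k}f(x_{k})\nu(x_{k})$ and using $T^{\K}(f_{e'})(x)=(e'\circ f)(x)$ rewrites this as the value of the vanishing functional at $f_{e'}$, hence $0$; separation of $G$ then forces the combination to vanish in $E$.

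Second, I would verify the hypotheses of \prettyref{prop:ext_B_fix_top}. (i) $B_{1}\subset B_{\Fv}^{\circ \f'}$: for $g\in B_{\Fv}$ one has $|(T^{\K}_{x}(\cdot)\nu(x))(g)|=|T^{\K}(g)(x)|\nu(x)\leq|g|_{\nu}\leq 1$ by the very definition of $|\cdot|_{\nu}$. (ii) $B_{1}^{\circ\Fv}$ is bounded in $\Fv$: this is exactly the content of $U$ fixing the topology in the sense of \prettyref{def:fix_top_1} (with $J$, $M$ singletons), since $\sup_{x\in U}|T^{\K}(g)(x)|\nu(x)\leq 1$ together with $|g|_{\nu}\leq C\sup_{x\in U}|T^{\K}(g)(x)|\nu(x)$ yields $|g|_{\nu}\leq C$. (iii) $\mathsf{A}$ is bounded on $B_{1}$, because $\mathsf{A}(B_{1})=N_{U}(f)$ is bounded in $E$ by $f\in\mathcal{F}\nu_{G}(U,E)_{lb}\subset\ell\nu(U,E)$. (iv) For each $e'\in G$ one has $e'\circ\mathsf{A}\in Z$: reading $X\subset\f'$ as a subspace of $\Fv'$ via the continuous inclusion $\Fv\hookrightarrow\f$ (which exists because $B_{\Fv}$ is bounded, being compact, in $\f$), the functional $e'\circ\mathsf{A}$ coincides on $B_{1}$ with $f_{e'}\in\Fv$, since $e'(f(x)\nu(x))=(e'\circ f)(x)\nu(x)=T^{\K}(f_{e'})(x)\nu(x)$.

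Third, I would conclude. \prettyref{prop:ext_B_fix_top} then furnishes an extension $\widehat{\mathsf{A}}\in\f\varepsilon E$ of $\mathsf{A}$ with $\widehat{\mathsf{A}}(B_{\Fv}^{\circ \f'})$ bounded in $E$; hence $F:=S(\widehat{\mathsf{A}})\in\Feps$ by the very definition of the mingle-mangle space (see \prettyref{prop:mingle-mangle}). Finally, consistency of $(T^{E},T^{\K})$ for $(F,E)$ in the sense of \prettyref{def:cons_strong} gives, for $x\in U\subset\omega$,
\[
T^{E}(F)(x)=\widehat{\mathsf{A}}(T^{\K}_{x})=\frac{1}{\nu(x)}\widehat{\mathsf{A}}\bigl(T^{\K}_{x}(\cdot)\nu(x)\bigr)
=\frac{1}{\nu(x)}\mathsf{A}\bigl(T^{\K}_{x}(\cdot)\nu(x)\bigr)=f(x),
\]
so that $R_{U,G}(F)=f$, which proves the surjectivity.

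I expect the only genuinely delicate points to be step (iv) together with the matching of ``fixing the topology'' to the boundedness of $B_{1}^{\circ Z}$ in step (ii); everything else is bookkeeping about which space each object lives in. In particular, the passage $e'\circ\mathsf{A}\in Z$ requires carefully viewing the generators $T^{\K}_{x}(\cdot)\nu(x)\in\f'$ as elements of $\Fv'$ by restriction, so that $f_{e'}\in\Fv$ can play the role of the required element of $Z$ in the duality $\langle X,Z\rangle$; this is precisely where the compactness of $B_{\Fv}$ in $\f$ (which makes $\Fv\hookrightarrow\f$ continuous and identifies $y'_{\mid\Fv}\in\Fv'$ for $y'\in\f'$) enters, exactly as in the reduction at the start of the proof of \prettyref{thm:ext_B_unique}. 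The scaling by $\nu$ is the one genuinely creative ingredient, since it is what simultaneously keeps $B_{1}$ inside the polar $B_{\Fv}^{\circ \f'}$ and identifies $\mathsf{A}(B_{1})$ with the a priori bounded set $N_{U}(f)$.
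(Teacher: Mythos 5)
Your proposal is correct and follows essentially the same route as the paper's own proof: the same set $B_{1}=\{T^{\K}_{x}(\cdot)\nu(x)\mid x\in U\}$, the same verification that $B_{1}\subset B_{\Fv}^{\circ\f'}$ and that $B_{1}^{\circ\Fv}$ is bounded via the fixing-the-topology estimate, the same identification $\mathsf{A}(B_{1})=N_{U}(f)$ and $e'\circ\mathsf{A}=f_{e'}$, and the same conclusion via \prettyref{prop:ext_B_fix_top} and consistency. No gaps.
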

\begin{proof}
Let $f\in \mathcal{F}\nu_{G}(U,E)_{lb}$. 
We set $B_{1}:=\{T^{\K}_{x}(\cdot)\nu(x)\;|\;x\in U\}$, 
$X:=\operatorname{span}B_{1}$, $Y:=\f$ and $Z:=\Fv$. 
We have $B_{1}\subset Y'$ since $(T^{E},T^{\K})$ is a consistent family for $(F,E)$. 
If $f\in B_{Z}$, then
\[
|T^{\K}_{x}(f)\nu(x)|\leq |f|_{\Fv}\leq 1
\]
for all $x\in U$ and thus $B_{1}\subset B_{Z}^{\circ Y'}$. 
Further on, there is $C>0$ such that for all $f\in B_{1}^{\circ Z}$
\[
|f|_{\Fv}\leq C\sup_{x\in U}|T^{\K}_{x}(f)|\nu(x)
\leq C
\]
as $U$ fixes the topology in $Z$, implying the boundedness of $B_{1}^{\circ Z}$ in $Z$.
Let $\mathsf{A}\colon X\to E$ be the linear map determined by 
\[
\mathsf{A}(T^{\K}_{x}(\cdot)\nu(x)):=f(x)\nu(x). 
\]
The map $\mathsf{A}$ is well-defined since $G$ is $\sigma(E',E)$-dense, and 
bounded on $B_{1}$ because $\mathsf{A}(B_{1})=N_{U}(f)$.
Let $e'\in G$ and $f_{e'}$ be the unique element in $\mathcal{F}\nu(\Omega)$ such that 
$T^{\K}(f_{e'})(x)=(e'\circ f)(x)$ for all $x \in U$, which implies 
$T^{\K}(f_{e'})(x)\nu(x)=(e'\circ \mathsf{A})(T^{\K}_{x}(\cdot)\nu(x))$.
Again, this equation allows us to consider $f_{e'}$ as a linear form on $X$ 
(by setting $f_{e'}(T^{\K}_{x}(\cdot)\nu(x)):=(e'\circ \mathsf{A})(T^{\K}_{x}(\cdot)\nu(x))$), 
which yields $e'\circ \mathsf{A}\in\mathcal{F}\nu(\Omega)=Z$ for all $e'\in G$. 
Hence we can apply \prettyref{prop:ext_B_fix_top} and obtain an extension 
$\widehat{\mathsf{A}}\in Y\varepsilon E$ of $\mathsf{A}$ 
such that $\widehat{\mathsf{A}}(B_{Z}^{\circ Y'})$ is bounded in $E$. 
We set $\widetilde{F}:=S(\widehat{\mathsf{A}})\in\Feps$ and get for all $x\in U$ that
\[
T^{E}(\widetilde{F})(x)=T^{E}S(\widehat{\mathsf{A}})(x)=\widehat{\mathsf{A}}(T^{\K}_{x})
=\frac{1}{\nu(x)}\mathsf{A}(T^{\K}_{x}(\cdot)\nu(x))=f(x)
\]
by consistency for $(F,E)$, yielding $R_{U,G}(\widetilde{F})=f$.
\end{proof}

\begin{cor}\label{cor:hypo_weighted_ext_fix_top}
Let $E$ be a locally complete lcHs, $G\subset E'$ a separating subspace, $\Omega\subset\R^{d}$ open, 
$P(\partial)^{\K}$ a hypoelliptic linear partial differential operator, $\nu\colon\Omega\to(0,\infty)$ continuous and 
$U$ fix the topology in $\mathcal{C}\nu_{P(\partial)}(\Omega)$.
If $f\colon U\to E$ is a function in $\ell\nu(U,E)$ such that $e'\circ f$ 
admits an extension $f_{e'}\in\mathcal{C}\nu_{P(\partial)}(\Omega)$ 
for every $e'\in G$, then there exists a unique extension $F\in\mathcal{C}\nu_{P(\partial)}(\Omega,E)$ of $f$.
\end{cor}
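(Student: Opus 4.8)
The plan is to recognise this as the \emph{thick-sets} counterpart of \prettyref{cor:hypo_weighted_ext_unique} and to deduce it from the main extension theorem \prettyref{thm:ext_B_fix_top} of this subsection, after exhibiting $\mathcal{C}\nu_{P(\partial)}(\Omega,E)$ as a weighted space $\FvE$. Concretely, I would take $\f:=(\mathcal{C}^{\infty}_{P(\partial)}(\Omega),\tau_{c})$ and $\fe:=(\mathcal{C}^{\infty}_{P(\partial)}(\Omega,E),\tau_{c})$, keep the single continuous weight $\nu$, and choose the identity operators $(T^{E},T^{\K}):=(\id_{E^{\Omega}},\id_{\K^{\Omega}})$, so that $\mathcal{F}\nu(\Omega)=\mathcal{C}\nu_{P(\partial)}(\Omega)$ and $\mathcal{F}\nu(\Omega,E)=\mathcal{C}\nu_{P(\partial)}(\Omega,E)$ with $(T^{E},T^{\K})$ a generator for $(\mathcal{F}\nu,E)$. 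This is exactly the reduction already used in \prettyref{cor:hypo_weighted_ext_unique}; only the final appeal changes from \prettyref{thm:ext_B_unique} to \prettyref{thm:ext_B_fix_top}.

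Next I would check the hypotheses of \prettyref{thm:ext_B_fix_top}. By \prettyref{prop:co_top_isomorphism} the spaces $\f$ and $\fe$ are $\varepsilon$-compatible (in particular $\varepsilon$-into-compatible) and $(\id_{E^{\Omega}},\id_{\K^{\Omega}})$ is a strong, consistent family for $(F,E)$. By \prettyref{prop:frechet_bierstedt} the space $\Fv=\mathcal{C}\nu_{P(\partial)}(\Omega)$ is a Banach space, and its closed unit ball $B_{\Fv}$ is bounded in $\f$ by the estimate \eqref{eq:hypo_weighted_Frechet}; since $\f$ is a Fr\'{e}chet--Schwartz, hence Montel, space and $B_{\Fv}$ is readily seen to be $\tau_{c}$-closed, the ball $B_{\Fv}$ is in fact compact in $\f$. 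Finally $U$ fixes the topology in $\Fv$ by hypothesis. Thus every assumption of \prettyref{thm:ext_B_fix_top} is met, with $G$ the given separating subspace.

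It then remains to place the given $f$ in the correct restriction space $\mathcal{F}\nu_{G}(U,E)_{lb}=\mathcal{F}\nu_{G}(U,E)\cap\ell\nu(U,E)$: membership in $\ell\nu(U,E)$ is the standing hypothesis $f\in\ell\nu(U,E)$, while membership in $\mathcal{F}\nu_{G}(U,E)$ is exactly the assumption that each $e'\circ f$, $e'\in G$, extends to some $f_{e'}\in\mathcal{C}\nu_{P(\partial)}(\Omega)$ (here $T^{\K}=\id$). Applying \prettyref{thm:ext_B_fix_top} yields $\widetilde{F}\in\Feps$ with $R_{U,G}(\widetilde{F})=f$, and since $\f$ and $\fe$ are $\varepsilon$-compatible, \prettyref{prop:mingle-mangle} c) identifies $\Feps=\FvE=\mathcal{C}\nu_{P(\partial)}(\Omega,E)$ as linear spaces. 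Because $T^{E}=\id_{E^{\Omega}}$, the relation $R_{U,G}(\widetilde{F})=f$ simply says that $\widetilde{F}$ restricts to $f$ on $U$, so $F:=\widetilde{F}$ is the sought extension. For uniqueness I would invoke that a set fixing the topology is in particular a set of uniqueness, whence \prettyref{prop:injectivity} makes the restriction map injective on $\FvE$.

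Since every ingredient is already supplied by the preceding results, there is no genuine analytic difficulty here; the only points needing care are structural. The first is confirming the compactness of $B_{\Fv}$ in $\f$, which is precisely where the mere local completeness of $E$ does not suffice and one must route through the locally-complete-$E$ mechanism of \prettyref{prop:ext_B_fix_top} underlying \prettyref{thm:ext_B_fix_top}; the second is the identification $\Feps=\FvE$ via \prettyref{prop:mingle-mangle} c). The expected ``obstacle'' is thus bookkeeping: verifying that the single-weight space really fits \prettyref{def:weighted_space} and that the hypotheses of \prettyref{thm:ext_B_fix_top} are matched verbatim, rather than establishing any new estimate.
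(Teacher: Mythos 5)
Your proposal is correct and follows essentially the same route as the paper: the paper's proof likewise observes that $f\in\mathcal{F}\nu_{G}(U,E)_{lb}$ and applies \prettyref{thm:ext_B_fix_top}, with all hypotheses (the choice $\f=(\mathcal{C}^{\infty}_{P(\partial)}(\Omega),\tau_{c})$, the identity generator, the compactness of $B_{\Fv}$ in $\f$, and the identification $\Feps=\FvE$ via \prettyref{prop:mingle-mangle} c)) already verified in the proof of \prettyref{cor:hypo_weighted_ext_unique}. The only difference is that you spell out those verifications explicitly while the paper cites them.
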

\begin{proof}
Observing that $f\in\mathcal{F}\nu_{G}(U,E)_{lb}$ with $\mathcal{F}\nu(\Omega)=\mathcal{C}\nu_{P(\partial)}(\Omega)$, 
our statement follows directly from \prettyref{thm:ext_B_fix_top} 
whose conditions are fulfilled by the proof of \prettyref{cor:hypo_weighted_ext_unique}. 
\end{proof}

Sets that fix the topology in $\mathcal{C}\nu_{P(\partial)}(\Omega)$ for different weights $\nu$ 
are well-studied if $P(\partial)=\overline{\partial}$ is the Cauchy--Riemann operator.
If $\Omega\subset\C$ is open, $P(\partial)=\overline{\partial}$ and $\nu=1$, 
then $\mathcal{C}\nu_{P(\partial)}(\Omega)=H^{\infty}(\Omega)$ is the space of bounded holomorphic functions 
on $\Omega$. 
Brown, Shields and Zeller characterise the countable discrete sets $U:=(z_{n})_{n\in\N}\subset\Omega$ that 
fix the topology in $H^{\infty}(\Omega)$ 
with $C=1$ and equality in \prettyref{def:fix_top_1} for Jordan domains $\Omega$ in 
\cite[Theorem 3, p.\ 167]{brownshieldszeller1960}.
In particular, they prove for $\Omega=\D$ that a discrete $U=(z_{n})_{n\in\N}$ fixes the topology in 
$H^{\infty}(\D)$ if and only if almost 
every boundary point is a non-tangential limit of a sequence in $U$. 
Bonsall obtains the same characterisation for bounded harmonic functions, i.e.\ $P(\partial)=\Delta$ and 
$\nu=1$, on $\Omega=\D$ by \cite[Theorem 2, p.\ 473]{bonsall1987}. 
An example of such a set $U=(z_{n})_{n\in\N}\subset\D$ is constructed in
\cite[Remark 6, p.\ 172]{brownshieldszeller1960}.
Probably the first example of a countable discrete set $U\subset\D$ that fixes the topology in $H^{\infty}(\D)$ 
is given by Wolff in \cite[p.\ 1327]{wolff1921} (cf.\ \cite[Theorem (Wolff), p.\ 402]{grosse-erdmann2004}). 
In \cite[4.14 Theorem, p.\ 255]{rubelshields1966} Rubel and Shields give a charaterisation of sets $U\subset\Omega$ 
that fix the topology in $H^{\infty}(\Omega)$ by means of bounded complex measures where $\Omega\subset\C$ is open 
and connected. The existence of a countable $U$ fixing the topology in $H^{\infty}(\Omega)$ is shown in 
\cite[4.15 Proposition, p.\ 256]{rubelshields1966}.
In the case of several complex variables the existence of such a countable $U$ is treated by Sibony in 
\cite[Remarques 4 b), p.\ 209]{sibony1975}
and by Massaneda and Thomas in \cite[Theorem 2, p.\ 838]{massaneda2000}.

If $\Omega=\C$ and $P(\partial)=\overline{\partial}$, 
then $\mathcal{C}\nu_{P(\partial)}(\Omega)=:F_{\nu}^{\infty}(\C)$ is 
a generalised $L^{\infty}$-version of the Bargmann--Fock space. 
In the case that $\nu(z)=\exp(-\alpha |z|^{2}/2)$, $z\in\C$, for some $\alpha>0$, Seip and Wallst\'en show 
in \cite[Theorem 2.3, p.\ 93]{seip1992b} that a countable discrete set $U\subset\C$ fixes the topology 
in $F_{\nu}^{\infty}(\C)$ if and only if $U$ contains a uniformly discrete subset $U'$ with lower uniform 
density $D^{-}(U')>\alpha/\pi$ (the proof of sufficiency is given in \cite{seip1992c} and the result was announced 
in \cite[Theorem 1.3, p.\ 324]{seip1992a}). 
A generalisation of this result using lower angular densities is given by Lyubarski{\u{\i}} and Seip 
in \cite[Theorem 2.2, p.\ 162]{lyubarskij1994} to weights of the form $\nu(z)=\exp(-\phi(\arg z)|z|^{2}/2)$, $z\in\C$,
with a $2\pi$-periodic $2$-trigonometrically convex function $\phi$ such that $\phi\in\mathcal{C}^{2}([0,2\pi])$ 
and $\phi(\theta)+(1/4)\phi''(\theta)>0$ for all $\theta\in[0,2\pi]$.
An extension of the results in \cite{seip1992b} to weights of the form $\nu(z)=\exp(-\phi(z))$, $z\in\C$, 
with a subharmonic function $\phi$ such that $\Delta\phi(z)\sim 1$ is given in 
\cite[Theorem 1, p.\ 249]{ortegacerda1998} by Ortega-Cerd{\`a} and Seip. 
Here, $f(x)\sim g(x)$ for two functions $f,g\colon\Omega\to\R$ means 
that there are $C_{1},C_{2}>0$ such that $C_{1}g(x)\leq f(x)\leq C_{2}g(x)$ for all $x\in\Omega$.  
Marco, Massaneda and Ortega-Cerd{\`a} describe sets that fix the topology in $F_{\nu}^{\infty}(\C)$ with 
$\nu(z)=\exp(-\phi(z))$, $z\in\C$, for some subharmonic function $\phi$ whose Laplacian $\Delta\phi$ is a 
doubling measure (see \cite[Definition 5, p.\ 868]{marco2003}), 
e.g.\ $\phi(z)=|z|^{\beta}$ for some $\beta>0$, in \cite[Theorem A, p.\ 865]{marco2003}.
The case of several complex variables is handled by Ortega-Cerd{\`a}, Schuster and Varolin 
in \cite[Theorem 2, p.\ 81]{ortegacerda2006}.

If $\Omega=\D$ and $P(\partial)=\overline{\partial}$, then 
$\mathcal{C}\nu_{P(\partial)}(\Omega)=:A_{\nu}^{\infty}(\D)$ is 
a generalised $L^{\infty}$-version of the weighted Bergman space (and of $H^{\infty}(\D)$). 
For $\nu(z)=(1-|z|^{2})^{n}$, $z\in\D$, for some $n\in\N$, Seip proves that a countable discrete set 
$U\subset\D$ fixes the topology in $A_{\nu}^{\infty}(\D)$ if and only if $U$ contains a uniformly discrete subset 
$U'$ with lower uniform density $D^{-}(U')>n$ by \cite[Theorem 1.1, p.\ 23]{seip1993}, and gives a typical example 
in \cite[p.\ 23]{seip1993}.
Later on, this is extended by Seip in \cite[Theorem 2, p.\ 718]{seip1998} to weights $\nu(z)=\exp(-\phi(z))$, $z\in\D$, 
with a subharmonic function $\phi$ such that $\Delta\phi(z)\sim(1-|z|^{2})^{-2}$, 
e.g.\ $\phi(z)=-\beta\ln(1-|z|^2)$, $z\in\D$, for some $\beta>0$.
Doma\'nski and Lindstr\"{o}m give necessary resp.\ sufficient conditions for fixing the topology 
in $A_{\nu}^{\infty}(\D)$ in the case that $\nu$ is an essential weight on $\D$, i.e.\ there is $C>0$ with
$\nu(z)\leq\widetilde{\nu}(z)\leq C\nu(z)$ for each $z\in\D$ where 
$\widetilde{\nu}(z):=(\sup\{|f(z)|\;|\;f\in B_{A_{\nu}^{\infty}(\D)}\})^{-1}$ is the associated weight. 
In \cite[Theorem 29, p.\ 260]{DomLind2002} they describe necessary resp.\ sufficient conditions for fixing the topology 
if the upper index $U_{\nu}$ is finite (see \cite[p.\ 242]{DomLind2002}), and 
necessary and sufficient conditions in \cite[Corollary 31, p.\ 261]{DomLind2002} 
if $0<L_{\nu}=U_{\nu}<\infty$ holds where $L_{\nu}$ is the lower index (see \cite[p.\ 243]{DomLind2002}), 
which for example can be applied to $\nu(z)=(1-|z|^{2})^{n}(\ln(\tfrac{\e}{1-|z|}))^{\beta}$, $z\in\D$, 
for some $n>0$ and $\beta\in\R$. 
The case of simply connected open $\Omega\subset\C$ is considered in \cite[Corollary 32, p.\ 261--262]{DomLind2002}.

Borichev, Dhuez and Kellay treat $A_{\nu}^{\infty}(\D)$ and $F_{\nu}^{\infty}(\C)$ simultaneously.  
Let $\Omega_{R}:=\D$, if $R=1$, and $\Omega_{R}:=\C$ if $R=\infty$. They take $\nu(z)=\exp(-\phi(z))$, $z\in\Omega_{R}$, 
where $\phi\colon[0,R)\to[0,\infty)$ is an increasing function such that $\phi(0) = 0$, 
$\lim_{r\to R}\phi(r)=\infty$, $\phi$ is extended to $\Omega_{R}$ by $\phi(z):=\phi(|z|)$, 
$\phi\in\mathcal{C}^{2}(\Omega_{R})$, and, in addition $\Delta\phi(z)\geq 1$ if $R=\infty$ 
(see \cite[p.\ 564--565]{borichev2007}). 
Then they set $\rho\colon[0,R)\to\R$, $\rho(r):=[\Delta\phi(r)]^{-1/2}$, and suppose that $\rho$
decreases to $0$ near $R$, $\rho'(r)\to 0$, $r\to R$, and either
$(I_{\D})$ the function $r\mapsto\rho(r)(1-r)^{-C}$ increases for some $C\in\R$ and for $r$ close to $1$, resp.\
$(I_{\C})$ the function $r\mapsto\rho(r)r^{C}$ increases for some $C\in\R$ and for large $r$,
or $(II_{\Omega_{R}})$ that $\rho'(r)\ln(1/\rho(r))\to 0$, $r\to R$ (see \cite[p.\ 567--569]{borichev2007}).
Typical examples for $(I_{\D})$ are 
\[
\phi(r)=\ln(\ln(\tfrac{1}{1-r}))\ln(\tfrac{1}{1-r})\quad\text{or}\quad\phi(r)=\tfrac{1}{1-r},
\]
a typical example for $(II_{\D})$ is $\phi(r)=\exp(\tfrac{1}{1-r})$, for $(I_{\C})$
\[
 \phi(r)=r^2\ln(\ln(r))\quad\text{or}\quad\phi(r)=r^p,\;\text{for some}\; p>2,
\]
and a typical example for $(II_{\C})$ is $\phi(r)=\exp(r)$.
Sets that fix the topology in $A_{\nu}^{\infty}(\D)$ are described by densities 
in \cite[Theorem 2.1, p.\ 568]{borichev2007} and sets that fix the topology in $F_{\nu}^{\infty}(\C)$ 
in \cite[Theorem 2.5, p.\ 569]{borichev2007}.

Wolf uses sets that fix the topology in $A_{\nu}^{\infty}(\D)$ for the characterisation of weighted composition 
operators on $A_{\nu}^{\infty}(\D)$ with closed range in \cite[Theorem 1, p.\ 36]{wolf2011} for bounded $\nu$.

\begin{cor}\label{cor:Bloch_ext_fix_top}
Let $E$ be a locally complete lcHs, $G\subset E'$ a separating subspace, $\nu\colon\D\to(0,\infty)$ continuous and 
$U:=\{0\}\cup(\{1\}\times U_{\ast})$ fix the topology in $\mathcal{B}\nu(\D)$ with $U_{\ast}\subset\D$. 
If $f\colon U\to E$ is a function in $\ell\nu_{\ast}(U,E)$ such that there is 
$f_{e'}\in\mathcal{B}\nu(\D)$ for each $e'\in G$ with $f_{e'}(0)=e'(f(0))$ and $f_{e'}'(z)=e'(f(1,z))$ 
for all $z\in U_{\ast}$, then there exists a unique $F\in\mathcal{B}\nu(\D,E)$ with $F(0)=f(0)$ and 
$(\partial_{\C}^{1})^{E}F(z)=f(1,z)$ for all $z\in U_{\ast}$.
\end{cor}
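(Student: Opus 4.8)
The plan is to deduce the statement from \prettyref{thm:ext_B_fix_top}, applied to exactly the weighted realisation of $\mathcal{B}\nu(\D,E)$ that was set up in the proof of \prettyref{cor:Bloch_ext_unique}. First I would take $F(\D):=(\mathcal{O}(\D),\tau_{c})$ and $F(\D,E):=(\mathcal{O}(\D,E),\tau_{c})$, so that with the weight $\nu_{\ast}$ and the generator $(T^{E},T^{\C})$ described above \prettyref{cor:Bloch_ext_unique} one has $\mathcal{F}\nu_{\ast}(\D)=\mathcal{B}\nu(\D)$ and $\mathcal{F}\nu_{\ast}(\D,E)=\mathcal{B}\nu(\D,E)$. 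Every hypothesis of \prettyref{thm:ext_B_fix_top} other than the one on $U$ has already been checked there: the spaces $F(\D)$ and $F(\D,E)$ are $\varepsilon$-compatible by \prettyref{prop:co_top_isomorphism} together with \eqref{eq:holomorphic_coincide_1}; the generator is a strong, consistent family for $(F,E)$ by \prettyref{prop:complex_diff_cons_strong}; $\mathcal{B}\nu(\D)$ is a Banach space by \prettyref{prop:Bloch_Banach} whose closed unit ball is compact in the Montel space $(\mathcal{O}(\D),\tau_{c})$ by \eqref{eq:Bloch}; and $\mathcal{F}_{\varepsilon}\nu_{\ast}(\D,E)=\mathcal{B}\nu(\D,E)$ as linear spaces by \prettyref{prop:mingle-mangle} c). The single new input, that $U$ fixes the topology in $\mathcal{B}\nu(\D)$, is assumed directly in the corollary.

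Next I would verify that the given $f$ lies in the target space $\mathcal{F}\nu_{\ast G}(U,E)_{lb}$ of \prettyref{thm:ext_B_fix_top}. With the operator $T^{\K}$ and weight $\nu_{\ast}$ recalled above, the two conditions $f_{e'}(0)=e'(f(0))$ and $f_{e'}'(z)=e'(f(1,z))$, $z\in U_{\ast}$, are precisely $T^{\K}(f_{e'})(x)=(e'\circ f)(x)$ for all $x\in U$, so $f\in\mathcal{F}\nu_{\ast G}(U,E)$; and the assumption $f\in\ell\nu_{\ast}(U,E)$ supplies the local boundedness, whence $f\in\mathcal{F}\nu_{\ast G}(U,E)_{lb}=\mathcal{F}\nu_{\ast G}(U,E)\cap\ell\nu_{\ast}(U,E)$. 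The surjectivity in \prettyref{thm:ext_B_fix_top} then yields $F\in\mathcal{F}_{\varepsilon}\nu_{\ast}(\D,E)=\mathcal{B}\nu(\D,E)$ with $T^{E}(F)(x)=f(x)$ for all $x\in U$, which unwinds to $F(0)=f(0)$ and $(\partial_{\C}^{1})^{E}F(z)=f(1,z)$ for all $z\in U_{\ast}$.

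For uniqueness I would use that a set fixing the topology is in particular a set of uniqueness for $(T^{\C},\mathcal{F}\nu_{\ast})$; since $G$ is separating, the restriction map is injective by \prettyref{prop:injectivity}, so $F$ is the only such extension. I do not expect a genuine obstacle here: the whole weighted machinery and all of the compactness, $\varepsilon$-compatibility and consistency checks are inherited verbatim from \prettyref{cor:Bloch_ext_unique}, in the same way that \prettyref{cor:hypo_weighted_ext_fix_top} inherits its conditions from \prettyref{cor:hypo_weighted_ext_unique}. The only step needing a moment of care is the bookkeeping that translates the pointwise weak-extendability hypothesis together with membership in $\ell\nu_{\ast}(U,E)$ into membership in $\mathcal{F}\nu_{\ast G}(U,E)_{lb}$, which is routine.
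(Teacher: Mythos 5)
Your proposal is correct and follows essentially the same route as the paper, which proves this corollary exactly by applying \prettyref{thm:ext_B_fix_top} in the weighted realisation $\mathcal{F}\nu_{\ast}(\D)=\mathcal{B}\nu(\D)$ from the proof of \prettyref{cor:Bloch_ext_unique}, after observing that the hypotheses place $f$ in $\mathcal{F}\nu_{\ast G}(U,E)_{lb}$. Your bookkeeping of the membership in $\mathcal{F}\nu_{\ast G}(U,E)_{lb}$ and the uniqueness via injectivity of the restriction map matches the intended argument.
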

\begin{proof}
As in \prettyref{cor:hypo_weighted_ext_fix_top} but with $\mathcal{F}\nu_{\ast}(\D)=\mathcal{B}\nu(\D)$ 
and \prettyref{cor:Bloch_ext_unique} instead of \prettyref{cor:hypo_weighted_ext_unique}.
\end{proof}

Sets that fix the topology in $\mathcal{B}\nu(\D)$ play an important role in the characterisation of 
composition operators on $\mathcal{B}\nu(\D)$ with closed range. 
Chen and Gauthier give a characterisation in \cite{chengauthier2008} for weights of the form 
$\nu(z)=(1-|z|^{2})^{\alpha}$, $z\in\D$, for some $\alpha\geq 1$. 
We recall the following definitions which are needed to phrase this characterisation.
For a continuous function $\nu\colon\D\to(0,\infty)$ and a non-constant holomorphic function $\phi\colon\D\to\D$ we set 
\[
\tau^{\nu}_{\phi}(z):=\frac{\nu(z)|\phi'(z)|}{\nu(\phi(z))},\;z\in\D,\quad\text{and}\quad
\Omega^{\nu}_{\varepsilon}:=\{z\in\D\;|\;\tau^{\nu}_{\phi}(z)\geq\varepsilon\},\;\varepsilon>0,
\]
and define the \emph{pseudohyperbolic distance}
\[
\rho(z,w):=\Bigl|\frac{z-w}{1-\overline{z}w}\Bigr|,\;z,w\in\D.
\]
For $0<r<1$ a set $B\subset\D$ is called a \emph{pseudo} $r$\emph{-net} 
if for every $w\in\D$ there is $z\in B$ with $\rho(z,w)\leq r$ (see \cite[p.\ 195]{chengauthier2008}). A set $U_{\star}\subset \D$ is a sampling set for $\mathcal{B}\nu(\D)$ 
with $\nu$ as above in the sense of \cite[p.\ 198]{chengauthier2008} if and only if 
$\{0\}\cup(\{1\}\times U_{\star})$ fixes the topology in $\mathcal{B}\nu(\D)$ 
(see the definitions above \prettyref{cor:Bloch_ext_unique}).

\begin{thm}[{\cite[Theorem 3.1, p.\ 199, Theorem 4.3, p.\ 202]{chengauthier2008}}]\label{thm:Bloch_sampling}
Let $\phi\colon\D\to\D$ be a non-constant holomorphic function and $\nu(z)=(1-|z|^{2})^{\alpha}$, $z\in\D$, 
for some $\alpha\geq 1$. Then the following statements are equivalent.
\begin{enumerate}
\item[(i)] The composition operator $C_{\phi}\colon\mathcal{B}\nu(\D)\to\mathcal{B}\nu(\D)$, $C_{\phi}(f):=f\circ\phi$, is bounded below 
(i.e.\ has closed range).
\item[(ii)] There is $\varepsilon>0$ such that $\{0\}\cup(\{1\}\times \phi(\Omega^{\nu}_{\varepsilon}))$ fixes the topology in $\mathcal{B}\nu(\D)$.
\item[(iii)] There are $\varepsilon>0$ and $0<r<1$ such that $\phi(\Omega^{\nu}_{\varepsilon})$ is a pseudo $r$-net.
\end{enumerate}
\end{thm}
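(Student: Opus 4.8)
The plan is to establish the two equivalences (i)$\Leftrightarrow$(ii) and (ii)$\Leftrightarrow$(iii) separately, with the entire bridge between the operator-theoretic statement (i) and the sampling statement (ii) resting on the pointwise chain-rule identity: for $f\in\mathcal{B}\nu(\D)$ and $z\in\D$,
\[
|(C_{\phi}f)'(z)|\nu(z)=|f'(\phi(z))|\,|\phi'(z)|\,\nu(z)=|f'(\phi(z))|\,\nu(\phi(z))\,\tau^{\nu}_{\phi}(z).
\]
First I would record its elementary consequences. Taking the supremum over all $z$ shows $C_{\phi}$ maps into $\mathcal{B}\nu(\D)$ precisely when $\sup_{z\in\D}\tau^{\nu}_{\phi}(z)<\infty$, and taking the supremum only over $z\in\Omega^{\nu}_{\varepsilon}$ gives the lower bound $|C_{\phi}f|_{\nu}\geq\varepsilon\sup_{w\in\phi(\Omega^{\nu}_{\varepsilon})}|f'(w)|\nu(w)$ up to the boundary term at $\phi(0)$. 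I would also note that $\mathcal{B}\nu(\D)$ is a Banach space by \prettyref{prop:Bloch_Banach}, and that the assertion in (ii) that $\{0\}\cup(\{1\}\times\phi(\Omega^{\nu}_{\varepsilon}))$ fixes the topology is, by definition, the statement that $\max(|f(0)|,\sup_{w\in\phi(\Omega^{\nu}_{\varepsilon})}|f'(w)|\nu(w))$ is an equivalent norm on $\mathcal{B}\nu(\D)$.

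The direction (ii)$\Rightarrow$(i) is then immediate: if $\phi(\Omega^{\nu}_{\varepsilon})$ is sampling, then $|f|_{\nu}$ is comparable to $\sup_{w\in\phi(\Omega^{\nu}_{\varepsilon})}|f'(w)|\nu(w)$, and combining this with the identity above and $\tau^{\nu}_{\phi}\geq\varepsilon$ on $\Omega^{\nu}_{\varepsilon}$ yields $|f|_{\nu}\leq C\varepsilon^{-1}|C_{\phi}f|_{\nu}$, so $C_{\phi}$ is bounded below. The converse (i)$\Rightarrow$(ii) is the analytic heart: one must show that for $\varepsilon$ small enough the degeneracy region $\{\tau^{\nu}_{\phi}<\varepsilon\}$ is negligible for the norm of $C_{\phi}f$, so that the lower bound $|C_{\phi}f|_{\nu}\geq c|f|_{\nu}$ forces the image $\phi(\Omega^{\nu}_{\varepsilon})$ to be dominating. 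Here I would exploit the Schwarz--Pick inequality (available since $\phi\colon\D\to\D$ is holomorphic and $\alpha\geq 1$) to control $\tau^{\nu}_{\phi}$ quantitatively, and a normal-families/peak-function argument producing, for any $w_{0}$ at which $\phi(\Omega^{\nu}_{\varepsilon})$ fails to sample, a test function $f$ concentrated near $w_{0}$ whose image $C_{\phi}f$ has small norm while $|f|_{\nu}$ stays bounded below, contradicting (i).

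The equivalence (ii)$\Leftrightarrow$(iii) is the purely geometric characterisation of sampling sets for the power weight $\nu(z)=(1-|z|^{2})^{\alpha}$ in terms of the pseudohyperbolic metric $\rho$. The key input is that $\nu$ is essentially constant on pseudohyperbolic balls of fixed radius (a direct computation from the explicit form of $\nu$ and $\rho$) and that the quantity $|f'(w)|\nu(w)$ obeys a Lipschitz/Harnack estimate in $\rho$, so its value at any point is comparable to its values at any point within a fixed pseudohyperbolic distance. For (iii)$\Rightarrow$(ii) I would cover $\D$ by pseudohyperbolic balls of radius $r$ centred at points of the pseudo $r$-net $\phi(\Omega^{\nu}_{\varepsilon})$ and transfer the global supremum of $|f'|\nu$ to the net. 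For (ii)$\Rightarrow$(iii) I would argue contrapositively: if no radius $r$ works, there exist points $w_{n}$ with $\rho$-distance to $\phi(\Omega^{\nu}_{\varepsilon})$ tending to $1$, and suitable peak functions localised near $w_{n}$ witness the failure of the sampling inequality.

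The main obstacle I expect is the direction (i)$\Rightarrow$(ii): absorbing the contribution of $\{\tau^{\nu}_{\phi}<\varepsilon\}$ requires a quantitative, reverse-Schwarz--Pick control of $\tau^{\nu}_{\phi}$ together with the precise behaviour of the power weight, and this is the step in which the hypothesis $\alpha\geq 1$ is genuinely used. By contrast, once the $\rho$-invariance of the weighted derivative is in place, the geometric equivalence (ii)$\Leftrightarrow$(iii) is routine, and (ii)$\Rightarrow$(i) follows directly from the displayed identity.
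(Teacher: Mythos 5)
First, a point of reference: the thesis does not prove this statement at all — it is quoted from Chen and Gauthier \cite[Theorems 3.1 and 4.3]{chengauthier2008} and used as an imported result — so there is no in-paper proof to compare against; I can only assess your sketch on its own terms and against the source literature.

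Your treatment of (i)$\Leftrightarrow$(ii) is essentially sound, and in fact simpler than you make it: since $\alpha\geq 1$, Schwarz--Pick gives $\sup_{z}\tau^{\nu}_{\phi}(z)=:M<\infty$, and for (i)$\Rightarrow$(ii) one just splits the supremum defining the seminorm of $C_{\phi}f$ over $\Omega^{\nu}_{\varepsilon}$ and its complement, absorbs the complement using $\tau^{\nu}_{\phi}<\varepsilon$ with $\varepsilon$ smaller than the lower bound of $C_{\phi}$, and reads off the sampling inequality with constant $M/\varepsilon$ — no peak functions or normal families are needed. The genuine gap is in (ii)$\Leftrightarrow$(iii), which you declare `routine'. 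The implication (iii)$\Rightarrow$(ii) cannot be obtained by covering $\D$ with pseudohyperbolic balls of radius $r$ centred at the net and invoking a Harnack-type comparability of $|f'(w)|\nu(w)$ at pseudohyperbolically close points: no such two-sided comparison exists ($f'$ may vanish at a point of the net while $|f'|\nu$ is of order $|f|_{\nu}$ elsewhere in the same ball), and the available sub-mean-value substitute only works for small radii. Concretely, an arbitrary pseudo $r$-net with $r$ close to $1$ need not fix the topology: take a Blaschke product $B$ whose zero set is a maximal $\delta$-separated set (hence a pseudo $\delta$-net) and $f(z)=\int_{0}^{z}B(\zeta)\,\d\zeta$; then $f\in\mathcal{B}\nu(\D)$ is non-constant, yet $|f'|\nu$ vanishes identically on the net. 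This is precisely why the predecessor result of Ghatage, Yan and Zheng required $r<1/4$, and removing that restriction is the actual content of Chen's contribution discussed in the text surrounding the theorem; the proof must exploit the specific structure of the sets $\phi(\Omega^{\nu}_{\varepsilon})$ — in particular the freedom to shrink $\varepsilon$ and the interaction of the holomorphic self-map $\phi$ with the pseudohyperbolic metric — none of which enters your argument. As written, your route to (iii)$\Rightarrow$(ii) would at best yield the weaker statement with an unspecified smallness restriction on $r$, which is not the theorem.
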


This theorem has some predecessors. The implications (i)$\Rightarrow$(iii) and 
(iii), $r<1/4\Rightarrow$(i) 
for $\alpha=1$ are due to Ghatage, Yan and Zheng by \cite[Proposition 1, p.\ 2040]{ghatage2001} 
and \cite[Theorem 2, p.\ 2043]{ghatage2001}. 
This was improved by Chen to (i)$\Leftrightarrow$(iii) for $\alpha=1$ by removing the restriction $r<1/4$ 
in \cite[Theorem 1, p.\ 840]{chen2003}. 
The proof of the equivalence (i)$\Leftrightarrow$(ii) given in \cite[Theorem 1, p.\ 1372]{ghatage2005} for $\alpha=1$ 
is due to Ghatage, Zheng and Zorboska. 
A non-trivial example of a sampling set for $\alpha=1$ can be found in \cite[Example 2, p.\ 1376]{ghatage2005} 
(cf.\ \cite[p.\ 203]{chengauthier2008}).
In the case of several complex variables a characterisation corresponding to \prettyref{thm:Bloch_sampling}
is given by Chen in \cite[Theorem 2, p.\ 844]{chen2003} and Deng, Jiang and Ouyang 
in \cite[Theorem 1--3, p.\ 1031--1032, 1034]{deng2007} where $\Omega$ is the unit ball of $\C^{d}$.
Gim{\'e}nez, Malav{\'e} and Ramos-Fern{\'a}ndez extend \prettyref{thm:Bloch_sampling} by 
\cite[Theorem 3, p.\ 112]{gimenez2010} and \cite[Corollary 1, p.\ 113]{gimenez2010} to more general weights 
of the form $\nu(z)=\mu(1-|z|^{2})$ with some continuous function $\mu\colon (0,1]\to(0,\infty)$ 
such that $\mu(r)\to 0$, $r\to 0\vcenter{\hbox{${\scriptstyle{+}}$}}$, 
which can be extended to a holomorphic function $\mu_{0}$ on $\D_{1}(1)$ without zeros 
in $\D_{1}(1)$ and fulfilling $\mu(1-|1-z|)\leq C|\mu_{0}(z)|$ for all $z\in\D_{1}(1)$ and some $C>0$ 
(see \cite[p.\ 109]{gimenez2010}). 
Examples of such functions $\mu$ are $\mu_{1}(r):=r^{\alpha}$, $\alpha>0$, $\mu_{2}:=r\ln(2/r)$ 
and $\mu_{3}(r):=r^{\beta}\ln(1-r)$, $\beta>1$, for $r\in(0,1]$ (see \cite[p.\ 110]{gimenez2010}) 
and with $\nu(z)=\mu_{1}(1-|z|^{2})=(1-|z|^{2})^{\alpha}$, $z\in\D$, one gets \prettyref{thm:Bloch_sampling} 
back for $\alpha\geq 1$. For $0<\alpha<1$ and $\nu(z)=\mu_{1}(1-|z|^{2})$, $z\in\D$, 
the equivalence (i)$\Leftrightarrow$(ii) is given in \cite[Proposition 4.4, p.\ 14]{yoneda2018} of Yoneda as well and 
a sufficient condition implying (ii) in \cite[Corollary 4.5, p.\ 15]{yoneda2018}.
Ramos-Fern{\'a}ndez generalises the results given in \cite{gimenez2010} to bounded essential weights $\nu$ on $\D$ by 
\cite[Theorem 4.3, p.\ 85]{fernandez2011} and \cite[Remark 4.2, p.\ 84]{fernandez2011}.
In \cite[Theorem 2.4, p.\ 3106]{pirasteh2018} Pirasteh, Eghbali and Sanatpour use sets that fix the topology 
in $\mathcal{B}\nu(\D)$ for radial essential $\nu$ to characterise Li--Stevi\'{c} integral-type operators 
on $\mathcal{B}\nu(\D)$ with closed range instead of composition operators. 
The composition operator on the harmonic variant of the Bloch type space $\mathcal{B}\nu(\D)$ 
with $\nu(z)=(1-|z|^{2})^{\alpha}$, $z\in\D$, for some $\alpha>0$ is considered by Esmaeili, Estaremi and Ebadian, 
who give a corresponding result in \cite[Theorem 2.8, p.\ 542]{esmaeili2018}.
\section{Weak-strong principles for differentiability of finite order}
\label{sect:weak_strong_finite_order}
This section is dedicated to $\mathcal{C}^{k}$-weak-strong principles for differentiable functions. 
So the question is:

\begin{que}
Let $E$ be an lcHs, $G\subset E'$ a separating subspace, $\Omega\subset\R^{d}$ open 
and $k\in\N_{0}\cup\{\infty\}$. If $f\colon\Omega\to E$ is such that $e'\circ f\in\mathcal{C}^{k}(\Omega)$ 
for each $e'\in G$, does $f\in\mathcal{C}^{k}(\Omega,E)$ hold?
\end{que}

An affirmative answer to the preceding question is called a $\mathcal{C}^{k}$-weak-strong principle. 
It is a result of Bierstedt \cite[2.10 Lemma, p.\ 140]{B2} that for $k=0$ the $\mathcal{C}^{0}$-weak-strong 
principle holds if $\Omega\subset\R^{d}$ is open (or more general a Hausdorff $k_{\R}$-space), $G=E'$ and 
$E$ is such that every bounded set is already precompact in $E$, i.e.\ $E$ is a generalised Schwartz space (see \prettyref{def:general_schwartz} and \prettyref{rem:bierstedt_weak_strong}). For instance, the last condition is fulfilled if $E$ is 
a semi-Montel or Schwartz space. The $\mathcal{C}^{0}$-weak-strong principle does not hold for general $E$ by 
\cite[Beispiel, p.\ 232]{Kaballo}.

Grothendieck sketches in a footnote \cite[p.\ 39]{Grothendieck1953} 
(cf.\ \cite[Chap.\ 3, Sect.\ 8, Corollary 1, p.\ 134]{gr73}) 
the proof that for $k<\infty$ a weakly-$\mathcal{C}^{k+1}$ function $f\colon\Omega\to E$ on an open set 
$\Omega\subset\R^{d}$ with values in a quasi-complete lcHs $E$ is already $\mathcal{C}^{k}$, i.e.\ that from 
$e'\circ f\in\mathcal{C}^{k+1}(\Omega)$ for all $e'\in E'$ it follows $f\in\mathcal{C}^{k}(\Omega,E)$. 
A detailed proof of this statement is given by Schwartz in \cite{Schwartz1955}, simultaneously weakening 
the condition from quasi-completeness of $E$ to sequential completeness and from 
weakly-$\mathcal{C}^{k+1}$ to weakly-$\mathcal{C}^{k,1}_{loc}$.

\begin{thm}[{\cite[Appendice, Lemme II, Remarques 1$^0$), p.\ 146--147]{Schwartz1955}}]\label{thm:schwartz_weak_strong}
Let $E$ be a sequentially complete lcHs, $\Omega\subset\R^{d}$ open and $k\in\N_{0}$. 
\begin{enumerate}
\item [a)] If $f\colon\Omega\to E$ is such that $e'\circ f\in \mathcal{C}^{k,1}_{loc}(\Omega)$ for all $e'\in E'$, 
then $f\in\mathcal{C}^{k}(\Omega,E)$. 
\item [b)] If $f\colon\Omega\to E$ is such that $e'\circ f\in \mathcal{C}^{k+1}(\Omega)$ for all $e'\in E'$, 
then $f\in\mathcal{C}^{k}(\Omega,E)$. 
\end{enumerate}
\end{thm}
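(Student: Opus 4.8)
The plan is to deduce part b) from part a) and to prove part a) by induction on $k$. For the reduction, observe that every $g\in\mathcal{C}^{k+1}(\Omega)$ lies in $\mathcal{C}^{k,1}_{loc}(\Omega)$: on a ball the order-$k$ derivatives $\partial^{\gamma}g$, $|\gamma|=k$, are $\mathcal{C}^{1}$, hence Lipschitz by the mean value theorem. So the hypothesis of b) is a special case of that of a). Since being $\mathcal{C}^{k}$ is local, I would fix $x_{0}\in\Omega$ and a ball $B$ with $\overline{B}\subset\Omega$ and argue on $B$, where each $e'\circ f$ and the relevant derivatives are genuinely Lipschitz with constants $L_{e'}$ depending on $e'$.

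First I would treat the base case $k=0$. The decisive mechanism is that the family
\[
A:=\Bigl\{\tfrac{1}{|x-y|}\bigl(f(x)-f(y)\bigr)\;\Big|\;x,y\in B,\,x\neq y\Bigr\}
\]
is \emph{weakly} bounded, since $|e'(f(x)-f(y))|/|x-y|=|(e'\circ f)(x)-(e'\circ f)(y)|/|x-y|\le L_{e'}$ for every $e'\in E'$. By Mackey's theorem \cite[Mackey's theorem 23.15, p.\ 268]{meisevogt1997} a weakly bounded set is bounded, so for every $\alpha$ there is $C_{\alpha}>0$ with $p_{\alpha}(f(x)-f(y))\le C_{\alpha}|x-y|$; thus $f$ is locally Lipschitz, in particular continuous, i.e.\ $f\in\mathcal{C}^{0}(\Omega,E)$. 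Note that sequential completeness is not yet needed here.

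For the inductive step I assume a) for $k-1$ and let $e'\circ f\in\mathcal{C}^{k,1}_{loc}(\Omega)$ for all $e'$ with $k\ge1$. Fixing $1\le n\le d$ and $x\in\Omega$, I would show that $\Delta^n_h f(x):=\tfrac{1}{h}(f(x+he_{n})-f(x))$ converges in $E$ as $h\to0$. Since $\partial^{e_{n}}(e'\circ f)\in\mathcal{C}^{k-1,1}_{loc}\subset\mathcal{C}^{0,1}_{loc}$, the fundamental theorem of calculus gives
\[
\bigl|\Delta^n_h(e'\circ f)(x)-\partial^{e_{n}}(e'\circ f)(x)\bigr|
=\Bigl|\tfrac{1}{h}\int_{0}^{h}\bigl[\partial^{e_{n}}(e'\circ f)(x+te_{n})-\partial^{e_{n}}(e'\circ f)(x)\bigr]\,\d t\Bigr|
\le \tfrac{1}{2}L_{e'}|h|,
\]
whence $|e'(\Delta^n_h f(x)-\Delta^n_{h'}f(x))|\le\tfrac12 L_{e'}(|h|+|h'|)$. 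Therefore the family $\{\tfrac{1}{\max(|h|,|h'|)}(\Delta^n_h f(x)-\Delta^n_{h'}f(x))\}$ is weakly bounded, hence bounded by Mackey, giving $p_{\alpha}(\Delta^n_h f(x)-\Delta^n_{h'}f(x))\le C_{\alpha}\max(|h|,|h'|)$. So $(\Delta^n_h f(x))$ is Cauchy as $h\to0$ and, by sequential completeness, has a limit $g_{n}(x)=:(\partial^{e_{n}})^{E}f(x)$ with $e'\circ g_{n}=\partial^{e_{n}}(e'\circ f)$ for all $e'$. As $\partial^{e_{n}}(e'\circ f)\in\mathcal{C}^{k-1,1}_{loc}(\Omega)$ for each $e'$, the induction hypothesis applied to $g_{n}$ yields $g_{n}\in\mathcal{C}^{k-1}(\Omega,E)$; in particular the $g_{n}$ are continuous, so $f\in\mathcal{C}^{1}(\Omega,E)$ with $(\partial^{e_{n}})^{E}f=g_{n}$, and then $f\in\mathcal{C}^{k}(\Omega,E)$.

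The main obstacle is precisely the Cauchy property of the difference quotients: weak convergence of $\Delta^n_h f(x)$ (immediate from differentiability of each $e'\circ f$) does not by itself force convergence in a merely sequentially complete $E$. The remedy is the quantitative Lipschitz estimate above, which after normalisation produces a weakly bounded, hence bounded, family and thereby a Cauchy estimate uniform in $\alpha$; this is exactly where the hypothesis $\mathcal{C}^{k,1}_{loc}$ (equivalently $\mathcal{C}^{k+1}$) rather than mere $\mathcal{C}^{k}$ is indispensable, and where sequential completeness enters to extract the limit. This interplay of Mackey's theorem with sequential completeness mirrors the role of \prettyref{cond:surjectivity_linearisation} c) in \prettyref{thm:full_linearisation}.
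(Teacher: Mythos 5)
The paper does not prove this theorem itself: it is quoted verbatim from Schwartz \cite[Appendice, Lemme II]{Schwartz1955}, so there is no internal proof to compare against. Your argument is correct and is essentially the classical Grothendieck--Schwartz route the citation refers to --- reduce b) to a) via the mean value theorem, control the (iterated) difference quotients by the scalar Lipschitz constants $L_{e'}$, upgrade the resulting weak boundedness to boundedness by Mackey's theorem, and invoke sequential completeness only to extract the limit of the now-Cauchy difference quotients (the passage from a Cauchy condition along the net $h\to 0$ to an actual limit via sequences is routine and does not affect the argument).
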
 

Here $\mathcal{C}^{k,1}_{loc}(\Omega)$ denotes the space of functions in $\mathcal{C}^{k}(\Omega)$ 
whose partial derivatives of order $k$ are locally Lipschitz continuous. 
Part b) clearly implies a $\mathcal{C}^{\infty}$-weak-strong principle for 
open $\Omega\subset\R^{d}$, $G=E'$ and sequentially complete $E$. This can be generalised to locally complete $E$.
Waelbroeck has shown in \cite[Proposition 2, p.\ 411]{waelbroeck1967_2} and 
\cite[Definition 1, p.\ 393]{waelbroeck1967_1} 
that the $\mathcal{C}^{\infty}$-weak-strong principle holds if $\Omega$ is a manifold, $G=E'$ 
and $E$ is locally complete.
It is a result of Bonet, Frerick and Jord\'a that the $\mathcal{C}^{\infty}$-weak-strong principle still holds by 
\cite[Theorem 9, p.\ 232]{B/F/J} if $\Omega\subset\R^{d}$ is open, $G\subset E'$ determines boundedness 
and $E$ is locally complete.
Due to \cite[2.14 Theorem, p.\ 20]{kriegl} of Kriegl and Michor an lcHs $E$ is locally complete if and 
only if the $\mathcal{C}^{\infty}$-weak-strong principle holds for $\Omega=\R$ and $G=E'$. 

One of the goals of this section is to improve \prettyref{thm:schwartz_weak_strong}. 
We recall the following definition from \prettyref{ex:diff_ext_boundary}. 
For $k\in\N_{0}$ the space of $k$-times continuously 
partially differentiable $E$-valued functions on an open set $\Omega\subset\R^{d}$ whose partial derivatives 
up to order $k$ are continuously extendable to the boundary of $\Omega$ is
\[
 \mathcal{C}^{k}(\overline{\Omega},E)=\{f\in\mathcal{C}^{k}(\Omega,E)\;|\;(\partial^{\beta})^{E}f\;
 \text{cont.\ extendable on}\;\overline{\Omega}\;\text{for all}\;\beta\in\N^{d}_{0},\,|\beta|\leq k\}
\]
equipped with the system of seminorms given by 
\[
 |f|_{\mathcal{C}^{k}(\overline{\Omega}),\alpha}=\sup_{\substack{x\in \Omega\\ \beta\in\N^{d}_{0}, |\beta|\leq k}}
 p_{\alpha}((\partial^{\beta})^{E}f(x)), \quad f\in\mathcal{C}^{k}(\overline{\Omega},E),\, \alpha\in\mathfrak{A}.
\]
The space of functions in $\mathcal{C}^{k}(\overline{\Omega},E)$ such 
that all its $k$-th partial derivatives are $\gamma$-H\"older continuous with $0<\gamma\leq 1$ is given by  
\[
\gls{Ckgamma}:=
\bigl\{f\in\mathcal{C}^{k}(\overline{\Omega},E)\;|\;\forall\;\alpha\in\mathfrak{A}:\;
|f|_{\mathcal{C}^{k,\gamma}(\overline{\Omega}),\alpha}<\infty\bigr\}
\]
where
\[
|f|_{\mathcal{C}^{k,\gamma}(\overline{\Omega}),\alpha}:=\max\Bigl(|f|_{\mathcal{C}^{k}(\overline{\Omega}),\alpha},
\sup_{\beta\in\N^{d}_{0}, |\beta|=k}|(\partial^{\beta})^{E}f|_{\mathcal{C}^{0,\gamma}(\Omega),\alpha}\Bigr)
\]
with
\[
|f|_{\mathcal{C}^{0,\gamma}(\Omega),\alpha}:=\sup_{\substack{x,y\in\Omega\\x\neq y}}
\frac{p_{\alpha}(f(x)-f(y))}{|x-y|^{\gamma}}.
\]
We set $\mathcal{C}^{k,\gamma}(\overline{\Omega}):=\mathcal{C}^{k,\gamma}(\overline{\Omega},\K)$ and 
\[
\omega_{1}:=\{\beta\in\N_{0}^{d}\;|\;|\beta|\leq k\}\times\Omega\quad\text{and}\quad
\omega_{2}:=\{\beta\in\N_{0}^{d}\;|\;|\beta|=k\}\times (\Omega^{2}\setminus\{(x,x)\;|\;x\in\Omega\})
\]
as well as $\omega:=\omega_{1}\cup\omega_{2}$. We define the operator 
$T^{E}\colon\mathcal{C}^{k}(\Omega,E)\to E^{\omega}$ by
\begin{align*}
T^{E}(f)(\beta,x):=&(\partial^{\beta})^{E}(f)(x) &&,\;(\beta,x)\in\omega_{1},\\
T^{E}(f)(\beta,(x,y)):=&(\partial^{\beta})^{E}(f)(x)-(\partial^{\beta})^{E}(f)(y) &&,\;(\beta,(x,y))\in\omega_{2}.
\end{align*}
and the weight $\nu\colon\omega\to (0,\infty)$ by 
\[
\nu(\beta,x):=1,\;(\beta,x)\in\omega_{1},\quad\text{and}\quad \nu(\beta,(x,y)):=\frac{1}{|x-y|^{\gamma}},
\;(\beta,(x,y))\in\omega_{2}.
\]
By setting $\fe:=\mathcal{C}^{k}(\overline{\Omega},E)$ and observing that
\[
|f|_{\mathcal{C}^{k,\gamma}(\overline{\Omega}),\alpha}=\sup_{x\in\omega}p_{\alpha}(T^{E}(f)(x))\nu(x),
\quad f\in\mathcal{C}^{k,\gamma}(\overline{\Omega},E),\,\alpha\in\mathfrak{A},
\]
we have $\mathcal{F}\nu(\Omega,E)=\mathcal{C}^{k,\gamma}(\overline{\Omega},E)$ with generator $(T^{E},T^{\K})$. 

\begin{cor}\label{cor:ext_B_unique}
Let $E$ be a locally complete lcHs, $G\subset E'$ determine boundedness, $\Omega\subset\R^{d}$ open and bounded, 
$k\in\N_{0}$ and $0<\gamma\leq 1$. In the case $k\geq 1$, assume additionally that $\Omega$ has Lipschitz boundary. 
If $f\colon\Omega\to E$ is such that $e'\circ f\in \mathcal{C}^{k,\gamma}(\overline{\Omega})$ for all $e'\in G$, 
then $f\in\mathcal{C}^{k,\gamma}(\overline{\Omega},E)$. 
\end{cor}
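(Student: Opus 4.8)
The plan is to recognise the statement as a weak-strong principle and to place it in the Banach-space, thin-set framework of \prettyref{sub:thin_0}, applying \prettyref{thm:ext_B_unique} with the full set of evaluation functionals. Concretely, I would work with the ambient spaces $\f:=\mathcal{C}^{k}(\overline{\Omega})$ and $\fe:=\mathcal{C}^{k}(\overline{\Omega},E)$, the weighted space $\Fv:=\mathcal{C}^{k,\gamma}(\overline{\Omega})$ with generator $(T^{E},T^{\K})$ as fixed just before the statement, and the candidate set of uniqueness $U:=\{(0,x)\;|\;x\in\Omega\}\subset\omega_{1}$, for which $T^{\K}_{(0,x)}=\delta_{x}$. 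The given $f$ then belongs to $\mathcal{F}\nu_{G}(U,E)$ by the hypothesis (take $f_{e'}:=e'\circ f$), so once the hypotheses of \prettyref{thm:ext_B_unique} are verified, surjectivity of $R_{U,G}$ produces $F\in\Feps$ with $T^{E}(F)(0,x)=F(x)=f(x)$; thus $F=f$, and $\Feps\subset\FvE=\mathcal{C}^{k,\gamma}(\overline{\Omega},E)$ by \prettyref{prop:mingle-mangle} c) yields $f\in\mathcal{C}^{k,\gamma}(\overline{\Omega},E)$, the uniqueness being automatic from \prettyref{prop:injectivity}.

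Next I would check the structural hypotheses, which are routine. Since $\mathcal{C}^{k}(\overline{\Omega})$ is a Banach space (for $k<\infty$) and hence barrelled, the consistency argument of \prettyref{ex:diff_ext_boundary} (via \prettyref{prop:diff_cons_barrelled}, \prettyref{prop:cont_ext} and \prettyref{lem:cont_ext}) shows that its generator is consistent for arbitrary lcHs $E$, so $\f$ and $\fe$ are $\varepsilon$-into-compatible by \prettyref{thm:linearisation}; note that full $\varepsilon$-compatibility (which would require metric ccp) is \emph{not} needed here. That $(T^{E},T^{\K})$ is a strong, consistent family for $(F,E)$ and a generator for $(\mathcal{F}\nu,E)$ follows as in \prettyref{prop:weighted_diff_strong_cons}: the identities on $\omega_{1}$ are exactly \prettyref{prop:diff_cons_barrelled} c) together with the linearity and continuity of $e'$, while the identities on $\omega_{2}$ are obtained from those on $\omega_{1}$ by subtracting the values at $x$ and $y$ and using the linearity of $u$ resp.\ $e'$. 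Finally, $U$ is a set of uniqueness because any $g\in\mathcal{C}^{k,\gamma}(\overline{\Omega})$ with $g(x)=0$ for all $x\in\Omega$ vanishes.

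The genuine work — and the main obstacle — lies in showing that $\Fv=\mathcal{C}^{k,\gamma}(\overline{\Omega})$ is a Banach space whose closed unit ball $B_{\Fv}$ is a \emph{compact} subset of $\f=\mathcal{C}^{k}(\overline{\Omega})$. Completeness is standard: a Cauchy sequence converges in the Banach space $\mathcal{C}^{k}(\overline{\Omega})$, and the H\"older seminorms of the top-order derivatives are lower semicontinuous under this convergence, so the limit stays in $\mathcal{C}^{k,\gamma}(\overline{\Omega})$. For compactness of $B_{\Fv}$ in $\mathcal{C}^{k}(\overline{\Omega})$ I would invoke the Arzel\`a--Ascoli theorem on the compact set $\overline{\Omega}$: for $f\in B_{\Fv}$ all derivatives $(\partial^{\beta})^{\K}f$, $|\beta|\leq k$, are uniformly bounded, the top-order ones ($|\beta|=k$) are uniformly $\gamma$-H\"older and hence equicontinuous, and $B_{\Fv}$ is $\mathcal{C}^{k}$-closed by the same lower semicontinuity. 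The delicate point is the equicontinuity of the lower-order derivatives when $k\geq 1$: there one only controls $(\partial^{\beta+e_{j}})^{\K}f$, so in order to bound $|(\partial^{\beta})^{\K}f(x)-(\partial^{\beta})^{\K}f(y)|$ by a fixed multiple of $|x-y|$ uniformly over $B_{\Fv}$ one must join $x$ and $y$ by a path inside $\Omega$ of length $O(|x-y|)$. This quasiconvexity is exactly what a Lipschitz boundary guarantees (it is the path-length condition of \prettyref{ex:diff_ext_boundary} (ii)), which explains why it is imposed for $k\geq 1$ and is superfluous for $k=0$, where the $\gamma$-H\"older bound on $f$ itself already yields equicontinuity.

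Assembling these verifications, \prettyref{thm:ext_B_unique} applies and delivers $F\in\Feps$ with $R_{U,G}(F)=f$. Since $T^{E}(F)(0,x)=(\partial^{0})^{E}F(x)=F(x)$, the identity $R_{U,G}(F)=f$ reads $F=f$ as functions on $\Omega$, and \prettyref{prop:mingle-mangle} c) gives $F\in\FvE=\mathcal{C}^{k,\gamma}(\overline{\Omega},E)$; hence $f\in\mathcal{C}^{k,\gamma}(\overline{\Omega},E)$, as claimed.
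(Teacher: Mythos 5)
Your proposal is correct and follows essentially the same route as the paper's proof: the same choice of ambient spaces $\f=\mathcal{C}^{k}(\overline{\Omega})$, $\fe=\mathcal{C}^{k}(\overline{\Omega},E)$, the same generator and set of uniqueness $U=\{0\}\times\Omega$, the same verification of $\varepsilon$-into-compatibility and of strength/consistency via \prettyref{ex:diff_ext_boundary}, and the same application of \prettyref{thm:ext_B_unique} together with \prettyref{prop:mingle-mangle} c). The only difference is that you sketch the compactness of $B_{\mathcal{C}^{k,\gamma}(\overline{\Omega})}$ in $\mathcal{C}^{k}(\overline{\Omega})$ by Arzel\`a--Ascoli (correctly isolating where the Lipschitz boundary enters for $k\geq 1$), whereas the paper simply cites the H\"older embedding theorem for this step.
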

\begin{proof}
We take $\f:=\mathcal{C}^{k}(\overline{\Omega})$ and $\fe:=\mathcal{C}^{k}(\overline{\Omega},E)$ 
and have $\mathcal{F}\nu(\Omega)=\mathcal{C}^{k,\gamma}(\overline{\Omega})$ 
and $\mathcal{F}\nu(\Omega,E)=\mathcal{C}^{k,\gamma}(\overline{\Omega},E)$ with the weight 
$\nu$ and generator $(T^{E},T^{\K})$ for $(\mathcal{F}\nu,E)$ described above.
Due to the proof of \prettyref{ex:diff_ext_boundary} and \prettyref{thm:linearisation} 
the spaces $\f$ and $\fe$ are $\varepsilon$-into-compatible for any lcHs $E$ 
(the condition that $E$ has metric ccp in \prettyref{ex:diff_ext_boundary} is only needed for 
$\varepsilon$-compatibility). 
Another consequence of \prettyref{ex:diff_ext_boundary} is that 
\[
T^{E}(S(u))(\beta,x)=(\partial^{\beta})^{E}(S(u))(x)=u(\delta_{x}\circ(\partial^{\beta})^{\K})=u(T^{\K}_{\beta,x}),
\quad (\beta,x)\in\omega_{1},
\]
holds for all $u\in\f\varepsilon E$, implying 
\begin{align*}
T^{E}(S(u))(\beta,(x,y))&=T^{E}(S(u))(\beta,x)-T^{E}(S(u))(\beta,y)=u(T^{\K}_{\beta,x})-u(T^{\K}_{\beta,y})\\
&=u(T^{\K}_{\beta,(x,y)}),\quad(\beta,(x,y))\in\omega_{2}.
\end{align*}
Thus $(T^{E},T^{\K})$ is a consistent family for $(F,E)$ and its strength is easily seen.
In addition, $\mathcal{F}\nu(\Omega)=\mathcal{C}^{k,\gamma}(\overline{\Omega})$ is a Banach space 
by \cite[Theorem 9.8, p.\ 110]{driver2004} (cf.\ \cite[1.7 H\"olderstetige Funktionen, p.\ 46]{alt2012}) 
whose closed unit ball is compact in $\f=\mathcal{C}^{k}(\overline{\Omega})$ 
by \cite[8.6 Einbettungssatz in H\"older-R\"aumen, p.\ 338]{alt2012}. 
Moreover, the $\varepsilon$-into-compatibility of $\f$ and $\fe$ 
in combination with the consistency of $(T^{E},T^{\K})$ for $(F,E)$ implies 
$\Feps\subset\mathcal{F}\nu(\Omega,E)$ as linear spaces by \prettyref{prop:mingle-mangle} c). 
Hence our statement follows from \prettyref{thm:ext_B_unique}
with the set of uniqueness $U:=\{0\}\times\Omega$ for $(T^{\K},\mathcal{F}\nu)$.
\end{proof}

\begin{rem}
We point out that \prettyref{cor:ext_B_unique} corrects our result \cite[Corollary 5.3, p.\ 16]{kruse2019_3} 
by adding the missing assumption that $\Omega$ should additionally
have Lipschitz boundary in the case $k\geq 1$. This is needed to deduce that the closed unit ball of 
$\mathcal{C}^{k,\gamma}(\overline{\Omega})$ is compact in $\mathcal{C}^{k}(\overline{\Omega})$ 
by \cite[8.6 Einbettungssatz in H\"older-R\"aumen, p.\ 338]{alt2012} (in the notation of \cite{gilbarg_trudinger2001} 
$\Omega$ having Lipschitz boundary means that it is a $\mathcal{C}^{0,1}$ domain, see 
\cite[Lemma 6.36, p.\ 136]{gilbarg_trudinger2001} and the comments below and above this lemma). 
This additional assumption is missing in \cite[Theorem 14.32, p.\ 232]{driver2004}, 
which is our main reference in \cite{kruse2019_3} for the compact embedding, 
but it is needed due to \cite[U8.1 Gegenbeispiel zu Einbettungss\"atzen, p.\ 365]{alt2012} 
(cf.\ \cite[p.\ 53]{gilbarg_trudinger2001}).  
However, this only affects the result \cite[Corollary 6.3, p.\ 21--22]{kruse2019_3} 
where we have to add this missing assumption as well 
(see \prettyref{cor:Hoelder_Blaschke} for this). The other results of \cite{kruse2019_3} 
derived from \cite[Corollary 5.3, p.\ 16]{kruse2019_3} are not affected by this missing assumption since 
they are all a consequence of \cite[Corollary 5.4, p.\ 17]{kruse2019_3} and \cite[Corollary 6.4, p.\ 22]{kruse2019_3}, 
whose proofs can be adjusted without additional 
assumptions (see \prettyref{cor:ext_B_unique_loc_Hoelder} and \prettyref{cor:loc_Hoelder_Blaschke} for this).
\end{rem}

Next, we use the preceding corollary to generalise the theorem of Grothendieck and Schwartz on weakly 
$\mathcal{C}^{k+1}$-functions.
For $k\in\N_{0}$ and $0<\gamma\leq 1$ we define the space of $k$-times continuously partially differentiable 
$E$-valued functions with locally $\gamma$-H\"older continuous partial derivatives of $k$-th order
on an open set $\Omega\subset\R^{d}$ by
\[
 \gls{Ckgammaloc}:=\{f\in\mathcal{C}^{k}(\Omega,E)\;|\;
 \forall\;K\subset\Omega\;\text{compact},\,\alpha\in\mathfrak{A}:\;|f|_{K,\alpha}<\infty\}
\]
where 
\[
|f|_{K,\alpha}:=\max\Bigl(|f|_{\mathcal{C}^{k}(K),\alpha},\,\sup_{\beta\in\N^{d}_{0}, |\beta|= k}
|(\partial^{\beta})^{E}f|_{\mathcal{C}^{0,\gamma}(K),\alpha}\Bigr)
\]
with
\[
 |f|_{\mathcal{C}^{k}(K),\alpha}:=\sup_{\substack{x\in K\\ \beta\in\N^{d}_{0}, |\beta|\leq k}}
 p_{\alpha}((\partial^{\beta})^{E}f(x))
\]
and 
\[
|f|_{\mathcal{C}^{0,\gamma}(K),\alpha}:=\sup_{\substack{x,y\in K\\x\neq y}}
\frac{p_{\alpha}(f(x)-f(y))}{|x-y|^{\gamma}}.
\]

Further, we set $\mathcal{C}^{k,\gamma}_{loc}(\Omega):=\mathcal{C}^{k,\gamma}_{loc}(\Omega,\K)$. 
Using \prettyref{cor:ext_B_unique}, we are able to improve \prettyref{thm:schwartz_weak_strong} 
to the following form.  

\begin{cor}\label{cor:ext_B_unique_loc_Hoelder}
Let $E$ be a locally complete lcHs, $G\subset E'$ determine boundedness, $\Omega\subset\R^{d}$ open, 
$k\in\N_{0}$ and $0<\gamma\leq 1$. 
\begin{enumerate}
\item [a)] If $f\colon\Omega\to E$ is such that $e'\circ f\in \mathcal{C}^{k,\gamma}_{loc}(\Omega)$ 
for all $e'\in G$, then $f\in\mathcal{C}^{k,\gamma}_{loc}(\Omega,E)$. 
\item [b)] If $f\colon\Omega\to E$ is such that $e'\circ f\in \mathcal{C}^{k+1}(\Omega)$ for all $e'\in G$, 
then $f\in\mathcal{C}^{k,1}_{loc}(\Omega,E)$. 
\end{enumerate}
\end{cor}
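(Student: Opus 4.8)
The plan is to localise \prettyref{cor:ext_B_unique}, which already settles the case of a bounded domain with Lipschitz boundary, by covering $\Omega$ with open balls whose closures lie in $\Omega$ and exploiting that such balls have smooth—hence Lipschitz—boundary, so that the Lipschitz hypothesis needed for $k\geq 1$ is automatic.

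For part a), fix $f\colon\Omega\to E$ with $e'\circ f\in\mathcal{C}^{k,\gamma}_{loc}(\Omega)$ for every $e'\in G$. First I would observe that for any open ball $B$ with $\overline{B}\subset\Omega$ the restriction $e'\circ f_{\mid B}$ lies in $\mathcal{C}^{k,\gamma}(\overline{B})$ for every $e'\in G$: since $e'\circ f$ is $\mathcal{C}^{k}$ on all of $\Omega$, its derivatives up to order $k$ extend continuously to the compact set $\overline{B}$, and the defining Hölder seminorm of $\mathcal{C}^{k,\gamma}(\overline{B})$ agrees with $|e'\circ f|_{\overline{B},\alpha}$, which is finite because $\overline{B}$ is a compact subset of $\Omega$ and $e'\circ f\in\mathcal{C}^{k,\gamma}_{loc}(\Omega)$. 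Applying \prettyref{cor:ext_B_unique} with $B$ in place of $\Omega$ (the hypotheses on $E$ and $G$ are unchanged, and the Lipschitz boundary requirement for $k\geq 1$ is met by the smooth boundary of $B$) then yields $f_{\mid B}\in\mathcal{C}^{k,\gamma}(\overline{B},E)$. In particular $f_{\mid B}\in\mathcal{C}^{k}(B,E)$, and since being $\mathcal{C}^{k}$ is a local property and the balls $B$ cover $\Omega$, I obtain $f\in\mathcal{C}^{k}(\Omega,E)$.

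It then remains to upgrade this to membership in $\mathcal{C}^{k,\gamma}_{loc}(\Omega,E)$, i.e.\ to bound $|(\partial^{\beta})^{E}f|_{\mathcal{C}^{0,\gamma}(K),\alpha}$ for every compact $K\subset\Omega$, every $\beta\in\N_{0}^{d}$ with $|\beta|=k$ and every $\alpha\in\mathfrak{A}$. This is the step that requires the most care, since the Hölder seminorm involves pairs $x,y\in K$ that need not lie in a common ball of the cover. Here I would invoke the Lebesgue number lemma: cover $K$ by finitely many balls $B_{1},\dots,B_{n}$ with $\overline{B_{i}}\subset\Omega$, on each of which $(\partial^{\beta})^{E}f$ is $\gamma$-Hölder by the previous step, choose a Lebesgue number $\delta>0$ for this cover, and split the supremum over pairs into those with $|x-y|<\delta$—which then lie in a single $B_{i}$ and are controlled by $\max_{i}|(\partial^{\beta})^{E}f|_{\mathcal{C}^{0,\gamma}(\overline{B_{i}}),\alpha}$—and those with $|x-y|\geq\delta$, which are controlled by $2\delta^{-\gamma}\sup_{z\in K}p_{\alpha}((\partial^{\beta})^{E}f(z))$, the latter supremum being finite since $(\partial^{\beta})^{E}f$ is continuous on the compact set $K$. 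This furnishes a finite bound and hence $f\in\mathcal{C}^{k,\gamma}_{loc}(\Omega,E)$.

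Part b) will follow immediately from a) with $\gamma=1$ once I record the elementary inclusion $\mathcal{C}^{k+1}(\Omega)\subset\mathcal{C}^{k,1}_{loc}(\Omega)$: if $g\in\mathcal{C}^{k+1}(\Omega)$ then each $\partial^{\beta}g$ with $|\beta|=k$ is $\mathcal{C}^{1}$, and the mean value theorem (applied on convex pieces, again combined with a Lebesgue-number argument for non-convex compacta) shows that such a function is Lipschitz, i.e.\ $1$-Hölder, on every compact subset of $\Omega$. Thus $e'\circ f\in\mathcal{C}^{k,1}_{loc}(\Omega)$ for all $e'\in G$, and part a) with $\gamma=1$ yields $f\in\mathcal{C}^{k,1}_{loc}(\Omega,E)$. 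The main obstacle throughout is purely the passage from the local ball estimates to estimates on an arbitrary compact set; everything else is a direct transcription of \prettyref{cor:ext_B_unique} to balls.
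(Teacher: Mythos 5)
Your argument is correct, and it follows the same basic strategy as the paper's proof: reduce to \prettyref{cor:ext_B_unique} on bounded subdomains with Lipschitz boundary and then use locality. The difference lies in the choice of subdomains. The paper takes a nested exhaustion $(\Omega_{n})_{n\in\N}$ of $\Omega$ by relatively compact open sets with Lipschitz boundary (interiors of finite unions of closed axis-parallel cubes, with $\Omega_{n}\subset\Omega_{n+1}$); since every compact $K\subset\Omega$ is then contained in a single $\Omega_{n}$, the H\"older seminorm over $K$ is directly dominated by the one over $\overline{\Omega}_{n}$ and no patching of estimates is needed. You instead cover by balls, which forces you to recombine the ball-wise H\"older bounds into a bound over an arbitrary compact $K$; your Lebesgue-number argument, splitting pairs $x,y$ according to whether $|x-y|<\delta$ or $|x-y|\geq\delta$ and using continuity of $(\partial^{\beta})^{E}f$ on $K$ for the second case, does this correctly. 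What your route buys is that the Lipschitz-boundary hypothesis of \prettyref{cor:ext_B_unique} is satisfied trivially for balls, whereas the paper has to appeal to a (mildly delicate) construction of a Lipschitz exhaustion; the price is the extra patching step, which the paper's nested exhaustion makes unnecessary. Incidentally, in part b) the paper also covers a compact set by finitely many balls on which the mean value theorem gives a Lipschitz bound, and there your explicit Lebesgue-number remark is actually the more careful way to conclude local Lipschitz continuity from such a cover.
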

\begin{proof}
Let us start with a). Let $f\colon\Omega\to E$ be such that $e'\circ f\in \mathcal{C}^{k,\gamma}_{loc}(\Omega)$ 
for all $e'\in G$. Let $(\Omega_{n})_{n\in\N}$ be an exhaustion of $\Omega$ with open, relatively compact sets 
$\Omega_{n}\subset\Omega$ with Lipschitz boundaries $\partial\Omega_{n}$ (e.g.\ choose each $\Omega_{n}$ as the interior 
of a finite union of closed axis-parallel cubes, see the proof of \cite[Theorem 1.4, p.\ 7]{stein2005} for the construction) 
that satisfies $\Omega_{n}\subset\Omega_{n+1}$ for all $n\in\N$. 
Then the restriction of $e'\circ f$ to $\Omega_{n}$ is an element of $\mathcal{C}^{k,\gamma}(\overline{\Omega}_{n})$ 
for every $e'\in G$ and $n\in\N$. Due to \prettyref{cor:ext_B_unique} we obtain that 
$f\in\mathcal{C}^{k,\gamma}(\overline{\Omega}_{n},E)$ for every $n\in\N$. 
Thus $f\in\mathcal{C}^{k,\gamma}_{loc}(\Omega,E)$ 
since differentiability is a local property and for each compact $K\subset\Omega$ there is $n\in\N$ 
such that $K\subset \Omega_{n}$.

Let us turn to b), i.e.\ let $f\colon\Omega\to E$ be such that $e'\circ f\in\mathcal{C}^{k+1}(\Omega)$ 
for all $e'\in G$. 
Since $\Omega\subset\R^{d}$ is open, for every $x\in\Omega$ there is $\varepsilon_{x}>0$ such that 
$\overline{\mathbb{B}_{\varepsilon_{x}}(x)}\subset\Omega$.
For all $e'\in G$, $\beta\in\N_{0}^{d}$ with $|\beta|=k$ 
and $w,y\in\overline{\mathbb{B}_{\varepsilon_{x}}(x)}$, $w\neq y$, it holds that
\[
 \frac{|(\partial^{\beta})^{\K}(e'\circ f)(w)-(\partial^{\beta})^{\K}(e'\circ f)(y)|}{|w-y|}
\leq C_{d}\max_{1\leq n\leq d}
 \max_{z\in\overline{\mathbb{B}_{\varepsilon_{x}}(x)}}|(\partial^{\beta+e_{n}})^{\K}(e'\circ f)(z)|
\]
by the mean value theorem applied to the real and imaginary part 
where $C_{d}:=\sqrt{d}$ if $\K=\R$, and $C_{d}:=2\sqrt{d}$ if $\K=\C$. 
Thus $e'\circ f\in \mathcal{C}^{k,1}_{loc}(\Omega)$ for all $e'\in G$ 
since for each compact set $K\subset\Omega$ there are $m\in\N$ and $x_{i}\in\Omega$, $1\leq i\leq m$, 
such that $K\subset \bigcup_{i=1}^{m}\mathbb{B}_{\varepsilon_{x_{i}}}(x_{i})$.
It follows from part a) that $f\in\mathcal{C}^{k,1}_{loc}(\Omega,E)$. 
\end{proof}

If $\Omega=\R$, $\gamma=1$ and $G=E'$, then part a) of \prettyref{cor:ext_B_unique_loc_Hoelder} is already known 
by \cite[2.3 Corollary, p.\ 15]{kriegl}. 
A `full' $\mathcal{C}^{k}$-weak-strong principle for $k<\infty$, i.e.\ the conditions of part b) 
imply $f\in\mathcal{C}^{k+1}(\Omega,E)$, does not hold in general (see \cite[p.\ 11--12]{kriegl}) 
but it holds if we restrict the class of admissible lcHs $E$.

\begin{thm}\label{thm:weak_strong_finite_order}
Let $E$ be a semi-Montel space, $G\subset E'$ determine boundedness, $\Omega\subset\R^{d}$ open and
$k\in\N$. If $f\colon\Omega\to E$ is such that $e'\circ f\in \mathcal{C}^{k}(\Omega)$ for all $e'\in G$, 
then $f\in\mathcal{C}^{k}(\Omega,E)$. 
\end{thm}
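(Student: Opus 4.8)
The plan is to prove the statement by induction on $k$, reducing everything to the continuous case and to a difference-quotient argument in which the semi-Montel property of $E$ is used to promote weak convergence to genuine convergence in $E$. First I would record the underlying $\mathcal{C}^{0}$-weak-strong principle as a lemma: if $g\colon\Omega\to E$ satisfies $e'\circ g\in\mathcal{C}(\Omega)$ for all $e'\in G$, then $g\in\mathcal{C}(\Omega,E)$. Note that $G$ determining boundedness forces $G$ to be separating, since otherwise $\{nx\}_{n\in\N}$ would be $\sigma(E,G)$-bounded but unbounded for some $x\neq 0$; hence \prettyref{prop:general_Schwartz} (i) is available. For compact $K\subset\Omega$ the set $g(K)$ is $\sigma(E,G)$-bounded, thus bounded in $E$ because $G$ determines boundedness, thus relatively compact as $E$ is semi-Montel; on the compact set $\overline{g(K)}$ the topology $\sigma(E,G)$ coincides with the initial topology of $E$ by \prettyref{prop:general_Schwartz}, so the $\sigma(E,G)$-continuous map $g|_{K}$ is continuous into $E$. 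Since $\Omega$ is locally compact, continuity on each compact neighbourhood yields $g\in\mathcal{C}(\Omega,E)$.

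The inductive step is the heart of the matter. Fix $k\geq 1$ and assume the theorem for $k-1$ (the case $k-1=0$ being the lemma). Given $f$ with $e'\circ f\in\mathcal{C}^{k}(\Omega)$ for all $e'\in G$, I would first construct the partial derivatives $(\partial^{e_{n}})^{E}f$ directly. Fix $x\in\Omega$, $1\leq n\leq d$ and $\delta>0$ with $\overline{\mathbb{B}_{\delta}(x)}\subset\Omega$, and consider the difference quotients $q_{h}:=\tfrac{1}{h}(f(x+he_{n})-f(x))$ for $0<|h|<\delta$. For each $e'\in G$ the mean value theorem applied to the real and imaginary parts of $e'\circ f\in\mathcal{C}^{1}(\Omega)$ bounds $|e'(q_{h})|$ uniformly in $h$, so $\{q_{h}\}$ is $\sigma(E,G)$-bounded and thus bounded in $E$; being semi-Montel, $E$ makes $C_{x}:=\overline{\{q_{h}\}}$ compact. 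On $C_{x}$ the topology $\sigma(E,G)$ agrees with the initial one, and every convergent subnet of $(q_{h})_{h\to 0}$ has, by the separation property of $G$, the same limit $y$ characterised by $e'(y)=\partial^{e_{n}}(e'\circ f)(x)$; hence $(q_{h})$ converges in $E$ and I set $(\partial^{e_{n}})^{E}f(x):=y$.

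With $(\partial^{e_{n}})^{E}f$ now defined pointwise and satisfying $e'\circ(\partial^{e_{n}})^{E}f=\partial^{e_{n}}(e'\circ f)\in\mathcal{C}^{k-1}(\Omega)$ for every $e'\in G$, the induction hypothesis (applied for $k-1\geq 1$, or the $\mathcal{C}^{0}$-lemma when $k=1$) gives $(\partial^{e_{n}})^{E}f\in\mathcal{C}^{k-1}(\Omega,E)$; together with $f\in\mathcal{C}(\Omega,E)$ from the lemma this yields $f\in\mathcal{C}^{k}(\Omega,E)$ and closes the induction. The main obstacle is precisely the promotion from weak to actual convergence of the difference quotients: one must simultaneously produce the limiting vector $y\in E$ — not merely the functional $e'\mapsto\partial^{e_{n}}(e'\circ f)(x)$ on $G$ — and identify it, and this is exactly where the interplay of boundedness via $G$, relative compactness via semi-Montelness, and the coincidence of $\sigma(E,G)$ with the initial topology on compacta (\prettyref{prop:general_Schwartz}) is indispensable. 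Dropping the semi-Montel hypothesis makes this limit fail to exist in general, which is why one otherwise has to surrender an extra order of differentiability as in \prettyref{thm:schwartz_weak_strong}.
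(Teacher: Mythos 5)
Your proof is correct, and the top-order mechanism is exactly the one the paper uses: bound the difference quotients, invoke semi-Montelness to get relative compactness, and use \prettyref{prop:general_Schwartz} (i) together with the fact that $G$ separates points (which, as you rightly note, follows from $G$ determining boundedness) to promote the weak limit $e'\mapsto\partial^{e_{n}}(e'\circ f)(x)$ to an honest limit in $E$. Where you genuinely diverge is in how the derivatives of order $<k$ are obtained. The paper does not induct: it first invokes \prettyref{cor:ext_B_unique_loc_Hoelder} b) (whose proof rests on the Banach-space extension theorem \prettyref{thm:ext_B_unique} and the $\varepsilon$-product machinery) to conclude $f\in\mathcal{C}^{k-1,1}_{loc}(\Omega,E)$ for merely locally complete $E$, and then runs the compactness argument only once, at the top order, with the boundedness of the difference quotient set coming from the local Lipschitz continuity of the $(k-1)$-th derivatives rather than from the mean value theorem applied to $e'\circ f$. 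Your route replaces that machinery by an induction on $k$ in which the same compactness argument is applied at every order, with boundedness supplied directly by the mean value theorem and the hypothesis that $G$ determines boundedness; this makes the proof essentially self-contained modulo \prettyref{prop:general_Schwartz} and Mackey's theorem. The trade-off is that the paper's detour through $\mathcal{C}^{k-1,1}_{loc}$ yields the quantitative Lipschitz information on the lower-order derivatives as a by-product and reuses results already established, whereas your induction is more elementary but produces only continuity at each stage. Both arguments are sound.
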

\begin{proof}
Let $f\colon\Omega\to E$ be such that $e'\circ f\in \mathcal{C}^{k}(\Omega)$ for all $e'\in G$. 
Due to \prettyref{cor:ext_B_unique_loc_Hoelder} b) we already know that $f\in\mathcal{C}^{k-1,1}_{loc}(\Omega,E)$ since 
semi-Montel spaces are quasi-complete and thus locally complete. 
Now, let $x\in\Omega$, $\varepsilon_{x}>0$ such that $\overline{\mathbb{B}_{\varepsilon_{x}}(x)}\subset\Omega$, 
$\beta\in\N_{0}^{d}$ with $|\beta|=k-1$ and $n\in\N$ with $1\leq n\leq d$. 
The set
\[
B:=\Bigl\{\frac{(\partial^{\beta}f)^{E}(x+he_{n})-(\partial^{\beta}f)^{E}f(x)}{h}\;|\;
          h\in\R,\,0<h\leq\varepsilon_{x}\Bigr\}
\]
is bounded in $E$ because $f\in\mathcal{C}^{k-1,1}_{loc}(\Omega,E)$. As $E$ is a semi-Montel space, 
the closure $\overline{B}$ is compact in $E$. 
Let $(h_{m})_{m\in\N}$ be a sequence in $\R$ such that $0<h_{m}\leq\varepsilon_{x}$ for all $m\in\N$. 
From the compactness of $\overline{B}$ we deduce that there is a subnet $(h_{m_{\iota}})_{\iota\in I}$ of $(h_{m})_{m\in\N}$ and $y_{x}\in\overline{B}$ with 
\[
y_{x}=\lim_{\iota\in I}\frac{(\partial^{\beta}f)^{E}(x+h_{m_{\iota}}e_{n})-(\partial^{\beta}f)^{E}f(x)}{h_{m_{\iota}}}
=:\lim_{\iota\in I}y_{\iota}.
\]
Further, we note that the limit
\begin{equation}\label{eq:weak_limit_1}
 (\partial^{\beta+e_{n}})^{\K}(e'\circ f)(x)
=\lim_{\substack{h\to 0\\ h\in\R, h\neq 0}}
 \frac{\partial^{\beta}(e'\circ f)(x+he_{n})-\partial^{\beta}(e'\circ f)(x)}{h}
\end{equation}
exists for all $e'\in G$ and that $(e'(y_{\iota}))_{\iota\in I}$ is a subnet of the net of difference quotients 
on the right-hand side of \eqref{eq:weak_limit_1} as $(\partial^{\beta})^{\K}(e'\circ f)=e'\circ (\partial^{\beta})^{E} f$.
Therefore 
\begin{align}\label{eq:weak_limit_2}
  (\partial^{\beta+e_{n}})^{\K}(e'\circ f)(x)
&=\lim_{\substack{h\to 0\\ h\in\R, h\neq 0}}
   e'\Bigl(\frac{(\partial^{\beta})^{E}f(x+he_{n})-(\partial^{\beta})^{E}f(x)}{h}\Bigr)\notag\\
&=\lim_{\substack{h\to 0\\ h\in\R, 0<h\leq\varepsilon_{x}}}
   e'\Bigl(\frac{(\partial^{\beta})^{E}f(x+he_{n})-(\partial^{\beta})^{E}f(x)}{h}\Bigr)
 =\lim_{\iota\in I}e'(y_{\iota})\notag\\
&=e'(y_{x})
\end{align}
for all $e'\in G$. By \prettyref{prop:general_Schwartz} (i) the topology $\sigma(E,G)$ 
and the initial topology of $E$ coincide on $\overline{B}$. Combining this fact with
\eqref{eq:weak_limit_2}, we deduce that 
\[
 (\partial^{\beta+e_{n}})^{E}f(x)
=\lim_{\substack{h\to 0\\ h\in\R, h\neq 0}}\frac{(\partial^{\beta})^{E}f(x+he_{n})-(\partial^{\beta})^{E}f(x)}{h}
=y_{x}.
\]
In addition, $e'\circ(\partial^{\beta+e_{n}})^{E}f=(\partial^{\beta+e_{n}})^{\K}(e'\circ f)$ is continuous on 
$\overline{\mathbb{B}_{\varepsilon_{x}}(x)}$ for all $e'\in G$, meaning that the restriction of 
$(\partial^{\beta+e_{n}})^{E}f$ on $\overline{\mathbb{B}_{\varepsilon_{x}}(x)}$ to $(E,\sigma(E,G))$
is continuous, and the range $(\partial^{\beta+e_{n}})^{E}f(\overline{\mathbb{B}_{\varepsilon_{x}}(x)})$ is bounded 
in $E$. As before we use that $\sigma(E,G)$ and the initial topology of $E$ coincide on 
$(\partial^{\beta+e_{n}})^{E}f(\overline{\mathbb{B}_{\varepsilon_{x}}(x)})$, 
which implies that the restriction of $(\partial^{\beta+e_{n}})^{E}f$ on $\overline{\mathbb{B}_{\varepsilon_{x}}(x)}$ 
is continuous w.r.t.\ the initial topology of $E$. 
Since continuity is a local property and $x\in\Omega$ is arbitrary, we conclude that $(\partial^{\beta+e_{n}})^{E}f$ 
is continuous on $\Omega$.
\end{proof}

In the special case that $\Omega=\R$, $G=E'$ and $E$ is a Montel space, i.e.\ a barrelled semi-Montel space, 
a different proof of the preceding weak-strong principle can be found in the proof of 
\cite[Lemma 4, p.\ 15]{carroll1961}. 
This proof uses the Banach--Steinhaus theorem and needs the barrelledness of the Montel space $E_{b}'$. 
Our weak-strong principle \prettyref{thm:weak_strong_finite_order} does not need the barrelledness of $E$, 
hence can be applied to $E=(\mathcal{C}^{\infty}_{\overline{\partial},b}(\D),\beta)$ 
which is a non-barrelled semi-Montel space by \prettyref{rem:strict_top_non_barrelled}
and \prettyref{prop:strict_topo_hypo_B_complete_semi_Montel}.

Besides the `full' $\mathcal{C}^{k}$-weak-strong principle for $k<\infty$ and semi-Montel $E$, part b) 
of \prettyref{cor:ext_B_unique_loc_Hoelder} also suggests an `almost' $\mathcal{C}^{k}$-weak-strong principle 
in terms of \cite[3.1.6 Rademacher's theorem, p.\ 216]{federer1969}, which we prepare next.

\begin{defn}[{generalised Gelfand space}]
We say that an lcHs $E$ is a \emph{\gls{gen_Gelfand_space}}
if every Lipschitz continuous map $f\colon [0,1]\to E$ is differentiable almost everywhere w.r.t\ to 
the one-dimensional Lebesgue measure. 
\end{defn}

If $E$ is a real Fr\'echet space ($\K=\R$), then this definition coincides with 
the definition of a Fr\'echet--Gelfand space given in \cite[Definition 2.2, p.\ 17]{mankiewicz1973}. In particular, 
every real nuclear Fr\'echet lattice (see \cite[Theorem 6, Corollary, p.\ 375, 378]{grosse-erdmann1991}) 
and more general every real Fr\'echet--Montel space is a generalised Gelfand space 
by \cite[Theorem 2.9, p.\ 18]{mankiewicz1973}. 
If $E$ is a Banach space, then this definition coincides with the definition of a Gelfand space given 
in \cite[Definition 4.3.1, p.\ 106--107]{diesteluhl1977} by \cite[Proposition 1.2.4, p.\ 18]{arendt2011}. 
A Banach space is a Gelfand space if and only if it has the Radon--Nikod\'ym 
property by \cite[Theorem 4.3.2, p.\ 107]{diesteluhl1977}. 
Thus separable duals of Banach spaces, reflexive Banach spaces and $\ell^{1}(\Gamma)$ 
for any set $\Gamma$ are generalised Gelfand spaces by \cite[Theorem 3.3.1 (Dunford-Pettis), p.\ 79]{diesteluhl1977}, 
\cite[Corollary 3.3.4 (Phillips), p.\ 82]{diesteluhl1977} and \cite[Corollary 3.3.8, p.\ 83]{diesteluhl1977}. 
The Banach spaces $c_{0}$, $\ell^{\infty}$, $\mathcal{C}([0,1])$, $\mathcal{L}^{1}([0,1])$ and $\mathcal{L}^{\infty}([0,1])$ do not have the 
Radon--Nikod\'ym property and hence are not generalised Gelfand spaces by \cite[Proposition 1.2.9, p.\ 20]{arendt2011}, 
\cite[Example 1.2.8, p.\ 20]{arendt2011} and \cite[Proposition 1.2.10, p.\ 21]{arendt2011}.

\begin{cor}\label{cor:allmost_weak_strong_finite_order}
Let $E$ be a locally complete generalised Gelfand space, $G\subset E'$ determine boundedness, $\Omega\subset\R$ open and
$k\in\N$. If $f\colon\Omega\to E$ is such that $e'\circ f\in \mathcal{C}^{k}(\Omega)$ for all $e'\in G$, 
then $f\in\mathcal{C}^{k-1,1}_{loc}(\Omega,E)$ and the derivative 
$(\partial^{k})^{E}f(x)$ exists for Lebesgue almost all $x\in\Omega$.
\end{cor}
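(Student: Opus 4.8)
The plan is to reduce everything to the local Hölder regularity already established in \prettyref{cor:ext_B_unique_loc_Hoelder} b) and then to invoke the defining property of a generalised Gelfand space to upgrade Lipschitz continuity of the top-order derivative to differentiability almost everywhere.

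First I would apply \prettyref{cor:ext_B_unique_loc_Hoelder} b) with $k$ replaced by $k-1$ (note $k-1\in\N_{0}$ since $k\geq 1$): the hypothesis that $e'\circ f\in\mathcal{C}^{(k-1)+1}(\Omega)=\mathcal{C}^{k}(\Omega)$ for all $e'\in G$, together with the local completeness of $E$ and the fact that $G$ determines boundedness, yields $f\in\mathcal{C}^{k-1,1}_{loc}(\Omega,E)$. This already settles the first assertion. In particular, writing $g:=(\partial^{k-1})^{E}f\colon\Omega\to E$ (here $d=1$, so there is a single partial derivative direction), membership in $\mathcal{C}^{k-1,1}_{loc}(\Omega,E)$ means precisely that for every compact $K\subset\Omega$ and every $\alpha\in\mathfrak{A}$ one has $\sup_{x\neq y\in K}p_{\alpha}(g(x)-g(y))/|x-y|<\infty$, i.e.\ $g$ is locally Lipschitz continuous as a map into $E$.

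Next I would localise. Since $\Omega\subset\R$ is open, it is a countable union $\Omega=\bigcup_{j\in\N}[c_{j},d_{j}]$ of compact intervals (for instance all rational-endpoint intervals contained in $\Omega$), and on each such interval $g$ is Lipschitz w.r.t.\ every seminorm $p_{\alpha}$. For a fixed interval $[c,d]\subset\Omega$ the affinely reparametrised map $\phi\colon[0,1]\to E$, $\phi(t):=g(c+t(d-c))$, is then Lipschitz continuous into $E$, so by the definition of a generalised Gelfand space it is differentiable for Lebesgue-almost all $t\in[0,1]$. Now $\phi$ is differentiable at $t$ if and only if $g$ is differentiable at $c+t(d-c)$, in which case $\phi'(t)=(d-c)\,(\partial^{1})^{E}g(c+t(d-c))$; since the affine change of variables sends Lebesgue-null sets to Lebesgue-null sets, it follows that $(\partial^{1})^{E}g(x)$ exists for almost all $x\in[c,d]$. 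Taking the countable union of the exceptional null sets over the intervals $[c_{j},d_{j}]$ produces a Lebesgue-null set outside of which $(\partial^{1})^{E}g(x)$ exists; as $(\partial^{1})^{E}g=(\partial^{1})^{E}(\partial^{k-1})^{E}f=(\partial^{k})^{E}f$, this is exactly the claimed almost everywhere existence of $(\partial^{k})^{E}f$.

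The genuinely new input is packaged entirely in \prettyref{cor:ext_B_unique_loc_Hoelder} b) and in the generalised Gelfand hypothesis, so I do not expect a serious obstacle. The one point requiring care is the matching of notions of Lipschitz continuity: the $\mathcal{C}^{0,1}$-type estimate above is stated per continuous seminorm $p_{\alpha}$, and one should verify that this is precisely the notion of a Lipschitz map into the lcHs $E$ used in the definition of a generalised Gelfand space — which it is, since Lipschitz continuity of an $E$-valued map means a Lipschitz bound with respect to each continuous seminorm. The remaining measure-theoretic bookkeeping (compact exhaustion of $\Omega$, invariance of null sets under affine maps, and countable unions of null sets) is routine.
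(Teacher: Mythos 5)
Your proposal is correct and follows essentially the same route as the paper's proof: apply \prettyref{cor:ext_B_unique_loc_Hoelder} b) with $k$ replaced by $k-1$ to get $f\in\mathcal{C}^{k-1,1}_{loc}(\Omega,E)$, reparametrise $(\partial^{k-1})^{E}f$ on compact subintervals to a Lipschitz map on $[0,1]$, invoke the generalised Gelfand property, and finish with a countable union of null sets. The only (harmless) difference is that you spell out the index shift and the affine change of variables slightly more explicitly than the paper does.
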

\begin{proof}
The first part follows from \prettyref{cor:ext_B_unique_loc_Hoelder} b). 
Now, let $[a,b]\subset\Omega$ be a bounded interval. 
We set $F\colon[0,1]\to E$, $F(x):=(\partial^{k-1})^{E}f(a+x(b-a))$. 
Then $F$ is Lipschitz continuous as $f\in\mathcal{C}^{k-1,1}_{loc}(\Omega,E)$. 
This yields that $F$ is differentiable on $[0,1]$ almost everywhere because $E$ is a generalised Gelfand space, 
implying that $(\partial^{k-1})^{E}f$ is differentiable on $[a,b]$ almost everywhere. 
Since the open set $\Omega\subset\R$ can be written as a countable union of disjoint open intervals 
$I_{n}$, $n\in\N$, and each $I_{n}$ is a countable union of closed bounded intervals 
$[a_{m},b_{m}]$, $m\in\N$, our statement follows from the fact that the countable union of null sets is a null set.
\end{proof}

To the best of our knowledge there are still some open problems for continuously partially differentiable functions 
of finite order. 

\begin{que}
\begin{enumerate}
\item[(i)] Are there other spaces than semi-Montel spaces $E$ for which the `full' 
$\mathcal{C}^{k}$-weak-strong principle \prettyref{thm:weak_strong_finite_order} with $k<\infty$ is true? 
For instance, if $k=0$, then it is still true if $E$ is 
a generalised Schwartz space by \cite[2.10 Lemma, p.\ 140]{B2}. 
Does this hold for $0<k<\infty$ as well?
\item[(ii)] Does the `almost' $\mathcal{C}^{k}$-weak-strong principle \prettyref{cor:allmost_weak_strong_finite_order} 
also hold for $d>1$? 
\item[(iii)] For every $\varepsilon>0$ does there exist a function $g\in\mathcal{C}^{k}(\R,E)$ such that 
$\lambda(\{x\in\Omega\;|\;f(x)\neq g(x)\})<\varepsilon$ in \prettyref{cor:allmost_weak_strong_finite_order} 
where $\lambda$ is the one-dimensional Lebesgue measure. 
In the case that $E=\R^{n}$ this is true by \cite[Theorem 3.1.15, p.\ 227]{federer1969}.
\item[(iv)] Is there a `Radon--Nikod\'ym type' characterisation of generalised Gelfand spaces as in the Banach case? 
\end{enumerate}
\end{que}
\section{Vector-valued Blaschke theorems}
\label{sect:blaschke}
In this section we prove several convergence theorems for Banach-valued functions in the 
spirit of Blaschke's convergence theorem \cite[Theorem 7.4, p.\ 219]{burckel1979} as it is done in 
\cite[Theorem 2.4, p.\ 786]{Arendt2000} and \cite[Corollary 2.5, p.\ 786--787]{Arendt2000} 
for bounded holomorphic functions and more general in \cite[Corollary 4.2, p.\ 695]{F/J/W} 
for bounded functions in the kernel of a hypoelliptic linear partial differential operator. 
\emph{\gls{Blaschke}} says that if $(z_{n})_{n\in\N}\subset\D$ is a sequence of distinct elements
with $\sum_{n\in\N}(1-|z_{n}|)=\infty$ and if $(f_{k})_{k\in\N}$ is a bounded sequence in $H^{\infty}(\D)$ 
such that $(f_{k}(z_{n}))_{k}$ converges in $\C$ for each $n\in\N$, then there is $f\in H^{\infty}(\D)$ such that 
$(f_{k})_{k}$ converges uniformly to $f$ on the compact subsets of $\D$, i.e.\ w.r.t.\ to $\tau_{c}$.

\begin{prop}[{\cite[Proposition 4.1, p.\ 695]{F/J/W}}]\label{prop:Blaschke_operator}
Let $(E,\|\cdot\|)$ be a Banach space, $Z$ a Banach space whose closed unit ball $B_{Z}$ is a 
compact subset of an lcHs $Y$ and let $(\mathsf{A}_{\iota})_{\iota\in I}$ be a net in $Y\varepsilon E$ 
such that
\[
\sup_{\iota\in I}\{\|\mathsf{A}_{\iota}(y)\|\;|\;y\in B_{Z}^{\circ Y'}\}<\infty.
\] 
Assume further that there exists a $\sigma(Y',Z)$-dense subspace $X\subset Y'$ such that 
$\lim_{\iota}\mathsf{A}_{\iota}(x)$ exists for each $x\in X$. Then there is 
$\mathsf{A}\in Y\varepsilon E$ with $\mathsf{A}(B_{Z}^{\circ Y'})$ bounded and 
$\lim_{\iota}\mathsf{A}_{\iota}=\mathsf{A}$ uniformly on the equicontinuous subsets of $Y'$, 
i.e.\ for all equicontinuous $B\subset Y'$ and $\varepsilon>0$ there exists $\varsigma\in I$ such that 
\[
\sup_{y\in B}\|\mathsf{A}_{\iota}(y)-\mathsf{A}(y)\|<\varepsilon
\]
for each $\iota\geq\varsigma$.
\end{prop}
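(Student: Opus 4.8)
The plan is to route everything through the single seminorm $p(y'):=\sup_{z\in B_{Z}}|y'(z)|$ on $Y'$. Since $B_{Z}$ is absolutely convex and compact in $Y$, the seminorm $p$ is one of the defining seminorms of $\kappa(Y',Y)$, and $B_{Z}^{\circ Y'}=\{y'\in Y'\;|\;p(y')\leq 1\}$. Writing $M:=\sup_{\iota}\sup_{y\in B_{Z}^{\circ Y'}}\|\mathsf{A}_{\iota}(y)\|<\infty$, homogeneity yields the uniform estimate $\|\mathsf{A}_{\iota}(y')\|\leq M\,p(y')$ for all $y'\in Y'$ and $\iota\in I$; in particular each $\mathsf{A}_{\iota}$ factors continuously through $p$, and the net $(\mathsf{A}_{\iota})$ is equicontinuous as a family of maps $Y'_{\kappa}\to E$.

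The key step, and the main obstacle, is to upgrade the hypothesis that $X$ is $\sigma(Y',Z)$-dense to the stronger statement that $X$ is dense for the seminorm $p$ (the hypothesised density is for a coarser topology, so this is not automatic). The point is that $p$ is the topology of uniform convergence on $B_{Z}$, and $B_{Z}$ is absolutely convex and $\sigma(Z,Y')$-compact, its $\tau_{Y}$-compactness passing to the coarser Hausdorff topology $\sigma(Z,Y')=\sigma(Y,Y')_{\mid Z}$. Hence, by the Mackey--Arens theorem, $p$ is a topology of the dual pair $\langle Y',Z\rangle$: a linear functional on $Y'$ is $p$-continuous iff it is bounded on $\{p\leq 1\}=B_{Z}^{\circ Y'}$, and $(B_{Z}^{\circ Y'})^{\circ}=B_{Z}$ by the bipolar theorem, so the $p$-dual of $Y'$ is exactly $Z$. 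Since $X$ is convex, the theorem of Mazur (a convex set has the same closure in all locally convex topologies of a fixed dual pair) shows that its $\sigma(Y',Z)$-closure and its $p$-closure coincide; as the former is $Y'$, the subspace $X$ is $p$-dense in $Y'$.

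With $p$-density in hand I would first construct $\mathsf{A}$: the pointwise limit $\mathsf{A}_{0}(x):=\lim_{\iota}\mathsf{A}_{\iota}(x)$ exists for $x\in X$, is linear, and satisfies $\|\mathsf{A}_{0}(x)\|\leq M\,p(x)$; being $p$-uniformly continuous into the Banach space $E$ and defined on the $p$-dense subspace $X$, it extends uniquely to a linear map $\mathsf{A}\colon Y'\to E$ with $\|\mathsf{A}(y')\|\leq M\,p(y')$. Thus $\mathsf{A}\in L(Y'_{\kappa},E)=Y\varepsilon E$ and $\mathsf{A}(B_{Z}^{\circ Y'})$ is bounded (alternatively one may invoke \prettyref{prop:ext_B_set_uni} with $G:=E'$, which determines boundedness since $E$ is a Banach space). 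For the convergence, let $B\subset Y'$ be equicontinuous and $\varepsilon>0$. Then $B$ is relatively $\kappa$-compact by the Alaoglu--Bourbaki theorem together with the coincidence of $\sigma(Y',Y)$ and $\kappa(Y',Y)$ on equicontinuous sets, hence $p$-totally bounded. Covering $B$ by finitely many $p$-balls of radius $\varepsilon/(12M)$ with centres in $B$ and using $p$-density to move these centres into $X$, I obtain $\tilde y_{1},\dots,\tilde y_{n}\in X$ with $B\subset\bigcup_{k=1}^{n}\{y'\;|\;p(y'-\tilde y_{k})<\varepsilon/(6M)\}$. Since $\mathsf{A}_{\iota}(\tilde y_{k})\to\mathsf{A}(\tilde y_{k})$ for each of the finitely many indices $k$, there is $\varsigma\in I$ with $\|(\mathsf{A}_{\iota}-\mathsf{A})(\tilde y_{k})\|<\varepsilon/3$ for all $k$ and all $\iota\geq\varsigma$; combining this with the equibound $\|(\mathsf{A}_{\iota}-\mathsf{A})(w)\|\leq 2M\,p(w)$ gives, for $y'\in B$ with $p(y'-\tilde y_{k})<\varepsilon/(6M)$,
\[
\|(\mathsf{A}_{\iota}-\mathsf{A})(y')\|\leq 2M\,p(y'-\tilde y_{k})+\|(\mathsf{A}_{\iota}-\mathsf{A})(\tilde y_{k})\|<\tfrac{\varepsilon}{3}+\tfrac{\varepsilon}{3}<\varepsilon,
\]
so that $\sup_{y'\in B}\|(\mathsf{A}_{\iota}-\mathsf{A})(y')\|<\varepsilon$ for every $\iota\geq\varsigma$, which is the asserted uniform convergence on equicontinuous subsets of $Y'$.
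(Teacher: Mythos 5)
Your proof is correct. Note that the paper itself gives no proof of this proposition --- it is quoted verbatim from \cite[Proposition 4.1, p.\ 695]{F/J/W} --- so there is nothing internal to compare against; the natural ``intended'' route, and the one the source paper takes, is to define $\mathsf{A}_{0}(x):=\lim_{\iota}\mathsf{A}_{\iota}(x)$ on $X$, check that it is $\sigma(X,Z)$-$\sigma(E,E')$-continuous (each $e'\circ\mathsf{A}_{\iota}$ lies in a fixed multiple of $B_{Z}$, so a $\sigma(Z,Y')$-convergent subnet identifies $e'\circ\mathsf{A}_{0}$ with an element of $Z$), and then invoke \prettyref{prop:ext_B_set_uni} to produce $\mathsf{A}$, the uniform convergence following from equicontinuity of the net on $Y'_{\kappa}$ plus pointwise convergence on a dense subspace. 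Your route replaces that extension lemma by an elementary completeness argument: you identify the dual of $(Y',p)$ with $Z$ via the bipolar theorem, upgrade $\sigma(Y',Z)$-density of $X$ to $p$-density by Mazur, and extend the uniformly $p$-Lipschitz map $\mathsf{A}_{0}$ by completeness of $E$; the $3\varepsilon$-argument at the end is the standard equicontinuity-plus-total-boundedness step and your constants check out. This is self-contained and buys independence from \prettyref{prop:ext_B_set_uni} at the cost of the duality computation. The only point to phrase with a little more care is the appeal to Mackey--Arens and Mazur for the pairing $\langle Y',Z\rangle$: this pairing need not be separated (there may be nonzero $y'\in Y'$ vanishing on $Z$), so strictly speaking you should argue directly that the continuous duals of $(Y',p)$ and of $(Y',\sigma(Y',Z))$, viewed inside the algebraic dual $Y'^{\star}$, both equal the image of $Z$ --- which your bipolar computation does establish, since $B_{Z}$ is $\sigma(Y'^{\star},Y')$-compact and hence closed --- and then use that closures of convex sets depend only on the continuous dual, a Hahn--Banach separation argument that does not require the topologies to be Hausdorff.
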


Next, we generalise \cite[Corollary 4.2, p.\ 695]{F/J/W}.

\begin{cor}\label{cor:Blaschke_vector_valued}
Let $(E,\|\cdot\|)$ be a Banach space and $\f$ and $\fe$ be $\varepsilon$-into-compatible. 
Let $(T^{E},T^{\K})$ be a generator for $(\mathcal{F}\nu,E)$ 
and a strong, consistent family for $(F,E)$, $\Fv$ a Banach space 
whose closed unit ball $B_{\mathcal{F}\nu(\Omega)}$ 
is a compact subset of $\f$ and $U$ a set of uniqueness for $(T^{\K},\mathcal{F}\nu)$. 

If $(f_{\iota})_{\iota\in I}\subset\Feps$ is a bounded net in $\FvE$ such that 
$\lim_{\iota}T^{E}(f_{\iota})(x)$ exists for all $x\in U$, then there is $f\in\Feps$ such that 
$(f_{\iota})_{\iota\in I}$ converges to $f$ in $\fe$.
\end{cor}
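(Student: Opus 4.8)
The plan is to deduce this from the operator version of Blaschke's theorem, \prettyref{prop:Blaschke_operator}, applied with $Y:=\f$ and $Z:=\Fv$. First I would record the standing identifications. Since $\f$ and $\fe$ are $\varepsilon$-into-compatible, the map $S\colon\f\varepsilon E\to\fe$ is injective, so each $f_{\iota}\in\Feps$ is of the form $f_{\iota}=S(\mathsf{A}_{\iota})$ for a unique $\mathsf{A}_{\iota}\in\f\varepsilon E$ with $\mathsf{A}_{\iota}(B_{\Fv}^{\circ\f'})$ bounded in $E$, by the very definition of $\Feps$. Because $B_{\Fv}$ is a compact, hence bounded, subset of $\f$, the inclusion $i\colon\Fv\to\f$ is continuous, so all hypotheses of \prettyref{prop:mingle-mangle} are in force; note also that $B_{Z}^{\circ Y'}=B_{\Fv}^{\circ\f'}$ and that $B_{Z}=B_{\Fv}$ is a compact subset of $Y=\f$, as required by \prettyref{prop:Blaschke_operator}.

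Next I would verify the two hypotheses of \prettyref{prop:Blaschke_operator}. For the uniform boundedness, \prettyref{prop:mingle-mangle} b) gives $\sup_{y'\in B_{\Fv}^{\circ\f'}}\|\mathsf{A}_{\iota}(y')\|=|S(\mathsf{A}_{\iota})|_{\Fv}=|f_{\iota}|_{\Fv}$ (here $E$ is a Banach space, so the single seminorm is the norm), and the hypothesis that $(f_{\iota})$ is bounded in $\FvE$ yields $\sup_{\iota}|f_{\iota}|_{\Fv}<\infty$. For the approximating subspace I would take $X:=\operatorname{span}\{T^{\K}_{x}\;|\;x\in U\}\subset\f'$, which is legitimate since $T^{\K}_{x}=\delta_{x}\circ T^{\K}\in\f'$ by the consistency of $(T^{E},T^{\K})$ for $(F,E)$. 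Pointwise convergence on $X$ is then immediate: for $x\in U$ consistency gives $\mathsf{A}_{\iota}(T^{\K}_{x})=T^{E}(S(\mathsf{A}_{\iota}))(x)=T^{E}(f_{\iota})(x)$, which converges by assumption, and this extends to all of $X$ by linearity.

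The step I expect to require the most care is the $\sigma(\f',\Fv)$-density of $X$ in $\f'$, since the set-of-uniqueness hypothesis only directly controls $\Fv$ and lives in $\Fv'$ rather than in the larger dual $\f'$. Here I would argue through annihilators: the $\sigma(\f',\Fv)$-continuous linear functionals on $\f'$ are exactly those given by evaluation at elements of $\Fv$, so by the bipolar theorem $\overline{X}^{\sigma(\f',\Fv)}$ equals the bi-annihilator of $X$ relative to the pairing $\langle\f',\Fv\rangle$. The annihilator of $X$ in $\Fv$ is $\{z\in\Fv\;|\;\forall\,x\in U:\;T^{\K}(z)(x)=0\}$, which is $\{0\}$ precisely because $U$ is a set of uniqueness for $(T^{\K},\mathcal{F}\nu)$ (see \prettyref{def:set_uniqueness}(ii)); hence the bi-annihilator is all of $\f'$ and $X$ is dense. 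With all hypotheses checked, \prettyref{prop:Blaschke_operator} furnishes $\mathsf{A}\in\f\varepsilon E$ with $\mathsf{A}(B_{\Fv}^{\circ\f'})$ bounded and $\mathsf{A}_{\iota}\to\mathsf{A}$ uniformly on the equicontinuous subsets of $\f'$. Finally I would set $f:=S(\mathsf{A})$, which lies in $\Feps$ by boundedness of $\mathsf{A}(B_{\Fv}^{\circ\f'})$, and observe that uniform convergence on equicontinuous sets is exactly convergence in $\f\varepsilon E$ by definition of the $\varepsilon$-product topology; since $S$ is continuous, $f_{\iota}=S(\mathsf{A}_{\iota})\to S(\mathsf{A})=f$ in $\fe$, which is the assertion.
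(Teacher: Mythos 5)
Your proposal is correct and follows essentially the same route as the paper: the same choices $X=\operatorname{span}\{T^{\K}_{x}\mid x\in U\}$, $Y=\f$, $Z=\Fv$, the same use of consistency to identify $\mathsf{A}_{\iota}(T^{\K}_{x})$ with $T^{E}(f_{\iota})(x)$ and to verify the uniform bound on $B_{\Fv}^{\circ\f'}$ (the paper computes this directly where you invoke \prettyref{prop:mingle-mangle} b), but it is the same bipolar computation), and the same application of \prettyref{prop:Blaschke_operator} followed by continuity of $S$. Your annihilator argument for the $\sigma(\f',\Fv)$-density of $X$ is just an unfolded version of the density step the paper delegates to the proof of \prettyref{thm:ext_B_unique}.
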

\begin{proof}
We set $X:=\operatorname{span}\{T^{\K}_{x}\;|\;x\in U\}$, $Y:=\f$ and $Z:=\Fv$.  
As in the proof of \prettyref{thm:ext_B_unique} we observe that $X$ is $\sigma(Y',Z)$-dense in $Y'$. 
From $(f_{\iota})_{\iota\in I}\subset\Feps$ follows that there are $\mathsf{A}_{\iota}\in\f\varepsilon E$ 
with $S(\mathsf{A}_{\iota})=f_{\iota}$ for all $\iota\in I$. Since $(f_{\iota})_{\iota\in I}$ is a bounded net 
in $\FvE$, we note that 
\begin{align*}
  \sup_{\iota\in I}\sup_{x\in\omega}\|\mathsf{A}_{\iota}(T^{\K}_{x}(\cdot)\nu(x))\|
 &=\sup_{\iota\in I}\sup_{x\in\omega}\|T^{E}S(\mathsf{A}_{\iota})(x)\|\nu(x)
 =\sup_{\iota\in I}\sup_{x\in\omega}\|T^{E}f_{\iota}(x)\|\nu(x)\\
 &=\sup_{\iota\in I}|f_{\iota}|_{\FvE}<\infty
\end{align*}
by consistency. Further, $\lim_{\iota}S(\mathsf{A}_{\iota})(T^{\K}_{x})=\lim_{\iota}T^E(f_{\iota})(x)$
exists for each $x\in U$, implying the existence of $\lim_{\iota}S(\mathsf{A}_{\iota})(x)$ for each $x\in X$ 
by linearity. 
We apply \prettyref{prop:Blaschke_operator} and obtain $f:=S(\mathsf{A})\in\Feps$ 
such that $(\mathsf{A}_{\iota})_{\iota\in I}$ converges to $\mathsf{A}$ in $\f\varepsilon E$. 
From $\f$ and $\fe$ being $\varepsilon$-into-compatible it follows that 
$(f_{\iota})_{\iota\in I}$ converges to $f$ in $\fe$.
\end{proof}
 
First, we apply the preceding corollary to the space $\mathcal{C}^{[\gamma]}_{z}(\Omega,E)$ of $\gamma$-H\"older continuous functions on 
$\Omega$ that vanish at a fixed point $z\in\Omega$ from \prettyref{ex:hoelder} a).
We recall that for a metric space $(\Omega,\d)$, $z\in\Omega$, an lcHs $E$ and 
$0<\gamma\leq 1$ we have
\[
\mathcal{C}^{[\gamma]}_{z}(\Omega,E)=\{f\in E^{\Omega}\;|\;f(z)=0\;\text{and}\;\forall\;\alpha\in \mathfrak{A}:\; 
|f|_{\mathcal{C}^{0,\gamma}(\Omega),\alpha}<\infty\}.
\]
Further, we set $\omega:=\Omega^{2}\setminus\{(x,x)\;|\;x\in\Omega\}$, 
$\fe:=\{f\in\mathcal{C}(\Omega,E)\;|\;f(z)=0\}$ 
and $T^{E}\colon\fe\to E^{\omega}$, $T^{E}(f)(x,y):=f(x)-f(y)$, and 
\[
\nu\colon\omega\to [0,\infty),\;\nu(x,y):=\frac{1}{\d(x,y)^{\gamma}}.
\]
Then we have for every $\alpha\in\mathfrak{A}$ that
\[
 |f|_{\mathcal{C}^{0,\gamma}(\Omega),\alpha}=\sup_{x\in\omega}p_{\alpha}\bigl(T^{E}(f)(x)\bigr)\nu(x),
 \quad f\in\mathcal{C}^{[\gamma]}_{z}(\Omega,E),
\]
and observe that $\FvE=\mathcal{C}^{[\gamma]}_{z}(\Omega,E)$ with generator $(T^{E},T^{\K})$. 

\begin{cor}\label{cor:Hoelder_vanish_Blaschke}
Let $E$ be a Banach space, $(\Omega,\d)$ a metric space, $z\in\Omega$ and $0<\gamma\leq 1$.
If $(f_{\iota})_{\iota\in I}$ is a bounded net in $\mathcal{C}^{[\gamma]}_{z}(\Omega,E)$ such that 
$\lim_{\iota}f_{\iota}(x)$ exists for all $x$ in a dense subset $U\subset\Omega$,
then there is $f\in\mathcal{C}^{[\gamma]}_{z}(\Omega,E)$ such that 
$(f_{\iota})_{\iota\in I}$ converges to $f$ in $\mathcal{C}(\Omega,E)$ uniformly on 
compact subsets of $\Omega$.
\end{cor}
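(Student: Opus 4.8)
The plan is to realize this as a direct application of the general Blaschke theorem \prettyref{cor:Blaschke_vector_valued} to the specific generator $(T^{E},T^{\K})$ for $\mathcal{C}^{[\gamma]}_{z}$ described just above the statement. To invoke that corollary I must verify its hypotheses in the present setting, namely: that $\f$ and $\fe$ are $\varepsilon$-into-compatible, that $(T^{E},T^{\K})$ is a strong, consistent family for $(F,E)$ and a generator for $(\mathcal{F}\nu,E)$, that $\Fv=\mathcal{C}^{[\gamma]}_{z}(\Omega)$ is a Banach space whose closed unit ball is compact in $\f$, and that the dense set $U$ gives a set of uniqueness for $(T^{\K},\mathcal{F}\nu)$. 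With these in hand, the conclusion of \prettyref{cor:Blaschke_vector_valued} delivers the limit $f\in\mathcal{F}_{\varepsilon}\nu(\Omega,E)\subset\mathcal{C}^{[\gamma]}_{z}(\Omega,E)$ with convergence in $\fe=(\mathcal{C}(\Omega,E),\tau_{c})$, which is exactly uniform convergence on compact subsets.

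First I would fix the spaces: take $\f:=(\mathcal{C}(\Omega),\tau_{c})$ restricted to functions vanishing at $z$ and $\fe:=\{f\in\mathcal{C}(\Omega,E)\;|\;f(z)=0\}$ with the topology of compact convergence, so that $\mathcal{F}\nu(\Omega,E)=\mathcal{C}^{[\gamma]}_{z}(\Omega,E)$ with the generator $(T^{E},T^{\K})$ and weight $\nu$ as set up before the statement. The $\varepsilon$-into-compatibility of $\f$ and $\fe$ and the strength and consistency of $(T^{E},T^{\K})$ follow as in the proof of \prettyref{ex:hoelder} a), using \prettyref{prop:zeros} for vanishing at $z$ together with the elementary calculation for the difference operator. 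Next I would check that $\mathcal{C}^{[\gamma]}_{z}(\Omega)$ is a Banach space whose closed unit ball $B$ is $\tau_{c}$-compact: the Banach property is standard (completeness under $|\cdot|_{\mathcal{C}^{0,\gamma}(\Omega)}$), and for compactness one notes that a $\tau_{c}$-closed, uniformly $\gamma$-H\"older-bounded family is equicontinuous and pointwise bounded, so by an Arzel\`a--Ascoli argument $B$ is relatively $\tau_{c}$-compact; combined with closedness this yields compactness of $B$ in $\f$.

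The remaining hypothesis is that the dense set $U\subset\Omega$ is a set of uniqueness for $(T^{\K},\mathcal{F}\nu)$: if $f\in\mathcal{C}^{[\gamma]}_{z}(\Omega)$ satisfies $T^{\K}(f)(x,y)=f(x)-f(y)=0$ for all $(x,y)\in\omega$ with the relevant differences vanishing on $U$, then, since $f$ is continuous and $U$ is dense, $f$ is constant, and the normalization $f(z)=0$ forces $f=0$. I would phrase this carefully in terms of \prettyref{def:set_uniqueness}, identifying the functionals $T^{\K}_{(x,y)}=\delta_{x}-\delta_{y}$. Here the delicate point is that the natural set of uniqueness for this generator consists of \emph{pairs} in $\omega$, not points of $\Omega$, so I would take $U$ in \prettyref{cor:Blaschke_vector_valued} to be a suitable subset of $\omega$ built from the dense set in $\Omega$; the hypothesis that $\lim_{\iota}f_{\iota}(x)$ exists for each $x$ in the dense set of $\Omega$ immediately gives existence of $\lim_{\iota}T^{E}(f_{\iota})(x,y)=\lim_{\iota}(f_{\iota}(x)-f_{\iota}(y))$ on these pairs. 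I expect this translation between point-uniqueness in $\Omega$ and pair-uniqueness in $\omega$, and the verification that $\mathcal{F}_{\varepsilon}\nu(\Omega,E)$ coincides as a set with $\mathcal{C}^{[\gamma]}_{z}(\Omega,E)$ (via \prettyref{prop:mingle-mangle} c), using $\varepsilon$-into-compatibility together with consistency), to be the main bookkeeping obstacle; once it is settled, the convergence statement is exactly the output of \prettyref{cor:Blaschke_vector_valued}.
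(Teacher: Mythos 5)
Your proposal follows essentially the same route as the paper: the same choice of $\f=\{f\in\mathcal{C}(\Omega)\;|\;f(z)=0\}$ with $\tau_{c}$, the Arzel\`a--Ascoli argument for compactness of $B_{\mathcal{F}\nu(\Omega)}$, the identification $\Feps=\FvE$ via \prettyref{prop:mingle-mangle} c), the density of $U$ giving a set of uniqueness for the difference functionals, and the final application of \prettyref{cor:Blaschke_vector_valued}. The only points worth tightening are that the identification $\Feps=\FvE$ in \prettyref{prop:mingle-mangle} c) needs full $\varepsilon$-compatibility of $\f$ and $\fe$ (which the paper obtains from Bierstedt, not from \prettyref{ex:hoelder}), and that the paper sidesteps your pair-bookkeeping by simply using the pairs $(x,z)$, $x\in U$, for which $T^{E}(f_{\iota})(x,z)=f_{\iota}(x)$.
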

\begin{proof}
We choose $\f:=\{f\in\mathcal{C}(\Omega)\;|\;f(z)=0\}$ 
and $\fe:=\{f\in\mathcal{C}(\Omega,E)\;|\;f(z)=0\}$. Then we have 
$\Fv=\mathcal{C}^{[\gamma]}_{z}(\Omega)$ 
and $\FvE=\mathcal{C}^{[\gamma]}_{z}(\Omega,E)$ with the weight $\nu$ and generator $(T^{E},T^{\K})$ 
for $(\mathcal{F}\nu,E)$ described above.
Due to \cite[3.1 Bemerkung, p.\ 141]{B2} the spaces $\f$ and $\fe$, equipped 
with the topology $\tau_{c}$ of compact convergence, 
are $\varepsilon$-compatible. Obviously, $(T^{E},T^{\K})$ is a strong, consistent family for $(\mathcal{F},E)$.
In addition, $\Fv=\mathcal{C}^{[\gamma]}_{z}(\Omega)$ is a Banach space 
by \cite[Proposition 1.6.2, p.\ 20]{Weaver}. For all $f$ from the closed unit ball $B_{\Fv}$ 
of $\Fv$ we have
\[
|f(x)-f(y)|\leq \d(x,y)^{\gamma},\quad x,y\in\Omega,
\]
and 
\[
|f(x)|=|f(x)-f(z)|\leq \d(x,z)^{\gamma},\quad x\in\Omega.
\]
It follows that $B_{\Fv}$ is (uniformly) equicontinuous and 
$\{f(x)\;|\;f\in B_{\Fv}\}$ is bounded in $\K$ for all $x\in\Omega$. 
Ascoli's theorem (see e.g.\ \cite[Theorem 47.1, p.\ 290]{munkres2000}) implies the compactness of 
$B_{\Fv}$ in $\f$ (see also \cite[3.7 Theorem (a), p.\ 10]{kruse_2022}).
Furthermore, the $\varepsilon$-compatibility of $\f$ and $\fe$ 
in combination with the consistency of $(T^{E},T^{\K})$ for $(F,E)$ gives $\Feps=\FvE$
as linear spaces by \prettyref{prop:mingle-mangle} c). 
We note that $\lim_{\iota}f_{\iota}(x)=\lim_{\iota}T^{E}(f_{\iota})(x,z)$ for all $x$ in $U$,
proving our claim by \prettyref{cor:Blaschke_vector_valued}.
\end{proof}

The space $\mathcal{C}^{[\gamma]}_{z}(\Omega)$ is named $\operatorname{Lip}_{0}(\Omega^{\gamma})$ 
in \cite{Weaver} (see \cite[Definition 1.6.1 (b), p.\ 19]{Weaver} and \cite[Definition 1.1.2, p.\ 2]{Weaver}). 
\prettyref{cor:Hoelder_vanish_Blaschke} generalises \cite[Proposition 2.1.7, p.\ 38]{Weaver} 
(in combination with \cite[Proposition 1.2.4, p.\ 5]{Weaver}) where $\Omega$ is compact, $U=\Omega$ and $E=\K$. 

\begin{cor}\label{cor:Hoelder_Blaschke}
Let $E$ be a Banach space, $\Omega\subset\R^{d}$ open and bounded, $k\in\N_{0}$ and $0<\gamma\leq 1$. 
In the case $k\geq 1$, assume additionally that $\Omega$ has Lipschitz boundary. 
If $(f_{\iota})_{\iota\in I}$ is a bounded net in $\mathcal{C}^{k,\gamma}(\overline{\Omega},E)$ such that 
\begin{enumerate}
\item [(i)] $\lim_{\iota}f_{\iota}(x)$ exists for all $x$ in a dense subset $U\subset\Omega$, or if
\item [(ii)] $\lim_{\iota}(\partial^{e_{n}})^{E}f_{\iota}(x)$ exists for all $1\leq n\leq d$ and $x$ in 
a dense subset $U\subset\Omega$, $\Omega$ is connected and there is $x_{0}\in\overline{\Omega}$ such that 
$\lim_{\iota}f_{\iota}(x_{0})$ exists and $k\geq 1$, 
\end{enumerate}
then there is $f\in\mathcal{C}^{k,\gamma}(\overline{\Omega},E)$ such that 
$(f_{\iota})_{\iota\in I}$ converges to $f$ in $\mathcal{C}^{k}(\overline{\Omega},E)$.
\end{cor}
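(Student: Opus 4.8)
The plan is to deduce the statement from the abstract vector-valued Blaschke theorem \prettyref{cor:Blaschke_vector_valued} by reusing verbatim the weighted-space setup constructed in the proof of \prettyref{cor:ext_B_unique}. Thus I would take $\f:=\mathcal{C}^{k}(\overline{\Omega})$ and $\fe:=\mathcal{C}^{k}(\overline{\Omega},E)$ together with the index set $\omega=\omega_{1}\cup\omega_{2}$, the operator $T^{E}$ and the weight $\nu$ defined there, so that $\Fv=\mathcal{C}^{k,\gamma}(\overline{\Omega})$ and $\FvE=\mathcal{C}^{k,\gamma}(\overline{\Omega},E)$ hold with generator $(T^{E},T^{\K})$ for $(\mathcal{F}\nu,E)$.

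Next I would collect the structural facts already available from that proof. The spaces $\f$ and $\fe$ are $\varepsilon$-into-compatible for any lcHs $E$, and since a Banach space $E$ has metric ccp they are even $\varepsilon$-compatible by \prettyref{ex:diff_ext_boundary}~(i); moreover $(T^{E},T^{\K})$ is a strong, consistent family for $(F,E)$, the space $\Fv=\mathcal{C}^{k,\gamma}(\overline{\Omega})$ is a Banach space whose closed unit ball is a compact subset of $\f=\mathcal{C}^{k}(\overline{\Omega})$ (this is precisely where the Lipschitz-boundary hypothesis for $k\geq 1$ enters, via the compact embedding \cite[8.6 Einbettungssatz in H\"older-R\"aumen, p.\ 338]{alt2012}), and $\Feps=\FvE$ as linear spaces by \prettyref{prop:mingle-mangle}~c). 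In particular the given net $(f_{\iota})_{\iota\in I}$, which lies in $\mathcal{C}^{k,\gamma}(\overline{\Omega},E)=\Feps$, is a bounded net in $\FvE$, so all hypotheses of \prettyref{cor:Blaschke_vector_valued} except the choice of a set of uniqueness are in place.

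It then remains to exhibit, in each of the two cases, a set of uniqueness $\widetilde{U}$ for $(T^{\K},\mathcal{F}\nu)$ along which the pointwise limits $\lim_{\iota}T^{E}(f_{\iota})$ exist. In case~(i) I would take $\widetilde{U}:=\{0\}\times U$ with $U\subset\Omega$ dense: each $T^{\K}_{0,x}=\delta_{x}$ is continuous on $\mathcal{C}^{k,\gamma}(\overline{\Omega})$, and if $f$ vanishes at every such point then $f\equiv 0$ on $\overline{\Omega}$ by continuity and density, while $\lim_{\iota}T^{E}(f_{\iota})(0,x)=\lim_{\iota}f_{\iota}(x)$ exists by hypothesis. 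In case~(ii) I would take $\widetilde{U}:=\{(e_{n},x)\mid 1\leq n\leq d,\,x\in U\}\cup\{(0,x_{0})\}$, reading the evaluations through the continuous extensions to $\overline{\Omega}$ that define $\mathcal{C}^{k,\gamma}(\overline{\Omega},E)$; that this is a set of uniqueness follows as in \prettyref{rem:set_unique_CV}~b) from the mean value theorem (vanishing first derivatives on a dense set force $f$ to be constant by continuity and connectedness of $\Omega$) combined with the prescribed value at $x_{0}$, and the limits $\lim_{\iota}T^{E}(f_{\iota})(e_{n},x)=\lim_{\iota}(\partial^{e_{n}})^{E}f_{\iota}(x)$ and $\lim_{\iota}T^{E}(f_{\iota})(0,x_{0})=\lim_{\iota}f_{\iota}(x_{0})$ exist by assumption. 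Applying \prettyref{cor:Blaschke_vector_valued} then produces $f\in\Feps=\mathcal{C}^{k,\gamma}(\overline{\Omega},E)$ with $(f_{\iota})_{\iota\in I}\to f$ in $\fe=\mathcal{C}^{k}(\overline{\Omega},E)$, which is the assertion.

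Since the compactness of $B_{\Fv}$ in $\f$ and the strong, consistent generator property are inherited from \prettyref{cor:ext_B_unique}, the genuinely new work is the verification of the two sets of uniqueness, and the main obstacle I anticipate is the bookkeeping in case~(ii): the evaluation point $x_{0}$ is allowed to lie on the boundary $\partial\Omega$, so one has to be careful that $\delta_{x_{0}}$ is interpreted via the continuous boundary extension in order to guarantee $T^{\K}_{0,x_{0}}\in\mathcal{F}\nu(\Omega)'$, and that the mean-value/connectedness argument correctly propagates the vanishing of the first derivatives on the dense set $U$ to a global constant whose boundary value is then pinned down by $x_{0}$.
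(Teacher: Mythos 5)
Your proposal is correct and follows exactly the paper's route: the paper's own proof is a one-liner that reuses the setup of \prettyref{cor:ext_B_unique} and invokes \prettyref{cor:Blaschke_vector_valued}, leaving the sets of uniqueness for cases (i) and (ii) implicit. You simply make those sets explicit (and rightly flag the minor point that $\delta_{x_{0}}$ for $x_{0}\in\partial\Omega$ must be read through the continuous boundary extension), which is a faithful elaboration rather than a different argument.
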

\begin{proof}
As in \prettyref{cor:ext_B_unique} we take $\f:=\mathcal{C}^{k}(\overline{\Omega})$ 
and $\fe:=\mathcal{C}^{k}(\overline{\Omega},E)$ 
as well as $\Fv:=\mathcal{C}^{k,\gamma}(\overline{\Omega})$ 
and $\FvE:=\mathcal{C}^{k,\gamma}(\overline{\Omega},E)$ with the weight $\nu$ and generator $(T^{E},T^{\K})$ 
for $(\mathcal{F}\nu,E)$ described above of \prettyref{cor:ext_B_unique}.
By the proof of \prettyref{cor:ext_B_unique} all conditions of \prettyref{cor:Blaschke_vector_valued} are satisfied, 
which implies our statement.
\end{proof}

We recall that $\mathcal{CW}^{k}(\Omega,E)$ is the space $\mathcal{C}^{k}(\Omega,E)$ equipped with its usual 
topology for an open set $\Omega\subset\R^{d}$, $k\in\N_{\infty}\cup\{0\}$ and an lcHs $E$ 
(see \prettyref{ex:weighted_smooth_functions} b) for $k\in\N_{\infty}$ and the definition 
above \prettyref{prop:diff_cons_barrelled} for $k=0$).

\begin{cor}\label{cor:loc_Hoelder_Blaschke}
Let $E$ be a Banach space, $\Omega\subset\R^{d}$ open, $k\in\N_{0}$ and $0<\gamma\leq 1$.
If $(f_{\iota})_{\iota\in I}$ is a bounded net in $\mathcal{C}^{k,\gamma}_{loc}(\Omega,E)$ such that 
\begin{enumerate}
\item [(i)] $\lim_{\iota}f_{\iota}(x)$ exists for all $x$ in a dense subset $U\subset\Omega$, or if
\item [(ii)] $\lim_{\iota}(\partial^{e_{n}})^{E}f_{\iota}(x)$ exists for all $1\leq n\leq d$ and $x$ in 
a dense subset $U\subset\Omega$, $\Omega$ is connected and there is $x_{0}\in\Omega$ such that 
$\lim_{\iota}f_{\iota}(x_{0})$ exists and $k\geq 1$, 
\end{enumerate}
then there is $f\in\mathcal{C}^{k,\gamma}_{loc}(\Omega,E)$ such that 
$(f_{\iota})_{\iota\in I}$ converges to $f$ in $\mathcal{CW}^{k}(\Omega,E)$.
\end{cor}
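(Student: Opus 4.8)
The plan is to reduce \prettyref{cor:loc_Hoelder_Blaschke} to the already-proven global version \prettyref{cor:Hoelder_Blaschke} via an exhaustion argument, exactly mirroring how \prettyref{cor:ext_B_unique_loc_Hoelder} was deduced from \prettyref{cor:ext_B_unique}. First I would choose an exhaustion $(\Omega_{n})_{n\in\N}$ of $\Omega$ by open, relatively compact subsets $\Omega_{n}\subset\Omega$ with Lipschitz boundaries $\partial\Omega_{n}$ satisfying $\Omega_{n}\subset\Omega_{n+1}$ and $\bigcup_{n\in\N}\Omega_{n}=\Omega$, constructed as in the proof of \prettyref{cor:ext_B_unique_loc_Hoelder} (e.g.\ each $\Omega_{n}$ the interior of a finite union of closed axis-parallel cubes). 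The restriction net $(f_{\iota\mid\Omega_{n}})_{\iota\in I}$ then lies in $\mathcal{C}^{k,\gamma}(\overline{\Omega}_{n},E)$ and is bounded there, since on the relatively compact $\Omega_{n}$ the local seminorms $|\cdot|_{\mathcal{C}^{k}(\overline{\Omega}_{n}),\alpha}$ and the H\"older quotients over $\Omega_{n}$ are dominated by the local seminorms $|\cdot|_{\overline{\Omega}_{n},\alpha}$ defining $\mathcal{C}^{k,\gamma}_{loc}(\Omega,E)$.

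The main work is then to apply \prettyref{cor:Hoelder_Blaschke} on each $\Omega_{n}$ and to patch the resulting limits coherently. In case (i), for each fixed $n$ the set $U\cap\Omega_{n}$ is dense in $\Omega_{n}$ and $\lim_{\iota}f_{\iota}(x)$ exists for all $x\in U\cap\Omega_{n}$, so \prettyref{cor:Hoelder_Blaschke} (i) produces $f^{(n)}\in\mathcal{C}^{k,\gamma}(\overline{\Omega}_{n},E)$ with $f_{\iota\mid\Omega_{n}}\to f^{(n)}$ in $\mathcal{C}^{k}(\overline{\Omega}_{n},E)$. In case (ii), I would need $\Omega_{n}$ to be connected and to contain (in its closure) the distinguished point $x_{0}$; this is arranged by additionally requiring the exhaustion to consist of connected sets with $x_{0}\in\Omega_{n}$ for all sufficiently large $n$, which is possible since $\Omega$ is connected and open, hence polygonally connected, so any relatively compact piece can be enlarged to a connected relatively compact subset containing $x_{0}$ and any prescribed $\Omega_{m}$. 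Then \prettyref{cor:Hoelder_Blaschke} (ii) applies on each such $\Omega_{n}$ and yields $f^{(n)}\in\mathcal{C}^{k,\gamma}(\overline{\Omega}_{n},E)$ with convergence in $\mathcal{C}^{k}(\overline{\Omega}_{n},E)$.

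The limits are automatically compatible: since $f^{(n)}$ and $f^{(m)}$ are both $\mathcal{C}^{k}$-limits (in particular pointwise limits) of the same net $(f_{\iota})$ on the overlap $\Omega_{n}\cap\Omega_{m}$, and limits in a Hausdorff space are unique, we get $f^{(n)}_{\mid\Omega_{n}\cap\Omega_{m}}=f^{(m)}_{\mid\Omega_{n}\cap\Omega_{m}}$. Hence the $f^{(n)}$ glue to a well-defined function $f\colon\Omega\to E$ with $f_{\mid\Omega_{n}}=f^{(n)}$ for every $n$. Since differentiability and the H\"older condition are local, and every compact $K\subset\Omega$ is contained in some $\Omega_{n}$, it follows that $f\in\mathcal{C}^{k,\gamma}_{loc}(\Omega,E)$. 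Finally, convergence of $(f_{\iota})$ to $f$ in $\mathcal{CW}^{k}(\Omega,E)$ is exactly convergence of all partial derivatives up to order $k$ uniformly on compact subsets of $\Omega$; given a compact $K$ and $n$ with $K\subset\Omega_{n}$, the $\mathcal{C}^{k}(\overline{\Omega}_{n},E)$-convergence $f_{\iota\mid\Omega_{n}}\to f^{(n)}=f_{\mid\Omega_{n}}$ dominates the corresponding seminorm over $K$, which gives the claim.

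The step I expect to be the main obstacle is the construction of the exhaustion in case (ii): one must simultaneously guarantee that each $\Omega_{n}$ is open, relatively compact, \emph{connected}, has Lipschitz boundary, contains $x_{0}$, and exhausts $\Omega$. This is routine for open connected $\Omega\subset\R^{d}$ (take a standard cube-based exhaustion and connect the relevant pieces to $x_{0}$ by finitely many small cubes along a polygonal path), but it is the only place where some care beyond a verbatim transcription of the $\mathcal{C}^{k,\gamma}_{loc}$ argument is required, since \prettyref{cor:Hoelder_Blaschke} (ii) genuinely needs connectedness and the presence of $x_{0}$. Everything else reduces to the global statement together with uniqueness of limits and the local nature of the $\mathcal{C}^{k,\gamma}_{loc}$ and $\mathcal{CW}^{k}$ topologies.
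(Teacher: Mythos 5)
Your proposal is correct and follows essentially the same route as the paper's proof: an exhaustion by open, relatively compact sets with Lipschitz boundaries (connected and containing $x_{0}$ in case (ii)), application of \prettyref{cor:Hoelder_Blaschke} on each piece, and gluing the limits via uniqueness together with the local nature of $\mathcal{C}^{k,\gamma}_{loc}$ and $\mathcal{CW}^{k}$. The extra care you flag for the connected exhaustion in case (ii) is exactly the point the paper also handles (by requiring $x_{0}\in\Omega_{1}$ and each $\Omega_{n}$ connected), so there is no gap.
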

\begin{proof}
Let $(\Omega_{n})_{n\in\N}$ be an exhaustion of $\Omega$ with open, relatively compact sets $\Omega_{n}\subset\Omega$ 
such that $\Omega_{n}$ has Lipschitz boundary, $\Omega_{n}\subset\Omega_{n+1}$ for all $n\in\N$ and, 
in addition, $x_{0}\in\Omega_{1}$ and $\Omega_{n}$ is connected for each $n\in\N$ in case (ii) 
(see the proof of \prettyref{cor:ext_B_unique_loc_Hoelder}).
The restriction of $(f_{\iota})_{\iota\in I}$ to $\Omega_{n}$ is a bounded net in 
$\mathcal{C}^{k,\gamma}(\overline{\Omega}_{n},E)$ for each $n\in\N$. By \prettyref{cor:Hoelder_Blaschke} there 
is $F_{n}\in\mathcal{C}^{k,\gamma}(\overline{\Omega}_{n},E)$ for each $n\in\N$ such that 
the restriction of $(f_{\iota})_{\iota\in I}$ to $\Omega_{n}$ converges to $F_{n}$ 
in $\mathcal{C}^{k}(\overline{\Omega}_{n},E)$ since $U\cap\Omega_{n}$ is dense in $\Omega_{n}$ 
due to $\Omega_{n}$ being open and $x_{0}$ being an element of the connected set $\Omega_{n}$ in case (ii). 
The limits $F_{n+1}$ and $F_{n}$ coincide on $\Omega_{n}$ for each $n\in\N$. Thus the 
definition $f:=F_{n}$ on $\Omega_{n}$ for each $n\in\N$ gives a well-defined 
function $f\in\mathcal{C}^{k,\gamma}_{loc}(\Omega,E)$, 
which is a limit of $(f_{\iota})_{\iota\in I}$ in $\mathcal{CW}^{k}(\Omega,E)$.
\end{proof}

\begin{cor}\label{cor:k+1_smooth_Blaschke}
Let $E$ be a Banach space, $\Omega\subset\R^{d}$ open and $k\in\N_{0}$.
If $(f_{\iota})_{\iota\in I}$ is a bounded net in $\mathcal{C}^{k+1}(\Omega,E)$ such that 
\begin{enumerate}
\item [(i)] $\lim_{\iota}f_{\iota}(x)$ exists for all $x$ in a dense subset $U\subset\Omega$, or if
\item [(ii)] $\lim_{\iota}(\partial^{e_{n}})^{E}f_{\iota}(x)$ exists for all $1\leq n\leq d$ and $x$ in 
a dense subset $U\subset\Omega$, $\Omega$ is connected and there is $x_{0}\in\Omega$ such that 
$\lim_{\iota}f_{\iota}(x_{0})$ exists, 
\end{enumerate}
then there is $f\in\mathcal{C}^{k,1}_{loc}(\Omega,E)$ such that 
$(f_{\iota})_{\iota\in I}$ converges to $f$ in $\mathcal{CW}^{k}(\Omega,E)$.
\end{cor}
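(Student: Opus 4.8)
The plan is to reduce \prettyref{cor:k+1_smooth_Blaschke} to \prettyref{cor:loc_Hoelder_Blaschke} with the Hölder exponent $\gamma=1$. For this it suffices to prove that a bounded net in $\mathcal{C}^{k+1}(\Omega,E)$ is already a bounded net in $\mathcal{C}^{k,1}_{loc}(\Omega,E)$; once this is established, the hypotheses of \prettyref{cor:loc_Hoelder_Blaschke} hold verbatim, case (i) mapping to case (i) and case (ii) to case (ii), and its conclusion — the existence of a limit $f\in\mathcal{C}^{k,1}_{loc}(\Omega,E)$ with convergence in $\mathcal{CW}^{k}(\Omega,E)$ — is exactly the assertion to be proved.

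The heart of the matter is therefore the boundedness transfer, which is the net version of the pointwise implication in the proof of \prettyref{cor:ext_B_unique_loc_Hoelder} b). I would fix a compact set $K\subset\Omega$ and $\alpha\in\mathfrak{A}$ and bound $\sup_{\iota}|f_{\iota}|_{K,\alpha}$, where $|f_{\iota}|_{K,\alpha}$ is the maximum of $|f_{\iota}|_{\mathcal{C}^{k}(K),\alpha}$ and of the Lipschitz seminorms $|(\partial^{\beta})^{E}f_{\iota}|_{\mathcal{C}^{0,1}(K),\alpha}$ over $|\beta|=k$. The first contribution is dominated directly by the $\mathcal{C}^{k+1}$-boundedness on $K$. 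For the second, choose $\delta>0$ with $K_{\delta}:=\{z\in\R^{d}\;|\;\operatorname{dist}(z,K)\leq\delta\}\subset\Omega$; then $K_{\delta}$ is compact. For a pair $x,y\in K$ with $|x-y|<\delta$ the segment $[x,y]$ lies in the convex set $K_{\delta}$, and the fundamental theorem of calculus for the Banach-valued $\mathcal{C}^{1}$-function $g:=(\partial^{\beta})^{E}f_{\iota}$ gives
\[
p_{\alpha}\bigl(g(x)-g(y)\bigr)\leq |x-y|\sqrt{d}\sup_{\substack{z\in K_{\delta}\\1\leq n\leq d}}p_{\alpha}\bigl((\partial^{\beta+e_{n}})^{E}f_{\iota}(z)\bigr),
\]
so the difference quotient is bounded uniformly in $\iota$ by the $\mathcal{C}^{k+1}$-boundedness on $K_{\delta}$. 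For a pair with $|x-y|\geq\delta$ one estimates the quotient by $\tfrac{2}{\delta}\sup_{z\in K}p_{\alpha}(g(z))$, again uniformly bounded. Taking the maximum over the two cases and over $|\beta|=k$ yields $\sup_{\iota}|f_{\iota}|_{K,\alpha}<\infty$. (Since $E$ is a Banach space, one may equivalently run the scalar mean value theorem on $e'\circ f_{\iota}$ and pass to the supremum over $\|e'\|\leq 1$, exactly as in \prettyref{cor:ext_B_unique_loc_Hoelder} b).)

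With the boundedness transfer in hand, \prettyref{cor:loc_Hoelder_Blaschke} applies and finishes the proof. I expect the covering argument of the boundedness transfer to be the only real work: the subtlety is that $K$ need not be convex, so the mean value inequality cannot be applied to the segment $[x,y]$ globally, which is precisely why one splits the pairs into ``near'' ones (handled by the thickened compactum $K_{\delta}$) and ``far'' ones (handled trivially). A second point to watch is case (ii): its reduction uses the first derivatives as part of the defining data, so it inherits the hypothesis $k\geq 1$ present in the corresponding case of \prettyref{cor:loc_Hoelder_Blaschke} and \prettyref{cor:Hoelder_Blaschke}; for $k=0$ the first-order information in (ii) is not controlled by a $\mathcal{C}^{k,1}_{loc}$-bound (no $\mathcal{C}^{2}$-control is available), so the reduction as described does not cover that edge case.
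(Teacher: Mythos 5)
Your proof is essentially the paper's proof: the paper also reduces to \prettyref{cor:loc_Hoelder_Blaschke} with $\gamma=1$ after asserting that a bounded net in $\mathcal{C}^{k+1}(\Omega,E)$ is bounded in $\mathcal{C}^{k,1}_{loc}(\Omega,E)$, citing \prettyref{cor:ext_B_unique_loc_Hoelder}~b) for that step. Since that corollary is stated only qualitatively (for a single function), the uniform-in-$\iota$ estimate has to be extracted from its proof, and your explicit mean-value bound on the thickened compactum $K_{\delta}$ together with the near/far splitting is exactly the right way to make the boundedness transfer honest; this is a genuine improvement in rigour over the two-line citation, not a different route. Your closing caveat about case (ii) with $k=0$ is also well taken and applies verbatim to the paper's own proof: \prettyref{cor:loc_Hoelder_Blaschke}~(ii) carries the hypothesis $k\geq 1$, which \prettyref{cor:k+1_smooth_Blaschke}~(ii) silently drops, and a $\mathcal{C}^{0,1}_{loc}$-bound indeed gives no control linking the pointwise limits of $(\partial^{e_{n}})^{E}f_{\iota}$ on a (possibly null) dense set to the limit function, so the reduction only covers $k\geq 1$ in case (ii); this is a defect of the statement/proof in the paper rather than of your argument.
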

\begin{proof}
By \prettyref{cor:ext_B_unique_loc_Hoelder} b) $(f_{\iota})_{\iota\in I}$ is a bounded net 
in $\mathcal{C}^{k,1}_{loc}(\Omega,E)$. 
Hence our statement is a consequence of \prettyref{cor:loc_Hoelder_Blaschke}.
\end{proof}

The preceding result directly implies a $\mathcal{C}^{\infty}$-smooth version.

\begin{cor}
Let $E$ be a Banach space and $\Omega\subset\R^{d}$ open.
If $(f_{\iota})_{\iota\in I}$ is a bounded net in $\mathcal{C}^{\infty}(\Omega,E)$ such that 
\begin{enumerate}
\item [(i)] $\lim_{\iota}f_{\iota}(x)$ exists for all $x$ in a dense subset $U\subset\Omega$, or if
\item [(ii)] $\lim_{\iota}(\partial^{e_{n}})^{E}f_{\iota}(x)$ exists for all $1\leq n\leq d$ and $x$ in 
a dense subset $U\subset\Omega$, $\Omega$ is connected and there is $x_{0}\in\Omega$ such that 
$\lim_{\iota}f_{\iota}(x_{0})$ exists, 
\end{enumerate}
then there is $f\in\mathcal{C}^{\infty}(\Omega,E)$ such that 
$(f_{\iota})_{\iota\in I}$ converges to $f$ in $\mathcal{CW}^{\infty}(\Omega,E)$.
\end{cor}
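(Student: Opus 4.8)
The plan is to deduce this $\mathcal{C}^{\infty}$-version from \prettyref{cor:k+1_smooth_Blaschke} by letting the finite order of differentiability run through all of $\N_{0}$ and patching the resulting limits together. First I would unravel what ``bounded net in $\mathcal{C}^{\infty}(\Omega,E)=\mathcal{CW}^{\infty}(\Omega,E)$'' means: by the definition of the seminorms $|\cdot|_{K,m,\alpha}$ generating this topology (and since $E$ is a Banach space), it says that for every compact $K\subset\Omega$ and every $m\in\N_{0}$ one has $\sup_{\iota\in I}\sup_{x\in K,|\beta|\leq m}\|(\partial^{\beta})^{E}f_{\iota}(x)\|<\infty$. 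In particular, for each fixed $k\in\N_{0}$ the net is a bounded net in $\mathcal{C}^{k+1}(\Omega,E)$ in the sense required by \prettyref{cor:k+1_smooth_Blaschke}, because every $f_{\iota}$ is $\mathcal{C}^{\infty}$, hence $\mathcal{C}^{k+1}$, and the boundedness in $\mathcal{CW}^{\infty}$ restricts to boundedness of all derivatives up to order $k+1$ on compact sets. The hypotheses (i) resp.\ (ii) are verbatim those of \prettyref{cor:k+1_smooth_Blaschke}, so they carry over with no further work.

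Next I would apply \prettyref{cor:k+1_smooth_Blaschke} for each $k\in\N_{0}$ separately, obtaining a function $f_{k}\in\mathcal{C}^{k,1}_{loc}(\Omega,E)$ such that $(f_{\iota})_{\iota\in I}$ converges to $f_{k}$ in $\mathcal{CW}^{k}(\Omega,E)$. The crucial bookkeeping step is that all these limits coincide as functions: convergence in $\mathcal{CW}^{k}(\Omega,E)$ entails convergence in $\mathcal{CW}^{0}(\Omega,E)$, hence pointwise convergence, and the pointwise limit of a net in the Hausdorff space $E$ is unique; therefore $f_{k}(x)=f_{0}(x)$ for every $x\in\Omega$ and every $k\in\N_{0}$. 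Writing $f:=f_{0}=f_{k}$ gives a single function with $f\in\mathcal{C}^{k,1}_{loc}(\Omega,E)\subset\mathcal{C}^{k}(\Omega,E)$ for every $k\in\N_{0}$, so $f\in\bigcap_{k\in\N_{0}}\mathcal{C}^{k}(\Omega,E)=\mathcal{C}^{\infty}(\Omega,E)$.

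Finally I would translate the convergence into the desired topology: by the description of the seminorms $|\cdot|_{K,m,\alpha}$ of $\mathcal{CW}^{\infty}(\Omega,E)$, a net converges in $\mathcal{CW}^{\infty}(\Omega,E)$ if and only if it converges in $\mathcal{CW}^{m}(\Omega,E)$ for every $m\in\N_{0}$. Since we have established $(f_{\iota})_{\iota\in I}\to f$ in $\mathcal{CW}^{k}(\Omega,E)$ for every $k\in\N_{0}$, it follows at once that $(f_{\iota})_{\iota\in I}\to f$ in $\mathcal{CW}^{\infty}(\Omega,E)$, completing the proof. Since the statement is flagged as a direct consequence, I do not expect a serious obstacle; the only points needing care are the uniqueness argument forcing the finite-order limits $f_{k}$ to agree, and the identification of the $\mathcal{CW}^{\infty}$-topology with the common refinement of the $\mathcal{CW}^{k}$-topologies, both of which are routine once the seminorm structure is made explicit.
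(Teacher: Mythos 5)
Your proposal is correct and is exactly the argument the paper intends: the corollary is stated as a direct consequence of \prettyref{cor:k+1_smooth_Blaschke}, obtained by applying it for every $k\in\N_{0}$, identifying the limits via uniqueness of pointwise limits in the Hausdorff space $E$, and observing that the $\mathcal{CW}^{\infty}$-topology is generated by the seminorms of all the $\mathcal{CW}^{m}$-topologies. The two points you flag as needing care are indeed the only ones, and you handle them correctly.
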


Now, we turn to weighted kernels of hypoelliptic linear partial differential operators.

\begin{cor}\label{cor:weighted_hypo_Blaschke}
Let $E$ be a Banach space, $\Omega\subset\R^{d}$ open, 
$P(\partial)^{\K}$ a hypoelliptic linear partial differential operator, 
$\nu\colon\Omega\to(0,\infty)$ continuous and $U\subset\Omega$ a set of uniqueness for 
$(\id_{\K^{\Omega}},\mathcal{C}\nu_{P(\partial)})$. 
If $(f_{\iota})_{\iota\in I}$ is a bounded net in $(\mathcal{C}\nu_{P(\partial)}(\Omega,E),|\cdot|_{\nu})$ 
such that $\lim_{\iota}f_{\iota}(x)$ exists for all $x\in U$, 
then there is $f\in\mathcal{C}\nu_{P(\partial)}(\Omega,E)$ such that 
$(f_{\iota})_{\iota\in I}$ converges to $f$ in $(\mathcal{C}^{\infty}_{P(\partial)}(\Omega,E),\tau_{c})$.
\end{cor}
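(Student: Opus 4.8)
The plan is to reduce the statement to the abstract vector-valued Blaschke theorem \prettyref{cor:Blaschke_vector_valued} along exactly the lines of \prettyref{cor:hypo_weighted_ext_unique}, since the extension result and the Blaschke result share the same structural hypotheses on the pair $(\f,\fe)$ and on the generator. First I would fix the ambient spaces $\f:=(\mathcal{C}^{\infty}_{P(\partial)}(\Omega),\tau_{c})$ and $\fe:=(\mathcal{C}^{\infty}_{P(\partial)}(\Omega,E),\tau_{c})$, so that $\Fv=\mathcal{C}\nu_{P(\partial)}(\Omega)$ and $\FvE=\mathcal{C}\nu_{P(\partial)}(\Omega,E)$ with the generator $(T^{E},T^{\K}):=(\id_{E^{\Omega}},\id_{\K^{\Omega}})$ for $(\mathcal{F}\nu,E)$, precisely as in the proof of \prettyref{cor:hypo_weighted_ext_unique}.

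Next I would collect the hypotheses of \prettyref{cor:Blaschke_vector_valued}, all of which are already verified in that proof: $\f$ and $\fe$ are $\varepsilon$-compatible, hence $\varepsilon$-into-compatible, by \prettyref{prop:co_top_isomorphism}; $(T^{E},T^{\K})$ is a strong, consistent family for $(F,E)$ and a generator for $(\mathcal{F}\nu,E)$, again by \prettyref{prop:co_top_isomorphism}; $\Fv=\mathcal{C}\nu_{P(\partial)}(\Omega)$ is a Banach space by \prettyref{prop:frechet_bierstedt} whose closed unit ball $B_{\mathcal{F}\nu(\Omega)}$ is bounded in $\f$ by the estimate \eqref{eq:hypo_weighted_Frechet} and $\tau_{c}$-closed, hence compact in the Montel space $\f$; and $U$ is a set of uniqueness for $(T^{\K},\mathcal{F}\nu)$ by hypothesis. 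Moreover, the $\varepsilon$-compatibility of $\f$ and $\fe$ together with the consistency of $(T^{E},T^{\K})$ for $(F,E)$ gives $\Feps=\FvE$ as linear spaces by \prettyref{prop:mingle-mangle} c).

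It then remains to match the data of the corollary with the hypotheses of \prettyref{cor:Blaschke_vector_valued}. Since $T^{E}=\id_{E^{\Omega}}$ we have $T^{E}(f_{\iota})(x)=f_{\iota}(x)$, so the assumption that $\lim_{\iota}f_{\iota}(x)$ exists for all $x\in U$ is precisely the assumption that $\lim_{\iota}T^{E}(f_{\iota})(x)$ exists for all $x\in U$; the net $(f_{\iota})_{\iota\in I}\subset\mathcal{C}\nu_{P(\partial)}(\Omega,E)=\FvE=\Feps$ is bounded in $\FvE$ by hypothesis; and convergence in $\fe$ is convergence in $(\mathcal{C}^{\infty}_{P(\partial)}(\Omega,E),\tau_{c})$. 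Applying \prettyref{cor:Blaschke_vector_valued} then yields $f\in\Feps=\mathcal{C}\nu_{P(\partial)}(\Omega,E)$ such that $(f_{\iota})_{\iota\in I}$ converges to $f$ in $\fe$, which is exactly the claim.

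I do not expect a genuine obstacle, since every nontrivial ingredient has already been assembled in the proof of \prettyref{cor:hypo_weighted_ext_unique}. The one point that must be checked with care is the identification $\Feps=\FvE$, which guarantees that the bounded net really lies in the \emph{mingle-mangle} space $\Feps$ to which \prettyref{cor:Blaschke_vector_valued} applies, together with the correct matching of the norm topology $|\cdot|_{\nu}$ on $\Fv$ and the compactness of $B_{\mathcal{F}\nu(\Omega)}$ inside $(\f,\tau_{c})$. This is exactly what \prettyref{prop:mingle-mangle} c) and \prettyref{prop:frechet_bierstedt} provide, so the verification is bookkeeping rather than a new difficulty.
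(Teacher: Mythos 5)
Your proposal is correct and follows exactly the paper's own route: the paper likewise derives the statement from \prettyref{cor:Blaschke_vector_valued} by noting that all required hypotheses were already verified in the proof of \prettyref{cor:hypo_weighted_ext_unique}. Your write-up merely spells out the bookkeeping that the paper leaves implicit.
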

\begin{proof}
Our statement follows from \prettyref{cor:Blaschke_vector_valued} since by the proof of 
\prettyref{cor:hypo_weighted_ext_unique} all conditions needed are fulfilled. 
\end{proof}

For $\nu=1$ on $\Omega$ the preceding corollary is included in \cite[Corollary 4.2, p.\ 695]{F/J/W} but then 
an even better result is available, whose proof we prepare next. 
We recall the definition of the space $(\mathcal{C}^{\infty}_{P(\partial),b}(\Omega,E),\beta)$ 
with the strict topology $\beta$ from \prettyref{prop:strict_top_isomorphism}.
For an open set $\Omega\subset\R^{d}$, an lcHs $E$ and a linear partial differential operator 
$P(\partial)^{E}$ which is hypoelliptic if $E=\K$ the space of bounded zero solutions is
\[
 \mathcal{C}^{\infty}_{P(\partial),b}(\Omega,E)
 =\{f\in\mathcal{C}^{\infty}_{P(\partial)}(\Omega,E)\;|\;\forall\;\alpha\in\mathfrak{A}:\;
   \|f\|_{\infty,\alpha}=\sup_{x\in\Omega}p_{\alpha}(f(x))<\infty\}.
\]
We equip this space with strict topology $\beta$ induced by the seminorms 
\[
|f|_{\widetilde{\nu},\alpha}:=\sup_{x\in\Omega}p_{\alpha}(f(x))|\widetilde{\nu}(x)|,
\quad f\in\mathcal{C}^{\infty}_{P(\partial),b}(\Omega,E),
\]
for $\widetilde{\nu}\in\mathcal{C}_{0}(\Omega)$.
Now, we phrase for $\mathcal{C}^{\infty}_{P(\partial),b}(\Omega,E)=\mathcal{C}\nu_{P(\partial)}(\Omega,E)$ 
with $\nu=1$ on $\Omega$ the improved version of \prettyref{cor:weighted_hypo_Blaschke}.

\begin{cor}\label{cor:strict_hypo_Blaschke}
Let $E$ be a Banach space, $\Omega\subset\R^{d}$ open, $P(\partial)^{\K}$ a hypoelliptic linear 
partial differential operator and $U\subset\Omega$ a set of uniqueness 
for $(\id_{\K^{\Omega}},\mathcal{C}^{\infty}_{P(\partial),b})$. 
If $(f_{\iota})_{\iota\in I}$ is a bounded net in $(\mathcal{C}^{\infty}_{P(\partial),b}(\Omega,E),\|\cdot\|_{\infty})$ 
such that $\lim_{\iota}f_{\iota}(x)$ exists for all $x\in U$, 
then there is $f\in\mathcal{C}^{\infty}_{P(\partial),b}(\Omega,E)$ such that 
$(f_{\iota})_{\iota\in I}$ converges to $f$ in $(\mathcal{C}^{\infty}_{P(\partial),b}(\Omega,E),\beta)$.
\end{cor}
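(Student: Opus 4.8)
The plan is to deduce the statement directly from the vector-valued Blaschke theorem \prettyref{cor:Blaschke_vector_valued}, but choosing the ambient spaces $\f,\fe$ to carry the \emph{strict} topology $\beta$ rather than the topology $\tau_{c}$ of compact convergence used in \prettyref{cor:weighted_hypo_Blaschke}. Concretely, I would set $\f:=(\mathcal{C}^{\infty}_{P(\partial),b}(\Omega),\beta)$ and $\fe:=(\mathcal{C}^{\infty}_{P(\partial),b}(\Omega,E),\beta)$, take the generator $(T^{E},T^{\K}):=(\id_{E^{\Omega}},\id_{\K^{\Omega}})$ and the weight $\nu:=1$ on $\Omega$, so that $\mathcal{F}\nu(\Omega)=\mathcal{C}^{\infty}_{P(\partial),b}(\Omega)$ and $\mathcal{F}\nu(\Omega,E)=\mathcal{C}^{\infty}_{P(\partial),b}(\Omega,E)$, both equipped with the sup-norm $\|\cdot\|_{\infty}$.

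First I would check the structural hypotheses. Since $E$ is a Banach space it is complete, hence has ccp and in particular metric ccp, so $\f$ and $\fe$ are $\varepsilon$-compatible by \prettyref{prop:strict_top_isomorphism}; moreover the generator $(\id_{E^{\Omega}},\id_{\K^{\Omega}})$ is a strong, consistent family for $(F,E)$ by the proof of \prettyref{prop:strict_top_isomorphism}. The inclusion $i\colon\Fv\to\f$ is continuous because $|f|_{\widetilde{\nu},\alpha}\leq\|\widetilde{\nu}\|_{\infty}\|f\|_{\infty,\alpha}$ for every $\widetilde{\nu}\in\mathcal{C}_{0}(\Omega)$, and $\mathcal{F}\nu(\Omega)=\mathcal{C}^{\infty}_{P(\partial),b}(\Omega)$ is a Banach space under $\|\cdot\|_{\infty}$ by \prettyref{prop:frechet_bierstedt}.

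The key geometric point is that the closed unit ball $B_{\mathcal{F}\nu(\Omega)}$ is a compact subset of $\f$. I would argue that $B_{\mathcal{F}\nu(\Omega)}$ is $\|\cdot\|_{\infty}$-bounded, hence $\beta$-bounded, and that it is $\tau_{c}$-closed (being the intersection of the pointwise-closed sets $\{f\;|\;|f(x)|\leq 1\}$), hence $\beta$-closed since $\beta\supseteq\tau_{c}$; because $(\mathcal{C}^{\infty}_{P(\partial),b}(\Omega),\beta)$ is a semi-Montel space by \prettyref{prop:strict_topo_hypo_B_complete_semi_Montel}, a bounded closed set there is compact. Alternatively one uses that $\beta$ and $\tau_{c}$ coincide on $\|\cdot\|_{\infty}$-bounded sets together with the $\tau_{c}$-compactness of $B_{\mathcal{F}\nu(\Omega)}$ noted in the proof of \prettyref{prop:strict_topo_hypo_B_complete_semi_Montel}.

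With these verifications in hand, \prettyref{prop:mingle-mangle} c) gives $\Feps=\mathcal{F}\nu(\Omega,E)=\mathcal{C}^{\infty}_{P(\partial),b}(\Omega,E)$ as linear spaces, so the given net $(f_{\iota})_{\iota\in I}$ lies in $\Feps$ and is bounded in $\FvE$ by hypothesis. Since $T^{E}=\id$, the assumption that $\lim_{\iota}f_{\iota}(x)$ exists for all $x\in U$ is precisely the condition that $\lim_{\iota}T^{E}(f_{\iota})(x)$ exists for all $x\in U$, and $U$ is a set of uniqueness for $(T^{\K},\mathcal{F}\nu)=(\id_{\K^{\Omega}},\mathcal{C}^{\infty}_{P(\partial),b})$ by hypothesis. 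Thus \prettyref{cor:Blaschke_vector_valued} applies and yields $f\in\Feps=\mathcal{C}^{\infty}_{P(\partial),b}(\Omega,E)$ with $(f_{\iota})_{\iota\in I}$ converging to $f$ in $\fe=(\mathcal{C}^{\infty}_{P(\partial),b}(\Omega,E),\beta)$, which is the assertion. The only genuine obstacle is the compactness of $B_{\mathcal{F}\nu(\Omega)}$ in the strict topology, and that is exactly what the semi-Montel property from \prettyref{prop:strict_topo_hypo_B_complete_semi_Montel} delivers; everything else is bookkeeping of the hypotheses of \prettyref{cor:Blaschke_vector_valued}.
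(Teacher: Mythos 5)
Your proposal is correct and follows essentially the same route as the paper: the same choice of $\f=(\mathcal{C}^{\infty}_{P(\partial),b}(\Omega),\beta)$, $\Fv=(\mathcal{C}^{\infty}_{P(\partial),b}(\Omega),\|\cdot\|_{\infty})$, the $\varepsilon$-compatibility from \prettyref{prop:strict_top_isomorphism}, the $\beta$-compactness of the closed unit ball, \prettyref{prop:mingle-mangle} c), and finally \prettyref{cor:Blaschke_vector_valued}. The only cosmetic difference is that you justify the $\beta$-compactness of $B_{\Fv}$ via the semi-Montel property from \prettyref{prop:strict_topo_hypo_B_complete_semi_Montel} (bounded and $\beta$-closed, hence compact), whereas the paper cites Cooper's results on the mixed topology directly; both are valid and your alternative argument coincides with the paper's.
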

\begin{proof}
We take $\f:=(\mathcal{C}^{\infty}_{P(\partial),b}(\Omega),\beta)$ 
and $\fe:=(\mathcal{C}^{\infty}_{P(\partial),b}(\Omega,E),\beta)$ 
as well as $\Fv:=(\mathcal{C}^{\infty}_{P(\partial),b}(\Omega),\|\cdot\|_{\infty})$ 
and $\FvE:=(\mathcal{C}^{\infty}_{P(\partial),b}(\Omega,E),\|\cdot\|_{\infty})$ 
with the weight $\nu(x):=1$, $x\in\Omega$, and generator $(\id_{E^{\Omega}},\id_{\Omega^{\K}})$ 
for $(\mathcal{F}\nu,E)$. The generator is strong and consistent for $(F,E)$ 
and $\f$ and $\fe$ are $\varepsilon$-compatible by \prettyref{prop:strict_top_isomorphism}. 
The space $\Fv$ is a Banach space as a closed subspace of the Banach space 
$(\mathcal{C}_{b}(\Omega),\|\cdot\|_{\infty})$. Its closed unit ball $B_{\Fv}$ is $\tau_{c}$-compact because 
$(\mathcal{C}^{\infty}_{P(\partial)}(\Omega),\tau_{c})$ is a Fr\'echet--Schwartz space, in particular, a Montel space. 
Thus $B_{\Fv}$ is $\|\cdot\|_{\infty}$-bounded and $\tau_{c}$-compact, which implies that 
it is also $\beta$-compact by \cite[Proposition 1 (viii), p.\ 586]{cooper1971} 
and \cite[Proposition 3, p.\ 590]{cooper1971}. 
In addition, the $\varepsilon$-compatibility of $\f$ and $\fe$ 
in combination with the consistency of $(\id_{E^{\Omega}},\id_{\K^{\Omega}})$ for $(F,E)$ gives $\Feps=\FvE$
as linear spaces by \prettyref{prop:mingle-mangle} c), verifying our statement 
by \prettyref{cor:Blaschke_vector_valued}.
\end{proof}

A direct consequence of \prettyref{cor:strict_hypo_Blaschke} is the following remark.

\begin{rem}
Let $E$ be a Banach space, $\Omega\subset\R^{d}$ open, $P(\partial)^{\K}$ a hypoelliptic linear partial 
differential operator and $(f_{\iota})_{\iota\in I}$ a bounded net in the space
$(\mathcal{C}^{\infty}_{P(\partial),b}(\Omega,E),\|\cdot\|_{\infty})$. 
Then the following statements are equivalent:
\begin{enumerate} 
\item[(i)] $(f_{\iota})$ converges pointwise,
\item[(ii)] $(f_{\iota})$ converges uniformly on compact subsets of $\Omega$,
\item[(iii)] $(f_{\iota})$ is $\beta$-convergent.  
\end{enumerate}
\end{rem}

In the case of complex-valued bounded holomorphic functions of one variable, i.e.\ $E=\C$, $\Omega\subset\C$ is open and 
$P(\partial)=\overline{\partial}$ is the Cauchy--Riemann operator, convergence w.r.t.\ $\beta$ is known as 
bounded convergence (see \cite[p.\ 13--14, 16]{rubel1971}) and
the preceding remark is included in \cite[3.7 Theorem, p.\ 246]{rubelshields1966} for connected sets $\Omega$.

A similar improvement of \prettyref{cor:Hoelder_vanish_Blaschke} for the space 
$\mathcal{C}^{[\gamma]}_{z}(\Omega,E)$ of $\gamma$-H\"older continuous functions on a metric space $(\Omega,\d)$ 
that vanish at a given point $z\in\Omega$ is possible, using the \gls{strict_topology} $\beta$ on 
$\mathcal{C}^{[\gamma]}_{z}(\Omega)$ given by the seminorms
\[
|f|_{\nu}:=\sup_{\substack{x,y\in\Omega\\x\neq y}}
\frac{|f(x)-f(y)|}{|x-y|^{\gamma}}|\nu(x,y)|,\quad f\in\mathcal{C}^{[\gamma]}_{z}(\Omega),
\]
for $\nu\in\mathcal{C}_{0}(\omega)$ with $\omega=\Omega^{2}\setminus\{(x,x)\;|\;x\in\Omega\}$. 
If $\Omega$ is compact and $E$ a Banach space, this follows as in \prettyref{cor:strict_hypo_Blaschke} 
from the observation that $\beta$ is the mixed topology 
$\gamma(|\cdot|_{\mathcal{C}^{0,\gamma}(\Omega)},\tau_{c})$ 
by \cite[Theorem 3.3, p.\ 645]{vargas2018}, that a set is $\beta$-compact if and only if it 
is $|\cdot|_{\mathcal{C}^{0,\gamma}(\Omega)}$-bounded and $\tau_{c}$-compact 
by \cite[Theorem 2.1 (6), p.\ 642]{vargas2018}, the $\varepsilon$-compatibility
$(\mathcal{C}^{[\gamma]}_{z}(\Omega),\beta)\varepsilon E\cong(\mathcal{C}^{[\gamma]}_{z}(\Omega,E),\gamma\tau_{\gamma})$ 
by \cite[Theorem 4.4, p.\ 648]{vargas2018} where the topology 
$\gamma\tau_{\gamma}$ is described in \cite[Definition 4.1, p.\ 647]{vargas2018} 
and coincides with $\beta$ if $E=\K$ by \cite[Proposition 4.3 (i), p.\ 647]{vargas2018}.

Let us turn to Bloch type spaces. The result corresponding to \prettyref{cor:weighted_hypo_Blaschke} 
for Bloch type spaces reads as follows.

\begin{cor}
Let $E$ be a Banach space, $\nu\colon\D\to(0,\infty)$ continuous and $U_{\ast}\subset\D$ 
have an accumulation point in $\D$. 
If $(f_{\iota})_{\iota\in I}$ is a bounded net in $\mathcal{B}\nu(\D,E)$ 
such that $\lim_{\iota}f_{\iota}(0)$ and $\lim_{\iota}(\partial_{\C}^{1})^{E}f_{\iota}(z)$ 
exist for all $z\in U_{\ast}$, 
then there is $f\in\mathcal{B}\nu(\D,E)$ such that 
$(f_{\iota})_{\iota\in I}$ converges to $f$ in $(\mathcal{O}(\D,E),\tau_{c})$. 
\end{cor}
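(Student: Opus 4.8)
The plan is to recognise this as a direct instance of the general vector-valued Blaschke theorem \prettyref{cor:Blaschke_vector_valued}, reusing verbatim the $\dom$-space machinery already assembled in the proof of \prettyref{cor:Bloch_ext_unique}. Concretely, I would set $\f:=(\mathcal{O}(\D),\tau_{c})$ and $\fe:=(\mathcal{O}(\D,E),\tau_{c})$, keep the index set $\omega:=\{0\}\cup\{(1,z)\;|\;z\in\D\}$, the weight $\nu_{\ast}$ with $\nu_{\ast}(0):=1$, $\nu_{\ast}(1,z):=\nu(z)$, and the generator $(T^{E},T^{\C})$ given by $T^{E}(f)(0):=f(0)$ and $T^{E}(f)(1,z):=(\partial_{\C}^{1})^{E}f(z)$, so that $\mathcal{F}\nu_{\ast}(\D)=\mathcal{B}\nu(\D)$ and $\mathcal{F}\nu_{\ast}(\D,E)=\mathcal{B}\nu(\D,E)$. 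All the structural hypotheses of \prettyref{cor:Blaschke_vector_valued} are then furnished by that earlier proof: $\f$ and $\fe$ are $\varepsilon$-compatible (hence $\varepsilon$-into-compatible) by \prettyref{prop:co_top_isomorphism} together with \eqref{eq:holomorphic_coincide_1}; $(T^{E},T^{\C})$ is a strong, consistent family for $(F,E)$ by \prettyref{prop:complex_diff_cons_strong}; $\mathcal{B}\nu(\D)$ is a Banach space by \prettyref{prop:Bloch_Banach} whose closed unit ball $B_{\mathcal{B}\nu(\D)}$ is compact in the Montel space $(\mathcal{O}(\D),\tau_{c})$ by the estimate \eqref{eq:Bloch}; and $\mathcal{F}_{\varepsilon}\nu_{\ast}(\D,E)=\mathcal{F}\nu_{\ast}(\D,E)=\mathcal{B}\nu(\D,E)$ as linear spaces by \prettyref{prop:mingle-mangle} c). Finally, since $U_{\ast}$ has an accumulation point in $\D$, the set $U:=\{0\}\cup\{(1,z)\;|\;z\in U_{\ast}\}$ is a set of uniqueness for $(T^{\C},\mathcal{F}\nu_{\ast})$ by the identity theorem for holomorphic functions.

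With this dictionary in place, the remaining step is to match the convergence data. The net $(f_{\iota})_{\iota\in I}$ is a bounded net in $\mathcal{B}\nu(\D,E)=\mathcal{F}_{\varepsilon}\nu_{\ast}(\D,E)$, which is exactly the input required. Moreover, for $x\in U$ the quantity $\lim_{\iota}T^{E}(f_{\iota})(x)$ reads as $\lim_{\iota}f_{\iota}(0)$ when $x=0$ and as $\lim_{\iota}(\partial_{\C}^{1})^{E}f_{\iota}(z)$ when $x=(1,z)$ with $z\in U_{\ast}$; both limits exist by hypothesis. Thus \prettyref{cor:Blaschke_vector_valued} yields $f\in\mathcal{F}_{\varepsilon}\nu_{\ast}(\D,E)=\mathcal{B}\nu(\D,E)$ such that $(f_{\iota})_{\iota\in I}$ converges to $f$ in $\fe=(\mathcal{O}(\D,E),\tau_{c})$, which is the assertion.

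There is no genuine analytic obstacle here, since the substantial work (compactness of the unit ball, the linearisation $\mathcal{B}\nu(\D)\varepsilon E\cong\mathcal{B}\nu(\D,E)$, consistency and strength of $(T^{E},T^{\C})$) has already been carried out for \prettyref{cor:Bloch_ext_unique}; the proof is therefore a one-line citation in the same spirit as \prettyref{cor:weighted_hypo_Blaschke}. The only point deserving care — and the place where I would be most explicit — is the bookkeeping identification of $\lim_{\iota}T^{E}(f_{\iota})(x)$ over $x\in U$ with the two given convergence conditions, and the verification that $U$ (rather than merely $U_{\ast}$) is the correct set of uniqueness, namely that convergence of the derivatives on $U_{\ast}$ plus convergence of the values at $0$ together pin down the limit. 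This is exactly what the choice $U=\{0\}\cup(\{1\}\times U_{\ast})$ encodes.
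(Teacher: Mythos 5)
Your proposal is correct and follows exactly the paper's route: the paper likewise disposes of this corollary by noting that all hypotheses of \prettyref{cor:Blaschke_vector_valued} are already verified in the proof of \prettyref{cor:Bloch_ext_unique}. Your additional bookkeeping of how $\lim_{\iota}T^{E}(f_{\iota})(x)$ over $x\in U=\{0\}\cup(\{1\}\times U_{\ast})$ matches the two given convergence conditions is a correct and welcome elaboration of what the paper leaves implicit.
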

\begin{proof}
Due to the proof of \prettyref{cor:Bloch_ext_unique} all conditions needed 
to apply \prettyref{cor:Blaschke_vector_valued} are fulfilled, which proves our statement.
\end{proof}
\section{Wolff type results}
\label{sect:wolff_type}
The following theorem gives us a Wolff type description of the dual of $\f$ 
and generalises \cite[Theorem 3.3, p.\ 693]{F/J/W} and \cite[Corollary 3.4, p.\ 694]{F/J/W} whose proofs only need a bit of adaptation. \emph{\gls{Wolff}} \cite[p.\ 1327]{wolff1921} (cf.\ \cite[Theorem (Wolff), p.\ 402]{grosse-erdmann2004}) 
phrased in a functional analytic way (see \cite[p.\ 240]{F/J/W}) says: if $\Omega\subset\C$
is a domain (i.e.\ open and connected), then for each $\mu\in\mathcal{O}(\Omega)'$ there are a sequence 
$(z_{n})_{n\in\N}$ which is relatively compact in $\Omega$ and a sequence
$(a_{n})_{n\in\N}$ in $\ell^{1}$ such that $\mu=\sum_{n=1}^{\infty}a_{n}\delta_{z_{n}}$.

\begin{thm}\label{thm:wolff}
Let $\f$ and $\fe$ be $\varepsilon$-into-compatible, 
$(T^{E},T^{\K})$ be a generator for $(\mathcal{F}\nu,E)$ and a strong, consistent family for $(F,E)$ 
for every Banach space $E$. 
Let $\f$ be a nuclear Fr\'echet space and $\Fv$ a Banach space 
whose closed unit ball $B_{\Fv}$ 
is a compact subset of $\f$ and $(x_{n})_{n\in\N}$ fixes the topology in $\Fv$.
\begin{enumerate}
\item[a)] Then there is $0<\lambda\in\ell^1$, i.e.\ $\lambda\in\ell^{1}$ and $\lambda_{n}>0$ for all $n\in\N$, such that for every bounded $B\subset\f_{b}'$ there 
is $C\geq 1$ with 
\[
       \{\mu_{\mid\Fv}\;|\;\mu\in B\}
\subset\{\sum_{n=1}^{\infty}a_{n}\nu(x_{n})T^{\K}_{x_{n}}\in\Fv'
         \;|\;a\in\ell^{1},\;\forall\;n\in\N:\;|a_{n}|\leq C\lambda_{n}\}.
\]
\item[b)] Let $(\|\cdot\|_{k})_{k\in\N}$ denote the system of seminorms generating the topology of $\f$. 
Then there is a decreasing zero sequence $(\varepsilon_{n})_{n\in\N}$ such that for all $k\in\N$ 
there is $C\geq 1$ with 
\[
\|f\|_{k}\leq C \sup_{n\in\N}|T^{\K}(f)(x_{n})|\nu(x_{n})\varepsilon_{n},\quad f\in\Fv. 
\]
\end{enumerate}
\end{thm}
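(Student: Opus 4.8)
The plan is to pass through the scalar sequence space $\ell^{\infty}$ and to extract the $\ell^{1}$‑representation from the nuclearity of $\f$. First I would record the \emph{frame isomorphism}. Since $T^{\K}_{x_{n}}\in\f'$ by the consistency hypothesis and $\sup_{n}|T^{\K}(f)(x_{n})|\nu(x_{n})\le|f|_{\Fv}$ (the supremum defining $|\cdot|_{\Fv}$ runs over all of $\omega\supset\{x_{n}\}$), the map $R\colon\Fv\to\ell^{\infty}$, $R(f):=(\nu(x_{n})T^{\K}(f)(x_{n}))_{n}$, is continuous; fixing the topology gives $|f|_{\Fv}\le C\|R(f)\|_{\infty}$, so $R$ is a topological isomorphism onto a closed subspace $X:=R(\Fv)\subset\ell^{\infty}$. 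Writing $y_{n}:=\nu(x_{n})T^{\K}_{x_{n}}$, one has $y_{n}=R^{t}(e_{n}^{\ast})$ for the coordinate functionals $e_{n}^{\ast}$, and for every $a\in\ell^{1}$ the series $\sum_{n}a_{n}y_{n}$ converges absolutely in the Banach space $\Fv'$ (because $\sup_{n}\|y_{n}\|_{\Fv'}<\infty$), defining $\sigma\colon\ell^{1}\to\Fv'$. Moreover the embedding $\iota\colon\Fv\hookrightarrow\f$ is continuous with $\iota(B_{\Fv})$ relatively compact, i.e.\ compact, and $\f$ is nuclear Fr\'echet.

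The key reduction is to the following factorisation, which is simultaneously an $\ell^{1}$‑version of b): there is $0<\varepsilon\in\ell^{1}$ such that for every seminorm $\|\cdot\|_{k}$ of $\f$ there is $C_{k}$ with
\[
\|f\|_{k}\le C_{k}\,\sup_{n}\varepsilon_{n}|R(f)_{n}|,\qquad f\in\Fv.\qquad(\star)
\]
Granting $(\star)$, part b) is immediate (its $\varepsilon$ is a witness, and replacing it by $\widetilde{\varepsilon}_{n}:=\sup_{m\ge n}\varepsilon_{m}$ makes it decreasing while only enlarging the right‑hand side). Part a) follows too: given a bounded, hence equicontinuous, $B\subset\f'_{b}$, choose $k_{0},M$ with $|\mu(g)|\le M\|g\|_{k_{0}}$ for all $\mu\in B$. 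Since $\varepsilon_{n}\to0$, the injective map $D_{\varepsilon}R\colon\Fv\to c_{0}$, $f\mapsto(\varepsilon_{n}R(f)_{n})_{n}$, takes values in $c_{0}$, and $(\star)$ yields $|\mu_{\mid\Fv}(f)|\le MC_{k_{0}}\|D_{\varepsilon}R(f)\|_{\infty}$. Thus $\mu_{\mid\Fv}$ factors through $D_{\varepsilon}R$ by a functional on $\overline{D_{\varepsilon}R(\Fv)}\subset c_{0}$ of norm $\le MC_{k_{0}}$; extending it by Hahn--Banach and using $(c_{0})'=\ell^{1}$ gives $b\in\ell^{1}$, $\|b\|_{1}\le MC_{k_{0}}$, with $\mu_{\mid\Fv}=\sum_{n}b_{n}\varepsilon_{n}y_{n}$. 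Setting $a_{n}:=b_{n}\varepsilon_{n}$ and $\lambda_{n}:=\varepsilon_{n}$ gives $|a_{n}|\le MC_{k_{0}}\lambda_{n}$ with $\lambda=\varepsilon\in\ell^{1}$ independent of $B$, which is exactly a).

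Establishing $(\star)$ is where the real work lies, and where nuclearity of $\f$ and compactness of $\iota$ enter. For fixed $k$ the composition $\iota_{k}R^{-1}\colon X\to\f_{k}$ into the local Banach space $\f_{k}$ is nuclear, since $\iota_{k}=\pi_{l,k}\circ\iota_{l}$ factors through a nuclear linking map $\pi_{l,k}$ supplied by nuclearity; this produces $\ell^{1}$‑summable control of $\|\cdot\|_{k}$, and a diagonal amalgamation over $k\in\N$ of the resulting countably many $\ell^{1}$‑weight sequences $\lambda^{(k)}$ yields a single $0<\varepsilon\in\ell^{1}$ via the standard envelope $\varepsilon_{n}:=2^{-n}+\sum_{j}2^{-j}\lambda^{(j)}_{n}/(1+\|\lambda^{(j)}\|_{1})$. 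The genuine obstacle I anticipate is that a nuclear representation a priori uses \emph{arbitrary} functionals on $X$, whereas $(\star)$ must be expressed through the \emph{specific} coordinate functionals $e_{n}^{\ast}$, i.e.\ through the frame $(y_{n})$; reconciling the two amounts to pushing the relevant elements of $X'$ down to honest $\ell^{1}$‑sequences, and this is precisely what the passage to $c_{0}$ accomplishes. I expect this to be the step where the hypothesis that $(T^{E},T^{\K})$ is a generator and strong, consistent family for \emph{every} Banach space $E$ is used: by instantiating the linearisation $S\colon\f\varepsilon E\to\fe$ together with \prettyref{prop:mingle-mangle} at the auxiliary Banach spaces $E=c_{0}$ (and $E=\ell^{1}$) one identifies $\Feps$ with the frame‑coefficient picture, while the compactness criteria of \prettyref{app:clos_abs_conv_compact} secure that the closed absolutely convex hulls occurring in the argument are compact. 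The frame isomorphism and the two reductions above I regard as routine; essentially all the difficulty of the theorem is concentrated in the nuclear construction of $(\star)$.
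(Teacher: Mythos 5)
Your reduction of both parts to the single estimate $(\star)$ is attractive, and the step
$(\star)\Rightarrow$ a) via $D_{\varepsilon}R\colon\Fv\to c_{0}$, Hahn--Banach and $(c_{0})'=\ell^{1}$ is a genuinely nice observation. The problem is that $(\star)$, as you need it, is never proved, and it cannot be obtained the way you indicate. Note first that $(\star)$ with $0<\varepsilon\in\ell^{1}$ is \emph{strictly stronger} than part b) of the theorem, which only asserts a decreasing null sequence; and your derivation of a) really does need $\varepsilon\in\ell^{1}$, since the only uniform bound you get is $|a_{n}|=|b_{n}|\varepsilon_{n}\leq MC_{k_{0}}\varepsilon_{n}$, so $\lambda$ must dominate $\varepsilon$. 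Now the nuclearity of $\iota_{k}\colon\Fv\to\f_{k}$ only gives $\|f\|_{k}\leq\sum_{j}|z_{j}'(f)|\,\|w_{j}\|_{k}$ with \emph{arbitrary} $z_{j}'\in\Fv'$. Rewriting each $z_{j}'$ as an $\ell^{1}$-combination of the frame functionals $\nu(x_{n})T^{\K}_{x_{n}}$ with weights dominated by a fixed $\lambda\in\ell^{1}$ is essentially the content of part a) itself, so your logical order is reversed; and your remark that this obstacle "is precisely what the passage to $c_{0}$ accomplishes" is circular, because that passage is your derivation of a) \emph{from} $(\star)$. Even granting the rewriting, you would only obtain $\|f\|_{k}\leq C\sum_{n}\lambda_{n}|R(f)_{n}|\leq C\|(\lambda_{n}/\varepsilon_{n})\|_{\ell^{1}}\sup_{n}\varepsilon_{n}|R(f)_{n}|$ for any $\varepsilon$ with $\lambda/\varepsilon\in\ell^{1}$; by Dini such an $\varepsilon$ can always be chosen to be a decreasing null sequence (this is exactly how the paper deduces b) from a)), but by Cauchy--Schwarz it can be taken in $\ell^{1}$ only when $(\lambda_{n}^{1/2})_{n}\in\ell^{1}$, which nothing in the construction provides. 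So $(\star)$ in the form you need for a) is unavailable.

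The paper proceeds in the opposite direction: it first proves a) by applying the operator extension theorem \prettyref{prop:ext_B_fix_top} (which is where the hypothesis that $(x_{n})$ fixes the topology and that $B_{\Fv}$ is compact in $\f$ enter) with the auxiliary Banach space $E:=j(\ell^{1}/\ker j_{1})\subset\Fv'$, identifies the resulting $\widehat{\mathsf{A}}\in\f\varepsilon E$ with the restriction map $\mu\mapsto\mu_{\mid\Fv}$, and then uses the representation $\mu=\sum_{k}\gamma_{k}w_{k}'(\mu)\mu_{k}$, $\gamma\in\ell^{1}$, of the elements of a bounded subset of the dual of a nuclear Fr\'echet space, together with the DFS-structure of $\f_{b}'$, to produce one $\lambda\in\ell^{1}$; part b) is then an easy consequence of a). If you want to salvage your plan, you should prove a) first along these lines and then read off your $(\star)$-type estimate with a null (not $\ell^{1}$) weight, which is exactly part b).
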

\begin{proof}
We start with part a). Let $B_{1}:=\{T^{\K}_{x_{n}}(\cdot)\nu(x_{n})\;|\;n\in\N\}\subset\f'$, 
$X:=\operatorname{span} B_{1}$, $Y:=\f$, 
$Z:=\Fv$ and $E_{1}:=\{\sum_{n=1}^{\infty}a_{n}\nu(x_{n})T^{\K}_{x_{n}}\;|\;a\in\ell^{1}\}$. 
From
\[
|j_{1}(a)(f)|:=|\sum_{n=1}^{\infty}a_{n}\nu(x_{n})T^{\K}_{x_{n}}(f)|\leq\sup_{n\in\N}|T^{\K}(f)(x_{n})|\nu(x_{n})\|a\|_{\ell^{1}}
\leq |f|_{\Fv}\|a\|_{\ell^{1}}
\]
for all $f\in\Fv$ and $a\in\ell^{1}$ it follows that $E_{1}$ is a linear subspace of $\Fv'$ 
and the continuity of the map $j_{1}\colon \ell^{1}\to\Fv'$ where $\Fv'$ is equipped 
with the operator norm. In addition, we deduce that the linear map 
$j\colon \ell^{1}/\ker j_{1}\to\Fv'$, $j([a]):=j_{1}(a)$, where 
$[a]$ denotes the equivalence class of $a\in\ell^{1}$ in the quotient space $\ell^{1}/\ker j_{1}$,
is continuous w.r.t.\ the quotient norm since
\[
\|j([a])\|_{\Fv'}\leq \inf_{b\in\ell^{1},[b]=[a]}\|b\|_{\ell^{1}}=\|[a]\|_{\ell^{1}/\ker j_{1}}.
\]
By setting $E:=j(\ell^{1}/\ker j_{1})$ and $\|j([a])\|_{E}:=\|[a]\|_{\ell^{1}/\ker j_{1}}$, $a\in\ell^{1}$, 
and observing that $\ell^{1}/\ker j_{1}$ is a Banach space, we obtain that $E$ is also a Banach space, 
which is continuously embedded in $\Fv'$.

We denote by $\mathsf{A}\colon X\to E$ the restriction to $Z=\Fv$ determined by 
\[
\mathsf{A}(T^{\K}_{x_{n}}(\cdot)\nu(x_{n})):=T^{\K}_{x_{n}}(\cdot)_{\mid\Fv}\nu(x_{n})=j([e_{n}])
\]
where $e_{n}$ is the $n$-th unit sequence in $\ell^{1}$. We consider $\Fv$ as a subspace 
of $E'$ via 
\[
f(j([a])):=j([a])(f)=\sum_{n=1}^{\infty}a_{n}\nu(x_{n})T^{\K}(f)(x_{n}),\quad a\in\ell^{1},
\]
for $f\in\Fv$. The space $G:=\Fv$ clearly separates the points of $E$, 
thus is $\sigma(E',E)$-dense and  
\[
 (f\circ \mathsf{A})(T^{\K}_{x_{n}}(\cdot)\nu(x_{n})) 
=\mathsf{A}(T^{\K}_{x_{n}}(\cdot)\nu(x_{n}))(f)
=j([e_{n}])(f)
=f(j([e_{n}]))
\]
for all $n\in\N$. Hence we may consider $f\circ \mathsf{A}$ by identification with $f$ as an element of 
$Z=\Fv$ for all $f\in G=\Fv$. It follows from \prettyref{prop:ext_B_fix_top} that there is 
a unique extension $\widehat{\mathsf{A}}\in\f\varepsilon E$ of $\mathsf{A}$ such that 
$S(\widehat{\mathsf{A}})\in\Feps$. 

For each $e'\in E'$ there are $C_{0},C_{1}>0$ and an absolutely convex compact set $K\subset\f$ such that
\[
|(e'\circ\widehat{\mathsf{A}})(\mu)|\leq C_{0}\|\widehat{\mathsf{A}}(\mu)\|_{E}\leq C_{0}C_{1}\sup_{f\in K}|\mu(f)|
\]
for all $\mu\in\f'$, implying $e'\circ \widehat{\mathsf{A}}\in(\f_{b}')'$. 
Due to the reflexivity of the nuclear Fr\'echet space $\f$ we obtain 
$e'\circ \widehat{\mathsf{A}}\in\f$ for each $e'\in E'$. 
Further, for each $e'\in E'$ we have 
\begin{align*}
  \|e'\circ\widehat{\mathsf{A}}\|_{\Fv}
&=\sup_{x\in\omega}|T^{\K}(e'\circ\widehat{\mathsf{A}})(x)|\nu(x)
 =\sup_{x\in\omega}|(e'\circ\widehat{\mathsf{A}})(T^{\K}_{x}(\cdot)\nu(x))|\\
&\leq C_{0}\sup_{x\in\omega}\|\widehat{\mathsf{A}}(T^{\K}_{x}(\cdot)\nu(x))\|_{E}<\infty
\end{align*}
since $\widehat{\mathsf{A}}(B_{\Fv}^{\circ \f'})$ is bounded in $E$. 
This yields $e'\circ\widehat{\mathsf{A}}\in\Fv$ for each $e'\in E'$. In particular, we get that
$\widehat{\mathsf{A}}$ is $\sigma(\f',\Fv)$-$\sigma(E,E')$ continuous. 
The restriction $r\colon\f'\to \Fv'$, $r(\mu):=\mu_{\mid \Fv}$, 
is $\sigma(\f',\Fv)$-$\sigma(\Fv',\Fv)$ continuous 
and coincides with $\widehat{\mathsf{A}}$ on the $\sigma(\f',\Fv)$-dense subspace 
$X=\operatorname{span} B_{1}\subset\f'$. 
Therefore $\widehat{\mathsf{A}}(\mu)=r(\mu)=\mu_{\mid \Fv}$ for all $\mu\in\f'$.

Let $B$ be an absolutely convex, closed and bounded subset of $\f_{b}'$. 
We endow $W:=\operatorname{span} B$ with the Minkowski functional of $B$. 
Due to the nuclearity of $\f$, there are an absolutely convex, closed and bounded subset 
$V\subset\f_{b}'$, $(w_{k}')_{k\in\N}\subset B_{W'}$, $(\mu_{k})_{k\in\N}\subset V$ 
and $0\leq \gamma\in\ell^{1}$, i.e.\ $\gamma\in\ell^{1}$ and $\gamma_{n}\geq 0$ 
for all $n\in\N$, such that
\[
\mu=\sum_{k=1}^{\infty}\gamma_{k}w_{k}'(\mu)\mu_{k},\quad \mu\in B,
\]
by \cite[2.9.1 Theorem, p.\ 134, 2.9.2 Definition, p.\ 135]{bogachev2017}. 
The boundedness of $\widehat{\mathsf{A}}(V)$ in $E$ and the definition of $E$ give us a bounded sequence $([\beta^{(k)}])_{k\in\N}\subset E$ 
with 
\[
{\mu_{k}}_{\mid \Fv}=\widehat{\mathsf{A}}(\mu_{k})=\sum_{n=1}^{\infty}\beta^{(k)}_{n}\nu(x_{n})T^{\K}_{x_{n}}
\]
for all $k\in\N$. The sequence $(\beta^{(k)})_{k\in\N}\subset\ell^{1}$ is also bounded by 
\cite[Remark 5.11, p.\ 36]{meisevogt1997} 
and we set $\rho_{n}:=\sum_{k=1}^{\infty}\gamma_{k}|\beta^{(k)}_{n}|$ for $n\in\N$. 
With $\rho:=(\rho_{n})_{n\in\N}$ we have
\[
\|\rho\|_{\ell^{1}}=\sum_{n=1}^{\infty}\sum_{k=1}^{\infty}\gamma_{k}|\beta^{(k)}_{n}|
\leq \sum_{n=1}^{\infty}\sup_{l\in\N}|\beta^{(l)}_{n}|\sum_{k=1}^{\infty}\gamma_{k}
=\sup_{l\in\N}\|\beta^{(l)}\|_{\ell^{1}}\|\gamma\|_{\ell^{1}}<\infty,
\]
which means that $\rho\in\ell^{1}$. 
For every $\mu\in B$ we set $a_{n}:=\sum_{k=1}^{\infty}\gamma_{k}w_{k}'(\mu)\beta^{(k)}_{n}$, $n\in\N$, 
and conclude that $a\in\ell^{1}$ with $|a_{n}|\leq\rho_{n}$ for all $n\in\N$ and
\begin{equation}\label{eq:wolff}
\mu_{\mid \Fv}=\sum_{n=1}^{\infty}a_{n}\nu(x_{n})T^{\K}_{x_{n}}.
\end{equation}
The strong dual $\f_{b}'$ of the Fr\'echet--Schwartz space $\f$ is a DFS-space 
and thus there is a fundamental sequence of bounded (closed, absolutely convex) sets $(B_{l})_{l\in\N}$ in $\f_{b}'$ 
by \cite[Proposition 25.19, p.\ 303]{meisevogt1997}. Due to our preceding results there is $\rho^{(l)}\in\ell^{1}$ 
with \eqref{eq:wolff} for each $l\in\N$. Finally, part a) follows from choosing $0<\lambda\in\ell^{1}$ such that 
each $\rho^{(l)}$ is componentwise smaller than a multiple of $\lambda$, i.e.\ we choose $\lambda$ in a way that 
for each $l\in\N$ there is $C_{l}\geq 1$ with $\rho^{(l)}_{n}\leq C_{l}\lambda_{n}$ for all $n\in\N$ 
(w.l.o.g.\ we may assume (the worst case) that $\lim_{n\to\infty}\rho^{(l+1)}_{n}/\rho^{(l)}_{n}=\infty$ for each $l\in\N$. Then the construction of a suitable $0<\lambda\in\ell^{1}$ is given in 
\cite[Chap.\ IX, \S41, 7., p.\ 301--302]{knopp1951}: set $c_{n}^{(l)}:=\rho^{(l)}_{n}$ for all $l,n\in\N$ 
and define $\lambda_{n}:=c_{n}+\frac{1}{n^2}$ for all $n\in\N$ with the $(c_{n})\in\ell^1$ constructed there. 
Then set $C_{1}:=1$ and $C_{l}:=(\max\{c_{n}^{(l)}\;|\;1\leq n\leq n_{l-1}\}/\min\{\lambda_{n}\;|\;1\leq n\leq n_{l-1}\})+1$, $l\geq 2$, for the sequence of indices $(n_{l})_{l\in\N}$ from the construction of $(c_{n})$.).

Let us turn to part b). We choose $\lambda\in\ell^{1}$ from part a) and a decreasing zero sequence 
$(\varepsilon_{n})_{n\in\N}$ such that $(\tfrac{\lambda_{n}}{\varepsilon_{n}})_{n\in\N}$ still belongs to $\ell^{1}$ 
(e.g.\ take $\varepsilon_{n}:=(\sum_{k=n}^{\infty}\lambda_{k})^{1/2}$ for $n\in\N$ by 
\cite[Chap.\ IX, \S39, Theorem of Dini, p.\ 293]{knopp1951}).
For $k\in\N$ we set 
\[
\widetilde{B}_{k}:=\{f\in\f\;|\;\|f\|_{k}\leq 1\}
\]
and note that the polar $\widetilde{B}_{k}^{\circ}$ is bounded in $\f_{b}'$. 
Due to part a) there exists $C\geq 1$ such that 
\[
\widehat{\mathsf{A}}(\widetilde{B}_{k}^{\circ})
\subset
\{\sum_{n=1}^{\infty}a_{n}\nu(x_{n})T^{\K}_{x_{n}}\in\Fv'
         \;|\;a\in\ell^{1},\;\forall\;n\in\N:\;|a_{n}|\leq C\lambda_{n}\}.
\]
By \cite[Proposition 22.14, p.\ 256]{meisevogt1997} the formula 
\[
\|f\|_{k}=\sup_{y'\in \widetilde{B}_{k}^{\circ}}|y'(f)|,\quad f\in\f,
\]
is valid and hence 
\begin{align*}
  \|f\|_{k}
&=\sup_{y'\in \widetilde{B}_{k}^{\circ}}|r(y')(f)|
 =\sup_{y'\in \widetilde{B}_{k}^{\circ}}|\widehat{\mathsf{A}}(y')(f)|
 \leq C\sup_{\substack{a\in\ell^{1}\\ |a_{n}|\leq \lambda_{n}}}
      \bigl|\sum_{n=1}^{\infty}a_{n}\nu(x_{n})T^{\K}(f)(x_{n})\bigr|\\
&\leq C\Bigl\|\Bigl(\frac{\lambda_{n}}{\varepsilon_{n}}\Bigr)_{n}\Bigr\|_{\ell^{1}}
       \sup_{n\in\N}|T^{\K}(f)(x_{n})|\nu(x_{n})\varepsilon_{n}
\end{align*}
for all $f\in\Fv$.
\end{proof}

\begin{rem}
The proof of \prettyref{thm:wolff} shows it is not needed that the assumption 
that $\f$ and $\fe$ are $\varepsilon$-into-compatible, 
$(T^{E},T^{\K})$ is a generator for $(\mathcal{F}\nu,E)$ and a strong, consistent family for $(F,E)$ 
is fulfilled for every Banach space $E$. It is sufficient that it is fulfilled for the Banach space 
$E:=j(\ell^{1}/\ker j_{1})$.
\end{rem}

We recall from \eqref{eq:frame} that for a positive sequence $\nu:=(\nu_{n})_{n\in\N}$ and an lcHs $E$ we have
\[
\ell\nu(\N,E)=\{x=(x_{n})_{n\in\N}\in E^{\N}\;|\;\forall\;\alpha\in\mathfrak{A}:\;
\|x\|_{\alpha}=\sup_{n\in\N}p_{\alpha}(x_{n})\nu_{n}<\infty\}.
\]
Further, we equip the space $E^{\N}$ of all sequences in $E$ from \prettyref{ex:space_of_all_functions} 
with the topology of pointwise convergence, i.e.\ the topology generated by the seminorms 
\[
|x|_{k,\alpha}:=\sup_{1\leq n\leq k}p_{\alpha}(x_{n}),\quad x=(x_{n})_{n\in\N}\in E^{\N},
\]
for $k\in\N$ and $\alpha\in\mathfrak{A}$.

\begin{cor}
Let $\nu:=(\nu_{n})_{n\in\N}$ be a positive sequence. 
\begin{enumerate}
\item[a)] Then there is $0<\lambda\in\ell^1$ such that for every bounded $B\subset(\K^{\N})_{b}'$ there 
is $C\geq 1$ with 
\[
       \{\mu_{\mid\ell\nu(\N)}\;|\;\mu\in B\}
\subset\{\sum_{n=1}^{\infty}a_{n}\nu_{n}\delta_{n}\in\ell\nu(\N)'
         \;|\;a\in\ell^{1},\;\forall\;n\in\N:\;|a_{n}|\leq C\lambda_{n}\}.
\]
\item[b)] Then there is a decreasing zero sequence $(\varepsilon_{n})_{n\in\N}$ such that for all $k\in\N$ 
there is $C\geq 1$ with 
\[
\sup_{1\leq n\leq k}|x_{n}|\leq C \sup_{n\in\N}|x_{n}|\nu_{n}\varepsilon_{n},\quad x=(x_{n})_{n\in\N}\in\ell\nu(\N). 
\]
\end{enumerate}
\end{cor}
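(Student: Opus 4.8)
The plan is to realise this corollary as the special case of \prettyref{thm:wolff} in which the underlying function space is a sequence space. Concretely, I would take $\Omega=\omega:=\N$, $\f:=\K^{\N}$ and $\fe:=E^{\N}$, both equipped with the topology of pointwise convergence, and set $T^{E}:=\id_{E^{\N}}$, $T^{\K}:=\id_{\K^{\N}}$ together with $\operatorname{AP}(\N,E):=E^{\N}$. Since $\nu=(\nu_{n})_{n\in\N}$ is positive, it is a single weight function (so \eqref{loc3} holds and the one-element family is trivially directed), and with this data the $\dom$-space $\mathcal{F}\nu(\N,E)$ is exactly $\ell\nu(\N,E)$, a Banach space with norm $|x|_{\nu}=\sup_{n}|x_{n}|\nu_{n}$. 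The points I would use are the indices $n\in\N$ themselves, so that $T^{\K}_{n}=\delta_{n}$ and $\nu(n)=\nu_{n}$.

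Next I would verify that all hypotheses of \prettyref{thm:wolff} are met. First, $\f$ and $\fe$ are $\varepsilon$-compatible (hence $\varepsilon$-into-compatible) and $(\id_{E^{\N}},\id_{\K^{\N}})$ is a strong, consistent generator for $(\mathcal{F}\nu,E)$ as well as a strong, consistent family for $(F,E)$ for every Banach space $E$, all by \prettyref{ex:space_of_all_functions}. Second, $\f=\K^{\N}$ is a nuclear Fr\'echet space by \cite[Theorem 51.1, p.\ 526]{Treves}. Third, $\Fv=\ell\nu(\N)$ is a Banach space whose closed unit ball equals $\prod_{n\in\N}\{z\in\K\;|\;|z|\leq \nu_{n}^{-1}\}$, which is compact in the product topology of $\K^{\N}=\f$ by Tychonoff's theorem (and the inclusion $\Fv\hookrightarrow\f$ is continuous because every coordinate functional is $|\cdot|_{\nu}$-bounded). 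Finally, the sequence $(n)_{n\in\N}$ fixes the topology in $\ell\nu(\N)$ in the sense of \prettyref{def:fix_top_1}: since $\sup_{n\in\N}|T^{\K}(f)(n)|\nu_{n}=\sup_{n}|f(n)|\nu_{n}=|f|_{\nu}$, the defining inequality holds with $C=1$ and equality.

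With these verifications in place I would simply invoke \prettyref{thm:wolff}. Part a) of that theorem produces $0<\lambda\in\ell^{1}$ such that for every bounded $B\subset\f_{b}'=(\K^{\N})_{b}'$ there is $C\geq 1$ with $\{\mu_{\mid\ell\nu(\N)}\;|\;\mu\in B\}$ contained in the set of all $\sum_{n=1}^{\infty}a_{n}\nu_{n}\delta_{n}$ with $a\in\ell^{1}$ and $|a_{n}|\leq C\lambda_{n}$, which is precisely statement a). For part b) I would only need to record that the topology of $\K^{\N}$ is generated by the increasing seminorms $\|x\|_{k}=\sup_{1\leq n\leq k}|x_{n}|$; feeding these into \prettyref{thm:wolff} b) yields a decreasing zero sequence $(\varepsilon_{n})_{n\in\N}$ with $\sup_{1\leq n\leq k}|x_{n}|\leq C\sup_{n\in\N}|x_{n}|\nu_{n}\varepsilon_{n}$ for all $x\in\ell\nu(\N)$, which is statement b).

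Since the argument is a translation of an already proved theorem, there is no genuine obstacle; the only points requiring a moment of care are the identification of the unit ball of $\ell\nu(\N)$ as a product of discs (so that compactness in $\K^{\N}$ comes from Tychonoff rather than from any completeness or Montel property) and the correct bookkeeping that the seminorms of $\K^{\N}$ are the truncated suprema $\sup_{1\leq n\leq k}|x_{n}|$, so that \prettyref{thm:wolff} b) delivers exactly the stated inequality.
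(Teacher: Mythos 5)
Your proof is correct and follows essentially the same route as the paper: both realise the corollary as the special case of \prettyref{thm:wolff} with $\f=\K^{\N}$, $\Fv=\ell\nu(\N)$ and the identity generator, using \prettyref{ex:space_of_all_functions} for $\varepsilon$-compatibility and the nuclearity of $\K^{\N}$. The only cosmetic difference is that you obtain compactness of $B_{\ell\nu(\N)}$ in $\K^{\N}$ via Tychonoff's theorem applied to the product of discs, whereas the paper notes that the ball is closed and bounded in the Montel space $\K^{\N}$; both verifications are valid.
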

\begin{proof}
We take $F(\N):=\K^{\N}$ and $F(\N,E):=E^{\N}$ as well as $\mathcal{F}\nu(\N):=\ell\nu(\N)$ and 
$\mathcal{F}\nu(\N,E):=\ell\nu(\N,E)$ 
where $(T^{E},T^{\K}):=(\operatorname{\id}_{E^{\N}},\operatorname{\id}_{\K^{\N}})$ is the generator 
for $(\mathcal{F}\nu,E)$.
We remark that $F(\N)$ and $F(\N,E)$ are $\varepsilon$-compatible 
and $(T^{E},T^{\K})$ is a strong, consistent family for $(F,E)$ by \prettyref{ex:space_of_all_functions}
for every Banach space $E$. Moreover, $\mathcal{F}\nu(\N)=\ell\nu(\N)$ is a Banach space by 
\cite[Lemma 27.1, p.\ 326]{meisevogt1997} since $\ell\nu(\N)=\lambda^{\infty}(A)$ 
with the K\"othe matrix $A:=(a_{n,j})_{n,j\in\N}$ given by $a_{n,j}:=\nu_{n}$ for all $n,j\in\N$. 
In addition, we have for every $k\in\N$
\[
\sup_{1\leq n\leq k}|x_{n}|\leq \sup_{1\leq n\leq k}\nu_{n}^{-1}|x|_{\nu} \leq \sup_{1\leq n\leq k}\nu_{n}^{-1},
\quad x=(x_{n})_{n\in\N}\in B_{\mathcal{F}\nu(\N)},
\]
which means that $B_{\mathcal{F}\nu(\N)}$ is bounded in $F(\N)$.  
The space $F(\N)=\K^{\N}$ is a nuclear Fr\'echet space and 
$B_{\mathcal{F}\nu(\N)}$ is obviously closed in $\K^{\N}$. 
Thus the bounded and closed set $B_{\mathcal{F}\nu(\N)}$ is compact in $F(\N)$,
implying our statement by \prettyref{thm:wolff}.
\end{proof}

\begin{cor}
Let $\Omega\subset\R^{d}$ be open, $P(\partial)^{\K}$ a hypoelliptic linear partial differential operator, 
$\nu\colon\Omega\to(0,\infty)$ continuous and $(x_{n})_{n\in\N}$ fix the topology 
in $\mathcal{C}\nu_{P(\partial)}(\Omega)$.
\begin{enumerate}
\item[a)] Then there is $0<\lambda\in\ell^1$ such that for every bounded 
$B\subset(\mathcal{C}^{\infty}_{P(\partial)}(\Omega),\tau_{c})_{b}'$ there is $C\geq 1$ with 
\[
       \{\mu_{\mid\mathcal{C}\nu_{P(\partial)}(\Omega)}\;|\;\mu\in B\}
\subset\{\sum_{n=1}^{\infty}a_{n}\nu(x_{n})\delta_{x_{n}}\in\mathcal{C}\nu_{P(\partial)}(\Omega)'
         \;|\;a\in\ell^{1},\;\forall\;n\in\N:\;|a_{n}|\leq C\lambda_{n}\}.
\]
\item[b)] Then there is a decreasing zero sequence $(\varepsilon_{n})_{n\in\N}$ such that 
for all compact $K\subset\Omega$ there is $C\geq 1$ with 
\[
\sup_{x\in K}|f(x)|\leq C \sup_{n\in\N}|f(x_{n})|\nu(x_{n})\varepsilon_{n},
\quad f\in\mathcal{C}\nu_{P(\partial)}(\Omega). 
\]
\end{enumerate}
\end{cor}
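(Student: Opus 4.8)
The plan is to recognise this corollary as a direct instance of \prettyref{thm:wolff}, applied to exactly the setup already used in the proof of \prettyref{cor:hypo_weighted_ext_unique}. Concretely, I would take $\f:=(\mathcal{C}^{\infty}_{P(\partial)}(\Omega),\tau_{c})$ and $\fe:=(\mathcal{C}^{\infty}_{P(\partial)}(\Omega,E),\tau_{c})$, so that $\mathcal{F}\nu(\Omega)=\mathcal{C}\nu_{P(\partial)}(\Omega)$ and $\mathcal{F}\nu(\Omega,E)=\mathcal{C}\nu_{P(\partial)}(\Omega,E)$, with the generator $(T^{E},T^{\K}):=(\id_{E^{\Omega}},\id_{\K^{\Omega}})$ for $(\mathcal{F}\nu,E)$. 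Under this identification $T^{\K}_{x_{n}}=\delta_{x_{n}}$, and the two displayed assertions of the corollary become verbatim parts a) and b) of \prettyref{thm:wolff}. Thus the whole task reduces to checking that the hypotheses of that theorem are satisfied.

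Most of these hypotheses were already established in the proof of \prettyref{cor:hypo_weighted_ext_unique}: by \prettyref{prop:co_top_isomorphism} the spaces $\f$ and $\fe$ are $\varepsilon$-compatible (hence $\varepsilon$-into-compatible) and $(T^{E},T^{\K})$ is a strong, consistent family for $(F,E)$ for every locally complete $E$, in particular for every Banach space; by \prettyref{prop:frechet_bierstedt} the single-weight space $\mathcal{C}\nu_{P(\partial)}(\Omega)$ is a Banach space; and the estimate \eqref{eq:hypo_weighted_Frechet}, together with $\f$ being a Fr\'echet--Schwartz (hence Montel) space in which $B_{\mathcal{C}\nu_{P(\partial)}(\Omega)}$ is $\tau_{c}$-closed, shows that $B_{\mathcal{C}\nu_{P(\partial)}(\Omega)}$ is a compact subset of $\f$. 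The one extra ingredient demanded by \prettyref{thm:wolff}, and not used in \prettyref{cor:hypo_weighted_ext_unique}, is that $\f$ be a \emph{nuclear} Fr\'echet space: this holds since, by hypoellipticity, $\f=(\mathcal{C}^{\infty}_{P(\partial)}(\Omega),\tau_{c})=\mathcal{CW}^{\infty}_{P(\partial)}(\Omega)$ is a closed subspace of the nuclear Fr\'echet space $\mathcal{CW}^{\infty}(\Omega)$, and closed subspaces of nuclear Fr\'echet spaces are again nuclear Fr\'echet spaces. Combined with the standing assumption that $(x_{n})_{n\in\N}$ fixes the topology in $\mathcal{C}\nu_{P(\partial)}(\Omega)$, all hypotheses of \prettyref{thm:wolff} are in force.

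Part a) is then literally \prettyref{thm:wolff} a) after substituting $T^{\K}_{x_{n}}=\delta_{x_{n}}$. For part b), \prettyref{thm:wolff} b) yields a decreasing zero sequence $(\varepsilon_{n})_{n\in\N}$ such that for each index $k$ of the fundamental system $(\|\cdot\|_{k})_{k\in\N}$ of $\tau_{c}$-seminorms of $\f$ there is $C\geq 1$ with $\|f\|_{k}\leq C\sup_{n\in\N}|f(x_{n})|\nu(x_{n})\varepsilon_{n}$ for all $f\in\mathcal{C}\nu_{P(\partial)}(\Omega)$. I would then note that for any compact $K\subset\Omega$ the seminorm $f\mapsto\sup_{x\in K}|f(x)|$ is continuous on $\f$, hence dominated by some $\|\cdot\|_{k}$ up to a constant; absorbing that constant into $C$ gives the stated inequality.

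I do not expect a genuine obstacle, as the corollary is just a specialisation of the already-proved master theorem. The only two points needing attention are the upgrade from ``Fr\'echet--Schwartz'' (which sufficed in \prettyref{cor:hypo_weighted_ext_unique}) to ``nuclear Fr\'echet'' for $\f$, and the routine reduction in part b) from the abstract seminorms $\|\cdot\|_{k}$ of $\f$ to the concrete compact-sup seminorms $\sup_{x\in K}|\cdot|$.
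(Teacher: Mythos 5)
Your proposal is correct and follows essentially the same route as the paper: the paper's proof likewise reduces the corollary to Theorem \ref{thm:wolff} by reusing the verifications from the proof of Corollary \ref{cor:hypo_weighted_ext_unique} and adding the observation that $(\mathcal{C}^{\infty}_{P(\partial)}(\Omega),\tau_{c})$ is a nuclear Fr\'echet space. Your write-up merely spells out explicitly the two points the paper leaves implicit (the nuclearity upgrade and the identification of the seminorms $\|\cdot\|_{k}$ with the compact-sup seminorms in part b)), both of which are handled correctly.
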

\begin{proof}
Due to the proof of \prettyref{cor:hypo_weighted_ext_unique} and the observation that the space 
$\f=(\mathcal{C}^{\infty}_{P(\partial)}(\Omega),\tau_{c})$ is a nuclear Fr\'echet space 
all conditions of \prettyref{thm:wolff} are fulfilled, which yields our statement.
\end{proof}
\section[Series representation of vector-valued functions]{Series representation of vector-valued functions via Schauder decompositions}
\label{sect:schauder}
The purpose of this section is to lift series representations known from scalar-valued functions 
to vector-valued functions and its underlying idea was derived from the classical example of 
the (local) power series representation of a holomorphic function. 
We recall that a $\C$-valued function $f$ on the open disc $\D_{r}(0)$
around zero with radius $r>0$ belongs to the space $\mathcal{O}(\D_{r}(0))$ of 
holomorphic functions on $\D_{r}(0)$ if the limit
\begin{equation}\label{intro:holom}
 f^{(1)}(z):=\lim_{\substack{h\to 0\\ h\in\C,h\neq 0}}\frac{f(z+h)-f(z)}{h},\quad z\in \D_{r}(0),
\end{equation}
exists in $\C$. It is well-known that every $f\in\mathcal{O}(\D_{r}(0))$ can be written as 
\[
 f(z)=\sum_{n=0}^{\infty}\frac{f^{(n)}(0)}{n!}z^{n},\quad z\in\D_{r}(0),
\]
where the power series on the right-hand side converges uniformly on every compact subset of $\D_{r}(0)$ and 
$f^{(n)}(0)$ is the $n$-th complex derivative of $f$ at $0$ which is defined from \eqref{intro:holom} 
by the recursion $f^{(0)}:=f$ and $f^{(n)}:=(f^{(n-1)})^{(1)}$ for $n\in\N$. 
By \cite[2.1 Theorem and Definition, p.\ 17--18]{grosse-erdmann1992} and 
\cite[5.2 Theorem, p.\ 35]{grosse-erdmann1992}, this series representation remains valid if 
$f$ is a holomorphic function on $\D_{r}(0)$ with values in a locally complete locally convex Hausdorff space $E$ 
over $\C$ where holomorphy means that the limit \eqref{intro:holom} exists in $E$ 
and the higher complex derivatives are defined recursively as well. 
Analysing this example, we observe that $\mathcal{O}(\D_{r}(0))$, equipped with the topology $\tau_{c}$ of uniform convergence on 
compact subsets of $\D_{r}(0)$, is a Fr\'{e}chet space, in particular, barrelled, with a Schauder basis formed 
by the monomials $z\mapsto z^{n}$. Further, the formulas for the complex derivatives of a 
$\C$-valued resp.\ an $E$-valued function $f$ on $\D_{r}(0)$ 
are built up in the same way by \eqref{intro:holom} (see \prettyref{chap:notation}).

Our goal is to derive a mechanism which uses these observations and transfers known series representations for 
other spaces of scalar-valued functions to their vector-valued counterparts. Let us describe the general setting. 
We recall from \cite[14.2, p.\ 292]{Jarchow} that a sequence $(f_{n})$ in a locally convex Hausdorff space $F$ over a 
field $\K$ is called a \emph{\gls{top_basis}}, 
or simply a basis, if for every $f\in F$ there is a unique sequence of coefficients 
$(\lambda^{\K}_{n}(f))$ in $\K$ such that 
\begin{equation}\label{intro:basis}
f=\sum_{n=1}^{\infty}\lambda^{\K}_{n}(f)f_{n}
\end{equation}
where the series converges in $F$. Due to the uniqueness of the coefficients the map 
$\lambda^{\K}_{n}\colon f\mapsto \lambda^{\K}_{n}(f)$ is well-defined, linear and 
called the $n$\emph{-th \gls{coeff_func} associated to} $(f_{n})$. Further, for each $k\in\N$ the 
map 
\[
 P_{k}\colon F\to F,\;P_{k}(f):=\sum_{n=1}^{k}\lambda^{\K}_{n}(f)f_{n}, 
\]
is a linear projection whose range is $\operatorname{span}\{f_{1},\ldots,f_{n}\}$ and it is called 
the $k$\emph{-th \gls{exp_op} associated to} $(f_{n})$. 
A basis $(f_{n})$ of $F$ is called \emph{\gls{equicontinuous}} if the expansion operators $P_{k}$ 
form an equicontinuous sequence in the linear space $L(F,F)$ of continuous linear maps from $F$ to $F$ 
(see \cite[14.3, p.\ 296]{Jarchow}). 
A basis $(f_{n})$ of $F$ is called a \emph{\gls{Schauder_basis}} if the coefficient functionals are continuous, 
i.e.\ $\lambda^{\K}_{n}\in F'$ for each $n\in\N$.
In particular, this is already fulfilled if $F$ is a Fr\'{e}chet space by 
\cite[Corollary 28.11, p.\ 351]{meisevogt1997}. If $F$ is barrelled, then 
a Schauder basis of $F$ is already equicontinuous and $F$ has the (bounded) approximation property 
by the uniform boundedness principle.

The starting point for our approach is equation \eqref{intro:basis}. 
Let $F$ and $E$ be non-trivial locally convex Hausdorff spaces over a field $\K$ 
where $F$ has an equicontinuous Schauder basis $(f_{n})$ 
with associated coefficient functionals $(\lambda^{\K}_{n})$.
The expansion operators $(P_{k})$ form a so-called \emph{\gls{Schauder_decomp}} of $F$ (see \cite[p.\ 77]{Bonet2007}), i.e.\ 
they are continuous projections on $F$ such that
\begin{enumerate}
 \item [(i)] $P_{k}P_{j}=P_{\min(j,k)}$ for all $j,k\in\N$,
 \item [(ii)] $P_{k}\neq P_{j}$ for $k\neq j$,
 \item [(iii)] $(P_{k}f)$ converges to $f$ for each $f\in F$.
\end{enumerate}
This operator theoretic definition of a Schauder decomposition is equivalent to the usual definition in terms of closed subspaces of $F$ 
given in \cite[p.\ 377]{kalton_1970} (see \cite[p.\ 219]{Lotz1985}).
In our main \prettyref{thm:schauder_decomp} of this section we prove that $(P_{k}\varepsilon\id_{E})$ is a Schauder decomposition 
of Schwartz' $\varepsilon$-product $F\varepsilon E$ 
and each $u\in F\varepsilon E$ has the series representation
\[
u(f')=\sum_{n=1}^{\infty}u(\lambda_{n}^{\K})f'(f_{n}),\quad f'\in F'.
\]
Now, suppose that $F=\F$ is a space of $\K$-valued functions on a set $\Omega$ with a 
topology such that the point-evaluation functionals $\delta_{x}$, $x\in\Omega$, belong to $\F'$ and that there is a locally convex Hausdorff space $\FE$ of 
functions from $\Omega$ to $E$ such that the map 
\[
S\colon \F\varepsilon E\to \FE,\; u\longmapsto [x\mapsto u(\delta_{x})],
\]  
is an isomorphism, i.e.\ suppose that $\F$ and $\FE$ are $\varepsilon$-compatible.
Assuming that for each $n\in\N$ and $u\in\F\varepsilon E$ there is 
$\lambda^{E}_{n}(S(u))\in E$ with
\begin{equation}\label{eq:intro_consistent}
\lambda^{E}_{n}(S(u))=u(\lambda^{\K}_{n}),
\end{equation}
i.e.\ $(\lambda^{E},\lambda^{\K})$ is consistent, we obtain in \prettyref{cor:schauder_decomp} 
that $(S\circ (P_{k}\varepsilon\id_{E})\circ S^{-1})_{k}$ 
is a Schauder decomposition of $\FE$ and
\[
f=\lim_{k\to\infty}(S\circ (P_{k}\varepsilon\id_{E})\circ S^{-1})(f)=\sum_{n=1}^{\infty}\lambda^{E}_{n}(f)f_{n},
\quad f\in\FE,
\]
which is the desired series representation in $\FE$. 
In particular, the consistency condition \eqref{eq:intro_consistent} guarantees in the 
case of $E$-valued holomorphic functions 
on $\D_{r}(0)$ that the complex derivatives at $0$ appear in the Schauder decomposition of $\mathcal{O}(\D_{r}(0),E)$ 
since $(\partial_{\C}^{n})^{E}S(u)(0)=u(\delta_{0}\circ(\partial_{\C}^{n})^{\C})$ for all $u\in\mathcal{O}(\D_{r}(0))\varepsilon E$ and $n\in\N_{0}$ by \prettyref{prop:complex_diff_cons_strong} if $E$ is locally complete.
We apply our result to sequence spaces, spaces of continuously differentiable functions on 
a compact interval, the space of holomorphic functions, the Schwartz space and the space of 
smooth functions which are $2\pi$-periodic in each variable. 

As a byproduct of \prettyref{thm:schauder_decomp} we obtain  
that every element of the completion $F\,\widehat{\otimes}_{\varepsilon}E$ of the injective tensor product 
$F\,\otimes_{\varepsilon}E$ has a series representation as well if $F$ is a complete space 
with an equicontinuous Schauder basis and $E$ is complete. 
Concerning series representation in $F\,\widehat{\otimes}_{\varepsilon}E$, little seems to be known
whereas for the completion $F\,\widehat{\otimes}_{\pi}E$ of the projective tensor product $F\otimes_{\pi}E$ 
of two metrisable locally convex spaces $F$ and $E$ it is well-known that every $f\in F\,\widehat{\otimes}_{\pi}E$ 
has a series representation 
\[
f=\sum_{n=1}^{\infty}a_{n}f_{n}\otimes e_{n}
\]
where $(a_{n})\in \ell^{1}$, i.e.\ $(a_{n})$ is absolutely summable, 
and $(f_{n})$ and $(e_{n})$ are null sequences in $F$ and $E$, respectively 
(see e.g.\ \cite[Chap.\ I, \S2 , n$^{\circ}$1, Th\'{e}or\`{e}me 1, p.\ 51]{Gro} 
or \cite[15.6.4 Corollary, p.\ 334]{Jarchow}). 
If $F$ and $E$ are metrisable and one of them is nuclear, then the isomorphism
$F\,\widehat{\otimes}_{\pi}E \cong F\,\widehat{\otimes}_{\varepsilon}E$ holds and we trivially have a series 
representation of the elements of $F\,\widehat{\otimes}_{\varepsilon}E$ as well.
Other conditions on the existence of series representations of the elements of $F\,\widehat{\otimes}_{\varepsilon}E$
can be found in \cite[Proposition 4.25, p.\ 88]{ryan2002}, where $F$ and $E$ are Banach spaces 
and both of them have a Schauder basis, and in \cite[Theorem 2, p.\ 283]{Joiner1970}, where $F$ and $E$ 
are locally convex Hausdorff spaces and both of them have an equicontinuous Schauder basis.
\subsection{Schauder decomposition}
Let us start with our main theorem on Schauder decompositions of $\varepsilon$-products. We recall 
from \eqref{eq:tensor_into_eps_product}
that we consider the tensor product $F\otimes E$ as a linear subspace of $F\varepsilon E$ for two locally convex 
Hausdorff spaces $F$ and $E$ by means of the linear injection 
\[
\Theta\colon F\otimes E\to F\varepsilon E,\; 
\sum^{k}_{n=1}{f_{n}\otimes e_{n}}\longmapsto\bigl[y\mapsto \sum^{k}_{n=1}{y(f_{n}) e_{n}}\bigr].
\]

The next theorem is essentially due to Jos\'e Bonet, improving a previous version of us which became 
\prettyref{cor:schauder_decomp}.

\begin{thm}\label{thm:schauder_decomp}
Let $F$ and $E$ be lcHs, $(f_{n})_{n\in\N}$ an equicontinuous Schauder basis of $F$ 
with associated coefficient functionals $(\lambda_{n})_{n\in\N}$ and set $Q_{n}\colon F\to F$, 
$Q_{n}(f):=\lambda_{n}(f)f_{n}$ for every $n\in\N$. Then the following holds:
\begin{enumerate}
\item [a)] The sequence $(P_{k})_{k\in\N}$ given by $P_{k}:=\bigl(\sum_{n=1}^{k}Q_{n}\bigr)\varepsilon\id_{E}$ is 
a Schauder decomposition of $F\varepsilon E$.
\item [b)] Each $u\in F\varepsilon E$ has the series representation
\[
u(f')=\sum_{n=1}^{\infty}u(\lambda_{n})f'(f_{n}),\quad f'\in F'.
\]
\item [c)] $F\otimes E$ is sequentially dense in $F\varepsilon E$. 
\end{enumerate}
\end{thm}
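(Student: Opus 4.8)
The plan is to establish the three claims in sequence, with part a) carrying the main technical weight and parts b) and c) following as relatively quick consequences. Throughout I would work with the operators $Q_n(f)=\lambda_n(f)f_n$ and the expansion operators $R_k:=\sum_{n=1}^{k}Q_n$ on $F$, which form the Schauder decomposition of $F$ coming from the hypotheses; equicontinuity of the basis means precisely that $(R_k)_{k\in\N}$ is an equicontinuous sequence in $L(F,F)$. The key observation is that $P_k=R_k\varepsilon\id_E$ is a well-defined continuous linear operator on $F\varepsilon E=L_e(F_\kappa',E)$ acting by $P_k(u)=u\circ R_k^{t}$, where $R_k^{t}\colon F'\to F'$ is the dual map, because $R_k\in L(F,F)$ and the $\varepsilon$-product of operators from the preliminaries is exactly this construction.

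First I would verify the three defining properties of a Schauder decomposition for $(P_k)$. The algebraic identities $R_kR_j=R_{\min(j,k)}$ transfer to $P_kP_j=P_{\min(j,k)}$ by functoriality of $\varepsilon\id_E$ together with $(R_kR_j)^t=R_j^tR_k^t$, and $P_k\neq P_j$ for $k\neq j$ follows since $F\otimes E\subset F\varepsilon E$ is nontrivial and the $R_k$ are pairwise distinct as projections. The crucial property is pointwise convergence: I must show $P_k(u)\to u$ in $F\varepsilon E$ for every $u$. Here the hard part is that the topology of $F\varepsilon E$ is uniform convergence on equicontinuous subsets of $F'$, so I need $\sup_{f'\in B}p_\alpha\bigl(u(R_k^t f')-u(f')\bigr)\to 0$ for each equicontinuous $B\subset F'$ and each seminorm $p_\alpha$ on $E$. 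The strategy is to exploit that $R_k^t f'\to f'$ in $F_\kappa'$ for each fixed $f'$ (this is just the statement that $(f_n)$ is a basis, dualised), upgrade this to uniform convergence on the equicontinuous set $B$ using that $u$ is continuous on $B$ and that the $R_k^t$ are equicontinuous on $F'$, and then invoke an Ascoli-type or equicontinuity argument. Concretely, since $u\in L_e(F_\kappa',E)$ is continuous and $B$ is equicontinuous hence $\kappa(F',F)$-relatively compact by Alaoğlu–Bourbaki, $u$ is uniformly continuous on $B$; combining the equicontinuity of $(R_k^t)$ with the pointwise convergence $R_k^t f'\to f'$ gives uniform convergence of $R_k^t$ to the identity on the (relatively compact) set $B$ in the relevant topology, from which the desired estimate follows.

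For part b), I would specialise the convergence in a) by testing against a fixed $f'\in F'$: writing $R_k f=\sum_{n=1}^k\lambda_n(f)f_n$ and dualising, $(R_k^t f')(f)=f'(R_k f)=\sum_{n=1}^k\lambda_n(f)f'(f_n)$, so $R_k^t f'=\sum_{n=1}^k f'(f_n)\lambda_n$ in $F'$. Applying $u$ and letting $k\to\infty$ using the convergence established in a) yields
\[
u(f')=\lim_{k\to\infty}u(R_k^t f')=\sum_{n=1}^{\infty}u(\lambda_n)f'(f_n),
\]
which is exactly the claimed representation, the series converging in $E$.

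For part c), the point is that each partial-sum operator $P_k$ has range inside $F\otimes E$: indeed $P_k(u)=u\circ R_k^t$ and $R_k^t f'=\sum_{n=1}^k f'(f_n)\lambda_n$ shows that $P_k(u)$ is the finite-rank operator $f'\mapsto\sum_{n=1}^k f'(f_n)\,u(\lambda_n)$, which is precisely $\Theta\bigl(\sum_{n=1}^k f_n\otimes u(\lambda_n)\bigr)\in\Theta(F\otimes E)$. Since $P_k(u)\to u$ by a), every $u\in F\varepsilon E$ is the limit of a sequence in $F\otimes E$, giving sequential density. I expect the only real obstacle to be the uniform-convergence step in part a); once the passage from pointwise to uniform convergence on equicontinuous sets is handled via equicontinuity of $(R_k^t)$ and relative compactness of $B$, the remaining parts are essentially formal manipulations of the dual maps.
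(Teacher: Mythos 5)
Your proposal is correct and follows essentially the same route as the paper: the algebraic identities, the finite-rank observation for parts b) and c), and the reduction of the convergence step to the fact that equicontinuity of the basis upgrades $\sum_{n=1}^{k}Q_{n}\to\id_{F}$ from pointwise to uniform convergence on precompact sets all match. The only cosmetic difference is that you phrase the convergence $P_{k}(u)\to u$ via an Ascoli-type argument for $R_{k}^{t}\to\id_{F'}$ on the equicontinuous set $B$, whereas the paper obtains the same conclusion by the direct estimate $\sup_{f'\in V^{\circ}}p_{\alpha}\bigl((P_{k}(u)-u)(f')\bigr)\leq C_{0}C_{1}\sup_{f\in K}q_{\beta}\bigl(\sum_{n=1}^{k}Q_{n}f-f\bigr)$ coming from continuity of $u$ and equicontinuity of the polar.
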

\begin{proof}
Since $(f_{n})$ is a Schauder basis of $F$, the sequence $(\sum_{n=1}^{k}Q_{n})$ converges to $\id_{F}$ in $L_{\sigma}(F,F)$. 
Thus we deduce from the equicontinuity of $(f_{n})$ that $(\sum_{n=1}^{k}Q_{n})$ converges to $\id_{F}$ in $L_{\kappa}(F,F)$ 
by \cite[Theorem 8.5.1 (b), p.\ 156]{Jarchow}. For $f'\in F'$ and $f\in F$ it holds
\begin{align*}
  (Q_{n}^{t}\circ Q_{m}^{t})(f')(f)
&=Q_{m}^{t}(f')(Q_{n}(f))=Q_{m}^{t}(f')(\lambda_{n}(f)f_{n})=f'(\lambda_{m}(\lambda_{n}(f)f_{n})f_{m})\\
&=\lambda_{m}(f_{n})\lambda_{n}(f)f'(f_{m})
 =\begin{cases}
   \lambda_{n}(f)f'(f_{n})&,\; m=n,\\
   0&,\; m\neq n,
  \end{cases}
\end{align*}
due to the uniqueness of the coefficient functionals $(\lambda_{n})$ (see \cite[14.2.1 Proposition, p.\ 292]{Jarchow}) and
it follows for $k,j\in\N$ that 
\[
(\sum_{n=1}^{j}Q_{n}^{t}\circ\sum_{m=1}^{k}Q_{m}^{t})(f')(f)=\sum_{n=1}^{\min(j,k)}\lambda_{n}(f)f'(f_{n})
=\sum_{n=1}^{\min(j,k)}Q_{n}^{t}(f')(f).
\]
This implies that
\[
(P_{k}P_{j})(u)=u\circ\sum_{n=1}^{j}Q_{n}^{t}\circ\sum_{m=1}^{k}Q_{m}^{t}
=u\circ\sum_{n=1}^{\min(j,k)}Q_{n}^{t}=P_{\min(j,k)}(u)
\]
for all $u\in F\varepsilon E$. If $k\neq j$, w.l.o.g.\ $k>j$, we choose $x\in E$, $x\neq 0$,\footnote{The lcHs $E$ is non-trivial by our assumptions in \prettyref{chap:notation}.} and 
consider $f_{k}\otimes x$ as an element of $F\varepsilon E$ via the map $\Theta$. Then 
\[
(P_{k}-P_{j})(f_{k}\otimes x)=\sum_{n=j+1}^{k}(f_{k}\otimes x)\circ Q_{n}^{t}=f_{k}\otimes x\neq 0
\]
since 
\[
((f_{k}\otimes x)\circ Q_{n}^{t})(f')=(f_{k}\otimes x)(\lambda_{n}(\cdot)f'(f_{n}))=\lambda_{n}(f_{k})f'(f_{n})x
 =\begin{cases}
   (f_{k}\otimes x)(f')&\hspace{-0.1cm},\; n=k,\\
   0&\hspace{-0.1cm},\; n\neq k.
  \end{cases}
\]
It remains to prove that for each $u\in F\varepsilon E$ 
\[
\lim_{k\to\infty}P_{k}(u)=u
\]
in $F\varepsilon E$. Let $(q_{\beta})_{\beta\in\mathfrak{B}}$ denote the system of seminorms inducing 
the locally convex topology of $F$. Let $u\in F\varepsilon E$ and $\alpha\in\mathfrak{A}$. 
Due to the continuity of $u$ there are an absolutely convex compact set $K=K(u,\alpha)\subset F$ 
and $C_{0}=C_{0}(u,\alpha)>0$ such that for each $f'\in F'$ we have 
\begin{align*}
 p_{\alpha}\bigl((P_{k}(u)-u)(f')\bigr)
&=p_{\alpha}\bigl(u\bigl((\sum_{n=1}^{k}Q_{n}^{t}-\id_{F'})(f')\bigr)\bigr)
\leq C_{0}\sup_{f\in K}\bigl|(\sum_{n=1}^{k}Q_{n}^{t}-\id_{F'})(f')(f)\bigr|\\
&=C_{0}\sup_{f\in K}\bigl|f'(\sum_{n=1}^{k}Q_{n}f-f)\bigr|.
\end{align*}
Let $V$ be an absolutely convex zero neighbourhood in $F$. As a consequence of the equicontinuity of 
the polar $V^{\circ}$ there are $C_{1}>0$ and $\beta\in\mathfrak{B}$ such that 
\[ 
\sup_{f'\in V^{\circ}}p_{\alpha}\bigl((P_{k}(u)-u)(f')\bigr)
\leq C_{0}C_{1}\sup_{f\in K}q_{\beta}(\sum_{n=1}^{k}Q_{n}f-f).
\]
In combination with the convergence of $(\sum_{n=1}^{k}Q_{n})$ to $\id_{F}$ in $L_{\kappa}(F,F)$ 
this yields the convergence of $(P_{k}(u))$ to $u$ in $F\varepsilon E$ and settles part a). 

Let us turn to b) and c). Since 
\[
P_{k}(u)(f')=u\bigl(\sum_{n=1}^{k}Q_{n}^{t}(f')\bigr)=\sum_{n=1}^{k}u(\lambda_{n})f'(f_{n})
\]
for every $f'\in F'$, we note that the range of $P_{k}(u)$ is contained in $\Span\{u(\lambda_{n})\;|\;1\leq n\leq k\}$ 
for each $u\in F\varepsilon E$ and $k\in\N$. Hence $P_{k}(u)$ has finite rank and thus belongs to $F\otimes E$, 
implying the sequential density of $F\otimes E$ in $F\varepsilon E$ and the desired series representation by part a).
\end{proof}

The index set of the equicontinuous Schauder basis of $F$ 
in \prettyref{thm:schauder_decomp} need not be $\N$ (or $\N_{0}$) but may be any other countable index set 
as long as the equicontinuous Schauder basis is unconditional which is, for instance, always fulfilled 
if $F$ is nuclear by \cite[21.10.1 Dynin-Mitiagin Theorem, p.\ 510]{Jarchow}.

\begin{rem}\label{rem:schauder_decomp}
If $F$ and $E$ are complete, we have under the assumption of \prettyref{thm:schauder_decomp} that 
$F\,\widehat{\otimes}_{\varepsilon}E\cong F\varepsilon E$ by c) since $F\varepsilon E$ is complete 
by \cite[Satz 10.3, p.\ 234]{Kaballo} and $F\,\widehat{\otimes}_{\varepsilon}E$ is the closure 
of $F\otimes E$ in $F\varepsilon E$. 
Thus each element of $F\,\widehat{\otimes}_{\varepsilon}E$ has a series representation. 
\end{rem}

Let us apply the preceding theorem to spaces of Lebesgue integrable functions. 
We consider the measure space $([0,1], \mathscr{L}([0,1]), \lambda)$ of Lebesgue measurable sets 
and use the notation $\mathcal{L}^{p}[0,1]$ for 
the space of (equivalence classes) of Lebesgue $p$-integrable functions on $[0,1]$.
The \emph{Haar system} $h_{n}\colon [0,1]\to\R$, $n\in\N$, given by 
$h_{1}(x):=1$ for all $x\in [0,1]$ and 
\[
h_{2^{k}+j}(x):=
\begin{cases}
\phantom{-}1 &, (2j-2)/2^{k+1}\leq x<(2j-1)/2^{k+1},\\
-1 &, (2j-1)/2^{k+1}\leq x<2j/2^{k+1},\\
\phantom{-}0 &, \text{else},
\end{cases}
\]
for $k\in\N_{0}$ and $1\leq j\leq 2^{k}$ forms a Schauder basis 
of $\mathcal{L}^{p}[0,1]$ for every $1\leq p<\infty$ and the associated coefficient 
functionals are 
\[
\lambda_{n}(f):=\int_{[0,1]}f(x)h_{n}(x)\d\lambda(x), \quad f\in \mathcal{L}^{p}[0,1],\;n\in\N,
\]
(see \cite[Satz I, p.\ 317]{Schauder1928}). Because $\mathcal{L}^{p}[0,1]$ is Banach space and thus barrelled, 
its Schauder basis $(h_{n})$ is equicontinuous and 
we directly obtain from \prettyref{thm:schauder_decomp} the following corollary. 

\begin{cor}\label{cor:L_p_>1}
Let $E$ be an lcHs and $1\leq p<\infty$. $(\sum_{n=1}^{k}\lambda_{n}(\cdot)h_{n}\varepsilon\id_{E})_{k\in\N}$ is 
a Schauder decomposition of $\mathcal{L}^{p}[0,1]\varepsilon E$ and for
each $u\in\mathcal{L}^{p}[0,1]\varepsilon E$ it holds
\[
u(f')=\sum_{n=1}^{\infty}u(\lambda_{n})f'(h_{n}),\quad f'\in \mathcal{L}^{p}[0,1]'.
\]
\end{cor}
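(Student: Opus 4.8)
The plan is to apply Theorem \ref{thm:schauder_decomp} directly with $F:=\mathcal{L}^{p}[0,1]$, so the only real work is to verify that the hypotheses of that theorem are met for this particular $F$. First I would recall from \cite[Satz I, p.\ 317]{Schauder1928} that the Haar system $(h_{n})_{n\in\N}$ is indeed a Schauder basis of $\mathcal{L}^{p}[0,1]$ for every $1\leq p<\infty$, with the stated coefficient functionals $\lambda_{n}$ given by integration against $h_{n}$. Since $\mathcal{L}^{p}[0,1]$ is a Banach space, hence barrelled, its Schauder basis is automatically equicontinuous: the expansion operators $P_{k}=\sum_{n=1}^{k}\lambda_{n}(\cdot)h_{n}$ form an equicontinuous sequence in $L(F,F)$ by the uniform boundedness principle, exactly as noted in the discussion above \prettyref{thm:schauder_decomp}. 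This is the whole content that needs to be checked, and it is essentially immediate.

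With $(h_{n})$ established as an equicontinuous Schauder basis with associated coefficient functionals $(\lambda_{n})$, I would set $Q_{n}(f):=\lambda_{n}(f)h_{n}$ as in the theorem and invoke \prettyref{thm:schauder_decomp} a) to conclude that the sequence
\[
\Bigl(\bigl(\textstyle\sum_{n=1}^{k}Q_{n}\bigr)\varepsilon\id_{E}\Bigr)_{k\in\N}
=\Bigl(\textstyle\sum_{n=1}^{k}\lambda_{n}(\cdot)h_{n}\,\varepsilon\,\id_{E}\Bigr)_{k\in\N}
\]
is a Schauder decomposition of $\mathcal{L}^{p}[0,1]\varepsilon E$. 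The series representation for each $u\in\mathcal{L}^{p}[0,1]\varepsilon E$ then follows verbatim from \prettyref{thm:schauder_decomp} b), which gives $u(f')=\sum_{n=1}^{\infty}u(\lambda_{n})f'(h_{n})$ for all $f'\in\mathcal{L}^{p}[0,1]'$.

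There is essentially no obstacle here, since the statement is explicitly presented as a direct corollary; the proof is a one-line application of the main theorem once the two facts (Schauder basis property of the Haar system, and barrelledness forcing equicontinuity) are in place. The only point one should not overlook is that $p=\infty$ is excluded: the Haar system fails to be a Schauder basis of $\mathcal{L}^{\infty}[0,1]$, which is why the range $1\leq p<\infty$ is imposed, matching precisely the regime covered by \cite{Schauder1928}. Thus the corollary requires no new computation beyond citing the Schauder basis result and the general decomposition theorem.
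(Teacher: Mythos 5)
Your proposal matches the paper's argument exactly: the paper likewise cites \cite[Satz I, p.\ 317]{Schauder1928} for the Haar system being a Schauder basis of $\mathcal{L}^{p}[0,1]$ with the stated coefficient functionals, notes that barrelledness of the Banach space $\mathcal{L}^{p}[0,1]$ makes the basis equicontinuous, and then obtains the corollary directly from \prettyref{thm:schauder_decomp}. No gaps; this is the intended one-line application of the main theorem.
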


Defining $\mathcal{L}^{p}([0,1],E):=\mathcal{L}^{p}[0,1]\varepsilon E$, we can read the corollary 
above as a statement on series representations in the vector-valued version of $\mathcal{L}^{p}[0,1]$. 
However, in many cases of spaces $\F$ of scalar-valued functions there 
is a more natural way to define the vector-valued version $\FE$ of $\F$, 
namely, that $\F$ and $\FE$ are $\varepsilon$-compatible.

\begin{rem}\label{rem:identification_subspaces}
If $\F$ and $\FE$ are $\varepsilon$-into-compatible, then we get by identification of isomorphic subspaces
 \[
  \F\otimes_{\varepsilon} E\subset \F\varepsilon E \subset \FE
 \]
and the embedding $ \F\otimes E\hookrightarrow \FE$ is given by $f\otimes e\longmapsto [x\mapsto f(x)e]$.
\end{rem}
\begin{proof}
The inclusions obviously hold and $\F\varepsilon E$ and $\FE$ 
induce the same topology on $\F\otimes E$. 
Further, we have
 \[
 f\otimes e \overset{\Theta}{\longmapsto}[y\mapsto y(f)e]
 \overset{S}{\longmapsto}[x\longmapsto [y\mapsto y(f) e](\delta_{x})]
 =[x\mapsto f(x)e].\qedhere
 \]
\end{proof}

\begin{cor}\label{cor:schauder_decomp}
Let $\F$ and $\FE$ be $\varepsilon$-compatible, 
$(f_{n})_{n\in\N}$ an equi\-continuous Schauder basis of $\F$ 
with associated coefficient functionals $\lambda^{\K}:=(\lambda_{n}^{\K})_{n\in\N}$. 
Let there be $\lambda^{E}\colon \FE\to E^{\N}$ such that $(\lambda^{E},\lambda^{\K})$ 
is a consistent family for $(\mathcal{F},E)$,
and set $Q_{n}^{E}\colon \FE\to \FE$, $Q_{n}^{E}(f):=\lambda_{n}^{E}(f)f_{n}$ for every $n\in\N$. 
Then the following holds:
\begin{enumerate}
\item [a)] The sequence $(P_{k}^{E})_{k\in\N}$ given by $P_{k}^{E}:=\sum_{n=1}^{k}Q_{n}^{E}$ is 
a Schauder decomposition of $\FE$.
\item [b)] Each $f\in\FE$ has the series representation
\[
f=\sum_{n=1}^{\infty}\lambda_{n}^{E}(f)f_{n}.
\]
\item [c)] $\F\otimes E$ is sequentially dense in $\FE$. 
\end{enumerate}
\end{cor}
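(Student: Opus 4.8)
The plan is to obtain \prettyref{cor:schauder_decomp} as the transport of \prettyref{thm:schauder_decomp} (applied with $F:=\F$) through the isomorphism $S\colon \F\varepsilon E\to\FE$. With $Q_{n}\colon\F\to\F$, $Q_{n}(g):=\lambda_{n}^{\K}(g)f_{n}$, and the expansion operators $P_{k}:=\bigl(\sum_{n=1}^{k}Q_{n}\bigr)\varepsilon\id_{E}$ from that theorem, the whole statement reduces to the single conjugacy identity
\[
P_{k}^{E}=S\circ P_{k}\circ S^{-1},\quad k\in\N.
\]
Once this is established, the three defining properties of a Schauder decomposition pass from $(P_{k})$ on $\F\varepsilon E$ to $(P_{k}^{E})$ on $\FE$ purely formally, because $S$ is a topological isomorphism.

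The core computation is to verify the conjugacy identity, and this is where consistency enters. First I would record that for $u\in\F\varepsilon E$ the operator $P_{k}$ acts by $P_{k}(u)=u\circ\bigl(\sum_{n=1}^{k}Q_{n}\bigr)^{t}$, so that for $x\in\Omega$
\[
S(P_{k}(u))(x)=P_{k}(u)(\delta_{x})=u\Bigl(\sum_{n=1}^{k}\delta_{x}\circ Q_{n}\Bigr).
\]
Since $(\delta_{x}\circ Q_{n})(g)=\delta_{x}(\lambda_{n}^{\K}(g)f_{n})=f_{n}(x)\lambda_{n}^{\K}(g)$ for every $g\in\F$, one has $\delta_{x}\circ Q_{n}=f_{n}(x)\lambda_{n}^{\K}$ in $\F'$, whence
\[
S(P_{k}(u))(x)=\sum_{n=1}^{k}f_{n}(x)\,u(\lambda_{n}^{\K})=\sum_{n=1}^{k}f_{n}(x)\,\lambda_{n}^{E}(S(u)),
\]
where the last equality is exactly the consistency relation $\lambda_{n}^{E}(S(u))=u(\lambda_{n}^{\K})$ from \prettyref{def:cons_strong} a)(ii). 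The right-hand side is $\bigl(\sum_{n=1}^{k}\lambda_{n}^{E}(S(u))f_{n}\bigr)(x)=P_{k}^{E}(S(u))(x)$, so $S\circ P_{k}=P_{k}^{E}\circ S$ and the identity follows. Implicit here is the observation that each $Q_{n}^{E}(f)=\lambda_{n}^{E}(f)f_{n}$ indeed lies in $\FE$, being the image of $f_{n}\otimes\lambda_{n}^{E}(f)\in\F\otimes E$ under the embedding of \prettyref{rem:identification_subspaces}.

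With the conjugacy in hand, part a) is immediate: $P_{k}^{E}$ is a continuous projection, $P_{k}^{E}P_{j}^{E}=S P_{k}P_{j}S^{-1}=P_{\min(j,k)}^{E}$, $P_{k}^{E}\neq P_{j}^{E}$ for $k\neq j$ since $S$ is injective and $P_{k}\neq P_{j}$, and $P_{k}^{E}f=S(P_{k}(S^{-1}f))\to S(S^{-1}f)=f$ by continuity of $S$ and property (iii) for $(P_{k})$. Part b) then follows because $P_{k}^{E}f=\sum_{n=1}^{k}\lambda_{n}^{E}(f)f_{n}$ converges to $f$, giving $f=\sum_{n=1}^{\infty}\lambda_{n}^{E}(f)f_{n}$; and part c) follows either by transporting \prettyref{thm:schauder_decomp} c) via $S$ together with the identification $S(\F\otimes E)=\F\otimes E$ inside $\FE$ from \prettyref{rem:identification_subspaces}, or directly from b), since the partial sums $P_{k}^{E}f$ are finite-rank, hence belong to $\F\otimes E$, and converge to $f$. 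I do not expect any serious obstacle: the only genuine content is the consistency-driven computation of the previous paragraph, and the remainder is the routine verification that an isomorphism carries one Schauder decomposition onto another.
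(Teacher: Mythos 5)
Your proposal is correct and follows essentially the same route as the paper: both establish the intertwining identity $S\circ P_{k}=P_{k}^{E}\circ S$ by computing $S(P_{k}(u))(x)=u\bigl(\sum_{n=1}^{k}Q_{n}^{t}(\delta_{x})\bigr)=\sum_{n=1}^{k}u(\lambda_{n}^{\K})f_{n}(x)$ and invoking consistency $\lambda_{n}^{E}(S(u))=u(\lambda_{n}^{\K})$, then transport \prettyref{thm:schauder_decomp} through the isomorphism $S$. Your write-up merely spells out the routine verifications that the paper leaves implicit; there is no gap.
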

\begin{proof}
For each $u\in\F\varepsilon E$ and $x\in\Omega$ we note that with $P_{k}$ from \prettyref{thm:schauder_decomp} it holds
 \begin{align*}
    (S\circ P_{k})(u)(x)
  &=u\bigl(\sum_{n=1}^{k}Q_{n}^{t}(\delta_{x})\bigr) 
   =u\bigl(\sum_{n=1}^{k}\lambda_{n}^{\K}(\cdot)f_{n}(x)\bigr) 
   =\sum_{n=1}^{k}u(\lambda_{n}^{\K})f_{n}(x)\\
  &=\sum_{n=1}^{k}\lambda_{n}^{E}(S(u))f_{n}(x)
   =(P_{k}^{E}\circ S)(u)(x),
 \end{align*}
which means that $S\circ P_{k}=P_{k}^{E}\circ S$. This implies part a) and b) 
by \prettyref{thm:schauder_decomp} a) since $S$ is an isomorphism. 
Part c) is a direct consequence of \prettyref{thm:schauder_decomp} c) and 
the isomorphism $\F\varepsilon E\cong\FE$.
\end{proof}

In the preceding corollary we used the isomorphism $S$ to obtain a Schauder decomposition. 
On the other hand, if $S$ is an isomorphism into, which is often the case (see \prettyref{thm:linearisation}), 
we can use a Schauder decomposition of $\FE$ to 
prove the surjectivity of $S$.

\begin{prop}\label{prop:reverse_Schauder}
Let $\F$ and $\FE$ be $\varepsilon$-into-compatible.
Let there be $(f_{n})_{n\in\N}$ in $\F$ and 
for every $f\in\FE$ a sequence $(\lambda_{n}^{E}(f))_{n\in\N}$ in $E$ such that
 \[
  f=\sum_{n=1}^{\infty}\lambda_{n}^{E}(f)f_{n}, \quad f\in\FE.
 \]
Then the following holds:
\begin{enumerate}
\item [a)] $\F\otimes E$ is sequentially dense in $\FE$.
\item [b)] If $\F$ and $E$ are sequentially complete, then
 \[
  \FE\cong\F\varepsilon E.
 \] 
\item [c)] If $\F$ and $E$ are complete, then
 \[
  \FE\cong\F\varepsilon E\cong\F\widehat{\otimes}_{\varepsilon}E.
 \]
\end{enumerate}
\end{prop}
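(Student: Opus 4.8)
The plan is to deduce everything from the series representation together with the fact that, by $\varepsilon$-into-compatibility, $S\colon\F\varepsilon E\to\FE$ is an isomorphism onto its range. First I would treat a). For $f\in\FE$ the partial sums $g_{k}:=\sum_{n=1}^{k}\lambda_{n}^{E}(f)f_{n}$ converge to $f$ in $\FE$ by assumption. Under the identification of \prettyref{rem:identification_subspaces} the term $\lambda_{n}^{E}(f)f_{n}$ is exactly the image of $f_{n}\otimes\lambda_{n}^{E}(f)\in\F\otimes E$, so each $g_{k}$ lies in $\F\otimes E$ and $(g_{k})$ is a sequence in $\F\otimes E$ converging to $f$. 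This is the sequential density asserted in a).

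For b) and c) the only point left is the surjectivity of $S$. I would phrase it through a soft observation: if $S(\F\varepsilon E)$ is sequentially closed in $\FE$, then it equals $\FE$. Indeed, $\F\otimes E\subset S(\F\varepsilon E)$ by \prettyref{rem:identification_subspaces}, and $\F\otimes E$ is sequentially dense by a); hence for each $f\in\FE$ the sequence $(g_{k})$ above lies in $S(\F\varepsilon E)$ and converges to $f$, so $f\in S(\F\varepsilon E)$ once the range is sequentially closed. Because $S$ is a topological isomorphism onto its range, $S(\F\varepsilon E)$ carries the topology of $\F\varepsilon E$; thus the range is sequentially closed as soon as $\F\varepsilon E$ is sequentially complete (a sequence in the range converging in $\FE$ is Cauchy in the range, converges there, and by Hausdorffness the two limits agree). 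Granting this, $S$ is onto and $\FE\cong\F\varepsilon E$, and transporting a) through $S$ shows that $\F\otimes_{\varepsilon}E$ is dense in $\F\varepsilon E$; in the complete case of c) this yields $\F\varepsilon E\cong\F\widehat{\otimes}_{\varepsilon}E$ exactly as in \prettyref{rem:schauder_decomp}.

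It remains to supply the (sequential) completeness of $\F\varepsilon E$. For c), where $\F$ and $E$ are complete, $\F\varepsilon E$ is complete by \cite[Satz 10.3, p.\ 234]{Kaballo}, so its range under $S$ is closed and c) is immediate. For b) I would run the standard scheme for sequences: given a Cauchy sequence $(u_{k})$ in $\F\varepsilon E=L_{e}(\F_{\kappa}',E)$, the evaluations $(u_{k}(y))_{k}$ are Cauchy in $E$ for each $y\in\F'$, so $u(y):=\lim_{k}u_{k}(y)$ exists by sequential completeness of $E$ and defines a linear map $u\colon\F'\to E$; on every equicontinuous $B\subset\F'$ the convergence $u_{k}\to u$ is uniform, whence $u|_{B}$ is $\sigma(\F',\F)$-continuous as a uniform limit of continuous maps on the $\sigma(\F',\F)$-compact set $\overline{B}$. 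The hard part will be upgrading this to continuity of $u$ on all of $\F_{\kappa}'$, i.e.\ to $u\in\F\varepsilon E$: this is precisely where one invokes Grothendieck's characterisation of $L(\F_{\kappa}',E)$ by continuity on equicontinuous sets (the mechanism that also underlies the completeness theorem for $\varepsilon$-products) and where the sequential completeness of $\F$ enters. Once $u\in\F\varepsilon E$ is secured, $u_{k}\to u$ in $\F\varepsilon E$, which establishes sequential completeness and closes b).
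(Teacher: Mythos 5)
Your proposal is correct and takes essentially the same route as the paper: part a) is the identical partial-sum argument via \prettyref{rem:identification_subspaces}, and in b) and c) the paper likewise reduces surjectivity of $S$ to the fact that the preimages $\Theta(\sum_{n=1}^{k}f_{n}\otimes\lambda_{n}^{E}(f))$ of the partial sums form a Cauchy sequence in $\F\varepsilon E$ (because $S$ is an isomorphism into) which converges by the (sequential) completeness of $\F\varepsilon E$ taken from \cite[Satz 10.3, p.~234]{Kaballo}. The only deviation is your attempt to re-derive the sequential completeness of $\F\varepsilon E$ by hand in case b) — a detour you leave unfinished at the continuity upgrade on $\F_{\kappa}'$; the paper simply cites the same Kaballo result for both b) and c), and you should do likewise.
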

\begin{proof}
Let $f\in\FE$ and observe that
\[
P^{E}_{k}(f):=\sum_{n=1}^{k}\lambda_{n}^{E}(f)f_{n}=\sum_{n=1}^{k}f_{n}\otimes \lambda_{n}^{E}(f)\in \F\otimes E
\]
for every $k\in\N$ by \prettyref{rem:identification_subspaces}.
Due to our assumption we have the convergence $P^{E}_{k}(f)\to f$ in $\FE$. 
Thus $\F\otimes E$ is sequentially dense in $\FE$. 

Let us turn to part b). If $\F$ and $E$ are sequentially complete, 
then $\F\varepsilon E$ is sequentially complete 
by \cite[Satz 10.3, p.\ 234]{Kaballo}. Since $S$ is an isomorphism into and
\[
S(\Theta(\sum_{n=q}^{k}f_{n}\otimes \lambda_{n}^{E}(f)))=\sum_{n=q}^{k}\lambda_{n}^{E}(f)f_{n}
\]
for all $k,q\in\N$, $k>q$, we get that $(\Theta(\sum_{n=1}^{k}f_{n}\otimes \lambda_{n}^{E}(f))$ is a Cauchy sequence 
in $\F\varepsilon E$ and thus convergent. Hence we deduce that
\[
S(\lim_{k\to\infty}\Theta(\sum_{n=1}^{k}f_{n}\otimes \lambda_{n}^{E}(f)))
=\lim_{k\to\infty}\sum_{n=1}^{k}(S\circ\Theta)(f_{n}\otimes \lambda_{n}^{E}(f))
=\sum_{n=1}^{\infty}\lambda_{n}^{E}(f)f_{n}=f,
\]
which proves the surjectivity of $S$. 

If $\F$ and $E$ are complete, then $\F\widehat{\otimes}_{\varepsilon}E$ 
is the closure of $\F\otimes_{\varepsilon}E$ in the complete space 
$\F\varepsilon E$ by \cite[Satz 10.3, p.\ 234]{Kaballo}.
As $\lim_{k\to\infty}\Theta(\sum_{n=1}^{k}f_{n}\otimes \lambda_{n}^{E}(f))$ is an element of the closure, 
we obtain part c). 
\end{proof}
\subsection{Examples of Schauder decompositions}
\label{sub:Schauder_examples}
\subsubsection*{\textbf{Sequence spaces}}

For our first application we recall the definition of some sequence spaces.
For an lcHs $E$ and a K\"othe matrix $A:=\left(a_{k,j}\right)_{k,j\in\N}$ we define 
the topological subspace of $\lambda^{\infty}(A,E)$ from \prettyref{cor:sequence_vanish_infty} a) by
\[
 \gls{c0AE}:=\{x=(x_{k})\in E^{\N}\;|\;\forall\;j\in\N:\;\lim_{k\to\infty}x_{k}a_{k,j}=0\}.
\]
In particular, the space $c_{0}(\N,E)$ of null-sequences in $E$ is obtained as 
$c_{0}(\N,E)=c_{0}(A,E)$ with $a_{k,j}:=1$ for all $k,j\in\N$. 
The space of convergent sequences in $E$ is defined by
\[
\gls{cNE}:=\{x\in E^{\N}\;|\; x=(x_{k})\;\text{converges in}\;E\}
\]
and equipped with the system of seminorms
\[
|x|_{\alpha}:=\sup_{k\in\N}p_{\alpha}(x_{k}),\quad x\in c(\N,E),
\]
for $\alpha\in\mathfrak{A}$. 
Further, we set $c_{0}(A):=c_{0}(A,\K)$, $c_{0}(\N):=c_{0}(\N,\K)$ and 
$c(\N):=c(\N,\K)$. 
Furthermore, we equip the space $E^{\N}$ with the system of seminorms given by 
\[
 \|x\|_{l,\alpha}:=\sup_{k\in\N}p_{\alpha}(x_{k})\chi_{\{1,\ldots,l\}}(k),\quad x=(x_{k})\in E^{\N},
\]
for $l\in\N$ and $\alpha\in\mathfrak{A}$. For a non-empty set $\Omega$ we define for $n\in\Omega$ the $n$-th unit function by 
\[
 \varphi_{n,\Omega}\colon \Omega\to \K,\;
 \varphi_{n,\Omega}(k):=\begin{cases} 1 &,\;k=n, \\ 0 &,\;\text{else},\end{cases}
\]
and we simply write $\varphi_{n}$ instead of $\varphi_{n,\Omega}$ if no confusion seems to be likely. 
Further, we set
$
 \varphi_{\infty}\colon \N\to \K,\;
 \varphi_{\infty}(k):=1,
$
and $x_{\infty}:=\delta_{\infty}(x):=\lim_{k\to\infty}x_{k}$ for $x\in c(\N,E)$.  
For series representations of the elements in these sequence spaces we do not need \prettyref{cor:schauder_decomp} 
due to the subsequent proposition but we can use the representation to obtain the surjectivity of $S$ 
for sequentially complete $E$.

\begin{prop}\label{prop:vector_valued_seq_spaces}
 Let $E$ be an lcHs and $\ell(\Omega,E)$ one of the spaces $c_{0}(A,E)$, $E^{\N}$, 
 $s(\N^{d},E)$, $s(\N_{0}^{d},E)$ or $s(\Z^{d},E)$. 
 \begin{enumerate}
 \item [a)] Then $(\sum_{n\in\Omega,|n|\leq k}\delta_{n} \varphi_{n})_{k\in\N}$ is a Schauder decomposition 
 of $\ell(\Omega,E)$ and 
  \[
  x=\sum_{n\in\Omega} x_{n}\varphi_{n},\quad x\in \ell(\Omega,E).
  \]
 \item [b)] Then $(\delta_{\infty}\varphi_{\infty}+\sum_{n=1}^{k}(\delta_{n}-\delta_{\infty}) \varphi_{n})_{k\in\N}$ 
 is a Schauder decomposition of $c(\N,E)$ and 
 \[
  x=x_{\infty}\varphi_{\infty} +\sum_{n=1}^{\infty}(x_{n}-x_{\infty})\varphi_{n},\quad x\in c(\N,E).
 \]
 \end{enumerate}
\end{prop}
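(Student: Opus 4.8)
The plan is to verify that both displayed series representations hold by a direct pointwise argument, and then to recognise them as Schauder decompositions by checking the defining projection properties. Since we are dealing with concrete sequence spaces, the natural approach is to work coordinatewise rather than invoking the abstract \prettyref{cor:schauder_decomp}, exactly as the statement preceding the proposition suggests.

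\medskip

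For part a), first I would fix $x\in\ell(\Omega,E)$ and set $P^{E}_{k}(x):=\sum_{n\in\Omega,|n|\leq k}x_{n}\varphi_{n}$, noting that $x_{n}=\delta_{n}(x)$ so that $P^{E}_{k}=\sum_{n\in\Omega,|n|\leq k}\delta_{n}\varphi_{n}$. The key estimate is that for each defining seminorm the tail $x-P^{E}_{k}(x)$ becomes small. For $c_{0}(A,E)$ this is immediate from $\lim_{k\to\infty}p_{\alpha}(x_{k})a_{k,j}=0$, which bounds $\sup_{|n|>k}p_{\alpha}(x_{n})a_{n,j}$; for the rapidly decreasing spaces $s(\Omega,E)$ one uses that $\sup_{n}p_{\alpha}(x_{n})(1+|n|^{2})^{(j+1)/2}<\infty$ forces $\sup_{|n|>k}p_{\alpha}(x_{n})(1+|n|^{2})^{j/2}\to 0$; for $E^{\N}$ with the seminorms $\|\cdot\|_{l,\alpha}$ the tail vanishes identically once $k\geq l$. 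This gives convergence $P^{E}_{k}(x)\to x$ in $\ell(\Omega,E)$, which is property (iii) of a Schauder decomposition. Properties (i) and (ii) are purely formal: $P^{E}_{k}P^{E}_{j}=P^{E}_{\min(j,k)}$ holds because the $\varphi_{n}$ have disjoint supports and $\delta_{m}(\varphi_{n})=\delta_{mn}$, and $P^{E}_{k}\neq P^{E}_{j}$ for $k\neq j$ follows by testing on a unit function $\varphi_{n}\otimes e$ with $e\neq 0$ and $|n|$ between the two indices.

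\medskip

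For part b) the argument is the same but adapted to the splitting at infinity. Here I would write $P^{E}_{k}(x):=x_{\infty}\varphi_{\infty}+\sum_{n=1}^{k}(x_{n}-x_{\infty})\varphi_{n}$ and check that its $k$-th coordinate reproduces $x$: evaluating at $m\leq k$ gives $x_{\infty}+(x_{m}-x_{\infty})=x_{m}$, at $m>k$ gives $x_{\infty}+(x_{m}-x_{\infty})\cdot 0=x_{\infty}$, so that $(x-P^{E}_{k}(x))$ has $m$-th entry $x_{m}-x_{\infty}$ for $m>k$ and $0$ otherwise. Since $x\in c(\N,E)$ means $x_{m}\to x_{\infty}$, the quantity $\sup_{m>k}p_{\alpha}(x_{m}-x_{\infty})$ tends to $0$, giving convergence in $c(\N,E)$. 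The composition identity and the injectivity of $k\mapsto P^{E}_{k}$ are verified as in part a), using that $\delta_{\infty}(\varphi_{\infty})=1$, $\delta_{\infty}(\varphi_{n})=0$ and $\delta_{m}(\varphi_{\infty})=1$ for finite $m$.

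\medskip

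The main obstacle, though a mild one, is not any single hard step but rather handling all the listed spaces uniformly: the seminorm systems differ (the matrix weights $a_{k,j}$, the polynomial weights $(1+|n|^{2})^{j/2}$, and the truncations $\chi_{\{1,\ldots,l\}}$), so the tail-vanishing argument must be phrased once in a way that covers every case, or split into short case distinctions. I would organise this by observing that in each case the defining seminorms are of the form $\sup_{n}p_{\alpha}(x_{n})w_{j}(n)$ for suitable weights $w_{j}$, and that membership in the space guarantees precisely the vanishing of the weighted tail; for $c(\N,E)$ the role of the weighted tail is played by $\sup_{m>k}p_{\alpha}(x_{m}-x_{\infty})$. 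Once this is set up, every assertion in the proposition reduces to elementary pointwise computations with the unit functions $\varphi_{n}$, $\varphi_{\infty}$ and the evaluation functionals $\delta_{n}$, $\delta_{\infty}$, and no appeal to completeness or to the $\varepsilon$-product machinery is required.
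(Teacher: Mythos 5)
Your proposal is correct and follows essentially the same route as the paper's proof: define the partial-sum projections explicitly, verify the algebraic projection identities directly, and establish convergence by the case-by-case weighted tail estimates (the paper bounds the tail in $s(\Omega,E)$ via $|x|_{2j,\alpha}$ where you use the $(j+1)$-weight, an immaterial difference). No gaps.
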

\begin{proof}
Let us begin with a). First, we note that $(\varphi_{n})_{n\in\Omega}$ is an unconditional equicontinuous 
Schauder basis of $s(\Omega)$, $\Omega=\N^{d}$, $\N_{0}^{d}$, $\Z^{d}$, 
since $s(\Omega)$ is a nuclear Fr\'echet space. 
Now, for $x=(x_{n})\in \ell(\Omega,E)$ let $(P_{k}^{E})$ be the sequence 
in $\ell(\Omega,E)$ given by $P_{k}^{E}(x):=\sum_{|n|\leq k} x_{n}\varphi_{n}$. 
It is easy to see that $P_{k}^{E}$ is a continuous projection on $\ell(\Omega,E)$, 
$P_{k}^{E}P_{j}^{E}=P_{\min(k,j)}^{E}$ for all $k,j\in\N$ and $P_{k}^{E}\neq P_{j}^{E}$ for $k\neq j$.
Let $\varepsilon>0$, $\alpha\in\mathfrak{A}$ and $j\in\N$. 
For $x\in c_{0}(A,E)$ there is $N_{0}\in\N$ such that 
$p_{\alpha}(x_{n}a_{n,j})<\varepsilon$ for all $n\geq N_{0}$. 
Hence we have for $x\in c_{0}(A,E)$
\[
|x-P_{k}^{E}(x)|_{j,\alpha}=\sup_{n>k}p_{\alpha}(x_{n})a_{n,j}\leq\sup_{n\geq N_{0}}p_{\alpha}(x_{n})a_{n,j}\leq\varepsilon
\]
for all $k\geq N_{0}$. For $x\in E^{\N}$ and $l\in\N$ we have 
\[
\left\|x-P_{k}^{E}(x)\right\|_{l,\alpha}=0<\varepsilon
\]
for all $k\geq l$. For $x\in s(\Omega,E)$, $\Omega=\N^{d}$, $\N_{0}^{d}$, $\Z^{d}$, 
we notice that there is $N_{1}\in\N$ such that for all $n\in\Omega$ with $|n|\geq N_{1}$ we have
\[
\frac{(1+|n|^{2})^{j/2}}{(1+|n|^{2})^{j}}=(1+|n|^{2})^{-j/2}<\varepsilon.
\]
Thus we deduce for $|n|\geq N_{1}$
\[
p_{\alpha}(x_{n})(1+|n|^{2})^{j/2}< \varepsilon p_{\alpha}(x_{n})(1+|n|^{2})^{j}
\leq \varepsilon |x|_{2j,\alpha}
\]
and hence
\[
|x-P_{k}^{E}(x)|_{j,\alpha}=\sup_{|n|>k}p_{\alpha}(x_{n})(1+|n|^{2})^{j/2}\leq\sup_{|n|\geq N_{1}}p_{\alpha}(x_{n})(1+|n|^{2})^{j/2}
\leq\varepsilon |x|_{2j,\alpha}
\]
for all $k\geq N_{1}$. Therefore $(P_{k}^{E}(x))$ converges to $x$ in $\ell(\Omega,E)$ and 
\[
x=\lim_{k\to\infty}P_{k}^{E}(x)=\sum_{n\in\Omega}x_{n}\varphi_{n}.
\]

Now, we turn to b). For $x=(x_{n})\in c(\N,E)$ let $(\widetilde{P}_{k}^{E}(x))$ be the sequence in $c(\N,E)$ given by 
$\widetilde{P}_{k}^{E}(x):=x_{\infty}\varphi_{\infty}+\sum_{n=1}^{k} (x_{n}-x_{\infty}) \varphi_{n}$. 
Again, it is easy to see that $\widetilde{P}_{k}^{E}$ is a continuous projection on $c(\N,E)$, 
$\widetilde{P}_{k}^{E}\widetilde{P}_{j}^{E}=\widetilde{P}_{\min(k,j)}^{E}$ for all $k,j\in\N$ 
and $\widetilde{P}_{k}^{E}\neq\widetilde{P}_{j}^{E}$ for $k\neq j$.
Let $\varepsilon>0$ and $\alpha\in\mathfrak{A}$. Then there is $N_{2}\in\N$ such that 
$p_{\alpha}(x_{n}-x_{\infty})<\varepsilon$ for all $n\geq N_{2}$. Thus we obtain 
\[
|x-\widetilde{P}_{k}^{E}(x)|_{\alpha}=\sup_{n>k}p_{\alpha}(x_{n}-x_{\infty})\leq \sup_{n\geq N_{2}}p_{\alpha}(x_{n}-x_{\infty})
\leq\varepsilon
\]
for all $k\geq N_{2}$, implying that $(\widetilde{P}_{k}^{E}(x))$ converges to $x$ in $c(\N,E)$ and 
\[
x=\lim_{k\to\infty}\widetilde{P}_{k}^{E}(x)=x_{\infty}\varphi_{\infty}+\sum_{n=1}^{\infty}(x_{n}-x_{\infty})\varphi_{n}.\qedhere
\]
\end{proof}

\begin{thm}\label{thm:sequence.spaces}
 Let $E$ be a sequentially complete lcHs and $\ell(\Omega,E)$ one of the spaces $c_{0}(A,E)$, $E^{\N}$, 
 $s(\N^{d},E)$, $s(\N_{0}^{d},E)$ or $s(\Z^{d},E)$. Then
 \[
  (i)\;\ell(\Omega,E)\cong \ell(\Omega)\varepsilon E,\quad (ii)\;c(\N,E)\cong c(\N)\varepsilon E.
 \]
\end{thm}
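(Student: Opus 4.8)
The plan is to apply \prettyref{prop:reverse_Schauder} b), which is tailor-made for precisely this situation: we already have an explicit series representation for each element of the vector-valued space together with $\varepsilon$-into-compatibility, and we want to upgrade this to $\varepsilon$-compatibility under a sequential completeness assumption on $E$. So the strategy is to verify the hypotheses of \prettyref{prop:reverse_Schauder} b) for each of the listed spaces and then read off the conclusion.

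First I would establish $\varepsilon$-into-compatibility of $\ell(\Omega)$ and $\ell(\Omega,E)$ (respectively $c(\N)$ and $c(\N,E)$). For the spaces $s(\Omega)$ with $\Omega=\N^{d},\N_{0}^{d},\Z^{d}$ this follows from \prettyref{cor:sequence_vanish_infty} b), and for $c_{0}(A,E)$ one observes that it is the relevant $\dom$-space with the generator $(\id_{E^{\N}},\id_{\K^{\N}})$, whose strength and consistency are immediate (as in \prettyref{ex:space_of_all_functions}), so that \prettyref{thm:linearisation} yields that $S$ is an isomorphism into. The space $E^{\N}$ is covered directly by \prettyref{ex:space_of_all_functions}. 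For $c(\N,E)$ the generator is again $(\id_{E^{\N}},\id_{\K^{\N}})$ and one checks strength and consistency as in the earlier sequence-space examples; the point-evaluation $\delta_{\infty}=\lim_{k\to\infty}$ is a continuous functional on $c(\N)$, so the defining operator $T^{\K}$ landing in the index $\infty$ is well-behaved.

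Next I would invoke the series representations already proved in \prettyref{prop:vector_valued_seq_spaces}: part a) gives $x=\sum_{n\in\Omega}x_{n}\varphi_{n}$ for $x\in\ell(\Omega,E)$, which is exactly of the form $f=\sum_{n=1}^{\infty}\lambda_{n}^{E}(f)f_{n}$ required in \prettyref{prop:reverse_Schauder} with $f_{n}=\varphi_{n}\in\ell(\Omega)$ and $\lambda_{n}^{E}(x)=x_{n}=\delta_{n}(x)\in E$; part b) gives the analogous representation for $c(\N,E)$ with the basis $\{\varphi_{\infty}\}\cup\{\varphi_{n}\}_{n\in\N}$ and coefficients $x_{\infty},(x_{n}-x_{\infty})$. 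In all cases the $f_{n}$ indeed lie in the scalar space $\ell(\Omega)$ resp.\ $c(\N)$. Since $E$ is assumed sequentially complete and each of $c_{0}(A)$, $\K^{\N}$, $s(\Omega)$ and $c(\N)$ is a (complete, hence sequentially complete) Fr\'echet space, the completeness hypothesis on $\F$ in \prettyref{prop:reverse_Schauder} b) is satisfied. Applying \prettyref{prop:reverse_Schauder} b) then gives the surjectivity of $S$, whence $\ell(\Omega,E)\cong\ell(\Omega)\varepsilon E$ and $c(\N,E)\cong c(\N)\varepsilon E$.

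I do not expect a genuine obstacle here, since the heavy lifting was already done in \prettyref{prop:vector_valued_seq_spaces} and \prettyref{prop:reverse_Schauder}. The only points requiring mild care are the bookkeeping for the two distinct families of spaces (the $\varphi_{n}$-representation versus the $c(\N)$-representation involving $\varphi_{\infty}$), and confirming in the $c(\N)$ case that the candidate basis elements $\varphi_{\infty},\varphi_{n}$ all belong to $c(\N)$ and that the coefficient functionals $\delta_{\infty}$ and $\delta_{n}-\delta_{\infty}$ are continuous on $c(\N)$ so that the relevant generator is strong and consistent; both are routine. Thus the proof reduces to assembling these cited results in the correct order.
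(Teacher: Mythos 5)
Your proposal is correct and follows essentially the same route as the paper: establish that $S$ is an isomorphism into via \prettyref{thm:linearisation}, then combine the series representations of \prettyref{prop:vector_valued_seq_spaces} with \prettyref{prop:reverse_Schauder} b) to get surjectivity. The only under-justified step is the claim that consistency for $c_{0}(A,E)$ is ``immediate as in \prettyref{ex:space_of_all_functions}'': unlike $E^{\N}$, the space $c_{0}(A,E)$ carries the additional property $\lim_{k\to\infty}x_{k}a_{k,j}=0$, so one must verify that $S(u)$ vanishes at infinity in the weighted topology, which the paper does by invoking \prettyref{prop:van.at.inf0} (i); similarly, for $c(\N,E)$ the ``routine'' check amounts to upgrading the pointwise convergence $\delta_{n}\to\delta_{\infty}$ to convergence in $c(\N)_{\kappa}'$ via the Banach--Steinhaus theorem so that $u(\delta_{n})\to u(\delta_{\infty})$, i.e.\ $S(u)$ is a convergent sequence.
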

\begin{proof} 
The map $S_{\ell(\Omega)}$ is an isomorphism into by \prettyref{thm:linearisation} and, 
in addition, by \prettyref{prop:van.at.inf0} (i) if $\ell(\Omega,E)=c_{0}(A,E)$.
Considering $c(\N,E)$, we observe that for $x\in c(\N)$ 
\[
\delta_{n}(x)=x_{n}\to x_{\infty}=\delta_{\infty}(x),
\]
which implies the convergence $\delta_{n}\to \delta_{\infty}$ 
in $c(\N)_{\gamma}'$ by the Banach--Steinhaus theorem since $c(\N)$ is a Banach space. 
Hence we get 
\[
u(\delta_{\infty})=\lim_{n\to\infty}u(\delta_{n})
=\lim_{n\to\infty}S(u)(n)=\delta_{\infty}(S(u))
\]
for every $u\in c(\N)\varepsilon E$, which implies that $S_{c(\N)}$ is an isomorphism into 
by \prettyref{thm:linearisation}.
From \prettyref{prop:vector_valued_seq_spaces} and \prettyref{prop:reverse_Schauder} we deduce our statement.
\end{proof}

More general, we note that \prettyref{thm:sequence.spaces} holds 
for any lcHs $E$ if $\ell(\Omega,E)=E^{\N}$ by \prettyref{ex:space_of_all_functions}, 
for $E$ with metric ccp if $\ell(\Omega,E)=c_{0}(A,E)$ by \prettyref{ex:cont_loc_comp} (ii), 
and for locally complete $E$ if $\ell(\Omega,E)=s(\Omega,E)$ 
with $\Omega=\N^{d}$, $\N_{0}^{d}$, $\Z^{d}$ by \prettyref{cor:sequence_vanish_infty} b).

\subsubsection*{\textbf{Continuous and differentiable functions on a compact interval}}

We start with continuous functions on compact sets. 
Let $E$ be an lcHs and $\Omega\subset\R^{d}$ compact. 
We equip the space $\mathcal{C}(\Omega,E)$ of continuous functions on $\Omega$ with values in $E$ 
with the system of seminorms given by
\[
|f|_{\alpha}:=\sup_{x\in\Omega}p_{\alpha}(f(x)),\quad f\in\mathcal{C}(\Omega,E),
\]
for $\alpha\in\mathfrak{A}$.
We want to apply our preceding results to intervals.
Let $-\infty<a<b<\infty$ and $T:=(t_{j})_{0\leq j\leq n}$ be a partition of 
the interval $[a,b]$, i.e.\ $a=t_{0}<t_{1}<\ldots<t_{n}=b$. 
The \emph{\gls{hat_function}} $h_{t_{j}}^{T}\colon [a,b]\to\R$ for the partition $T$ are given by 
\[
h_{t_{j}}^{T}(x):=
\begin{cases}
\frac{x-t_{j}}{t_{j}-t_{j-1}} &, t_{j-1}\leq x\leq t_{j},\\
\frac{t_{j+1}-x}{t_{j+1}-t_{j}} &,t_{j}< x\leq t_{j+1},\\
0 &, \text{else},
\end{cases}
\]
for $2\leq j\leq n-1$ and 
\[
h_{a}^{T}(x):=
\begin{cases}
\frac{t_{1}-x}{t_{1}-a} &, a\leq x\leq t_{1},\\
0 &, \text{else},
\end{cases}
\quad 
h_{b}^{T}(x):=
\begin{cases}
\frac{x-t_{n-1}}{b-t_{n-1}} &, t_{n-1}\leq x\leq b,\\
0 &, \text{else}.
\end{cases}
\]
Let $\mathcal{T}:=(t_{n})_{n\in\N_{0}}$ be a dense sequence in $[a,b]$ with $t_{0}=a$, $t_{1}=b$ 
and $t_{n}\neq t_{m}$ for $n\neq m$. For $T^{n}:=\{t_{0},\ldots,t_{n}\}$ there is a (unique) enumeration 
$\sigma\colon \{0,\ldots,n\}\to\{0,\ldots,n\}$ of $T^{n}$ such that $T_{n}:=(t_{\sigma(j)})_{0\leq j\leq n}$ 
is a partition of $[a,b]$ with $T^{n}=\{t_{\sigma(1)},\ldots,t_{\sigma(n)}\}$.
The functions $\varphi^{\mathcal{T}}_{0}:=h^{T_{1}}_{t_{0}}$, 
$\varphi^{\mathcal{T}}_{1}:=h^{T_{1}}_{t_{1}}$ and $\varphi^{\mathcal{T}}_{n}:=h^{T_{n}}_{t_{\sigma(j)}}$ 
with $j=\sigma^{-1}(n)$ for $n\geq 2$ are called \emph{\gls{Schauder_hat_function}} for the sequence $\mathcal{T}$ 
and form a Schauder basis of $\mathcal{C}([a,b])$ with associated coefficient functionals given by 
$\lambda^{\K}_{0}(f):=f(t_{0})$, $\lambda^{\K}_{1}(f):=f(t_{1})$ and 
\[
\lambda^{\K}_{n+1}(f):=f(t_{n+1})
-\sum_{k=0}^{n}\lambda^{\K}_{k}(f)\varphi^{\mathcal{T}}_{k}(t_{n+1}),\quad f\in\mathcal{C}([a,b]),\; n\geq 1,
\]
by \cite[2.3.5 Proposition, p.\ 29]{Semadeni1982}. Looking at the coefficient functionals, we 
see that the right-hand sides even make sense if $f\in\mathcal{C}([a,b],E)$ and thus we define 
$\lambda^{E}_{n}$ on $\mathcal{C}([a,b],E)$ for $n\in\N_{0}$ accordingly. 

\begin{thm}\label{thm:cont.func.interval}
 Let $E$ be an lcHs with metric ccp and $\mathcal{T}:=(t_{n})_{n\in\N_{0}}$ a dense sequence in $[a,b]$ 
 with $t_{0}=a$, $t_{1}=b$ and $t_{n}\neq t_{m}$ for $n\neq m$. 
 Then $(\sum_{n=0}^{k}\lambda_{k}^{E}\varphi^{\mathcal{T}}_{n})_{k\in\N_{0}}$ is a Schauder decomposition of 
 $\mathcal{C}([a,b],E)$ and 
 \[
 f=\sum_{n=0}^{\infty}\lambda^{E}_{n}(f)\varphi^{\mathcal{T}}_{n},\quad f\in \mathcal{C}([a,b],E).
 \]
\end{thm}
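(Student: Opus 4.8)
The plan is to verify that this theorem is an instance of \prettyref{cor:schauder_decomp} applied to the pair $\F=\mathcal{C}([a,b])$ and $\FE=\mathcal{C}([a,b],E)$, with the Schauder hat functions $(\varphi^{\mathcal{T}}_{n})_{n\in\N_{0}}$ and their associated coefficient functionals $(\lambda^{\K}_{n})_{n\in\N_{0}}$. To invoke that corollary I need three ingredients: that $\F$ and $\FE$ are $\varepsilon$-compatible, that $(\varphi^{\mathcal{T}}_{n})$ is an equicontinuous Schauder basis of $\mathcal{C}([a,b])$, and that the family $(\lambda^{E},\lambda^{\K})$ is consistent for $(\mathcal{C}([a,b]),E)$.

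First I would establish the $\varepsilon$-compatibility. Since $[a,b]\subset\R$ is a compact, hence metrisable, $k_{\R}$-space, \prettyref{ex:cont_usual} gives $\mathcal{C}([a,b],E)\cong\mathcal{C}([a,b])\varepsilon E$ via $S$ precisely when $E$ has metric ccp, which is our hypothesis. (Here one notes $\mathcal{CW}(\Omega,E)=\mathcal{C}(\Omega,E)$ with the sup-seminorms since $\Omega=[a,b]$ is compact, so $\mathcal{W}=\{\chi_{[a,b]}\}$ generates the topology of uniform convergence.) Second, the fact that $(\varphi^{\mathcal{T}}_{n})$ is a Schauder basis of the Banach space $\mathcal{C}([a,b])$ with the stated coefficient functionals is exactly \cite[2.3.5 Proposition, p.\ 29]{Semadeni1982}; equicontinuity of the basis is automatic because $\mathcal{C}([a,b])$ is a Banach space, hence barrelled, so the expansion operators are equicontinuous by the uniform boundedness principle (as recorded in \prettyref{chap:notation}). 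The coefficient functionals $\lambda^{\K}_{n}$ are finite linear combinations of point evaluations $\delta_{t_{k}}$, which lie in $\mathcal{C}([a,b])'$, confirming $\lambda^{\K}_{n}\in\F'$.

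The key remaining step, and the only one requiring genuine verification, is the consistency of $(\lambda^{E},\lambda^{\K})$, i.e.\ that $\lambda^{E}_{n}(S(u))=u(\lambda^{\K}_{n})$ for all $u\in\mathcal{C}([a,b])\varepsilon E$ and $n\in\N_{0}$. I would argue by induction on $n$, exploiting that each $\lambda^{\K}_{n}$ is built from point evaluations. For $n=0,1$ we have $\lambda^{\K}_{n}=\delta_{t_{n}}$, and consistency reduces to $S(u)(t_{n})=u(\delta_{t_{n}})$, which is the definition of $S$. For the inductive step, since $S$ is linear and continuous and $\lambda^{\K}_{n+1}=\delta_{t_{n+1}}-\sum_{k=0}^{n}\varphi^{\mathcal{T}}_{k}(t_{n+1})\lambda^{\K}_{k}$ is a finite linear combination, I would compute
\[
u(\lambda^{\K}_{n+1})
=u(\delta_{t_{n+1}})-\sum_{k=0}^{n}\varphi^{\mathcal{T}}_{k}(t_{n+1})\,u(\lambda^{\K}_{k})
=S(u)(t_{n+1})-\sum_{k=0}^{n}\varphi^{\mathcal{T}}_{k}(t_{n+1})\,\lambda^{E}_{k}(S(u)),
\]
using linearity of $u$ and the induction hypothesis, and the right-hand side equals $\lambda^{E}_{n+1}(S(u))$ by the very definition of $\lambda^{E}_{n+1}$ on $\mathcal{C}([a,b],E)$. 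This shows consistency holds for all $n$, noting that $S(u)\in\mathcal{C}([a,b],E)=\FE$ by the $\varepsilon$-compatibility so that $\lambda^{E}_{n}(S(u))$ is well-defined.

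With these three ingredients in hand, \prettyref{cor:schauder_decomp} a) and b) directly yield that $(\sum_{n=0}^{k}Q_{n}^{E})_{k\in\N_{0}}=(\sum_{n=0}^{k}\lambda_{n}^{E}(\cdot)\varphi^{\mathcal{T}}_{n})_{k\in\N_{0}}$ is a Schauder decomposition of $\mathcal{C}([a,b],E)$ and that each $f\in\mathcal{C}([a,b],E)$ admits the series representation $f=\sum_{n=0}^{\infty}\lambda^{E}_{n}(f)\varphi^{\mathcal{T}}_{n}$, which is the claim. The only genuinely delicate point is keeping the indexing consistent with the corollary (which is stated over $\N$ rather than $\N_{0}$); since the index set of an equicontinuous Schauder basis may be any countable set as remarked after \prettyref{thm:schauder_decomp}, this is purely cosmetic and causes no difficulty. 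I expect the main conceptual obstacle to be simply recognising that the metric ccp hypothesis is exactly what \prettyref{ex:cont_usual} needs for $\varepsilon$-compatibility in the metrisable case, with everything else following mechanically from the structure already assembled in the preceding sections.
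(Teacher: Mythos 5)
Your proposal is correct and follows essentially the same route as the paper's proof: $\varepsilon$-compatibility from \prettyref{ex:cont_usual} under metric ccp, equicontinuity of the hat-function basis from barrelledness of the Banach space $\mathcal{C}([a,b])$, consistency of $(\lambda^{E},\lambda^{\K})$ by induction on $n$ using the recursive definition of the coefficient functionals, and then \prettyref{cor:schauder_decomp}. The only difference is cosmetic (you run the inductive computation from $u(\lambda^{\K}_{n+1})$ to $\lambda^{E}_{n+1}(S(u))$ rather than the reverse), so there is nothing to add.
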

\begin{proof}
The spaces $\mathcal{C}([a,b])$ and $\mathcal{C}([a,b],E)$ are $\varepsilon$-compatible by 
\prettyref{ex:cont_usual} if $E$ has metric ccp. 
$\mathcal{C}([a,b])$ is a Banach space and thus barrelled, implying 
that its Schauder basis $(\varphi^{\mathcal{T}}_{n})$ is equicontinuous. 
We note that for all $u\in\mathcal{C}([a,b])\varepsilon E$ and $x\in [a,b]$
\[
\lambda_{n}^{E}(S(u))(x)=u(\delta_{t_{n}})=u(\lambda^{\K}_{n}),\quad n\in\{0,1\},
\] 
and by induction 
\begin{align*}
  \lambda_{n+1}^{E}(S(u))(x)&=u(\delta_{t_{n+1}})-\sum_{k=0}^{n}\lambda^{E}_{k}(S(u))\varphi^{\mathcal{T}}_{k}(t_{n+1})
 =u(\delta_{t_{n+1}})-\sum_{k=0}^{n}u(\lambda^{\K}_{k})\varphi^{\mathcal{T}}_{k}(t_{n+1})\\
&=u(\lambda_{n+1}^{\K}),\quad n\geq 1.
\end{align*}
Thus $(\lambda^{E},\lambda^{\K})$ is consistent, proving our claim by \prettyref{cor:schauder_decomp}.
\end{proof}

If $a=0$, $b=1$ and $\mathcal{T}$ is the sequence of dyadic numbers given in 
\cite[2.1.1 Definitions, p.\ 21]{Semadeni1982}, then $(\varphi^{\mathcal{T}}_{n})$ is the 
so-called Faber--Schauder system. Using the Schauder basis and coefficient functionals of 
the space $\mathcal{C}_{0}(\R)$ of continuous functions vanishing at infinity given 
in \cite[2.7.1, p.\ 41--42]{Semadeni1982} and \cite[2.7.4 Corollary, p.\ 43]{Semadeni1982} 
and that $S_{\mathcal{C}_{0}(\R)}$ is an isomorphism by \prettyref{ex:cont_loc_comp} (ii) 
if $E$ has metric ccp, the corresponding result for the $E$-valued 
counterpart $\mathcal{C}_{0}(\R,E)$ holds as well by a similar reasoning. 
Another corresponding result holds for the space $C^{[\gamma]}_{0,0}([0,1],E)$, 
$0<\gamma<1$, of $\gamma$-H\"{o}lder continuous functions on $[0,1]$ with values in $E$ 
that vanish at zero and at infinity if one uses 
the Schauder basis and coefficient functionals of $C^{[\gamma]}_{0,0}([0,1])$
from \cite[Theorem 2, p.\ 220]{Ciesieski1960} and \cite[Theorem 3, p.\ 230]{Ciesieski1959}.
This result is a bit weaker since \prettyref{ex:hoelder} only guarantees that 
$S_{\mathcal{C}^{[\gamma]}_{0,0}([0,1])}$ is an isomorphism if $E$ is quasi-complete.

Now, we turn to the spaces $\mathcal{C}^{k}([a,b],E)$ of continuously differentiable functions on an 
interval $(a,b)$ with values in an lcHs $E$ such that
all derivatives can be continuously extended to the boundary 
from \prettyref{ex:diff_ext_boundary}. We set $f^{(k)}(x):=(\partial^{k})^{\K}f(x)$ for $x\in(a,b)$ and 
$f\in\mathcal{C}^{k}([a,b])$.
From the Schauder hat functions $(\varphi^{\mathcal{T}}_{n})$ for a dense sequence 
$\mathcal{T}:=(t_{n})_{n\in\N_{0}}$ in $[a,b]$ with $t_{0}=a$, $t_{1}=b$ and 
$t_{n}\neq t_{m}$ for $n\neq m$ and the associated coefficient functionals 
$\lambda_{n}^{\K}$ we can easily get 
a Schauder basis for the space $\mathcal{C}^{k}([a,b])$, $k\in\N$, by 
applying $\int_{a}^{(\cdot)}$ $k$-times to the series representation 
\[
f^{(k)}=\sum_{n=0}^{\infty}\lambda_{n}^{\K}(f^{(k)})\varphi^{\mathcal{T}}_{n},
\quad f\in\mathcal{C}^{k}([a,b]),
\]
where we identified $f^{(k)}$ with its continuous extension. 
The resulting Schauder basis $f_{n}^{\mathcal{T}}\colon [a,b]\to \R$ and associated coefficient 
functionals $\mu_{n}^{\K}\colon \mathcal{C}^{k}([a,b])\to \K$, $n\in\N_{0}$, are 
\begin{align*}
f_{n}^{\mathcal{T}}(x)&=\frac{1}{n!}(x-a)^{n}, &&\mu_{n}^{\K}(f)=f^{(n)}(a), & 0\leq n\leq k-1,\\
f_{n}^{\mathcal{T}}(x)&=\int_{a}^{x}\int_{a}^{s_{k-1}}\cdots\int_{a}^{s_{2}}\int_{a}^{s_{1}}
\varphi^{\mathcal{T}}_{n-k}\d s\d s_{1}\dots \d s_{k-1}, 
&&\mu_{n}^{\K}(f)=\lambda_{n-k}^{\K}(f^{(k)}), & n\geq k,
\end{align*}
for $x\in[a,b]$ and $f\in\mathcal{C}^{k}([a,b])$ (see e.g.\ \cite[p.\ 586--587]{schonefeld1969}, 
\cite[2.3.7, p.\ 29]{Semadeni1982}). 
Again, the mapping rule for the coefficient functionals still makes sense if $f\in\mathcal{C}^{k}([a,b],E)$ 
and so we define $\mu^{E}_{n}$ on $\mathcal{C}^{k}([a,b],E)$ for $n\in\N_{0}$ accordingly.

\begin{thm}\label{thm:cont.diff.func.interval}
 Let $E$ be an lcHs with metric ccp, $k\in\N$, $\mathcal{T}:=(t_{n})_{n\in\N_{0}}$ a dense sequence in $[a,b]$ 
 with $t_{0}=a$, $t_{1}=b$ and $t_{n}\neq t_{m}$ for $n\neq m$. 
 Then $(\sum_{n=0}^{l}\mu_{n}^{E}f^{\mathcal{T}}_{n})_{l\in\N_{0}}$ is a Schauder decomposition of 
 $\mathcal{C}^{k}([a,b],E)$ and 
 \[
 f=\sum_{n=0}^{\infty}\mu^{E}_{n}(f)f^{\mathcal{T}}_{n},\quad f\in \mathcal{C}^{k}([a,b],E).
 \]
\end{thm}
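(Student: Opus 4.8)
The plan is to derive \prettyref{thm:cont.diff.func.interval} from the machinery already established, following exactly the template of \prettyref{thm:cont.func.interval}. First I would verify the three hypotheses of \prettyref{cor:schauder_decomp} for $\F=\mathcal{C}^{k}([a,b])$ and $\FE=\mathcal{C}^{k}([a,b],E)$ with the basis $(f^{\mathcal{T}}_{n})_{n\in\N_{0}}$ and coefficient functionals $(\mu^{\K}_{n})_{n\in\N_{0}}$. The $\varepsilon$-compatibility $\mathcal{C}^{k}([a,b])\varepsilon E\cong\mathcal{C}^{k}([a,b],E)$ is furnished by \prettyref{ex:diff_ext_boundary} (i), since $E$ has metric ccp and $[a,b]=\overline{(a,b)}$ with $\Omega:=(a,b)$ bounded and open. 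That $(f^{\mathcal{T}}_{n})$ is an equicontinuous Schauder basis follows because $\mathcal{C}^{k}([a,b])$ is a Banach space, hence barrelled, so every Schauder basis is automatically equicontinuous; the basis property itself is the cited construction from \cite[2.3.7, p.\ 29]{Semadeni1982} obtained by $k$-fold integration of the Faber--Schauder expansion of $f^{(k)}$.

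The only genuine work is to check consistency of the family $(\mu^{E},\mu^{\K})$, i.e.\ that $\mu^{E}_{n}(S(u))=u(\mu^{\K}_{n})$ for every $u\in\mathcal{C}^{k}([a,b])\varepsilon E$ and $n\in\N_{0}$. I would split this according to the two-part mapping rule for the $\mu^{\K}_{n}$. For $0\leq n\leq k-1$ we have $\mu^{\K}_{n}=\delta_{a}\circ(\partial^{n})^{\K}$, so consistency reads $(\partial^{n})^{E}S(u)(a)=u(\delta_{a}\circ(\partial^{n})^{\K})$; this is precisely the identity established in the proof of \prettyref{ex:diff_ext_boundary} via \prettyref{prop:diff_cons_barrelled} c), applied at the boundary point $a$ using the continuous extendability of the derivatives (the limit $x\to a\rlim$ commutes with $u$ by continuity, as in \prettyref{ex:E_0}). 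For $n\geq k$ we have $\mu^{\K}_{n}=\lambda^{\K}_{n-k}\circ(\partial^{k})^{\K}$ where $\lambda^{\K}_{n-k}$ is a finite linear combination of point evaluations $\delta_{t_{j}}$ from \prettyref{thm:cont.func.interval}. The strategy here is to combine the already-verified consistency $(\partial^{k})^{E}S(u)(t)=u(\delta_{t}\circ(\partial^{k})^{\K})$ with the linearity of $u$: writing $\lambda^{\K}_{n-k}=\sum_{j}c_{j}\delta_{t_{j}}$ one gets $\mu^{E}_{n}(S(u))=\sum_{j}c_{j}(\partial^{k})^{E}S(u)(t_{j})=\sum_{j}c_{j}u(\delta_{t_{j}}\circ(\partial^{k})^{\K})=u(\mu^{\K}_{n})$, matching the recursive definition of $\mu^{E}_{n}$ on $\mathcal{C}^{k}([a,b],E)$.

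Once consistency is in hand, \prettyref{cor:schauder_decomp} a) and b) immediately yield that $(\sum_{n=0}^{l}\mu^{E}_{n}f^{\mathcal{T}}_{n})_{l\in\N_{0}}$ is a Schauder decomposition of $\mathcal{C}^{k}([a,b],E)$ together with the claimed series representation $f=\sum_{n=0}^{\infty}\mu^{E}_{n}(f)f^{\mathcal{T}}_{n}$, completing the proof. I expect the main obstacle to be the bookkeeping in the $n\geq k$ case: one must carefully track that the $E$-valued coefficient functionals $\mu^{E}_{n}$ defined directly on $\mathcal{C}^{k}([a,b],E)$ by the same recursion coincide with $\lambda^{E}_{n-k}\circ(\partial^{k})^{E}$, and that the boundary extension of $(\partial^{k})^{E}f$ is exactly what is evaluated at the $t_{j}$. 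Since differentiation commutes with continuous linear functionals and $S(u)\in\mathcal{C}^{k}([a,b],E)$ by $\varepsilon$-compatibility, this is routine but needs the inductive structure of the Faber--Schauder coefficients to be unwound in the same order as in \prettyref{thm:cont.func.interval}.
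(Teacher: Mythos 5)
Your proposal is correct and follows essentially the same route as the paper's proof: $\varepsilon$-compatibility from \prettyref{ex:diff_ext_boundary}, equicontinuity of the basis from barrelledness of the Banach space $\mathcal{C}^{k}([a,b])$, consistency of $(\mu^{E},\mu^{\K})$ via \prettyref{prop:diff_cons_barrelled} c) together with \prettyref{prop:cont_ext} and \prettyref{lem:cont_ext} for the boundary evaluations, and then \prettyref{cor:schauder_decomp}. Your explicit case split at $n\leq k-1$ versus $n\geq k$ merely unpacks what the paper leaves as "from these observations we deduce".
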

\begin{proof}
The spaces $\mathcal{C}^{k}([a,b])$ and $\mathcal{C}^{k}([a,b],E)$ are $\varepsilon$-compatible by 
\prettyref{ex:diff_ext_boundary} if $E$ has metric ccp. 
The Banach space $\mathcal{C}^{k}([a,b])$ is barrelled giving the equicontinuity of its Schauder basis. 
Due to \prettyref{prop:diff_cons_barrelled} c) we have for all $u\in\mathcal{C}^{k}([a,b])\varepsilon E$, 
$\beta\in\N_{0}$, $\beta\leq k$, and $x\in(a,b)$
\[
(\partial^{\beta})^{E}S(u)(x)=u(\delta_{x}\circ(\partial^{\beta})^{\K}).
\]
Further, for every sequence $(x_{n})$ in $(a,b)$ converging to $t\in\{a,b\}$ we obtain by 
\prettyref{prop:cont_ext} in combination with \prettyref{lem:cont_ext}
applied to $T:=(\partial^{\beta})^{\K}$
\[
\lim_{n\to\infty}(\partial^{\beta})^{E}S(u)(x_{n})=u(\lim_{n\to\infty}\delta_{x_{n}}\circ(\partial^{\beta})^{\K}).
\]
From these observations we deduce that $\mu_{n}^{E}(S(u))=u(\mu_{n}^{\K})$ for all $n\in\N_{0}$, 
i.e.\ $(\mu^{E},\mu^{\K})$ is consistent. 
Therefore our statement is a consequence of \prettyref{cor:schauder_decomp}.
\end{proof}

\subsubsection*{\textbf{Holomorphic functions}}

In this short subsection we show how to get the result on power series expansion of holomorphic functions 
from the introduction. 
Let $E$ be an lcHs over $\C$, $z_{0}\in\C$, $r\in(0,\infty]$ and 
equip $\mathcal{O}(\D_{r}(z_{0}),E)$ with the topology $\tau_{c}$ of compact convergence. 

\begin{thm}\label{thm:powerseries}
  Let $E$ be a locally complete lcHs over $\C$, $z_{0}\in\C$ and $r\in(0,\infty]$. 
  Then $(f\mapsto\sum_{n=0}^{k}\frac{(\partial^{n}_{\C})^{E}f(z_{0})}{n!}(\cdot - z_{0})^{n})_{k\in\N_{0}}$ 
  is a Schauder decomposition 
  of $\mathcal{O}(\D_{r}(z_{0}),E)$ and 
   \[
    f=\sum_{n=0}^{\infty}\frac{(\partial^{n}_{\C})^{E}f(z_{0})}{n!}(\cdot - z_{0})^{n},
    \quad f\in\mathcal{O}(\D_{r}(z_{0}),E).
   \]
\end{thm}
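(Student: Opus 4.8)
The plan is to apply the general machinery of \prettyref{cor:schauder_decomp} with $\F:=\mathcal{O}(\D_{r}(z_{0}))$ and $\FE:=\mathcal{O}(\D_{r}(z_{0}),E)$, both equipped with the topology $\tau_{c}$ of compact convergence. The equicontinuous Schauder basis will be the shifted monomials $f_{n}\colon z\mapsto \frac{1}{n!}(z-z_{0})^{n}$, $n\in\N_{0}$, and the associated coefficient functionals will be $\lambda_{n}^{\K}:=\delta_{z_{0}}\circ(\partial_{\C}^{n})^{\C}$ and $\lambda_{n}^{E}:=\delta_{z_{0}}\circ(\partial_{\C}^{n})^{E}$, so that the series of \prettyref{cor:schauder_decomp} b) reads exactly $f=\sum_{n=0}^{\infty}\lambda_{n}^{E}(f)f_{n}=\sum_{n=0}^{\infty}\frac{(\partial_{\C}^{n})^{E}f(z_{0})}{n!}(\cdot-z_{0})^{n}$, as claimed. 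To invoke \prettyref{cor:schauder_decomp} I must check three things: that $\F$ and $\FE$ are $\varepsilon$-compatible, that $(f_{n})$ is an equicontinuous Schauder basis of $\F$ with coefficient functionals $(\lambda_{n}^{\K})$, and that $(\lambda^{E},\lambda^{\K})$ is a consistent family for $(\mathcal{F},E)$.

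First I would establish $\varepsilon$-compatibility. Since $E$ is locally complete, \eqref{eq:holomorphic_coincide_1} gives $(\mathcal{O}(\D_{r}(z_{0}),E),\tau_{c})=\mathcal{CW}_{\overline{\partial}}(\D_{r}(z_{0}),E)$, and by \prettyref{prop:co_top_isomorphism} the space $\F$ and $\FE$ are $\varepsilon$-compatible, with the inverse of $S$ given by $R^{t}$. Next I would recall that $\mathcal{O}(\D_{r}(z_{0}))$ with $\tau_{c}$ is a Fr\'echet space in which the shifted monomials $(f_{n})$ form a Schauder basis whose coefficient functionals are precisely $f\mapsto \frac{f^{(n)}(z_{0})}{n!}=\frac{1}{n!}(\partial_{\C}^{n})^{\C}f(z_{0})$; this is the classical local power series expansion recalled in the introduction of this section. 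Because $\mathcal{O}(\D_{r}(z_{0}))$ is a Fr\'echet space it is barrelled, so its Schauder basis is automatically equicontinuous by the remarks following \eqref{intro:basis}. Up to the harmless normalising constant $\frac{1}{n!}$, the coefficient functionals coincide with $\lambda_{n}^{\K}=\delta_{z_{0}}\circ(\partial_{\C}^{n})^{\C}$.

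The consistency of $(\lambda^{E},\lambda^{\K})$ is the one genuinely structural point, but it is already available: \prettyref{prop:complex_diff_cons_strong} states that $((\partial_{\C}^{n})^{E},(\partial_{\C}^{n})^{\C})_{n\in\N_{0}}$ is a strong, consistent family for $((\mathcal{O}(\D_{r}(z_{0})),\tau_{c}),E)$ whenever $E$ is locally complete over $\C$. Evaluating at $z_{0}$ gives, for every $u\in\F\varepsilon E$,
\[
\lambda_{n}^{E}(S(u))=(\partial_{\C}^{n})^{E}S(u)(z_{0})=u(\delta_{z_{0}}\circ(\partial_{\C}^{n})^{\C})=u(\lambda_{n}^{\K}),
\]
which is exactly the consistency condition \eqref{eq:intro_consistent}. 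With $\varepsilon$-compatibility, the equicontinuous Schauder basis, and consistency all verified, \prettyref{cor:schauder_decomp} a) and b) immediately yield that $(P_{k}^{E})_{k}=(\sum_{n=0}^{k}\lambda_{n}^{E}(\cdot)f_{n})_{k}$ is a Schauder decomposition of $\mathcal{O}(\D_{r}(z_{0}),E)$ together with the stated series representation. I do not expect any serious obstacle here, since every ingredient has been prepared earlier in the text; the only care needed is bookkeeping of the factorial normalisation so that the functionals $\lambda_{n}^{\K}$ of \prettyref{cor:schauder_decomp} match the classical coefficient functionals $\frac{1}{n!}(\partial_{\C}^{n})^{\C}(\cdot)(z_{0})$ of the monomial basis, and confirming that the convergence in $\tau_{c}$ delivered by the corollary is exactly the uniform convergence on compact subsets asserted in the statement.
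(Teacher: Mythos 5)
Your proposal is correct and follows essentially the same route as the paper's proof: $\varepsilon$-compatibility via \prettyref{prop:co_top_isomorphism} and \eqref{eq:holomorphic_coincide_1}, equicontinuity of the monomial Schauder basis from barrelledness of the Fr\'echet space, consistency from \prettyref{prop:complex_diff_cons_strong}, and then \prettyref{cor:schauder_decomp}. The only cosmetic difference is that you absorb the factor $\frac{1}{n!}$ into the basis vectors rather than into the coefficient functionals, which changes nothing.
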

\begin{proof}
The spaces $\mathcal{O}(\D_{r}(z_{0}))$ and $\mathcal{O}(\D_{r}(z_{0}),E)$ are $\varepsilon$-compatible 
by \prettyref{prop:co_top_isomorphism} and \eqref{eq:holomorphic_coincide_1} 
(cf.\ \cite[Theorem 9, p.\ 232]{B/F/J}) if $E$ is locally complete. 
Further, the Schauder basis $((\cdot - z_{0})^{n})$ of 
$\mathcal{O}(\D_{r}(z_{0}))$ is equicontinuous 
since the Fr\'{e}chet space $\mathcal{O}(\D_{r}(z_{0}))$ is barrelled. 
Due to \prettyref{prop:complex_diff_cons_strong} we have for all $u\in\mathcal{O}(\D_{r}(z_{0}))\varepsilon E$
\[
(\partial^{n}_{\C})^{E}S(u)(z)=u(\delta_{z}\circ(\partial^{n}_{\C})^{\C}),\quad n\in\N_{0},\,z\in\D_{r}(z_{0}),
\]
which yields that $(\lambda^{E},\lambda^{\C})$ is consistent where 
$\lambda^{E}\colon\mathcal{O}(\D_{r}(z_{0}),E)\to E^{\N_{0}}$ is given by 
$\lambda^{E}_{n}(f):=\frac{(\partial^{n}_{\C})^{E}f(z_{0})}{n!}$ for $n\in\N_{0}$ 
(and analogously for $E$ replaced by $\C$). 
Hence \prettyref{cor:schauder_decomp} implies our statement.
\end{proof}

\prettyref{thm:powerseries} holds for holomorphic functions in several variables as well 
(see \cite[Theorem 5.7, p.\ 264]{kruse2019_4}).

\subsubsection*{\textbf{Fourier expansions}}

In this subsection we turn our attention to \gls{Fourier_expansion} in the 
Schwartz space $\mathcal{S}(\R^{d},E)$ and in the space $\mathcal{C}^{\infty}_{2\pi}(\R^{d},E)$ of smooth functions that are $2\pi$-periodic in each variable. 

We recall the definition of the Hermite functions. For $n\in\N_{0}$ we set 
\[
h_{n}\colon\R\to\R,\;
h_{n}(x):=(2^{n}n!\sqrt{\pi})^{-1/2}\Bigl(x-\frac{d}{dx}\Bigr)^{n}\e^{-x^{2}/2}=(2^{n}n!\sqrt{\pi})^{-1/2}H_{n}(x)\e^{-x^{2}/2},
\]
with the \emph{\gls{Hermite_poly}} $H_{n}$ of degree $n$ which can be computed recursively by 
\[
 H_{0}(x)=1,\; H_{n+1}(x)=2xH_{n}(x)-H_{n}'(x)\;\text{and}\; H_{n}'(x)=2nH_{n-1}(x), \quad 
 x\in\R,\; n\in\N_{0}.
\]
For $n=(n_{k})\in\N_{0}^{d}$ we define the $n$\emph{-th \gls{Hermite_function}} by
\[
\gls{h_n}\colon\R^{d}\to\R,\;
h_{n}(x):=\prod_{k=1}^{d} h_{n_{k}}(x_{k}),\quad\text{and}\quad
\gls{H_n}\colon\R^{d}\to\R,\;
H_{n}(x):=\prod_{k=1}^{d} H_{n_{k}}(x_{k}).
\]

\begin{prop}\label{prop:Schwartz_pettis_int}
Let $E$ be a locally complete lcHs, $f\in\mathcal{S}(\R^{d},E)$ and $n\in\N_{0}^{d}$. 
Then $fh_{n}$ is Pettis-integrable on $\R^{d}$.
\end{prop}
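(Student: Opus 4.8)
The goal is to show that for locally complete $E$, $f\in\mathcal{S}(\R^{d},E)$ and $n\in\N_{0}^{d}$, the product $fh_{n}$ is Pettis-integrable on $\R^{d}$. The natural strategy is to mimic the proof of \prettyref{prop:Fourier-trafo_Bjoerck}, where the analogous statement was shown for $f\e^{-\iu\langle x,\cdot\rangle}$, and to reduce everything to the Pettis-integrability criterion \prettyref{prop:pettis.ccp.to.loc.complete} (iii). That criterion asks us to split the integrand as a product of an $\mathcal{L}^{1}$-weight and a function which, after multiplication by a suitable factor $g$, is $\mathcal{C}^{1}_{b}$, with a further function $h$ controlling the behaviour at infinity.

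\textbf{Key steps.} First I would choose $m:=d+1$ and set $\psi(\zeta):=(1+|\zeta|^{2})^{-m/2}$ and $g:=\psi^{-1}$, so that $\psi\in\mathcal{L}^{1}(\R^{d},\lambda)$ and $\psi g=1$, exactly as in \prettyref{prop:Fourier-trafo_Bjoerck}. Then I would define $u:=fh_{n}g$ and verify that $u\in\mathcal{C}^{1}_{b}(\R^{d},E)$. The point here is that each partial derivative $(\partial^{e_{j}})^{E}u$ is, by the product rule, a finite sum of terms each of the form (a polynomial in $x$) times (a partial derivative of $f$ up to order one) times (a derivative of $h_{n}$) times $g$ or its derivatives; since $h_{n}$ is a Hermite function, its derivatives are again of the form (polynomial)$\cdot\e^{-|x|^{2}/2}$, and the rapidly decaying Gaussian factor together with the Schwartz seminorm estimates on $f$ bounds everything uniformly on $\R^{d}$. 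Concretely, one estimates
\[
\sup_{\substack{\zeta\in\R^{d}\\ \beta\in\N_{0}^{d},|\beta|\leq 1}}p_{\alpha}\bigl((\partial^{\beta})^{E}u(\zeta)\bigr)
\leq C_{n,\alpha}\,\max_{|\gamma|\leq 1}|f|_{\mathcal{S}(\R^{d}),M,\alpha}
\]
for a suitable $M\in\N_{0}$ depending on $n$ and $d$, the constant absorbing the polynomial-times-Gaussian bounds coming from $h_{n}$ and its first derivatives. This yields $u\in\mathcal{C}^{[1]}_{b}(\R^{d},E)$ via \prettyref{prop:abs_conv_comp_C_1_b}.

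\textbf{Completing the argument.} Next I would take $h(\zeta):=1+|\zeta|^{2}$ and check the boundedness of $uh$: since $h_{n}$ decays faster than any power and $f$ is Schwartz, $\sup_{\zeta}p_{\alpha}(u(\zeta)h(\zeta))$ is controlled by a higher Schwartz seminorm of $f$ (again using the Gaussian factor of $h_{n}$ to kill the polynomial growth of $g\cdot h$), and for every $\varepsilon>0$ there is $r>0$ with $1\leq\varepsilon h(\zeta)$ for $\zeta\notin\overline{\mathbb{B}_{r}(0)}$. With $\psi$, $g$, $h$ in hand, \prettyref{prop:pettis.ccp.to.loc.complete} (iii) delivers the Pettis-integrability of $fh_{n}=u\psi$ on $\R^{d}$. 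The main obstacle I anticipate is purely bookkeeping: controlling the derivatives of the Hermite function $h_{n}$ uniformly, i.e.\ writing $(\partial^{\gamma}h_{n})(\zeta)=P_{\gamma}(\zeta)\e^{-|\zeta|^{2}/2}$ with $P_{\gamma}$ a polynomial whose degree grows with $|n|$, and then verifying that the combined polynomial factors from $P_{\gamma}$, from differentiating $g$, and from $h$ are all dominated by the Gaussian so that the relevant suprema reduce to finitely many Schwartz seminorms of $f$. This is entirely analogous to the computation in \prettyref{prop:Fourier-trafo_Bjoerck} and in the proof of \prettyref{cor:Schwartz}, so no genuinely new difficulty arises.
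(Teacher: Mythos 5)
Your proposal is correct and follows essentially the same route as the paper: reduce to \prettyref{prop:pettis.ccp.to.loc.complete} (iii) by exhibiting a splitting $\psi g=1$ with $\psi\in\mathcal{L}^{1}(\R^{d},\lambda)$, verifying $u:=fh_{n}g\in\mathcal{C}^{[1]}_{b}(\R^{d},E)$ via \prettyref{prop:abs_conv_comp_C_1_b}, and taking $h(\zeta)=1+|\zeta|^{2}$. The only (immaterial) difference is that the paper chooses $\psi(\zeta)=\e^{-|\zeta|^{2}/2}$ and $g=\e^{|\zeta|^{2}/2}$ rather than your polynomial weight, so that $h_{n}g=C_{n}H_{n}$ and all auxiliary factors become literal polynomials bounded by $(1+|\zeta|^{2})^{m/2}$, which disposes of exactly the bookkeeping you flag as the main obstacle.
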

\begin{proof}
First, we set $\psi\colon\R^{d}\to\R$, $\psi(x):=\e^{-|x|^{2}/2}$, as well as 
$g\colon\R^{d}\to [0,\infty)$, $g(x):=\e^{|x|^{2}/2}$. 
Then $\psi\in\mathcal{L}^{1}(\R^{d},\lambda)$ and $\psi g=1$. 
Moreover, let $u\colon\R^{d}\to E$, $u(x):=f(x)h_{n}(x)g(x)$, and note that
\[
 (\partial^{e_{j}})^{E}u(x)
=(\partial^{e_{j}})^{E}f(x)h_{n}(x)g(x)
 +f(x)g(x)\partial^{e_{j}}h_{n}(x)
 +f(x)h_{n}(x)g(x)x_{j}
\]
where
\begin{align*}
  \partial^{e_{j}}h_{n}(x)
&=(2^{n_{j}}n_{j}!\sqrt{\pi})^{-1/2}(H_{n_{j}}'(x_{j})\e^{-x_{j}^{2}/2}-H_{n_{j}}(x_{j})x_{j}\e^{-x_{j}^{2}/2})
  \prod_{k=1,k\neq j}^{d} h_{n_{k}}(x_{k})\\
&= (2^{n_{j}}n_{j}!\sqrt{\pi})^{-1/2}(2n_{j}H_{n_{j}-1}(x_{j})-x_{j}H_{n_{j}}(x_{j}))\e^{-x_{j}^{2}/2}
  \prod_{k=1,k\neq j}^{d} h_{n_{k}}(x_{k})
\end{align*}
for all $x=(x_{k})\in\R^{d}$ and $1\leq j\leq d$.
We set $C_{n}:=(\prod_{i=1}^{d}2^{n_{i}}n_{i}!\sqrt{\pi})^{-1/2}$ and observe that 
\[
 g(x)\partial^{e_{j}}h_{n}(x)=\e^{|x|^{2}/2}\partial^{e_{j}}h_{n}(x)
 =C_{n}(2n_{j}H_{n_{j}-1}(x_{j})-x_{j}H_{n_{j}}(x_{j}))\prod_{k=1,k\neq j}^{d} H_{n_{k}}(x_{k})
\]
is a polynomial in $d$ variables. The functions given by
\[
h_{n}(x)g(x)=\e^{|x|^{2}/2}h_{n}(x)=C_{n}H_{n}(x)\quad\text{and}\quad h_{n}(x)g(x)x_{j}=C_{n}H_{n}(x)x_{j}
\]
are polynomials in $d$ variables as well. Thus there are $m\in\N$ and $C>0$ such that 
\[
\max\bigl(|h_{n}(x)g(x)|,|g(x)\partial^{e_{j}}h_{n}(x)|,|h_{n}(x)g(x)x_{j}|\bigr)\leq C(1+|x|^{2})^{m/2}
\]
for all $x\in\R^{d}$ and $1\leq j\leq d$, which implies 
\[
     p_{\alpha}((\partial^{e_{j}})^{E}u(x))\\
\leq C \bigl(p_{\alpha}((\partial^{e_{j}})^{E}f(x))(1+|x|^{2})^{m/2}
 +2p_{\alpha}(f(x))(1+|x|^{2})^{m/2}\bigr)
\]
for all $\alpha\in\mathfrak{A}$ and hence 
\[
\sup_{\substack{x\in\R^{d}\\ \beta\in\N_{0}^{d},|\beta|\leq 1}}p_{\alpha}((\partial^{\beta})^{E}u(x))
\leq 3C|f|_{\mathcal{S}(\R^{d}),m,\alpha}.
\]
Therefore $u=fh_{n}g$ is (weakly) $\mathcal{C}^{1}_{b}$, which yields
$u\in\mathcal{C}_{b}^{[1]}(\R^{d},E)$ by \prettyref{prop:abs_conv_comp_C_1_b}. 
Further, we set $h\colon \R^{d}\to (0,\infty)$, $h(x):=1+|x|^2$, and observe that 
\[
\sup_{x\in\R^{d}}p_{\alpha}(u(x)h(x))\leq\sup_{x\in\R^{d}}p_{\alpha}(f(x))|h_{n}(x)g(x)h(x)|\leq C|f|_{m+2,\alpha}<\infty
\]
for all $\alpha\in\mathfrak{A}$. In addition, we remark that for every $\varepsilon>0$ there is $r>0$ 
such that $1\leq\varepsilon h(x)$ for all $x\notin\overline{\mathbb{B}_{r}(0)}=:K$. 
We deduce from \prettyref{prop:pettis.ccp.to.loc.complete} (iii) that $fh_{n}$ 
is Pettis-integrable on $\R^{d}$.
\end{proof}

Due to the previous proposition we can define the $n$-th Fourier coefficient of 
$f\in\mathcal{S}(\R^{d},E)$ by
\[
  \widehat{f}(n):=\mathscr{F}^{E}_{n}(f)
:=\int_{\R^{d}}f(x)\overline{h_{n}(x)}\d x=\int_{\R^{d}}f(x)h_{n}(x)\d x, \quad n\in\N_{0}^{d},
\]
if $E$ is locally complete.
We know that the map 
\[
 \mathscr{F}^{\K}\colon\mathcal{S}(\R^{d})\to s(\N_{0}^{d}),\;
 \mathscr{F}^{\K}(f):=\bigl(\widehat{f}(n)\bigr)_{n\in\N_{0}^{d}},
\]
is an isomorphism (see e.g.\ \cite[Satz 3.7, p.\ 66]{Kaballo}). 
We improve this result to locally complete $E$ 
and derive a Schauder decomposition of $\mathcal{S}(\R^{d},E)$ as well.

\begin{thm}\label{thm:fourier.rap.dec}
Let $E$ be a locally complete lcHs. Then the following holds:
\begin{enumerate}
\item [a)] $(\sum_{n\in\N_{0}^{d}, |n|\leq k}\mathscr{F}^{E}_{n}h_{n})_{k\in\N}$ 
is a Schauder decomposition of $\mathcal{S}(\R^{d},E)$ and 
\[
f=\sum_{n\in\N_{0}^{d}}\widehat{f}(n)h_{n}, \quad f\in\mathcal{S}(\R^{d},E).
\] 
\item [b)] The map
 \[
  \mathscr{F}^{E}\colon \mathcal{S}(\R^{d},E)\to s(\N_{0}^{d},E),
  \;\mathscr{F}^{E}(f):=\bigl(\widehat{f}(n)\bigr)_{n\in\N_{0}^{d}},
 \]
 is an isomorphism and
 \[
 \mathscr{F}^{E}=S_{s(\N_{0}^{d})}\circ(\mathscr{F}^{\K}\varepsilon\id_{E})\circ S_{\mathcal{S}(\R^{d})}^{-1}.
 \]
\end{enumerate}
\end{thm}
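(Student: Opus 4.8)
The plan is to recognise \prettyref{thm:fourier.rap.dec} as a direct application of the Schauder decomposition machinery developed in \prettyref{cor:schauder_decomp}, combined with the $\varepsilon$-compatibility results already established for the Schwartz space. First I would set up the scalar-valued picture: the Hermite functions $(h_{n})_{n\in\N_{0}^{d}}$ form an unconditional equicontinuous Schauder basis of $\mathcal{S}(\R^{d})$, with associated coefficient functionals $\lambda_{n}^{\K}=\mathscr{F}^{\K}_{n}$, where equicontinuity is automatic because $\mathcal{S}(\R^{d})$ is a barrelled (indeed Fr\'echet--Schwartz) space, and unconditionality follows from nuclearity via the Dynin--Mitiagin theorem as remarked after \prettyref{thm:schauder_decomp}. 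The $\varepsilon$-compatibility $\mathcal{S}(\R^{d},E)\cong\mathcal{S}(\R^{d})\varepsilon E$ for locally complete $E$ is exactly \prettyref{cor:Schwartz}.

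The key step is to verify the consistency condition needed by \prettyref{cor:schauder_decomp}, namely that the pair $(\mathscr{F}^{E},\mathscr{F}^{\K})$, with $\mathscr{F}^{E}\colon\mathcal{S}(\R^{d},E)\to E^{\N_{0}^{d}}$ given by the Fourier coefficients, is a consistent family for $(\mathcal{S},E)$. Concretely, I would show that for every $u\in\mathcal{S}(\R^{d})\varepsilon E$ and $n\in\N_{0}^{d}$,
\[
\mathscr{F}^{E}_{n}(S(u))=u(\mathscr{F}^{\K}_{n}).
\]
This is where \prettyref{prop:Schwartz_pettis_int} does the real work: it guarantees that $fh_{n}$ is Pettis-integrable for $f=S(u)\in\mathcal{S}(\R^{d},E)$, so that $\widehat{f}(n)=\int_{\R^{d}}f(x)h_{n}(x)\,\d x$ exists as an element of $E$ and satisfies $e'(\widehat{f}(n))=\int_{\R^{d}}(e'\circ f)(x)h_{n}(x)\,\d x=\mathscr{F}^{\K}_{n}(e'\circ f)$ for all $e'\in E'$. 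Writing $f=S(u)$ and using the defining relation of $S$ together with the identity $R_{f}^{t}(\mathscr{F}^{\K}_{n})=\mathcal{J}(\widehat{f}(n))$ (exactly as in the argument for \prettyref{ex:Bjoerck}, see \eqref{eq:bjoerck_fourier_eps}), one obtains $u(\mathscr{F}^{\K}_{n})=\mathcal{J}^{-1}(R_{f}^{t}(\mathscr{F}^{\K}_{n}))=\widehat{f}(n)=\mathscr{F}^{E}_{n}(S(u))$. This establishes consistency.

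Once consistency is in hand, part a) is immediate from \prettyref{cor:schauder_decomp} a) and b): the operators $P_{k}^{E}=\sum_{|n|\le k}\mathscr{F}^{E}_{n}(\cdot)h_{n}$ form a Schauder decomposition of $\mathcal{S}(\R^{d},E)$ and every $f$ has the stated expansion $f=\sum_{n\in\N_{0}^{d}}\widehat{f}(n)h_{n}$. For part b) I would argue that $\mathscr{F}^{E}$ decomposes as $S_{s(\N_{0}^{d})}\circ(\mathscr{F}^{\K}\varepsilon\id_{E})\circ S_{\mathcal{S}(\R^{d})}^{-1}$ by appealing to \prettyref{thm:eps_prod_surj_inj} b): here $(\mathscr{F}^{E},\mathscr{F}^{\K})$ is the consistent family, $T^{\K}=\mathscr{F}^{\K}\colon\mathcal{S}(\R^{d})\to s(\N_{0}^{d})$ is the scalar isomorphism, and both $\mathcal{S}(\R^{d})$, $s(\N_{0}^{d})$ are $\varepsilon$-compatible with their $E$-valued versions (the latter by \prettyref{cor:sequence_vanish_infty} b), valid for locally complete $E$). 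Since $\mathscr{F}^{\K}$ is an isomorphism and both $S$-maps are surjective, \prettyref{thm:eps_prod_surj_inj} b) yields that $\mathscr{F}^{E}$ is an isomorphism with the claimed factorisation. The only point requiring care is to check that the $\mathscr{F}^{E}$ defined via Fourier coefficients genuinely coincides with the operator $T^{E}$ appearing in \prettyref{thm:eps_prod_surj_inj}, i.e.\ that $\mathscr{F}^{E}\colon\mathcal{S}(\R^{d},E)\to s(\N_{0}^{d},E)$ maps into the rapidly decreasing sequence space; this follows from the isometry-type estimates implicit in \prettyref{prop:Schwartz_pettis_int} together with the strength of the family, but I expect the main obstacle to be bookkeeping the compatibility between the two descriptions of $\mathscr{F}^{E}$ rather than any genuine analytic difficulty.
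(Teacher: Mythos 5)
Your proposal is correct and follows essentially the same route as the paper: equicontinuity and unconditionality of the Hermite basis from barrelledness and nuclearity of $\mathcal{S}(\R^{d})$, $\varepsilon$-compatibility via \prettyref{cor:Schwartz}, consistency of $(\mathscr{F}^{E},\mathscr{F}^{\K})$ from the Pettis-integrability of $fh_{n}$ (the paper packages your inline computation as an application of \prettyref{prop:pettis_consistent} with $(T^{E}_{0},T^{\K}_{0})=(h_{n}\id_{E^{\R^{d}}},h_{n}\id_{\K^{\R^{d}}})$), and then \prettyref{cor:schauder_decomp} for a) and \prettyref{thm:eps_prod_surj_inj} b) for b). The only point where the paper is cleaner is the well-definedness of $\mathscr{F}^{E}$ into $s(\N_{0}^{d},E)$: rather than ``isometry-type estimates'', it uses the strength identity $\langle e',\mathscr{F}^{E}(f)_{n}\rangle=\mathscr{F}^{\K}(e'\circ f)_{n}$ together with Mackey's theorem to pass from weak to strong membership.
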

\begin{proof}
Let us begin with part a). 
Due to \prettyref{cor:Schwartz} the spaces $\mathcal{S}(\R^{d})$ and $\mathcal{S}(\R^{d},E)$ 
are $\varepsilon$-compatible and the inverse of the isomorphism 
$S\colon\mathcal{S}(\R^{d})\varepsilon E\to \mathcal{S}(\R^{d},E)$ is given by the map 
$R^{t}\colon\mathcal{S}(\R^{d},E)\to\mathcal{S}(\R^{d})\varepsilon E$, $f\mapsto\mathcal{J}^{-1}\circ R_{f}^{t}$,
according to \prettyref{thm:full_linearisation}. Moreover, $\mathcal{S}(\R^{d})$ is a nuclear Fr\'{e}chet space, 
thus barrelled, and hence its Schauder basis $(h_{n})$ is equicontinuous and unconditional.
From the Pettis-integrability of $fh_{n}$ 
by \prettyref{prop:Schwartz_pettis_int} and \prettyref{prop:pettis_consistent} 
with $(T^{E}_{0},T^{\K}_{0}):=(h_{n}\id_{E^{\R^{d}}},h_{n}\id_{\K^{\R^{d}}})$ 
we obtain that $(\mathscr{F}^{E}, \mathscr{F}^{\K})$ is consistent. 
Hence we conclude our statement from \prettyref{cor:schauder_decomp}.

Let us turn to part b). First, we show that the map $\mathscr{F}^{E}$ 
is well-defined. Let $f\in\mathcal{S}(\R^{d},E)$. 
Then $e'\circ f\in \mathcal{S}(\R^{d})$ and 
\[
 \langle e',\mathscr{F}^{E}(f)_{n}\rangle=\langle e',\widehat{f}(n)\rangle
 =\widehat{e'\circ f}(n)=\mathscr{F}^{\K}(e'\circ f)_{n}
\]
for every $n\in\N_{0}^{d}$ and $e'\in E'$. Thus we have $\mathscr{F}^{\K}(e'\circ f)\in s(\N_{0}^{d})$ 
for every $e'\in E'$, which implies by \cite[Mackey's theorem 23.15, p.\ 268]{meisevogt1997}
that $\mathscr{F}^{E}(f)\in s(\N_{0}^{d},E)$ and that $\mathscr{F}^{E}$ is well-defined. 
Due to \prettyref{cor:Schwartz} and \prettyref{cor:sequence_vanish_infty} the maps 
$S_{\mathcal{S}(\R^{d})}$ and $S_{s(\N_{0}^{d})}$ are isomorphisms, which implies that 
$\mathscr{F}^{E}$ is also an isomorphism with 
$\mathscr{F}^{E}=S_{s(\N_{0}^{d})}\circ(\mathscr{F}^{\K}\varepsilon\id_{E})\circ S_{\mathcal{S}(\R^{d})}^{-1}$
by \prettyref{thm:eps_prod_surj_inj} b).
\end{proof}

Our last example of this subsection is devoted to Fourier expansions in the space $\mathcal{C}^{\infty}_{2\pi}(\R^{d},E)$. 
We recall that $\mathcal{C}^{\infty}_{2\pi}(\R^{d},E)$ denotes the topological subspace of 
$\mathcal{CW}^{\infty}(\R^{d},E)$ consisting of the functions 
which are $2\pi$-periodic in each variable. 
Due to \prettyref{lem:pettis.loc.complete} we are able to define the $n$-th Fourier coefficient of 
$f\in\mathcal{C}^{\infty}_{2\pi}(\R^{d},E)$ by
\[
\widehat{f}(n):=\mathfrak{F}^{E}_{n}(f):=(2\pi)^{-d}\int_{[-\pi,\pi]^{d}}f(x)\e^{-\iu\langle n,x\rangle}\d x, \quad n\in\Z^{d},
\]
where $\langle\cdot,\cdot\rangle$ is the usual scalar product on $\R^{d}$, if $E$ is locally complete. 
We know that the map 
\[
 \mathfrak{F}^{\C}\colon\mathcal{C}^{\infty}_{2\pi}(\R^{d})\to s(\Z^{d}),\;
 \mathfrak{F}^{\C}(f):=\bigl(\widehat{f}(n)\bigr)_{n\in\Z^{d}},
\]
is an isomorphism (see e.g.\ \cite[Satz 1.7, p.\ 18]{Kaballo}), which we lift to the $E$-valued case.

\begin{thm}\label{thm:fourier_periodic}
Let $E$ be a locally complete lcHs over $\C$.
\begin{enumerate}
\item[a)] Then $(\sum_{n\in\Z^{d}, |n|\leq k}\mathfrak{F}^{E}_{n}\e^{\iu\langle n,\cdot\rangle})_{k\in\N}$
is a Schauder decomposition of $\mathcal{C}^{\infty}_{2\pi}(\R^{d},E)$ and
\[
f=\sum_{n\in\Z^{d}}\widehat{f}(n)\e^{\iu\langle n,\cdot\rangle}, \quad f\in\mathcal{C}^{\infty}_{2\pi}(\R^{d},E).
\]
\item[b)] The map 
\[
\mathfrak{F}^{E}\colon\mathcal{C}^{\infty}_{2\pi}(\R^{d},E)\to s(\Z^{d},E),\;
\mathfrak{F}^{E}(f):=\bigl(\widehat{f}(n)\bigr)_{n\in\Z^{d}},
\] 
is an isomorphism and 
\[
 \mathfrak{F}^{E}
=S_{s(\Z^{d})}\circ (\mathfrak{F}^{\C}\varepsilon\id_{E})\circ S^{-1}_{\mathcal{C}^{\infty}_{2\pi}(\R^{d})}.
\]
\end{enumerate}
\end{thm}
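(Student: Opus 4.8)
The plan is to treat Theorem~\ref{thm:fourier_periodic} in complete parallel to Theorem~\ref{thm:fourier.rap.dec}, since the only structural difference is that the role of the Hermite functions $h_{n}$ is now played by the exponentials $\e^{\iu\langle n,\cdot\rangle}$ and the role of the Schwartz space by $\mathcal{C}^{\infty}_{2\pi}(\R^{d})$. First I would record the three ingredients that make the machinery run. The $\varepsilon$-compatibility of $\mathcal{C}^{\infty}_{2\pi}(\R^{d})$ and $\mathcal{C}^{\infty}_{2\pi}(\R^{d},E)$ for locally complete $E$ is exactly \prettyref{ex:smooth_periodic_eps_compat}, and its inverse $R^{t}$ is the one from \prettyref{thm:full_linearisation}. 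The fact that $\mathcal{C}^{\infty}_{2\pi}(\R^{d})$ is a nuclear Fr\'echet space (being a closed subspace of the nuclear $\mathcal{C}^{\infty}(\R^{d})$) gives that it is barrelled, so its Schauder basis $(\e^{\iu\langle n,\cdot\rangle})_{n\in\Z^{d}}$ is equicontinuous and, by the Dynin--Mitiagin theorem, unconditional; this is what lets me index the Schauder decomposition over the countable set $\Z^{d}$. The scalar isomorphism $\mathfrak{F}^{\C}\colon\mathcal{C}^{\infty}_{2\pi}(\R^{d})\to s(\Z^{d})$ is quoted from the literature.

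For part a) the key step is to verify that the pair $(\mathfrak{F}^{E},\mathfrak{F}^{\C})$ is a consistent family for $(\mathcal{C}^{\infty}_{2\pi},E)$, i.e.\ that $\mathfrak{F}^{E}_{n}(S(u))=u(\mathfrak{F}^{\C}_{n})$ for every $u\in\mathcal{C}^{\infty}_{2\pi}(\R^{d})\varepsilon E$ and $n\in\Z^{d}$. Here I would invoke \prettyref{prop:pettis_consistent} with the strong family $(T^{E}_{0},T^{\K}_{0}):=(\e^{-\iu\langle n,\cdot\rangle}\id_{E^{\R^{d}}},\e^{-\iu\langle n,\cdot\rangle}\id_{\K^{\R^{d}}})$ on the measure space $([-\pi,\pi]^{d},\lambda)$, after first checking that $f\e^{-\iu\langle n,\cdot\rangle}$ is Pettis-integrable over $[-\pi,\pi]^{d}$ for every $f\in\mathcal{C}^{\infty}_{2\pi}(\R^{d},E)$. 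The Pettis-integrability is the one technical point, but it is immediate from \prettyref{lem:pettis.loc.complete} since $f\e^{-\iu\langle n,\cdot\rangle}\in\mathcal{C}^{1}([-\pi,\pi]^{d},E)$ (indeed $\mathcal{C}^{\infty}$) and $[-\pi,\pi]^{d}$ is compact with finite Lebesgue measure; this is precisely the justification already used in the definition of $\widehat{f}(n)$ above the theorem. With consistency in hand, \prettyref{cor:schauder_decomp} applied to $\F:=\mathcal{C}^{\infty}_{2\pi}(\R^{d})$, the equicontinuous basis $(\e^{\iu\langle n,\cdot\rangle})_{n\in\Z^{d}}$ and the coefficient maps $\lambda^{E}_{n}:=\mathfrak{F}^{E}_{n}$ yields both the Schauder decomposition $(\sum_{|n|\leq k}\mathfrak{F}^{E}_{n}\e^{\iu\langle n,\cdot\rangle})_{k\in\N}$ and the expansion $f=\sum_{n\in\Z^{d}}\widehat{f}(n)\e^{\iu\langle n,\cdot\rangle}$.

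For part b) I would first check that $\mathfrak{F}^{E}$ maps into $s(\Z^{d},E)$ and is well-defined. This follows the template of \prettyref{thm:fourier.rap.dec} b): for $f\in\mathcal{C}^{\infty}_{2\pi}(\R^{d},E)$ and $e'\in E'$ the Pettis-integrability gives $\langle e',\mathfrak{F}^{E}_{n}(f)\rangle=\mathfrak{F}^{\C}_{n}(e'\circ f)$, so $\mathfrak{F}^{\C}(e'\circ f)\in s(\Z^{d})$ for every $e'\in E'$, and Mackey's theorem (\cite[23.15, p.\ 268]{meisevogt1997}) upgrades this weak statement to $\mathfrak{F}^{E}(f)\in s(\Z^{d},E)$. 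Then I would apply \prettyref{thm:eps_prod_surj_inj} b) with $\mathcal{F}_{1}(\Omega_{1}):=\mathcal{C}^{\infty}_{2\pi}(\R^{d})$, $\mathcal{F}_{2}(\Omega_{2}):=s(\Z^{d})$, $T^{\K}:=\mathfrak{F}^{\C}$ and $T^{E}:=\mathfrak{F}^{E}$: the two relevant $S$-maps are isomorphisms by \prettyref{ex:smooth_periodic_eps_compat} and \prettyref{cor:sequence_vanish_infty} b), the consistency of $(\mathfrak{F}^{E},\mathfrak{F}^{\C})$ was established in part a) (it is the $n$-indexed statement $\mathfrak{F}^{E}_{n}(S(u))=u(\delta\text{-type functional})$), and $\mathfrak{F}^{\C}$ is an isomorphism. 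The conclusion of \prettyref{thm:eps_prod_surj_inj} b) is exactly that $\mathfrak{F}^{E}$ is an isomorphism with $\mathfrak{F}^{E}=S_{s(\Z^{d})}\circ(\mathfrak{F}^{\C}\varepsilon\id_{E})\circ S^{-1}_{\mathcal{C}^{\infty}_{2\pi}(\R^{d})}$. The main obstacle, such as it is, is purely bookkeeping: making sure the consistency identity is phrased for the operator family $(\mathfrak{F}^{E}_{n},\mathfrak{F}^{\C}_{n})_{n\in\Z^{d}}$ in the form \prettyref{thm:eps_prod_surj_inj} b) requires, rather than as the componentwise statement \prettyref{cor:schauder_decomp} uses, but both follow from the same single Pettis-integration identity, so no new analytic difficulty arises.
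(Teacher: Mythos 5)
Your proposal is correct and follows essentially the same route as the paper: $\varepsilon$-compatibility from \prettyref{ex:smooth_periodic_eps_compat}, consistency of $(\mathfrak{F}^{E},\mathfrak{F}^{\C})$ via Pettis-integrability and \prettyref{prop:pettis_consistent}, then \prettyref{cor:schauder_decomp} for a) and Mackey's theorem plus \prettyref{thm:eps_prod_surj_inj} b) with \prettyref{cor:sequence_vanish_infty} for b). No substantive differences to report.
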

\begin{proof}
The spaces $\mathcal{C}^{\infty}_{2\pi}(\R^{d})$ and $\mathcal{C}^{\infty}_{2\pi}(\R^{d},E)$ 
are $\varepsilon$-compatible by \prettyref{ex:smooth_periodic_eps_compat}.

The space $\mathcal{C}^{\infty}_{2\pi}(\R^{d})$ is barrelled since it is a nuclear Fr\'{e}chet space 
and thus its Schauder basis $(\e^{\iu\langle n,\cdot\rangle})$ is equicontinuous and unconditional. 
By \prettyref{thm:full_linearisation} the inverse of $S_{\mathcal{C}^{\infty}_{2\pi}(\R^{d})}$ is given by 
$R^{t}\colon \mathcal{C}^{\infty}_{2\pi}(\R^{d},E) \to \mathcal{C}^{\infty}_{2\pi}(\R^{d}) \varepsilon E$,
$f\mapsto \mathcal{J}^{-1}\circ R_{f}^{t}$.
From the Pettis-integrability of $f\e^{-\iu\langle n,\cdot\rangle}$ and \prettyref{prop:pettis_consistent} 
with $(T^{E}_{0},T^{\K}_{0}):=(\e^{-\iu\langle n,\cdot\rangle}\id_{E^{\R^{d}}},
\e^{-\iu\langle n,\cdot\rangle}\id_{\C^{\R^{d}}})$ 
we obtain that $(\mathfrak{F}^{E}, \mathfrak{F}^{\C})$ is consistent. 
Hence we conclude part a) from \prettyref{cor:schauder_decomp}.

Let us turn to part b). As in \prettyref{thm:fourier.rap.dec} it follows from 
\cite[Mackey's theorem 23.15, p.\ 268]{meisevogt1997} that the map $\mathfrak{F}^{E}$ is well-defined.
Due to \prettyref{cor:sequence_vanish_infty} and \prettyref{ex:smooth_periodic_eps_compat} the maps $S_{s(\Z^{d})}$
and $S_{\mathcal{C}^{\infty}_{2\pi}(\R^{d})}$ are isomorphisms, which implies that $\mathfrak{F}^{E}$ 
is an isomorphism as well with 
$\mathfrak{F}^{E}
=S_{s(\Z^{d})}\circ(\mathscr{F}^{\C}\varepsilon\id_{E})\circ S_{\mathcal{C}^{\infty}_{2\pi}(\R^{d})}^{-1}$
by \prettyref{thm:eps_prod_surj_inj} b).
\end{proof}

For quasi-complete $E$ \prettyref{thm:fourier_periodic} is already known by \cite[Satz 10.8, p.\ 239]{Kaballo}.
\section{Representation by sequence spaces}
\label{sect:sequence_space}
Our last section is dedicated to the representation of weighted spaces of $E$-valued functions by weighted
spaces of $E$-valued sequences if there is a counterpart of this representation in the scalar-valued case 
involving the coefficient functionals associated to a Schauder basis (see \prettyref{rem:Schauder_coeff_set_uni} b)).
We only touched upon this problem in \prettyref{sect:schauder} 
for special cases like $\mathcal{S}(\R^{d},E)$ and $\mathcal{C}^{\infty}_{2\pi}(\R^{d},E)$ in
\prettyref{thm:fourier.rap.dec} b) and \prettyref{thm:fourier_periodic} b).
We solve this problem in a different way by an application of our extension results from \prettyref{sect:extension}.
As an example we treat the space $\mathcal{O}(\D_{R}(0),E)$ of holomorphic functions 
and the multiplier space $\mathcal{O}_{M}(\R,E)$ of the Schwartz space 
(see \prettyref{cor:Schauder_coeff_space_multiplier}).

\begin{thm}\label{thm:Schauder_coeff_space}
Let $E$ be a locally complete lcHs, $G\subset E'$ determine boundedness and $\F$ and $\FE$ resp.\ 
$\ell(\N)$ and $\ell(\N,E)$ be $\varepsilon$-into-compatible with $e'\circ g\in\ell(\N)$ for all $e'\in E'$ 
and $g\in\ell(\N,E)$.
Let $(f_{n})_{n\in\N}$ be an equicontinuous Schauder basis of $\F$ 
with associated coefficient functionals $(T^{\K}_{n})_{n\in \N}$ such that 
\[
T^{\K}\colon \F\to\ell(\N),\;T^{\K}(f):=(T^{\K}_{n}(f))_{n\in \N},
\]
is an isomorphism and let there be $T^{E}\colon \FE\to E^{\N}$ such that $(T^{E},T^{\K})$ 
is a strong, consistent family for $(\mathcal{F},E)$. 
If 
\begin{enumerate}
\item[(i)] $\F$ is a Fr\'{e}chet--Schwartz space, or 
\item[(ii)] $E$ is sequentially complete, $G=E'$ and $\F$ is a semi-Montel BC-space,
\end{enumerate}
then the following holds:
\begin{enumerate}
\item[a)] $\mathcal{F}_{G}(\N,E)=\ell(\N,E)$.
\item[b)] $\ell(\N)$ and $\ell(\N,E)$ are $\varepsilon$-compatible, in particular, $\ell(\N)\varepsilon E\cong\ell(\N,E)$.
\item[c)] The map 
\[
T^{E}\colon \FE\to\ell(\N,E),\;T^{E}(f):=(T^{E}_{n}(f))_{n\in \N},
\]
is a well-defined isomorphism, $\F$ and $\FE$ are $\varepsilon$-compatible, in particular, 
$\F\varepsilon E\cong\FE$, and $T^{E}=S_{\ell(\N)}\circ(T^{\K}\varepsilon\id_{E})\circ S_{\F}^{-1}$.
\end{enumerate}
\end{thm}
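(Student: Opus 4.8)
The plan is to transport everything through the isomorphism $T^{\K}\colon\F\to\ell(\N)$ and to read the three assertions as a weak--strong principle for the pair $(\ell(\N),\ell(\N,E))$ together with the lifting machinery of \prettyref{thm:eps_prod_surj_inj}. First I would record that $U:=\N$ is a set of uniqueness for $(T^{\K}_{n},\mathcal{F})_{n\in\N}$ by \prettyref{rem:Schauder_coeff_set_uni}, and trivially one for $(\id_{\K^{\N}},\ell)$. Since $T^{\K}$ is an isomorphism, $\ell(\N)$ inherits the relevant type of $\F$: a Fr\'echet--Schwartz space in case (i), a semi-Montel BC-space in case (ii). I would then unwind the restriction space: because $T^{\K}$ maps $\F$ onto $\ell(\N)$, for $f\colon\N\to E$ an extension $f_{e'}\in\F$ with $T^{\K}_{n}(f_{e'})=e'(f(n))$ exists exactly when $e'\circ f\in\ell(\N)$ (then $f_{e'}=(T^{\K})^{-1}(e'\circ f)$). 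Hence $\mathcal{F}_{G}(\N,E)=\{f\colon\N\to E\mid\forall\,e'\in G:\,e'\circ f\in\ell(\N)\}$, which is exactly the restriction space $\ell(\N)_{G}(\N,E)$ of the identity generator $(\id_{E^{\N}},\id_{\K^{\N}})$ on $\ell(\N)$; that generator is strong and consistent for $(\ell,E)$ by the assumed $\varepsilon$-into-compatibility of $\ell(\N),\ell(\N,E)$ and the hypothesis $e'\circ g\in\ell(\N)$ for $g\in\ell(\N,E)$.

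For a) and b) I would apply the extension machinery to $(\ell(\N),\ell(\N,E))$ with this identity generator and $U=\N$. In case (i), \prettyref{thm:ext_FS_set_uni} applies directly (as $G$ determines boundedness, $E$ is locally complete and $\ell(\N)$ is Fr\'echet--Schwartz) and gives surjectivity of $R_{\N,G}\colon S_{\ell(\N)}(\ell(\N)\varepsilon E)\to\ell(\N)_{G}(\N,E)$. In case (ii) I would invoke \prettyref{thm:ext_F_semi_M} under its condition (ii); its extra hypothesis — that every $f'\in\ell(\N)'$ be a $\kappa(\ell(\N)',\ell(\N))$-limit of a sequence in $\ell(\N)'$ whose $\mathscr{R}_{f}^{t}$-image lies in $\mathcal{J}(E)$ — is the one genuinely technical point, which I would settle with the Schauder structure as in \prettyref{thm:schauder_decomp}: transporting $(f_{n})$ gives an equicontinuous Schauder basis $(\varphi_{n})$ of $\ell(\N)$ with coefficient functionals $(\delta_{n})$, the adjoint expansion operators satisfy $P_{m}^{t}(f')=\sum_{n\leq m}\langle f',\varphi_{n}\rangle\delta_{n}\in\operatorname{span}\{\delta_{n}\}$, equicontinuity of $(P_{m})$ makes $\{P_{m}^{t}(f')\}\cup\{f'\}$ equicontinuous, on which $\sigma$ and $\kappa$ agree, so $P_{m}^{t}(f')\to f'$ in $\ell(\N)'_{\kappa}$, while $\mathscr{R}_{f}^{t}(P_{m}^{t}(f'))\in\mathcal{J}(E)$ since $\mathscr{R}_{f}^{t}(\delta_{n})=\mathcal{J}(f(n))$. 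With $R_{\N,G}$ surjective in either case, \prettyref{prop:weak_strong_principle} (applicable since $G$ is separating, being boundedness-determining) yields at once $\ell(\N)\varepsilon E\cong\ell(\N,E)$ via $S_{\ell(\N)}$, which is b), and $\ell(\N,E)=\ell(\N)_{G}(\N,E)=\mathcal{F}_{G}(\N,E)$, which is a).

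For c) I would first check that $T^{E}$ really lands in $\ell(\N,E)$: for $f\in\FE$ strength gives $e'\circ T^{E}(f)=T^{\K}(e'\circ f)\in\ell(\N)$ for all $e'\in E'$, so $T^{E}(f)\in\ell(\N)_{E'}(\N,E)\subset\ell(\N)_{G}(\N,E)=\ell(\N,E)$ by a). Then $T^{E}\colon\FE\to\ell(\N,E)$ is injective by \prettyref{prop:injectivity}, and \prettyref{thm:eps_prod_surj_inj} a) gives the commuting identity $T^{E}\circ S_{\F}=S_{\ell(\N)}\circ(T^{\K}\varepsilon\id_{E})$. Surjectivity of $S_{\F}$ then follows directly: given $f\in\FE$ choose $v\in\ell(\N)\varepsilon E$ with $S_{\ell(\N)}(v)=T^{E}(f)$ (by b)) and set $u:=((T^{\K})^{-1}\varepsilon\id_{E})(v)$; the identity gives $T^{E}(S_{\F}(u))=S_{\ell(\N)}(v)=T^{E}(f)$, so injectivity of $T^{E}$ forces $S_{\F}(u)=f$. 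Hence $\F$ and $\FE$ are $\varepsilon$-compatible, and since $S_{\ell(\N)}$, $T^{\K}\varepsilon\id_{E}$ (an isomorphism with inverse $(T^{\K})^{-1}\varepsilon\id_{E}$) and $S_{\F}$ are all isomorphisms, the commuting identity rearranges to $T^{E}=S_{\ell(\N)}\circ(T^{\K}\varepsilon\id_{E})\circ S_{\F}^{-1}$, exhibiting $T^{E}$ as an isomorphism; equivalently one may quote \prettyref{thm:eps_prod_surj_inj} b) once $S_{\F}$ is known to be onto. The main obstacle is the verification of the sequential $\kappa$-approximation condition in case (ii), where the equicontinuous Schauder basis does the essential work; everything else is bookkeeping built from the identification $\mathcal{F}_{G}(\N,E)=\ell(\N)_{G}(\N,E)$ and the isomorphism $T^{\K}$.
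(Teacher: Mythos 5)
Your proof is correct and follows essentially the same route as the paper's: both reduce a) and b) to the extension theorems (\prettyref{thm:ext_FS_set_uni} in case (i), \prettyref{thm:ext_F_semi_M} (ii) in case (ii), with the sequential $\kappa$-approximation verified via the partial-sum functionals of the equicontinuous Schauder basis), and both obtain c) from injectivity of $T^{E}$ together with the commuting identity $T^{E}\circ S_{\F}=S_{\ell(\N)}\circ(T^{\K}\varepsilon\id_{E})$. The only cosmetic difference is that you apply the extension machinery to $(\ell(\N),\ell(\N,E))$ with the identity generator and the transported basis $(\varphi_{n})=(T^{\K}(f_{n}))$, whereas the paper applies it to $(\F,\FE)$ with the family $(T^{E}_{n},T^{\K}_{n})$; since $T^{\K}$ is an isomorphism these are the same argument.
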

\begin{proof}
a)(1) First, we remark that $\N$ is a set of uniqueness for $(T^{\K},\mathcal{F})$. 
Let $u\in\F\varepsilon E$ and $n\in\N$. Then 
\begin{align}\label{eq:Schauder_coeff_space}
 R_{\N,G}(S_{\F}(u))(n)&=(T^{E}\circ S_{\F})(u)(n)=T^{E}_{n}(S_{\F}(u))=u(T^{\K}_{n})=u(\delta_{n}\circ T^{\K})\notag\\
&=(u\circ (T^{\K})^{t})(\delta_{n})
 =(T^{\K}\varepsilon\id_{E})(u)(\delta_{n})\notag\\
&=\bigl(S_{\ell(\N)}\circ (T^{\K}\varepsilon\id_{E})\bigr)(u)(n)
\end{align}
by consistency and the $\varepsilon$-into-compatibility, yielding $\mathcal{F}_{G}(\N,E)\subset \ell(\N,E)$ once we have shown that $R_{\N,G}$ is surjective, which we postpone to part b).

a)(2) Let $g\in\ell(\N,E)$. Then $e'\circ g\in\ell(\N)$ for all $e'\in E'$ and $g_{e'}:=(T^{\K})^{-1}(e'\circ g)\in\F$. 
We note that $T^{\K}_{n}(g_{e'})=(e'\circ g)(n)$ for all $n\in\N$, which implies 
$\ell(\N,E)\subset \mathcal{F}_{G}(\N,E)$. 

b) We only need to show that $S_{\ell(\N)}$ is surjective. Let $g\in\ell(\N,E)$, which implies 
$g\in\mathcal{F}_{G}(\N,E)$ by part a)(2). 

We claim that $R_{\N,G}$ is surjective. In case (i) this follows directly from \prettyref{thm:ext_FS_set_uni}. 
Let us turn to case (ii) and denote by $(f_{n})_{n\in\N}$ the equicontinuous Schauder basis of $\F$ 
associated to $(T^{\K}_{n})_{n\in \N}$. We check that condition (ii) of \prettyref{thm:ext_F_semi_M} 
is fulfilled. Let $f'\in\F'$ and set 
\[
f_{k}'\colon \F\to\K,\;f_{k}'(f):=\sum_{n=1}^{k}T^{\K}_{n}(f)f'(f_{n}),
\]
for $k\in\N$. Then $f_{k}'\in\F'$ for every $k\in\N$ and $(f_{k}')$ converges to $f'$ in $\F_{\sigma}'$ 
since $(\sum_{n=1}^{k}T^{\K}_{n}(f)f_{n})$ converges to $f$ in $\F$. From the equicontinuity of the 
Schauder basis we deduce that $(f_{k}')$ converges to $f'$ in $\F_{\kappa}'$ 
by \cite[8.5.1 Theorem (b), p.\ 156]{Jarchow}. 
Let $f\in\mathcal{F}_{E'}(\N,E)$. For each $e'\in E'$ and $k\in\N$ we have 
\[
\mathscr{R}^{t}_{f}(f_{k}')(e')=f_{k}'(f_{e'})=\sum_{n=1}^{k}T^{\K}_{n}(f_{e'})f'(f_{n})
=e'(\sum_{n=1}^{k}f(n)f'(f_{n}))
\]
since $f\in\mathcal{F}_{E'}(\N,E)$, implying $\mathscr{R}^{t}_{f}(f_{k}')\in\mathcal{J}(E)$. 
Hence we can apply \prettyref{thm:ext_F_semi_M} (ii) and obtain that $R_{\N,E'}$ is surjective, finishing the proof of part a)(1). 

Thus there is $u\in\F\varepsilon E$ such that $R_{\N,E'}(S_{\F}(u))=g$ in both cases.
Then $(T^{\K}\varepsilon\id_{E})(u)\in\ell(\N)\varepsilon E$ and from \eqref{eq:Schauder_coeff_space} we derive 
\[
S_{\ell(\N)}((T^{\K}\varepsilon\id_{E})(u))=R_{\N,G}(S_{\F}(u))=g,
\]
proving the surjectivity of $S_{\ell(\N)}$. 

c) First, we note that the map $T^{E}$ is well-defined. Indeed, we have $(e'\circ T^{E})(f)=T^{\K}(e'\circ f)\in\ell(\N)$ 
for all $f\in\FE$ and $e'\in E'$ by the strength of the family. 
Part a) implies that $T^{E}(f)\in\mathcal{F}_{G}(\N,E)=\ell(\N,E)$ 
and thus the map $T^{E}$ is well-defined and its linearity follows from the linearity of the $T^{E}_{n}$ for $n\in\N$.
Next, we prove that $T^{E}$ is surjective. 
Let $g\in\ell(\N,E)$. Since $T^{\K}\varepsilon\id_{E}$ is an isomorphism 
and $S_{\ell(\N)}$ by part b) as well, we obtain that 
$u:=((T^{\K}\varepsilon\id_{E})^{-1}\circ S_{\ell(\N)}^{-1})(g)\in \F\varepsilon E$. 
Therefore $S_{\F}(u)\in\FE$ and from \eqref{eq:Schauder_coeff_space} we get
\[
T^{E}(S_{\F}(u))=(T^{E}\circ S_{\F})(u)=\bigl(S_{\ell(\N)}\circ (T^{\K}\varepsilon\id_{E})\bigr)(u)=g,
\]
which means that $T^{E}$ is surjective. The injectivity of $T^{E}$ by \prettyref{prop:injectivity}, implies that 
\[
S_{\F}=(T^{E})^{-1}\circ\bigl(S_{\ell(\N)}\circ (T^{\K}\varepsilon\id_{E})\bigr),
\]
yielding the surjectivity of $S_{\F}$ and thus the $\varepsilon$-compatibility of $\F$ and $\FE$. 
Furthermore, we have $T^{E}=S_{\ell(\N)}\circ (T^{\K}\varepsilon\id_{E})\circ S_{\F}^{-1}$, resulting in 
$T^{E}$ being an isomorphism. 
\end{proof}

We note that one should not confuse the coefficient space $\ell(\N)$ of the Schauder
series expansion of functions from $\F$ in the theorem above with the space $\ell^{1}=\ell^{1}(\N)$ 
of absolutely summable sequences. 
We remark again (see \prettyref{thm:schauder_decomp}) that the index set of the equicontinuous Schauder basis of $\F$ 
in \prettyref{thm:Schauder_coeff_space} need not be $\N$ (or $\N_{0}$) but may be any other countable index set 
as long as the equicontinuous Schauder basis is unconditional which is, for instance, always fulfilled 
if $\F$ is nuclear by \cite[21.10.1 Dynin-Mitiagin Theorem, p.\ 510]{Jarchow}.

\prettyref{thm:Schauder_coeff_space} (i) gives another proof of \prettyref{thm:fourier.rap.dec} b) and 
\prettyref{thm:fourier_periodic} b). 
Let us demonstrate an application of the preceding theorem which relates the space of 
$\mathcal{O}(\D_{R}(0),E)$, $0<R\leq\infty$, of holomorphic functions on $\D_{R}(0)$ with values in a complex 
locally complete lcHs $E$ (see \prettyref{thm:powerseries}) and the K\"othe space $\lambda^{\infty}(A_{R},E)$ 
with K\"othe matrix $A_{R}:=(r_{j}^{k})_{k\in\N_{0},j\in\N}$ 
for some strictly increasing sequence $(r_{j})_{j\in\N}$ in $(0,R)$ converging to $R$ 
(see \prettyref{cor:sequence_vanish_infty}), using the sequence of Taylor coefficients of a holomorphic function.

\begin{cor}\label{cor:Schauder_coeff_space_holom}
Let $E$ be a locally complete lcHs over $\C$, $0<R\leq\infty$ and define 
the K\"othe matrix $A_{R}:=(r_{j}^{k})_{k\in\N_{0},j\in\N}$ 
for some strictly increasing sequence $(r_{j})_{j\in\N}$ in $(0,R)$ converging to $R$. 
Then $\lambda^{\infty}(A_{R})\varepsilon E\cong\lambda^{\infty}(A_{R},E)$
and
 \[
 \lambda^{E}\colon \mathcal{O}(\D_{R}(0),E)\to \lambda^{\infty}(A_{R},E),\;
 \lambda^{E}(f):=\Bigl(\frac{(\partial^{k}_{\C})^{E}f(0)}{k!}\Bigr)_{k\in\N_{0}},
 \]
is an isomorphism with 
$\lambda^{E}=S_{\lambda^{\infty}(A_{R})}\circ(\lambda^{\C}\varepsilon \id_{E})\circ S_{\mathcal{O}(\D_{R}(0))}^{-1}$.
\end{cor}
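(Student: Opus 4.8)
The plan is to verify that \prettyref{cor:Schauder_coeff_space_holom} is a direct application of the abstract \prettyref{thm:Schauder_coeff_space} with the concrete identifications $\F:=\mathcal{O}(\D_{R}(0))$, $\FE:=\mathcal{O}(\D_{R}(0),E)$, $\ell(\N):=\lambda^{\infty}(A_{R})$ (reindexed over $\N_{0}$), $T^{\K}_{k}:=\tfrac{1}{k!}\delta_{0}\circ(\partial^{k}_{\C})^{\C}$ and $T^{E}_{k}:=\tfrac{1}{k!}\delta_{0}\circ(\partial^{k}_{\C})^{E}$, and $G:=E'$. To invoke the theorem I must check each of its hypotheses. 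First I would note that $\mathcal{O}(\D_{R}(0))$ is a nuclear Fr\'{e}chet space, hence a Fr\'{e}chet--Schwartz space, so hypothesis (i) of \prettyref{thm:Schauder_coeff_space} is met; moreover $G=E'$ determines boundedness by \cite[Mackey's theorem 23.15, p.\ 268]{meisevogt1997}.

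Next I would assemble the $\varepsilon$-into-compatibility and the Schauder-basis facts. The pair $\F$ and $\FE$ is $\varepsilon$-compatible (in particular $\varepsilon$-into-compatible) by \prettyref{prop:co_top_isomorphism} together with \eqref{eq:holomorphic_coincide_1}, exactly as used in \prettyref{thm:powerseries}, and the monomials $(z^{k})_{k\in\N_{0}}$ form an equicontinuous (indeed unconditional, by nuclearity) Schauder basis of $\mathcal{O}(\D_{R}(0))$ whose associated coefficient functionals are precisely the $T^{\K}_{k}$. That $(T^{E}_{k},T^{\K}_{k})_{k\in\N_{0}}$ is a strong, consistent family for $(\mathcal{O}(\D_{R}(0)),E)$ follows from \prettyref{prop:complex_diff_cons_strong}, since the scaling by $1/k!$ preserves both strength and consistency. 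For the sequence-space side I would invoke \prettyref{cor:sequence_vanish_infty} a): the K\"othe matrix $A_{R}=(r_{j}^{k})_{k\in\N_{0},j\in\N}$ is a genuine K\"othe matrix because $(r_{j})$ is strictly increasing, and it satisfies condition \eqref{eq:sequence_vanish_infty_koethe}, so $\lambda^{\infty}(A_{R})$ and $\lambda^{\infty}(A_{R},E)$ are $\varepsilon$-compatible; the condition $e'\circ g\in\lambda^{\infty}(A_{R})$ for all $e'\in E'$, $g\in\lambda^{\infty}(A_{R},E)$ is immediate from the definition of the weighted sequence space.

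The one genuinely substantive point — and the step I expect to be the main obstacle — is verifying that
\[
T^{\K}\colon \mathcal{O}(\D_{R}(0))\to\lambda^{\infty}(A_{R}),\quad T^{\K}(f):=\Bigl(\tfrac{(\partial^{k}_{\C})^{\C}f(0)}{k!}\Bigr)_{k\in\N_{0}},
\]
is a topological isomorphism. This is the classical identification of a holomorphic function with its sequence of Taylor coefficients. I would argue it directly: by \prettyref{thm:powerseries} (scalar case $E=\C$) every $f\in\mathcal{O}(\D_{R}(0))$ equals $\sum_{k}\tfrac{(\partial^{k}_{\C})^{\C}f(0)}{k!}z^{k}$, and the Cauchy estimates together with the fact that $\sup_{|z|\le r_j}|f(z)|$ and $\sup_k |\tfrac{(\partial^{k}_{\C})^{\C}f(0)}{k!}|r_j^{k}$ are equivalent families of seminorms (one bounds the other via the Cauchy integral formula on $|z|=\rho$ for $r_j<\rho<R$, and conversely by term-by-term estimation of the power series) shows both $T^{\K}$ and its inverse $(a_k)_{k}\mapsto\sum_k a_k z^k$ are continuous. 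Here the choice of an exhausting sequence $r_j\nearrow R$ is exactly what makes the weighted seminorms of $\lambda^{\infty}(A_{R})$ match the compact-convergence seminorms on $\mathcal{O}(\D_{R}(0))$.

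Once these hypotheses are in place, \prettyref{thm:Schauder_coeff_space} b) yields $\lambda^{\infty}(A_{R})\varepsilon E\cong\lambda^{\infty}(A_{R},E)$, and part c) yields that $T^{E}=\lambda^{E}$ is a well-defined isomorphism with $\lambda^{E}=S_{\lambda^{\infty}(A_{R})}\circ(\lambda^{\C}\varepsilon\id_{E})\circ S_{\mathcal{O}(\D_{R}(0))}^{-1}$, which is precisely the assertion of the corollary. I would close by remarking that the only real content beyond bookkeeping is the scalar Taylor-coefficient isomorphism above; everything else is a matter of citing the already-established $\varepsilon$-compatibilities and the consistency/strength of the complex-derivative family.
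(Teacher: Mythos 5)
Your proposal is correct and follows essentially the same route as the paper: both verify the hypotheses of \prettyref{thm:Schauder_coeff_space} (i) with $\F=\mathcal{O}(\D_{R}(0))$, the monomial Schauder basis, the consistency and strength from \prettyref{prop:complex_diff_cons_strong}, and the $\varepsilon$-compatibilities from \prettyref{prop:co_top_isomorphism} and \prettyref{cor:sequence_vanish_infty}. The only difference is that you prove the scalar Taylor-coefficient isomorphism $T^{\C}\colon\mathcal{O}(\D_{R}(0))\to\lambda^{\infty}(A_{R})$ directly via Cauchy estimates, whereas the paper simply cites \cite[Example 27.27, p.\ 341--342]{meisevogt1997}; your argument for that step is sound.
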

\begin{proof}
By \prettyref{prop:co_top_isomorphism} and \eqref{eq:holomorphic_coincide_1} the spaces $\mathcal{O}(\D_{R}(0))$ and $\mathcal{O}(\D_{R}(0),E)$ are $\varepsilon$-compatible. 
Moreover, $\lambda^{\infty}(A_{R})$ and $\lambda^{\infty}(A_{R},E)$ are $\varepsilon$-compatible by 
\prettyref{cor:sequence_vanish_infty} as $\lim_{k\to\infty}(\frac{r_{j}}{r_{j+1}})^{k}=0$ for any $j\in\N$. 
Clearly, we have $e'\circ x\in\lambda^{\infty}(A_{R})$ for all $e'\in E'$ and $x\in\lambda^{\infty}(A_{R},E)$. 
The space $\mathcal{O}(\D_{R}(0))$ with the topology $\tau_{c}$ of compact convergence 
is a nuclear Fr\'echet space and thus a Fr\'echet--Schwartz space. 
In particular, this space is barrelled and its Schauder basis of monomials 
$(z \mapsto z^{k})_{k\in\N_{0}}$ is equicontinuous. The corresponding coefficient functionals 
are given by $\lambda^{\C}_{k}$ and the map $\lambda^{\C}$ is an isomorphism 
by \cite[Example 27.27, p.\ 341--342]{meisevogt1997}. By the proof of \prettyref{thm:powerseries} the family 
$(\lambda^{E},\lambda^{\C})$ is consistent for $(\mathcal{O},E)$ 
and its strength follows from \prettyref{prop:complex_diff_cons_strong}. 
Now, we can apply \prettyref{thm:Schauder_coeff_space} (i), yielding our statement.
\end{proof}

Let us present another application of \prettyref{thm:Schauder_coeff_space} 
to the space $\mathcal{O}_{M}(\R^{d},E)$ of 
multipliers for the Schwartz space from \prettyref{ex:weighted_smooth_functions} d). 
For simplicity we restrict to the case $d=1$.
Fix a compactly supported test function $\varphi\in\mathcal{C}^{\infty}_{c}(\R)$ with 
$\varphi(x)=1$ for $x\in[0,\frac{1}{4}]$ and $\varphi(x)=0$ for $x\geq\frac{1}{2}$.   
For $f\in\mathcal{C}^{\infty}(\R,E)$ we set 
\[
f_{j}(x):=f(x+j)-\sum_{k=0}^{\infty}a_{k}\varphi(-2^k(x-1))f(-2^k(x-1+j)+1) ,\;x\in[0,1],\,j\in\Z,
\]
where 
\[
 a_{k}:=\prod_{j=0,j\neq k}^{\infty}\frac{1+2^j}{2^j-2^k},\;k\in\N_{0}.
\]
Fixing $x\in[0,1)$, we observe that $f_{j}(x)$ is well-defined for each $j\in\Z$ 
since there are only finitely many summands due to the compact support of $\varphi$ 
and $-2^k(x-1)\to\infty$ for $k\to\infty$. For $x=1$ we have $f_{j}(1)=0$ for each $j$ 
and the convergence of the series in $E$ follows from the uniform continuity of $f$ on $[0,1]$, $f(0)=0$ 
and $\sum_{k=0}^{\infty}a_{k}=1$ by the case $n=0$ in \cite[Lemma (iii), p.\ 625]{seeley1964}. 
For each $e'\in E'$ and $j\in\Z$ we note that 
\[
e'(f_{j}(x))=(e'\circ f)(x+j)-\sum_{k=0}^{\infty}a_{k}\varphi(-2^k(x-1))(e'\circ f)(-2^k(x-1+j)+1),\;
x\in[0,1],
\]
which implies that $e'\circ f_{j}\in \mathcal{E}_{0}$ by \cite[Proposition 3.2, p.\ 15]{bargetz2014}. 
Using the weak-strong principle \prettyref{cor:weak_strong_E_0}, we obtain 
that $f_{j}\in\mathcal{E}_{0}(E)$ for all $j\in\Z$ if $E$ is locally complete. Setting 
\[
 \rho\colon\R\to[0,1],\;\rho(x):=1-\cos(\arctan(x))=1-\frac{1}{\sqrt{1+x^2}},
\]
we deduce from the proof and with the notation of \cite[Proposition 2.2, p.\ 1494]{bargetz2015} that 
$e'\circ f_{j}\circ\rho=(\Phi_{2}^{-1}\circ\Phi_{1})(e'\circ f_{j})$ is an element of the Schwartz space 
$\mathcal{S}(\R)$ for each $e'\in E'$. The weak-strong principle \prettyref{cor:weak_strong_CV} c) 
yields that $f_{j}\circ\rho\in\mathcal{S}(\R,E)$ 
if $E$ is locally complete. Hence $(f_{j}\circ\rho)\cdot h_{2n}$ is Pettis-integrable on $\R$  
for every $j\in\Z$ and $n\in\N_{0}$ by \prettyref{prop:Schwartz_pettis_int}  
if $E$ is locally complete where $h_{n}$ is the $n$-th Hermite function.
Therefore the Pettis-integral
\[
 b_{n,j}(f):=\langle f_{j}\circ\rho, h_{2n}\rangle_{\mathcal{L}^{2}}:=\int_{\R}f_{j}(\rho(x))h_{2n}(x)\d x,\;j\in\Z,
 \,n\in\N_{0},
\]
is a well-defined element of $E$ by \prettyref{prop:Schwartz_pettis_int} if $E$ is locally complete. 
By \cite[Theorem 2.1, p.\ 1496--1497]{bargetz2015} (cf.\ \cite[Theorem 3, p.\ 478]{valdivia1981}) the map 
 \[
  \Phi^{\K}\colon \mathcal{O}_{M}(\R)\to s(\N)_{b}'\widehat{\otimes}_{\pi}s(\N),\;
  \Phi^{\K}(f):=(b_{\sigma(n,j)}(f))_{(n,j)\in\N^2},
 \]
is an isomorphism where $\sigma\colon\N^{2}\to\N_{0}\times\Z$ is the enumeration given 
by $\sigma(n,j):=(n-1,(j-1)/2)$ if $j$ is odd, and $\sigma(n,j):=(n-1,-j/2)$ if $j$ is even. 
Here, we have to interpret $\Phi^{\K}(f)$ as an element of $s(\N)_{b}'\widehat{\otimes}_{\pi}s(\N)$ 
by identification of isomorphic spaces. Namely, 
\[
s(\N)_{b}'\widehat{\otimes}_{\pi}s(\N)\cong s(\N)\widehat{\otimes}_{\pi}s(\N)_{b}'
\cong s(\N) \varepsilon s(\N)_{b}'\cong s(\N,s(\N)_{b}')
\]
holds where the first isomorphism is due to the commutativity of $\widehat{\otimes}_{\pi}$, the second due to 
the nuclearity of $s(\N)$ and the last due to \prettyref{cor:sequence_vanish_infty} b) via $S_{s(\N)}$. 
Then we interpret $\Phi^{\K}(f)$ as an element of $s(\N,s(\N)_{b}')$ by means of
\[
j\in\N \longmapsto [a\in s(\N)\mapsto \sum_{n\in\N}a_{n}b_{\sigma(n,j)}]
\]
(see also \eqref{eq:Schauder_coeff_space_multiplier_1} below).

\begin{cor}\label{cor:Schauder_coeff_space_multiplier}
If $E$ is a sequentially complete lcHs, then the map 
 \[
 \Phi^{E}\colon \mathcal{O}_{M}(\R,E)\to s(\N,L_{b}(s(\N),E)),\;\Phi^{E}(f):=(b_{\sigma(n,j)}(f))_{(n,j)\in\N^2},
 \]
is an isomorphism where we interpret $\Phi^{E}(f)$ as an element of $s(\N,L_{b}(s(\N),E))$.
\end{cor}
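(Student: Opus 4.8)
The plan is to apply the machinery of \prettyref{thm:Schauder_coeff_space} with the scalar-valued sequence space representation $\Phi^{\K}$ serving as the isomorphism $T^{\K}$ onto the coefficient space. First I would identify the coefficient space: set $\ell(\N):=s(\N)_{b}'\widehat{\otimes}_{\pi}s(\N)$ and, using the chain of isomorphisms already noted in the excerpt, namely
\begin{equation}\label{eq:Schauder_coeff_space_multiplier_1}
s(\N)_{b}'\widehat{\otimes}_{\pi}s(\N)\cong s(\N)\widehat{\otimes}_{\pi}s(\N)_{b}'
\cong s(\N)\varepsilon s(\N)_{b}'\cong s(\N,s(\N)_{b}'),
\end{equation}
I would view $\ell(\N)$ concretely as $s(\N,L_{b}(s(\N),E))$ in the vector-valued case, where for $E=\K$ we recover $s(\N,s(\N)_{b}')$ since $L_{b}(s(\N),\K)=s(\N)_{b}'$. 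Thus $\ell(\N,E):=s(\N,L_{b}(s(\N),E))$ is the natural $E$-valued counterpart. The crucial structural input from \prettyref{cor:Schauder_coeff_space_multiplier} preamble is that $\mathcal{O}_{M}(\R)$ carries an equicontinuous Schauder basis whose associated coefficient functionals are exactly the $f\mapsto b_{\sigma(n,j)}(f)$, and that $T^{\K}:=\Phi^{\K}$ is an isomorphism onto $\ell(\N)$.

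Next I would verify the hypotheses of \prettyref{thm:Schauder_coeff_space} (i). The space $\mathcal{O}_{M}(\R)$ is a Fr\'echet--Schwartz space (indeed nuclear Fr\'echet, by the references for $\mathcal{O}_{M}(\R)$ being Montel in \prettyref{cor:Schwartz}), so I would need to confirm the Fr\'echet--Schwartz property; if $\mathcal{O}_{M}(\R)$ is only an LF- or projective-limit space rather than Fr\'echet, I would instead invoke the sequential-completeness branch (ii), noting that $\mathcal{O}_{M}(\R)$ is a semi-Montel BC-space and $E$ is sequentially complete. The $\varepsilon$-compatibility of $\mathcal{O}_{M}(\R)$ and $\mathcal{O}_{M}(\R,E)$ is supplied by \prettyref{cor:Schwartz} for locally complete $E$, hence a fortiori for sequentially complete $E$, and the $\varepsilon$-compatibility of $\ell(\N)$ and $\ell(\N,E)$ follows from \prettyref{cor:sequence_vanish_infty} b) combined with \prettyref{prop:isom_op_spaces} and the nuclearity of $s(\N)$ in \eqref{eq:Schauder_coeff_space_multiplier_1}. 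I would then define the operators $T^{E}_{n,j}(f):=b_{\sigma(n,j)}(f)$ using the Pettis-integral definition of $b_{n,j}$ established in the preamble, valid for sequentially (hence locally) complete $E$, and check that $(T^{E},T^{\K})$ is a strong, consistent family for $(\mathcal{O}_{M},E)$: strength is the identity $e'(b_{\sigma(n,j)}(f))=b_{\sigma(n,j)}(e'\circ f)$, which is exactly the interchange of $e'$ with the Pettis-integral, and consistency follows from \prettyref{prop:pettis_consistent} applied to the integral operators $(T^{E}_{0},T^{\K}_{0})$ built from $h_{2n}$ and $\rho$, precisely as in the proof of \prettyref{thm:fourier.rap.dec}.

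With these verifications in place, \prettyref{thm:Schauder_coeff_space} c) delivers directly that $T^{E}=\Phi^{E}\colon\mathcal{O}_{M}(\R,E)\to\ell(\N,E)=s(\N,L_{b}(s(\N),E))$ is a well-defined isomorphism, with $\Phi^{E}=S_{\ell(\N)}\circ(\Phi^{\K}\varepsilon\id_{E})\circ S_{\mathcal{O}_{M}(\R)}^{-1}$, which is exactly the claimed statement. The main obstacle I anticipate is pinning down the precise interpretation of the target space: the identification $\Phi^{E}(f)\in s(\N,L_{b}(s(\N),E))$ requires that the abstract coefficient space $\ell(\N,E)$ produced by the $\varepsilon$-product machinery coincides, as a topological vector space and not merely setwise, with $s(\N,L_{b}(s(\N),E))$. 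This hinges on transporting the scalar isomorphisms in \eqref{eq:Schauder_coeff_space_multiplier_1} through the $\varepsilon$-product, i.e. on showing $s(\N)\varepsilon(s(\N)_{b}'\varepsilon E)\cong s(\N,L_{b}(s(\N),E))$, which uses the associativity of the $\varepsilon$-product for these nuclear spaces together with \prettyref{prop:isom_op_spaces} b) identifying $L_{b}(s(\N),E)$ with $s(\N)\varepsilon E$ (since $s(\N)$ is Montel). I would also need to ensure that $e'\circ g\in\ell(\N)$ for all $e'\in E'$ and $g\in\ell(\N,E)$, a prerequisite of \prettyref{thm:Schauder_coeff_space}, which again reduces to the same nuclear tensor-product bookkeeping; everything else is routine once this identification is secured.
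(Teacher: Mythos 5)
Your overall strategy is the paper's: apply \prettyref{thm:Schauder_coeff_space} with $T^{\K}=\Phi^{\K}$, get strength and consistency of $(\Phi^{E},\Phi^{\C})$ from the Pettis-integrability of $(f_{j}\circ\rho)h_{2n}$ together with \prettyref{prop:pettis_consistent}, and take the $\varepsilon$-compatibility of $\mathcal{O}_{M}(\R)$ and $\mathcal{O}_{M}(\R,E)$ from \prettyref{cor:Schwartz}. Two corrections on the hypotheses, though. First, $\mathcal{O}_{M}(\R)$ is \emph{not} a Fr\'echet space (it is a projective limit of inductive limits; its strong dual is an LF-space by Grothendieck), so branch (i) is unavailable and your fallback to branch (ii) is the only viable route; there you still owe the argument that $\mathcal{O}_{M}(\R)$ is a BC-space, which the paper gets by observing that $\mathcal{O}_{M}(\R)\cong(\mathcal{O}_{M}(\R)_{b}')_{b}'$ is the strong dual of an LF-space by reflexivity, hence webbed.

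The genuine gap is the point you yourself flag as the ``main obstacle'' and then defer. \prettyref{thm:Schauder_coeff_space} requires $\ell(\N)$ and $\ell(\N,E)$ to be concrete, $\varepsilon$-into-compatible spaces of scalar- resp.\ $E$-valued sequences \emph{on the index set of the basis} (here $\N^{2}$), with $e'\circ g\in\ell(\N)$ for all $g\in\ell(\N,E)$; you cannot feed it the abstract space $s(\N,L_{b}(s(\N),E))$ directly, since that is a space of $L_{b}(s(\N),E)$-valued sequences on $\N$, not of $E$-valued sequences on $\N^{2}$, and your appeal to \prettyref{cor:sequence_vanish_infty} b) and $\varepsilon$-product associativity does not produce the required $\dom$-space structure or the point evaluations $\delta_{n,j}$. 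The paper closes this by explicitly constructing the weighted sequence space $\ell\mathcal{V}(\N^{2},E)$ with seminorms $\sup_{(j,a)\in\N\times B}p_{\alpha}(\sum_{n}a_{n}x_{n,j})(1+j^{2})^{k/2}$ indexed by $k\in\N$ and bounded $B\subset s(\N)$, proving that $x\mapsto(\sum_{n}a_{n}x_{n,j})_{j}$ is an isomorphism onto $s(\N,L_{b}(s(\N),E))$, and verifying the $\varepsilon$-into-compatibility of $\ell\mathcal{V}(\N^{2})$ and $\ell\mathcal{V}(\N^{2},E)$ by a Banach--Steinhaus argument (using that $\ell\mathcal{V}(\N^{2})\cong\mathcal{O}_{M}(\R)$ is barrelled). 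This construction is the bulk of the actual proof, not routine bookkeeping, so as written your argument is incomplete at precisely this step.
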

\begin{proof}
The spaces $\mathcal{O}_{M}(\R)$ and $\mathcal{O}_{M}(\R,E)$ are $\varepsilon$-compatible 
by \prettyref{cor:Schwartz} with the inverse of $S_{\mathcal{O}_{M}(\R)}$ given by the map 
$R^{t}\colon \mathcal{O}_{M}(\R,E)\to\mathcal{O}_{M}(\R)\varepsilon E$, 
$f\mapsto \mathcal{J}^{-1}\circ R^{t}_{f}$, according to \prettyref{thm:full_linearisation}. 
The barrelled nuclear space $\mathcal{O}_{M}(\R)$ has the equicontinuous unconditional Schauder basis 
$(\psi_{\sigma(n,j)})_{(n,j)\in\N^{2}}$ with associated coefficient functionals 
$\delta_{n,j}\circ\Phi^{\K}=b_{\sigma(n,j)}$ given in \cite[Proposition 3.2, p.\ 1499]{bargetz2015}.
Next, we show that $(\Phi^{E},\Phi^{\K})$ 
is a strong, consistent family for $(\mathcal{O}_{M},E)$. 
Let $f\in\mathcal{O}_{M}(\R,E)$. For each $e'\in E'$ and $(n,j)\in\N^2$ we have
\begin{align*}
  \delta_{n,j}\circ\Phi^{\K}(e'\circ f)
&=b_{\sigma(n,j)}(e'\circ f)
 =\int_{\R}(e'\circ f)_{(j-1)/2}(\rho(x))h_{2(n-1)}(x)\d x \\
&=\langle e',\int_{\R}f_{(j-1)/2}(\rho(x))h_{2(n-1)}(x)\d x\rangle
 =\langle e',\delta_{n,j}\circ\Phi^{E}(f)\rangle\\
&=e'(b_{\sigma(n,j)}(f))
\end{align*}
if $j$ is odd since $(f_{(j-1)/2}\circ\rho)\cdot h_{2(n-1)}$ is Pettis-integrable on $\R$. 
The analogous result holds for even $j$ as well. This implies the strength of the family. 
Due to \prettyref{prop:pettis_consistent} with $(T^{E}_{0},T^{\K}_{0})$ given by 
$T^{E}_{0}(f):=(f_{j}\circ\rho)h_{2n}$, $f\in\mathcal{O}_{M}(\R,E)$, and 
$T^{\K}_{0}(f):=(f_{j}\circ\rho)h_{2n}$, $f\in\mathcal{O}_{M}(\R)$, the family 
$(\Phi^{E},\Phi^{\K})$ is consistent.

In order to apply \prettyref{thm:Schauder_coeff_space} we need spaces 
$\ell\mathcal{V}(\N^2)$ and $\ell\mathcal{V}(\N^2,E)$ of  sequences with values in $\K$ and $E$, respectively. 
In addition, the space $\ell\mathcal{V}(\N^2)$ has to be isomorphic 
to $s(\N,s(\N)_{b}')$ so that $\Phi^{\K}\colon\mathcal{O}_{M}(\R)\to s(\N,s(\N)_{b}')\cong\ell\mathcal{V}(\N^2)$ 
becomes the isomorphism we need for \prettyref{thm:Schauder_coeff_space}. We set
\[
\ell\mathcal{V}(\N^2,E):=\{x=(x_{n,j})\in E^{\N^2}\;|\;\forall\;k\in\N,\,B\subset s(\N)\;\text{bounded},\,\alpha\in\mathfrak{A}:\;
\|x\|_{k,B,\alpha}<\infty\}
\]
where 
\[
\|x\|_{k,B,\alpha}:=\sup_{(j,a)\in\omega_{B}}p_{\alpha}(T^{E}(x)(j,a))\nu_{k,B}(j,a)
\]
with $\omega_{B}:=\N\times B$ and $\nu_{k,B}\colon \omega_{B}\to [0,\infty)$, $\nu_{k,B}(j,a):=(1+j^2)^{k/2}$, and 
\[
T^{E}(x)(j,a):=\sum_{n\in\N}a_{n}x_{n,j}.
\]
We claim that the map 
\begin{equation}\label{eq:Schauder_coeff_space_multiplier_1}
T^{E}\colon\ell\mathcal{V}(\N^2,E)\to s(\N,L_{b}(s(\N),E)),\; x\mapsto (T^{E}(x)(j,\cdot))_{j\in\N},
\end{equation}
is an isomorphism. We remark for each $k\in\N$, bounded $B\subset s(\N)$ and $\alpha\in\mathfrak{A}$ that
\[
|T^{E}(x)|_{s(\N),k,(B,\alpha)}=\sup_{j\in\N}\sup_{a\in B}p_{\alpha}(T^{E}(x)(j,a))(1+j^2)^{k/2}=\|x\|_{k,B,\alpha}
\]
for all $x\in\ell\mathcal{V}(\N^2,E)$, implying that $T^{E}$ is an isomorphism into.  
Let $y:=(y_{j})\in s(\N,L_{b}(s(\N),E))$. Then $y_{j}\in L_{b}(s(\N),E)$ for $j\in\N$ and 
we set $x_{n,j}:=y_{j}(e_{n})$ for $n\in\N$ where $e_{n}$ is the $n$-th unit sequence in $s(\N)$. We note 
that with $x:=(x_{n,j})_{(n,j)\in\N^2}$ 
\[
T^{E}(x)(j,a)=\sum_{n\in\N}a_{n}x_{n,j}=\sum_{n\in\N}a_{n}y_{j}(e_{n})=y_{j}(\sum_{n\in\N}a_{n}e_{n})=y_{j}(a)
\]
holds for all $j\in\N$ and $a:=(a_{n})\in s(\N)$ since $(e_{n})$ is a Schauder basis of $s(\N)$ 
with associated coefficient functionals $a\mapsto a_{n}$. It follows that $x\in\ell\mathcal{V}(\N^2,E)$ 
and the surjectivity of $T^{E}$. 

The next step is to prove that $\ell\mathcal{V}(\N^2)$ and $\ell\mathcal{V}(\N^2,E)$ 
are $\varepsilon$-into-compatible. 
Due to \prettyref{thm:linearisation} we only need to show that $(T^{E},T^{\K})$ is a consistent generator 
for $(\ell\mathcal{V},E)$. Let $u\in\ell\mathcal{V}(\N^2)\varepsilon E$. Then 
\begin{equation}\label{eq:Schauder_coeff_space_multiplier_2}
 \sum_{n=1}^{m}a_{n}S_{\ell\mathcal{V}(\N^{2})}(u)(j,n)=\sum_{n=1}^{m}a_{n}u(\delta_{j,n})
=u(\sum_{n=1}^{m}a_{n}\delta_{j,n})
\end{equation}
for all $m\in\N$ and $a:=(a_{n})\in s(\N)$. Since 
\[
 (\sum_{n=1}^{m}a_{n}\delta_{j,n})(x)=\sum_{n=1}^{m}a_{n}x_{j,n}\to T^{\K}(x)(j,a)
=T^{\K}_{(j,a)}(x),\quad m\to\infty,
\]
for all $x\in\ell\mathcal{V}(\N^2)$, we deduce that $(\sum_{n=1}^{m}a_{n}\delta_{j,n})_{m}$ converges 
to $T^{\K}_{(j,a)}(x)$ in $\ell\mathcal{V}(\N^2)_{\kappa}'$ by the Banach--Steinhaus theorem, 
which is applicable as $\ell\mathcal{V}(\N^2)\cong s(\N,s(\N)_{b}')\cong\mathcal{O}_{M}(\R)$ is barrelled. 
We conclude that 
\[
 u(T^{\K}_{(j,a)})
=\lim_{m\to\infty}u(\sum_{n=1}^{m}a_{n}\delta_{j,n})
\underset{\eqref{eq:Schauder_coeff_space_multiplier_2}}{=}
 \sum_{n=1}^{\infty}a_{n}S_{\ell\mathcal{V}(\N^{2})}(u)(j,n)
=T^{E}S_{\ell\mathcal{V}(\N^{2})}(u)(j,a)
\]
and thus the consistency of $(T^{E},T^{\K})$ for $(\ell\mathcal{V},E)$.

Furthermore, we clearly have $e'\circ x\in\ell\mathcal{V}(\N^2)$ for all $x\in\ell\mathcal{V}(\N^2,E)$ and the map 
$\Phi\colon \mathcal{O}_{M}(\R)\to s(\N)_{b}'\widehat{\otimes}_{\pi}s(\N)\cong\ell\mathcal{V}(\N^2)$ 
is an isomorphism by \cite[Theorem 2.1, p.\ 1496--1497]{bargetz2015} 
and \eqref{eq:Schauder_coeff_space_multiplier_1}. 
Due to \cite[Chap.\ II, \S4, n$^\circ$4, Th\'{e}or\`{e}me 16, p.\ 131]{Gro} the dual 
$\mathcal{O}_{M}(\R)_{b}'$ is an LF-space and thus $\mathcal{O}_{M}(\R)\cong(\mathcal{O}_{M}(\R)_{b}')_{b}'$ 
is the strong dual of an LF-space by reflexivity and therefore webbed by \cite[Satz 7.25, p.\ 165]{Kaballo}. 
Finally, we can apply \prettyref{thm:Schauder_coeff_space} (ii), yielding our statement. 
\end{proof}

\begin{rem}\label{rem:Schauder_coeff_space_multiplier}
The actual isomorphism in \prettyref{cor:Schauder_coeff_space_multiplier} (without the interpretation) is given by
$\widetilde{\Phi}^{E}:=T^{E}\circ\Phi^{E}$ with $T^{E}$ from \eqref{eq:Schauder_coeff_space_multiplier_1} 
and we have
\[
 \widetilde{\Phi}^{E}=T^{E}\circ\Phi^{E}
=T^{E}\circ S_{\ell\mathcal{V}(\N^{2})}\circ (\Phi^{\K}\varepsilon\id_{E})\circ S^{-1}_{\mathcal{O}_{M}(\R)}.
\]
Furthermore, \prettyref{cor:Schauder_coeff_space_multiplier} is valid for locally complete $E$ as well. Indeed, 
similar to \prettyref{ex:sequence_vanish_infty} we may show that 
$\ell\mathcal{V}(\N^2,E)\cong \ell\mathcal{V}(\N^2)\varepsilon E$ for locally complete $E$. 
In combination with \prettyref{cor:Schwartz} and \prettyref{thm:eps_prod_surj_inj} b) this proves 
\prettyref{cor:Schauder_coeff_space_multiplier} for locally complete $E$ as in 
\prettyref{thm:fourier.rap.dec} b).
\end{rem}

\appendixpage
\noappendicestocpagenum
\addappheadtotoc 
\renewcommand{\thechapter}{\Alph{chapter}}  
\renewcommand{\thesection}{\Alph{chapter}.\arabic{section}}
\begin{appendix}
\chapter[Compactness of closed abs.\ convex hulls \& Pettis-integrals]{Compactness of closed absolutely convex hulls and Pettis-integrals}
\label{app:appendix}
\section[Compactness of closed absolutely convex hulls]{Compactness of closed absolutely convex hulls}
\label{app:clos_abs_conv_compact}
In this section of the appendix we treat the question for which functions 
$f\colon\Omega\to E$, subsets $K\subset\Omega$ and lcHs $E$ sets like $\oacx(\overline{f(K)})$ are compact or sets like 
\[
N_{j,m}(f):=\{T^{E}_{m}(f)(x)\nu_{j,m}(x)\;|\;x\in\omega_{m}\},\quad j\in J,\,m\in M,
\] 
for $f\in\FVE$ are contained in an absolutely convex compact set. This is useful in connection with $\varepsilon$-compatibility 
due to \prettyref{cor:full_linearisation} (iv) and also relevant in connection with the Pettis-integrability of a vector-valued function due to the Mackey--Arens theorem.

We recall that the space of c\`{a}dl\`{a}g functions on a set $\Omega\subset\R$ with values in an lcHs $E$ is defined by 
\[
  D(\Omega,E):=\{f\in E^{\Omega}\;|\;\forall\;x\in\Omega:\;\lim_{w\to x\rlim}f(w)=f(x)\;\text{and}\;
  f(x\llim):=\lim_{w\to x\llim}f(w)\;\text{exists}\}.\footnote{We recall that for $x\in\Omega$ we only demand $\lim_{w\to x\rlim}f(w)=f(x)$ 
  if $x$ is an accumulation point of $[x,\infty)\cap\Omega$, and the existence of the limit $\lim_{w\to x\llim}f(w)$ if $x$ is an 
  accumulation point of $(-\infty,x]\cap\Omega$.}
\]

\begin{prop}\label{prop:cadlag_precomp}
Let $\Omega\subset\R$, $K\subset\Omega$ be compact and $E$ an lcHs. Then $f(K)$ is precompact 
for every $f\in D(\Omega,E)$. If $E$ is quasi-complete, then $\oacx(\overline{f(K)})$ is compact.
\end{prop}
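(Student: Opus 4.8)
The plan is to establish precompactness of $f(K)$ by a direct covering argument that exploits the one-sided limits built into the definition of $D(\Omega,E)$, and then to obtain compactness of $\oacx(\overline{f(K)})$ from the convex compactness property, which quasi-complete spaces enjoy.

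First I would fix a seminorm $p_{\alpha}$, $\alpha\in\mathfrak{A}$, and $\varepsilon>0$, and construct a finite net for $f(K)$ relative to $p_{\alpha}$. For each $x\in K$ the right-continuity of $f$ at $x$ yields $\delta_{x}^{+}>0$ with $p_{\alpha}(f(w)-f(x))<\varepsilon$ for all $w\in(x,x+\delta_{x}^{+})\cap\Omega$; if $x$ is not an accumulation point of $[x,\infty)\cap\Omega$, I would simply shrink $\delta_{x}^{+}$ so that $(x,x+\delta_{x}^{+})\cap\Omega=\emptyset$, whence the estimate holds vacuously. Symmetrically, the existence of the left limit $f(x\llim)$ gives $\delta_{x}^{-}>0$ with $p_{\alpha}(f(w)-f(x\llim))<\varepsilon$ for all $w\in(x-\delta_{x}^{-},x)\cap\Omega$, again trivially when $x$ has no left accumulation points in $\Omega$. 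The open intervals $I_{x}:=(x-\delta_{x}^{-},x+\delta_{x}^{+})$, $x\in K$, then cover the compact set $K$, so finitely many $I_{x_{1}},\dots,I_{x_{n}}$ suffice.

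Next, for each $i$ the intersection $I_{x_{i}}\cap K$ splits into the point $x_{i}$, the left part on which $f$ lies within $p_{\alpha}$-distance $\varepsilon$ of $f(x_{i}\llim)$, and the right part on which $f$ lies within $p_{\alpha}$-distance $\varepsilon$ of $f(x_{i})$. Consequently $f(K)$ is contained in the union of the $2n$ sets $\{y\in E\;|\;p_{\alpha}(y-f(x_{i}))<\varepsilon\}$ and $\{y\in E\;|\;p_{\alpha}(y-f(x_{i}\llim))<\varepsilon\}$, so that $\{f(x_{i}),f(x_{i}\llim)\;|\;1\leq i\leq n\}$ is the desired finite net. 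Since $\alpha$ and $\varepsilon$ were arbitrary and the system $(p_{\alpha})_{\alpha\in\mathfrak{A}}$ is directed, this shows $f(K)$ is precompact in $E$. For the second assertion I would note that $\overline{f(K)}$ is closed and precompact, hence bounded, so it is complete by quasi-completeness and therefore compact; applying that every quasi-complete lcHs has ccp (see \cite[9-2-10 Example, p.\ 134]{Wilansky}) to the compact set $\overline{f(K)}$ yields that $\oacx(\overline{f(K)})$ is compact.

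The step needing the most care is the covering argument itself, precisely because $K$ is an arbitrary compact subset of a general $\Omega\subset\R$ rather than a compact interval, and the càdlàg property is only imposed relative to $\Omega$ and only at the respective one-sided accumulation points. The argument above is engineered to avoid the classical supremum-and-partition reasoning on intervals: the delicate configurations (isolated points of $K$, or points isolated from $\Omega$ on one side) are absorbed by permitting the defining intervals to meet $\Omega$ in the empty set, so that the relevant oscillation estimate holds there vacuously and the finite subcover produced by compactness of $K$ does all the remaining work.
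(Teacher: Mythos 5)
Your proof is correct and follows essentially the same route as the paper's: a covering of $K$ by one-sided neighbourhoods furnished by the right-limits and left-limits (with the vacuous cases at non-accumulation points handled just as the paper does), a finite subcover, and the resulting $2n$-point $\varepsilon$-net $\{f(x_{i}),f(x_{i}\llim)\}$. The final step also matches: the paper cites that precompact sets in quasi-complete spaces are relatively compact and then invokes ccp, whereas you prove that fact inline, which is an immaterial difference.
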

\begin{proof}
Let $f\in D(\Omega,E)$, $\alpha\in \mathfrak{A}$ and $\varepsilon>0$. 
We recall and define
\[
\mathbb{B}_{r}(x)=\{w\in\R\;|\;|w-x|<r\}\quad\text{and}\quad
B_{\varepsilon,\alpha}(y):=\{w\in E\;|\;p_{\alpha}(w-y)<\varepsilon\}
\]
for every $x\in\Omega$, $y\in E$ and $r>0$. 
Let $x\in\Omega$. Then there is $r_{x\llim}>0$ such that $p_{\alpha}(f(w)-f(x\llim))<\varepsilon$ for all
$w\in\mathbb{B}_{r_{x\llim}}(x)\cap(-\infty,x)\cap\Omega$ if $x$ is an accumulation point of $(-\infty,x]\cap\Omega$. 
Further, there is $r_{x\rlim}>0$ such that $p_{\alpha}(f(w)-f(x))<\varepsilon$ 
for all $w\in\mathbb{B}_{r_{x\rlim}}(x)\cap[x,\infty)\cap\Omega$ if $x$ is an accumulation point of $[x,\infty)\cap\Omega$. 
If $x$ is an accumulation point of $(-\infty,x]\cap\Omega$ and $[x,\infty)\cap\Omega$, we choose $r_{x}:=\min(r_{x\llim},r_{x\rlim})$. 
If $x$ is an accumulation point of $(-\infty,x]\cap\Omega$ but not of $[x,\infty)\cap\Omega$, we choose $r_{x}:=r_{x\llim}$. 
If $x$ is an accumulation point of $[x,\infty)\cap\Omega$ but not of $(-\infty,x]\cap\Omega$, we choose $r_{x}:=r_{x\rlim}$. 
If $x$ is neither an accumulation point of $(-\infty,x]\cap\Omega$ nor of $[x,\infty)\cap\Omega$, then there is 
$r_{x}>0$ such that $\mathbb{B}_{r_{x}}(x)\cap\Omega=\{x\}$. 
 
Setting $V_{x}:=\mathbb{B}_{r_{x}}(x)\cap\Omega $, we note that the sets $V_{x}$ are open in $\Omega$ with respect to the topology induced by $\R$ and $K\subset \bigcup_{x\in K} V_{x}$. 
Since $K$ is compact, there are $n\in\N$ and $x_{1},\ldots,x_{n}\in K$ such that 
$K\subset \bigcup_{i=1}^{n} V_{x_{i}}$. W.l.o.g.\ each $x_{i}$ is an accumulation point of 
$(-\infty,x_{i}]\cap\Omega$ and $[x_{i},\infty)\cap\Omega$. Then we have 
$f(w)\in (B_{\varepsilon,\alpha}(f(x_{i}\llim))\cup B_{\varepsilon,\alpha}(f(x_{i})))$ for all $w\in V_{x_{i}}$ and get 
\[
f(K)\subset\bigcup_{i=1}^{n} f(V_{x_{i}})\subset \bigcup_{i=1}^{n}\bigl(B_{\varepsilon,\alpha}(f(x_{i}\llim))\cup B_{\varepsilon,\alpha}(f(x_{i}))\bigr),
\]
which means that $f(K)$ is precompact.

If $E$ is quasi-complete, then the precompact set $f(K)$ is relatively compact 
by \cite[3.5.3 Proposition, p.\ 65]{Jarchow}. Hence $\oacx(\overline{f(K)})$ is compact as 
quasi-complete spaces have ccp.
\end{proof}

For $f\in D(\Omega,E)$ we define the \emph{\gls{jump_function}} $\gls{Delta_astf}(x):=f(x)-f(x\llim)$, $x\in\Omega$, 
where we set $f(x\llim):=0$ if $x$ is not an accumulation point of $(-\infty,x]\cap\Omega$.

\begin{prop}\label{prop:cadlag_difference_precomp}
Let $\Omega\subset\R$, $K\subset\Omega$ be compact and $E$ an lcHs. Then $\Delta_{\ast} f(K)$ is precompact 
for every $f\in D(\Omega,E)$. If $E$ is quasi-complete, then the set $\oacx(\overline{\Delta_{\ast} f(K)})$ 
is compact.
\end{prop}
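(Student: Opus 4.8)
The plan is to reduce the statement about the jump function $\Delta_{\ast}f$ to the already-established result for $f$ itself, \prettyref{prop:cadlag_precomp}. The key observation is that $\Delta_{\ast}f(x)=f(x)-f(x\llim)$ is essentially built from two copies of the range of $f$ and its left limits, and the collection of left-limit values $\{f(x\llim)\;|\;x\in K\}$ is itself contained in the closure of $f$ applied to a slightly enlarged (but still compact) set. So first I would show that $\Delta_{\ast}f(K)$ is contained in a difference of two precompact sets, from which precompactness follows because the linear combination of precompact sets is precompact (this fact is already used in the excerpt, e.g.\ in \prettyref{ex:hoelder} b), via \cite[Chap.\ I, 5.1, p.\ 25]{schaefer}).

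Concretely, I would proceed as follows. Fix $f\in D(\Omega,E)$ and let $L:=\{f(x\llim)\;|\;x\in K,\,x\text{ accum.\ pt.\ of }(-\infty,x]\cap\Omega\}\cup\{0\}$. Each $f(x\llim)$ is a limit of values $f(w)$ with $w<x$, $w\in\Omega$, so $L$ is contained in $\overline{f(K')}$ for a suitable compact $K'\subset\Omega$ — for instance one can take $K'$ to be the union of $K$ with a compact set capturing the approaching sequences, or more cleanly argue that every point of $L$ lies in $\overline{f(K)}$ together with $\{0\}$ by a limiting argument analogous to the covering argument in the proof of \prettyref{prop:cadlag_precomp}. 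Then
\[
\Delta_{\ast}f(K)\subset \overline{f(K)}-L\subset \overline{f(K)}-\bigl(\overline{f(K)}\cup\{0\}\bigr).
\]
Since $f(K)$ is precompact by \prettyref{prop:cadlag_precomp}, so is $\overline{f(K)}$ (its closure in the completion, or equivalently $f(K)$ is totally bounded), and hence the difference set on the right is precompact. Therefore $\Delta_{\ast}f(K)$, being a subset of a precompact set, is precompact.

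The cleanest route, mirroring the proof of \prettyref{prop:cadlag_precomp} directly, is to run the same finite open cover argument: for each $\alpha\in\mathfrak{A}$ and $\varepsilon>0$, cover $K$ by finitely many neighbourhoods $V_{x_{1}},\dots,V_{x_{n}}$ on which $f$ oscillates (in the seminorm $p_{\alpha}$) by less than $\varepsilon$ on each of the left and right pieces. On such a neighbourhood both $f(w)$ and $f(w\llim)$ are $\varepsilon$-close to the finite set of values $\{f(x_{i}\llim),f(x_{i})\}$, so $\Delta_{\ast}f(w)=f(w)-f(w\llim)$ lands in a $2\varepsilon$-ball around one of finitely many differences $f(x_{i})-f(x_{i}\llim)$ or $f(x_{i}\llim)-f(x_{j}\llim)$. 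This exhibits a finite $2\varepsilon$-net for $\Delta_{\ast}f(K)$ in each seminorm, giving precompactness.

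For the second assertion, if $E$ is quasi-complete then the precompact set $\Delta_{\ast}f(K)$ is relatively compact by \cite[3.5.3 Proposition, p.\ 65]{Jarchow}, so $\overline{\Delta_{\ast}f(K)}$ is compact; since quasi-complete spaces have the convex compactness property (ccp), the closed absolutely convex hull $\oacx(\overline{\Delta_{\ast}f(K)})$ of this compact set is compact. The main obstacle I anticipate is the careful bookkeeping of the left-limit values at points that are and are not accumulation points from the left: one must be sure that the convention $f(x\llim):=0$ for non-accumulation points is correctly incorporated into the finite net (it only adds $0$, hence at worst the point $0$, to the candidate values), and that the covering neighbourhoods are chosen so that every $w\in V_{x_{i}}$ controls both $f(w)$ and $f(w\llim)$ simultaneously. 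This is exactly the delicacy already handled in \prettyref{prop:cadlag_precomp}, so the argument there can be adapted essentially verbatim with the values $f(x_{i})$ replaced by the differences.
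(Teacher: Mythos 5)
Your proposal is correct, but only via the second route you describe (the finite open cover argument), and that route is genuinely different from the paper's. The paper does not re-run the covering argument; instead it proves the classical structural fact that for each $\alpha\in\mathfrak{A}$ and $\varepsilon>0$ the set $\Delta_{\varepsilon,\alpha}:=\{x\in K\;|\;p_{\alpha}(\Delta_{\ast}f(x))\geq\varepsilon\}$ of $\varepsilon$-large jumps is \emph{finite} (by contradiction: an infinite monotone sequence in $\Delta_{\varepsilon,\alpha}$ converging to some $x\in K$ forces $f(x_{n})$ and suitably chosen $f(w_{n})$ with $w_{n}\in(x_{n-1},x_{n})$ to both tend to $f(x\llim)$ while staying $\varepsilon/2$ apart), whence $\Delta_{\ast}f(K)\subset B_{\varepsilon,\alpha}(0)\cup\Delta_{\ast}f(\Delta_{\varepsilon,\alpha})$ is covered by finitely many $\varepsilon$-balls. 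Your covering argument buys a proof that is a near-verbatim adaptation of \prettyref{prop:cadlag_precomp} (one only has to check, as you do, that on each $V_{x_{i}}$ the left limits $f(w\llim)$ are also within $\varepsilon$ of $\{f(x_{i}\llim),f(x_{i})\}$, and to adjoin $0$ for points that are not left accumulation points); the paper's argument buys the sharper structural statement that only finitely many jumps exceed any given size. Your concluding step for quasi-complete $E$ coincides with the paper's. One caution: your first reduction, $\Delta_{\ast}f(K)\subset\overline{f(K)}-L$ with $L\subset\overline{f(K)}\cup\{0\}$, is not sound as stated — the left limit $f(x\llim)$ is approached through points of $\Omega$ that need not lie in $K$ (nor in any single compact $K'\subset\Omega$ chosen uniformly for all $x\in K$), so $L\not\subset\overline{f(K)}\cup\{0\}$ in general (already $K=\{x\}$ with a jump at $x$ is a counterexample). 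Since you discard this in favour of the covering argument, the proof stands, but the first paragraph should be dropped rather than repaired.
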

\begin{proof}
If $K$ is a finite set, then $\Delta_{\ast} f(K)$ is finite, thus compact, and we are done. 
So let us assume that $K$ is not finite. Let $\alpha\in\mathfrak{A}$ and $\varepsilon>0$. 
We define $\Delta_{\varepsilon,\alpha}:=\{x\in K\;|\;p_{\alpha}(\Delta_{\ast} f(x))\geq\varepsilon\}$ and claim that $\Delta_{\alpha,\varepsilon}$ is a finite set. Let us assume the contrary. 
Then there is an infinite sequence $(x_{n})$ in 
$\Delta_{\varepsilon,\alpha}\subset K$. 
Due to the compactness of $K$ there is a subsequence of $(x_{n})$ which converges to some $x\in K$. W.l.o.g.\ 
this subsequence is strictly increasing and we call this 
subsequence again $(x_{n})$. Since $f$ has left limits (in left-accumulation points), for every $n\in\N$, $n\geq 2$, there 
is $w_{n}\in(x_{n-1},x_{n})$ such that $p_{\alpha}(f(x_{n}\llim)-f(w_{n}))\leq \varepsilon/2$ (if $x_{n}$ is not an accumulation 
point of $(-\infty,x_{n}]\cap\Omega$, then there is $w_{n}\in(x_{n-1},x_{n})$ with $w_{n}\notin\Omega$ and we set $f(w_{n}):=0$). 
Hence we have 
\begin{align*}
     p_{\alpha}(f(x_{n})-f(w_{n}))
&\geq p_{\alpha}(f(x_{n})-f(x_{n}\llim))-p_{\alpha}(f(x_{n}\llim)-f(w_{n}))\\
&= p_{\alpha}(\Delta_{\ast} f (x_{n}))-p_{\alpha}(f(x_{n}\llim)-f(w_{n}))
\geq \varepsilon/2
\end{align*}
for all $n\geq 2$. But this is a contradiction because 
\[
\lim_{n\to\infty}f(x_{n})=\lim_{n\to\infty}f(w_{n})=f(x\llim),
\]
which proves our claim. 

Next, we note that
\[
\Delta_{\ast} f(K)
\subset \bigl(B_{\varepsilon,\alpha}(0)\cup \Delta_{\ast} f(\Delta_{\varepsilon,\alpha})\bigr)
\subset \bigcup_{z\in\{0\}\cup \Delta_{\ast} f(\Delta_{\varepsilon, \alpha})}z+B_{\varepsilon,\alpha}(0),
\]
which implies that $\Delta_{\ast} f(K)$ is precompact as 
$\{0\}\cup \Delta_{\ast} f(\Delta_{\varepsilon, \alpha})$ is finite.

If $E$ is quasi-complete, then the precompact set $\Delta_{\ast} f(K)$ is relatively compact 
by \cite[3.5.3 Proposition, p.\ 65]{Jarchow}. Hence $\oacx(\overline{\Delta_{\ast} f(K)})$ is compact as 
quasi-complete spaces have ccp.
\end{proof}

\prettyref{prop:cadlag_precomp} and \prettyref{prop:cadlag_difference_precomp} are known in the case that $\Omega=[0,1]$ and 
$E=\K$ (see the comments after \cite[Chap.\ 3, Sect.\ 14, Lemma 1, p.\ 110]{billingsley1968}) since precompactness is equivalent 
to boundedness if $E=\K$.

\begin{prop}\label{prop:abs_conv_comp_C_0}
Let $\Omega$ be a locally compact topological Hausdorff space
and $f\in\mathcal{C}_{0}(\Omega,E)$. If
\begin{enumerate}
\item[(i)] $E$ is an lcHs with ccp, or 
\item[(ii)] $E$ is an lcHs with metric ccp and $\Omega$ second-countable,
\end{enumerate}
then $\oacx{(f(\Omega))}$ is compact.
\end{prop}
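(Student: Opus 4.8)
The goal is to show that $\oacx(f(\Omega))$ is compact for $f\in\mathcal{C}_{0}(\Omega,E)$, under either of the two completeness-type hypotheses. The natural strategy is to reduce the claim to a statement about the compactness of the closed absolutely convex hull of a \emph{suitable} compact (in case (i)) resp.\ metrisable compact (in case (ii)) set, and then invoke the definition of the [metric] convex compactness property. The two ingredients are therefore: (1) identify a compact set $K$ with $f(\Omega)\subset\oacx(K)$, and (2) control the metrisability of $K$ in case (ii).

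The plan is to first analyse the closure $\overline{f(\Omega)}$ in $E$. Since $f$ vanishes at infinity, for every $\alpha\in\mathfrak{A}$ and $\varepsilon>0$ there is a compact $C\subset\Omega$ with $p_{\alpha}(f(x))<\varepsilon$ for all $x\in\Omega\setminus C$. Combining this with the continuity of $f$ (so that $f(C)$ is compact for each compact $C\subset\Omega$), I would show that $f(\Omega)\cup\{0\}$ is precompact in $E$: its image misses any $p_{\alpha}$-ball of radius $\varepsilon$ only on the compact set $C$, where $f(C)$ is covered by finitely many such balls. Thus $K:=\overline{f(\Omega)\cup\{0\}}$ is closed and precompact. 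The subtlety is that precompactness alone does not give compactness, so I need $E$ to supply completeness along $K$; both hypotheses (i) and (ii) entail that $E$ is locally complete (recall from the preliminaries that ccp $\Rightarrow$ metric ccp $\Rightarrow$ local completeness), and more directly, quasi-completeness is available via ccp, so a closed precompact set is compact. Hence $K$ is compact, and then $\oacx(f(\Omega))=\oacx(K)$ (since $0\in K$ and the absolutely convex hull swallows the single point closure), so it suffices to know that $\overline{\operatorname{acx}}$ of the compact set $K$ is compact, which is precisely what the [metric] convex compactness property delivers once we know whether $K$ is metrisable.

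In case (i), where $E$ has ccp, the compactness of $\oacx(K)$ is immediate from the definition of ccp applied to the compact set $K$. In case (ii), where $E$ has only \emph{metric} ccp and $\Omega$ is second-countable, the key extra step is to verify that $K$ is metrisable. The plan here is to use that $\Omega$ second-countable and locally compact is $\sigma$-compact, so $\Omega=\bigcup_{n}C_{n}$ with $C_{n}$ compact; since $f$ is continuous, each $f(C_{n})$ is compact and metrisable (a continuous image of a second-countable, hence metrisable, compact space, via \cite[Chap.\ IX, \S2.10, Proposition 17, p.\ 159]{bourbakiII}); and the vanishing-at-infinity condition forces $f(\Omega)$ to be, up to adding $0$, a set whose closure is metrisable. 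The cleanest route is to argue that $K=\overline{f(\Omega)\cup\{0\}}$ is itself a metrisable compact set — I would show $K$ is second-countable by exhibiting a countable base coming from the countable base of $\Omega$ together with the single limiting behaviour at infinity (every neighbourhood of $0$ contains $f(\Omega\setminus C)$ for a suitable compact $C$), and then a compact second-countable Hausdorff space is metrisable. Having established metrisability of $K$, the metric ccp of $E$ gives compactness of $\oacx(K)=\oacx(f(\Omega))$.

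The main obstacle I anticipate is the metrisability argument in case (ii): precompactness of $f(\Omega)\cup\{0\}$ is routine, but pinning down that its closure is metrisable requires care, since a countable union of metrisable compacta need not have metrisable closure in general — one genuinely needs to exploit the $\mathcal{C}_{0}$ structure to see that the ``point at infinity'' contributes only the single accumulation point $0$, so that $K$ is the one-point-type compactification-flavoured image and inherits second countability. I would handle this by writing $K\subset f(C)\cup \{x\in E\;|\;p_{\alpha}(x)\le\varepsilon\}$-type decompositions and assembling a countable base explicitly, rather than appealing to a general union theorem. Everything else reduces to the already-recorded implications among the completeness notions and the definition of the [metric] convex compactness property.
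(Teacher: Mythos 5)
Your overall skeleton is right: reduce to applying the [metric] convex compactness property to the compact [metrisable] set $f(\Omega)\cup\{0\}$, whose closed absolutely convex hull contains $\oacx(f(\Omega))$ as a closed subset. But there is a genuine error in the middle step. You show $f(\Omega)\cup\{0\}$ is \emph{precompact} and then claim its closure is compact because ``quasi-completeness is available via ccp, so a closed precompact set is compact.'' This reverses the actual implication: quasi-completeness implies ccp, not conversely, and the paper's preliminaries record that this implication is strict. Neither ccp nor metric ccp (nor local completeness) guarantees that closed precompact sets are compact, so as written your argument does not close in either case (i) or (ii).

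The repair is to obtain compactness of $f(\Omega)\cup\{0\}$ directly, with no completeness input at all: since $f$ vanishes at infinity, it extends continuously to the one-point compactification $\Omega^{\ast}$ by $\widehat{f}(\infty):=0$, and $K:=\widehat{f}(\Omega^{\ast})=f(\Omega)\cup\{0\}$ is then compact as the continuous image of a compact space. (Equivalently, run your finite-subcover argument against an arbitrary open cover rather than only covers by seminorm balls.) This also dissolves the metrisability difficulty you flag in case (ii), which your proposed hand-built countable base for $K\subset E$ would not easily resolve: when $\Omega$ is locally compact and second-countable, $\Omega^{\ast}$ is metrisable, and the continuous image of a compact metrisable space in a Hausdorff space is metrisable, so $K$ is a metrisable compact set and metric ccp applies. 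This is exactly the route the paper takes (treating compact $\Omega$ separately, where $f(\Omega)$ itself is already compact).
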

\begin{proof}
Let $\Omega$ be compact, then $f(\Omega)$ is compact in $E$ as $f$ is continuous. 
If $\Omega$ is even second-countable, then 
$\Omega$ is metrisable by \cite[Chap.\ XI, 4.1 Theorem, p.\ 233]{Dugundji1966} and thus $f(\Omega)$ as well 
by \cite[Chap.\ IX, \S2.10, Proposition 17, p.\ 159]{bourbakiII}. 
This yields that $\oacx{(f(\Omega))}$ is compact in both cases.

Let $\Omega$ be non-compact and $\Omega^{\ast}$ denote the one-point compactification of $\Omega$. Since 
$f\in\mathcal{C}_{0}(\Omega,E)$, it has a unique continuous extension $\widehat{f}$ 
to $\Omega^{\ast}$ with $\widehat{f}(\infty)=0$. 
Hence $K:=\widehat{f}(\Omega^{\ast})$ is a compact set in $E$ 
as $\Omega^{\ast}$ is compact and $\widehat{f}$ continuous. 
If $\Omega$ is even second-countable, then $\Omega^{\ast}$ is metrisable by 
\cite[Chap.\ XI, 8.6 Theorem, p.\ 247]{Dugundji1966} and thus $K$ as well 
by \cite[Chap.\ IX, \S2.10, Proposition 17, p.\ 159]{bourbakiII}.
This yields that $\oacx{(K)}$ is compact in both cases and thus the closed subset $\oacx{(f(\Omega))}$, too.
\end{proof}

We note that $\mathcal{C}_{0}(\Omega,E)=\mathcal{C}(\Omega,E)$ if $\Omega$ is compact. For our next proposition we define the space of bounded $\gamma$-H\"older continuous functions, $0<\gamma\leq 1$, 
from a metric space $(\Omega,\d)$ to an lcHs $E$ by 
\[
\gls{Cbgamma}:=\Bigl\{f\in E^{\Omega}\;|\;\forall\alpha\in\mathfrak{A}:\;\sup_{x\in \Omega}p_{\alpha}(f(x))<\infty\;\text{and}\;\sup_{\substack{x,y\in \Omega\\x\neq y}}\frac{p_{\alpha}(f(x)-f(y))}{\d (x,y)^{\gamma}}<\infty\Bigr\}.
\]

\begin{prop}\label{prop:abs_conv_comp_hoelder}
Let $(\Omega,\d)$ be a metric space, 
$E$ a locally complete lcHs and $f\in\mathcal{C}_{b}^{[\gamma]}(\Omega,E)$ for some $0<\gamma\leq 1$. 
If there is $h\colon\Omega\to (0,\infty)$ such that $fh$ is bounded on $\Omega$ and 
with $N:=\{x\in\Omega\;|\;f(x)=0\}$ it holds that 
\[
\forall\;\varepsilon>0\;\exists\;K\subset\Omega\;\text{compact}\;
\forall\;x\in\Omega\setminus (K\cup N) :\;1\leq\varepsilon h(x),
\]
then $\oacx{(f(\Omega))}$ is compact.
\end{prop}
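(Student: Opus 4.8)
The statement to prove is \prettyref{prop:abs_conv_comp_hoelder}: under the stated hypotheses, $\oacx(f(\Omega))$ is compact in a locally complete lcHs $E$. The natural strategy is to reduce the problem to the compactness criterion available for locally complete spaces, namely \cite[Proposition 2, p.\ 354]{Bonet2002} (the same reference invoked in \prettyref{ex:diff_usual}), which guarantees that the closed absolutely convex hull of a precompact set is compact in a locally complete space. Thus the whole task reduces to showing that $f(\Omega)$ is \emph{precompact} in $E$; once this is established, local completeness immediately gives that $\oacx(f(\Omega))$ is compact.

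First I would set up the precompactness proof in the spirit of \prettyref{prop:vanish_at_infinity_precomp} (which splits precompactness into a ``vanishing at infinity'' part and a ``compact core'' part). Fix a seminorm $p_{\alpha}$ and $\varepsilon>0$. The hypothesis furnishes a compact set $K\subset\Omega$ and the function $h\colon\Omega\to(0,\infty)$ with $fh$ bounded on $\Omega$, say $\sup_{x\in\Omega}p_{\alpha}(f(x)h(x))\leq C_{\alpha}$, and such that $1\leq\varepsilon h(x)$ for all $x\in\Omega\setminus(K\cup N)$. For a point $x\in\Omega\setminus(K\cup N)$ this yields
\[
p_{\alpha}(f(x))\leq \varepsilon h(x)p_{\alpha}(f(x))=\varepsilon p_{\alpha}(f(x)h(x))\leq \varepsilon C_{\alpha},
\]
while for $x\in N$ we have $f(x)=0$. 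Hence outside the compact set $K$ the values $f(x)$ lie in an arbitrarily small $p_{\alpha}$-ball around $0$, i.e.\ $f$ vanishes at infinity with respect to $(\pi,\mathfrak{K})$ in the sense governing \prettyref{prop:vanish_at_infinity_precomp}, where $\pi$ is the identity and $\mathfrak{K}$ the family of compact subsets of $\Omega$.

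Next I would handle the compact core $f(K)$. Since $f\in\mathcal{C}_{b}^{[\gamma]}(\Omega,E)$, the restriction of $f$ to $K$ is $\gamma$-H\"older continuous, hence uniformly continuous, and $K$ is compact; arguing as in the proof of \prettyref{ex:hoelder} b), the continuous image $f(K)$ of the compact set $K$ is precompact in $E$. Combining the vanishing-at-infinity estimate with the precompactness of $f(K)$ through \prettyref{prop:vanish_at_infinity_precomp} gives that $N_{1,1}(f)=f(\Omega)$ (in the one-weight notation with $\nu\equiv 1$) is precompact in $E$.

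The main obstacle, and the point demanding care, is the interplay between the exceptional set $N$ and the covering estimate: one must ensure that the decomposition $\Omega=K\cup N\cup(\Omega\setminus(K\cup N))$ feeds correctly into \prettyref{prop:vanish_at_infinity_precomp}, since $f$ need not vanish on $K$ and the boundedness of $fh$ rather than of $f$ itself is what controls the tail. The verification that $\gamma$-H\"older continuity of $f$ on the compact set $K$ genuinely yields precompactness of $f(K)$ (via uniform continuity, as in \prettyref{ex:hoelder} b)) is the remaining technical step, but it is routine once the tail estimate is in place. With $f(\Omega)$ precompact, the conclusion follows from \cite[Proposition 2, p.\ 354]{Bonet2002} and the local completeness of $E$.
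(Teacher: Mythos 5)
There is a genuine gap in your final step. You reduce everything to the claim that, in a locally complete lcHs, the closed absolutely convex hull of a precompact set is compact, and you attribute this to \cite[Proposition 2, p.\ 354]{Bonet2002}. That claim is false, and that reference does not say it: it concerns $\oacx(f(K))$ for weakly $\mathcal{C}^{1}$ functions on compact subsets of $\R^{d}$. The property ``closed absolutely convex hulls of (pre)compact sets are compact'' is the convex compactness property, which is \emph{strictly stronger} than local completeness — see the paper's own diagram of strict implications in the preliminaries (metric ccp $\Rightarrow$ locally complete, and the implication does not reverse). Your intermediate work (the tail estimate $p_{\alpha}(f(x))\leq\varepsilon C_{\alpha}$ off $K\cup N$, the compactness of $f(K)$, hence precompactness of $f(\Omega)$ in $E$) is fine, but precompactness of $f(\Omega)$ in $E$ plus local completeness does not yield the conclusion; that route would only work under quasi-completeness or (metric) ccp, which is exactly the weaker statement proved elsewhere (e.g.\ \prettyref{prop:abs_conv_comp_C_0}).

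The paper's proof gets around this by exploiting the H\"older hypothesis to build a Banach disk first: it takes $B:=\oacx\bigl(B_{1}\cup f(\Omega)\cup (fh)(\Omega)\bigr)$, where $B_{1}$ is the bounded set of H\"older difference quotients $\{(f(z)-f(t))/\d(z,t)^{\gamma}\}$, so that local completeness makes $E_{B}$ a Banach space. Because $B_{1}\subset B$, the map $f\colon\Omega\to E_{B}$ satisfies $\|f(z)-f(t)\|_{B}\leq\d(z,t)^{\gamma}$ and is therefore uniformly continuous \emph{into $E_{B}$}, so $f(K)$ is compact in $E_{B}$; and since $(fh)(\Omega)\subset B$, the tail estimate upgrades to $\|f(x)\|_{B}\leq 1/h(x)\leq\varepsilon$ on $\Omega\setminus(K\cup N)$. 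Hence $f(\Omega)$ is precompact in the \emph{Banach} space $E_{B}$, where $\oacx(f(\Omega))$ is genuinely compact, and compactness passes to the weaker topology of $E$. This transfer to $E_{B}$ — made possible only because the H\"older quotients and $(fh)(\Omega)$ were packed into the disk $B$ — is the idea missing from your argument; without it, the hypothesis $f\in\mathcal{C}_{b}^{[\gamma]}(\Omega,E)$ is used only for continuity on $K$, which cannot suffice, since the statement would then wrongly apply to arbitrary bounded continuous $f$ on arbitrary locally complete $E$.
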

\begin{proof}
Since $f\in\mathcal{C}_{b}^{[\gamma]}(\Omega,E)$, the sets $f(\Omega)$ and 
\[
B_{1}:=\Bigl\{\frac{f(z)-f(t)}{\d (z,t)^{\gamma}}\;|\;z,t\in \Omega,\;z\neq t\Bigr\}
\]
are bounded in $E$. Further, the range $(fh)(\Omega)$ is bounded in $E$ by assumption.
Thus $B:=\oacx(B_{1}\cup f(\Omega)\cup (fh)(\Omega))$ is a closed disk and $E_{B}$ a Banach space 
with the norm $\|x\|_{B}:=\inf\{r>0\;|\;x\in rB\}$, $x\in E_{B}$, as $E$ is locally complete. 
Next, we show that $f(\Omega)$ is precompact in $E_{B}$. Let $V$ be a zero neighbourhood in $E_{B}$. 
Then there is $\varepsilon>0$ such that 
$U_{\varepsilon}:=\{x\in E_{B}\;|\; \|x\|_{B}\leq\varepsilon\}\subset V$. 
Moreover, there is a compact set $K\subset\Omega$ such that $1\leq\varepsilon h(x)$ for all 
$x\in\Omega\setminus (K\cup N)$. 
The map $f\colon \Omega\to E_{B}$ is well-defined and uniformly continuous because 
$\|f(z)-f(t)\|_{B}\leq \d (z,t)^{\gamma}$ for all $z,t\in\Omega$, which follows from $B_{1}\subset B$.
We deduce that $f(K)$ is compact in $E_{B}$. 
We note that 
\[
f(x)=f(x)h(x)\frac{1}{h(x)},\quad x\in \Omega\setminus N,
\] 
which implies that $\|f(x)\|_{B}\leq \frac{1}{h(x)}$ as $(fh)(\Omega)\subset B$. Hence we have 
\[
\|f(x)\|_{B}\leq \frac{1}{h(x)}\leq \varepsilon,\quad x\in \Omega\setminus (K\cup N),
\]
and the estimate $0=\|f(x)\|_{B}\leq \varepsilon$ is still valid for $x\in N$, 
yielding $f(\Omega\setminus K)\subset U_{\varepsilon}$. Since $f(K)$ is compact in $E_{B}$, 
it is also precompact and so there is a finite set $P\subset E_{B}$ such that $f(K)\subset P+V$. 
We derive that 
\[
 f(\Omega)
=\bigl(f(K)\cup f(\Omega\setminus K)\bigr) 
\subset\bigl( (P+V) \cup U_{\varepsilon}\bigr)\subset \bigl((P\cup\{0\})+V\bigr),
\]
which means that $f(\Omega)$ is precompact in $E_{B}$ and thus $\oacx{(f(\Omega))}$ as well 
by \cite[6.7.1 Proposition, p.\ 112]{Jarchow}.
Therefore the set $\oacx{(f(\Omega))}$ is compact in the Banach space $E_{B}$ and 
also compact in the weaker topology of $E$.
\end{proof}

The underlying idea of \prettyref{prop:abs_conv_comp_hoelder} is taken from 
\cite[Lemma 1, Proposition 2, p.\ 354]{Bonet2002}.

\begin{prop}\label{prop:abs_conv_comp_C_1_b}
Let $\Omega\subset\R^{d}$ be an open convex set, $E$ an lcHs over $\K$ and 
$f\colon\Omega\to E$ weakly $C^{1}_{b}$, i.e.\ $e'\circ f\in\mathcal{C}_{b}^{1}(\Omega)$ for each $e'\in E'$. 
Then $f\in\mathcal{C}_{b}^{[1]}(\Omega,E)$.
\end{prop}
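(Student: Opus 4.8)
The plan is to transfer everything to the scalar functions $e'\circ f$ through the polar description of the seminorms, $p_{\alpha}(y)=\sup_{e'\in B_{\alpha}^{\circ}}|e'(y)|$ for $y\in E$ (see \cite[Proposition 22.14, p.\ 256]{meisevogt1997}), where $B_{\alpha}:=\{x\in E\;|\;p_{\alpha}(x)<1\}$, and then to upgrade weak boundedness to boundedness via Mackey's theorem \cite[Mackey's theorem 23.15, p.\ 268]{meisevogt1997}. The genuinely vector-valued content of the statement is precisely this last upgrade; the scalar input is provided by the mean value theorem together with the convexity of $\Omega$.

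First I would settle the boundedness of $f$. Since $e'\circ f\in\mathcal{C}_{b}^{1}(\Omega)$ is bounded on $\Omega$ for every $e'\in E'$, the range $f(\Omega)$ is $\sigma(E,E')$-bounded. By Mackey's theorem it is then bounded in $E$, so that for every $\alpha\in\mathfrak{A}$
\[
\sup_{x\in\Omega}p_{\alpha}(f(x))=\sup_{e'\in B_{\alpha}^{\circ}}\sup_{x\in\Omega}|(e'\circ f)(x)|<\infty .
\]

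The main work is the Lipschitz estimate. I would set $D:=\{(f(x)-f(y))/|x-y|\;|\;x,y\in\Omega,\,x\neq y\}$ and show that $D$ is $\sigma(E,E')$-bounded. Fixing $e'\in E'$, convexity of $\Omega$ guarantees that for $x\neq y$ the segment $[x,y]$ lies in $\Omega$, so the mean value theorem applied to the real and imaginary parts of $t\mapsto(e'\circ f)(y+t(x-y))$ on $[0,1]$ gives
\[
\frac{|(e'\circ f)(x)-(e'\circ f)(y)|}{|x-y|}\leq C_{d}\max_{1\leq n\leq d}\sup_{z\in\Omega}|\partial^{e_{n}}(e'\circ f)(z)|<\infty ,
\]
with $C_{d}:=\sqrt{d}$ if $\K=\R$ and $C_{d}:=2\sqrt{d}$ if $\K=\C$, the supremum being finite because $e'\circ f\in\mathcal{C}_{b}^{1}(\Omega)$. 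Hence $\sup_{w\in D}|e'(w)|<\infty$ for every $e'\in E'$, i.e.\ $D$ is weakly bounded, and Mackey's theorem yields that $D$ is bounded in $E$. Consequently, for each $\alpha\in\mathfrak{A}$,
\[
\sup_{\substack{x,y\in\Omega\\x\neq y}}\frac{p_{\alpha}(f(x)-f(y))}{|x-y|}=\sup_{w\in D}p_{\alpha}(w)<\infty .
\]
Combining the two parts gives $f\in\mathcal{C}_{b}^{[1]}(\Omega,E)$.

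The step I expect to be the crux is the passage from the per-functional estimate to a uniform one: the mean value bound holds for each fixed $e'$, but its right-hand side depends on $e'$, so no uniformity over $e'\in B_{\alpha}^{\circ}$ is visible directly. It is exactly Mackey's theorem, applied to the weakly bounded difference-quotient set $D$, that supplies this uniformity and thereby the finiteness of the Lipschitz seminorms; convexity of $\Omega$ is needed only to make the mean value argument available along segments. Note that no strong differentiability of $f$ is used or claimed, and Lipschitz continuity of $f$ follows a posteriori from the finiteness of the Lipschitz seminorms.
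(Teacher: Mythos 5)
Your proof is correct and follows essentially the same route as the paper's: the mean value theorem on segments (using convexity of $\Omega$) gives the per-functional Lipschitz bound, and Mackey's theorem upgrades the weak boundedness of the range and of the difference-quotient set to boundedness in $E$. The paper's version is merely terser, leaving the Mackey step for both the range and the difference quotients implicit in one sentence.
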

\begin{proof} Let $z,t\in\Omega$, $z\neq t$. By the mean value theorem we have 
\[
\frac{|(e'\circ f)(z)-(e'\circ f)(t)|}{|z-t|}\leq C_{d}\max_{1\leq n\leq d}\sup_{x\in\Omega}|(\partial^{e_{n}})^{\K}(e'\circ f)(x)|\leq C_{d}|e'\circ f|_{\mathcal{C}_{b}^{1}(\Omega)}<\infty
\]
for all $e'\in E'$ where $C_{d}:=\sqrt{d}$ if $\K=\R$, and $C_{d}:=2\sqrt{d}$ if $\K=\C$.
It follows from \cite[Mackey's theorem 23.15, p.\ 268]{meisevogt1997} that $f$ is Lipschitz continuous 
and bounded as well, thus $f\in\mathcal{C}_{b}^{[1]}(\Omega,E)$.  
\end{proof}

\begin{prop}\label{prop:vanish_at_infinity_precomp}
Let $\FVE$ be a $\dom$-space, let there be a set $X$, a family $\mathfrak{K}$ of sets and a map 
$\pi\colon\bigcup_{m\in M}\omega_{m} \to X$ such that $\bigcup_{K\in\mathfrak{K}}K\subset X$. 
If $f\in\FVE$ fulfils
\begin{align*}
&\forall\;\varepsilon >0,\, j\in J,\, m\in M,\,\alpha\in\mathfrak{A}\;\exists\;K\in\mathfrak{K}:\\
&(i)\; \sup_{\substack{x\in\omega_{m},\\ \pi(x)\notin K}}p_{\alpha}\bigl(T^{E}_{m}(f)(x)\bigr)\nu_{j,m}(x)<\varepsilon,\\
&(ii)\; N_{\pi\subset K,j,m}(f):=\{T^{E}_{m}(f)(x)\nu_{j,m}(x)\;|\;x\in\omega_{m},\,\pi(x)\in K\}\;\text{is precompact in}\;E,\notag
\end{align*}
then the set $N_{j,m}(f)$ is precompact in $E$ for every $j\in J$ and $m\in M$. 
If $E$ is quasi-complete, then $\oacx(\overline{N_{j,m}(f)})$ is compact.
\end{prop}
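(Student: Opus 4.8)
The statement to prove is \prettyref{prop:vanish_at_infinity_precomp}: under hypotheses (i) (a vanishing-at-infinity condition with respect to $(\pi,\mathfrak{K})$) and (ii) (precompactness of the ``localised'' piece $N_{\pi\subset K,j,m}(f)$), the full set $N_{j,m}(f)$ is precompact, with compactness of $\oacx(\overline{N_{j,m}(f)})$ following when $E$ is quasi-complete. The plan is to verify precompactness directly from the definition: a subset $A$ of an lcHs $E$ is precompact if and only if for every $\alpha\in\mathfrak{A}$ and every $\varepsilon>0$ there is a finite set $P\subset E$ with $A\subset\bigcup_{z\in P}(z+B_{\varepsilon,\alpha}(0))$, where $B_{\varepsilon,\alpha}(0):=\{w\in E\;|\;p_{\alpha}(w)<\varepsilon\}$.

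First I would fix $j\in J$, $m\in M$, $\alpha\in\mathfrak{A}$ and $\varepsilon>0$, and split $N_{j,m}(f)$ according to whether $\pi(x)$ lies in a suitable $K\in\mathfrak{K}$. Concretely, I would apply hypothesis (i) with the parameter $\varepsilon/2$ (say) to obtain a set $K\in\mathfrak{K}$ such that
\[
\sup_{\substack{x\in\omega_{m}\\ \pi(x)\notin K}}p_{\alpha}\bigl(T^{E}_{m}(f)(x)\bigr)\nu_{j,m}(x)<\frac{\varepsilon}{2}.
\]
This means the ``tail'' part $N_{j,m}(f)\setminus N_{\pi\subset K,j,m}(f)=\{T^{E}_{m}(f)(x)\nu_{j,m}(x)\;|\;x\in\omega_{m},\,\pi(x)\notin K\}$ is entirely contained in the single ball $B_{\varepsilon/2,\alpha}(0)\subset 0+B_{\varepsilon,\alpha}(0)$. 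For the ``localised'' part, hypothesis (ii) gives that $N_{\pi\subset K,j,m}(f)$ is precompact, so there is a finite set $P\subset E$ with $N_{\pi\subset K,j,m}(f)\subset\bigcup_{z\in P}(z+B_{\varepsilon,\alpha}(0))$. Combining the two pieces,
\[
N_{j,m}(f)\subset\bigcup_{z\in P\cup\{0\}}\bigl(z+B_{\varepsilon,\alpha}(0)\bigr),
\]
and since $P\cup\{0\}$ is finite, this exhibits the desired finite cover. As $\alpha$ and $\varepsilon$ were arbitrary, $N_{j,m}(f)$ is precompact.

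For the final assertion, if $E$ is quasi-complete then the precompact set $N_{j,m}(f)$ is relatively compact by \cite[3.5.3 Proposition, p.\ 65]{Jarchow}, i.e.\ $\overline{N_{j,m}(f)}$ is compact; since quasi-complete spaces have ccp, the closure of the absolutely convex hull $\oacx(\overline{N_{j,m}(f)})$ is compact, exactly as in the closing arguments of \prettyref{prop:cadlag_precomp} and \prettyref{prop:cadlag_difference_precomp}. I do not anticipate a genuine obstacle here: the argument is a clean two-piece cover, and the only point requiring slight care is bookkeeping with the seminorm parameters, namely choosing the vanishing threshold in (i) strictly below $\varepsilon$ so that the tail lands inside a single $p_{\alpha}$-ball that can be absorbed into the finite cover coming from (ii). The mild subtlety worth stating explicitly is that the precompactness criterion must be checked seminorm-by-seminorm, but since both hypotheses are already quantified over all $\alpha\in\mathfrak{A}$, this causes no difficulty.
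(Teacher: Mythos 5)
Your proof is correct and follows essentially the same route as the paper's: split $N_{j,m}(f)$ into the tail controlled by (i) and the localised piece covered finitely via (ii), then conclude with \cite[3.5.3 Proposition, p.\ 65]{Jarchow} and the ccp of quasi-complete spaces. The only cosmetic difference is that the paper phrases the precompactness test with an arbitrary zero neighbourhood $V$ containing some $B_{\varepsilon,\alpha}$, whereas you work directly with the seminorm balls, which is equivalent since the system $(p_{\alpha})_{\alpha\in\mathfrak{A}}$ is directed.
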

\begin{proof}
Let $V$ be a zero neighbourhood in $E$. Then there are $\alpha\in\mathfrak{A}$ and $\varepsilon>0$ such 
that $B_{\varepsilon,\alpha}\subset V$ where $B_{\varepsilon,\alpha}:=\{x\in E\;|\;p_{\alpha}(x)<\varepsilon\}$. 
Let $j\in J$ and $m\in M$. Due to (i) there is $K\in\mathfrak{K}$ such that the set 
\[
\qquad\;\; N_{\pi\nsubset K,j,m}(f):=\{T^{E}_{m}(f)(x)
 \nu_{j,m}(x)\;|\;x\in\omega_{m},\;\pi(x)\notin K\}
\]
is contained in $B_{\varepsilon,\alpha}$. Further, the precompactness of $N_{\pi\subset K,j,m}(f)$ by (ii) 
implies that there exists a finite set $P\subset E$ such that $N_{\pi\subset K,j,m}(f) \subset P+V$. 
Hence we conclude
\begin{align*}
 N_{j,m}(f)&= \bigl( N_{\pi\nsubset K,j,m}(f)\cup N_{\pi\subset K,j,m}(f)\bigr) \\
 &\subset \bigl(B_{\varepsilon,\alpha}\cup (P+V)\bigr)
 \subset\bigl( V\cup (P+V)\bigr)
 =(P\cup\{0\})+V,
\end{align*}
which means that $N_{j,m}(f)$ is precompact. 

The second part of the statement follows from the fact that 
a precompact set in a quasi-complete space is relatively compact 
by \cite[3.5.3 Proposition, p.\ 65]{Jarchow} and that
quasi-complete spaces have ccp.
\end{proof}

The most common case is that $\mathfrak{K}$ consists of the compact subsets of 
$\Omega$ and $\pi$ is a projection on $X:=\Omega$ (see e.g.\ \prettyref{ex:cont_loc_comp}, \prettyref{ex:subspace_bierstedt} and \prettyref{ex:diff_vanish_at_infinity}).
\section[The Pettis-integral]{The Pettis-integral}
\label{app:pettis}
We start with the definition of the Pettis-integral which we use to define 
Fourier transformations of vector-valued functions 
(see \prettyref{prop:Fourier-trafo_Bjoerck}, \prettyref{thm:fourier.rap.dec} and \prettyref{thm:fourier_periodic}) and for 
Riesz--Markov--Kakutani theorems in \prettyref{sect:riesz_markov_kakutani}.

Let $\Sigma$ be a $\sigma$-algebra on a set $X$. A function $\mu\colon\Sigma\to\K$ is 
called \emph{\gls{K_valued_meas}} if $\mu(\varnothing)=0$ and $\mu$ is \emph{countably additive}, 
i.e.\ for any sequence $(A_{n})_{n\in\N}$ of pairwise disjoint sets in $\Sigma$ it holds that
\[
\mu\bigl(\bigcup_{n\in\N}A_{n}\bigr)=\sum_{n\in\N}\mu(A_{n})\in\K .
\]
If $\K=\R$, $\mu$ is also called a \emph{signed measure}, and if $\K=\C$ a \emph{complex measure}.
If $\K$ is replaced by $[0,\infty]$, we say that $\mu$ is a \emph{\gls{pos_meas}}.
For a $\K$-valued measure $\mu$ its \emph{\gls{total_var}} $\gls{mu_abs}$ given by 
\[
|\mu|(A):=\sup\bigl\{\sum_{n\in\N}|\mu(A_{n})|\;|\;A_{n}\in\Sigma, A_{m}\cap A_{n}=\varnothing\;\text{if}\; m\neq n, A=\bigcup_{n\in\N}A_{n}\bigr\},\quad A\in\Sigma,
\] 
is a well-defined positive measure by \cite[6.2 Theorem, p.\ 117]{rudin1970} 
and it is \emph{finite} by \cite[6.4 Theorem, p.\ 118]{rudin1970}, 
i.e.\ $|\mu|(X)<\infty$. Obviously, a $\K$-valued measure $\mu$ is positive if and only if $|\mu|=\mu$.
For a positive measure $\mu$ on $X$ and $1\leq p<\infty$ let 
\[
\mathfrak{L}^{p}(X,\mu):=\{f\colon X\to\K\;\text{measurable}\;|\;
q_{p}(f):=\int_{X}|f(x)|^{p}\d\mu(x)<\infty\}
\]
and define the quotient space of $p$-integrable functions by 
$\gls{L_pXmu}:=\mathfrak{L}^{p}(X,\mu)/\{f\in\mathfrak{L}^{p}(X,\mu)\;|\;q_{p}(f)=0\}$,
which becomes a Banach space if it is equipped with the norm 
$\|f\|_{p}:=\|f\|_{\mathcal{L}^{p}}:=q_{p}(F)^{1/p}$, $f=[F]\in \mathcal{L}^{p}(X,\mu)$. 
From now on we do not distinguish between equivalence classes and 
their representatives anymore.

For a $\K$-valued measure $\mu$ there is a unique $h\in\mathcal{L}^{1}(X,|\mu|)$ with
$\d\mu = h\d|\mu|$ by the Radon--Nikod\'ym theorem (see \cite[6.12 Theorem, p.\ 124]{rudin1970})
and $h$ can be chosen such that $|h|=1$, 
i.e.\ has a representative with modulus equal to $1$. Now, we say that $f\in\mathcal{L}^{p}(X,\mu)$ 
if $f\cdot h\in \mathcal{L}^{p}(X,|\mu|)$. For $f\in\mathcal{L}^{1}(X,\mu)$ we define 
the \emph{integral of} $f$ \emph{on} $X$ \emph{w.r.t.}\ $\mu$ by
\[
\int_{X}f(x)\d\mu(x):=\int_{X}f(x)h(x)\d|\mu|(x).
\]

For a measure space $(X, \Sigma, \mu)$ and $f\colon X\to \K$ we say that 
$f$ is \emph{integrable on} $\Lambda\in\Sigma$ and write $f\in\mathcal{L}^{1}(\Lambda,\mu)$ 
if $\chi_{\Lambda}f\in \mathcal{L}^{1}(X,\mu)$. Then we set 
\[
\int\limits_{\Lambda} f(x)  \d\mu(x):=\int\limits_{X} \chi_{\Lambda}(x)f(x)  \d\mu(x).
\]

\begin{defn}[Pettis-integral]\label{def:integral}
 	Let $(X, \Sigma, \mu)$ be a measure space and $E$ an lcHs. 
 	A function $f\colon X \to E$ is called \emph{\gls{weakly_measurable}} if the function 
 	$e'\circ  f  \colon X\to \K$, $ (e'\circ f )(x) := \langle e' , f(x) \rangle:=e'(f(x)),$ 
 	is measurable for all $e' \in E'$.  
 	A weakly measurable function is said to be \emph{\gls{weakly_integrable}} 
 	if $e' \circ f \in \mathcal{L}^{1}(X,\mu)$. 
 	A function $f\colon X\to E$ is called \emph{\gls{Pettis_integrable}} on $\Lambda\in\Sigma$ 
 	if it is weakly integrable on $\Lambda$ and
 	\[
 	\exists\; e_{\Lambda} \in E \; \forall e' \in E': \langle e' , e_{\Lambda} \rangle 
 	= \int\limits_{\Lambda} \langle e' , f(x) \rangle \d\mu(x). 
 	\]  
 	In this case $e_{\Lambda}$ is unique due to $E$ being Hausdorff and 
 	we set the \emph{\gls{Pettis_integral}}
 	\[
 	 \int\limits_{\Lambda} f(x) \d\mu(x):=e_{\Lambda}.
 	\]	
\end{defn}	 

If we consider the measure space $(X, \mathscr{L}(X), \lambda)$ of Lebesgue measurable sets 
for $X\subset\R^{d}$, we just write $\d x:=\d\lambda(x)$.

\begin{lem}\label{lem:pettis.loc.complete}
 Let $E$ be a locally complete lcHs, $\Omega\subset\R^{d}$ open and $f\colon\Omega\to E$.
 If $f$ is weakly $\mathcal{C}^{1}$, i.e.\ $e'\circ f\in\mathcal{C}^{1}(\Omega)$ 
 for every $e'\in E'$, then $f$ is Pettis-integrable on every compact subset 
 $K\subset\Omega$ with respect to any locally finite positive measure $\mu$ on $\Omega$ and
 \[
  p_{\alpha}\bigl(\int_{K}f(x)\d\mu(x)\bigr)\leq \mu(K)\sup_{x\in K}p_{\alpha}(f(x)),\quad\alpha\in\mathfrak{A}.
 \]
\end{lem}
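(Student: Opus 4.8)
The plan is to show that the $E$-valued function $f$ is Pettis-integrable on a compact set $K\subset\Omega$ by exhibiting the candidate integral as an element of $E$ built from the map $\mathscr{R}^{t}$ of \prettyref{thm:full_linearisation}, exploiting that weakly $\mathcal{C}^{1}$ functions fall under the weak-strong machinery. The key observation is that the functional
\[
T^{\K}\colon\mathcal{C}^{1}(\Omega)\to\K,\quad T^{\K}(g):=\int_{K}g(x)\d\mu(x),
\]
is continuous on $\mathcal{CW}^{1}(\Omega)$ since $\mu$ is locally finite and $K$ compact, giving $|T^{\K}(g)|\leq\mu(K)\sup_{x\in K}|g(x)|$. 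For each $e'\in E'$ we have $e'\circ f\in\mathcal{C}^{1}(\Omega)$, so $e'\circ f$ is integrable on $K$, and the linear form $e'\mapsto T^{\K}(e'\circ f)=\int_{K}(e'\circ f)(x)\d\mu(x)$ is exactly $R_{f}^{t}(T^{\K})$ in the notation preceding \prettyref{thm:full_linearisation}.

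The heart of the argument is to verify that this functional lies in $\mathcal{J}(E)$, i.e.\ is represented by an actual element of $E$. First I would restrict attention to a suitable $\dom$-space: take $\F:=\mathcal{CW}^{1}(\Omega)$ and $\FE:=\mathcal{CW}^{1}(\Omega,E)$, which are $\varepsilon$-compatible for locally complete $E$ by \prettyref{ex:diff_usual} a) (with $k=1$, noting local completeness suffices via metric ccp once we are in the $\mathcal{C}^{1}$ setting, or more directly by passing to the relevant weighted subspace). The generator $(\id,\id)$ together with the partial derivatives is strong and consistent by \prettyref{prop:weighted_diff_strong_cons}. Since $f$ is only assumed weakly $\mathcal{C}^{1}$, I would invoke the weak-strong principle: by \prettyref{cor:weak_strong_CV} c) (or \prettyref{thm:ext_FS_set_uni}) with $G:=E'$, which determines boundedness by Mackey's theorem, the weak $\mathcal{C}^{1}$ hypothesis yields $f\in\mathcal{CW}^{1}(\Omega,E)$. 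Then $S(u)=f$ for a unique $u\in\F\varepsilon E$, and the inverse formula of \prettyref{thm:full_linearisation} gives $u=\mathcal{J}^{-1}\circ R_{f}^{t}$.

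With $f\in\F\varepsilon E$ in hand, I would set $e_{K}:=u(T^{\K})=\mathcal{J}^{-1}(R_{f}^{t}(T^{\K}))\in E$, which is well-defined precisely because $T^{\K}\in\F'$. By the definition of $R_{f}^{t}$ we obtain, for every $e'\in E'$,
\[
\langle e',e_{K}\rangle=R_{f}^{t}(T^{\K})(e')=T^{\K}(e'\circ f)=\int_{K}\langle e',f(x)\rangle\d\mu(x),
\]
which is exactly the Pettis-integrability condition of \prettyref{def:integral} with $e_{K}=\int_{K}f(x)\d\mu(x)$. Finally, the seminorm estimate follows from $p_{\alpha}(e_{K})=\sup_{e'\in B_{\alpha}^{\circ}}|\langle e',e_{K}\rangle|$ together with
\[
|\langle e',e_{K}\rangle|\leq\int_{K}|\langle e',f(x)\rangle|\d\mu(x)\leq\mu(K)\sup_{x\in K}|\langle e',f(x)\rangle|\leq\mu(K)\sup_{x\in K}p_{\alpha}(f(x)),
\]
for $e'\in B_{\alpha}^{\circ}$, giving $p_{\alpha}(e_{K})\leq\mu(K)\sup_{x\in K}p_{\alpha}(f(x))$. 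The main obstacle I anticipate is the careful bookkeeping needed to deploy the weak-strong principle in the correct space so that $T^{\K}$ is genuinely continuous on it and $f$ genuinely lands in $\mathcal{CW}^{1}(\Omega,E)$; an alternative, perhaps cleaner route that avoids invoking the full extension theorem is to directly verify $R_{f}^{t}(T^{\K})\in\mathcal{J}(E)$ by the Mackey--Arens theorem, using that $\overline{\operatorname{acx}}(f(K))$ is compact (since $f$, being weakly $\mathcal{C}^{1}$ hence continuous into $E$, maps the compact $K$ into a compact set whose closed absolutely convex hull is compact in a locally complete space by \prettyref{prop:abs_conv_comp_C_1_b}-type reasoning), which bounds the functional by the support function of a compact set.
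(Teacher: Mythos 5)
Your primary route has a genuine gap at its central step. You claim that the weak $\mathcal{C}^{1}$ hypothesis yields $f\in\mathcal{CW}^{1}(\Omega,E)$ via \prettyref{cor:weak_strong_CV} c), but that corollary concerns the spaces $\mathcal{CV}^{\infty}$ (Fr\'echet--Schwartz spaces of smooth functions), not $\mathcal{C}^{1}$; and the paper itself stresses (see \prettyref{sect:weak_strong_finite_order} and the discussion around \prettyref{thm:schwartz_weak_strong}) that the $\mathcal{C}^{k}$-weak-strong principle is \emph{false} for finite $k$ and general locally complete $E$ --- weakly $\mathcal{C}^{1}$ only gives $f\in\mathcal{C}^{0,1}_{loc}(\Omega,E)$, i.e.\ local Lipschitz continuity, not $\mathcal{C}^{1}$; the full principle needs $E$ semi-Montel (\prettyref{thm:weak_strong_finite_order}). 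Independently, the $\varepsilon$-compatibility $\mathcal{CW}^{1}(\Omega,E)\cong\mathcal{CW}^{1}(\Omega)\varepsilon E$ from \prettyref{ex:diff_usual} a) is only available for $E$ with metric ccp when $k<\infty$, and metric ccp is strictly stronger than local completeness, so you cannot "pass through $S^{-1}$" under the stated hypotheses. Hence the object $u=\mathcal{J}^{-1}\circ R_{f}^{t}\in\F\varepsilon E$ on which your main argument rests is not available.

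Your closing "alternative, cleaner route" is, however, essentially the paper's actual proof: one shows directly that $I(e'):=\int_{K}\langle e',f(x)\rangle\d\mu(x)$ is bounded by $\mu(K)\sup_{x\in\oacx(f(K))}|e'(x)|$ and concludes $I\in(E'_{\kappa})'\cong E$ by Mackey--Arens; the seminorm estimate then follows exactly as you compute it. The one point you must be careful about is the justification that $\oacx(f(K))$ is compact: it is \emph{not} true that the closed absolutely convex hull of a compact set is compact in every locally complete space (that would require ccp). The correct mechanism, which the paper imports from \cite[Proposition 2, p.\ 354]{Bonet2002}, is that a weakly $\mathcal{C}^{1}$ function is locally Lipschitz, so the argument can be run in a local Banach space $E_{B}$ in the spirit of \prettyref{prop:abs_conv_comp_hoelder}; your phrase "\prettyref{prop:abs_conv_comp_C_1_b}-type reasoning" gestures at this, but the intermediate claim "compact set whose closed absolutely convex hull is compact in a locally complete space" should be replaced by the precise citation or the $E_{B}$-argument. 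With that repair, the alternative route is complete and coincides with the paper's proof.
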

\begin{proof}
 Let $K\subset\Omega$ be compact and $(\Omega,\Sigma,\mu)$ a measure space with locally finite measure $\mu$, 
 i.e.\ $\Sigma$ contains the Borel $\sigma$-algebra $\mathcal{B}(\Omega)$ on $\Omega$ and 
 for every $x\in\Omega$ there is a neighbourhood $U_{x}\subset\Omega$ of $x$ such that $\mu(U_{x})<\infty$. 
 Since the map $e'\circ f$ is differentiable for every $e'\in E'$, thus Borel-measurable, 
 and $\mathcal{B}(\Omega)\subset \Sigma$, it is measurable. 
 We deduce that $e'\circ f\in \mathcal{L}^{1}(K,\mu)$ for every $e'\in E'$ because locally finite measures 
 are finite on compact sets. Therefore the map 
 \[
  I\colon E'\to \mathbb{K},\;I(e'):=\int_{K}\langle e',f(x)\rangle\d\mu(x)
 \]
 is well-defined and linear. We estimate
 \[
  |I(e')|\leq |\mu(K)| \sup_{x\in f(K)}|e'(x)|\leq \mu(K) \sup_{x\in \oacx(f(K))}|e'(x)|,\quad e'\in E'.
 \]
 Due to $f$ being weakly $\mathcal{C}^{1}$ and \cite[Proposition 2, p.\ 354]{Bonet2002} 
 the absolutely convex set $\oacx(f(K))$ is compact, 
 yielding $I\in(E_{\kappa}')'\cong E$ by the theorem of Mackey--Arens, which means that there is $e_{K}\in E$ such that
 \[
  \langle e',e_{K}\rangle=I(e')=\int_{K}\langle e',f(x)\rangle\d\mu(x),\quad e'\in E'.
 \]
 Hence $f$ is Pettis-integrable on $K$ w.r.t.\ $\mu$.
 For $\alpha\in\mathfrak{A}$ we set $B_{\alpha}:=\{x\in E\;|\;p_{\alpha}(x)<1\}$ and observe that
\begin{align*}
 p_{\alpha}\bigl(\int_{K}f(x)\d\mu(x)\bigr)
 &=\sup_{e'\in B_{\alpha}^{\circ}}\bigl|\langle e',\int_{K}f(x)\d\mu(x)\rangle\bigr|
 =\sup_{e'\in B_{\alpha}^{\circ}}\bigl|\int_{K}e'(f(x))\d\mu(x)\bigr|\\
 &\leq \mu(K)\sup_{e'\in B_{\alpha}^{\circ}}\sup_{x\in K}|e'(f(x))|
 = \mu(K)\sup_{x\in K}p_{\alpha}(f(x))
\end{align*}
where we used \cite[Proposition 22.14, p.\ 256]{meisevogt1997} in the first and last equation 
to get from $p_{\alpha}$ to $\sup_{e'\in B_{\alpha}^{\circ}}$ and back.
\end{proof}

\begin{lem}\label{lem:pettis.seq.complete}
 Let $E$ be a sequentially complete lcHs, $\Omega\subset\R^{d}$ open, 
 $(\Omega,\Sigma,\mu)$ a measure space with locally finite positive measure $\mu$ and $f\colon\Omega\to E$.
 If $f$ is weakly $\mathcal{C}^{1}$ and there are $\psi\in\mathcal{L}^{1}(\Omega,\mu)$ 
 and $g\colon\Omega\to[0,\infty)$ measurable such that $\psi g\geq 1$ and $fg$ is bounded on $\Omega$, 
 then $f$ is Pettis-integrable on $\Omega$ and
 \[
p_{\alpha}\bigl(\int_{\Omega}f(x)\d \mu(x)\bigr)\leq \|\psi\|_{1}\sup_{x\in\Omega}p_{\alpha}(f(x)g(x)),
\quad\alpha\in\mathfrak{A}.
\]
\end{lem}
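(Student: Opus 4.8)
The plan is to follow the same strategy as in Lemma \ref{lem:pettis.loc.complete}, but to exploit the weight pair $(\psi,g)$ to build a \emph{single} compact absolutely convex set that dominates the integral over all of $\Omega$, rather than working on compact subsets only. First I would check well-definedness of the candidate functional. Since $f$ is weakly $\mathcal{C}^{1}$, each $e'\circ f$ is differentiable, hence Borel-measurable and therefore $\Sigma$-measurable. For $e'\in E'$ choose $\alpha\in\mathfrak{A}$ and $C>0$ with $|e'(\cdot)|\le C p_{\alpha}(\cdot)$; then using $\psi g\ge 1$ and the boundedness of $fg$ I estimate
\[
|(e'\circ f)(x)|\le |(e'\circ f)(x)|\psi(x)g(x)\le C\psi(x)\sup_{y\in\Omega}p_{\alpha}(f(y)g(y))
\]
for all $x\in\Omega$. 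As $\psi\in\mathcal{L}^{1}(\Omega,\mu)$, this shows $e'\circ f\in\mathcal{L}^{1}(\Omega,\mu)$, so the linear map $I\colon E'\to\K$, $I(e'):=\int_{\Omega}\langle e',f(x)\rangle\d\mu(x)$, is well-defined.

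Next I would identify the compact set carrying the integral. The natural candidate is $K:=\oacx\bigl((fg)(\Omega)\bigr)$. The key point is to verify that $K$ is absolutely convex and compact: the range $(fg)(\Omega)$ is bounded by hypothesis, and since $f$ is weakly $\mathcal{C}^{1}$ so is $fg$ (the weak $\mathcal{C}^{1}$ property is preserved under multiplication by the scalar-valued measurable weight $g$ on the relevant estimates, exactly as exploited in \prettyref{prop:abs_conv_comp_hoelder}); invoking \prettyref{prop:abs_conv_comp_hoelder} with this $h:=g$ — whose hypothesis ``$1\le\varepsilon h$ outside a compact set'' must be extracted from $\psi g\ge1$ together with $\psi\in\mathcal{L}^{1}$ — yields the compactness of $K$ in the locally complete space $E$. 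With $B_{\alpha}:=\{x\in E\mid p_{\alpha}(x)<1\}$ I then bound, for $e'\in B_{\alpha}^{\circ}$,
\[
|I(e')|\le\int_{\Omega}|e'(f(x)g(x))|\psi(x)\d\mu(x)\le\|\psi\|_{1}\sup_{x\in\Omega}|e'(f(x)g(x))|\le\|\psi\|_{1}\sup_{x\in K}|e'(x)|,
\]
so $I$ is continuous with respect to the topology of uniform convergence on the compact set $K$. By the Mackey--Arens theorem $I\in(E_{\kappa}')'\cong\mathcal{J}(E)$, giving $e_{\Omega}\in E$ with $\langle e',e_{\Omega}\rangle=I(e')$ for all $e'\in E'$; this is precisely Pettis-integrability of $f$ on $\Omega$.

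Finally, the norm estimate follows as in \prettyref{lem:pettis.loc.complete}: for $\alpha\in\mathfrak{A}$,
\[
p_{\alpha}\bigl(\textstyle\int_{\Omega}f(x)\d\mu(x)\bigr)
=\sup_{e'\in B_{\alpha}^{\circ}}\bigl|\langle e',e_{\Omega}\rangle\bigr|
\le\|\psi\|_{1}\sup_{e'\in B_{\alpha}^{\circ}}\sup_{x\in\Omega}|e'(f(x)g(x))|
=\|\psi\|_{1}\sup_{x\in\Omega}p_{\alpha}(f(x)g(x)),
\]
where the outer equalities use \cite[Proposition 22.14, p.\ 256]{meisevogt1997} to pass between $p_{\alpha}$ and $\sup_{e'\in B_{\alpha}^{\circ}}$. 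I expect the main obstacle to be the compactness of $K=\oacx((fg)(\Omega))$: one must confirm that the hypotheses of \prettyref{prop:abs_conv_comp_hoelder} (or of \cite[Proposition 2, p.\ 354]{Bonet2002}) genuinely hold here, in particular translating $\psi g\ge 1$ with $\psi\in\mathcal{L}^{1}$ into the ``vanishing outside a compact set'' condition on the auxiliary weight and ensuring $fg$ is weakly $\mathcal{C}^{1}$ and bounded so that Mackey's theorem applies. The sequential completeness of $E$ enters only to guarantee local completeness, which is what \prettyref{prop:abs_conv_comp_hoelder} requires.
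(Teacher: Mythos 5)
There is a genuine gap at the heart of your argument: the compactness of $K:=\oacx\bigl((fg)(\Omega)\bigr)$ cannot be obtained from the hypotheses of the lemma. You want to invoke \prettyref{prop:abs_conv_comp_hoelder}, but that proposition requires $fg\in\mathcal{C}_{b}^{[\gamma]}(\Omega,E)$, i.e.\ $fg$ bounded \emph{and H\"older continuous}, together with an auxiliary weight $h$ such that $(fg)h$ is bounded and $1\leq\varepsilon h$ off a compact set. Here $g$ is only assumed measurable, so $fg$ need not be continuous, let alone H\"older continuous; the choice $h:=g$ would in addition require $fg^{2}$ to be bounded, which is not given; and $\psi\in\mathcal{L}^{1}(\Omega,\mu)$ does not force any pointwise decay of $\psi$ (it may be large on sets of small measure arbitrarily far out), so the condition ``$1\leq\varepsilon h$ outside a compact set'' cannot be extracted from $\psi g\geq 1$. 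Without compactness of $K$ your final estimate only shows that $I$ is bounded on the polar of a \emph{bounded} set, and the Mackey--Arens theorem then places $I$ in the bidual $(E_{\beta}')'$, not in $\mathcal{J}(E)$. Your closing remark that sequential completeness ``enters only to guarantee local completeness'' is the tell-tale sign that something is missing: if that were true, the lemma would hold for locally complete $E$, whereas the statement genuinely uses sequential completeness.

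The paper's proof circumvents this by never integrating over all of $\Omega$ in one step. It takes a compact exhaustion $(K_{n})$ of $\Omega$, applies \prettyref{lem:pettis.loc.complete} to get Pettis-integrals $e_{n}:=\int_{K_{n}}f\,\d\mu$ (there the set $\oacx(f(K_{n}))$ \emph{is} compact, by \cite[Proposition 2, p.\ 354]{Bonet2002}, because $f$ is weakly $\mathcal{C}^{1}$ and $K_{n}$ is compact), and then uses the estimate
\[
p_{\alpha}(e_{k}-e_{n})\leq\int_{K_{k}\setminus K_{n}}|\psi(x)|\,\d\mu(x)\,\sup_{x\in\Omega}p_{\alpha}(f(x)g(x))
\]
to show $(e_{n})$ is Cauchy; sequential completeness yields the limit $e_{\Omega}$, and dominated convergence identifies $\langle e',e_{\Omega}\rangle$ with $\int_{\Omega}\langle e',f(x)\rangle\,\d\mu(x)$. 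Your well-definedness argument for $I$ and the final seminorm estimate are fine, but the existence of the integral itself must be produced by this limiting procedure rather than by a single application of Mackey--Arens.
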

\begin{proof}
Let $(K_{n})_{n\in\N}$ be a compact exhaustion of $\Omega$.
Due to \prettyref{lem:pettis.loc.complete} the Pettis-integral 
\[
e_{n}:=\int_{K_{n}}f(x)\d\mu(x)
\]
is a well-defined element of $E$ for every $n\in\N$.
Next, we show that $(e_{n})$ is a Cauchy sequence 
in $E$. Let $\alpha\in\mathfrak{A}$, $m\in\N_{0}$ and $k,n\in\N$ with $k>n$. 
We set $B_{\alpha}:=\{x\in E\;|\;p_{\alpha}(x)<1\}$ and $Q_{k,n}:=K_{k}\setminus K_{n}$ 
and note that
\begin{align}\label{eq:pettis.seq.complete}
  p_{\alpha}(e_{k}-e_{n})
&=\sup_{e'\in B_{\alpha}^{\circ}}|e'(e_{k}-e_{n})|
 =\sup_{e'\in B_{\alpha}^{\circ}}\bigl| \int_{Q_{k,n}}e'(f(x))\d\mu(x)\bigr|\notag\\
&\leq \int_{Q_{k,n}}|\psi(x)|\d\mu(x)
      \sup_{e'\in B_{\alpha}^{\circ}}\sup_{x\in \Omega}|e'(f(x)g(x))|\notag\\
&=\int_{Q_{k,n}}|\psi(x)|\d\mu(x)
  \sup_{x\in \Omega}p_{\alpha}(f(x)g(x))
\end{align}
where we used \cite[Proposition 22.14, p.\ 256]{meisevogt1997} to switch from 
$p_{\alpha}$ to $\sup_{e'\in B_{\alpha}^{\circ}}$ and back.
Since $\psi\in\mathcal{L}^{1}(\Omega,\mu)$, we have that $(e_{n})$ is a Cauchy sequence 
in the sequentially complete space $E$. 
Thus $e_{\Omega}:=\lim_{n\to\infty}e_{n}$ exists in $E$ and the dominated convergence theorem implies 
\[
e'(e_{\Omega})=\lim_{n\to\infty}e'(e_{n})
=\lim_{n\to\infty}\int_{K_{n}}e'(f(x)) \d \mu(x)
=\int_{\Omega} e'(f(x))\d \mu(x), \quad e'\in E'.
\]
Hence $f$ is Pettis-integrable on $\Omega$ 
with $\int_{\Omega}f(x)\d \mu(x)= e_{\Omega}$. As in \eqref{eq:pettis.seq.complete} we have 
\[
 p_{\alpha}(e_{n})
 \leq \int_{K_{n}}|\psi(x)|\d\mu(x) 
  \sup_{x\in \Omega}p_{\alpha}(f(x)g(x))
 \leq \|\psi\|_{1}\sup_{x\in\Omega}p_{\alpha}(f(x)g(x)) 
\]
for every $n\in\N$. Letting $n\to\infty$, we derive the estimate in our statement. 
\end{proof}

\begin{rem}\label{rem:pettis.loc.complete}
Let $\mu$ be a $\K$-valued measure and $\Sigma$ contain $\mathcal{B}(\Omega)$. 
Then \prettyref{lem:pettis.loc.complete} is still valid with $\mu(K)$ replaced by $|\mu|(K)$ 
due to the definition of the integral w.r.t.\ a $\K$-valued measure and 
as $|\mu|(K)\leq|\mu|(\Omega)<\infty$. Thus \prettyref{lem:pettis.seq.complete} holds in this case as well.
\end{rem}

The following definition is analogous to the definition of the Pettis-integral.

\begin{defn}[Pettis-summable]\label{def:pettis_summable} 
 	Let $I$ be a non-empty set and $E$ an lcHs. 
 	A family $(f_{i})_{i\in I}$ in $E$ is called \emph{\gls{weakly_summable}} 
 	if $(\langle e' , f_{i} \rangle)_{i\in I} \in \ell^{1}(I,\K)$ for all $e'\in E'$. 
 	A family $(f_{i})_{i\in I}$ in $E$ is called \emph{\gls{Pettis_summable}} 
 	if it is weakly summable and
 	\[
 	\exists\; e_{I} \in E \; \forall e' \in E': \langle e' , e_{I} \rangle 
 	= \sum\limits_{i\in I} \langle e' , f_{i} \rangle. 
 	\]  
 	In this case $e_{I}$ is unique due to $E$ being Hausdorff and we set 
 	\[
 	 \sum\limits_{i\in I}  f_{i} :=e_{I}.
 	\]	
\end{defn}	 

For the elements $f$ of the 
space $D([0,1],E)$ of $E$-valued c\`{a}dl\`{a}g functions on $[0,1]$ 
and their jump functions $\Delta_{\ast} f$ we have the following result.
 
\begin{prop}\label{prop:cadlag_pettis}
Let $E$ be a quasi-complete lcHs, $\mu$ a $\K$-valued Borel measure on $[0,1]$ and $\psi\in\ell^{1}([0,1],\K)$.
Then $f\in D([0,1],E)$ is Pettis-integrable on $[0,1]$ and 
\[
p_{\alpha}\bigl(\int_{[0,1]}f(x)\d \mu(x)\bigr)\leq|\mu|([0,1]) \sup_{x\in[0,1]}p_{\alpha}(f(x)),
\quad\alpha\in\mathfrak{A},
\]
and $(\Delta_{\ast} f)\psi$ is Pettis-summable on $[0,1]$ and 
\[
     p_{\alpha}\bigl(\sum_{x\in[0,1]}(\Delta_{\ast} f)(x)\psi(x)\bigr)
\leq \|\psi\|_{\ell^{1}}\sup_{x\in[0,1]}p_{\alpha}(\Delta_{\ast} f(x)),
\quad\alpha\in\mathfrak{A}.
\]
\end{prop}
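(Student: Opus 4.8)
The plan is to prove both assertions---Pettis-integrability of $f$ and Pettis-summability of $(\Delta_{\ast}f)\psi$---by the same strategy used in \prettyref{lem:pettis.loc.complete} and \prettyref{lem:pettis.seq.complete}: construct the candidate functional on $E'$, show it is continuous on $E'_{\kappa}$ by bounding it against an absolutely convex compact set, and then invoke the Mackey--Arens theorem to identify it with an element of $E$. The crucial geometric input is \prettyref{prop:cadlag_precomp} and \prettyref{prop:cadlag_difference_precomp}, which for quasi-complete $E$ give that $\oacx(\overline{f([0,1])})$ and $\oacx(\overline{\Delta_{\ast}f([0,1])})$ are absolutely convex and compact in $E$.

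For the first part, I would first check weak integrability: for each $e'\in E'$ the scalar function $e'\circ f$ lies in $D([0,1],\K)\subset L^{\infty}([0,1])$ and is Borel measurable (c\`{a}dl\`{a}g functions have at most countably many discontinuities), hence $e'\circ f\in\mathcal{L}^{1}([0,1],\mu)$ since $|\mu|([0,1])<\infty$. Thus $I(e'):=\int_{[0,1]}e'(f(x))\d\mu(x)$ defines a linear functional on $E'$ with
\[
|I(e')|\leq |\mu|([0,1])\sup_{x\in[0,1]}|e'(f(x))|\leq |\mu|([0,1])\sup_{x\in K}|e'(x)|
\]
where $K:=\oacx(\overline{f([0,1])})$, which is compact by \prettyref{prop:cadlag_precomp}. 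By Mackey--Arens $I\in(E'_{\kappa})'\cong E$, giving $e_{[0,1]}\in E$ with $\langle e',e_{[0,1]}\rangle=I(e')$, i.e.\ Pettis-integrability. The stated norm estimate then follows via \cite[Proposition 22.14, p.\ 256]{meisevogt1997} exactly as in the displayed computation at the end of \prettyref{lem:pettis.loc.complete}, writing $p_{\alpha}(\int f\d\mu)=\sup_{e'\in B_{\alpha}^{\circ}}|I(e')|$ and pulling $|\mu|([0,1])$ out.

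For the second part I would argue analogously. Weak summability holds because for each $e'\in E'$ the family $(e'(\Delta_{\ast}f(x))\psi(x))_{x\in[0,1]}$ is dominated by $\|\psi\|_{\ell^{1}}\sup_{x}|e'(\Delta_{\ast}f(x))|$ times $(|\psi(x)|/\|\psi\|_{\ell^{1}})_{x}$, which is summable since $\psi\in\ell^{1}([0,1],\K)$; here $\sup_{x}|e'(\Delta_{\ast}f(x))|<\infty$ because $\Delta_{\ast}f([0,1])$ is precompact hence bounded. Define $J(e'):=\sum_{x\in[0,1]}e'(\Delta_{\ast}f(x))\psi(x)$, which is linear in $e'$ and satisfies $|J(e')|\leq\|\psi\|_{\ell^{1}}\sup_{x\in K'}|e'(x)|$ with $K':=\oacx(\overline{\Delta_{\ast}f([0,1])})$ compact by \prettyref{prop:cadlag_difference_precomp}. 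Again Mackey--Arens yields $e_{[0,1]}'\in E$ representing $J$, establishing Pettis-summability, and the norm estimate follows as before.

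The main obstacle I anticipate is the measurability/summability bookkeeping rather than the functional-analytic core: one must confirm that $e'\circ f$ is genuinely $\mu$-integrable for a general $\K$-valued Borel measure (handled by reducing to $|\mu|$ via the Radon--Nikod\'ym density $h$ with $|h|=1$ as in the integral definition preceding \prettyref{def:integral}), and that the uncountable ``sum'' over $[0,1]$ is well-defined---this last point is precisely why $\Delta_{\ast}f$ takes nonzero values only on a countable set, a fact extractable from the proof of \prettyref{prop:cadlag_difference_precomp}, where the jump set $\{x: p_{\alpha}(\Delta_{\ast}f(x))\geq\varepsilon\}$ is shown to be finite for each $\varepsilon$. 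Once countability of the support of $\Delta_{\ast}f$ is in hand, the sum collapses to an ordinary absolutely convergent series and the argument proceeds cleanly.
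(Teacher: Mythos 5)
Your proposal is correct and follows essentially the same route as the paper's proof: define the candidate functionals $I$ and $I_{0}$ on $E'$, bound them against the absolutely convex compact sets $\oacx(\overline{f([0,1])})$ and $\oacx(\overline{\Delta_{\ast}f([0,1])})$ supplied by \prettyref{prop:cadlag_precomp} and \prettyref{prop:cadlag_difference_precomp}, and conclude via Mackey--Arens, with the seminorm estimates obtained exactly as at the end of \prettyref{lem:pettis.loc.complete}. Your extra bookkeeping on measurability and on the countable support of the sum is harmless (the paper cites Billingsley for the former, and the latter is already forced by $\psi\in\ell^{1}([0,1],\K)$).
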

\begin{proof}
By \cite[Chap.\ 3, Sect.\ 14, Lemma 1, p.\ 110]{billingsley1968} $e'\circ f$ 
is Borel measurable for every $e'\in E'$ and integrable due to its boundedness on $[0,1]$. 
Thus the map 
\[
I\colon E'\to\K,\;I(e'):=\int_{[0,1]}e'(f(x))\d\mu(x),
\]
is well-defined and linear. 
It follows from \prettyref{prop:cadlag_precomp} that 
$\oacx(\overline{f([0,1])})$ is absolutely convex and compact in $E$.
In combination with the estimate
\begin{align*}
|I(e')|&\leq|\mu|([0,1])
        \sup_{x\in f([0,1])}|e'(x)|
        \leq|\mu|([0,1])\sup_{x\in\oacx(\overline{f([0,1])})}|e'(x)|
\end{align*} 
for every $e'\in E'$ we deduce that $I\in(E_{\kappa}')'\cong E$ by the theorem of Mackey--Arens, 
which implies that there is $e_{[0,1]}\in E$ such that
\[
\langle e', e_{[0,1]}\rangle=I(e')=\int_{[0,1]}e'(f(x))\d\mu(x),\quad e'\in E'.
\]
Thus $f$ is Pettis-integrable on $[0,1]$. 

Since $\psi\in\ell^{1}([0,1])$ and $e'\circ \Delta_{\ast}f$ bounded on $[0,1]$ for every $e'\in E'$, the map 
\[
I_{0}\colon E'\to\K,\;I_{0}(e'):=\sum_{x\in[0,1]}e'((\Delta_{\ast} f)(x)\psi(x)),
\]
is well-defined and linear. Moreover, the set $\oacx(\overline{\Delta_{\ast} f([0,1])})$ 
is absolutely convex and compact 
by \prettyref{prop:cadlag_difference_precomp}. Again, the estimate
\begin{align*}
|I_{0}(e')|&\leq\sum_{x\in[0,1]}|\psi(x)|
        \sup_{x\in \Delta_{\ast} f([0,1])}|e'(x)|
        \leq\|\psi\|_{\ell^{1}}\sup_{x\in\oacx(\overline{\Delta_{\ast} f([0,1])})}|e'(x)|
\end{align*} 
for every $e'\in E'$, implies our statement. 
The remaining estimates are deduced analogously to \prettyref{lem:pettis.loc.complete}.
\end{proof}

\begin{prop}\label{prop:pettis.ccp.to.loc.complete}
Let $E$ be an lcHs, $\Omega$ a topological Hausdorff space 
and $(\Omega,\Sigma,\mu)$ a measure space. 
If $f\colon\Omega\to E$ is weakly integrable and 
there are $\psi\in\mathcal{L}^{1}(\Omega,\mu)$ and $g\colon\Omega\to[0,\infty)$ 
measurable such that $\psi g\geq 1$
and 
\begin{enumerate}
\item[(i)] $E$ has ccp, $\Omega$ is locally compact and $fg\in\mathcal{C}_{0}(\Omega,E)$, or 
\item[(ii)] $E$ has metric ccp, $\Omega$ is locally compact and second-countable, and $fg\in\mathcal{C}_{0}(\Omega,E)$, or
\item[(iii)] $E$ is locally complete, $\Omega$ a metric space, $fg\in\mathcal{C}_{b}^{[\gamma]}(\Omega,E)$ 
for some $0<\gamma\leq 1$ and there is $h\colon\Omega\to (0,\infty)$ such that $fgh$ is bounded on $\Omega$ 
and with $N:=\{x\in\Omega\;|\;f(x)g(x)=0\}$ it holds that
\[
\forall\;\varepsilon>0\;\exists\;K\subset\Omega\;\text{compact}\;
\forall\;x\in\Omega\setminus (K\cup N) :\;1\leq\varepsilon h(x),
\]
\end{enumerate} 
then $f$ is Pettis-integrable on $\Omega$ and 
\[
p_{\alpha}\bigl(\int_{\Omega}f(x)\d \mu(x)\bigr)\leq \|\psi\|_{1}\sup_{x\in\Omega}p_{\alpha}(f(x)g(x)),
\quad\alpha\in\mathfrak{A}.
\]
\end{prop}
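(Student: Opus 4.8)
The statement is a ``master theorem'' for Pettis-integrability that unifies the three completeness hypotheses appearing in Lemmas~\ref{lem:pettis.loc.complete} and \ref{lem:pettis.seq.complete}. The unifying observation is that in every case the key ingredient is the compactness of the closed absolutely convex hull of the range of the bounded function $fg$, which then allows an application of the Mackey--Arens theorem to the linear functional
\[
I\colon E'\to\K,\quad I(e'):=\int_{\Omega}\langle e',f(x)\rangle\d\mu(x).
\]
First I would verify that $I$ is well-defined and linear: it is well-defined precisely because $f$ is assumed weakly integrable, i.e.\ $e'\circ f\in\mathcal{L}^{1}(\Omega,\mu)$ for every $e'\in E'$, and linearity is immediate from the linearity of the integral. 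The whole proof then reduces to showing that $I$ is $\kappa(E',E)$-continuous so that $I\in(E'_{\kappa})'=\mathcal{J}(E)$ by Mackey--Arens, which yields the representing element $e_{\Omega}\in E$.

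\textbf{The three cases.} The decisive step is to produce, in each of the cases (i)--(iii), an absolutely convex compact set $K_{0}\subset E$ with the estimate
\[
|I(e')|\leq \|\psi\|_{1}\sup_{x\in\Omega}|\langle e',f(x)g(x)\rangle|\leq \|\psi\|_{1}\sup_{x\in K_{0}}|e'(x)|,\quad e'\in E'.
\]
The first inequality follows from $\psi g\geq 1$ (so $|f(x)|\le |\psi(x)|\,|f(x)g(x)|$ pointwise after testing with $e'$) together with $\int_{\Omega}|\psi(x)|\d\mu(x)=\|\psi\|_{1}$; I would write $|\langle e',f(x)\rangle|\le|\psi(x)|\sup_{y\in\Omega}|\langle e',f(y)g(y)\rangle|$ and integrate. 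For the second inequality I take $K_{0}:=\oacx((fg)(\Omega))$ and invoke the compactness results from \prettyref{app:clos_abs_conv_compact}: in case (i) and (ii), $fg\in\mathcal{C}_{0}(\Omega,E)$ gives compactness of $K_{0}$ by \prettyref{prop:abs_conv_comp_C_0} (using ccp resp.\ metric ccp with second-countability), and in case (iii) compactness of $K_{0}$ follows from \prettyref{prop:abs_conv_comp_hoelder} applied to $fg$ with the auxiliary function $h$ and the vanishing condition on $\Omega\setminus(K\cup N)$. Once $K_{0}$ is absolutely convex and compact, the estimate places $I$ in $(E'_{\kappa})'$, and Mackey--Arens furnishes $e_{\Omega}\in E$ with $\langle e',e_{\Omega}\rangle=I(e')$ for all $e'\in E'$, which is exactly Pettis-integrability of $f$ on $\Omega$.

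\textbf{The final estimate and the main obstacle.} The norm estimate in the conclusion is obtained as in \prettyref{lem:pettis.loc.complete}: for $\alpha\in\mathfrak{A}$ set $B_{\alpha}:=\{x\in E\;|\;p_{\alpha}(x)<1\}$ and compute
\[
p_{\alpha}\bigl(e_{\Omega}\bigr)=\sup_{e'\in B_{\alpha}^{\circ}}|\langle e',e_{\Omega}\rangle|
=\sup_{e'\in B_{\alpha}^{\circ}}\Bigl|\int_{\Omega}\langle e',f(x)\rangle\d\mu(x)\Bigr|
\leq \|\psi\|_{1}\sup_{e'\in B_{\alpha}^{\circ}}\sup_{x\in\Omega}|\langle e',f(x)g(x)\rangle|
=\|\psi\|_{1}\sup_{x\in\Omega}p_{\alpha}(f(x)g(x)),
\]
where the first and last equalities use \cite[Proposition 22.14, p.\ 256]{meisevogt1997} to pass between $p_{\alpha}$ and $\sup_{e'\in B_{\alpha}^{\circ}}$. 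I expect the main obstacle to be purely bookkeeping rather than conceptual: one must confirm that the measurability and integrability assumptions genuinely suffice in each case (in particular that weak integrability of $f$, not merely of $fg$, is what is needed for $I$ to be defined, while the \emph{compactness} input comes entirely from $fg$), and that the hypotheses of \prettyref{prop:abs_conv_comp_C_0} and \prettyref{prop:abs_conv_comp_hoelder} are matched exactly in cases (i)--(iii). Since these compactness propositions are already available from the appendix, no new compactness argument is required, and the proof is essentially a case distinction feeding into the single Mackey--Arens argument above.
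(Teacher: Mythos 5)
Your proposal is correct and follows essentially the same route as the paper: define $I(e'):=\int_{\Omega}e'(f(x))\,\d\mu(x)$, establish compactness of $\oacx(fg(\Omega))$ via \prettyref{prop:abs_conv_comp_C_0} in cases (i)--(ii) and \prettyref{prop:abs_conv_comp_hoelder} in case (iii), and conclude by Mackey--Arens, with the seminorm estimate obtained as in \prettyref{lem:pettis.loc.complete}. The only detail the paper makes explicit that you gloss over is the distinction between positive and $\K$-valued measures (where $\mu$ is replaced by $|\mu|$ in the estimate), but this is minor bookkeeping.
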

\begin{proof}
Since $f$ is weakly integrable, the map 
\[
I\colon E'\to\K,\;I(e'):=\int_{\Omega}e'(f(x))\d\mu(x),
\]
is well-defined and linear.
It follows from \prettyref{prop:abs_conv_comp_C_0} in case (i)-(ii) 
and from \prettyref{prop:abs_conv_comp_hoelder} in case (iii) 
that $\oacx(fg(\Omega))$ is absolutely convex and compact in $E$.  
If $\mu$ is a positive measure, i.e.\ $[0,\infty]$-valued, we observe that
\begin{align*}
|I(e')|&\leq\int_{\Omega}|\psi(x)|\d\mu(x)
        \sup_{x\in fg(\Omega)}|e'(x)|
        \leq \|\psi\|_{1}\sup_{x\in\oacx(fg(\Omega))}|e'(x)|
\end{align*} 
for every $e'\in E'$. 
If $\mu$ is a $\K$-valued measure, then the same estimate holds with $\mu$ replaced by $|\mu|$. 
We deduce from this estimate that $I\in(E_{\kappa}')'\cong E$ by the theorem of Mackey--Arens,
which implies that there is $e_{\Omega}\in E$ such that
\[
\langle e', e_{\Omega}\rangle=I(e')=\int_{\Omega}e'(f(x))\d\mu(x),\quad e'\in E'.
\]
Thus $f$ is Pettis-integrable on $\Omega$. 
The remaining estimate is deduced analogously to \prettyref{lem:pettis.loc.complete}.
\end{proof}

The idea how to prove \prettyref{prop:pettis.ccp.to.loc.complete} (ii) 
for $\Omega=\R^{d}$ is due to 
an anonymous reviewer of \cite{kruse2017} but did not make it into \cite{kruse2017} 
because of page limits. 
\end{appendix}
\printunsrtglossary[type=symbols,style=alttreegroup,title={List of Symbols}]
\printunsrtglossary[type=index,style=mcolindex,title={Index}]
\bibliography{biblio}
\bibliographystyle{plainnat}
\end{document}